\documentclass[a4paper,11pt]{article}
 \pagestyle{plain}
 \setlength{\oddsidemargin}{12pt}
 \setlength{\evensidemargin}{12pt}
 \setlength{\topmargin}{0pt}
 \setlength{\textwidth}{15cm}
 \setlength{\textheight}{21.5cm}
 \setlength{\parindent}{0.5cm}
 \setlength{\parskip}{1ex plus 0.2ex minus 0.2ex}
 \setlength{\emergencystretch}{3em}

\synctex=1

 \usepackage[plainpages=false]{hyperref}
 \usepackage{amsfonts,amsmath,amssymb,amsthm,mathtools}
 \usepackage{latexsym,lscape,rawfonts,mathrsfs,slashed}
\usepackage{enumitem}
\usepackage{scalerel}[2014/03/10]
\usepackage[usestackEOL]{stackengine}
\usepackage{CJKutf8}
\usepackage{fmtcount}
\usepackage{imakeidx}
\usepackage[thinc]{esdiff}
\usepackage{xcolor}


 \usepackage[all]{xy}
 \usepackage{graphicx,psfrag,float}
 \usepackage{pstool}
 \usepackage{tikz-cd}

 \usepackage{array,tabularx}

 \usepackage{setspace}

 \usepackage[titletoc,title]{appendix}
\usepackage{txfonts}


 

 \newcommand{\E}{\ensuremath{\mathbb{E}}}
 \newcommand{\F}{\ensuremath{\mathbb{F}}}
 
 \newcommand{\N}{\ensuremath{\mathbb{N}}}
 
 \newcommand{\R}{\ensuremath{\mathbb{R}}}
 
 \def\III{\mathbb{I}}
 \def\XX{\mathcal{X}}
 \def\RR{\mathcal{R}}
 \def\MM{\mathcal{M}}

 \def\PP{\mathcal{P}}
 
 \def\Var{\mathrm{Var}}

 \def\scal{\mathrm{R}}

 \def\rcd{\mathrm{RCD}}
 
 \def\spt{\mathrm{supp}}

 \newcommand{\ba}{\begin{align*}}
 \newcommand{\ea}{\end{align*}}
 \newcommand{\flow}{\mathcal{X}=\{M^n,(g(t))_{t\in I}\}}
 \newcommand{\na}{\nabla}
\newcommand{\la}{\langle}
\newcommand{\ra}{\rangle}

\newcommand{\lc}{\left(}
\newcommand{\rc}{\right)}
 
\newcommand{\ep}{\epsilon}

\def\MMM{\mathscr{M}}

\def\NN{\mathcal{N}}

\def\WW{\mathcal{W}}

\def\delf{\mathrm{div}_f}

\def\Lip{\mathrm{Lip}}

\newcommand{\Rm}{\ensuremath{\mathrm{Rm}}}
\newcommand{\Ric}{\ensuremath{\mathrm{Ric}}}

\renewcommand{\t}{\mathfrak{t}}
\newcommand{\CF}{\mathfrak{C}}

\def\MS{\mathcal{S}}
\newcommand{\IF}{\ensuremath{\mathbb{F}}}

 \makeatletter
 \def\ExtendSymbol#1#2#3#4#5{\ext@arrow 0099{\arrowfill@#1#2#3}{#4}{#5}}

 \makeatother
 
\def\aint{\,\ThisStyle{\ensurestackMath{%
  \stackinset{c}{.2\LMpt}{c}{.5\LMpt}{\SavedStyle-}{\SavedStyle\phantom{\int}}}%
  \setbox0=\hbox{$\SavedStyle\int\,$}\kern-\wd0}\int}

\DeclarePairedDelimiter\abs{\lvert}{\rvert}%
\makeatletter
\let\oldabs\abs
\def\abs{\@ifstar{\oldabs}{\oldabs*}}
\makeatother

\numberwithin{equation}{section}


\newtheorem{thm}{Theorem}[section]
\newtheorem{cor}[thm]{Corollary}
\newtheorem{prop}[thm]{Proposition}
\newtheorem{lem}[thm]{Lemma}

\newtheorem{rem}[thm]{Remark}

\newtheorem{defn}[thm]{Definition}

\newtheorem{exmp}[thm]{Example}

\newtheorem{notn}[thm]{Notation}

\setlength{\oddsidemargin}{0.25in}
\setlength{\evensidemargin}{0.25in} 
\setlength{\textwidth}{6in}
\setlength{\topmargin}{-0in} 
\setlength{\textheight}{8.5in}

\makeindex
\title{On the structure of noncollapsed Ricci flow limit spaces}
\author{Hanbing Fang \quad and \quad Yu Li} 
\date{\today}

\begin{document}
	\begin{CJK}{UTF8}{gbsn}

\maketitle
	\begin{abstract}
We establish a weak compactness theorem for the moduli space of closed Ricci flows, each equipped with a natural spacetime distance, under pointed Gromov--Hausdorff convergence. For the subspace of flows with uniformly bounded entropy, we further develop a structure theory for the corresponding noncollapsed Ricci flow limit spaces, showing that the regular part, where convergence is smooth, admits the structure of a Ricci flow spacetime, while the singular set has codimension at least four.
\end{abstract}
	\tableofcontents

\section{Introduction}

A Ricci flow solution $(g(t))_{t \in I}$ on a closed Riemannian manifold $M^n$ is given by the evolution equation:
		\begin{align*}
\partial_t g(t)=-2 \Ric(g(t))
	\end{align*}	
for any $t \in I$, where $I$ is a closed time interval. Ricci flow was introduced by Hamilton in his pioneering 1982 paper \cite{hamilton1982}, where he used it to prove that a closed 3-manifold with positive Ricci curvature evolves under Ricci flow to a manifold with constant curvature. This result was a major breakthrough in the use of geometric evolution equations to study the topology of manifolds. In the early 2000s, building on Hamilton's program, Perelman introduced several new ideas that revolutionized the understanding of Ricci flow and finally resolved the Poincar\'e Conjecture and the more general Geometrization Conjecture \cite{perelman2002entropy, perelman2003b, perelman2003a}.

Compactness theory plays a central role in the analysis of geometric flows, particularly in Ricci flow, where understanding the behavior of sequences of solutions is essential for studying singularity formation, convergence, and geometric limits. The classical compactness theorem for the Ricci flow, established by Hamilton \cite{hamilton1995compactness}, asserts that a sequence of Ricci flows with uniform curvature bounds and noncollapsing conditions admits a subsequence converging in the Cheeger--Gromov sense. Another example is the compactness of $\kappa$-solutions to the Ricci flow, which were introduced by Perelman as local models for singularities after appropriate blow-up procedures. In three dimensions, Perelman used this compactness result to essentially classify all 3-dimensional $\kappa$-solutions, leading to a detailed understanding of singularity models and enabling the implementation of Ricci flow with surgery. 

In general dimensions, the weak compactness theory of Ricci flows has been developed under the additional assumption of a uniform scalar curvature bound; see, for instance, \cite{chen2012space, tianzhang16, chen2017space, chen2020space, bamler2018convergence}. In the case of K\"ahler Ricci flow on Fano manifolds, this scalar curvature bound is automatic due to Perelman's crucial estimate \cite{sesumtian08}. These weak compactness theories focus on the convergence of the time slices of Ricci flows in the Gromov--Hausdorff sense. A key observation under the scalar curvature bound is that the distance functions at different time slices are mutually comparable; see \cite[Lemma 4.21]{chen2020space} and \cite[Theorem 1.1]{bamler2017heat}. Consequently, the weak compactness theory implies that the time slices converge in the Gromov--Hausdorff sense to a singular metric space, whose singular set has codimension at least 4. Moreover, in \cite{chen2020space} (see also \cite{bamler2018convergence}), the authors further established the convergence of Ricci flows as \textbf{spacetimes}---a perspective that already appeared in Perelman's work.

The convergence theory of Ricci flows can be viewed as a natural generalization of the convergence theory for Einstein manifolds, developed by Cheeger, Colding, Naber, and others; see \cite{cheeger1997structure, cheeger2013lower, cheeger2015regularity}. However, in the case of general Ricci flows without any curvature assumptions, the lack of distance comparability prevents one from establishing Gromov--Hausdorff convergence for individual time slices.

In a series of works \cite{bamler2020entropy, bamler2023compactness, bamler2020structure}, Bamler introduced a number of ideas to develop the theory of $\F$-convergence. Within this framework, Bamler proved that for almost every time, the time slices of a Ricci flow, when equipped with conjugate heat kernel measures, converge in the Gromov--$W_1$-Wasserstein distance (see Definition \ref{defnGWpdistance}). Moreover, he established that Ricci flows $\F$-converge to a limit known as a metric flow (see Definition \ref{defnmetricflow}), and that the family of time slices in this limiting metric flow is almost continuous in the $GW_1$-sense. A metric flow can be regarded as a weak notion of Ricci flow; see also alternative formulations in \cite{haslhofer2018chara, choihas24}. In dimension three, an example of a metric flow is a branch of a weak Ricci flow, as established in \cite{kleiner2017singular}, in which each time slice remains connected.

In general, a limiting metric flow may carry limited geometric information due to potential collapsing phenomena. However, when a uniform bound on the Nash entropy at the base point is imposed, Bamler showed in \cite{bamler2020structure} that the limiting metric flow exhibits favorable geometric properties. Notably, the limit space admits a regular--singular decomposition: the regular part forms a Ricci flow spacetime (see Definition \ref{def_RF_spacetime}), while the singular part has codimension at least 4 with respect to coverings by $P^*$-balls (see Definition \ref{def:pstar}). Moreover, several key results originally established in the context of Einstein manifolds---such as the stratification of the singular set, volume estimates for the quantitative singular strata, and integral curvature radius bounds from \cite{cheeger2013lower, cheeger2015regularity}---continue to hold in the setting of Ricci flow.

In this paper, we first consider the moduli space of all closed Ricci flows on a fixed time interval.

\begin{defn}[Moduli space]
For a fixed constant $T\in(0,+\infty]$, the moduli space $\MM(n,T)$ consists of all $n$-dimensional closed Ricci flows
\begin{align*}
\XX=\{M^n,(g(t))_{t\in\III^{++}}\},\qquad \III^{++}:=[-T,0].
\end{align*}
For $Y>0$, we denote by $\MM(n,Y,T)\subset\MM(n,T)$ the subspace consisting of flows satisfying
\begin{align*}
\inf_{\tau>0}\NN_{x^*}(\tau)\geq-Y
\end{align*}
for every $x^*\in M\times\III^{++}$, where the infimum is taken over all $\tau>0$ for which the Nash entropy is well-defined.
\end{defn}

It is clear that any closed Ricci flow defined on a closed time interval of length $T$ can, via a time translation, be assumed to be defined on $\III^{++}$. The definition of the Nash entropy $\NN_{x^*}$ based at a spacetime point $x^*$ is given in Definition \ref{defnentropy}. The condition defining $\MM(n,Y,T)$ is equivalent to a uniform noncollapsing condition. By Perelman's celebrated monotonicity formula, any closed Ricci flow $\{M^n,(g(t))_{t\in\III^{++}}\}$ satisfying
\begin{align*}
\inf_{\tau\in(0,T]}\boldsymbol{\mu}(g(-T),\tau)\geq-Y
\end{align*}
belongs to $\MM(n,Y,T)$.

For any Ricci flow $\XX=\{M^n,(g(t))_{t\in\III^{++}}\}$, the absence of curvature bounds makes it difficult to define a natural distance between spacetime points. Nevertheless, a key result---proved in \cite[Theorem 2]{mccann2010ricci} and \cite[Theorem 3.1]{ctop2012} (see also \cite[Lemma 2.7]{bamler2020entropy})---states that
\begin{align*}
d_{W_1}^t(\nu_{x^*;t},\nu_{y^*;t})\quad\text{and}\quad d_{W_2}^t(\nu_{x^*;t},\nu_{y^*;t})
\end{align*}
are nondecreasing in $t$ for any spacetime points $x^*,y^*\in\XX$, where $d^t_{W_p}$ denotes the $W_p$-Wasserstein distance with respect to the metric $g(t)$ (see Definition \ref{defnvariance}), and $\nu_{z^*;t}$ denotes the conjugate heat kernel measure based at $z^*$ (see Definition \ref{def:chkm}). In fact, this monotonicity is equivalent to the notion of a super Ricci flow \cite{mccann2010ricci}, and is closely related to weak formulations of super Ricci flows; see \cite{sturm2018super, kopfer2018heat}.

For $\XX=\{M^n,(g(t))_{t \in \III^{++}}\}$, we can use this monotonicity to define a spacetime distance by restricting to the slightly smaller interval $\III^{+}:=[-(1-\sigma)T, 0]$, where $\sigma$ is a small parameter in $(0, 1/100]$. Specifically, we have the following definition:

\begin{defn}\label{def:introd-dstar}
For $x^*,y^*\in M\times\III^+$, set
\begin{align*}
t_+:=\max\{\t(x^*),\t(y^*)\},\qquad t_-:=\min\{\t(x^*),\t(y^*)\}.
\end{align*}
We define
\begin{align}\label{introdef1}
d^*(x^*,y^*):=\inf_{-(1-\sigma)T\leq\tau\leq t_-}
\max\left\{\sqrt{t_+-\tau},\ d_{W_1}^{\tau}(\nu_{x^*;\tau},\nu_{y^*;\tau})\right\}.
\end{align}
If $T=+\infty$, the infimum is taken over $\tau\in(-\infty,t_-]$.
\end{defn}

Definition \ref{def:introd-dstar} ensures that the natural time-function $\t$, defined as the projection of a spacetime point onto its time component, is 2-H\"older continuous, that is, 
	\begin{align*}
	|\t(x^*)-\t(y^*)| \le d^*(x^*, y^*)^2 \quad \text{for all} \quad x^*, y^* \in M \times \III^+.
	\end{align*}
It can be shown---see Lemma \ref{lem:000a}---that $d^*$ is indeed a distance function. Moreover, the topology induced by $d^*$ coincides with the standard topology on $M \times \III^+$ (see Corollary \ref{cor:topo}). In addition, the metric balls $B^*$, defined via $d^*$, are comparable to the parabolic balls $P^*$ introduced by Bamler (see Proposition \ref{propdistance2}). Furthermore, if the scalar curvature is locally bounded, $B^*$ is comparable to the standard parabolic balls; see Proposition \ref{equivalenceofballs}.

There is some flexibility in the construction in Definition \ref{def:introd-dstar}. For example, one may define a similar spacetime distance using the monotonicity of $d_{W_2}^t$, as in \eqref{introdef1}. Nonetheless, all such spacetime distances are equivalent in the sense that they are bi-Lipschitz to one another; see Appendix \ref{app:B} for details.

Our first main result establishes pointed Gromov--Hausdorff convergence, with respect to the $d^*$-distance, for a sequence of Ricci flows in $\MM(n,T)$ restricted to the smaller time interval $\III:=[-(1-2\sigma)T,0]$.

\begin{thm}[Weak compactness]\label{thm:intro1}
Given any sequence $\XX^i=\{M_i^n,(g_i(t))_{t\in\III^{++}}\}\in\MM(n,T)$ with base points $p_i^*\in M_i\times\III$ \emph{(}when $T=+\infty$, we additionally assume $\limsup_{i\to\infty}\t_i(p_i^*)>-\infty$\emph{)}, after passing to a subsequence, we obtain
\begin{align*}
(M_i\times\III,d_i^*,p_i^*,\t_i)\xrightarrow[i\to\infty]{\quad\mathrm{pGH}\quad}(Z,d_Z,p_\infty,\t),
\end{align*}
where $d_i^*$ denotes the restriction of the $d^*$-distance to $M_i\times\III$. The limit $(Z,d_Z,\t)$ is a complete, separable, locally compact metric space coupled with a $2$-H\"older continuous time-function $\t:Z\to\III$.
\end{thm}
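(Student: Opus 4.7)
The plan is to apply Gromov's pointed compactness theorem for metric spaces, together with an Arzel\`a--Ascoli extraction to transport the time functions to the limit. The central point is to verify that for every fixed $R > 0$ and $\ep > 0$, the $d^*_i$-balls $B^*_i(p_i^*, R) \subset M_i \times \III$ admit an $\ep$-net whose cardinality is bounded independently of $i$; pointed Gromov--Hausdorff subconvergence to a complete, locally compact, separable limit $(Z, d_Z, p_\infty)$ then follows via a standard diagonal argument along an exhaustion $R_k \to \infty$ and $\ep_k \to 0$.

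The key covering estimate proceeds as follows. By Proposition \ref{propdistance2}, the $d^*$-balls $B^*(x^*, r)$ are comparable to Bamler's parabolic balls $P^*(x^*, r)$ up to constants depending only on $n$, $Y$, and $\sigma$. The entropy condition (ii) in the definition of $\MM(n,Y,T)$ asserts $\NN_{x^*}(\tau) \ge -Y$ uniformly across the flow, which, via Bamler's volume and concentration estimates for conjugate heat kernel measures on $P^*$-balls, allows one to cover each $P^*(p_i^*, R')$ by a uniformly bounded number $N = N(n, Y, \sigma, R, \ep)$ of $P^*$-balls of a suitably small radius. Translating through the ball comparability yields an $\ep$-net in $B^*_i(p_i^*, R)$ of cardinality at most $N$, so the family $\{(M_i \times \III, d^*_i, p_i^*)\}$ is uniformly totally bounded on every compact scale.

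Once uniform total boundedness is established, Gromov's theorem produces, after passing to a subsequence, a pointed GH limit $(Z, d_Z, p_\infty)$ that is automatically complete and locally compact; separability follows from local compactness plus the existence of countable $\ep$-nets. For the time functions, the 2-H\"older bound $|\t_i(x^*) - \t_i(y^*)| \le d^*_i(x^*, y^*)^2$, together with the assumption $\limsup_i \t_i(p_i^*) > -\infty$ when $T = +\infty$, makes $(\t_i)$ equi-H\"older and uniformly bounded on $d^*_i$-balls about $p_i^*$. Extracting a further subsequence via Arzel\`a--Ascoli applied along almost-isometric embeddings of the $M_i \times \III$ realizing the pGH convergence into a common ambient metric space, one obtains a uniform-on-bounded-sets limit $\t : Z \to \III$ that inherits the 2-H\"older bound $|\t(x^*) - \t(y^*)| \le d_Z(x^*, y^*)^2$.

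The main technical hurdle is the uniform covering estimate for $P^*$-balls. The equivalence $B^* \simeq P^*$ reduces the problem to the parabolic setting, but one must be attentive to base points in $\III$ whose associated $d^*$-balls can still need to probe backward by up to $\sigma T$ in time before the distance is realized; this is precisely what the nested intervals $\III \subset \III^+ \subset \III^{++}$ are designed to accommodate, providing a $\sigma T$ buffer at each end so that the Nash-entropy bound and Bamler's covering estimates remain valid at every base point and every scale of interest. Once this covering bound is secured, the remaining steps are essentially structural applications of pointed Gromov--Hausdorff theory.
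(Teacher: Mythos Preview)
Your proposal is correct and follows essentially the same route as the paper: establish uniform total boundedness of $d^*$-balls, apply Gromov's pointed compactness, then extract the time function via the 2-H\"older bound. The only minor difference is that the paper works directly with $B^*$-balls via two-sided spacetime volume bounds (Propositions~\ref{propdistance3} and~\ref{uppervolumebd}) and a packing argument on a maximal $\ep$-separated set, rather than passing through the $B^* \simeq P^*$ equivalence and appealing to covering estimates; this is slightly more self-contained but amounts to the same thing.
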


In Propositions \ref{propdistance3} and \ref{uppervolumebd}, we establish entropy-weighted lower and upper bounds for spacetime balls $B^*(x^*, r)$. Since the common entropy factor is compared by the Harnack estimate, Theorem \ref{thm:intro1} follows from a standard ball-packing argument, analogous to the convergence theory for sequences of Riemannian manifolds with uniform Ricci curvature lower bounds. The full proof is given in Theorem \ref{thm:GHlimit-dstar}.

The limit space $(Z, d_Z, p_\infty, \t)$ is referred to as a \textbf{Ricci flow limit space} over $\III$. It is called \textbf{noncollapsed} if the approximating sequence lies in $\MM(n,Y,T)$ for some fixed $Y$. A natural question arises: what is the relationship between the space $Z$ and the $\F$-limits obtained from the sequence $\XX^i$? 

To investigate this, consider a sequence of points $z_i^* \in M_i \times \III$ converging to $z \in Z$ in the Gromov--Hausdorff sense. By the theory of $\F$-convergence (see Section \ref{sec:f}), there exists a correspondence $\CF$ such that 
\begin{equation}\label{thm:intro2}
	(\XX^i, (\nu_{z_i^*;t})_{t \in [-T, \t_i(z_i^*)]}) \xrightarrow[i \to \infty]{\quad \IF, \CF\quad} (\XX^z, (\nu_{z;t})_{t \in  [-T, \t(z)]}),
\end{equation}
where the metric flow $\XX^z$ is future continuous for all $t \in [-T, \t(z)]$, except possibly at $t=-(1-\sigma)T$, at which we require that the convergence \eqref{thm:intro2} is uniform. The metric flow $\XX^z$ is referred to as the metric flow associated with $z$. On $\XX^z_{\III^+}$, one can define a spacetime function $d_z^*$ as in Definition \ref{def:introd-dstar} (see Definition \ref{defn:dstar-limit}). In general, $d_z^*$ is only a pseudo-distance on $\XX^z_{\III^+}$. However, by passing to the corresponding quotient space $\widetilde{\XX^{z}_\III}$, one obtains an isometric embedding into the limit space $Z$ (see Theorem \ref{thm:idenproof} for the proof).

\begin{thm}\label{thm:iden}
For any $z \in Z$, there exists an isometric embedding
	\begin{align*}
\iota_z: (\widetilde{\XX^{z}_\III}, d^*_z) \longrightarrow (Z, d_Z)
	\end{align*}
such that $\iota_z(z)=z$ and $\t \circ \iota_z=\t^z$, where $\t^z$ is the time-function on $\widetilde{\XX^{z}_\III}$. Moreover, for any $y_i^* \in \XX^i_\III$ and $y_{\infty} \in \XX^{z}_\III$, $y_i^*$ converge to $y_{\infty}$ within $\CF$ if and only if $y_i^* \to \iota_z(\tilde y_{\infty})$ in the Gromov--Hausdorff sense, where $\tilde y_{\infty}$ is the quotient image of $y_{\infty}$ from $\XX^{z}_\III$ to $\widetilde{\XX^{z}_\III}$.	
\end{thm}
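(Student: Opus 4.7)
The plan is to transport points of $\XX^z_\III$ into $Z$ via the $\F$-convergence correspondence $\CF$, then verify that the resulting map descends to an isometric embedding on the quotient $\widetilde{\XX^z_\III}$.

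\textbf{Construction and well-definedness.} For $y_\infty \in \XX^z_\III$, pick a sequence $y_i^* \in \XX^i_\III$ converging to $y_\infty$ within $\CF$; a uniform bound on $d_i^*(y_i^*, p_i^*)$ coming from estimates on $\XX^z$ together with Theorem \ref{thm:intro1} yields a subsequence with a GH-limit $\bar y \in Z$, and I set $\iota_z(\tilde y_\infty) := \bar y$. If $y_i^{**}$ is another $\CF$-lift of $y_\infty$, then $d_{W_1}^{t}(\nu_{y_i^*;t}, \nu_{y_i^{**};t}) \to 0$ at every relevant $t$; Definition \ref{def:introd*} then forces $d_i^*(y_i^*, y_i^{**}) \to 0$, so the two sequences share the same GH-limit. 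An analogous argument with $y'_\infty$ in place of $y_i^{**}$ shows that when $d_z^*(y_\infty, y'_\infty)=0$ the images coincide, so the map descends to the quotient.

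\textbf{Isometry.} Via GH-convergence, it suffices to show
\begin{align*}
\lim_{i \to \infty} d_i^*(y_i^*, y_i^{**}) = d_z^*(\tilde y_\infty, \tilde y'_\infty)
\end{align*}
for any pair of $\CF$-lifts. Unpacking the infimum in Definition \ref{def:introd*}, this amounts to showing that an admissible $r$ in the limit is asymptotically admissible for the sequence, and conversely. Under $\F$-convergence via $\CF$, $d_{W_1}^{t-r^2}(\nu_{y_i^*;t-r^2}, \nu_{y_i^{**};t-r^2})$ converges to its metric-flow analog at continuity times, yielding upper semicontinuity of the infimum; the reverse inequality follows by choosing a slight enlargement $r' > r$ of any $r$ attained along a subsequence and checking admissibility in the limit. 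The alternative endpoint clause of Definition \ref{def:introd*} is handled similarly using the \emph{uniform} $\F$-convergence at $t=-(1-\sigma)T$ guaranteed by the setup.

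\textbf{Remaining properties and main obstacle.} The equality $\iota_z(z)=z$ follows by choosing a sequence $z_i^*$ with $z_i^* \to z$ simultaneously in the GH-sense and in $\CF$, the latter being automatic from the construction of $\XX^z$ as the $\F$-limit based at $z$; $\t \circ \iota_z = \t^z$ is immediate from time-compatibility of $\CF$. The ``only if'' direction of the biconditional is the definition of $\iota_z$, while the ``if'' direction uses well-definedness: any GH-convergent sequence $y_i^* \to \iota_z(\tilde y_\infty)$ admits, after passing to a subsequence, an $\CF$-limit $y'_\infty$ with $\iota_z(\tilde y'_\infty)=\iota_z(\tilde y_\infty)$, hence $\tilde y'_\infty = \tilde y_\infty$, and the full sequence $\CF$-converges by a standard subsequence argument. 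The main obstacle is the isometry step: exchanging the $r$-infimum of Definition \ref{def:introd*} with the $\F$-limit requires control of $W_1$-distances at potentially non-continuous times of the limit metric flow, together with careful treatment of the endpoint clause at $t=-(1-\sigma)T$.
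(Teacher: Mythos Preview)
Your approach is essentially the paper's: the core step is exactly Lemma~\ref{lem:key1}, and you correctly identify the main obstacle (exchanging the $r$-infimum with the $\F$-limit, handling non-continuous times and the endpoint at $-(1-\sigma)T$). The paper proves this lemma first and then builds $\iota_z$ on a countable dense set, extending by completeness; your pointwise construction works too, but note that the ``uniform bound on $d_i^*(y_i^*,p_i^*)$'' you invoke in the construction is itself a consequence of the distance-convergence lemma applied with one point equal to $z$, so the logical order should be: prove the isometry lemma first, then construct $\iota_z$.

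There is one genuine gap in your ``if'' direction. You assert that a GH-convergent sequence $y_i^* \to \iota_z(\tilde y_\infty)$ admits, after passing to a subsequence, a $\CF$-limit $y'_\infty \in \XX^z_\III$. This is not automatic: extracting a limit within the correspondence $\CF$ requires tightness of the conjugate heat flows $(\nu_{y_i^*;t})$ relative to the embeddings $\varphi^i_t$, which you have not established. The paper avoids this entirely: it takes a known $\CF$-convergent sequence $w_i^* \to y_\infty$ (via Theorem~\ref{representlimit}), notes that $w_i^* \to \iota_z(\tilde y_\infty)$ in the GH sense by the already-proved direction, deduces $d_i^*(y_i^*,w_i^*)\to 0$, and then shows directly from Definition~\ref{convergenceheatflow2} that this forces $y_i^* \to y_\infty$ within $\CF$ (since small $d_i^*$ controls $W_1$-distances of the conjugate heat kernels uniformly below a small time gap). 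You should replace your subsequence-extraction argument with this comparison.
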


From this point through the structure theory, we assume that the approximating sequence lies in $\MM(n,Y,T)$ for a fixed $Y$; hence, the Ricci flow limit space $Z$ is noncollapsed.

The space $Z$ contains a regular part $\RR$. Its restriction to $\III^-$ is a dense open subset of $Z_{\III^-}$ (see Corollary \ref{cor:comple1}), and $\RR$ carries the structure of a Ricci flow spacetime $(\RR, \t, \partial_\t, g^Z)$. On this regular part, the convergence described in Theorem \ref{thm:intro1} is smooth, in the following sense.

\begin{thm}[Smooth convergence]\label{thm:intro3}
There exist an increasing sequence $U_1 \subset U_2 \subset \ldots \subset \RR$ of open subsets with $\bigcup_{i=1}^\infty U_i = \RR$, open subsets $V_i \subset M_i \times \III$, time-preserving diffeomorphisms $\phi_i : U_i \to V_i$ and a sequence $\ep_i \to 0$ such that the following properties hold:
		\begin{enumerate}[label=\textnormal{(\alph{*})}]
			\item We have
			\begin{align*}
				\Vert \phi_i^* g^i - g^Z \Vert_{C^{[\ep_i^{-1}]}(U_i)} & \leq \ep_i, \\
				\Vert \phi_i^* \partial_{\t_i} - \partial_{\t} \Vert_{C^{[\ep_i^{-1}]}(U_i)} &\leq \ep_i,
			\end{align*}
			where $g^i$ is the spacetime metric induced by $g_i(t)$, and $\partial_{\t_i}$ is the standard time vector field induced by $\t_i$.
			
		\item Let $y \in \RR$ and $y_i^* \in M_i \times \III$. Then $y_i^* \to y$ in the Gromov--Hausdorff sense if and only if $y_i^* \in V_i$ for large $i$ and $\phi_i^{-1}(y_i^*) \to y$ in $\RR$.

			\item For $U_i^{(2)}=\{(x,y) \in U_i \times U_i \mid \t(x)> \t(y)+\ep_i\}$, $V_i^{(2)}=\{(x^*,y^*) \in V_i \times V_i \mid \t_i(x^*)> \t_i(y^*)+\ep_i\}$ and $\phi_i^{(2)}:=(\phi_i, \phi_i): U_i^{(2)} \to V_i^{(2)}$, we have
					\begin{align*}
\Vert  (\phi_i^{(2)})^* K^i-K_Z \Vert_{C^{[\ep_i^{-1}]}(U_i^{(2)})} \le \ep_i,
			\end{align*}	
where $K^i$ and $K_Z$ denote the heat kernels on $(M_i \times \III, g_i(t))$ and $(\RR, g^Z)$, respectively.
		
		\item If $z_i^* \in M_i \times \III$ converge to $z \in Z$ in the Gromov--Hausdorff sense, then
\begin{align*}
K^i(z_i^*;\phi_i(\cdot)) \xrightarrow[i \to \infty]{C^{\infty}_\mathrm{loc}} K_Z(z;\cdot) \quad \text{on} \quad \RR_{(-\infty,\t(z))}.
\end{align*}	

		\item For each $t \in \III$, the time slice $\RR_t$ has at most countably many connected components.
				\end{enumerate}
\end{thm}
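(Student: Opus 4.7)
The plan is to extract the smooth spacetime structure on $\RR$ via Hamilton's compactness theorem, applied in parabolic neighborhoods where the curvature of the approximating flows is uniformly bounded. The key input is the $\epsilon$-regularity theorem for Ricci flows with bounded Nash entropy (due to Bamler), which characterizes a regular point $x \in \RR$ precisely as one admitting such a parabolic neighborhood along a subsequence of the $\XX^i$. Combined with Shi's local derivative estimates, this yields uniform $C^k$ bounds on all iterated covariant derivatives of $g_i$ near $x$ for every $k$, so that $g^Z$ and $\partial_\t$ on $\RR$ arise as the pointed smooth Cheeger--Gromov limit; by Theorem \ref{thm:iden}, this smooth limit is canonically embedded in the pGH limit $Z$ in a time- and spacetime-compatible way.

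To prove (a) and (b) I would choose a relatively compact exhaustion $W_1 \Subset W_2 \Subset \ldots \Subset \RR$ with $\bigcup_j W_j = \RR$ and, on each $W_j$, apply Hamilton's compactness theorem to obtain time-preserving diffeomorphisms $\psi_{i,j}: W_j \to M_i \times \III$ onto their images with $\Vert \psi_{i,j}^* g_i - g^Z \Vert_{C^j(W_j)} \to 0$ and $\Vert \psi_{i,j}^* \partial_{\t_i} - \partial_\t \Vert_{C^j(W_j)} \to 0$ as $i \to \infty$. A standard diagonal selection then yields a sequence $\ep_i \to 0$ and gives (a) with $U_i := W_{j(i)}$, $V_i := \psi_{i,j(i)}(U_i)$, and $\phi_i := \psi_{i,j(i)}$. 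For (b), the ``only if'' direction is immediate from (a), since $\phi_i$ is an $\ep_i$-isometry on $U_i$; the ``if'' direction follows from Theorem \ref{thm:iden}, because any GH limit of $y_i^*$ corresponds to a point in the metric flow $\XX^y$ which, on the regular part, must coincide with $\phi_i^{-1}(y_i^*)$ by uniqueness of pGH limits.

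Parts (c) and (d) then follow from parabolic Schauder theory once the smooth convergence of metrics is in place. Bamler's Gaussian-type upper and lower bounds for heat kernels on Ricci flows with bounded Nash entropy provide uniform pointwise control of $K^i$ on compact subsets bounded away from the diagonal. For (c), the condition $\t_i(x^*) > \t_i(y^*) + \ep_i$ supplies the required time separation, and Schauder estimates applied in both variables (the heat equation in the first, the conjugate heat equation in the second) upgrade the pointwise limit to $C^{[\ep_i^{-1}]}$-convergence on $U_i^{(2)}$. For (d) the base point $z_i^*$ may accumulate at a singular point $z \in Z$, but for any compact $K \Subset \RR_{(-\infty, \t(z))}$ the time separation from $z$ is uniform; the same Schauder argument, now applied only in the second variable, upgrades the weak convergence of the densities—supplied by the $\IF$-convergence $\nu_{z_i^*;t} \to \nu_{z;t}$ from \eqref{thm:intro2} and identified with the pGH convergence via Theorem \ref{thm:iden}—to $C^\infty_{\mathrm{loc}}$-convergence.

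Part (e) follows from separability of $(Z, d_Z)$, which is inherited by $\RR_t$: any collection of pairwise disjoint nonempty open subsets of a separable metric space is countable, and connected components of the topological manifold $\RR_t$ are open in $\RR_t$. I expect the main obstacle to be (d), where one must reconcile the $\IF$-convergence of conjugate heat kernel measures—which is the very mechanism underlying the construction of $Z$—with $C^\infty_{\mathrm{loc}}$-convergence of their densities on the regular part, in the presence of a potentially singular base point. Theorem \ref{thm:iden} is the crucial bridge, and its careful invocation together with a uniform off-diagonal heat kernel bound is what makes this step go through.
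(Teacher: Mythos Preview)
Your approach is broadly correct and close in spirit to the paper's, but the paper routes the local construction through Bamler's $\IF$-convergence framework rather than invoking Hamilton compactness directly. Concretely, for each $z\in\RR_{\t<0}$ the paper (Lemma~\ref{lem:smooth1}) picks a point $w$ slightly \emph{forward} in time from $z$, passes to a subsequence so that $(\XX^i,\nu_{w_i^*;\cdot})\xrightarrow{\IF}\XX^w$, and then quotes Theorem~\ref{Fconvergence}(3) to get the local smooth charts $\phi_i^z$ on a product domain in $\RR^w$; the identification with the pGH limit $Z$ is then exactly $\iota_w$ from Theorem~\ref{thm:idenproof}. This is morally the same as your Hamilton-compactness step, but it makes the compatibility between the smooth charts and the $d^*$-Gromov--Hausdorff convergence automatic rather than something one must argue separately (your ``$\phi_i$ is an $\ep_i$-isometry'' claim needs Proposition~\ref{equivalenceofballs} to pass from $g^Z$-closeness to $d^*$-closeness). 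The paper then glues these local patches by center of mass. One subtlety you do not mention: points $z\in\RR_0$ require a separate argument (Lemma~5.5 in the paper), since there is no forward time at which to place $w$; the paper instead flows $z$ backward along $\partial_{\t_i}$, builds a product domain on $[-r^2,0)$, and extends to $t=0$ by showing the flow-lines converge in $d_Z$.

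For (d), your Schauder-plus-$\IF$-convergence strategy works, but there is a hidden well-definedness issue: for singular $z$, the limit $K_Z(z;\cdot)$ is \emph{defined} via an associated metric flow $\XX^z$ (Definition~\ref{def:chks}), and this a priori depends on the subsequence of $\XX^i$ used to produce $\XX^z$. The paper's Lemma~\ref{lem:well-define} resolves this by showing $K_Z(z;\cdot)=\lim_{i}K_Z(x_i;\cdot)$ for any $x_i\in\RR$ with $x_i\to z$ in $d_Z$ (using the Lipschitz bound of Corollary~\ref{cor:continuheat}), and then proves (d) (Theorem~\ref{thm:convextra}) by a contradiction argument: if $C^\infty_{\mathrm{loc}}$-convergence fails on some compact $\bar U$, pass to a subsequence where $\IF$-convergence to $\XX^z$ holds, and use Theorem~\ref{Fconvergence}(3)(a) together with $\psi_i:=\phi_i^{-1}\circ\phi_i^z\to\iota_z$ to reach a contradiction. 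Your argument for (e) is essentially the paper's (Proposition~\ref{prop:connectnumber}), once one checks via Proposition~\ref{equivalenceofballs} that each component of $\RR_t$ contains a $d_Z$-ball.
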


Theorem \ref{thm:intro3} is proved in Theorems \ref{thm:smooth1} and \ref{thm:convextra} and Proposition \ref{prop:connectnumber}.

The proof of Theorem \ref{thm:intro3} is similar to the proof of smooth convergence for an $\F$-limit (see \cite[Section 9]{bamler2023compactness}). Roughly speaking, the approach involves constructing a product domain $U_z$ for each $z \in \RR$, such that $U_z$ is realized as a Ricci flow spacetime satisfying the required properties. These local pieces are then glued together using a standard patching procedure. Special care must be taken in the case where $\t(z)=0$. 

The associated metric flow $\XX^z$ also contains a regular part $\RR^z$, which admits the structure of a Ricci flow spacetime $(\RR^z, \t^z, \partial_{\t^z}, g^z)$; see Theorem \ref{Fconvergence}. It can be shown---see Proposition \ref{prop:embed2}---that the isometric embedding $\iota_z$ from Theorem \ref{thm:iden} is, in fact, an isometric embedding of Ricci flow spacetimes. As a result, the regular part $\RR$ can be viewed as the union of the pieces $\iota_z(\RR^z)$.

In general, the regular part $\RR$ may not be connected in the spacetime. We provide a sufficient condition (see Corollary \ref{cor:connected}) under which two points in $\RR$ lie in the same connected component. In particular, $\RR$ is connected if $T=+\infty$. We emphasize that this stands in sharp contrast to the regular part of an $\F$-limit, whose time slices are connected. For example, as illustrated in Figure \ref{embedding}, the slice $\RR_{t_3}$ consists of two components, namely $\iota_x(\RR^x_{t_3})$ and $\iota_y(\RR^y_{t_3})$.

For each $z \in Z$ we can assign a conjugate heat kernel measure $\nu_{z;s}$ based at $z$ for $s \le \t(z)$, which is a probability measure on $\RR_s$. All these probability measures together satisfy the reproduction formula (see \eqref{eq:generalrep}). With the help of conjugate heat kernel measures, we can define a distance $d_t^Z$ at the time-slice $Z_t$ for any $t \in \III^-:=(-(1-2\sigma)T, 0]$.

\begin{defn}\label{intro:timedis}
	For each $t \in \III^-$, we define the distance at the time-slice $Z_t$ by
	\begin{align*}
		d^{Z}_t(x,y):=\lim_{s \nearrow t} d_{W_1}^{\RR_s}(\nu_{x;s},\nu_{y;s}) \in [0,\infty]
	\end{align*}
	for any $x,y \in Z_t$, where $d_{W_1}^{\RR_s}$ denotes the $W_1$-Wasserstein distance on $(\RR_s, g^Z_s)$.
\end{defn}

It can be proved that the limit in Definition \ref{intro:timedis} must exist, since $d_{W_1}^{\RR_s}$ is nondecreasing (see Lemma \ref{lem:limitmono}). 

\begin{thm}\label{thm:intro4}
For the distance $d^Z_t$ defined in Definition \ref{intro:timedis}, the following properties hold.
		\begin{enumerate}[label=\textnormal{(\alph{*})}]
			\item For any $t \in \III^-$, $(Z_t, d^Z_t)$ is a complete extended metric space.
			
		\item $\lc Z, \t, (d^Z_t)_{t \in \III^-}, (\nu_{z; s})_{s\in \III^-, s\le \t(z)} \rc$ is an $H_n$-concentrated extended metric flow over $\III^-$, in the sense of Definition \ref{defn:emf}.
		
		\item For any $w \in \RR_t$, there exists a sufficiently small constant $r>0$ such that for any $x,y \in B_{g^Z_t}(w, r)$,
	\begin{align*}
d^Z_t(x, y)=d_{g^Z_t}(x, y).
	\end{align*}

			\item For all but countably many times $t \in \III^-$, on each connected component of $\RR_t$, we have
	\begin{align*}
d^Z_t=d_{g^Z_t}.
	\end{align*}
		
		\item For any $x,y\in Z_{\III^-}$ with $t_0=\t(x)\geq\t(y)$, set $r=d_Z(x,y)$ and $q=t_0-r^2$. If $q\in\III^-$, then
\begin{align*}
\lim_{t\nearrow q}d_{W_1}^{Z_t}(\nu_{x;t},\nu_{y;t})\leq r.
\end{align*}
If in addition $q<\t(y)$, equivalently $r>\sqrt{\t(x)-\t(y)}$, then
\begin{align*}
r\leq\lim_{t\searrow q}d_{W_1}^{Z_t}(\nu_{x;t},\nu_{y;t}).
\end{align*}
Here $d_{W_1}^{Z_t}$ denotes the $W_1$-Wasserstein distance on $(Z_t,d_t^Z)$ (see Definition \ref{defnvarianceextended}).
				\end{enumerate}
\end{thm}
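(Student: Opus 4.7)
My plan is to prove the five parts in the non-sequential order (e), (c), (a), (b), (d), since (e) underpins the completeness and positive-definiteness arguments in (a), while (c) provides the local input for (d). The unifying strategy throughout is to lift each statement about points of $Z$ to a statement about approximating sequences in $M_i \times \III$ via pointed Gromov--Hausdorff convergence, exploit the prelimit Ricci flow theory---principally Definition \ref{def:introd*} together with the monotonicity of $d^{g_i(t)}_{W_1}$ along the flow---and then pass to the limit using Theorems \ref{thm:intro1}, \ref{thm:iden} and \ref{thm:intro3}.

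For (e), I would pick sequences $x_i^* \to x$ and $y_i^* \to y$ with $d^*_i(x_i^*, y_i^*) \to r = d_Z(x, y)$. Definition \ref{def:introd*} expresses $d^*_i$ as an infimum, so for every $r' < r$ the inequality $d_{W_1}^{t_0 - (r')^2}(\nu_{x_i^*; t_0 - (r')^2}, \nu_{y_i^*; t_0 - (r')^2}) > \ep_0 r'$ holds for all large $i$, whereas at the value $r$ the opposite non-strict inequality holds. Passing $i \to \infty$ and then $r' \to r$ from each side, using monotonicity of $s \mapsto d^{Z_s}_{W_1}(\nu_{x;s}, \nu_{y;s})$, yields the two one-sided limit inequalities in (e). For (c), the smooth diffeomorphisms $\phi_i$ of Theorem \ref{thm:intro3} identify a small $g^Z_t$-ball about $w$ with genuine Riemannian balls in the prelimit; short-time parabolic estimates ensure that $\nu_{x_i^*; s}$ concentrates at scale $\sqrt{t - s}$ about $x_i^*$, so that $d_{W_1}^{g_i(s)}(\nu_{x_i^*; s}, \nu_{y_i^*; s}) = d_{g_i(t)}(x_i^*, y_i^*) + O(\sqrt{t - s})$. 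Sending $s \nearrow t$ and then $i \to \infty$ via Theorem \ref{thm:intro3}(d) gives the equality; the reverse inequality $d^Z_t \le d_{g^Z_t}$ is immediate from monotonicity, since $\nu_{x;t} = \delta_x$.

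For (a), symmetry, triangle inequality and $d^Z_t(x, x) = 0$ descend from the $W_1$-axioms on each $\RR_s$ by passing to the limit $s \nearrow t$. Positive definiteness and completeness both follow from (e): combining the lower bound in (e) with monotonicity in $s$ and the limit $s \nearrow t$ yields $\ep_0 d_Z(x, y) \le d^Z_t(x, y)$ for $x, y \in Z_t$ with $d_Z(x, y)$ small enough, so $d^Z_t(x, y) = 0$ forces $x = y$, and any $d^Z_t$-Cauchy sequence in $Z_t$ is $d_Z$-Cauchy, hence converges in the closed time-slice $Z_t \subset Z$ to some $z$; lower semicontinuity of $d^Z_t$ with respect to $d_Z$ then identifies $z$ as the $d^Z_t$-limit. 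For (b), I would verify each axiom of an $H_n$-concentrated extended metric flow as in Definition \ref{defn:emf} by taking the $\F$-limit of the corresponding prelimit identity: the reproduction formula follows from the smooth convergence of heat kernels in Theorem \ref{thm:intro3}(d) via dominated convergence; the monotonicity of $d^{Z_t}_{W_1}$ along the flow passes from the super-Ricci-flow inequality recalled after Definition \ref{def:introd*}; and the $H_n$-concentration estimate descends from its sharp prelimit counterpart.

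Part (d) will be the main obstacle. The inequality $d^Z_t \le d_{g^Z_t}$ on each connected component of $\RR_t$ is routine: it follows from (c) combined with the triangle inequality applied to a finite chain of small $g^Z_t$-balls covering a smooth curve in the component. The reverse inequality can fail in general, and the real task is to confine the failure to a countable subset of times. My plan is to exploit the fact that, for each fixed spacetime pair $(x, y)$ with $\t(x) = \t(y)$, the monotone nondecreasing function $s \mapsto d^{Z_s}_{W_1}(\nu_{x;s}, \nu_{y;s})$ has at most countably many jump discontinuities, so equality holds at every left-continuity point via (c). To upgrade from a fixed pair to all pairs in $\RR_t$ simultaneously, I would pick a countable dense collection of spacetime pairs---using the countability of connected components from Theorem \ref{thm:intro3}(e) together with separability of each---form the union of the jump sets across this collection, and approximate arbitrary pairs in $\RR_t$ at good times by this collection via the smooth convergence of Theorem \ref{thm:intro3}. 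The delicate point will be ensuring that this approximation is stable under $d^Z_t$ and compatible with the component structure of $\RR_t$, which may itself change at the countable exceptional times and must therefore be folded into the exceptional set.
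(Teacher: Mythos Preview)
Your overall architecture is sound, but there is a central technical gap that recurs in parts (c), (e), and by extension in your derivations of (a), (b), (d): you repeatedly assert that one can pass the prelimit Wasserstein distance $d_{W_1}^{g_i(s)}(\nu_{x_i^*;s},\nu_{y_i^*;s})$ to a Wasserstein distance on the limit space, either $d_{W_1}^{\RR_s}$ or $d_{W_1}^{Z_s}$, by simply ``passing $i\to\infty$.'' This limit passage is \emph{not} a consequence of the smooth convergence of heat kernels in Theorem~\ref{thm:intro3}(d). The difficulty is that $d_{W_1}^{\RR_s}$ is computed with respect to the Riemannian distance on the open, possibly disconnected manifold $\RR_s$, with $d_{g^Z_s}=+\infty$ between components, whereas $(M_i,g_i(s))$ is closed and connected; even within a single component, geodesics realizing $d_{g^Z_s}$ need not be limits of geodesics in $M_i$, so there is no lower semicontinuity of Lipschitz test functions under the convergence you invoke. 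The paper handles this by routing every such limit through the associated metric flow $\XX^x$ of Theorem~\ref{thm:iden}: Proposition~\ref{prop:pre5} and Lemma~\ref{lem:pre5} show that at \emph{continuity times} of $\XX^x$ (and only there), the prelimit $d_{W_1}^{g_i(t)}$ converges to $d_{W_1}^{\XX^x_t}$, and the latter is then compared to $d_{W_1}^{Z_t}$ via the local isometry of part~(c). Your proof of~(e) is missing exactly this mechanism for both inequalities, and your proof of~(c) has the related problem of interchanging $s\nearrow t$ with $i\to\infty$ without justification; the paper instead picks a sequence $t_i\nearrow t$ of continuity times of $\XX^x$ and uses that $d^Z_{t_i}=d_{g^Z_{t_i}}$ there.

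This same continuity-time machinery is what drives the paper's proof of~(d): the countable exceptional set is not a union of jump sets of scalar monotone functions, but the union over a countable family of associated metric flows $\XX^{z_{k,j}}$ (one for each rational time and each of the countably many components of $\RR_{t_k}$) of their discontinuity times in the sense of \cite[Definition~4.7]{bamler2023compactness}. Your ``jump discontinuity'' heuristic is close in spirit but does not by itself give the needed equality $d^Z_t=d_{g^Z_t}$ on a whole component; that equality at a given time $t$ comes from $\XX^{z_{k,j}}$ being continuous at $t$, which forces the time-slice metric of $\XX^{z_{k,j}}$ to coincide with the length metric there. Finally, your completeness argument in~(a) via ``lower semicontinuity of $d^Z_t$ with respect to $d_Z$'' is circular as stated: that lower semicontinuity (Lemma~\ref{takelimitdt}) relies on Corollary~\ref{cor:limit1}, which in turn rests on~(e). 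The paper avoids this by proving completeness directly inside $\XX^{x_1}$ before~(e), constructing the limiting conjugate heat flow and its $H_n$-centers explicitly.
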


Theorem \ref{thm:intro4} is proved in Lemma \ref{lem:extend1}, Propositions \ref{prop:com}, \ref{prop:dismatch}, and \ref{prop:005bb}, Theorem \ref{thm:em}, and Propositions \ref{prop:chara} and \ref{prop:dismatch1}.

In general, any conjugate heat kernel measure $\nu_{z;s}$ has full measure on a single connected component of $\RR_s$. Parts (c) and (d) of Theorem \ref{thm:intro4} show that, locally, the distance function $d^Z_t$ agrees with the Riemannian distance $d_{g^Z_t}$ induced by the metric $g^Z_t$, and for almost every $t \in \III^-$, the two coincide on each connected component of $\RR_t$. However, one should not expect this agreement to hold globally on all of $\RR_t$, as it is possible for $d^Z_t(x, y)$ to be finite even when $x$ and $y$ lie in different components of $\RR_t$ (see Figure \ref{embedding} at $t_3$). Part (e) of Theorem \ref{thm:intro4} further clarifies the relationship between the spacetime distance $d_Z$ and the time-slice distance $d^Z_t$, in alignment with Definition \ref{def:introd-dstar}.

\begin{defn}[Tangent flow]
	For any $z \in Z_{\III^-}$, a \textbf{tangent flow} $(Z',d_{Z'},z',\t')$ at $z$ is a pointed Gromov--Hausdorff limit of $(Z, r_j^{-1} d_Z, z, r_j^{-2}(\t-\t(z)))$ for a sequence $r_j  \searrow 0$.
	\end{defn}
	
It can be shown (see Section \ref{sec:tangent}) that any tangent flow is a noncollapsed Ricci flow limit space. We now introduce a broader class of Ricci flow limit spaces, called \textbf{Ricci shrinker spaces}, which encompass all tangent flows. Roughly speaking, a Ricci shrinker space $(Z', d_{Z'}, z', \t')$ is a noncollapsed Ricci flow limit space with $\R_- \subset \mathrm{image}(\t')$ such that the base point $z'$ has constant Nash entropy (see Definition \ref{def:rss}).

For Ricci shrinker spaces, we have the following:

\begin{thm}[Characterization of Ricci shrinker spaces]\label{thm:intro5}
Let $(Z', d_{Z'}, z', \t')$ be a Ricci shrinker space so that its regular part is given by a Ricci flow spacetime $(\RR', \t', \partial_{\t'}, g^{Z'}_t)$. Then the following statements hold.
		\begin{enumerate}[label=\textnormal{(\alph{*})}]
			\item On $\RR'_{(-\infty,0)}$, the following equation holds:
	\begin{align*}
\Ric(g^{Z'})+\na^2 f_{z'}=\frac{g^{Z'}}{2 |\t'|},
	\end{align*}
where $f_{z'}$ is the potential function at $z'$.
			
		\item For any $t<0$, the slice $\RR'_t$ is connected. Moreover, the distance $d^{Z'}_t$, when restricted to $\RR'_t$, coincides with the Riemannian distance induced by the metric $g^{Z'}_t$.
		
		\item $Z'_{(0, \infty)}=\emptyset$ if $(Z', d_{Z'}, z', \t')$ is \textbf{collapsed} (see Definition \ref{def:ncf}).

			\item The space $Z'_{(-\infty, 0]}$ is \textbf{self-similar} in the following sense: there exists a flow $\boldsymbol{\psi}^s$ on $Z'_{(-\infty, 0]}$ such that, when restricted to $\RR'_{(-\infty,0)}$, it is generated by $\tau(\partial_{\t'}-\na f_{z'})$, with $\boldsymbol{\psi}^0=\mathrm{id}$. Moreover, for any $x, y \in Z'_{(-\infty, 0]}$ and $s \in \R$, we have
		\begin{align*}
d_{Z'}(\boldsymbol{\psi}^s(x), \boldsymbol{\psi}^s(y))=e^{-\frac s 2} d_{Z'}(x, y).
	\end{align*}		
In addition, for any $x, y \in Z'_t$ with $t\le 0$ and $s \in \R$, the time-slice distance satisfies
		\begin{align*}
d^{Z'}_{e^{-s}t}(\boldsymbol{\psi}^s(x), \boldsymbol{\psi}^s(y))=e^{-\frac{s}{2}}d^{Z'}_{t}(x,y).
	\end{align*}	
	\item For any $t<0$, $Z'_{t} \setminus \RR'_t$ has Minkowski dimension at most $n-4$ with respect to $d^{Z'}_t$.
				\end{enumerate}
\end{thm}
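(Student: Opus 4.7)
The plan is to derive (a) first from rigidity in Perelman's entropy monotonicity, then use the resulting soliton equation to construct the self-similar flow of (d), which supplies the symmetry needed for (b) and (c), leaving (e) as a consequence of the general stratification theory for noncollapsed Ricci flow limit spaces.

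For (a), the defining property of a Ricci shrinker space is that $\tau \mapsto \NN_{z'}(\tau)$ is constant, so Perelman's $\mathcal{W}$-entropy along the conjugate heat kernel $\nu_{z';\cdot}$ is stationary. On the regular Ricci flow spacetime $\RR'$ the pointwise evolution identity for the $\mathcal{W}$-integrand has a nonpositive bulk term whose vanishing is precisely the shrinking soliton identity with potential $f_{z'}$. The technical point is to make sense of $f_{z'}$ on the possibly incomplete spacetime $\RR'$; I would identify it as the $C^{\infty}_{\loc}$-limit of the normalized log-densities of the approximating conjugate heat kernels on $\XX^i$ via the smooth convergence of Theorem \ref{thm:intro3}, and then apply the equality case of Perelman's rigidity pointwise on $\RR'_{(-\infty,0)}$.

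With (a) in hand, the vector field $X:=|\t'|(\partial_{\t'}-\na f_{z'})$ on $\RR'_{(-\infty,0)}$ generates a local flow $\boldsymbol{\psi}^s$ whose pull-back acts on $g^{Z'}$ by parabolic rescaling, as a direct differentiation of the soliton identity shows. To prove (d) I would extend $\boldsymbol{\psi}^s$ continuously to all of $Z'_{(-\infty,0)}$ by observing that the rescaling $(d_{Z'},\t')\mapsto(e^{-s/2}d_{Z'},e^{-s}\t')$ produces a pointed Gromov--Hausdorff self-isometry of $(Z',d_{Z'},z',\t')$; this uses the constancy of Nash entropy together with Theorem \ref{thm:iden} applied to the rescaled sequence $\XX^i$. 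The induced self-map agrees with the flow of $X$ on $\RR'_{(-\infty,0)}$, so the scaling identity for $d_{Z'}$ is automatic, and the scaling of $d^{Z'}_t$ is obtained by passing to the limit $s\nearrow t$ in Definition \ref{intro:timedis}.

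For (b), I would use that $\nu_{z';s}$ has full mass on a single connected component $C_s$ of $\RR'_s$ and that self-similarity matches $C_{e^{-s}t}$ to $C_t$ under $\boldsymbol{\psi}^s$; any putative second component would produce, by tracking a radial curve as $t\nearrow 0$, a base point distinct from $z'$, contradicting uniqueness of the collapse point furnished by Theorem \ref{thm:iden}. Metric compatibility $d^{Z'}_t=d_{g^{Z'}_t}$ then follows from Theorem \ref{thm:intro4}(d), since the exceptional countable set of times is invariant under every rescaling $t\mapsto e^{-s}t$ and hence empty. For (c), if the space is collapsed then the regular stratum at each $t<0$ has Hausdorff dimension strictly below $n$, and any nonempty forward slice at $t_0>0$ would carry an $n$-dimensional Ricci flow spacetime piece by Theorem \ref{thm:intro3}(a); rescaling and passing to a tangent flow at $z'$ produces a dimension mismatch across $t=0$. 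Finally, (e) follows from the codimension-four stratification for noncollapsed Ricci flow limit spaces established in \cite{bamler2020structure}, combined with Theorem \ref{thm:intro4}(c) to transfer the spacetime singular set bound to the fixed time-slice. I expect the connectedness assertion in (b) and the dimension rigidity in (c) to be the main difficulties, since the remaining parts reduce more directly to established rigidity, stratification, or self-similar scaling arguments.
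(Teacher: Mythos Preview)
Your approach to (a) and (d) tracks the paper's closely: the soliton identity comes from entropy rigidity passed through smooth convergence, and the flow $\boldsymbol{\psi}^s$ is built on $\RR'_{(-\infty,0)}$ from $X=\tau(\partial_{\t'}-\na f_{z'})$ and extended by density using the $d_{Z'}$-scaling. The paper's extension goes through the heat-kernel scaling identity $K_{Z'}(\boldsymbol{\psi}^s(x);\boldsymbol{\psi}^s(y))=e^{ns/2}K_{Z'}(x;y)$ rather than a direct GH self-isometry, but either route works.

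There is, however, a genuine error in your argument for (c). You write that ``if the space is collapsed then the regular stratum at each $t<0$ has Hausdorff dimension strictly below $n$''. This is false: \emph{collapsed} in Definition~\ref{def:ncf} means the asymptotic volume ratio of $\RR'_{-1}$ vanishes, not that $\RR'_{-1}$ drops dimension. The time-slice $\RR'_{-1}$ is always an $n$-dimensional smooth Riemannian manifold, so your ``dimension mismatch across $t=0$'' does not exist. The paper's argument (Theorem~\ref{dichotomy}) runs in the opposite direction: a regular point $q\in\RR'_0$ with $r_{\Rm}(q)\ge\delta$ gives, via the self-similar flow $\boldsymbol{\psi}^s$, balls in $\RR'_{-1}$ of radius $\sim|t_i|^{-1/2}\delta$ with curvature $\le|t_i|\delta^{-2}$ for $t_i\nearrow 0$; the no-local-collapsing theorem (Lemma~\ref{no-local-collapsing}) then forces positive asymptotic volume ratio, contradicting collapsedness.

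Your connectedness argument for (b) also has a gap. You claim a second component of $\RR'_t$ would, along a radial curve as $t\nearrow 0$, produce a limit point distinct from $z'$. But nothing prevents several components from all limiting to $z'$; uniqueness of the collapse point does not separate them. The paper's proof (Proposition~\ref{prop:004}) instead uses the potential $f_{z'}$: on $\iota_{z'}(\RR^{z'})$ the soliton identities $\partial_{\t'}f_{z'}=|\na f_{z'}|^2$ and $f_{z'}-\tau(|\na f_{z'}|^2+\scal)=\NN_{z'}(1)$ together with $\scal\ge 0$ give a differential inequality $\big|\tfrac{d}{ds}f_{z'}(\gamma(s))\big|\le C(f_{z'}(\gamma(s))+1)$ along any curve $\gamma\subset\RR'$, so $f_{z'}$ stays finite along $\gamma$; but $f_{z'}\to+\infty$ at the boundary of $\iota_{z'}(\RR^{z'})$ since $K_{Z'}(z';\cdot)$ vanishes there, a contradiction. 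Your scale-invariance trick for metric compatibility is clever and could be made to work, but the paper deduces $d^{Z'}_t=d_{g^{Z'}_t}$ more directly from continuity of the metric soliton $\XX^{z'}$ (Corollary~\ref{cor:agree2}), and uses this \emph{before} establishing the $d^{Z'}_t$-scaling in (d), so your ordering would need adjustment. For (e), note that transferring the spacetime codimension-four bound to a fixed time-slice is not automatic from Theorem~\ref{thm:intro4}(c); the paper uses self-similarity (Lemma~\ref{lem:contain}, Theorem~\ref{thm:spacemink}) to integrate the spacetime estimate along the $\boldsymbol{\psi}^s$-orbits.
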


Theorem \ref{thm:intro5} is proved in Proposition \ref{prop:004}, Corollary \ref{cor:agree2}, Theorem \ref{dichotomy}, and Proposition \ref{selfsimilarall}.

We will show (see Section \ref{sec:tangent}) that the metric flow $\XX^{z'}$ associated with $z'$ is a metric soliton in the sense of Definition \ref{def:met_soliton}, and that its regular part $\RR^{z'}$, under the embedding $\iota_{z'}$, coincides with $\RR'_{(-\infty,0)}$. In general, however, it is not known whether $\iota_{z'}(\XX^{z'}_{(-\infty,0)})=Z'_{(-\infty, 0)}$ holds unconditionally. We will prove in Theorem \ref{metricsolitonbdscalcomplete} that this equality does hold if the scalar curvature on $\RR'_{-1}$ is uniformly bounded.

On $Z_{\III^-}$, we have the following regular--singular decomposition:
	\begin{align*}
Z_{\III^-}=\RR_{\III^-} \sqcup \MS,
	\end{align*}
where $\RR_{\III^-}$ denotes the restriction of $\RR$ to $\III^-$. It can be proved (see Theorem \ref{thm:tworegular}) that a point $z$ is regular if and only if any of its tangent flows is isometric to $(\R^n\times\R,d_E^*,(\vec 0^n,0),\t)$ or $(\R^n\times\R_-,d_E^*,(\vec 0^n,0),\t)$, where $d_E^*$ denotes the spacetime distance induced by Definition \ref{defn:dstar-distance} on the static Euclidean flow (see Example \ref{ex:euclidean}). Here the concept of isometry is given in Definition \ref{def:iso}. Equivalently, $z$ is regular if and only if $\NN_z(0)\geq-\ep_n$ (see Proposition \ref{prop:limitepsilon}).

The singular set $\MS$ admits a natural stratification:
	\begin{equation*}
		\mathcal S^0 \subset \mathcal S^1 \subset \cdots \subset \mathcal S^{n+1}=\mathcal S,
	\end{equation*}
	where a point $z \in \MS^k$ lies in the stratum if and only if no tangent flow at $z$ is $(k+1)$-symmetric. Here, a tangent flow $(Z', d_{Z'}, z', \t')$ is said to be $k$-symmetric if one of the following holds:
		\begin{enumerate}[label=\textnormal{(\arabic*)}]
		\item  $(Z',d_{Z'},z',\t')$ is $k$-splitting and is not a static cone.
	
		\item $(Z',d_{Z'},z',\t')$ is a static cone that is $(k-2)$-splitting.
	\end{enumerate} 
	
Roughly speaking, a \textbf{static cone} is characterized by $\mathrm{image}(\t')=\R$ and vanishing Ricci curvature on $\RR'$. The singular strata $\MS^k$ that we introduce differ from those considered by Bamler (see \cite[Theorem 2.8]{bamler2020structure}). Notably, our framework allows for the presence of a \textbf{quasi-static cone}, which satisfies the vanishing Ricci curvature condition only on $\RR'_{(-\infty, t_a]}$ for some constant $t_a \in [0, \infty)$, but not beyond. Such quasi-static cones do not arise in Bamler's setting. For precise definitions and related properties of static and quasi-static cones, we refer to Definition \ref{def:stacone}, Theorem \ref{staticcone}, and Proposition \ref{prop:staticcone1}.

\begin{thm}\label{thm:introsingular}
In the same setting as above, we have
	\begin{align*}
\MS=\MS^{n-2}.
	\end{align*}
\end{thm}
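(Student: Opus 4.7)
The plan is to argue by contradiction via a dimension estimate on tangent flows. Suppose for contradiction that some $z \in \MS$ is not in $\MS^{n-2}$; then by definition $z$ admits an $(n-1)$-symmetric tangent flow $(Z', d_{Z'}, z', \t')$. Since every tangent flow is a Ricci shrinker space (Section \ref{sec:tangent}), Theorem \ref{thm:intro5} is available for $(Z', d_{Z'}, z', \t')$. I would then treat the two cases in the definition of $(n-1)$-symmetry separately.

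In the first case, $(Z', d_{Z'}, z', \t')$ is $(n-1)$-splitting and is not a static cone. The $(n-1)$-splitting produces a product factorization of the regular part, $\RR'_{(-\infty, 0)} \cong \R^{n-1} \times W$, with $W$ a one-dimensional factor. Since $\Ric$ vanishes identically in dimension one, $\Ric(g^{Z'})$ vanishes on all of $\RR'$; combined with the self-similarity from Theorem \ref{thm:intro5}(d) this forces $Z'$ to be a static cone, contradicting the standing assumption of this case. (Equivalently, the shrinker equation from Theorem \ref{thm:intro5}(a) on the $W$ factor reduces to a Hessian equation whose only solutions give the flat Gaussian on $\R$, so $Z' \cong \R^n$, again a static cone.)

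In the second case, $(Z', d_{Z'}, z', \t')$ is a static cone that is $(n-3)$-splitting, so $\RR'_{(-\infty, 0)}$ factors as $\R^{n-3} \times C(Y)$ for a $3$-dimensional Ricci-flat metric cone $C(Y)$ with smooth $2$-dimensional cross-section $Y$. The standard cone Ricci computation gives $\Ric_Y = g_Y$, which in dimension two forces $Y$ to have constant sectional curvature one, so $Y$ is isometric to $S^2$ or $\RP^2$. If $Y = S^2$ then $C(Y) = \R^3$ and $Z'$ is the Euclidean Ricci flow; the tangent-flow characterization of regular points recalled just before the statement of Theorem \ref{thm:introsingular} then gives $z \in \RR$, contradicting $z \in \MS$. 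Otherwise $C(Y)$ has a genuine singularity at its tip, and for every $t < 0$ the set $\R^{n-3} \times \{0\}$ is contained in $Z'_t \setminus \RR'_t$; this set has Minkowski dimension $n-3$, contradicting the bound $\dim_{\mathrm{Mink}}(Z'_t \setminus \RR'_t) \le n-4$ from Theorem \ref{thm:intro5}(e).

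The main obstacle I expect is the second case: one has to transfer the abstract $(n-3)$-splitting and static-cone structure of a Ricci shrinker space into the concrete product form $\R^{n-3} \times C(Y)$ with smooth cross-section, identify the isometry class of $Y$ via the Ricci-flat cone equation, and then correctly place the tip inside the time-slice stratum $Z'_t \setminus \RR'_t$ so as to apply Theorem \ref{thm:intro5}(e). Case one is comparatively mild, because one-dimensional spaces are automatically Ricci-flat and this makes the ``not a static cone'' assumption self-defeating.
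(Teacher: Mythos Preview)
Your approach differs from the paper's. The paper's proof (Theorem~\ref{thm:singular}) is essentially a one-line citation to \cite[Theorem~2.8]{bamler2020structure}, which gives the dichotomy for tangent metric solitons: either Ricci vanishes and the slice splits off at most $\R^{n-4}$, or the scalar curvature is positive and the slice splits off at most $\R^{n-2}$. You instead argue by hand within the paper's framework, leveraging the codimension-$4$ estimate of Theorem~\ref{thm:intro5}(e). This is a legitimate self-contained alternative, essentially reproving the relevant cases of Bamler's result.

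Two points need tightening. In Case~1, the inference ``vanishing Ricci plus self-similarity forces a static cone'' is not valid under Definition~\ref{def:stacone}, which distinguishes static cones ($t_a = +\infty$) from quasi-static ones ($t_a < \infty$); vanishing Ricci on $\RR'_{-1}$ only yields one or the other. Your parenthetical ($Z' \cong \R^n$) is the correct route, but you must first verify that the one-dimensional factor $W$ is all of $\R$: if $W$ had an ideal boundary point $p$, then $\R^{n-1} \times \{p\}$ would sit in $Z'_{-1} \setminus \RR'_{-1}$ with Minkowski dimension $n-1 > n-4$, contradicting Theorem~\ref{thm:intro5}(e) (the $\R^{n-1}$-action extends to $Z'_{-1}$ by Proposition~\ref{prop:splitk}). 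In Case~2, the factorization $\R^{n-3} \times C(Y)$ with smooth link is not automatic from Definition~\ref{def:stacone}, which defines static cones via vanishing Ricci rather than literal metric-cone structure. The cleanest fix bypasses the cone entirely: in dimension three Ricci-flat implies flat; if the three-dimensional factor had any singular point $p$ then $\R^{n-3} \times \{p\}$ again violates the codimension-$4$ bound; hence the factor is smooth and complete, and a complete flat $3$-manifold carrying a function with $\nabla^2 f = g/2$ must be $\R^3$ (any nontrivial deck transformation would fix the unique minimum of the lifted potential, contradicting freeness). Either way $Z' \cong \R^n$ and $z$ is regular, the desired contradiction.
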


Theorem \ref{thm:introsingular} is derived from \cite[Theorem 2.8]{bamler2020structure}, where the corresponding metric solitons are excluded (see Theorem \ref{thm:singular} for details). We can also formulate the following quantitative singular strata as in \cite{cheeger2013lower} and \cite{bamler2020structure}.

\begin{defn} 
	For $\ep > 0$ and $0<r_1<r_2<\infty$, the quantitative singular strata
	\[  \MS^{\ep,0}_{r_1,r_2} \subset \MS^{\ep,1}_{r_1,r_2} \subset   \ldots \subset  \MS^{\ep,n-2}_{r_1,r_2} \subset Z_{\III^-} \]
	are defined as follows:
	$z \in  \MS^{\ep,k}_{r_1,r_2}$ if and only if $\t(z)-\ep^{-1} r_2^2 \in \III^-$ and for all $r \in [r_1, r_2]$, $z$ is not $(k+1,\ep,r)$-symmetric. 
	Here, the precise definition of a point being $(k, \ep, r)$-symmetric can be found in Definition \ref{def:epsym}.
\end{defn}

The following identity is clear from the above definitions: for any $L>1$,
\begin{align}\label{eq:introsiden1}
\MS^{k}=\bigcup_{\ep \in (0, L^{-1})} \bigcap_{0<r<\ep L} \MS^{\ep,k}_{r, \ep L}.
\end{align}

\begin{thm}\label{thm:intro6}
Given $x_0 \in Z$, $\ep>0$ and $r>0$ with $\t(x_0)-2 r^2 \in \III^-$, the following statements hold. 
		\begin{enumerate}[label=\textnormal{(\alph{*})}]
			\item For any $\delta \in (0, \ep )$,
	\begin{align*}
\abs{B^*_{Z} \lc \MS^{\ep, n-2}_{\delta r, \ep r}, \delta r \rc \cap B_Z^* (x_0, r)} \le C(n,Y,\sigma, \ep) \delta^{4-\ep} r^{n+2},
	\end{align*}
where $B^*_{Z} \lc A, s \rc$ denotes the $s$-neighborhood of a subset $A$ with respect to $d_Z$. Moreover, for any $t\in \R$,
	\begin{align*}
\abs{B^*_{Z} \lc \MS^{\ep,n-2}_{\delta r, \ep r}, \delta r \rc \cap B_Z^* (x_0, r) \cap Z_t}_t \le C(n,Y,\sigma, \ep) \delta^{2-\ep} r^{n}.
	\end{align*}
			
		\item For any $\delta \in (0, \ep )$,
	\begin{align*}
	\abs{\{r_{\Rm} <  \delta r \} \cap B_Z^* (x_0, r)} \le  C(n,Y,\sigma, \ep) \delta^{4-\ep} r^{n+2},
	\end{align*}		
	where $r_{\Rm}$ denotes the curvature radius; see Definition \ref{curvatureradiuslimit}. Moreover, for any $t\in \R$,
	\begin{align*} 
	\abs{\{r_{\Rm} <  \delta r \} \cap B_Z^* (x_0, r) \cap Z_t}_t \le C(n,Y,\sigma, \ep) \delta^{2-\ep} r^{n}.
	\end{align*}

		\item For any $\ep>0$, we have
	\begin{align*}
		\int_{B^*_Z(x_0,r)\cap \RR}|\Rm|^{2-\ep} \, \mathrm{d}V_{g^Z_t}\mathrm{d}t \le \int_{B^*_Z(x_0,r)\cap \RR}r_{\Rm}^{-4+2\ep} \, \mathrm{d}V_{g^Z_t}\mathrm{d}t\leq C(n,Y,\sigma,\ep)r^{n-2+2\ep}.
	\end{align*} 
	Moreover, for any $t \in \R$,
	\begin{align*}
		\int_{B^*_Z(x_0,r)\cap \RR_t}|\Rm|^{1-\ep} \, \mathrm{d}V_{g^Z_t} \leq \int_{B^*_Z(x_0,r)\cap \RR_t}r_{\Rm}^{-2+2\ep} \, \mathrm{d}V_{g^Z_t} \leq C(n,Y,\sigma,\ep)r^{n-2+2\ep}.		
	\end{align*} 
				\end{enumerate}
\end{thm}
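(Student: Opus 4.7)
The plan is to establish all three parts by transporting the corresponding quantitative estimates from the approximating Ricci flows $\XX^i$ to the limit space $Z$, using the weak compactness of Theorem \ref{thm:intro1}, the identification of associated metric flows provided by Theorem \ref{thm:iden}, and the smooth convergence of Theorem \ref{thm:intro3}. The quantitative codimension-four estimates for the singular strata of Ricci flows with bounded Nash entropy were already established by Bamler (see \cite[Theorem 2.24]{bamler2020structure} and the related Minkowski-type estimates), so the work here is to verify that these estimates survive the limiting process in the $d^*$-sense. Throughout, the comparability of $B^*$-balls with the parabolic $P^*$-balls (mentioned after Definition \ref{def:introd*} and established in Proposition \ref{propdistance2}) allows us to switch freely between the two kinds of neighborhoods.

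\textbf{Part (a).} The key lemma I would prove first is an upper semicontinuity statement for quantitative symmetry under pGH convergence: if $z_i^* \in M_i \times \III$ converges to $z \in Z$ in Gromov--Hausdorff sense and $z$ is not $(k+1,\ep,r)$-symmetric, then for any $\ep' > \ep$ and $r'$ close to $r$, the points $z_i^*$ fail to be $(k+1,\ep',r')$-symmetric for all $i$ large. This follows by contradiction using the $\F$-convergence of pointed flows (Theorem \ref{thm:iden} combined with the weak compactness of $(k+1)$-splitting limits). Consequently, a covering of $\MS^{\ep, n-2}_{\delta r, \ep r} \cap B_Z^*(x_0, r)$ by $\delta r$-balls can be lifted to a covering of the corresponding $(\delta r/2, \ep/2)$-strata in $\XX^i$, and Bamler's Minkowski estimate \cite[Theorem 2.24]{bamler2020structure} on $\XX^i$ then yields the desired bound after passing to the limit (using the uniform upper and lower volume bounds from Propositions \ref{propdistance3} and \ref{uppervolumebd}). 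The time-slice version comes from the same argument applied to the corresponding time-slice volume estimate on $\XX^i$, combined with the smooth convergence (Theorem \ref{thm:intro3}) which ensures time-slice volumes of the regular part converge.

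\textbf{Part (b).} The link between part (a) and (b) is the $\ep$-regularity statement: there is a dimensional constant $\ep_n > 0$ such that if $z$ is $(n+1, \ep_n, s)$-symmetric at scale $s$, then $r_{\Rm}(z) \geq c(n) s$. This is essentially the characterization of regular points in terms of Nash entropy (Proposition \ref{prop:limiepsilon}) together with the smooth convergence Theorem \ref{thm:intro3}(a) applied at the scale $s$: an $(n+1,\ep_n,s)$-symmetric point is close to the Euclidean model, so on the Ricci flow side it lies in an almost-flat region where standard pseudolocality (or Bamler's $\ep$-regularity) forces a curvature radius bound. Since $\MS^{\ep_n, n-2}_{\delta r, \ep_n r}$ contains $\{r_{\Rm} < c\delta r\} \cap B_Z^*(x_0, r)$ (shrinking $\delta$ if necessary), part (b) reduces immediately to part (a) applied with $\ep = \ep_n$. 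The same reduction works on time-slices.

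\textbf{Part (c).} This follows from part (b) by a layer-cake decomposition. For any $q \in (0,4)$ and $A := B_Z^*(x_0,r) \cap \RR$, one has
\begin{equation*}
\int_{A} r_{\Rm}^{-q} \, \dd V_{g^Z_t} \dd t = q \int_0^\infty s^{-q-1} \bigl|\{r_{\Rm} < s\} \cap A\bigr| \, \dd s.
\end{equation*}
Splitting the integral at $s = r$ and using part (b) with parameter $\ep' < 2\ep$ on the range $s \le r$, together with the global volume bound on $A$ for $s > r$, gives the estimate with exponent $r^{n-2+2\ep}$. The same argument, now using the time-slice bound from part (b), yields the time-slice version of (c). The pointwise inequality $|\Rm| \le r_{\Rm}^{-2}$ on the regular part delivers the curvature integral bound as a corollary.

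\textbf{Main obstacle.} The principal technical difficulty is the first lemma of part (a), namely showing that quantitative symmetry is stable along the pGH convergence of Theorem \ref{thm:intro1}. The subtlety is that $(k,\ep,r)$-symmetry at a limit point $z \in Z$ is defined in terms of the associated metric flow $\XX^z$, not directly on $Z$, so one must reconcile the Gromov--Hausdorff convergence in $d^*$ with the $\F$-convergence of the pointed heat kernel measures at every scale. The identification in Theorem \ref{thm:iden}, the smoothness on $\RR$ (Theorem \ref{thm:intro3}(c) for the heat kernel), and a diagonal extraction argument should make this precise; after that step the rest of the proof is a relatively direct translation of the Cheeger--Naber and Bamler machinery.
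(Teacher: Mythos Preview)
Your proposal is correct and follows essentially the same route as the paper. The paper's proof (Corollary \ref{cor:006} and Theorem \ref{integralRmd*limit}) proceeds via Lemma \ref{comparediffquanset}, which is exactly your ``upper semicontinuity of quantitative symmetry'' step, then lifts Bamler's covering estimate (Theorem \ref{quantsizeS}) through the pGH limit in Theorem \ref{quantsizeSd*limit1}, and finishes part (c) with the same dyadic/layer-cake summation you describe. One small correction: in the paper the notion of $(k,\ep,r)$-symmetric at a point of $Z$ (Definition \ref{def:epsym}) is formulated directly in terms of $\ep$-closeness of the rescaled space $(Z, r^{-1}d_Z, z, r^{-2}(\t-\t(z)))$ to a model Ricci shrinker space, not via the associated metric flow $\XX^z$; the comparison with Bamler's $\F$-definition is then done on the smooth approximants (Lemma \ref{comparediffquanset}) rather than on $Z$ itself, which slightly streamlines the ``main obstacle'' you flagged.
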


Theorem \ref{thm:intro6} is proved in Corollary \ref{cor:006} and Theorem \ref{thm:integral-Rm-dstar-limit}. Together with \eqref{eq:introsiden1}, it implies the following result.

\begin{thm}\label{thm:minkow}
The Minkowski dimension with respect to $d_Z$ satisfies
	\begin{align*}
		\dim_{\MMM} \mathcal S \le n-2.
	\end{align*}
\end{thm}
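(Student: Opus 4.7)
The plan is as follows. By Theorem \ref{thm:introsingular} we have $\MS=\MS^{n-2}$, and since the upper Minkowski dimension is a local notion, it suffices to prove $\dim_{\MMM}(\MS\cap B^*_Z(x_0,R))\le n-2$ for each $x_0\in Z$ and $R>0$ with $\t(x_0)-2R^2\in\III^-$. I will not work directly with the decomposition \eqref{eq:introsiden1}: the constants $C(n,Y,\sigma,\ep)$ in Theorem \ref{thm:intro6}(a) degenerate as $\ep\to 0$, so the quantitative strata yield only Hausdorff-type bounds stable under countable unions, not Minkowski bounds. The strategy is instead to absorb the $\delta$-neighborhood of $\MS$ into a sublevel set of the curvature radius $r_{\Rm}$ and then apply the $\MS$-free estimate of Theorem \ref{thm:intro6}(b).

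The key geometric step is the inclusion
\[
B^*_Z(\MS,\delta)\cap B^*_Z(x_0,R) \;\subset\; \{r_{\Rm}<C_0\delta\}\cap B^*_Z(x_0,R)
\]
for some dimensional constant $C_0$ and all sufficiently small $\delta>0$. On $\MS$ itself $r_{\Rm}=0$ by Definition \ref{curvatureradiuslimit}, so the content of the inclusion is that any regular point $z\in\RR$ within $d_Z$-distance $\delta$ of a singular point $y\in\MS$ must satisfy $r_{\Rm}(z)\le C_0\delta$. Were $r_{\Rm}(z)$ much larger than $\delta$, a parabolic neighborhood of $z$ of that scale would lie in $\RR$ with uniform smooth curvature control and yet contain the singular point $y$, a contradiction. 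Making this rigorous uses the comparability between the $d^*$-balls $B^*$ and Bamler's $P^*$-balls from Proposition \ref{propdistance2}.

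Once the inclusion is available, fix $\eta\in(0,1)$ and apply Theorem \ref{thm:intro6}(b) with $\ep=\eta$, $r=R$, and the threshold parameter $\delta$ of the theorem set to $C_0\delta/R$; for $\delta$ sufficiently small this lies in $(0,\eta)$. We obtain
\[
\bigl|\{r_{\Rm}<C_0\delta\}\cap B^*_Z(x_0,R)\bigr| \;\le\; C(n,Y,\sigma,\eta)\bigl(C_0\delta/R\bigr)^{4-\eta}R^{n+2} \;\le\; C'(n,Y,\sigma,\eta)\,\delta^{4-\eta}\,R^{n-2+\eta}.
\]
Combining with the inclusion of the previous paragraph, the $\delta$-neighborhood of $\MS\cap B^*_Z(x_0,R/2)$ has volume at most $C'\delta^{4-\eta}$. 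Together with the uniform lower volume bound $|B^*_Z(z,\delta)|\ge c\,\delta^{n+2}$ from Proposition \ref{propdistance3}, a standard ball-packing argument upgrades this to
\[
\dim_{\MMM}\bigl(\MS\cap B^*_Z(x_0,R/2)\bigr) \;\le\; (n+2)-(4-\eta) \;=\; n-2+\eta.
\]
Letting $\eta\to 0$ and covering $\MS$ by countably many such balls completes the proof.

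The main obstacle is the inclusion in the second paragraph. Geometrically it is obvious, but its clean formalization requires passing between the abstract definition of $r_{\Rm}$ on the limit space $Z$ and $d_Z$-distance statements about $\MS$; the bridge is Proposition \ref{propdistance2} together with the characterization of $\MS$ as the locus where no smooth Ricci flow spacetime chart exists. Once that inclusion is in hand, the rest is a mechanical volume-to-dimension conversion.
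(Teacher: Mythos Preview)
Your approach is correct and closely parallels the paper's. The paper's one-line justification cites \eqref{eq:introsiden1} together with the neighborhood-volume bound of Theorem~\ref{thm:intro6}(a); the point you worried about (that the countable union over $\ep$ in \eqref{eq:introsiden1} is not compatible with Minkowski bounds) is not actually an obstacle, because for any \emph{fixed} sufficiently small $\ep$ one already has $\MS\cap B^*_Z(x_0,r)\subset \MS^{\ep,n-2}_{\delta r,\ep r}$ for every $\delta$ (this is exactly \eqref{eq:currlimit}: outside $\MS^{\ep,n-2}_{\delta r,\ep r}$ the curvature radius is positive). So the paper bounds $|B^*_Z(\MS,\delta r)\cap B^*_Z(x_0,r)|$ by the quantity in Corollary~\ref{cor:006} at a single fixed $\ep$, and reads off $\dim_{\MMM}\MS\le n-2+\ep$ for each $\ep>0$. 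Your route via $B^*_Z(\MS,\delta)\subset\{r_{\Rm}<C_0\delta\}$ and Theorem~\ref{thm:intro6}(b) is the same argument after one more step (it is exactly how the paper proves Theorem~\ref{thm:slicecodim2} later).

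A few minor corrections. For the inclusion $B^*_Z(\MS,\delta)\subset\{r_{\Rm}<C_0\delta\}$, the clean reference is Proposition~\ref{prop:LiprRmlimit} (Lipschitz continuity of $r_{\Rm}$ on the limit), not Proposition~\ref{propdistance2}; since $r_{\Rm}=0$ on $\MS$, one gets $r_{\Rm}(z)\le C(n,Y)\,d_Z(z,\MS)$ immediately. The ball-packing step is unnecessary: the paper's Definition~\ref{defn*Mindim} of Minkowski dimension is already in terms of the volume of the $\delta$-neighborhood, so your volume estimate plugs in directly. Finally, your closing phrase ``covering $\MS$ by countably many such balls'' should be dropped---Minkowski dimension is not stable under countable unions, and in any case it is not needed: for each fixed $L$ the definition only asks for $|B^*_Z(\MS,\delta)\cap B^*_Z(x_0,L)|$, which you have already controlled (covering by finitely many smaller balls if $L$ is too large for Theorem~\ref{thm:intro6}(b) to apply directly).
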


As an application, we consider a closed Ricci flow $\XX=\{M^n,(g(t))_{t\in[-T,0)}\}$ such that $0$ is the first singular time. We assume that $T<\infty$ and that the entropy of $\XX$ is bounded below by $-Y$.

We consider the $d^*$-distance on $\XX_{[-0.99T,0)}$, defined as in Definition \ref{def:introd-dstar}. For simplicity, we fix $\sigma=1/100$ throughout.

We then define
\begin{align*}
(Z,d_Z,\t)
\end{align*}
to be the \textbf{metric completion} of $\XX_{[-0.98T,0)}$ with respect to $d^*$. By construction,
\[
(Z_{[-0.98T,0)},d_Z)=(\XX_{[-0.98T,0)},d^*),
\]
that is, the completion only adds the points in $Z_0$. As shown in Section \ref{sec:first}, $(Z,d_Z,\t)$ is a noncollapsed Ricci flow limit space.

\begin{thm}\label{thm:intro7}
With the above assumptions, there exists a constant $C_\ep$ depending on $\ep$ and the Ricci flow $\XX$ such that the following statements hold.
		\begin{enumerate}[label=\textnormal{(\alph{*})}]
		\item For any sufficiently small $\ep>0$,
		\begin{align*}
\int_{-T}^0 \int_M |\Rm|^{2-\ep}  \, \mathrm{d}V_{g(t)} \mathrm{d}t \le \int_{-T}^0 \int_M r_{\Rm}^{-4+2\ep}  \, \mathrm{d}V_{g(t)} \mathrm{d}t \le C_{\ep}.
	\end{align*} 
Moreover, for any $t \in [-T, 0)$,
		\begin{align*}
\int_M |\Rm|^{1-\ep}  \, \mathrm{d}V_{g(t)} \le  \int_M r_{\Rm}^{-2+2\ep}  \, \mathrm{d}V_{g(t)}  \le C_{\ep}.
	\end{align*} 
	
	\item The limit $V_0:=\lim_{t \nearrow 0} |M|_t \in [0, \infty)$ exists. $V_0=0$ if and only if $\RR_0 =\emptyset$. In this case, we have
			\begin{align*}
|M|_t \le C_\ep |t|^{1-\ep}
	\end{align*} 
	for any $t \in [-T, 0)$ and any small $\ep>0$.
	
	\item For every sufficiently small $\delta>0$ and $\ep>0$, we have
	\begin{align*}
\abs{\left\{y\in Z_0 \mid d_0^Z(y,\MS)<\delta \right\} }_0\leq C_\ep \delta^{2-\ep}.
	\end{align*} 
\end{enumerate}
\end{thm}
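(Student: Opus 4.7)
The overall strategy is to deduce Theorem \ref{thm:intro7} directly from the structure theorems for the noncollapsed Ricci flow limit space $(Z, d_Z, \t)$ --- namely Theorems \ref{thm:intro3}, \ref{thm:intro4}, \ref{thm:intro6}, and \ref{thm:minkow} --- applied to the specific $Z$ constructed in Section \ref{sec:first} as the metric completion of $\XX_{[-0.98T, 0)}$. Since $0$ is the first singular time of $\XX$, the flow has bounded curvature on $[-T, -0.98T]$, so contributions to the integrals in part (a) from this slab are trivially controlled and the entire analysis reduces to the critical regime $[-0.98T, 0) \hookrightarrow \RR_{[-0.98T, 0)} \subset Z$.

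Part (a) is carried out by covering the compact slab $Z_{[-0.98T, 0]}$ by finitely many spacetime balls $B_Z^*(x_i, r_i)$; finiteness follows from the uniform lower volume bound on spacetime balls (Proposition \ref{propdistance3}) together with the Gr\"onwall bound $|M|_\tau \le C(\XX)$ coming from $R_{\min}(\tau) \ge R_{\min}(-T)$. Summing Theorem \ref{thm:intro6}(c) over this finite cover yields the spacetime estimate, and the per-time-slice estimate repeats the argument with a covering of $Z_t$.

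For part (b), the existence of $V_0 = \lim_{t \nearrow 0} |M|_t$ follows from $\frac{d}{dt}|M|_t = -\int_M R\,dV$ together with $\int_{-T}^0 \int_M |R|\,dV d\tau < \infty$, the latter deduced from part (a) via $|R| \le n^2 r_{\Rm}^{-2}$ and H\"older. Smooth convergence on the regular part (Theorem \ref{thm:intro3}) combined with the zero-measure singular set (Theorem \ref{thm:minkow}) identifies $V_0 = |\RR_0|_0$, giving the dichotomy $V_0 = 0 \iff \RR_0 = \emptyset$. Under $\RR_0 = \emptyset$, every $y^* \in M \times \{t\} \subset \RR$ satisfies $r_{\Rm}(y^*) \le \sqrt{|t|}$, because the defining two-sided parabolic neighborhood cannot contain any regular point at nonnegative time. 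Feeding this into the time-slice version of Theorem \ref{thm:intro6}(b) with a fixed covering ball $B_Z^*(x_0, r_0)$ of $Z_t$ gives $|M|_t \le C_\ep \bigl(\sqrt{|t|}/r_0\bigr)^{2-\ep} r_0^n \le C_\ep' |t|^{1-\ep/2}$, i.e. the advertised rate after relabeling $\ep$.

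For part (c), Theorem \ref{thm:intro4}(c) says that $d^Z_0$ coincides with the Riemannian distance $d_{g^Z_0}$ on a small neighborhood of every regular point. Hence if $y \in \RR_0$ has $d^Z_0(y, \MS) < \delta$, its curvature radius must satisfy $r_{\Rm}(y) \le C\delta$: otherwise a Riemannian ball of radius $\gg \delta$ around $y$ would sit inside $\RR_0$ with $d^Z_0 = d_{g^Z_0}$ there, contradicting closeness to $\MS$. Consequently the $\delta$-neighborhood of $\MS$ in $Z_0$ is contained, up to a zero-measure set, in $\{r_{\Rm} < C\delta\} \cap Z_0$, and the time-slice form of Theorem \ref{thm:intro6}(b) produces the claimed $C_\ep \delta^{2-\ep}$ bound. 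The principal technical hurdle, common to parts (b) and (c), is pinning down the curvature-radius cutoff: one must verify that Definition \ref{curvatureradiuslimit} requires the defining parabolic neighborhood to extend forward in time by $r^2$, so that the absence of regular spacetime beyond $t = 0$ (part (b)) or proximity to $\MS$ (part (c)) actually forces $r_{\Rm}$ to be small; a secondary, comparatively mild point is the finiteness of the ball covering in part (a), which reduces to a uniform diameter estimate for $Z_{[-0.98T, 0]}$ combining local volume lower bounds with the $L^\infty$ control on $|M|_\tau$.
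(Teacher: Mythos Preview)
Your approach to parts (a) and (b) matches the paper's (Theorem \ref{intefirst}, Propositions \ref{prop:volume0} and \ref{prop:volumesingular}, Corollary \ref{cor:volume}): cover $Z_{[-T/4,0]}$ by finitely many $d_Z$-balls using the diameter bound \eqref{eq:diam} and sum the integral estimates; deduce existence of $V_0$ from $\int_{-T}^0\int_M |R|\,\mathrm{d}V\mathrm{d}t < \infty$; and, when $\RR_0 = \emptyset$, observe $r_{\Rm} < 2\sqrt{|t|}$ on $M \times \{t\}$ (since otherwise the product domain in Definition \ref{curvatureradiuslimit} reaches $t=0 \in \III$), then apply the time-slice bound \eqref{eq:currlimitsslice}.

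Part (c), however, has a real gap. You want to conclude $r_{\Rm}(y) \le C\delta$ from $d^Z_0(y, \MS) < \delta$ using only the local equality $d^Z_0 = d_{g^Z_0}$ near $y$ (Theorem \ref{thm:intro4}(c), i.e.\ Proposition \ref{prop:dismatch}). Even granting a quantitative version in which the radius of agreement is comparable to $r_{\Rm}(y)$, the proposed contradiction does not follow: $(Z_0, d^Z_0)$ is not known to be a length space, so knowing $d^Z_0 = d_{g^Z_0}$ on $B_{g^Z_0}(y, C'\delta) \subset \RR_0$ does \emph{not} force $d^Z_0(y, z) \ge C'\delta$ for a singular $z$ lying outside this Riemannian ball. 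Nor is $\RR_0$ known to be dense in $(Z_0, d^Z_0)$ (density is only established for $d_Z$), so approximation does not help either. The paper (Theorem \ref{thm:slicecodim2}) instead routes through the spacetime distance: Lemma \ref{dtlem1} gives $d_Z(y, z) \le \ep_0^{-1} d^Z_0(y, z) < \ep_0^{-1}\delta$ for some $z \in \MS$, and then the Lipschitz bound $|r_{\Rm}(y) - r_{\Rm}(z)| \le C(n,Y)\, d_Z(y,z)$ of Proposition \ref{prop:LiprRmlimit}, together with $r_{\Rm}(z) = 0$, yields $r_{\Rm}(y) \le C(n,Y)\ep_0^{-1}\delta$. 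Your forward-time-extension heuristic is the right mechanism for (b) but not for (c), where the relevant proximity is measured in $d^Z_0$ rather than via the product-domain structure of $\RR$.
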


Theorem \ref{thm:intro7} is proved in Theorem \ref{intefirst}, Proposition \ref{prop:volume0}, Corollary \ref{cor:volume}, and Theorem \ref{thm:slicecodim2}.

\begin{figure}[H]
	\centering
	\includegraphics[width=0.45\linewidth]{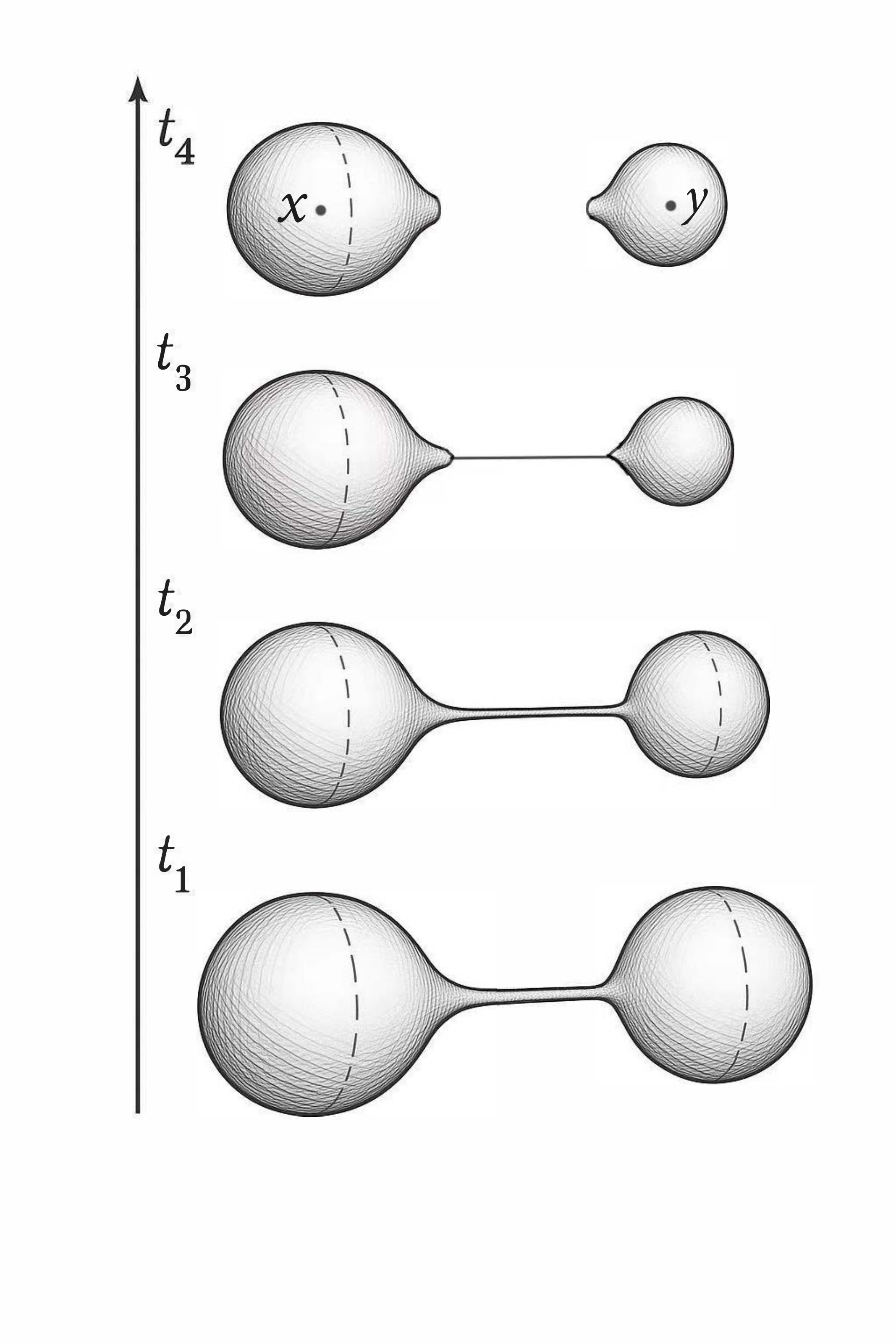}
	\caption{The singular set is a segment at $t_3$; $\iota_x(\RR^x_t) = \iota_y(\RR^y_t)$ for $t < t_3$.}
	\label{embedding}
\end{figure}

\subsection*{Organization of the paper}

This paper is organized as follows.

In Section \ref{secpreliminary}, we introduce the necessary definitions and basic properties of related concepts in metric measure spaces. We also review known results for closed Ricci flows, including estimates for the Nash entropy, heat kernel bounds, and volume bounds.

In Section \ref{secRicciflowlimitspace}, we define the spacetime $d^*$-distance and establish many of its fundamental properties. This section also contains the proof of Theorem \ref{thm:intro1}.
 
In Section \ref{sec:f}, we review Bamler's theory of $\F$-convergence and explain how $\F$-limits relate to the Ricci flow limit space $Z$. Theorem \ref{thm:iden} is also proved in this section.

Section \ref{sec:smooth} focuses on the regular part of the Ricci flow limit space. We detail the construction of the Ricci flow spacetime and analyze the associated conjugate heat kernel measures. The proof of Theorem \ref{thm:intro3} is presented here.
 
In Section \ref{sec:extend}, we define the time-slice distance $d^Z_t$ on $Z_t$ and prove several of its key properties, including Theorem \ref{thm:intro4}.
 
Section \ref{sec:tangent} is devoted to the study of tangent flows of the Ricci flow limit space. We prove Theorem \ref{thm:intro5} in this section.
 
In Section \ref{sec:singular}, we investigate the singular set and the quantitative singular strata, providing estimates on their size. Theorems \ref{thm:intro6} and \ref{thm:minkow} are proved here.

In Section \ref{sec:first}, we apply the results established earlier to the first singular time of a closed Ricci flow. Theorem \ref{thm:intro7} is proved in this section.

Section \ref{sec:almost} focuses on almost splitting maps. We establish their basic properties and show how they relate to the splitting of the limit space.

Finally, in Section \ref{sec:further}, we extend the main results of this paper to Ricci flows with bounded curvature on each compact time interval. We also study the noncollapsed Ricci flow limit spaces arising as limits of sequences of K\"ahler Ricci flows.

In Appendix \ref{app:A}, we derive two versions of estimates for the conjugate heat kernel measures. Appendix \ref{app:B} establishes the equivalence of various spacetime distances. In Appendix \ref{app:C}, we explore the relationship between eigenvalues and almost splitting, a result that may be of independent interest. Appendix \ref{app:D} introduces the notion of the spine of a Ricci shrinker space and investigates its basic properties. Finally, we include a list of notation for reference.
\\
\\
\textbf{Acknowledgments}. Hanbing Fang would like to thank his advisor, Prof. Xiuxiong Chen, for his encouragement and support. Hanbing Fang is supported by the Simons Foundation. Yu Li is supported by the National Key R\&D Program of China (2025YFA1018200), NSFC-12522105, YSBR-001, and research funds from the University of Science and Technology of China and the Chinese Academy of Sciences.

\section{Preliminaries}\label{secpreliminary}
In this section, we review some basic concepts for metric measure spaces and useful results for closed Ricci flows.

\subsection*{Probability measures on metric spaces}\label{secmetricmeasurespace}

Let $(X,d)$ be a complete, separable metric space. Denote by $\PP(X)$ the space of all probability measures on $X$. In particular, we denote by $\delta_x\in \PP(X)$ the Dirac measure at $x \in X$. A tuple $(X,d,\mu)$ with $\mu\in \PP(X)$ is called a \textbf{metric measure space}.

\begin{defn}[Variance and Wasserstein distance]\label{defnvariance}
The \textbf{variance} between two probability measures $\mu_1,\mu_2\in \PP(X)$ is defined by
\begin{equation*}
	\Var_X(\mu_1,\mu_2):= \int_X\int_X d^2(x_1,x_2)\, \mathrm{d}\mu_1(x_1)\, \mathrm{d}\mu_2(x_2).
\end{equation*}
For simplicity, we set $\Var_X(\mu)=\Var_X(\mu,\mu)$. For $p\geq 1$, the \textbf{$W_p$-Wasserstein} distance between $\mu_1,\mu_2 \in \PP(X)$ is defined by
\begin{align*}
	d_{W_p}^X(\mu_1,\mu_2):=\inf_{\Pi}\lc\int_{X\times X}d^p(x_1,x_2)\, \mathrm{d}\Pi(x_1,x_2)\rc^{1/p},
\end{align*}
\index{$\ensuremath{W_p}$-Wasserstein distance}
where the infimum is taken over all couplings $\Pi\in\PP(X\times X)$ between $\mu_1,\mu_2$, that is, any such $\Pi$ satisfies $(\pi_i)_{\#} \Pi=\mu_i$ for $i=1, 2$, where $\pi_i$ is the projection from $X \times X$ to the $i$-th copy of $X$.
\end{defn}

The following result is immediate from the Kantorovich-Rubinstein duality, see \cite[Chapter 5]{villani2009optimal}.

\begin{lem}\label{lem:duality}
For any $\mu_1,\mu_2 \in \PP(X)$, we have
\begin{align*}
	d_{W_1}^X(\mu_1,\mu_2)=\sup_{f \in C_b(X),\ \lVert f\rVert_{\Lip}\leq 1}\lc\int_X f \, \mathrm{d}\mu_1-\int_X f \, \mathrm{d}\mu_2\rc,
\end{align*}
where $C_b(X)$ denotes the space of bounded continuous functions on $X$.
\end{lem}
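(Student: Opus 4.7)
The plan is to reduce the lemma to the general Kantorovich duality theorem and then identify the optimal dual potentials with $1$-Lipschitz functions in the special case where the cost coincides with the underlying metric.

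I would first invoke the general Kantorovich duality (as in Villani \cite{villani2009optimal}, Theorem 5.10): for a Polish space $(X,d)$ and a lower semi-continuous cost $c: X \times X \to [0, \infty)$,
\begin{align*}
\inf_{\Pi} \int_{X \times X} c \, \mathrm{d}\Pi = \sup_{(\phi, \psi)} \left( \int_X \phi \, \mathrm{d}\mu_1 + \int_X \psi \, \mathrm{d}\mu_2 \right),
\end{align*}
where the infimum is over couplings $\Pi$ of $\mu_1, \mu_2$ and the supremum ranges over $(\phi, \psi) \in C_b(X) \times C_b(X)$ satisfying $\phi(x) + \psi(y) \le c(x, y)$. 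Specializing to $c = d$, I would apply the $c$-transform trick: for any admissible pair $(\phi,\psi)$, replacing $\phi$ by $\psi^c(x) := \inf_y (d(x, y) - \psi(y))$ weakly increases the dual value and produces a function that, by the triangle inequality for $d$, is automatically $1$-Lipschitz and satisfies $\psi(y) \le -\psi^c(y)$. This shows the supremum is attained within the subclass of pairs $(f, -f)$ with $f$ being $1$-Lipschitz; conversely, every such pair is admissible since $f(x) - f(y) \le d(x, y)$. Matching this with the left-hand side then produces the claimed formula.

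The main technical obstacle is the transition from possibly unbounded $1$-Lipschitz functions---which arise naturally from $c$-transforms---to $f \in C_b(X)$ as required by the statement. I would handle this by a standard truncation: for $f$ with $\|f\|_{\Lip} \le 1$, set $f_N := \max(-N, \min(f, N))$, which still satisfies $\|f_N\|_{\Lip} \le 1$ and lies in $C_b(X)$. After fixing a base point $x_0 \in X$, the bound $|f(x) - f(x_0)| \le d(x, x_0)$ provides a $d(\cdot, x_0)$-integrable majorant whenever the $W_1$-distance is finite (since by the first marginal condition either $\mu_1$ or $\mu_2$ integrates $d(\cdot,x_0)$), allowing dominated convergence as $N \to \infty$ to recover the full supremum. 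If the $W_1$-distance is infinite, one checks directly that the right-hand side is also infinite by testing against unbounded truncations of $d(\cdot, x_0)$, so there is nothing further to prove.
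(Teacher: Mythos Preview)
Your proposal is correct and follows the same route as the paper, which simply cites this result as immediate from the Kantorovich--Rubinstein duality in \cite[Chapter~5]{villani2009optimal} without supplying any argument. Your sketch spells out the standard reduction (general Kantorovich duality, $c$-transform for $c=d$, truncation to bounded functions) that underlies that citation, so there is nothing further to compare.
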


\begin{defn}\label{defnGWpdistance}
	Given two metric measure spaces $(X_1,d_1,\mu_1)$ and $(X_2,d_2,\mu_2)$, the \textbf{Gromov--$W_p$-Wasserstein distance} for $p\geq 1$ is defined as
	\begin{align*}
		d_{GW_p}\lc (X_1,d_1,\mu_1), (X_2,d_2,\mu_2)\rc:=\inf d_{W_p}^A\lc (\varphi_1)_*\mu_1, (\varphi_2)_*\mu_2\rc,
	\end{align*}
	where the infimum is taken over all isometric embeddings $\varphi_i:(X_i,d_i)\to (A,d_A)$ for $i=1,2$.
\end{defn}
\index{$\ensuremath{d_{GW_p}}$}
The following lemma from \cite[Lemma 3.2]{bamler2020entropy} gives basic properties of variance:
\begin{lem}\label{lem:var}
	For any $\mu_1,\mu_2,\mu_3\in \PP(X)$, we have
	\begin{align*}
		\sqrt{\Var_X(\mu_1,\mu_3)}\leq& \sqrt{\Var_X(\mu_1,\mu_2)}+\sqrt{\Var_X(\mu_2,\mu_3)},\\
		d_{W_1}^X(\mu_1,\mu_2)\leq& \sqrt{\Var_X(\mu_1,\mu_2)}\leq d_{W_1}^X(\mu_1,\mu_2)+\sqrt{\Var_X(\mu_1)}+\sqrt{\Var_X(\mu_2)}.
	\end{align*}
\end{lem}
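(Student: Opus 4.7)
The plan is to treat $\sqrt{\Var_X(\cdot,\cdot)}$ as the $L^2$-norm of the distance function on a suitable product probability space, and to prove each of the three inequalities by combining the pointwise triangle inequality for $d$ with Minkowski's inequality. The only nontrivial step will be the upper bound on $\sqrt{\Var_X(\mu_1,\mu_2)}$, where Jensen's inequality is needed to reduce an integrated $L^2$-norm to $d_{W_1}^X$ and the individual variances.

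For the first inequality, I would work on $(X^3,\mu_1\otimes\mu_2\otimes\mu_3)$ and take $L^2$-norms in the pointwise bound $d(x_1,x_3)\leq d(x_1,x_2)+d(x_2,x_3)$. Minkowski then yields the triangle inequality for $\sqrt{\Var_X}$, since on each side the ``unused'' marginal integrates to $1$; for instance, $\int d^2(x_1,x_3)\,d\mu_1\,d\mu_2\,d\mu_3 = \Var_X(\mu_1,\mu_3)$. For the inequality $d_{W_1}^X(\mu_1,\mu_2)\leq\sqrt{\Var_X(\mu_1,\mu_2)}$, I would simply plug the product coupling $\mu_1\otimes\mu_2$ into the definition of $d_{W_1}^X$ and apply Cauchy--Schwarz.

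The main step is the upper bound on $\sqrt{\Var_X(\mu_1,\mu_2)}$. Given any coupling $\Pi\in\PP(X\times X)$ of $(\mu_1,\mu_2)$, I would insert auxiliary variables $(y_1,y_2)$ via the pointwise triangle inequality
\[
d(x_1,x_2)\leq d(x_1,y_1)+d(y_1,y_2)+d(y_2,x_2),
\]
and take the $L^2$-norm with respect to $\mu_1\otimes\mu_2$ for each fixed $(y_1,y_2)$. Minkowski gives
\[
\sqrt{\Var_X(\mu_1,\mu_2)}\leq \|d(\cdot,y_1)\|_{L^2(\mu_1)}+d(y_1,y_2)+\|d(y_2,\cdot)\|_{L^2(\mu_2)}.
\]
Integrating this pointwise bound against $d\Pi(y_1,y_2)$ turns the middle term into $\int d\,d\Pi$, while the marginal property of $\Pi$ reduces the outer terms to $\int\|d(\cdot,y_1)\|_{L^2(\mu_1)}\,d\mu_1(y_1)$ and its analogue. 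The crucial point is now Jensen's inequality for the concave function $\sqrt{\cdot}$, which bounds $\int\sqrt{\int d^2(x_1,y_1)\,d\mu_1(x_1)}\,d\mu_1(y_1)$ by $\sqrt{\Var_X(\mu_1)}$; this is precisely what allows the middle term to be controlled by $d_{W_1}^X$ rather than by a stronger Wasserstein-type quantity. Taking the infimum over $\Pi$ then closes the argument.
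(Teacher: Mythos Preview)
Your proof is correct. The paper does not actually give its own proof of this lemma; it simply attributes the statement to \cite[Lemma 3.2]{bamler2020entropy} and moves on. Your argument---Minkowski on the product space for the triangle inequality, the product coupling plus Cauchy--Schwarz for the lower bound, and the insertion of an auxiliary coupling followed by Jensen for the upper bound---is precisely the standard route and is essentially what one finds in Bamler's paper as well.
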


Next, we recall that a sequence of $\mu_i\in \PP(X)$ converges \textbf{weakly} to $\mu_{\infty}\in \PP(X)$ if, for any $f \in C_b(X)$, 
	\begin{align*}
\lim_{i \to \infty}\int_X f \, \mathrm{d}\mu_i=\int_X f \, \mathrm{d}\mu_{\infty}.
	\end{align*}

\begin{prop}\label{takelimitVarW1}
	Suppose that a sequence of $\mu_i\in \PP(X)$ converges weakly to $\mu_{\infty}\in \PP(X)$. Then the following conclusions hold.
	\begin{enumerate}[label=\textnormal{(\roman{*})}]
	\item We have
	\begin{align*}
\Var_X(\mu_\infty)\leq \liminf_{i \to \infty}\Var_X(\mu_i).
	\end{align*}	
	
\item If $\Var_X(\mu_i)\leq C$ for a uniform constant $C$, then $\mu_i\to\mu_\infty$ in $d_{W_p}^X$ for any $p \in [1, 2)$.
	\end{enumerate}
\end{prop}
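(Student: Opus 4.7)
The plan is to handle (i) by lower semicontinuity of the variance functional under weak convergence of product measures, and to handle (ii) by first upgrading the uniform variance bound to a uniform second-moment bound around a fixed reference point, and then invoking the standard equivalence of $W_p$-convergence with weak convergence plus $p$-th moment convergence.

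For (i), the key observation is that weak convergence of $\mu_i$ implies weak convergence of the product measures $\mu_i \otimes \mu_i \to \mu_\infty \otimes \mu_\infty$ on $X \times X$. Truncating by $F_R := d^2 \wedge R$ gives a bounded continuous function for each fixed $R$, so $\int F_R \, d(\mu_\infty \otimes \mu_\infty) = \lim_i \int F_R \, d(\mu_i \otimes \mu_i) \le \liminf_i \Var_X(\mu_i)$; letting $R \to \infty$ and applying monotone convergence on the left yields the claim.

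For (ii), the first step is to convert the variance bound to a second-moment bound around a well-chosen reference point. I would pick $x_0 \in \spt(\mu_\infty)$ and $R > 0$ with $\mu_\infty(B(x_0, R)) \ge 3/4$; weak convergence together with the Portmanteau theorem gives $\mu_i(B(x_0, R)) \ge 1/2$ for large $i$. The geometric observation is that whenever $d(y, x_0) \ge 2R$ and $x \in B(x_0, R)$, the triangle inequality forces $d(x, y) \ge d(y, x_0)/2$. Restricting the outer integral in $\Var_X(\mu_i)$ to $B(x_0, R)$ then transfers the pairwise variance bound into a uniform estimate of the form $\sup_i \int d(\cdot, x_0)^2 \, d\mu_i \le M < \infty$.

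Once this second-moment bound is in hand, Chebyshev's inequality gives uniform integrability of $d(\cdot, x_0)^p$ for any $p \in [1, 2)$, since $\int_{\{d(\cdot, x_0) \ge L\}} d(y, x_0)^p \, d\mu_i \le L^{p-2} M$. Combined with weak convergence this upgrades to $\int d(\cdot, x_0)^p \, d\mu_i \to \int d(\cdot, x_0)^p \, d\mu_\infty$, and then the standard characterization of $W_p$-convergence (see, e.g., \cite[Theorem 6.9]{villani2009optimal}) yields $d_{W_p}^X(\mu_i, \mu_\infty) \to 0$. The main obstacle is the moment-transfer step: variance is a centered quantity and does not immediately control moments around an arbitrary point, so one must locate a reference point with non-trivial limiting mass and carefully exploit the geometry of the annulus $\{d(\cdot, x_0) \ge 2R\}$ to recover a one-point moment bound; everything else reduces to routine measure-theoretic bookkeeping.
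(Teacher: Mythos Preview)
Your proposal is correct and follows essentially the same strategy as the paper: part (i) via weak convergence of the product measures $\mu_i\otimes\mu_i$, and part (ii) by converting the uniform variance bound into a uniform second-moment bound around a fixed reference point, extracting uniform integrability of $d(\cdot,x_0)^p$ for $p<2$, and invoking \cite[Theorem~6.9]{villani2009optimal}. The only cosmetic difference is in the moment-transfer step: the paper picks, for each $i$, a near-minimizer $x_i$ of $\int d^2(x_i,\cdot)\,\mathrm{d}\mu_i$ and then argues by contradiction that the $x_i$ stay within bounded distance of $x_\infty$, whereas you fix $x_0\in\spt(\mu_\infty)$ from the outset and use Portmanteau plus the triangle-inequality annulus estimate---both routes yield the same uniform bound $\sup_i\int d^2(\cdot,x_0)\,\mathrm{d}\mu_i<\infty$.
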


\begin{proof}
(i): This is immediate from the definition of weak convergence, since $\mu_i \otimes \mu_i$ converges weakly to $\mu_{\infty} \otimes \mu_{\infty}$ in $\PP(X \times X)$.

(ii): By the definition of the variance, for each $i \in \mathbb N \cup \{\infty\}$, there exists $x_i \in X$ such that
	\begin{align}\label{eq:x20100}
\int_X d^2(x_i, x) \, \mathrm{d}\mu_i(x) \le C.
	\end{align}	
In particular, this implies that for any $L >0$,
	\begin{align} \label{eq:x20101}
\mu_i \lc \{x \in X \mid d(x, x_i) \ge L\}  \rc \le C L^{-2}.
	\end{align}	
We claim that there exists $C_1>0$ such that $d(x_{\infty}, x_i) \le C_1$ for any $i \in \mathbb N$. Suppose otherwise. Then, after passing to a subsequence, we have $\lim_{i \to \infty}d(x_{\infty}, x_i)=+\infty$. Thus, it follows from the weak convergence and \eqref{eq:x20101} that for any $L >0$,
	\begin{align*}
\mu_{\infty} \lc \{x \in X \mid d(x, x_{\infty}) < L\}  \rc \le  \liminf_{i \to \infty} \mu_i \lc \{x \in X \mid d(x, x_{\infty}) < L\} \rc=0,
	\end{align*}	
which is impossible.

By the claim and \eqref{eq:x20100}, we obtain for any $L \ge 2C_1$,
	\begin{align*}
\mu_i \lc \{x \in X \mid d(x, x_\infty) \ge L\}  \rc \le 4C L^{-2}.
	\end{align*}	
Moreover, we have
		\begin{align*}
\int_X d^2(x, x_\infty) \, \mathrm{d}\mu_i(x) \le C_2.
	\end{align*}	
		
Given $p \in [1, 2)$, we have for any $L \ge 2C_1$,
		\begin{align} \label{eq:x20103}
\int_{d(x, x_\infty) \ge L} d^p(x, x_\infty) \, \mathrm{d}\mu_i(x) \le \lc \int_X d^2(x, x_\infty) \, \mathrm{d}\mu_i(x) \rc^{\frac p 2} \lc \mu_i \lc \{x \in X \mid d(x, x_\infty) \ge L\} \rc \rc^{1-\frac p 2} \le C_3 L^{p-2}.
	\end{align}	
		
Consequently, the conclusion follows from \eqref{eq:x20103} and \cite[Theorem 6.9]{villani2009optimal}.
\end{proof}

\subsection*{Preliminary results on the Ricci flow}\label{secgeneralRF}

In this section, we consider a closed Ricci flow solution $\XX=\{M^n,(g(t))_{t\in I}\}$, where $M$ is an $n$-dimensional closed manifold, $I$ is a closed interval, and $(g(t))_{t\in I}$ is a family of smooth metrics on $M$ satisfying the Ricci flow equation for all $t \in I$:
		\begin{align*}
\partial_t g(t)=-2 \Ric(g(t)).
	\end{align*}	

For convenience, we use $x^* \in \XX$\index{$\ensuremath{x^*}$} to denote a spacetime point $x^* \in M \times I$ and define $\t(x^*) \in I$\index{$\ensuremath{\t}$} as its time component. For any subinterval $I' \subset I$, we set $\XX_{I'}=\{M^n,(g(t))_{t\in I'}\}$. We denote by $d_t$ the distance function on $M$ and by $dV_{g(t)}$ the volume form induced by $g(t)$. For any $x^*=(x, t) \in \XX$, let $B_t(x,r)$ denote the geodesic ball centered at $x$ with radius $r$ with respect to $g(t)$. The Riemannian curvature, Ricci curvature, and scalar curvature of $g(t)$ are denoted by $\Rm$, $\Ric$, and $\scal$, respectively, with the time parameter $t$ omitted when there is no ambiguity. Additionally, we define $R_{\min}$ as a lower bound of the scalar curvature. In general, for any $t> t_0$ with $t,t_0\in I$, the scalar curvature satisfies the bound
		\begin{align} \label{eq:lowerscal}
\scal(\cdot,t)\geq -\frac{n}{2(t-t_0)}
	\end{align}	
as shown, for instance, in \cite[Corollary 3.3.5]{topping2006lectures}.

For the closed Ricci flow $\XX$, we denote by $d_{W_p}^t(\mu_1,\mu_2)$\index{$\ensuremath{d_{W_p}^t}$} the $W_p$-Wasserstein distance and by $\Var_t(\mu_1,\mu_2)$\index{$\ensuremath{\Var_t}$} the variance between two probability measures $\mu_1$ and $\mu_2$ on $M$ with respect to $g(t)$.

We define the heat operator as $\square:=\partial_t-\Delta$\index{$\ensuremath{\square}$} and its conjugate operator as $\square^*:=-\partial_t-\Delta+\scal$\index{$\ensuremath{\square^*}$}. Let $K(x, t;y, s)$\index{$K(x, t;y,s)$} be the heat kernel on the Ricci flow for $t>s$. More precisely, it satisfies the following system:
  \begin{align*}
  \begin{cases}
    &\square K(\cdot,\cdot;y,s) =0,\\
    &\square^{*} K(x,t;\cdot,\cdot) =0,   \\
    &\lim_{t\searrow s} K(\cdot,t;y,s)=\delta_{y},\\
    &\lim_{s\nearrow t} K(x,t;\cdot,s)=\delta_{x}.
        \end{cases}
  \end{align*}

\begin{defn}\label{def:chkm}
	The \textbf{conjugate heat kernel measure} $\nu_{x^*;s}$\index{$\ensuremath{\nu_{x^*;s}}$} based at $x^*=(x, t)$ is defined as
		\begin{align*}
\mathrm{d}\nu_{x^*;s}=\mathrm{d}\nu_{x,t;s}:=K(x,t;\cdot,s) \, \mathrm{d}V_{g(s)}.
	\end{align*}		
	It is clear that $\nu_{x^*;s}$ is a probability measure on $M$. If we set 
			\begin{align*}
\mathrm{d}\nu_{x^*;s}=(4\pi(t-s))^{-n/2}e^{-f_{x^*}(\cdot,s)} \, \mathrm{d}V_{g(s)},
	\end{align*}	
	then the function $f_{x^*}$ is called the \textbf{potential function} at $x^*$ which satisfies:
\begin{equation*}
	-\partial_s f_{x^*}=\Delta f_{x^*}-|\nabla f_{x^*}|^2+\scal-\frac{n}{2(t-s)}.
\end{equation*}	
\end{defn}

Next, we recall the definitions of the Nash entropy and $\WW$-entropy based at $x^* \in \XX$.

\begin{defn}\label{defnentropy}
	The \textbf{Nash entropy} based at $x^*\in \XX$ is defined by
	\begin{equation*}
		\NN_{x^*}(\tau):= \int_M f_{x^*} \, \mathrm{d}\nu_{x^*;\t(x^*)-\tau}-\frac{n}{2}\index{$\ensuremath{\NN_{x^*}(\tau)}$}
	\end{equation*}
	for any $\tau>0$ with $\t(x^*)-\tau \in I$, where $f_{x^*}$ is the potential function at $x^*$. Moreover, the $\WW$-entropy based at $x^*$ is defined by
	\begin{equation*}
		\WW_{x^*}(\tau):= \int_M \tau(2\Delta f_{x^*}-|\nabla f_{x^*}|^2+\scal)+f_{x^*}-n \, \mathrm{d}\nu_{x^*;\t(x^*)-\tau}.\index{$\ensuremath{\WW_{x^*}(\tau)}$}
	\end{equation*}
\end{defn}

The following proposition follows from a direct calculation; see \cite{hein2014new} and \cite[Section 5]{bamler2020entropy}.

\begin{prop}\label{propNashentropy}
	For any $x^* \in \XX$ with $\t(x^*)-\tau \in I$ and $\scal(\cdot, \t(x^*)-\tau) \ge R_{\min}$, we have the following inequalities.
	\begin{enumerate}[label=\textnormal{(\roman{*})}]
	\item $\displaystyle-\frac{n}{2\tau}+R_{\min} \leq \diff{}{\tau}\NN_{x^*}(\tau)\leq 0$.
	\item $\displaystyle \diff{}{\tau}\lc\tau \NN_{x^*}(\tau)\rc=\WW_{x^*}(\tau)\leq 0$.
	\item $\displaystyle \diff{^2}{\tau^2}\lc\tau\NN_{x^*}(\tau)\rc=-2\tau\int_M \abs{\Ric+\nabla^2 f_{x^*}-\frac{1}{2\tau}g}^2\,\mathrm{d}\nu_{x^*;\t(x^*)-\tau}\leq 0$.
	\end{enumerate}	
\end{prop}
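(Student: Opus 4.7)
The plan is to reduce (ii) and (iii) to Perelman's pointwise identity for the $\WW$-functional, and then deduce (i) from them. Writing the conjugate heat kernel as $u = (4\pi\tau)^{-n/2}e^{-f_{x^*}}$ with $\tau = \t(x^*) - s$, the hypothesis $\square^* u = 0$ together with $\partial_s \, \mathrm{d}V_{g(s)} = -\scal\, \mathrm{d}V_{g(s)}$ yields by direct computation
\begin{equation*}
\partial_\tau f_{x^*} = \Delta f_{x^*} - |\nabla f_{x^*}|^2 + \scal - \frac{n}{2\tau}, \qquad \partial_\tau(u\, \mathrm{d}V_{g(s)}) = \Delta u\, \mathrm{d}V_{g(s)}.
\end{equation*}

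First I would establish (ii). Differentiating $\NN_{x^*}(\tau) = \int_M f_{x^*}\, \mathrm{d}\nu_{x^*;s} - n/2$ using the identities above and then applying integration by parts on the closed manifold $M$ (using $\int_M 2\Delta f_{x^*}\, \mathrm{d}\nu_{x^*;s} = 2\int_M |\nabla f_{x^*}|^2\, \mathrm{d}\nu_{x^*;s}$), yields
\begin{equation*}
\NN'_{x^*}(\tau) = \int_M (|\nabla f_{x^*}|^2 + \scal)\, \mathrm{d}\nu_{x^*;s} - \frac{n}{2\tau}.
\end{equation*}
The equality $\diff{}{\tau}(\tau \NN_{x^*}) = \tau \NN'_{x^*} + \NN_{x^*} = \WW_{x^*}$ then follows by elementary algebra, proving the identity in (ii). For (iii), I would invoke Perelman's pointwise identity: setting $v := [\tau(2\Delta f_{x^*} - |\nabla f_{x^*}|^2 + \scal) + f_{x^*} - n]\,u$ so that $\WW_{x^*}(\tau) = \int_M v\, \mathrm{d}V_{g(s)}$, a now-classical but delicate computation using the Bochner formula, the contracted Bianchi identity, and the Ricci flow commutator identities gives
\begin{equation*}
\square^* v = -2\tau \left|\Ric + \nabla^2 f_{x^*} - \frac{g}{2\tau}\right|^2 u.
\end{equation*}
Integrating over $M$, discarding $\int_M \Delta v\, \mathrm{d}V_{g(s)} = 0$, and converting the $s$-derivative to a $\tau$-derivative produces (iii); combined with (ii) this also shows $\WW'_{x^*} \leq 0$. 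Together with the standard heat kernel asymptotic $\WW_{x^*}(\tau) \to 0$ as $\tau \searrow 0$, we conclude $\WW_{x^*}(\tau) \leq 0$, completing (ii).

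Finally, for (i), the lower bound follows immediately from the formula for $\NN'_{x^*}$ above together with $|\nabla f_{x^*}|^2 \geq 0$ and $\scal \geq R_{\min}$. For the upper bound $\NN'_{x^*} \leq 0$, I would use $\tau \NN_{x^*}(\tau) \to 0$ as $\tau \searrow 0$ (again a consequence of heat kernel asymptotics) to rewrite (ii) as $\NN_{x^*}(\tau) = \tau^{-1}\int_0^\tau \WW_{x^*}(s)\, \mathrm{d}s$; differentiating then yields $\NN'_{x^*}(\tau) = (\WW_{x^*}(\tau) - \NN_{x^*}(\tau))/\tau$. Since $\WW_{x^*}$ is nonincreasing by (iii), its value at $\tau$ is dominated by its average over $[0,\tau]$, giving $\NN'_{x^*} \leq 0$. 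The main technical difficulty lies in Perelman's pointwise identity for $\square^* v$; everything else is formal manipulation.
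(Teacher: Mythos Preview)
Your proposal is correct and follows the standard derivation: the paper itself does not supply a proof of this proposition but simply attributes it to a direct calculation, citing \cite{hein2014new} and \cite[Section 5]{bamler2020entropy}. What you have written is precisely that calculation---the formula $\NN'_{x^*}(\tau) = \int_M (|\nabla f_{x^*}|^2 + \scal)\, \mathrm{d}\nu - n/(2\tau)$, Perelman's pointwise identity for $\square^* v$, and the deduction of $\NN'_{x^*} \leq 0$ from the monotonicity of $\WW_{x^*}$ via the averaging representation---so there is nothing to compare.
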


We also use the notation $\NN_s^*(x^*)=\NN_{x^*}(\t(x^*)-s)$\index{$\ensuremath{\NN_s^*(x^*)}$} as in \cite[Section 5]{bamler2020entropy}. The following result is \cite[Corollary 5.11]{bamler2020entropy}.

\begin{prop}\label{propNashentropy1}
For any $x_1^*, x_2^* \in \XX$ and $s<t \le \min\{\t(x_1^*), \t(x_2^*)\}$ with $s \in I$ and $\scal(\cdot, s) \ge R_{\min}$, we have
	\begin{align*}
		\NN_s^*(x_1^*)-\NN_s^*(x_2^*)\leq \lc \frac{n}{2(t-s)}-R_{\min} \rc^{\frac 1 2} d_{W_1}^{t}( \nu_{x_1^*;t},\nu_{x_2^*;t}) +\frac{n}{2}\log\lc\frac{\t(x_2^*)-s}{t-s}\rc.
	\end{align*}
\end{prop}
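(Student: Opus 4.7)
The idea is to introduce a generalized Nash entropy for probability measures whose transformation law under the conjugate heat kernel (CHK) semigroup absorbs the mismatch in base times between $x_1^*$ and $x_2^*$. For a probability measure $\mu$ on $M$ regarded as living at some time $t_1>s$, define
\[ \NN_s^*[\mu] := H(\mu_s) - \tfrac{n}{2}\log(4\pi(t_1-s)) - \tfrac{n}{2}, \]
where $\mu_s$ denotes its CHK evolution to time $s$ and $H(\mu_s):=-\int (d\mu_s/dV_{g(s)})\log(d\mu_s/dV_{g(s)})\,dV_{g(s)}$ is the differential entropy. For $\mu=\delta_x$ this reduces to $\NN_s^*(x,t_1)$.

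First, by the semigroup property of the CHK, if $\mu'$ is the CHK evolution of $\mu$ from time $t_1$ to time $t_2\in(s,t_1)$, then $(\mu')_s=\mu_s$, so
\[ \NN_s^*[\mu'] = \NN_s^*[\mu] + \tfrac{n}{2}\log\tfrac{t_1-s}{t_2-s}. \]
Taking $\mu=\delta_{x_i}$ regarded at time $\t(x_i^*)$ and $t_2=t$, the evolved measure is $\mu'=\nu_{x_i^*;t}$, and one obtains the identity
\[ \NN_s^*[\nu_{x_i^*;t}] = \NN_s^*(x_i^*) + \tfrac{n}{2}\log\tfrac{\t(x_i^*)-s}{t-s}. \]

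Second, I would establish the Wasserstein Lipschitz estimate
\[ \NN_s^*[\mu_1]-\NN_s^*[\mu_2] \leq C\,d_{W_1}^t(\mu_1,\mu_2),\quad C:=\sqrt{\tfrac{n}{2(t-s)}-R_{\min}}, \]
for any probability measures $\mu_1,\mu_2$ on $(M,g(t))$. The PDE input is the sharp spatial gradient estimate $|\nabla_x \NN_s^*(x,t)|_{g(t)}\leq C$. One derives it by differentiating $\NN_s^*(x,t)=\int f_{(x,t)}\,d\nu_{(x,t);s}-n/2$ and using $\int\nabla_x f_{(x,t)}\,d\nu_{(x,t);s}=0$ (which follows from $\int K(x,t;\cdot,s)\,dV_{g(s)}=1$) to obtain
\[ \nabla_x \NN_s^*(x,t) = -\int (f_{(x,t)}-\bar f)\,\nabla_x f_{(x,t)}\,d\nu_{(x,t);s}, \]
where $\bar f$ is the $\nu_{(x,t);s}$-average of $f_{(x,t)}$, followed by Cauchy--Schwarz and Perelman/Hamilton-type differential Harnack estimates for the CHK. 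Upgrading this pointwise bound to the functional setting on probability measures proceeds via Otto calculus on $\mu\mapsto H(\mu_s)$, or equivalently via a direct coupling argument combining an optimal transport plan at time $t$ with the CHK semigroup between $t$ and $s$.

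Finally, applying the Wasserstein bound with $\mu_i:=\nu_{x_i^*;t}$ and substituting the transformation law yields
\[ \NN_s^*(x_1^*)-\NN_s^*(x_2^*) \leq C\,d_{W_1}^t(\nu_{x_1^*;t},\nu_{x_2^*;t}) + \tfrac{n}{2}\log\tfrac{\t(x_2^*)-s}{t-s} - \tfrac{n}{2}\log\tfrac{\t(x_1^*)-s}{t-s}. \]
Since $\t(x_1^*)\geq t>s$, the final term is nonpositive and may be dropped, yielding the stated inequality. The principal technical obstacle is the Wasserstein Lipschitz bound on the generalized Nash entropy: both the sharpness of the constant $C$ and its extension from Dirac base measures to general probability measures rest on the Perelman/Hamilton-type differential Harnack machinery for the conjugate heat kernel under Ricci flow.
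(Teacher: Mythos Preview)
Your transformation law for the generalized Nash entropy $\NN_s^*[\mu]$ is correct and elegant, but the central step --- the Wasserstein Lipschitz estimate $\NN_s^*[\mu_1]-\NN_s^*[\mu_2]\le C\,d_{W_1}^t(\mu_1,\mu_2)$ for \emph{arbitrary} probability measures --- is not justified by your sketch, and is in fact at least as hard as the proposition itself. The pointwise gradient bound $|\nabla_x\NN_s^*(x,t)|\le C$ is a statement about the map $x\mapsto \NN_s^*[\delta_x]$; it does not automatically upgrade to a Lipschitz bound on the nonlinear, \emph{concave} functional $\mu\mapsto H(\mu_s)$. Concavity of differential entropy only yields $\NN_s^*[\mu]\ge\int \NN_s^*(\cdot,t)\,d\mu$, which is the wrong direction for bounding the $\mu_1$ side from above. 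Your Otto-calculus and coupling hints do not close this gap: in fact, substituting $\mu_i=\nu_{x_i^*;t}$ into your claimed Lipschitz bound and unwinding the transformation law produces a statement strictly \emph{stronger} than the proposition (with an extra term $-\tfrac{n}{2}\log\tfrac{\t(x_1^*)-s}{t-s}$ on the right), so you cannot hope to reduce to something already available.

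The argument the paper cites from Bamler avoids the nonlinear functional entirely. It uses the pointwise gradient bound together with Kantorovich--Rubinstein on the \emph{linear} pairing $\mu\mapsto\int \NN_s^*(\cdot,t)\,d\mu$, and then closes the chain with two distinct one-sided inequalities: (i) the heat-subsolution property $\square\,\NN_s^*(\cdot,\cdot)\le 0$, which yields $\NN_s^*(x_1^*)\le\int \NN_s^*(\cdot,t)\,d\nu_{x_1^*;t}$; and (ii) concavity of differential entropy (equivalently, your transformation law plus Jensen), which yields $\int \NN_s^*(\cdot,t)\,d\nu_{x_2^*;t}\le \NN_s^*(x_2^*)+\tfrac{n}{2}\log\tfrac{\t(x_2^*)-s}{t-s}$. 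Ingredient (i) is the PDE fact missing from your proposal; without it, the argument does not close.
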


\begin{defn}\label{defncurvatureradius}
	For $x^*=(x,t)\in\XX$, the \textbf{curvature radius} $r_{\Rm}$\index{$\ensuremath{r_{\Rm}}$} is defined to be the supremum over all $r>0$ such that $|\Rm|\leq r^{-2}$ on the parabolic ball $B_t(x,r)\times [t-r^2,t+r^2] \cap I$. 
\end{defn}

The following $\ep$-regularity from \cite[Theorem 10.2]{bamler2020entropy} will be useful later:
\begin{thm}\label{epregularityNash}
	There exists a dimensional constant $\ep_n>0$ such that the following holds. If $x^* \in\XX$ satisfies $\t(x^*)-r^2 \in I$ and $\NN_{x^*}(r^2)\geq -\ep_n$, then $r_{\Rm}(x^*)\geq \ep_n r$.
\end{thm}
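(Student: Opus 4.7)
The plan is to argue by contradiction using a standard point-picking-plus-compactness scheme. Suppose the conclusion fails. Then there exist a sequence $\ep_i \searrow 0$, Ricci flows $\XX^i=\{M_i^n,(g_i(t))_{t \in I_i}\}$, spacetime points $x_i^* \in \XX^i$ and radii $r_i>0$ with $\t(x_i^*)-r_i^2 \in I_i$ such that $\NN_{x_i^*}(r_i^2) \ge -\ep_i$ while $r_{\Rm}(x_i^*)<\ep_i r_i$. After parabolic rescaling at $x_i^*$ by $r_i^{-2}$ we may assume $r_i=1$. I would then apply a Perelman-style point-picking argument inside a suitable parabolic neighborhood of $x_i^*$ to produce points $y_i^*$ with $\rho_i := r_{\Rm}(y_i^*) \searrow 0$, on whose parabolic neighborhood of radius $L_i\rho_i$ (with $L_i \to \infty$) the curvature radius is at least $\rho_i$. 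Rescaling by $\rho_i^{-2}$ at $y_i^*$ yields new flows $\widetilde{\XX}^i$ with base points $\widetilde{y}_i^*$, on which $r_{\Rm}(\widetilde{y}_i^*)=1$ and the curvature radius is bounded below by $1$ on parabolic neighborhoods of larger and larger size.

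Next I would establish the Nash entropy bound at the new base points and scales. Applying Proposition \ref{propNashentropy1} to transfer $\NN_{x_i^*}$ to $\NN_{y_i^*}$ yields, for every fixed $\tau>0$ (at the rescaled scale), a bound of the form $\NN_{y_i^*}(\tau) \ge -\ep_i - o(1)$, using that $y_i^*$ sits in a parabolic neighborhood of bounded size about $x_i^*$ at the original scale and that the $W_1$-distance between the corresponding conjugate heat kernels is therefore controlled. Combined with the monotonicity from Proposition \ref{propNashentropy} and the lower scalar-curvature bound \eqref{eq:lowerscal}, this gives $\NN_{\widetilde y_i^*}(\tau) \to 0$ uniformly on every compact $\tau$-interval in $(0,\infty)$ after rescaling. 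The Nash entropy bound provides uniform volume non-collapsing at $\widetilde y_i^*$, and combined with the bounded local curvature one can invoke Hamilton's compactness theorem to extract a smooth pointed Cheeger--Gromov subsequential limit $(M_\infty,(g_\infty(t))_{t \in (-\infty,0]},\widetilde y_\infty^*)$, an ancient Ricci flow on which $r_{\Rm}(\widetilde y_\infty^*)=1$.

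The final step is rigidity. Passing the Nash entropy bound to the limit gives $\NN_{\widetilde y_\infty^*}(\tau) \equiv 0$ for all $\tau>0$. By Proposition \ref{propNashentropy}(iii), the equality case of the monotonicity forces
\begin{align*}
\Ric(g_\infty)+\na^2 f_{\widetilde y_\infty^*}=\frac{1}{2\tau}\, g_\infty
\end{align*}
on each time-slice, so the limit is a gradient shrinking Ricci soliton with zero entropy. The classical rigidity of Perelman's $\boldsymbol{\mu}$-functional then forces $(M_\infty,g_\infty(t)) \cong (\R^n,g_{\mathrm{Eucl}})$, which has $r_{\Rm} \equiv +\infty$ and contradicts $r_{\Rm}(\widetilde y_\infty^*)=1$.

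The main obstacle is the quantitative transfer of the Nash entropy from $x_i^*$ at scale $1$ to $\widetilde y_i^*$ at each scale $\tau>0$ after the second rescaling: one has to show that the Wasserstein distance between $\nu_{x_i^*;\,\cdot}$ and $\nu_{y_i^*;\,\cdot}$ remains controlled on the relevant time-slices, so that the near-sharp bound $\NN_{x_i^*} \ge -\ep_i$ genuinely survives point-picking and rescaling. This is exactly where Proposition \ref{propNashentropy1} is used in its sharp form, combined with a careful choice of the auxiliary time $t$ appearing there. Once this is done, the rigidity step and the extraction of the smooth limit are comparatively standard, but verifying that the soliton equation with $\NN \equiv 0$ forces flatness in the complete, possibly noncompact limit is the step requiring the most care.
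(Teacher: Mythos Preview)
The paper does not prove Theorem~\ref{epregularityNash}; it is quoted as a preliminary result from \cite[Theorem~10.2]{bamler2020entropy}, so there is no in-paper proof to compare against.

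Your overall strategy is natural and can be made to work, but the step you yourself flag as the main obstacle is genuinely incomplete as written. The assertion that ``$y_i^*$ sits in a parabolic neighborhood of bounded size about $x_i^*$ and the $W_1$-distance between the corresponding conjugate heat kernels is therefore controlled'' is not justified for a \emph{classical} parabolic neighborhood $P(x_i^*,\eta)$. Membership in such a neighborhood does not bound $d_{W_1}^t(\nu_{x_i^*;t},\nu_{y_i^*;t})$ without a priori curvature control on the region in between, which is exactly what you are trying to prove. If you restrict point-picking to the single time $t_0$, then $W_1$-monotonicity does give $d_{W_1}^s\le d_{t_0}(x_i,y_i)$ for all $s\le t_0$, and together with Proposition~\ref{propNashentropy1} (taking $t=t_0$ so the logarithmic term vanishes) and the monotonicity of $\NN$ this \emph{does} yield $\NN_{\widetilde y_i^*}(\tau)\to 0$ for every fixed $\tau>0$, since you only need $\NN_{y_i^*}$ close to zero at one \emph{fixed} unrescaled scale.

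However, same-time point-picking only produces $r_{\Rm}(\cdot,t_0)\ge\rho_i/2$ on a spatial ball, which after rescaling yields a curvature bound on a time interval of \emph{fixed} length (of order $1$), not an ancient flow. The point witnessing $r_{\Rm}(\widetilde y_i^*)=1$ may then lie at a rescaled time outside this interval, so the Euclidean rigidity of the limit on the short interval does not directly contradict $r_{\Rm}(\widetilde y_i^*)=1$. You would need an additional step---for instance iterating two-sided pseudolocality (Theorem~\ref{thm:twoside}) backward from the almost-Euclidean slice---to propagate the curvature bound and close the argument. Alternatively, one can point-pick with respect to the $d^*$-distance (using the Lipschitz bound of Proposition~\ref{pro:LiprRm}), which simultaneously controls $W_1$ and gives spacetime curvature radius bounds on large $B^*$-balls; this is closer in spirit to the $P^*$-machinery that Bamler develops in \cite{bamler2020entropy} prior to proving the cited theorem.
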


Now we recall some monotonicity formulas from \cite[Lemma 2.7, Corollary 3.7]{bamler2020entropy} and their consequences (see also \cite{mccann2010ricci} and \cite{topping2014ricci}).
\begin{prop}\label{monotonicityW1}
	Let $\nu_1,\nu_2 \in C^{\infty}(M \times I'),\, I' \subset I$ be two nonnegative solutions of the conjugate heat equation $\square^* \nu_1=\square^* \nu_2=0$ with $\int_M \nu_i(\cdot,t) \, \mathrm{d}V_{g(t)}=1$ for $i=1,2$ and any $t\in I'$. If we set $\mathrm{d}\mu_{i,t}=\nu_i(\cdot,t) \, \mathrm{d}V_{g(t)}$, then 
	\begin{align*}
		t\mapsto d_{W_1}^t(\mu_{1,t},\mu_{2,t}) \quad \text{and} \quad t\mapsto \Var_t(\mu_{1,t},\mu_{2,t})+H_nt
	\end{align*}
	are nondecreasing for $t \in I'$, where $H_n:=(n-1)\pi^2/2+4$\index{$H_n$}. In particular, for any $x^*_1,x^*_2\in \XX$, $d_{W_1}^t(\nu_{x^*_1;t},\nu_{x^*_2;t})$ and $\Var_t(\nu_{x^*_1;t},\nu_{x^*_2;t})+H_nt$ are nondecreasing for $t \in I$ and $t \le \min\{\t(x_1^*), \t(x_2^*)\}$.
\end{prop}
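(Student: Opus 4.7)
The plan is to prove the two monotonicity statements separately, each by reduction to a pointwise estimate via the Kantorovich-Rubinstein duality or a direct integration-by-parts computation against the conjugate heat equation. The ``in particular'' statement will then follow by specializing to $\nu_i(\cdot, t) = K(x_i^*;\cdot, t)$, which is a smooth positive solution of $\square^*\nu = 0$ with $\int_M \nu\, \mathrm{d}V_{g(t)} = 1$ on $\{t < \t(x_i^*)\}\cap I$.

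\emph{Monotonicity of $d_{W_1}^t$.} Fix $t_1 < t_2$ in $I'$. By Lemma~\ref{lem:duality} it suffices, for any $f_1 \in C^\infty(M)$ with $\|f_1\|_{\Lip_{t_1}} \le 1$, to exhibit $f_2$ with $\|f_2\|_{\Lip_{t_2}} \le 1$ and $\int_M f_j\, \mathrm{d}\mu_{i,t_j} = \int_M f_1\, \mathrm{d}\mu_{i,t_1}$ for $i = 1,2$. I would solve the forward heat equation $\square u = 0$ on $M \times [t_1, t_2]$ with $u(\cdot, t_1) = f_1$ and set $f_2 := u(\cdot, t_2)$. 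The identity
\begin{align*}
\frac{d}{dt}\int_M u\,\nu_i\,\mathrm{d}V_{g(t)} = \int_M \bigl((\square u)\nu_i - u(\square^* \nu_i)\bigr)\,\mathrm{d}V_{g(t)} = 0
\end{align*}
gives the preservation of pairings, and Bochner's formula along Ricci flow yields $\square|\nabla u|^2 = -2|\nabla^2 u|^2 \le 0$ (the Ricci contribution from $\partial_t g = -2\Ric$ cancels the Ricci term in the static Bochner identity), so the maximum principle on the closed manifold $M$ gives $\|u(\cdot, t)\|_{\Lip_t} \le 1$ for all $t \in [t_1, t_2]$. Taking the supremum over $f_1$ completes the argument.

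\emph{Monotonicity of $\Var_t + H_n t$.} Using $\partial_t \mathrm{d}V_{g(t)} = -\scal\, \mathrm{d}V_{g(t)}$ together with $\square^*\nu_i = 0$ gives $\partial_t(\nu_i\,\mathrm{d}V_{g(t)}) = -\Delta\nu_i\,\mathrm{d}V_{g(t)}$. Plugging this into $V(t) := \Var_t(\mu_{1,t}, \mu_{2,t}) = \int\!\int d_t^2(x,y)\,\nu_1(x,t)\nu_2(y,t)\,\mathrm{d}V(x)\mathrm{d}V(y)$ and integrating by parts twice yields
\begin{align*}
V'(t) = \int_M\!\int_M \bigl(\partial_t d_t^2(x,y) - \Delta_x d_t^2(x,y) - \Delta_y d_t^2(x,y)\bigr)\,\mathrm{d}\mu_{1,t}(x)\,\mathrm{d}\mu_{2,t}(y).
\end{align*}
It is therefore enough to establish, in the barrier sense at $x \ne y$, the pointwise bound $\partial_t d_t^2 - \Delta_x d_t^2 - \Delta_y d_t^2 \ge -H_n$. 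Let $\gamma:[0,L]\to M$ with $L = d_t(x,y)$ be a $g(t)$-unit-speed minimizing geodesic from $x$ to $y$, and let $\{E_i\}_{i=1}^{n-1}$ be a parallel orthonormal frame normal to $\gamma'$. I would apply the second variation of arc length with $V_i(s) = \cos(\pi s/(2L))E_i(s)$ (which satisfies $V_i(0) = E_i$, $V_i(L) = 0$, hence bounds $\Delta_x d(\cdot, y)$) and with $\tilde V_i(s) = \sin(\pi s/(2L))E_i(s)$ (bounding $\Delta_y d(x,\cdot)$). The weight identity $\cos^2 + \sin^2 \equiv 1$ makes the Ricci-weighted integrals add up exactly to $\int_0^L \Ric(\gamma',\gamma')\,\mathrm{d}s$, while the kinetic terms contribute $(n-1)\pi^2/(4L)$. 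Combining this with the first variation $\partial_t d_t = -\int_0^L \Ric(\gamma',\gamma')\,\mathrm{d}s$ and the identity $\Delta d^2 = 2d\,\Delta d + 2$ in each variable gives
\begin{align*}
\partial_t d_t^2 - \Delta_x d_t^2 - \Delta_y d_t^2 = 2L\bigl(\partial_t d_t - \Delta_x d_t - \Delta_y d_t\bigr) - 4 \ge -\frac{(n-1)\pi^2}{2} - 4 = -H_n.
\end{align*}
Integrating against $\mu_{1,t}\otimes\mu_{2,t}$ yields $V'(t)\ge -H_n$, which is precisely the monotonicity of $\Var_t + H_n t$.

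\emph{Main obstacle.} The crux of the argument is the choice of variation fields in the two Laplacian-comparison inequalities. One needs the weights at the two endpoints (here $\cos^2$ at $x$ and $\sin^2$ at $y$) to sum to $1$, so that the Ricci-weighted pieces from the two second-variation estimates combine into $\int_0^L \Ric(\gamma',\gamma')\,\mathrm{d}s$ and cancel the first-variation contribution to $\partial_t d_t$ exactly, leaving only the universal dimensional constant $H_n$. Any naive choice such as $V_i(s) = (1 - s/L)E_i(s)$ leaves a residual $\int_0^L (s/L)(1-s/L)\Ric(\gamma',\gamma')\,\mathrm{d}s$ that cannot be controlled without a curvature assumption. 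A secondary, more routine, technical point is that $d_t^2$ is not smooth across the cut locus, but the second-variation inequalities remain valid in the barrier sense, which is sufficient for the integrated computation of $V'(t)$ above; everything else---duality, integration by parts, and the passage to the conjugate-heat-kernel specialization---is standard.
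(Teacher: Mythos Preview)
Your proof is correct and follows the standard route found in the cited references. Note that the paper does not actually prove this proposition: it simply attributes the $d_{W_1}$ monotonicity to \cite[Lemma 2.7]{bamler2020entropy} (originally McCann--Topping) and the variance monotonicity to \cite[Corollary 3.7]{bamler2020entropy}. Your argument for $d_{W_1}$ via heat-flow contraction of the Lipschitz constant is exactly Bamler's Lemma~2.7, and your choice of variation fields $\cos(\pi s/(2L))E_i$ and $\sin(\pi s/(2L))E_i$---with the key feature that $\cos^2+\sin^2\equiv 1$ makes the weighted Ricci integrals combine to cancel $\partial_t d_t$---is precisely the computation behind Bamler's Lemma~3.6 and Corollary~3.7. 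The barrier/cut-locus issue you flag is handled in Bamler's paper by the standard Calabi trick, so your assessment of it as routine is fair.
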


\begin{defn} \label{defh-center}
A point $(z,t) \in \XX$ is called an \textbf{$H$-center}\index{$\ensuremath{H}$-center} of $x_0^* \in \XX$ for a constant $H>0$ if $t \in I$, $t<\t(x_0^*)$ and 
\begin{align*}
	\Var_t(\delta_z,\nu_{x_0^*;t})\leq H(\t(x_0^*)-t).
\end{align*}
By Proposition \ref{monotonicityW1}, there exists an $H_n$-center for every $t \in I$ with $t<\t(x_0^*)$.
\end{defn}

We have the following result from \cite[Propositions 3.12, 3.13]{bamler2020entropy}.

\begin{prop}\label{existenceHncenter}
Any two $H_n$-centers $(z_1, t)$ and $(z_2, t)$ of $x_0^*$ satisfy $d_{t}(z_1,z_2)\leq 2\sqrt{H_n(\t(x_0^*)-t)}$. Moreover, if $(z,t)$ is an $H_n$-center of $x_0^*$, then for any $L>0$, we have
	\begin{align*}
		\nu_{x^*_0;t}\lc B_t \lc z,\sqrt{LH_n(\t(x_0^*)-t)} \rc \rc\geq 1-L^{-1}.
	\end{align*}
\end{prop}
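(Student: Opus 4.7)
The plan is to prove the two statements separately using only the definition of an $H_n$-center together with the elementary properties of the variance recorded in Lemma \ref{lem:var}.

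For the first assertion, I would exploit the triangle-type inequality
$$\sqrt{\Var_t(\delta_{z_1},\delta_{z_2})} \le \sqrt{\Var_t(\delta_{z_1},\nu_{x_0^*;t})} + \sqrt{\Var_t(\nu_{x_0^*;t},\delta_{z_2})}$$
from Lemma \ref{lem:var}. Since $\Var_t(\delta_{z_1},\delta_{z_2}) = d_t(z_1,z_2)^2$, and each of the two terms on the right is at most $\sqrt{H_n(\t(x_0^*)-t)}$ by the defining property of an $H_n$-center (Definition \ref{defh-center}), the stated bound $d_t(z_1,z_2) \le 2\sqrt{H_n(\t(x_0^*)-t)}$ follows at once.

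For the second assertion, I would set $\tau = \t(x_0^*)-t$ and $r = \sqrt{LH_n\tau}$, and apply Markov's (Chebyshev's) inequality with respect to the nonnegative function $d_t(z,\cdot)^2$ against the probability measure $\nu_{x_0^*;t}$:
$$\nu_{x_0^*;t}\bigl(\{x \in M : d_t(z,x) \ge r\}\bigr) \le \frac{1}{r^2}\int_M d_t(z,x)^2 \, \mathrm{d}\nu_{x_0^*;t}(x) = \frac{\Var_t(\delta_z,\nu_{x_0^*;t})}{r^2} \le \frac{H_n\tau}{LH_n\tau} = \frac{1}{L}.$$
Passing to the complement inside $M$ yields $\nu_{x_0^*;t}(B_t(z,r)) \ge 1 - L^{-1}$, which is the desired concentration statement.

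No substantive obstacle is anticipated: both parts are formal consequences of the definition of $H_n$-center. The only care needed is notational, namely recognizing $\Var_t(\delta_{z_1},\delta_{z_2}) = d_t(z_1,z_2)^2$ for the first part, and identifying the integral of $d_t(z,\cdot)^2$ with $\Var_t(\delta_z,\nu_{x_0^*;t})$ for the second. Neither step uses the Ricci flow structure directly beyond the fact that $(M,g(t))$ is a fixed Riemannian manifold at time $t$, so no monotonicity input (such as Proposition \ref{monotonicityW1}) is required here—that ingredient is used only to guarantee the \emph{existence} of an $H_n$-center in Definition \ref{defh-center}.
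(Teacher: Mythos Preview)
Your proposal is correct and follows exactly the standard argument (the paper itself does not give a proof, merely citing \cite[Propositions 3.12, 3.13]{bamler2020entropy}, where the same variance triangle inequality and Chebyshev-type estimate are used). Both parts are handled cleanly; there is nothing to add.
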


The following theorem gives a sharp upper bound of the heat kernel, which improves \cite[Theorem 7.2]{bamler2020entropy}; see also \cite[Theorem 14]{li2020heat} and \cite[Theorem 4.15]{li2024heat}.

\begin{thm}\label{heatkernelupperbdgeneral}
	Let $\XX=\{M^n,(g(t))_{t\in I}\}$ be a closed Ricci flow with $[t, t_0] \subset I$. Then for any $\epsilon>0,L>0$ and $(x_0,t_0) \in \XX$, the following statements hold.
		\begin{enumerate}[label=\textnormal{(\roman{*})}]
	\item We have
	\begin{align}\label{sharpconcentration}
		\nu_{x_0,t_0;t}\lc M\setminus B_t(z,L)\rc\leq C(n,\ep) \exp\lc-\frac{L^2}{(4+\epsilon)(t_0-t)} \rc. 
	\end{align}
	
\item If $\scal(\cdot, t) \ge R_{\min}$, then for any $(y, t) \in \XX$,
	\begin{equation}\label{sharpheatkernelupper}
		K(x_0,t_0;y,t)\leq \frac{C\lc n,R_{\min}(t_0-t),\ep\rc}{(t_0-t)^{n/2}}\exp\lc -\frac{d_t^2(z,y)}{(4+\epsilon)(t_0-t)}-\NN_{x_0,t_0}(t_0-t) \rc.
	\end{equation}
\end{enumerate}	
Here, $(z,t)$ is any $H_n$-center of $(x_0,t_0)$.
\end{thm}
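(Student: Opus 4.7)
The plan is to establish Part (ii) first, via a sharp lower bound on the potential function $f = f_{x_0,t_0}$, and then to deduce Part (i) by integrating the resulting pointwise upper bound over dyadic annuli.

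For Part (ii), write $K(x_0,t_0;y,t) = (4\pi\tau)^{-n/2} e^{-f(y,t)}$ with $\tau := t_0-t$, so that the desired upper bound on $K$ is equivalent to the pointwise lower bound
\begin{equation*}
f(y,t) \ge \frac{d_t^2(z,y)}{(4+\epsilon)\tau} + \NN_{x_0,t_0}(\tau) - C(n, R_{\min}\tau, \epsilon).
\end{equation*}
The starting point is Perelman's differential Harnack inequality $\tau(2\Delta f - |\nabla f|^2 + \scal) + f - n \le 0$, whose integration against $d\nu_{x_0,t_0;t}$ recovers the $\WW$-entropy identity of Proposition \ref{propNashentropy}. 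Combined with $\scal \ge R_{\min}$ at time $t$, a bootstrap produces a Hein--Naber type gradient estimate of the form
\begin{equation*}
\bigl|\nabla \sqrt{ f - \NN_{x_0,t_0}(\tau) + C }\bigr|^2 \le \frac{1+\delta}{4\tau}
\end{equation*}
for any prescribed $\delta > 0$, with $C = C(n, R_{\min}\tau, \delta)$. At the $H_n$-center $(z,t)$, the variance bound $\Var_t(\delta_z, \nu_{x_0,t_0;t}) \le H_n \tau$ together with the identity $\int f \, d\nu_{x_0,t_0;t} = \NN_{x_0,t_0}(\tau) + n/2$ ensures that $f$ attains a value close to $\NN_{x_0,t_0}(\tau)$ at some point within controlled $d_t$-distance of $z$. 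Integrating the gradient estimate along a $g(t)$-minimizing geodesic from this near-minimum point to $y$, and applying the triangle inequality, yields the claimed lower bound with $\delta$ absorbed into $\epsilon$.

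For Part (i), decompose $M \setminus B_t(z,L)= \bigsqcup_{k=0}^{\infty} A_k$ with $A_k := B_t(z, 2^{k+1} L) \setminus B_t(z, 2^k L)$, and apply the upper bound from Part (ii) with some $\epsilon' \in (0, \epsilon)$:
\begin{equation*}
\nu_{x_0,t_0;t}(M\setminus B_t(z,L)) \le \frac{C(n,R_{\min}\tau,\epsilon')\, e^{-\NN_{x_0,t_0}(\tau)}}{\tau^{n/2}} \sum_{k=0}^{\infty} \exp\!\Bigl(-\frac{(2^kL)^2}{(4+\epsilon')\tau}\Bigr) |A_k|_{g(t)}.
\end{equation*}
A volume estimate of the form $|B_t(z,R)|_{g(t)} \le C(n, R_{\min}\tau)\, e^{\NN_{x_0,t_0}(\tau)} R^n$, which follows from the Nash entropy monotonicity combined with a heat kernel lower bound at an $H_n$-center, controls each annulus by a polynomial in $2^k L$. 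The sum then converges as a geometric series, with the slack $\epsilon - \epsilon' > 0$ absorbing the polynomial factor to leave an exponent of exactly $L^2 / ((4+\epsilon)\tau)$.

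The main obstacle is obtaining the sharp Gaussian constant $4+\epsilon$ rather than the weaker constant $8$ from Bamler's original \cite[Theorem 7.2]{bamler2020entropy}. This requires the gradient estimate on $\sqrt{f - \NN_{x_0,t_0}(\tau) + C}$ in place of the more elementary one on $f$ or $|\nabla f|^2$, together with a careful bootstrap driving $\delta \to 0$ while controlling error terms arising from the scalar curvature lower bound $R_{\min}$, the spatial drift of $H_n$-centers, and the gap between $\inf f$ and $\NN_{x_0,t_0}(\tau)$.
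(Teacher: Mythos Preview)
Your derivation of (i) from (ii) has a genuine gap: the volume bound you invoke, $|B_t(z,R)|_{g(t)} \le C(n, R_{\min}\tau)\, e^{\NN_{x_0,t_0}(\tau)} R^n$, does not hold. Proposition~\ref{prop:volume}(i) gives only $|B_t(z,A\sqrt{\tau})|_t \le C\, e^{\NN + C(n)A^2} \tau^{n/2}$, i.e.\ growth of order $e^{C(n)R^2/\tau}$ rather than $R^n$, and without curvature assumptions on the time-$t$ slice there is no polynomial volume bound available. In your dyadic sum this exponential factor competes directly with the Gaussian $e^{-(2^kL)^2/((4+\ep')\tau)}$; convergence would require $C(n) < 1/(4(4+\ep'))$, which fails, so the sharp exponent $4+\ep$ cannot be recovered this way. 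Even if it did work, your route would put $R_{\min}\tau$ into the constant for (i), whereas the stated bound is $C(n,\ep)$ with no scalar-curvature dependence.

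The paper avoids this by reversing the order. Part (i) is proved directly from the Hein--Naber Laplace-functional bound $\int_M e^{\lambda h}\,d\nu_t \le e^{\tau\lambda^2}$ for centered $1$-Lipschitz $h$: taking $h = d_t(\cdot,z) - \int d_t(\cdot,z)\,d\nu_t$ and optimizing over $\lambda$ yields $\nu_t(\{d_t(\cdot,z) \ge \sqrt{H_n\tau} + r\}) \le e^{-r^2/(4\tau)}$ immediately, with no pointwise kernel bound or volume input; the shift $\sqrt{H_n\tau}$ is then absorbed into $\ep$. Part (ii) follows by feeding this sharper concentration back into the argument of \cite[Theorem 7.2]{bamler2020entropy}. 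Your gradient-estimate route to (ii) is in the spirit of \cite{li2020heat} and may be viable independently, though note that Perelman's Harnack gives a \emph{lower} bound on $|\nabla f|^2$, not the upper bound $|\nabla\sqrt{f-\NN+C}\,|^2 \le (1+\delta)/(4\tau)$ you want, so the ``bootstrap'' there needs a different mechanism; in any case (i) still requires its own argument.
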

\begin{proof}
	The upper bounds in \eqref{sharpconcentration} and \eqref{sharpheatkernelupper} are analogous to those in \cite[Theorems 3.14, 7.2]{bamler2020entropy}, but the constant $8+\epsilon$ is replaced by the sharp constant $4+\epsilon$. For simplicity, we set $\mathrm{d}\nu_t=\mathrm{d}\nu_{x_0,t_0;t}$ and define the Laplace transform as
	\begin{equation}\label{defnlaplacefunctional}
		U_t(\lambda)=\sup\int_{M}e^{\lambda h}\, \mathrm{d}\nu_t,
	\end{equation}
	where the supremum runs over all bounded integrable $1$-Lipschitz functions $h$ on $(M,g(t))$ satisfying $\int_M h\,\mathrm{d}\nu_t=0$. By the proof of \cite[Theorem 1.30]{hein2014new}, the following bound for the Laplace transform holds:
	\begin{equation}\label{laplacefunctionalbd}
		U_t(\lambda)\leq e^{(t_0-t)\lambda^2}.
	\end{equation}
	For every integrable $1$-Lipschitz function $F:M\to \R$ and for every $r\geq 0$, applying \eqref{defnlaplacefunctional} to $F-\int_M F\,\mathrm{d}\nu_t$ (first to its bounded truncations and then passing to the limit), we obtain from \eqref{laplacefunctionalbd} that
	\begin{align*}
		\int_M e^{\lambda\lc F-\int_M F\,\mathrm{d}\nu_t\rc}\, \mathrm{d}\nu_t\leq e^{(t_0-t)\lambda^2}.
	\end{align*}
	Thus,
	\begin{align*}
		\nu_t\lc\left\{F\geq \int_M F\,\mathrm{d}\nu_t+r \right\}\rc\leq \inf_{\lambda>0}\lc e^{(t_0-t)\lambda^2-\lambda r}\rc = e^{-\frac{r^2}{4(t_0-t)}}.
	\end{align*}
	Now we take $F(x)=d_t(x,z)$, where $(z,t)$ is the $H_n$-center of $(x_0,t_0)$. Then
	\begin{equation}\label{sharpconcen1}
		\nu_t\lc\left\{x \,\big|\, d_t(x,z)\geq \int_M d_t(z,\cdot) \, \mathrm{d}\nu_t+r \right\}\rc\leq e^{-\frac{r^2}{4(t_0-t)}}.
	\end{equation}
	Recall that, by the definition of an $H_n$-center, $\lc\int_M d^2_t(z,\cdot) \, \mathrm{d}\nu_t\rc^{\frac{1}{2}}\leq \sqrt{H_n(t_0-t)}$, and thus by the Cauchy--Schwarz inequality, we have
	\begin{align*}
		\int_M d_t(z, \cdot) \, \mathrm{d}\nu_t\leq \sqrt{H_n(t_0-t)}.
	\end{align*}
	This implies $\{x \mid d_t(x,z)\geq \sqrt{H_n(t_0-t)}+r\}\subset \{x \mid d_t(x,z)\geq \int_M d_t(z, \cdot) \, \mathrm{d}\nu_t +r\}$. Combining with \eqref{sharpconcen1}, we obtain
	\begin{equation}\label{gaussianmeasureestimate}
		\nu_t\lc\left\{ x \mid d_t(x,z)\geq \sqrt{H_n(t_0-t)}+r \right\}\rc\leq e^{-\frac{r^2}{4(t_0-t)}}.
	\end{equation}
	For any $L>0$, \eqref{gaussianmeasureestimate} implies 
	\begin{align*}
		\nu_t\lc M\setminus B_t(z,L)\rc\leq \exp\lc-\frac{\lc L-\sqrt{H_n(t_0-t)}\rc_+^2}{4(t_0-t)} \rc\leq C(n,\ep) \exp \lc -\frac{L^2}{(4+\epsilon)(t_0-t)} \rc,
	\end{align*}
	which gives \eqref{sharpconcentration}.
	
	We can now follow the argument in the proof of \cite[Theorem 7.2]{bamler2020entropy} or \cite[Theorem 4.15]{li2024heat} to conclude \eqref{sharpheatkernelupper}.
\end{proof}

We have the following gradient bound from \cite[Theorem 7.5]{bamler2020entropy}:

\begin{thm}\label{gradientheatkernel}
If $[s,t]\subset I$ and $\scal\geq R_{\min}$ on $\XX$, then there exists a constant $C=C(n,R_{\min}(t-s))<\infty$ such that
	\begin{align*}
		\frac{|\na_x K|(x,t;y,s)}{K(x,t;y,s)}\leq \frac{C}{(t-s)^{1/2}}\sqrt{\log\lc \frac{C\exp\lc-\NN_{x,t}(t-s)\rc}{(t-s)^{n/2}K(x,t;y,s)}\rc}.
	\end{align*}
\end{thm}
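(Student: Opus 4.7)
The plan is to combine a Hamilton-type differential Harnack inequality with the sharp heat-kernel upper bound from Theorem \ref{heatkernelupperbdgeneral}. I would fix $(y, s)$ and view $u(x', t') := K(x', t'; y, s)$ as a positive solution of the forward heat equation $\square u = 0$ on $M \times (s, t]$; the aim is to bound $|\nabla u|^2 / u^2$ at the spacetime point $(x, t)$.

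First, I would establish a uniform upper bound of the form
$$u(x', t') \le A := \frac{C \exp\lc -\NN_{x, t}(t-s)\rc}{(t-s)^{n/2}}$$
on a suitable parabolic neighborhood of $(x, t)$. This should follow from Theorem \ref{heatkernelupperbdgeneral} applied pointwise: the Nash entropy appearing there at nearby basepoints $(x', t')$ is controlled in terms of $\NN_{x, t}(t-s)$ via Propositions \ref{propNashentropy} and \ref{propNashentropy1}, with the variation absorbed into the constant $C = C(n, R_{\min}(t-s))$.

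Next, I would set $f := \log(A/u) \ge 0$ on this neighborhood and consider a Hamilton-type auxiliary function such as
$$P := (t'-s)\, |\nabla f|^2 - \alpha f,$$
for a constant $\alpha = \alpha(n, R_{\min}(t-s))$ chosen optimally. From $\square u = 0$ one derives $(\partial_{t'} - \Delta) f = -|\nabla f|^2$; a direct Bochner calculation, exploiting the cancellation between the Ricci terms coming from the Bochner identity and the term arising from $\partial_{t'} g(t') = -2\Ric$, produces a parabolic inequality of the form
$$(\partial_{t'} - \Delta) P \le -2 \langle \nabla f, \nabla P\rangle + E,$$
where $E$ is a controlled error depending on $R_{\min}(t-s)$. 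Applying the maximum principle, with the singular initial datum at $t' = s$ handled by a short time-shift and a standard cutoff, then yields $P \le 0$ up to acceptable error. Rearranging gives
$$\frac{|\nabla u|^2}{u^2} \le \frac{C(n, R_{\min}(t-s))}{t-s} \log(A/u),$$
and taking square roots produces the stated bound.

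The hardest part will be carrying out the differential Harnack calculation with enough precision that all Ricci and scalar-curvature contributions either cancel through the Ricci flow equation or are absorbed into $C(n, R_{\min}(t-s))$, rather than propagating through. A secondary technical point is that the upper bound $A$ is only natural at the single point $(x, t)$, whereas the maximum principle requires a uniform bound on a parabolic neighborhood; controlling the drift of the Nash entropy in the basepoint via Propositions \ref{propNashentropy} and \ref{propNashentropy1} costs only a multiplicative constant, but this must be tracked carefully so as not to pollute the form of the final estimate.
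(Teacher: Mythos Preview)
The paper does not prove this statement; it is quoted verbatim from \cite[Theorem~7.5]{bamler2020entropy} without argument. Your proposed approach --- Hamilton's gradient estimate for positive heat solutions combined with the heat-kernel upper bound of Theorem~\ref{heatkernelupperbdgeneral} --- is exactly the strategy Bamler uses in the cited reference, and your identification of the two technical points (the Bochner--Ricci cancellation under Ricci flow, and the need to control the Nash-entropy drift from $(x,t)$ to nearby basepoints so that $A$ is uniform on a parabolic neighborhood) matches the issues that arise there.
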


We also need the following volume estimates from \cite[Theorems 6.1, 6.2, 8.1]{bamler2020entropy}.

\begin{prop}\label{prop:volume}
	Assume $[t-r^2,t]\subset I$ and $\scal(\cdot, t-r^2) \ge R_{\min}$. 
	\begin{enumerate}[label=\textnormal{(\roman{*})}]
	\item For any $1\leq A<\infty$, 
	\begin{align*}
		|B_t(x,Ar)|_t\leq C(n,R_{\min}r^2) \exp\lc\NN_{x,t}(r^2)+C(n)A^2 \rc r^n.
	\end{align*}
\item If $(z, t-r^2)$ is an $H_n$-center of $(x, t)$, then
	\begin{align*}
		|B_{t-r^2}(z, \sqrt{2H_n}r)|_{t-r^2}\geq C(n, R_{\min}r^2) \exp \lc\NN_{x,t}(r^2) \rc r^n>0.
	\end{align*}
\item If $\scal\leq r^{-2}$ on $B_t(x,r)$, then 
	\begin{align*}
		|B_t(x,r)|_t\geq C(n) \exp \lc\NN_{x,t}(r^2) \rc r^n>0.
	\end{align*}
\end{enumerate}
Here, $\abs{\cdot}_t$ denotes the volume with respect to $g(t)$.	
\end{prop}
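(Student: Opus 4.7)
The plan is to establish the three volume estimates by combining the heat kernel upper bound in Theorem \ref{heatkernelupperbdgeneral} with the $H_n$-center concentration in Proposition \ref{existenceHncenter}, following the approach of Bamler in \cite[Sections 6, 8]{bamler2020entropy}. In each case the underlying strategy is the same: play a lower bound on the mass of a ball under a suitable conjugate heat kernel measure against a pointwise bound on its density, exploiting that $\nu_{x^*;s}$ is a probability measure.

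For part (ii), let $(z, t-r^2)$ be an $H_n$-center of $(x,t)$. Applying Proposition \ref{existenceHncenter} with $L=2$ yields
$$\nu_{x,t;t-r^2}\bigl(B_{t-r^2}(z, \sqrt{2H_n}\,r)\bigr) \geq \tfrac{1}{2},$$
while Theorem \ref{heatkernelupperbdgeneral}(ii) bounds the density $K(x,t;\cdot,t-r^2) \leq C(n, R_{\min} r^2)\, r^{-n} e^{-\NN_{x,t}(r^2)}$ on this ball, since the Gaussian spatial factor is of order $1$ there. Dividing mass by density gives the desired volume lower bound. For part (iii), I would instead base the argument at a slightly later point $(x, t+cr^2)$, for a small dimensional constant $c$, and use the measure $\nu_{x, t+cr^2; t}$. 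The local scalar curvature bound $\scal \leq r^{-2}$ on $B_t(x,r)$ permits a perturbative comparison with the Euclidean heat kernel over the short time interval $[t, t+cr^2]$, placing the $H_n$-center of $(x,t+cr^2)$ at time $t$ inside $B_t(x, r/4)$ and forcing the concentration ball $B_t(z, \sqrt{2H_n cr^2})$ to lie inside $B_t(x, r/2) \subset B_t(x, r)$. The same mass-versus-density argument, together with the Nash entropy comparison afforded by Propositions \ref{propNashentropy} and \ref{propNashentropy1} which relates $\NN_{x, t+cr^2}(cr^2)$ to $\NN_{x, t}(r^2)$ up to a dimensional constant, then yields the claimed lower bound.

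For part (i), the logic reverses: an upper bound on $|B_t(x, Ar)|_t$ requires a pointwise lower bound for the forward heat kernel $K(\cdot, t; x, t-r^2)$ on $B_t(x, Ar)$. A Gaussian lower bound of the form
$$K(y, t; x, t-r^2) \geq c(n, R_{\min} r^2)\, r^{-n}\exp\bigl(\NN_{x,t}(r^2) - C(n)A^2\bigr)$$
follows by chaining the parabolic Harnack inequality along a finite family of overlapping spacetime cylinders joining the $H_n$-center $(z, t-r^2)$ to an arbitrary $y \in B_t(x, Ar)$. Integrating this estimate over $B_t(x, Ar)$ and using the bound $\int_M K(\cdot, t; x, t-r^2)\, dV_t \leq e^{|R_{\min}|r^2}$, which follows from $\scal \geq R_{\min}$ together with the evolution identity $\tfrac{d}{dt} \int K\, dV_t = -\int \scal K\, dV_t$, then produces the upper volume bound. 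The main obstacle lies in this last step: extracting the Gaussian factor with exponent $CA^2$ and the correct Nash entropy dependence requires careful quantitative tracking through the Harnack chain, and one must verify that the scalar lower bound $R_{\min}$ enters only via the overall harmless constant; the perturbative comparison needed in part (iii) is also delicate precisely because no global curvature bound is available.
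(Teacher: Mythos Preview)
The paper does not give its own proof here; it simply cites \cite[Theorems 6.1, 6.2, 8.1]{bamler2020entropy}. So the comparison is against Bamler's arguments.

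Your treatment of part (ii) is exactly right and matches Bamler: mass $\geq\tfrac12$ on the $H_n$-ball from Proposition~\ref{existenceHncenter}, density $\leq C r^{-n}e^{-\NN}$ from Theorem~\ref{heatkernelupperbdgeneral}(ii), divide.

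Part (i) has a genuine problem. With your claimed lower bound $K(y,t;x,t-r^2)\geq c\,r^{-n}\exp\bigl(\NN_{x,t}(r^2)-CA^2\bigr)$, integrating over $B_t(x,Ar)$ against the mass bound $\int_M K(\cdot,t;x,t-r^2)\,dV_t\leq C(R_{\min}r^2)$ yields
\[
|B_t(x,Ar)|_t\leq C\,r^n\exp\bigl(-\NN_{x,t}(r^2)+CA^2\bigr),
\]
which is the \emph{opposite} of what is asserted, since $\NN\leq 0$. To recover the stated bound you would need instead $K\geq c\,r^{-n}\exp\bigl(-\NN_{x,t}(r^2)-CA^2\bigr)$, i.e.\ the forward kernel must be \emph{large} when the Nash entropy is very negative. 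But $\NN_{x,t}$ is defined through the conjugate kernel $K(x,t;\cdot,t-r^2)$, not through $K(\cdot,t;x,t-r^2)$; no ``parabolic Harnack chain'' absent curvature bounds will produce such a lower bound. Bamler's route is different: it passes through Perelman's $\mathcal L$-geometry, using the reduced-distance lower bound $K(y,t;x,t-r^2)\geq(4\pi r^2)^{-n/2}e^{-\ell}$ together with a comparison tying $\ell$ to the Nash entropy (this is where the hypothesis $\scal\geq R_{\min}$ enters, controlling the scalar term in the $\mathcal L$-length from below).

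Part (iii) also has a gap. Basing at $(x,t+cr^2)$ requires the flow to exist past $t$, and even after extending (a closed flow does extend past a smooth endpoint), your ``perturbative Euclidean comparison'' on $[t,t+cr^2]$ relies on a scalar bound that is only assumed on $B_t(x,r)$ at the single time $t$. Any $c$ for which the bound persists would depend on the individual flow, so the resulting constant is not $C(n)$. Bamler's proof of his Theorem 8.1 avoids forward extension altogether: he works at time $t$, builds a compactly supported test function in $B_t(x,r)$ for Perelman's $\mathcal W$-functional (the scalar bound controls the $R$-term), and then compares the resulting $\mu(g(t),\cdot)$ with $\NN_{x,t}(r^2)$ through the monotonicity formulae.
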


For later applications, we need the following $L^p$-Poincar\'e inequality, proved by \cite[Theorem 1.10]{hein2014new} and \cite[Theorem 11.1]{bamler2020entropy}.

\begin{thm}[Poincar\'e inequality]\label{poincareinequ}
	Let $\flow$ be a closed Ricci flow with $x_0^*=(x_0,t_0)\in\XX$. Suppose $\tau>0$ with $t_0-\tau \in I$, and $h\in C^1(M)$ with $\int_M h \, \mathrm{d}\nu_{x_0^*;t_0-\tau}=0$. Then for any $p\geq 1$, 
	\begin{align*}
		\int_M |h|^p\,\mathrm{d}\nu_{x_0^*;t_0-\tau}\leq C(p)\tau^{p/2}\int_M |\nabla h|^p\,\mathrm{d}\nu_{x_0^*;t_0-\tau}.
	\end{align*}
	Here, we can choose $C(1)=\sqrt{\pi}$ and $C(2)=2$. 
\end{thm}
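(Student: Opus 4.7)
The strategy is to extend $h$ forward in time by solving the heat equation and then exploit monotonicity formulas that are special to Ricci flow. Let $u(x,t)$ solve $\square u = 0$ on $M \times [t_0-\tau, t_0]$ with $u(\cdot,t_0-\tau) = h$. Since $\square$ and $\square^*$ are adjoints and $d\nu_{x_0^*;t}$ is a conjugate heat kernel measure, one has $\tfrac{d}{dt}\int \phi\, d\nu_{x_0^*;t} = \int \square\phi\, d\nu_{x_0^*;t}$ for any $C^{2,1}$ function $\phi$. Applied to $\phi = u$, this gives $\tfrac{d}{dt}\int u\, d\nu_{x_0^*;t} = 0$, and evaluating at $t=t_0$ (where $\nu_{x_0^*;t_0} = \delta_{x_0}$) and $t = t_0-\tau$ (where the integral vanishes by hypothesis) yields the key identity $u(x_0, t_0) = 0$.

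For the representative case $p=2$, I would differentiate $\int u^2\, d\nu_{x_0^*;t}$. Using $\square(u^2) = 2u\square u - 2|\nabla u|^2 = -2|\nabla u|^2$, we obtain
\[
\int h^2\, d\nu_{x_0^*;t_0-\tau} = 2\int_{t_0-\tau}^{t_0} \int |\nabla u|^2\, d\nu_{x_0^*;t}\, dt.
\]
The Ricci-flow-specific fact that powers the argument is $\square|\nabla u|^2 = -2|\nabla^2 u|^2 \leq 0$: the $\mathrm{Ric}(\nabla u,\nabla u)$ term appearing in the Bochner identity is cancelled precisely by the $\partial_t g = -2\mathrm{Ric}$ contribution in the time-derivative of $|\nabla u|^2_{g(t)}$. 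Hence $t \mapsto \int |\nabla u|^2\, d\nu_{x_0^*;t}$ is nonincreasing, so the right-hand side is bounded by $2\tau \int |\nabla h|^2\, d\nu_{x_0^*;t_0-\tau}$, giving $C(2) = 2$.

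For general $p \geq 2$, the same scheme works after using Kato's inequality $\square |\nabla u| \leq 0$ (valid on Ricci flow by the same cancellation) and the chain rule to deduce $\square|\nabla u|^p \leq 0$; integrating this against the conjugate heat kernel and invoking the reproduction formula yields the stronger monotonicity $\int|\nabla u|^p\, d\nu_{x_0^*;t} \leq \int|\nabla h|^p\, d\nu_{x_0^*;t_0-\tau}$. Combining with $\tfrac{d}{dt}\int |u|^p\, d\nu_{x_0^*;t} = -p(p-1)\int |u|^{p-2}|\nabla u|^2\, d\nu_{x_0^*;t}$ and Hölder's inequality with exponents $(p/(p-2), p/2)$ then closes the estimate. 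The main obstacle I anticipate is the $p=1$ case with its sharp constant $C(1) = \sqrt{\pi}$, since the above differentiation breaks down due to the singular weight $|u|^{p-2}$. To handle this, I would apply the $p=2$ scheme to the smoothing $\sqrt{h^2 + \epsilon^2}$ and then pass $\epsilon \searrow 0$, using the pointwise semigroup bound together with a careful error analysis; alternatively, one can invoke the Gaussian concentration implicit in the Laplace transform estimate $\int e^{\lambda(F-\bar F)}\, d\nu_{x_0^*;t_0-\tau} \leq e^{\tau \lambda^2}$ (already used in the proof of Theorem \ref{heatkernelupperbdgeneral}) together with a duality/coarea argument on the level sets of $h$ to extract the sharp $\sqrt{\pi}$ factor.
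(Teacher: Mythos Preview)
The paper does not prove this result; it is quoted from \cite{hein2014new} and \cite{bamler2020entropy}. Your argument for $p=2$ is exactly the standard one in those references and gives the sharp $C(2)=2$, and your scheme for $p\ge 2$ (Kato inequality $\square|\nabla u|\le 0$, monotonicity of $\int|\nabla u|^p\,d\nu_t$, then the H\"older/ODE step) is also correct and yields $C(p)=(2(p-1))^{p/2}$.

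The gap is the whole range $1\le p<2$, not only $p=1$. For $1<p<2$ your H\"older pair $(p/(p-2),p/2)$ is inadmissible (the first exponent is negative), and you do not address this. For $p=1$ neither proposed fix closes the argument. Differentiating $\int\sqrt{u^2+\epsilon^2}\,d\nu_t$ and sending $\epsilon\to 0$ produces a coarea-type expression concentrated on $\{u=0\}$, not $\int|\nabla h|\,d\nu$. The Laplace-transform bound $\int e^{\lambda(F-\bar F)}\,d\nu\le e^{\tau\lambda^2}$ controls $\int|F|\,d\nu$ only through $\|\nabla F\|_\infty$, not $\int|\nabla F|\,d\nu$; to obtain the latter one needs an intermediate step you have not supplied, namely the Gaussian isoperimetric inequality for $\nu_{x_0^*;t_0-\tau}$ (Bakry--Ledoux, available on Ricci flow via the same semigroup gradient estimate you use). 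With that in hand, coarea gives $C(1)=\sqrt{\pi}$, and $1<p<2$ then follows by interpolating between $p=1$ and $p=2$. This is essentially the route taken in the cited references.
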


We also have the following hypercontractivity estimate from \cite[Theorem 12.1]{bamler2020entropy}.

\begin{thm}[Hypercontractivity]\label{hypercontractivity}
	Let $\flow$ be a closed Ricci flow with $x_0^*=(x_0,t_0)\in\XX$. Suppose $0<\tau_1<\tau_2$ with $t_0-\tau_2 \in I$, and $u\in C^2(M \times [t_0-\tau_2, t_0-\tau_1])$ is a solution of $\square u=0$ or $u\geq 0$ with $\square u\leq 0$. If $1<q\leq p<\infty$ satisfies
		\begin{align*}
	\frac{\tau_2}{\tau_1}\geq\frac{p-1}{q-1},
	\end{align*}
	then we have
	\begin{align*}
		\lc \int_{M}|u|^p \, \mathrm{d}\nu_{x_0^*;t_0-\tau_1} \rc^{1/p}\leq \lc \int_{M}|u|^q\,\mathrm{d}\nu_{x_0^*;t_0-\tau_2} \rc^{1/q}.
	\end{align*}	
\end{thm}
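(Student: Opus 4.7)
The plan is to adapt the classical Gross equivalence between hypercontractivity and a logarithmic Sobolev inequality, now applied to the conjugate heat kernel measures along the Ricci flow. First, a smoothing reduction: if $\square u = 0$ (so $u$ may change sign), I would work with $v_\epsilon := \sqrt{u^2+\epsilon}$ and verify via a Kato-type identity that $\square v_\epsilon \le 0$ and $v_\epsilon > 0$; then it suffices to handle the case $u > 0$ with $\square u \le 0$, and take $\epsilon \to 0$ at the end. Next I would choose the interpolating exponent
\[
p(t) := 1 + \frac{(p-1)\tau_1}{t_0 - t}, \qquad t\in[t_0-\tau_2,t_0-\tau_1],
\]
so that $p(t_0-\tau_1)=p$ and $p(t_0-\tau_2)=1+(p-1)\tau_1/\tau_2 \le q$, using the hypothesis $\tau_2/\tau_1 \ge (p-1)/(q-1)$. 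Set $F(t):=\int_M u^{p(t)}\,d\nu_{x_0^*;t}$ and $\phi(t):=F(t)^{1/p(t)}$. Since $\|\cdot\|_{L^r(\nu)}$ is nondecreasing in $r$ on a probability space, it reduces to proving $\phi(t_0-\tau_1) \le \phi(t_0-\tau_2)$, i.e., $\phi'(t)\le 0$.

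A direct differentiation of $F(t)$, using $\partial_t u \le \Delta u$, the conjugate heat equation for the density of $\nu_{x_0^*;t}$, and one integration by parts, would yield
\[
F'(t) \le p'(t)\int_M u^{p(t)}\log u\, d\nu_{x_0^*;t} - p(t)(p(t)-1)\int_M u^{p(t)-2}|\nabla u|^2\,d\nu_{x_0^*;t}.
\]
Substituting $w := u^{p(t)/2}$ and $g := w/\sqrt{F(t)}$ (so $\int g^2\,d\nu = 1$), the inequality $\phi'(t)\le 0$ is equivalent, after straightforward algebra, to
\[
\int_M g^2 \log g^2\,d\nu_{x_0^*;t} \le \frac{4(p(t)-1)}{p'(t)}\int_M |\nabla g|^2\,d\nu_{x_0^*;t}.
\]
A short calculation for the specific $p(t)$ above gives $(p(t)-1)/p'(t) = t_0-t = \tau$, so the constant on the right is exactly $4\tau$.

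The remaining input is Perelman's Gaussian-type logarithmic Sobolev inequality for the conjugate heat kernel measure: for any smooth $g$ with $\int g^2\,d\nu_{x_0^*;t_0-\tau}=1$,
\[
\int_M g^2 \log g^2\,d\nu_{x_0^*;t_0-\tau} \le 4\tau \int_M |\nabla g|^2\,d\nu_{x_0^*;t_0-\tau}.
\]
This follows from $\mathcal{W}_{x_0^*}(\tau) \le 0$ in Proposition \ref{propNashentropy}(ii) via the standard derivation (cf.\ Hein--Naber). Combining the LSI with the computation above yields $\phi'(t)\le 0$, completing the proof. The step I expect to require the most care is the LSI: one must obtain the sharp constant $4\tau$ (not merely $4\tau$ up to lower-order terms) so that the argument closes with equality in the constraint $(p(t)-1)/p'(t) = \tau$; justifying the differentiation of $F(t)$ and the integration by parts near zeros of $u$ is routine once the $\epsilon$-smoothing is in place.
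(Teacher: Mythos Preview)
The paper does not give its own proof of this theorem; it simply cites \cite[Theorem 12.1]{bamler2020entropy}. Your proposal is correct and is precisely the standard Gross-type argument (LSI $\Rightarrow$ hypercontractivity) that Bamler uses in the cited reference: the key input is the log-Sobolev inequality $\int g^2\log g^2\,\mathrm{d}\nu_{x_0^*;t_0-\tau}\le 4\tau\int|\nabla g|^2\,\mathrm{d}\nu_{x_0^*;t_0-\tau}$ for the conjugate heat kernel measure, and your choice of $p(t)$ is exactly calibrated so that $(p(t)-1)/p'(t)=\tau$ closes the computation.
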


In this paper, we mainly focus on the case where a Ricci flow $\XX$ has entropy bounded below. To formalize this, we introduce the following definition:
\begin{defn}\label{def:entropybound}
A closed Ricci flow $\flow$ is said to have entropy bounded below by $-Y$ at $x^* \in \XX$ if
\begin{align}\label{entropybd-Y}
	\inf_{\tau>0}\NN_{x^*}(\tau)\geq -Y,
\end{align}
where the infimum is taken over all $\tau>0$ for which the Nash entropy $\NN_{x^*}(\tau)$ is well-defined. 

Moreover, we say that the Ricci flow $\XX$ has entropy bounded below by $-Y$ if \eqref{entropybd-Y} holds for all $x^* \in \XX$.
\end{defn}

Under the assumption of a local scalar curvature bound, we have the following distance distortion estimates.

\begin{prop}\label{HncenterScal}
Let $\flow$ be a closed Ricci flow with entropy bounded below by $-Y$. Let $x^*=(x,t_0)\in \XX$ with $[t_0-2r^2, t_0]\subset I$. For any constant $R_0>0$, there exists a constant $C=C(n, Y, R_0)>0$ such that the following statements hold.
	\begin{enumerate}[label=\textnormal{(\roman{*})}]
		\item Assume $|\scal|\leq R_0 r^{-2}$ on $\{x\} \times [t_0-r^2, t_0]$. If $(z, t)$ is an $H_n$-center of $x^*$ with $t \in [t_0-r^2, t_0]$, then
		\begin{align}\label{Hncenterscal1}
		d_{t}(x,z)\leq C\sqrt{t_0-t}.
	\end{align}
	
	\item Assume $|\scal|\leq R_0 r^{-2}$ on $\{x\} \times [t_0-r^2, t_0]$ and $\{y\} \times [t_0-r^2, t_0]$. Then for any $t \in [t_0-r^2, t_0]$,
	\begin{align*}
		d_{t}(x, y) \le d_{t_0}(x, y)+C \sqrt{t_0-t}.
	\end{align*}

	\item Assume $|\scal|\leq R_0 r^{-2}$ on $B_{t_0}(x, r) \times [t_0-r^2, t_0+r^2] \cap I$. Then any $y \in M$ with $d_{t}(x, y) \le C^{-1} r$ for some $t \in [t_0-C^{-1} r^2, t_0+C^{-1} r^2] \cap I$ satisfies
	\begin{equation*}
		d_{t_0}(x,y) \le r.
	\end{equation*}
	\end{enumerate}	
\end{prop}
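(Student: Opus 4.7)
For part (i), the plan is to bound the potential function $f_{x^*}$ at the base point $x$ and match this against the Gaussian upper bound of Theorem \ref{heatkernelupperbdgeneral}(ii) evaluated on the diagonal $y=x$. Along the constant curve $\gamma(\sigma)\equiv x$ for $\sigma\in[0,\tau]$ with $\tau:=t_0-t$, Perelman's $\mathcal L$-length satisfies
\[
\mathcal L(\gamma)=\int_0^\tau \sqrt{\sigma}\,\scal(x,t_0-\sigma)\,\mathrm{d}\sigma\leq \tfrac{2R_0}{3r^2}\tau^{3/2},
\]
so the reduced length obeys $\ell_{x^*}(x,t)\leq R_0/3$ since $\tau\leq r^2$. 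Perelman's comparison $f_{x^*}\leq\ell_{x^*}$---from the sub-solution property of $(4\pi\sigma)^{-n/2}e^{-\ell_{x^*}}$ for the conjugate heat equation with matching initial data---then yields the diagonal lower bound $K(x,t_0;x,t)\geq c(n,R_0)\,\tau^{-n/2}$. Inserting $y=x$ into \eqref{sharpheatkernelupper}, absorbing $\NN_{x^*}(\tau)\geq -Y$ and the universal scalar lower bound \eqref{eq:lowerscal} into the prefactor, one obtains
\[
K(x,t_0;x,t)\leq \frac{C(n,Y,R_0,\epsilon)}{\tau^{n/2}}\exp\left(-\frac{d_t^2(x,z)}{(4+\epsilon)\tau}\right).
\]
Matching the two bounds and solving for $d_t(x,z)$ gives \eqref{Hncenterscal1}.

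Part (ii) follows from (i) via a triangle chain through $H_n$-centers. Let $(z_1,t),(z_2,t)$ be $H_n$-centers of $x^*,y^*$ respectively. Part (i), applied separately along the worldlines of $x$ and $y$, gives $d_t(x,z_1),\,d_t(y,z_2)\leq C\sqrt{t_0-t}$, while Lemma \ref{lem:var} combined with the monotonicity of $d_{W_1}^t$ (Proposition \ref{monotonicityW1}) gives
\[
d_t(z_1,z_2)\leq 2\sqrt{H_n(t_0-t)}+d_{W_1}^t(\nu_{x^*;t},\nu_{y^*;t})\leq 2\sqrt{H_n(t_0-t)}+d_{t_0}(x,y),
\]
and the triangle inequality finishes the proof.

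Part (iii) is the part I expect to be the main obstacle, since the scalar bound is now imposed on the spatial ball $B_{t_0}(x,r)$ rather than along the worldline of $y$, so neither (i) nor (ii) applies directly to $y$ without first knowing $y\in B_{t_0}(x,r)$---which is precisely what we want to conclude. The plan is to argue by contradiction: assuming $d_{t_0}(x,y)>r$, derive a lower bound $d_s(x,y)>(1-C^{-1/2})r$ uniformly for $s\in[t_0-C^{-1}r^2,t_0+C^{-1}r^2]\cap I$, contradicting the hypothesis $d_t(x,y)\leq C^{-1}r$ once $C$ is chosen large. The core technical step is a one-sided distance-distortion estimate using only the scalar bound on $B_{t_0}(x,r)$ and the entropy bound. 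My first line of attack is to apply (i) along the $x$-worldline (which is automatically contained in $B_{t_0}(x,r)$) to place the $H_n$-centers $(z_s,s)$ of $x^*$ within $C\sqrt{|t_0-s|}$ of $x$ at every time $s$, then exploit the concentration of $\nu_{x^*;s}$ around $z_s$ (Proposition \ref{existenceHncenter}) together with the coupling $d_{W_1}^s(\nu_{x^*;s},\nu_{y^*;s})\leq d_{t_0}(x,y)$ from Proposition \ref{monotonicityW1} to transfer the geometric information from time $t_0$ back to time $s$ with an error of order $\sqrt{|t_0-s|}$. If this transfer cannot be closed using the $x$-side alone (because the $\nu_{y^*;s}$-side still requires control near $y$), the fallback plan is to invoke the one-sided distance-distortion estimates for Ricci flows under scalar curvature bound and entropy bound developed in \cite{bamler2020entropy}, which are designed for exactly this configuration.
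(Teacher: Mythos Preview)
Your arguments for (i) and (ii) are correct and match the paper's approach. For (i), the paper simply cites \cite[Proposition 4.4]{LW25} and \cite[Proposition 3.1]{jian2023improved}, and the $\mathcal L$-geometry argument you wrote out is precisely what those references contain: bound $\ell_{x^*}(x,t)$ via the constant curve, use $f_{x^*}\le\ell_{x^*}$ to get a diagonal lower bound on $K$, and compare with the Gaussian upper bound \eqref{sharpheatkernelupper}. Part (ii) is identical to the paper's proof.

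For (iii), the paper does not give a self-contained argument either; it defers directly to the local distance-distortion theorem of Bamler--Zhang \cite[Theorem 1.1]{bamler2017heat} (see also \cite[Lemma 4.21]{chen2020space}, \cite[Lemma 5.8]{li2024heat}), with the remark that the $\nu$-entropy assumption there can be relaxed to a Nash entropy lower bound once one has Theorem \ref{heatkernelupperbdgeneral} and Proposition \ref{prop:volume} in hand. Your first approach cannot close: as you suspected, the $W_1$-monotonicity $d_{W_1}^s(\nu_{x^*;s},\nu_{y^*;s})\le d_{t_0}(x,y)$ only gives an \emph{upper} bound on the $W_1$-distance at earlier times, which says nothing against $d_s(x,y)$ being small; and for $s>t_0$, controlling the $H_n$-center of $(y,s)$ at time $t_0$ would require a scalar bound along the worldline of $y$, which is not assumed. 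So the fallback is the right move, but the relevant reference is \cite{bamler2017heat}, not \cite{bamler2020entropy}.
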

\begin{proof}
(i): Equation \eqref{Hncenterscal1} can be established using the same argument as in \cite[Proposition 4.4]{LW26}; see also \cite[Proposition 3.1]{jian2023improved}.

(ii): Let $(z_1, t)$ and $(z_2, t)$ be $H_n$-centers of $x^*$ and $y^*:=(y, t_0)$, respectively. From \eqref{Hncenterscal1}, we have
	\begin{align*}
		d_t(x, z_1)+d_t(y, z_2) \le C(n, Y, R_0)\sqrt{t_0-t}.
	\end{align*}	
Thus, it follows from Proposition \ref{monotonicityW1} that
	\begin{align*}
		d_t(x, y) \le & d_t(z_1, z_2)+C(n, Y, R_0)\sqrt{t_0-t} \\
		 \le & d_{W_1}^t(\nu_{x^*;t}, \nu_{y^*;t}) +C(n, Y, R_0)\sqrt{t_0-t} \le d_{t_0}(x, y)+C(n, Y, R_0)\sqrt{t_0-t}.
	\end{align*}	

(iii): This follows from the local distance-distortion estimate in \cite[Theorem 1.1]{bamler2017heat}; see also \cite[Lemma 4.21]{chen2020space} and \cite[Lemma 5.8]{li2024heat}. Although \cite[Theorem 1.1]{bamler2017heat} assumes a lower bound on Perelman's $\nu$-entropy, this condition can, in fact, be relaxed. Applying Theorem \ref{heatkernelupperbdgeneral} and Proposition \ref{prop:volume}, we can verify that it suffices to assume only a lower bound on the Nash entropy.
\end{proof}

We also need the following integral estimates from \cite[Proposition 6.2]{bamler2020structure}.

\begin{prop}\label{integralbound}
There exists a constant $\bar \alpha=\bar \alpha(n)>0$ such that the following holds. Let $\flow$ be a closed Ricci flow. Suppose $x_0^*=(x_0,t_0)\in\XX$ with $[t_0-2r^2,t_0]\subset I$, and define $\mathrm{d}\nu_t=\mathrm{d}\nu_{x_0^*;t}=(4\pi\tau)^{-n/2} e^{-f}\,\mathrm{d}V_{g(t)}$, where $\tau=t_0-t$. Assume that $\NN_{x_0^*}(2r^2)\geq -Y$ for some $r>0$. Then, for any $0<\theta\leq 1/2$ and $\alpha\in [0,\bar \alpha]$, the following estimates hold:
	\begin{align*}
		\int_{t_0-r^2}^{t_0-\theta r^2}\int_M\lc\tau|\Ric|^2+\tau |\na^2 f|^2+|\na f|^2+\tau |\na f|^4+\tau^{-1}e^{\alpha f}+\tau^{-1}\rc e^{\alpha f}\, \mathrm{d}\nu_t \mathrm{d}t&\leq C(n,Y)|\log\theta|,\\
		\int_M\lc \tau |\scal|+\tau |\Delta f|+\tau|\na f|^2+e^{\alpha f}+1\rc e^{\alpha f}\, \mathrm{d}\nu_{t_0-r^2}&\leq C(n,Y).
	\end{align*}
\end{prop}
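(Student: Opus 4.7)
The plan is to combine Perelman's second-variation formula (Proposition~\ref{propNashentropy}(iii)) with the sharp Gaussian upper bound on the conjugate heat kernel (Theorem~\ref{heatkernelupperbdgeneral}) and hypercontractivity (Theorem~\ref{hypercontractivity}). I would first integrate the identity
\[
\frac{d^2}{d\tau^2}\bigl(\tau\NN_{x_0^*}(\tau)\bigr)=-2\tau\int_M\Bigl|\Ric+\nabla^2 f-\tfrac{g}{2\tau}\Bigr|^2 d\nu_{x_0^*;t_0-\tau}
\]
over $\tau\in[\theta r^2,2r^2]$ and, using $\NN_{x_0^*}\geq -Y$ together with $\WW_{x_0^*}\leq 0$, extract (via a dyadic decomposition of the monotone concave function $\tau\mapsto\tau\NN_{x_0^*}(\tau)$) the bound $\int_{\theta r^2}^{2r^2}\tau\int_M|\Ric+\nabla^2 f-g/(2\tau)|^2 d\nu_t\, d\tau\leq C(n,Y)|\log\theta|$. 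Expanding the square and integrating by parts against the drift measure $d\nu=(4\pi\tau)^{-n/2}e^{-f}dV$ converts the cross term $\langle\Ric,\nabla^2 f\rangle$ into lower-order pieces, yielding individual $L^2$ bounds for $\tau|\Ric|^2$ and $\tau|\nabla^2 f|^2$; the Poincar\'e inequality (Theorem~\ref{poincareinequ}) applied to the Perelman functional then produces the $L^2$ bound on $|\nabla f|^2$, while the $\tau^{-1}$ terms contribute only a logarithm.

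Second, to insert the weight $e^{\alpha f}$, I would use the sharp bound \eqref{sharpheatkernelupper} to deduce a pointwise lower bound
\[
f(y,t)\geq \frac{d_t^2(y,z)}{(4+\epsilon)(t_0-t)}-C(n,Y)
\]
at any $H_n$-center $(z,t)$ of $x_0^*$; coupled with the concentration estimate \eqref{sharpconcentration}, this keeps $\int_M e^{p\alpha f}\, d\nu_t$ bounded by $C(n,Y,p)$ for any $p\geq 1$ provided $\bar\alpha=\bar\alpha(n)$ is chosen sufficiently small. Cauchy--Schwarz then transfers the weight into each of the $L^2$ bounds from the first step, and the single-slice inequality at $\tau=r^2$ follows from the time-integrated version by specializing $\theta=1/2$ and shrinking the integration window.

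The main obstacle is the $\tau|\nabla f|^4 e^{\alpha f}$ term, which is of fourth order in $\nabla f$ and is not directly captured by the entropy identity. For this I would apply Theorem~\ref{hypercontractivity} to $u=|\nabla f|^2$, which along Ricci flow satisfies $\square u\leq 2|\nabla^2 f|^2$ by Bochner's formula; hypercontractivity then bounds $\int_M |\nabla f|^4 d\nu_{t_0-\tau}$ by the square of $\int_M |\nabla f|^2 d\nu_{t_0-2\tau}$ plus an $|\nabla^2 f|^2$ remainder, both controlled by Step~1. Incorporating the weight $e^{\alpha f}$ here relies on the differential identity
\[
\square^*\bigl(e^{\alpha f}\bigr)=-\alpha(1+\alpha)|\nabla f|^2 e^{\alpha f}+\bigl[(1+\alpha)\scal-\tfrac{\alpha n}{2\tau}\bigr]e^{\alpha f},
\]
whose negative leading term for $\alpha\in(0,1)$ enables a Moser-type iteration on the weighted measure $e^{\alpha f}d\nu$; the iteration carries the weight through the hypercontractivity estimate after shrinking $\bar\alpha$ one more time, closing the proof.
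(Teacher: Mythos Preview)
The paper does not prove this proposition; it simply cites \cite[Proposition 6.2]{bamler2020structure}. Your outline is in the right spirit for the unweighted case $\alpha=0$ (Perelman's monotonicity plus the entropy bound indeed controls $\int\tau|\Ric+\nabla^2 f-g/2\tau|^2\,d\nu_t\,dt$, and the individual terms can then be extracted), but two of your steps contain genuine errors.

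First, the hypercontractivity argument for $\tau|\nabla f|^4$ rests on the claim that $\square|\nabla f|^2\le 2|\nabla^2 f|^2$ ``by Bochner's formula.'' This is false for the potential $f$. Under Ricci flow one has $\square|\nabla h|^2=2\langle\nabla h,\nabla\square h\rangle-2|\nabla^2 h|^2$ for any $h$, so the inequality you need holds when $\square h=0$. But the conjugate-heat potential satisfies $\square f=-2\Delta f+|\nabla f|^2-\scal+n/2\tau$, and the resulting cross term $\langle\nabla f,\nabla\square f\rangle$ contains $-2\langle\nabla f,\nabla\Delta f\rangle-\langle\nabla f,\nabla\scal\rangle$, which you have no control over. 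Even granting your inequality, the sign would be wrong for Theorem~\ref{hypercontractivity}, which requires $\square u\le 0$. The correct route to $|\nabla f|^4$ is a fixed-time integration by parts: since $\int_M(\Delta f-|\nabla f|^2)h\,d\nu=-\int_M\langle\nabla f,\nabla h\rangle\,d\nu$, taking $h=|\nabla f|^2$ and using Young's inequality yields $\int_M|\nabla f|^4\,d\nu\le C(n)\int_M|\nabla^2 f|^2\,d\nu$ directly, with no parabolic machinery needed.

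Second, the step ``Cauchy--Schwarz then transfers the weight'' is circular. You have established $\int A\,d\nu\le C$ for $A=\tau|\Ric|^2$, etc., but to bound $\int A\,e^{\alpha f}\,d\nu$ by Cauchy--Schwarz you would need $\int A^2\,d\nu$, i.e.\ fourth-order curvature bounds, which you do not have. The differential identity for $\square^*(e^{\alpha f}\,d\nu)$ that you write down is indeed the key, but it is not used via Moser iteration; rather, one repeats the Perelman-type monotonicity computation with $e^{\alpha f}\,d\nu$ in place of $d\nu$ from the outset. The extra terms generated by the weight are exactly $-\alpha(1-\alpha)|\nabla f|^2 e^{\alpha f}$ and lower order, and for small $\bar\alpha(n)$ they can be absorbed into the left-hand side, which is how one closes the estimate with the weight already present.
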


We end this subsection with the following two-sided pseudolocality theorem from \cite[Theorem 10.1]{perelman2002entropy} and \cite[Theorem 2.47]{bamler2020structure}:

\begin{thm}[Two-sided pseudolocality theorem]\label{thm:twoside}
Let $\flow$ be a closed Ricci flow. For any $\alpha > 0$, there is an $\ep (n, \alpha) > 0$ such that the following holds.

Given $x_0^*=(x_0, t_0) \in \XX$ and $r > 0$ with $[t_0-r^2, t_0] \subset I$, if $|B_{t_0}(x_0,r)|_{t_0} \geq \alpha r^n$ and $|\Rm| \leq (\alpha r)^{-2}$ on $B_{t_0}(x_0,r)$, then
\begin{align*}
r_{\Rm}(x_0^*) \ge \ep r.
\end{align*}
\end{thm}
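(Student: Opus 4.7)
The plan is to establish the forward and backward halves of the parabolic neighborhood by separate mechanisms. For the forward half, i.e.\ bounding $|\Rm|$ on $B_{t_0}(x_0,\ep r)\times[t_0, t_0+\ep^2 r^2]\cap I$, the hypotheses match exactly those of Perelman's classical pseudolocality theorem \cite[Theorem 10.1]{perelman2002entropy}, which yields $|\Rm|(y,t)\leq C(t-t_0)^{-1}$ on a forward region of the form $B_t(x_0,\delta r)\times(t_0,t_0+\delta^2 r^2]$. Combining this with the initial bound $|\Rm|(\cdot,t_0)\leq(\alpha r)^{-2}$ and applying the distance distortion estimate in Proposition \ref{HncenterScal}(iii) produces the required uniform bound for sufficiently small $\ep(n,\alpha)$.

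For the backward half, I would appeal to the Nash $\ep$-regularity criterion (Theorem \ref{epregularityNash}): it suffices to exhibit a scale $s=s(n,\alpha)\cdot r$ such that every spacetime point $y^*$ in the backward neighborhood satisfies $\NN_{y^*}(s^2)\geq-\ep_n$. A crude bound at the base point is immediate: since $|\Rm|\leq(\alpha r)^{-2}$ forces $\scal\geq-C(n,\alpha)r^{-2}$ on $B_{t_0}(x_0,r)$, rearranging Proposition \ref{prop:volume}(i) with $A=1$ against the volume hypothesis $|B_{t_0}(x_0,r)|_{t_0}\geq\alpha r^n$ gives $\NN_{x_0^*}(r^2)\geq-C_0(n,\alpha)$. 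To sharpen this to $-\ep_n$ at a smaller scale and propagate it to nearby $y^*$, I would argue by contradiction and parabolic rescaling: suppose the statement fails along a sequence $(\XX^i,x_0^{*,i},r_i)$ with $\ep_i\to 0$; normalize $r_i=1$ and perform Perelman-style point-picking to select $y_i^*$ of minimal curvature radius $\rho_i\to 0$ in a fixed parabolic neighborhood of $x_0^{*,i}$. Rescaling by $\rho_i^{-2}$ and invoking Hamilton's compactness theorem (justified by the crude entropy bound and the $\kappa$-noncollapsing implicit in Proposition \ref{prop:volume}) extracts a smooth ancient limit Ricci flow with $r_{\Rm}(y_\infty^*)=1$. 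Meanwhile, the rescaled initial data at $t_0^i$ becomes infinitely spread out and asymptotically Euclidean, and the Nash entropy monotonicity (Proposition \ref{propNashentropy}) together with the $W_1$-comparison in Proposition \ref{propNashentropy1} collapses the Nash entropy at $y_\infty^*$ to $0$ at every scale. The rigidity case in Theorem \ref{epregularityNash} then forces the limit to be flat Euclidean space, contradicting $r_{\Rm}(y_\infty^*)=1$.

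The principal obstacle is the blowup contradiction: one must carefully propagate the crude Nash entropy bound from the base point $x_0^{*,i}$ to the selected $y_i^*$ via the $W_1$-based inequality of Proposition \ref{propNashentropy1}, which requires controlling the conjugate heat flow distance between $x_0^{*,i}$ and $y_i^*$ on the scale dictated by the point-picking. Ensuring that no entropy is lost in passing to the smooth limit, and that Hamilton's compactness actually applies on the rescaled flows, also relies on combining the crude Nash entropy lower bound with the forward estimates from Step 1 to extract uniform $\kappa$-noncollapsing on the rescaled parabolic balls.
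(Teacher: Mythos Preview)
The paper does not prove Theorem~\ref{thm:twoside}; it is quoted as a known result, with the forward half attributed to Perelman \cite[Theorem~10.1]{perelman2002entropy} and the backward half to Bamler \cite[Theorem~2.47]{bamler2020structure}. So there is no ``paper's own proof'' to compare against, and your proposal is really an attempt to reconstruct those two cited theorems.

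Your forward half is fine: Perelman's pseudolocality applies directly, and the distance-distortion argument you describe is standard.

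Your backward half has two issues. First, you invoke a ``rigidity case in Theorem~\ref{epregularityNash}'', but that theorem as stated in the paper is purely a quantitative $\ep$-regularity statement with no rigidity clause; you would need to supply the rigidity separately (e.g.\ equality in Perelman's monotonicity forces a gradient shrinker, which combined with flatness at one scale forces Euclidean space). Second, and more seriously, the entropy propagation from $x_0^{*,i}$ to the point-picked $y_i^*$ via Proposition~\ref{propNashentropy1} is circular as written: that proposition requires control on $d_{W_1}^t(\nu_{x_0^{*,i};t},\nu_{y_i^*;t})$ at some intermediate time~$t$, and bounding this $W_1$-distance typically needs exactly the kind of curvature control on the region between the two points that you are trying to establish. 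Bamler's actual proof of \cite[Theorem~2.47]{bamler2020structure} is more involved and does not proceed by a direct point-picking/blowup contradiction of this form. Your sketch captures the right flavor (entropy lower bound from volume via Proposition~\ref{prop:volume}(i), then $\ep$-regularity), but the mechanism linking the crude bound at~$x_0^*$ to a sharp bound at nearby backward points is the genuine content of Bamler's theorem and is not recovered by your outline.
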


\section{Spacetime distance and Ricci flow limit spaces}\label{secRicciflowlimitspace}

We begin by fixing the time intervals. For a given constant $T \in (0,+\infty]$ and a parameter $\sigma\in(0,1/100]$, define
\begin{align*}
\III^-=(-(1-2\sigma)T,0],\quad \III=[-(1-2\sigma)T,0],\quad
\III^+=[-(1-\sigma)T,0],\quad \III^{++}=[-T,0].\index{$\ensuremath{\III^-,\III,\III^+,\III^{++}}$}
\end{align*}
If $T=+\infty$, we set $\III^-=\III=\III^+=\III^{++}=(-\infty,0]$.

\begin{defn}[Moduli space]\label{defnmoduli}
For a fixed constant $T\in(0,+\infty]$, the moduli space $\MM(n,T)$\index{$\ensuremath{\MM(n,T)}$} consists of all $n$-dimensional closed Ricci flows
\begin{align*}
\XX=\{M^n,(g(t))_{t\in\III^{++}}\}.
\end{align*}
For $Y>0$, the subspace $\MM(n,Y,T)\subset\MM(n,T)$ consists of all such flows with entropy bounded below by $-Y$ (see Definition \ref{def:entropybound}).
\end{defn}

Next, we define the spacetime distance on $\XX_{\III^+}$.

\begin{defn}\label{defn:dstar-distance}
Let $\XX\in\MM(n,T)$. For $x^*,y^*\in\XX_{\III^+}$, set
\begin{align*}
t_+:=\max\{\t(x^*),\t(y^*)\},\qquad t_-:=\min\{\t(x^*),\t(y^*)\}.
\end{align*}
We define
\begin{align}\label{eq:defn-dstar}
d^*(x^*,y^*):=\inf_{-(1-\sigma)T\leq\tau\leq t_-}
\max\left\{\sqrt{t_+-\tau},\ d_{W_1}^{\tau}(\nu_{x^*;\tau},\nu_{y^*;\tau})\right\}.\index{$\ensuremath{d^*}$}
\end{align}
If $T=+\infty$, the infimum is taken over $\tau\in(-\infty,t_-]$.
Equivalently, if $t=\t(x^*)\geq s=\t(y^*)$ and $T<+\infty$, then
\begin{align}\label{eq:defn-dstar-r}
d^*(x^*,y^*)=\min_{r\in[\sqrt{t-s},\sqrt{t+(1-\sigma)T}]}
\max\left\{r,\ d_{W_1}^{t-r^2}(\nu_{x^*;t-r^2},\nu_{y^*;t-r^2})\right\}.
\end{align}
\end{defn}

\begin{lem}\label{continuitydW1}
	Assume $x^*,y^*\in\XX_{\III^{+}}$ for $\XX \in \MM(n,T)$. Then for $t \in [-(1-\sigma)T,\min\{\t(x^*),\t(y^*)\}]$, $t\mapsto d_{W_1}^{t}(\nu_{x^*;t},\nu_{y^*;t})$ is continuous.
\end{lem}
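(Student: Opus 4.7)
The plan is to combine the monotonicity of $t \mapsto d_{W_1}^t(\nu_{x^*;t},\nu_{y^*;t})$ from Proposition \ref{monotonicityW1} with a triangle-inequality argument based on smooth dependence of the metric $g(t)$ and weak continuity of the conjugate heat kernel measures $\nu_{\cdot;t}$.

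First, I would observe that since $\XX=\{M^n,(g(t))_{t\in \III^{++}}\}$ is a smooth closed Ricci flow and $M$ is compact, on any compact subinterval $J \subset \III^{++}$ the curvature tensor is uniformly bounded, so the metrics $\{g(t)\}_{t \in J}$ are uniformly bi-Lipschitz equivalent on $M$ and $\sup_{M \times M}|d_t - d_{t_0}| \to 0$ as $t \to t_0 \in J$. In particular, for any fixed pair of probability measures $\mu_1,\mu_2$ on $M$, the inequality $|d_{W_1}^{t}(\mu_1,\mu_2) - d_{W_1}^{t_0}(\mu_1,\mu_2)| \le \sup_{M\times M}|d_t - d_{t_0}|$ shows that $d_{W_1}^{t}(\mu_1,\mu_2) \to d_{W_1}^{t_0}(\mu_1,\mu_2)$ as $t \to t_0$.

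Next, I would establish that $t \mapsto \nu_{x^*;t}$ is continuous in the weak topology on $\PP(M)$ on $[-(1-\sigma)T,\t(x^*)]$. For $t < \t(x^*)$ this follows immediately from the smoothness of the conjugate heat kernel $K(x^*;\cdot,t)$, and at the endpoint $t = \t(x^*)$ the initial condition $\lim_{t \nearrow \t(x^*)} \nu_{x^*;t} = \delta_x$ from Definition \ref{def:chkm} gives weak continuity. Since $M$ is compact with bounded $d_{t_0}$-diameter, weak convergence of probability measures implies convergence in the $W_1$-Wasserstein distance (a standard fact, also obtainable from Proposition \ref{takelimitVarW1}(ii) with $p=1$). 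Hence $d_{W_1}^{t_0}(\nu_{x^*;t_n},\nu_{x^*;t_0}) \to 0$ whenever $t_n \to t_0$, and by the uniform metric equivalence from the previous step the same convergence holds with respect to $d_{t_n}$. The analogous statement of course holds for $\nu_{y^*;t}$.

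Finally, for any $t_n \to t_0$ in the stated interval, the triangle inequality for $d_{W_1}^{t_n}$ yields
\begin{align*}
\bigl| d_{W_1}^{t_n}(\nu_{x^*;t_n},\nu_{y^*;t_n}) - d_{W_1}^{t_n}(\nu_{x^*;t_0},\nu_{y^*;t_0}) \bigr| \le d_{W_1}^{t_n}(\nu_{x^*;t_n},\nu_{x^*;t_0}) + d_{W_1}^{t_n}(\nu_{y^*;t_n},\nu_{y^*;t_0}),
\end{align*}
whose right-hand side tends to $0$ by the second step. Combining this with the convergence $d_{W_1}^{t_n}(\nu_{x^*;t_0},\nu_{y^*;t_0}) \to d_{W_1}^{t_0}(\nu_{x^*;t_0},\nu_{y^*;t_0})$ from the first step yields the desired continuity. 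There is essentially no obstacle here; the only mild care is needed at the right endpoint $t_0 = \min\{\t(x^*),\t(y^*)\}$, where one or both limiting measures is a Dirac mass, but this case is already absorbed into the weak-to-$W_1$ convergence on the compact manifold $M$. The monotonicity of Proposition \ref{monotonicityW1} is not logically necessary here, though it would streamline the presentation by reducing the task to computing one-sided limits.
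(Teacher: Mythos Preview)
Your proof is correct and takes a genuinely different route from the paper's. You decouple the two sources of variation in $d_{W_1}^t(\nu_{x^*;t},\nu_{y^*;t})$---the changing metric $d_t$ and the changing measures $\nu_{\cdot;t}$---handle each by a direct compactness argument on the closed manifold $M$ (uniform convergence of $d_t$ from bounded curvature, and weak-to-$W_1$ convergence of the heat-kernel measures), and then recombine via the triangle inequality. The paper instead exploits the monotonicity of Proposition~\ref{monotonicityW1} to reduce to one-sided inequalities, and for each side argues through the Kantorovich--Rubinstein duality (Lemma~\ref{lem:duality}): for right-continuity it runs a contradiction by extracting a subsequential limit of near-optimal $1$-Lipschitz test functions, and for left-continuity it fixes a single near-optimal test function at $t_0$ and lets the measures converge. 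Your argument is more elementary in that it never invokes monotonicity or duality, relying only on general metric-measure facts on a compact space; the paper's argument is more in the spirit of later sections, where duality with $1$-Lipschitz test functions is a recurring device (e.g.\ in the metric-flow framework), and it makes the role of monotonicity explicit.
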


\begin{proof}
Fix $t_0 \in [-(1-\sigma)T,\min\{\t(x^*),\t(y^*)\}]$. We first show
	\begin{align}\label{continuity1}
\lim_{t \searrow t_0}d_{W_1}^{t}(\nu_{x^*;t},\nu_{y^*;t})=d_{W_1}^{t_0}(\nu_{x^*;t_0},\nu_{y^*;t_0}).
	\end{align}
	
If $t_0=\min\{\t(x^*),\t(y^*)\}$, then \eqref{continuity1} is immediate. Hence assume $t_0<\min\{\t(x^*),\t(y^*)\}$. Since $t\mapsto d_{W_1}^{t}(\nu_{x^*;t},\nu_{y^*;t})$ is increasing by Proposition \ref{monotonicityW1}, if \eqref{continuity1} fails, we can find, by Lemma \ref{lem:duality}, $t_i \searrow t_0$ and $f_i \in C^1(M)$ with $|\na_{g(t_i)} f_i| \le 1$ so that
	\begin{align}\label{continuity2}
\int_M f_i \, \mathrm{d}\nu_{x^*;t_i}-\int_M f_i \, \mathrm{d}\nu_{y^*;t_i} \ge d_{W_1}^{t_0}(\nu_{x^*;t_0},\nu_{y^*;t_0})+\delta_0
	\end{align}
for a constant $\delta_0>0$. Without loss of generality, we may assume $f_i(p)=0$ for a fixed point $p \in M$. After passing to a subsequence, $f_i$ converges to a continuous function $f$ on $M$ with $\Lip_{g(t_0)} f \le 1$. Using the continuities of the conjugate heat kernel measures and the corresponding Riemannian metrics, we conclude from \eqref{continuity2} that
	\begin{align*}
\int_M f \, \mathrm{d}\nu_{x^*;t_0}-\int_M f \, \mathrm{d}\nu_{y^*;t_0} \ge d_{W_1}^{t_0}(\nu_{x^*;t_0},\nu_{y^*;t_0})+\delta_0.
	\end{align*}
However, this contradicts Lemma \ref{lem:duality}, and hence \eqref{continuity1} holds.

Next, we show 
	\begin{align}\label{continuity3}
\lim_{t \nearrow t_0}d_{W_1}^{t}(\nu_{x^*;t},\nu_{y^*;t})=d_{W_1}^{t_0}(\nu_{x^*;t_0},\nu_{y^*;t_0}).
	\end{align}
For any $\ep>0$, it follows from Lemma \ref{lem:duality} that there exists a continuous function $f$ on $M$ with $\Lip_{g(t_0)} f \le 1$ such that
	\begin{align*}
\int_M f \, \mathrm{d}\nu_{x^*;t_0}-\int_M f \, \mathrm{d}\nu_{y^*;t_0} \ge d_{W_1}^{t_0}(\nu_{x^*;t_0},\nu_{y^*;t_0})-\ep.
	\end{align*}
Note that $\Lip_{g(t)} f \le 1+\eta(t)$, where $\eta(t) \to 0$ if $t \to t_0$. By the continuities of the conjugate heat kernel measures and the corresponding Riemannian metrics, we conclude
	\begin{align*}
\lim_{t \nearrow t_0}d_{W_1}^{t}(\nu_{x^*;t},\nu_{y^*;t}) \ge & \lim_{t \nearrow t_0} \frac{1}{1+\eta(t)} \lc\int_M f \, \mathrm{d}\nu_{x^*;t}-\int_M f \, \mathrm{d}\nu_{y^*;t} \rc \ge d_{W_1}^{t_0}(\nu_{x^*;t_0},\nu_{y^*;t_0})-\ep.
	\end{align*}
Since $t\mapsto d_{W_1}^{t}(\nu_{x^*;t},\nu_{y^*;t})$ is increasing and $\ep$ is arbitrary, the proof of \eqref{continuity3} is complete.
\end{proof}

By Definition \ref{defn:dstar-distance} and Lemma \ref{continuitydW1}, if $x^*,y^*\in\XX_{\III^+}$, $t=\max\{\t(x^*),\t(y^*)\}$ and $r=d^*(x^*,y^*)$, then
\begin{align}\label{eq:dstar-equality1}
d_{W_1}^{\max\{t-r^2,-(1-\sigma)T\}}
\left(\nu_{x^*;\max\{t-r^2,-(1-\sigma)T\}},
\nu_{y^*;\max\{t-r^2,-(1-\sigma)T\}}\right)\leq r.
\end{align}
If $r>\sqrt{|\t(x^*)-\t(y^*)|}$ and $t-r^2>-(1-\sigma)T$, then
\begin{align}\label{eq:dstar-equality2}
d_{W_1}^{t-r^2}(\nu_{x^*;t-r^2},\nu_{y^*;t-r^2})=r.
\end{align}
For every $R>0$, one also has the open-ball criterion
\begin{align}\label{eq:dstar-ball-criterion}
d^*(x^*,y^*)<R
\quad\Longleftrightarrow\quad
\left\{
\begin{array}{l}
|\t(x^*)-\t(y^*)|<R^2,\\[1mm]
d_{W_1}^{\max\{t-R^2,-(1-\sigma)T\}}
(\nu_{x^*;\max\{t-R^2,-(1-\sigma)T\}},
\nu_{y^*;\max\{t-R^2,-(1-\sigma)T\}})<R.
\end{array}\right.
\end{align}
Indeed, the forward implication follows by backward monotonicity from an almost minimizing comparison time, while the converse follows from Lemma \ref{continuitydW1} by moving the comparison time slightly forward when necessary.

\begin{lem}\label{lem:000a}
For any $\XX\in\MM(n,T)$, $d^*$ defines a distance function on $\XX_{\III^+}$.
\end{lem}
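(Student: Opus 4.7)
The plan is to verify the three metric axioms: symmetry, non-degeneracy, and the triangle inequality. Symmetry $d^*(x^*,y^*)=d^*(y^*,x^*)$ is immediate from the symmetric roles of $x^*,y^*$ in the Wasserstein term $d_{W_1}^{t-r^2}(\nu_{x^*;\cdot},\nu_{y^*;\cdot})$ and in $\max(\t(x^*),\t(y^*))$, which fixes the range of $r$.

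For non-degeneracy, the direction $x^*=y^*\Rightarrow d^*=0$ is trivial. For the converse, I would first verify that $d^*(x^*,y^*)\geq\sqrt{|\t(x^*)-\t(y^*)|}$: in the first clause this is built into the range of the infimum; in the alternative clause, the failure of $d_{W_1}^{t-r^2}\leq\ep_0 r$ for every admissible $r$, combined with Lemma \ref{continuitydW1}, forces $d_{W_1}^{-(1-\sigma)T}\geq\ep_0\sqrt{\max(\t(x^*),\t(y^*))+(1-\sigma)T}$, so $d^*\geq\sqrt{|\t(x^*)-\t(y^*)|}$ as well. Hence $d^*=0$ implies $\t(x^*)=\t(y^*)=:t$. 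Then Lemma \ref{continuitydW1} gives
\[
\lim_{r\searrow 0}d_{W_1}^{t-r^2}(\nu_{x^*;t-r^2},\nu_{y^*;t-r^2})=d_{W_1}^{t}(\delta_x,\delta_y)=d_t(x,y),
\]
and picking a sequence realizing the infimum (or, in the boundary case $t=-(1-\sigma)T$, reading off the alternative clause directly) forces $d_t(x,y)=0$, i.e., $x=y$.

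For the triangle inequality, set $a=d^*(x^*,y^*)$, $b=d^*(y^*,z^*)$, $r=a+b$, and write $t_x,t_y,t_z$ for the times; WLOG $t_x\geq t_z$. A case-check on the ordering of $t_y$ relative to $t_x,t_z$, using $a\geq\sqrt{|t_x-t_y|}$, $b\geq\sqrt{|t_y-t_z|}$ and $(a+b)^2\geq a^2+b^2$, yields $r\geq\sqrt{t_x-t_z}$, so $r$ lies in (or beyond) the admissible range for $d^*(x^*,z^*)$. I then split on whether $r^2\geq t_x+(1-\sigma)T$ (Case II) or not (Case I). In Case II, for each of $a$ and $b$ separately one checks $d_{W_1}^{-(1-\sigma)T}(\nu_{x^*;\cdot},\nu_{y^*;\cdot})\leq\ep_0 a$ regardless of the defining clause: in the alternative clause this is an equality, and in the first clause Proposition \ref{monotonicityW1} gives $d_{W_1}^{-(1-\sigma)T}\leq d_{W_1}^{\max(t_x,t_y)-a^2}=\ep_0 a$. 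The $W_1$-triangle inequality at $-(1-\sigma)T$ then yields $d_{W_1}^{-(1-\sigma)T}(\nu_{x^*;\cdot},\nu_{z^*;\cdot})\leq\ep_0 r$, which gives $d^*(x^*,z^*)\leq r$ by inspecting either clause of the definition.

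In Case I ($t_x-r^2>-(1-\sigma)T$), the crucial observation is that neither $a$ nor $b$ can be given by the alternative clause. For $a$: the alternative forces $a^2\geq\max(t_x,t_y)+(1-\sigma)T\geq t_x+(1-\sigma)T$, hence $r^2\geq a^2\geq t_x+(1-\sigma)T$, contradicting Case I. For $b$: if $\max(t_y,t_z)\geq t_x$ we get the same immediate contradiction, while if $\max(t_y,t_z)<t_x$ then $t_x>t_y$, so $a^2\geq t_x-t_y$, and arithmetic gives $t_x-r^2\leq t_y-b^2\leq\max(t_y,t_z)-b^2\leq-(1-\sigma)T$, again contradicting Case I. Thus equality \eqref{equalityd*1} applies to both $a$ and $b$, and elementary arithmetic shows $t_x-r^2\leq\max(t_x,t_y)-a^2$ and $t_x-r^2\leq\max(t_y,t_z)-b^2$. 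Monotonicity of $W_1$ in time (Proposition \ref{monotonicityW1}) then gives
\[
d_{W_1}^{t_x-r^2}(\nu_{x^*;\cdot},\nu_{y^*;\cdot})\leq\ep_0 a,\qquad d_{W_1}^{t_x-r^2}(\nu_{y^*;\cdot},\nu_{z^*;\cdot})\leq\ep_0 b,
\]
and the $W_1$-triangle inequality certifies $r$ as an admissible candidate for $d^*(x^*,z^*)$. The only substantive obstacle is the case bookkeeping across the two definitional clauses; the point that makes the argument close is that several combinations of these clauses are logically incompatible in each of Cases I and II.
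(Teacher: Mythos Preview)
Your proof is correct and follows essentially the same approach as the paper: monotonicity of the $W_1$-distance (Proposition \ref{monotonicityW1}), the $W_1$-triangle inequality at a common earlier time, and a case split on whether $(a+b)^2$ lands the reference time below $-(1-\sigma)T$. The only organizational difference is that the paper fixes a full ordering $t_1\geq t_2\geq t_3$ and verifies all three triangle inequalities separately, whereas you fix only $t_x\geq t_z$ (leaving $t_y$ free) and prove the single inequality $d^*(x^*,z^*)\leq a+b$; both are valid ways of covering all cases, and your Case~I bookkeeping (showing that the alternative clause is logically excluded for both $a$ and $b$) is exactly what the paper's three separate arguments accomplish implicitly.
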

\begin{proof}
Suppose $x^*=(x,t)$ and $y^*=(y,s)$ satisfy $d^*(x^*,y^*)=0$. Definition \ref{defn:dstar-distance} gives $t=s$. Choose comparison times $\tau_i\nearrow t$ such that
\begin{align*}
d_{W_1}^{\tau_i}(\nu_{x^*;\tau_i},\nu_{y^*;\tau_i})\longrightarrow0.
\end{align*}
Since $\nu_{x^*;\tau_i}\to\delta_x$ and $\nu_{y^*;\tau_i}\to\delta_y$, we obtain $d_t(x,y)=0$, and hence $x^*=y^*$.

It remains to verify the triangle inequality. Take $x_i^*=(x_i,t_i)\in\XX_{\III^+}$ for $i=1,2,3$. Without loss of generality, assume $t_1\geq t_2\geq t_3$ and set
\begin{align*}
r=d^*(x_1^*,x_2^*),\qquad s=d^*(x_2^*,x_3^*),\qquad l=d^*(x_1^*,x_3^*).
\end{align*}
We first prove $l\leq r+s$. Set
\begin{align*}
\tau:=\max\{t_1-(r+s)^2,-(1-\sigma)T\}.
\end{align*}
By \eqref{eq:dstar-equality1}, Proposition \ref{monotonicityW1}, and the inequalities
\begin{align*}
\tau\leq\max\{t_1-r^2,-(1-\sigma)T\},\qquad
\tau\leq\max\{t_2-s^2,-(1-\sigma)T\},
\end{align*}
we obtain
\begin{align*}
d_{W_1}^{\tau}(\nu_{x_1^*;\tau},\nu_{x_3^*;\tau})
&\leq d_{W_1}^{\tau}(\nu_{x_1^*;\tau},\nu_{x_2^*;\tau})
+d_{W_1}^{\tau}(\nu_{x_2^*;\tau},\nu_{x_3^*;\tau})\\
&\leq r+s.
\end{align*}
Moreover, $r\geq\sqrt{t_1-t_2}$ and $s\geq\sqrt{t_2-t_3}$, so $t_1-(r+s)^2\leq t_3$; thus $\tau$ is an admissible comparison time for $x_1^*,x_3^*$. Since $t_1-\tau\leq(r+s)^2$, Definition \ref{defn:dstar-distance} gives $l\leq r+s$. The inequalities $r\leq l+s$ and $s\leq r+l$ follow from the same argument, with the roles of the three points interchanged.
\end{proof}

\begin{defn}[$d^*$-balls]
For any $\XX\in\MM(n,T)$, $x^*\in\XX_{\III^+}$ and $r>0$, define
\begin{align*}
B^*(x^*,r):=\{y^*\in\XX_{\III^+}\mid d^*(x^*,y^*)<r\}.\index{$\ensuremath{B^*}$}
\end{align*}
In particular, for any $y^*\in B^*(x^*,r)$, equation \eqref{eq:dstar-ball-criterion} gives
\begin{align}\label{pointinmetricball}
d_{W_1}^{\max\{\t(x^*)-r^2,\t(y^*)-r^2,-(1-\sigma)T\}}
\left(\nu_{x^*;\max\{\t(x^*)-r^2,\t(y^*)-r^2,-(1-\sigma)T\}},
\nu_{y^*;\max\{\t(x^*)-r^2,\t(y^*)-r^2,-(1-\sigma)T\}}\right)<r.
\end{align}
\end{defn}

\begin{exmp}\label{ex:euclidean}
Let
\[
\XX^\E=\{\R^n,(g(t)=g_E)_{t\in\R}\}
\]
be the standard static Ricci flow on Euclidean space. We compute the
$d^*$-distance between arbitrary spacetime points
\[
x^*=(x,t),\qquad y^*=(y,s).
\]
Assume without loss of generality that $t\geq s$, and set
\[
\Delta:=t-s,\qquad \ell:=|x-y|.
\]
Since $T=+\infty$, we have
\[
d_E^*(x^*,y^*)
=
\inf_{\tau\leq s}
\max\left\{
\sqrt{t-\tau},
d_{W_1}^{\tau}(\nu_{x^*;\tau},\nu_{y^*;\tau})
\right\}.
\]

We first compute the Wasserstein term. For $\tau\leq s$, set
\[
a:=t-\tau,\qquad b:=s-\tau,
\]
so that $a\geq b\geq0$. Consider the affine map
\[
F(w):=y+\sqrt{\frac{b}{a}}\,(w-x).
\]
Then
\[
F_\#\nu_{x^*;\tau}=\nu_{y^*;\tau}.
\]
If $a>b$, the map $F$ is a homothety with fixed point
\[
c=\frac{y-\sqrt{b/a}\,x}{1-\sqrt{b/a}}.
\]
The function $\varphi(w)=|w-c|$ is $1$-Lipschitz and satisfies
\[
\varphi(w)-\varphi(F(w))=|w-F(w)|.
\]
It follows from the Kantorovich--Rubinstein duality that $F$ is optimal. Consequently,
\[
d_{W_1}^{\tau}(\nu_{x^*;\tau},\nu_{y^*;\tau})
=
\frac{1}{(4\pi)^{n/2}}
\int_{\R^n}
\left|
x-y+
\left(\sqrt{t-\tau}-\sqrt{s-\tau}\right)z
\right|
e^{-|z|^2/4}\,\mathrm{d}z.
\]
If $t=s$, the two measures differ only by translation, and the same
formula gives
\[
d_{W_1}^{\tau}(\nu_{x^*;\tau},\nu_{y^*;\tau})=|x-y|.
\]

For $\rho,q\geq0$, define
\[
G_n(\rho,q)
:=
\frac{1}{(4\pi)^{n/2}}
\int_{\R^n}
|\rho e_1+qz|e^{-|z|^2/4}\,\mathrm{d}z,
\]
where $e_1=(1,0,\ldots,0)\in\R^n$. By the rotational invariance of the
Gaussian density,
\[
d_{W_1}^{\tau}(\nu_{x^*;\tau},\nu_{y^*;\tau})
=
G_n\left(
\ell,\sqrt{t-\tau}-\sqrt{s-\tau}
\right).
\]

Setting
\[
u:=s-\tau\in[0,+\infty),
\]
we obtain
\[
d_E^*(x^*,y^*)
=
\inf_{u\geq0}
\max\left\{
\sqrt{\Delta+u},
G_n\left(\ell,\sqrt{\Delta+u}-\sqrt u\right)
\right\}.
\]

Suppose $\Delta>0$. The function
\[
u\longmapsto\sqrt{\Delta+u}
\]
is strictly increasing, whereas
\[
u\longmapsto
G_n\left(\ell,\sqrt{\Delta+u}-\sqrt u\right)
\]
is nonincreasing. Moreover,
\[
G_n(\ell,\sqrt{\Delta})
\geq
\frac{1}{\sqrt{4\pi}}
\int_{\R}
|\ell+\sqrt{\Delta}\,z|e^{-z^2/4}\,\mathrm{d}z
\geq
\frac{2}{\sqrt{\pi}}\sqrt{\Delta}
>
\sqrt{\Delta},
\]
whereas
\[
G_n\left(\ell,\sqrt{\Delta+u}-\sqrt u\right)
\longrightarrow\ell
\]
as $u\to+\infty$. It follows that there exists a unique $u_*\geq0$
such that
\[
\sqrt{\Delta+u_*}
=
G_n\left(\ell,\sqrt{\Delta+u_*}-\sqrt{u_*}\right),
\]
and
\[
d_E^*(x^*,y^*)=\sqrt{\Delta+u_*}.
\]

Equivalently, $r=d_E^*(x^*,y^*)$ is the unique number
\[
r\geq\max\{\ell,\sqrt{\Delta}\}
\]
satisfying
\[
r
=
G_n\left(
\ell,r-\sqrt{r^2-\Delta}
\right),
\]
or, explicitly,
\[
r
=
\frac{1}{(4\pi)^{n/2}}
\int_{\R^n}
\left|
\ell e_1+
\left(r-\sqrt{r^2-\Delta}\right)z
\right|
e^{-|z|^2/4}\,\mathrm{d}z.
\]

In particular, if $t=s$, then
\[
d_E^*((x,t),(y,t))=|x-y|.
\]
If $x=y$, define
\[
c_n
:=
\frac{1}{(4\pi)^{n/2}}
\int_{\R^n}|z|e^{-|z|^2/4}\,\mathrm{d}z
=
2\frac{\Gamma\left(\frac{n+1}{2}\right)}
{\Gamma\left(\frac n2\right)}.
\]
Then
\[
d_E^*((x,t),(x,s))
=
\frac{c_n}{\sqrt{2c_n-1}}\sqrt{|t-s|}.
\]
\end{exmp}

Next, we show that $d^*$ is locally comparable to the standard parabolic distance.

\begin{prop}\label{equivalenceofballs}
Given $\XX=\{M^n,(g(t))_{t\in \III^{++}}\} \in \MM(n, Y, T)$, suppose $x^* \in \XX_{\III^+}$, $r \in (0, \sqrt{\sigma T}]$ and $|\scal|\leq R_0 r^{-2}$ on $P(x^*,r)$. Then there exists a constant $\rho=\rho(n,Y, R_0) \in (0, 1)$ such that
	\begin{equation}\label{eq:dstar-pstar-equivalence}
		P(x^*, \rho r)\subset B^*(x^*, r) \quad \text{and} \quad B^*(x^*, \rho r)\subset P(x^*,r).
	\end{equation}
	Here, $P(x^*,s):= B_{t_0}(x,s)\times [t_0-s^2,t_0+s^2]\bigcap \III^{+}$\index{$\ensuremath{P(x^*,s)}$} and $x^*=(x,t_0)$.
\end{prop}
\begin{proof}
Without loss of generality, we assume $r=1$.

First, we show that for any $l \in (0, 1)$, if $y^*=(y,s)\in P(x^*,l)$, then $d^*(x^*,y^*)\leq C(n, Y, R_0)l$. This will give the first inclusion in \eqref{eq:dstar-pstar-equivalence}.
	
	We set $t_1=\max\{-(1-\sigma)T, t_0-l^2\}$ and choose an $H_n$-center $(z, t_1)$ of $x^*$. By Theorem \ref{heatkernelupperbdgeneral} (i) and Proposition \ref{HncenterScal} (i), we know that
		\begin{align}\label{standardestimate1}
		\int_M d_{t_1}(x,\cdot)\, \mathrm{d}\nu_{x^*;t_1} \leq & C(n, Y, R_0) \sqrt{t_0-t_1}+\int_M d_{t_1}(z,\cdot)\, \mathrm{d}\nu_{x^*;t_1} \notag \\
		=&	C(n, Y, R_0)\sqrt{t_0-t_1}+\sum_{k=0}^\infty\int_{\{k\sqrt{t_0-t_1}\leq d_{t_1}(z,\cdot)\leq (k+1)\sqrt{t_0-t_1}\}}d_{t_1}(z,\cdot) \, \mathrm{d}\nu_{x^*;t_1}\nonumber\\
		\leq& C(n, Y, R_0)\sqrt{t_0-t_1}+\sqrt{t_0-t_1}\sum_{k=0}^\infty (k+1)\nu_{x^*;t_1}\lc\{d_{t_1}(z,\cdot) \ge k\sqrt{t_0-t_1}\}\rc\nonumber\\
		\leq&  C(n, Y, R_0)\sqrt{t_0-t_1}+C(n)\sqrt{t_0-t_1}\sum_{k=0}^\infty (k+1) e^{-\frac{k^2}{5}} \notag\\
		\leq & C(n, Y, R_0) \sqrt{t_0-t_1} \le C(n, Y, R_0) l.
	\end{align}
	Similarly, we have
\begin{align}\label{standardestimate1x}
		\int_M d_{t_1}(y,\cdot)\, \mathrm{d}\nu_{y^*;t_1} \leq C(n, Y, R_0) l.
	\end{align}
Now, by Definition \ref{defnvariance}, we estimate
	\begin{align*}
		d_{W_1}^{t_1}(\nu_{x^*;t_1},\nu_{y^*;t_1})&\leq \int_M\int_M d_{t_1}(z_1,z_2) \, \mathrm{d}\nu_{x^*;t_1}(z_1)\, \mathrm{d}\nu_{y^*;t_1}(z_2)\\
		&\leq \int_M\int_M\big( d_{t_1}(z_1,x)+d_{t_1}(z_2,y)+d_{t_1}(x,y)\big) \, \mathrm{d}\nu_{x^*;t_1}(z_1) \, \mathrm{d}\nu_{y^*;t_1}(z_2) \leq C(n, Y,  R_0) l,
	\end{align*}
	where in the last inequality, we have used \eqref{standardestimate1}, \eqref{standardestimate1x}, and the fact that $d_{t_1}(x, y) \le C(n, Y, R_0) l$ by Proposition \ref{HncenterScal} (ii).
	
	By Definition \ref{defn:dstar-distance}, this gives
	\begin{align*}
		d^*(x^*,y^*)\leq C(n, Y, R_0)l.
	\end{align*}
	
Next, we prove the second inclusion in \eqref{eq:dstar-pstar-equivalence}. Given $y^*=(y, s) \in B^*(x^*, \rho)$, where $\rho=\rho(n, Y, R_0, \sigma) \in (0, 1)$ will be chosen later.

We set $t_2:=\max\{-(1-\sigma)T, t_0-\rho^2\}$. Then, by our assumption and \eqref{eq:dstar-ball-criterion}, we have
		\begin{align} \label{eq:torsionext1}
d_{W_1}^{t_2}(\nu_{x^*;t_2},\nu_{y^*;t_2}) <\rho.
	\end{align}

Let $(z,t_2)$ be an $H_n$-center of $y^*$. It follows from Proposition \ref{HncenterScal}(i) and \eqref{eq:torsionext1} that
		\begin{align*}
d_{t_2}(x, z) \le C(n, Y, R_0) \rho.
	\end{align*}

Thus, it follows from the same argument as in \cite[Proposition 5.13]{li2024heat} that if $\rho \le \rho(n, Y, R_0)$, then $d_{t_0}(x, y)<1$, which finishes the proof.
\end{proof}

An immediate consequence of Proposition \ref{equivalenceofballs} is the following:

\begin{cor}\label{cor:topo}
Given $\XX\in\MM(n,T)$, the topology on $\XX_{\III^+}$ induced by the $d^*$-distance agrees with the standard topology.
\end{cor}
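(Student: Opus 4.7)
The plan is to derive Corollary \ref{cor:topo} as a direct consequence of Proposition \ref{equivalenceofballs}. That proposition gives a two-sided comparison of $d^*$-balls and parabolic balls under a local scalar curvature bound of the form $|\scal|\le R_0 r^{-2}$, so the main task reduces to observing that such a bound is automatic for any fixed closed Ricci flow $\XX\in\MM(n,Y,T)$.

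Concretely, fix $\XX=\{M^n,(g(t))_{t\in \III^{++}}\}$. Since $M$ is closed and $(g(t))_{t\in \III^{++}}$ is smooth on the compact interval $\III^{++}$, we have $|\scal|\le C_{\XX}$ uniformly on $M\times \III^{+}$ for some constant $C_{\XX}$ depending only on $\XX$. Choose $r_0=r_0(\XX)>0$ so that $C_{\XX} r_0^2\le 1$. Then for every $x^*\in \XX_{\III^{+}}$ and every $r\in(0,r_0]$, the hypothesis of Proposition \ref{equivalenceofballs} is satisfied with $R_0=1$, yielding constants $\rho_1,\rho_2\in(0,1)$ independent of $x^*$ and of $r\in(0,r_0]$ such that
\begin{align*}
P(x^*,\rho_1\ep_0 r)\subset B^*(x^*,r)\subset P(x^*,\rho_2^{-1} r),
\end{align*}
where the second inclusion is obtained by applying the proposition with $r$ replaced by $\rho_2^{-1} r$.

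With these inclusions in hand, the topological conclusion is essentially formal. The (open) parabolic balls around $x^*$ form a neighborhood basis for the standard product topology on $M\times \III^{+}$ at $x^*$, while the $d^*$-balls $B^*(x^*,r)$ form a neighborhood basis for the $d^*$-topology at $x^*$. The two inclusions above show that each basis mutually refines the other at every point, so the two topologies coincide. I do not expect any serious obstacle; the only care needed concerns points $x^*$ with $\t(x^*)$ at the endpoints of $\III^{+}$, where the definition $P(x^*,s):=B_{t_0}(x,s)\times[t_0-s^2,t_0+s^2]\cap \III^{+}$ already takes the intersection with $\III^{+}$, so the argument adapts without change.
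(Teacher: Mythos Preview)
Your proposal is correct and follows exactly the route the paper indicates: the corollary is recorded as an immediate consequence of Proposition \ref{equivalenceofballs}, and you have simply made explicit why the scalar curvature hypothesis is automatic for a fixed closed flow and why the two-sided ball comparison yields equality of topologies. One cosmetic point: when you rewrite the second inclusion as $B^*(x^*,r)\subset P(x^*,\rho_2^{-1}r)$, the hypothesis of the proposition must hold at scale $\rho_2^{-1}r$, so this is valid for $r\le \rho_2 r_0$ rather than $r\le r_0$---but that is harmless for the neighborhood-basis argument.
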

\begin{proof}
The assertion is local. On every compact parabolic neighborhood of a fixed smooth flow, the curvature and the Nash entropy are bounded. Proposition \ref{equivalenceofballs}, applied after shrinking the neighborhood and the scale, therefore gives the equivalence of the two topologies.
\end{proof}

\begin{prop}\label{distancefunction1}
For $\XX=\{M^n,(g(t))_{t\in \III^{++}}\} \in \MM(n,T)$, the following properties hold:
	\begin{enumerate}[label=\textnormal{(\arabic{*})}]
		\item For any $x,y\in M$ and $t\in \III^{+}$, $d^*\big((x,t),(y,t)\big)\leq d_t(x,y)$;
		\item For any $x^* \in \XX_{\III^{+}}$, $\mathfrak t(B^*(x^*,r))\subset (t-r^2,t+r^2)\bigcap \III^{+}$. Moreover, the time-function $\t$ is a $2$-H\"older function, i.e., for any $x^*,y^* \in \XX_{\III^{+}}$,
		\begin{align*}
			|\t(x^*)-\t(y^*)|\leq d^*(x^*,y^*)^2.
		\end{align*}
	\end{enumerate}
\end{prop}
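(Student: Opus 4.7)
The plan is to derive both statements directly from Definition \ref{defnd*distance} combined with the monotonicity of the $W_1$-Wasserstein distance (Proposition \ref{monotonicityW1}), supplemented by the continuity of $s \mapsto d_{W_1}^s(\nu_{x^*;s}, \nu_{y^*;s})$ (Lemma \ref{continuitydW1}). The key structural point is that both properties are essentially encoded in the definition of $d^*$: the factor $\epsilon_0^{-1}$ in (1) is precisely the fallback prefactor, while the $2$-Hölder constraint in (2) is built into the restriction $r \geq \sqrt{t-s}$ appearing in \eqref{defnd*}.

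For (1), I would set $r_0 := \epsilon_0^{-1} d_t(x,y)$ and aim to show $d^*((x,t),(y,t)) \leq r_0$. Since $\nu_{x,t;t} = \delta_x$ and $\nu_{y,t;t} = \delta_y$, Proposition \ref{monotonicityW1} gives $d_{W_1}^s(\nu_{x,t;s}, \nu_{y,t;s}) \leq d_t(x,y) = \epsilon_0 r_0$ for every $s \in [-(1-\sigma)T, t]$. If $t - r_0^2 > -(1-\sigma)T$, then $r_0$ lies in the admissible range in \eqref{defnd*} and satisfies the defining inequality, giving $d^* \leq r_0$; otherwise, the fallback branch yields $d^* = \epsilon_0^{-1} d_{W_1}^{-(1-\sigma)T}(\nu_{x,t;-(1-\sigma)T}, \nu_{y,t;-(1-\sigma)T}) \leq \epsilon_0^{-1} d_t(x,y) = r_0$ by the same monotonicity.

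For (2), the inclusion $\t(B^*(x^*, r)) \subset (t-r^2, t+r^2) \cap \III^+$ follows at once from the $2$-Hölder inequality, so I would focus on proving $|\t(x^*) - \t(y^*)| \leq d^*(x^*, y^*)^2$. Taking $x^* = (x,t)$ and $y^* = (y,s)$ with $t \geq s$ and setting $\rho_0 := d^*(x^*, y^*)$, the goal is $\rho_0 \geq \sqrt{t-s}$. If the defining infimum in \eqref{defnd*} is attained within the admissible range $[\sqrt{t-s}, \sqrt{t+(1-\sigma)T})$, the bound is immediate from the left endpoint. If instead the defining set is empty, then $d_{W_1}^{t-r^2}(\nu_{x^*;t-r^2}, \nu_{y^*;t-r^2}) > \epsilon_0 r$ for every such $r$, and passing to the limit $r \to \sqrt{t+(1-\sigma)T}^-$ via Lemma \ref{continuitydW1} gives $d_{W_1}^{-(1-\sigma)T}(\nu_{x^*;-(1-\sigma)T}, \nu_{y^*;-(1-\sigma)T}) \geq \epsilon_0 \sqrt{t+(1-\sigma)T} \geq \epsilon_0 \sqrt{t-s}$, where the last inequality uses $s \geq -(1-\sigma)T$; the fallback definition then yields $\rho_0 \geq \sqrt{t-s}$ as desired.

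I do not anticipate any substantive obstacle: both parts are essentially direct unpackings of Definition \ref{defnd*distance} via the monotonicity in Proposition \ref{monotonicityW1}. The only subtle point is handling the fallback case in (2), where the continuity of $d_{W_1}^s$ in $s$ up to the left endpoint $-(1-\sigma)T$ is needed to upgrade the strict inequalities on the interior of the admissible interval to the limiting inequality at the boundary.
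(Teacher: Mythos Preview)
Your proposal is correct and follows essentially the same approach as the paper. The paper's proof of (1) is phrased in the reverse direction---starting from $r = d^*((x,t),(y,t))$ and using \eqref{equalityd*1} together with Proposition~\ref{monotonicityW1} to deduce $\epsilon_0 r \leq d_t(x,y)$---while you start from $r_0 = \epsilon_0^{-1} d_t(x,y)$ and show it lies in the defining set; both are straightforward unpackings of Definition~\ref{defnd*distance}, and for (2) the paper simply asserts it follows directly from the definition, which is exactly what you spell out in detail.
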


\begin{proof}
For (1), take $\tau=t$ in Definition \ref{defn:dstar-distance}. Part (2) follows directly from Definition \ref{defn:dstar-distance}, since every comparison time satisfies
\begin{align*}
\sqrt{t_+-\tau}\geq\sqrt{|\t(x^*)-\t(y^*)|}.
\end{align*}
\end{proof}

Next, we prove

\begin{lem}\label{lem:complete0}
Given $\XX\in\MM(n,T)$, the $d^*$-distance on $\XX_{\III^{+}}$ is complete.
\end{lem}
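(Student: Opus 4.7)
The plan is to combine the topological equivalence from Corollary \ref{cor:topo} with the compactness of $M$ and a time-bounding argument based on the H\"older time function. Given a Cauchy sequence $\{x_i^*\}_{i \in \N}$ in $(\XX_{\III^{+}}, d^*)$, Proposition \ref{distancefunction1} (2) gives $|\t(x_i^*) - \t(x_j^*)| \le d^*(x_i^*, x_j^*)^2$, so the times $\t(x_i^*)$ form a Cauchy sequence of real numbers converging to some $t_\infty \le 0$, with $t_\infty \in \III^{+}$ since $\III^{+}$ is closed in $\R$. Consequently, the sequence is eventually contained in $M \times K$ for some compact subinterval $K \subset \III^{+}$: one may take $K = \III^{+}$ itself when $T < \infty$, and $K = [t_\infty - 1, 0]$ when $T = +\infty$. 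Because $M$ is a closed manifold, $M \times K$ is compact in the standard product topology.

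Next I would invoke sequential compactness: after discarding finitely many terms, $\{x_i^*\}$ lies in the compact set $M \times K$ and therefore admits a subsequence $\{x_{i_k}^*\}$ converging in the standard topology to some $x_\infty^* \in M \times K \subset \XX_{\III^{+}}$. By Corollary \ref{cor:topo}, the topology induced by $d^*$ on $\XX_{\III^{+}}$ coincides with the standard topology, so $x_{i_k}^* \to x_\infty^*$ with respect to $d^*$ as well. Since a Cauchy sequence possessing a convergent subsequence converges to the same limit, we conclude $x_i^* \to x_\infty^*$ in $d^*$, establishing completeness. There is no serious obstacle here: the topological equivalence in Corollary \ref{cor:topo} (obtained by applying Proposition \ref{equivalenceofballs} with the uniform scalar curvature bound available on any compact slab of the smooth flow $\XX$) transfers the ambient compactness to the $d^*$-setting, and the only mild subtlety—the non-compactness of $\III^{+}$ when $T = +\infty$—is handled at the outset by the H\"older continuity of $\t$.
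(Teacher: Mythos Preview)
Your proof is correct and follows essentially the same approach as the paper: both use Proposition \ref{distancefunction1} (2) to control the time components, compactness of the closed manifold $M$ to extract a convergent subsequence in the standard topology, and then Corollary \ref{cor:topo} to upgrade this to $d^*$-convergence. Your version is slightly more explicit about the case $T = +\infty$ (where $\III^{+}$ is non-compact), but the paper handles this implicitly since the times still converge to some $t_\infty \in \III^{+}$.
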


\begin{proof}
We set $\XX=\{M, (g(t))_{t \in \III^{++}}\}$. Given a Cauchy sequence $x_i^*=(x_i, t_i) \in M \times \III^{+}$ with respect to $d^*$, it follows from Proposition \ref{distancefunction1} (2) that $\{t_i\}$ is a Cauchy sequence in $\R$. Without loss of generality, we assume $t_i \to t_{\infty} \in \III^{+}$. 

Moreover, since $M$ is closed, we can take a subsequence (if necessary) such that $x_i \to x_{\infty}$ with respect to $g(0)$. Then, by Corollary \ref{cor:topo}, we conclude that $x_i^*$ converges to $(x_{\infty}, t_{\infty})$ with respect to $d^*$.
\end{proof}

Next, we recall the following definition of parabolic neighborhoods from \cite[Definition 9.2]{bamler2020entropy}, slightly adapted to our setting.

\begin{defn}[$P^*$-neighborhoods] \label{def:pstar}
For any $\XX\in\MM(n,T)$, $x^*=(x, t)\in\XX_{\III^{+}}$, $A,T^+,T^-\geq 0$, $P^*(x,t;A,-T^-,T^+)\subset \XX_{\III^{+}}$ is defined as the set of points $y^*=(y,s)\in \XX_{\III^{+}}$ with $s\in [t-T^-,t+T^+] \cap \III^+$ and 
\begin{align*}
	d_{W_1}^{\max\{t-T^-,-(1-\sigma)T\}}\big(\nu_{x^*;\max\{t-T^-,-(1-\sigma)T\}},\nu_{y^*;\max\{t-T^-,-(1-\sigma)T\}}\big)<A.
\end{align*}
Moreover, we set $P^*(x^*;r)=P^*(x,t;r,-r^2,r^2)$\index{$\ensuremath{P^*(x^*;r)}$}.
\end{defn}

The following proposition shows that $P^*$-neighborhoods are essentially equivalent to $d^*$-balls:
\begin{prop}\label{propdistance2}
For $x^*=(x,t)\in\XX_{\III^+}$ and $r>0$,
\begin{align*}
P^*(x,t;r,-r^2/2,r^2/2)\subset B^*(x^*,r)\subset P^*(x,t;r,-r^2,r^2).
\end{align*}
In particular,
\begin{align*}
P^*(x^*;r/\sqrt2)\subset B^*(x^*,r)\subset P^*(x^*;r).
\end{align*}
\end{prop}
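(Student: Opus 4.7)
The plan is to exploit three ingredients set up earlier in the section: the monotonicity of $t\mapsto d_{W_1}^t(\nu_{x^*;t},\nu_{y^*;t})$ from Proposition \ref{monotonicityW1}, the equality characterizations \eqref{equalityd*1}--\eqref{equalityd*2} of the value $d^*=r$, and the time-separation bound $|\t(x^*)-\t(y^*)|\le d^*(x^*,y^*)^2$ from Proposition \ref{distancefunction1}(2). Continuity from Lemma \ref{continuitydW1} is used to convert non-strict bounds into strict ones.

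For the inclusion $P^*(x,t;\ep_0 r,-r^2/2,r^2/2)\subset B^*(x^*,r)$, I take $y^*=(y,s)$ in the $P^*$-neighborhood, assume without loss of generality $s\le t$ (otherwise exchange the role of $t$ and $s$ in the $d^*$-definition), and set $u_0:=\max\{t-r^2/2,-(1-\sigma)T\}$. By hypothesis $t-s\le r^2/2$ and $d_{W_1}^{u_0}(\nu_{x^*;u_0},\nu_{y^*;u_0})<\ep_0 r$. For any $r'\in[\sqrt{t-s},r)$ with $r'^2\ge r^2/2$ and $t-r'^2\ge-(1-\sigma)T$, monotonicity gives $d_{W_1}^{t-r'^2}(\cdots)\le d_{W_1}^{u_0}(\cdots)<\ep_0 r$. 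Pushing $r'$ sufficiently close to $r$ and using Lemma \ref{continuitydW1}, one obtains a valid $r'<r$ such that $d_{W_1}^{t-r'^2}(\cdots)\le\ep_0 r'$, whence $d^*(x^*,y^*)\le r'<r$. In the edge case $t-r^2\le-(1-\sigma)T$, either a valid $r'$ still exists with $r'<\sqrt{t+(1-\sigma)T}\le r$, or else the ``else'' clause of Definition \ref{defnd*distance} directly yields $d^*(x^*,y^*)=\ep_0^{-1}d_{W_1}^{-(1-\sigma)T}(\cdots)<r$.

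For the reverse inclusion $B^*(x^*,r)\subset P^*(x,t;\ep_0 r,-r^2,r^2)$, I let $y^*=(y,s)\in B^*(x^*,r)$, set $r^*:=d^*(x^*,y^*)<r$, and again suppose $s\le t$. Proposition \ref{distancefunction1}(2) yields $|s-t|\le r^{*2}<r^2$, giving the required time condition $s\in[t-r^2,t+r^2]\cap\III^+$. Writing $u:=\max\{t-r^2,-(1-\sigma)T\}$, I split into two cases using \eqref{equalityd*1}--\eqref{equalityd*2}: if $t-r^{*2}>-(1-\sigma)T$ then $d_{W_1}^{t-r^{*2}}(\cdots)=\ep_0 r^*$, and since $r^*<r$ forces $u\le t-r^{*2}$, monotonicity gives $d_{W_1}^u(\cdots)\le\ep_0 r^*<\ep_0 r$; if $t-r^{*2}\le-(1-\sigma)T$ then $t-r^2<t-r^{*2}\le-(1-\sigma)T$, so $u=-(1-\sigma)T$ and directly $d_{W_1}^u(\cdots)=\ep_0 r^*<\ep_0 r$ from \eqref{equalityd*2}. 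Either way $y^*\in P^*(x,t;\ep_0 r,-r^2,r^2)$. The ``In particular'' conclusion is immediate since $P^*(x^*;\ep_0 r)=P^*(x,t;\ep_0 r,-\ep_0^2 r^2,\ep_0^2 r^2)\subset P^*(x,t;\ep_0 r,-r^2/2,r^2/2)$ and $P^*(x,t;\ep_0 r,-r^2,r^2)\subset P^*(x^*;r)$. The main technical care is in the first inclusion: one must use continuity to realize the strict inequality witnessed at the single time $u_0$ as a witness at some $r'<r$, and the symmetric case $s>t$ is handled by swapping the roles of $t$ and $s$ throughout.
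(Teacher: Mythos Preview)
Your proof is correct and follows the same monotonicity-plus-equality-characterization route as the paper. One caution: the case $s>t$ is not a pure symmetry (the $P^*$-neighborhood is measured at time $\max\{t-r^2/2,-(1-\sigma)T\}$ while the $d^*$-definition for $s>t$ runs through times $s-r'^2$), and the paper bridges this explicitly via $s\le t+r^2/2\Rightarrow s-r^2\le t-r^2/2$; with that observation your argument goes through, and your appeal to Lemma~\ref{continuitydW1} is harmless but unnecessary, since the paper simply invokes the definition (equivalently, the contrapositive via \eqref{equalityd*1}--\eqref{equalityd*2}).
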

\begin{proof}
Given $y^*=(y,s)\in B^*(x^*,r)$, equation \eqref{eq:dstar-ball-criterion} and backward monotonicity imply
\begin{align*}
d_{W_1}^{\max\{t-r^2,-(1-\sigma)T\}}
\left(\nu_{x^*;\max\{t-r^2,-(1-\sigma)T\}},
\nu_{y^*;\max\{t-r^2,-(1-\sigma)T\}}\right)<r.
\end{align*}
Together with $|s-t|<r^2$, this gives $y^*\in P^*(x,t;r,-r^2,r^2)$.

Conversely, suppose $y^*=(y,s)\in P^*(x,t;r,-r^2/2,r^2/2)$. If $s\leq t$, the defining Wasserstein estimate and $t-s\leq r^2/2$ give $d^*(x^*,y^*)<r$ by Definition \ref{defn:dstar-distance}. If $s\geq t$, then $s-r^2\leq t-r^2/2$, so backward monotonicity gives the same conclusion. The final inclusion follows by replacing $r$ with $r/\sqrt2$.
\end{proof}

\begin{lem}\label{lem:Hcenterdis}
For $x^*\in\XX_{\III^+}$ and $s\in[-(1-\sigma)T,\t(x^*)]$, let $z^*=(z,s)$ be an $H$-center of $x^*$. Then
\begin{align}\label{eq:Hcenter}
d^*(x^*,z^*)\leq\max\{1,\sqrt H\}\sqrt{\t(x^*)-s}.
\end{align}
In particular, for an $H_n$-center,
\begin{align*}
d^*(x^*,z^*)\leq\sqrt{H_n}\sqrt{\t(x^*)-s}.
\end{align*}
\end{lem}
\begin{proof}
By the definition of an $H$-center,
\begin{align*}
d_{W_1}^{s}(\nu_{x^*;s},\delta_z)\leq\sqrt{\Var_s(\nu_{x^*;s},\delta_z)}\leq\sqrt{H(\t(x^*)-s)}.
\end{align*}
Taking $\tau=s$ in Definition \ref{defn:dstar-distance} proves \eqref{eq:Hcenter}.
\end{proof}

\begin{prop}\label{propdistance3}
For $x^*\in\XX_{\III^+}$ and $r>0$ with $\t(x^*)-r^2\geq-(1-\sigma)T$, the following conclusions hold.
\begin{enumerate}[label=\textnormal{(\roman{*})}]
\item For any $t\in\R$,
\begin{align*}
\abs{B^*(x^*,r)\cap M\times\{t\}}_t
\leq C(n,\sigma)e^{\NN_{x^*}(r^2)}r^n,
\end{align*}
where $|\cdot|_t$ denotes the volume with respect to $\mathrm{d}V_{g(t)}$.

\item We have
\begin{align}\label{eq:entropyweightedlocalvolume}
0<c(n,\sigma)e^{\NN_{x^*}(r^2)}r^{n+2}
\leq\abs{B^*(x^*,r)}
\leq C(n,\sigma)e^{\NN_{x^*}(r^2)}r^{n+2},
\end{align}
	where $|\cdot|$ denotes the spacetime volume with respect to $\mathrm{d}V_{g(t)} \mathrm{d}t$.
\end{enumerate}
\end{prop}
\begin{proof}
The assertion in (i) and the upper bound in (ii) follow from \cite[Theorem 9.8]{bamler2020entropy} and Proposition \ref{propdistance2}. 
	
	For the lower bound in (ii), we set $t=\t(x^*)$ and take any $s$ with $r/2 \le 3\sqrt{H_n}s \le r$. Moreover, let $z^*=(z,t-s^2)$ be an $H_n$-center of $x^*$. By Proposition \ref{prop:volume} (ii), we have
	\begin{align*}
		\big|B_{t-s^2}(z,\sqrt{2H_n} s)\big|_{t-s^2}\geq C(R_{\min} s^2)\exp \lc\mathcal{N}_{x^*}(s^2) \rc s^n,
	\end{align*}
	where $\scal(\cdot, t-s^2) \ge R_{\min}$. Since $\XX$ is defined on $\III^{++}=[-T, 0]$ and $s^2 \le (1-\sigma)T$, it follows from \eqref{eq:lowerscal} and Proposition \ref{propNashentropy} that
		\begin{align}\label{lowerbdvolume1}
		\big|B_{t-s^2}(z,\sqrt{2H_n}s)\big|_{t-s^2}\geq c(n, \sigma) \exp \lc\mathcal{N}_{x^*}(r^2) \rc s^n>0.
	\end{align}

By Lemma \ref{lem:Hcenterdis}, we have
	\begin{align*}
		d^*(x^*, z^*)\leq \sqrt{H_n} s.
	\end{align*}
In addition, by Proposition \ref{distancefunction1} (1), we conclude that
	\begin{align*}
		B_{t-s^2}(z, \sqrt{2H_n}s)\subset B^*\big(z^*,2\sqrt{H_n}s\big) \subset B^*\big(x^*,3\sqrt{H_n}s\big).
	\end{align*}
	Combining with \eqref{lowerbdvolume1}, we get
	\begin{align*}
\big|B^*\big(x^*,r\big)\bigcap \XX_{t-s^2}\big|_{t-s^2} \ge&	\big|B^*\big(x^*,3\sqrt{H_n}s\big)\bigcap \XX_{t-s^2}\big|_{t-s^2} \\
\geq & \big|B_{t-s^2}(z, \sqrt{2H_n}s)\big|_{t-s^2}\geq c(n,\sigma) \exp \lc\mathcal{N}_{x^*}(r^2) \rc s^n.
	\end{align*}
Consequently, the conclusion follows by integrating with respect to $2s\,\mathrm{d}s$ from $r/(6\sqrt{H_n})$ to $r/(3\sqrt{H_n})$.
\end{proof}

\begin{prop}\label{uppervolumebd}
For any $x^*\in\XX_{\III^+}$, the following statements hold.
\begin{enumerate}[label=\textnormal{(\roman{*})}]
\item Suppose $T<+\infty$. Then for every $L>0$,
\begin{align}\label{eq:largeballupperfinite}
\abs{B^*(x^*,L\sqrt T)}
\leq C(n,\sigma,L)\exp \lc \NN^*_{-(1-\sigma/2)T}(x^*) \rc T^{\frac n2+1}.
\end{align}
\item Suppose $T=+\infty$. For every $L>0$, we have
\begin{align}\label{eq:largeballupperancient}
\abs{B^*(x^*,L)}
\leq C(n)\exp \lc\NN_{x^*}(2 L^2)\rc L^{n+2}.
\end{align}
\end{enumerate}
\end{prop}
\begin{proof}
We first prove (i), following the original large-ball estimate while retaining the entropy factor. Set
\begin{align*}
a:=-(1-\sigma)T,\qquad s_0:=-(1-\sigma/2)T.
\end{align*}
For any $y^*\in B^*(x^*,L\sqrt T)$, Definition \ref{defn:dstar-distance} and backward monotonicity give
\begin{align}\label{eq:bottomWcontrol}
d_{W_1}^{a}(\nu_{x^*;a},\nu_{y^*;a})\leq L\sqrt T.
\end{align}
Let $z_0^*:=(z_0,a)$ and $z^*:=(z,a)$ be $H_n$-centers of $x^*$ and $y^*$, respectively. Then
\begin{align*}
d_a(z_0,z)\leq(L+2\sqrt{H_n})\sqrt T.
\end{align*}
Choose
\begin{align*}
B:=B_a(z_0,C(n,L)\sqrt T)
\end{align*}
so that $\nu_{y^*;a}(B)\geq1/2$ for every $y^*\in B^*(x^*,L\sqrt T)$. Let $u$ solve the heat equation with $u(\cdot,a)=\chi_B$. Then $u(y^*)\geq1/2$. Thus, for any $t \ge -(1-\sigma)T$,
	\begin{align}\label{eq:heatmassupper}
		\frac{1}{2} \big|B^*(x^*,L \sqrt{T}) \cap \XX_t \big|_t \le \int_M u(\cdot, t) \, \mathrm{d}V_{g(t)} \leq C(n,\sigma) |B|_{a}.
	\end{align}
Here, the second inequality holds since $\scal\geq-n/(2\sigma T)$ on $\XX_{\III^{+}}$ and hence for $t \ge -(1-\sigma)T$,
	\begin{align*}
		\diff{}{t}\int_M u  \, \mathrm{d}V_{g(t)}=-\int_M u \scal  \, \mathrm{d}V_{g(t)}\leq \frac{n}{2\sigma T}\int_M u  \, \mathrm{d}V_{g(t)}. 
	\end{align*}

On the other hand, Proposition \ref{prop:volume} (i) gives
\begin{align*}
\abs B_a\leq C(n,\sigma,L)e^{\NN^*_{s_0}(z_0^*)}T^{n/2}.
\end{align*}
Since $\scal(\cdot,s_0)\geq -n/(\sigma T)$, Proposition \ref{propNashentropy1}, applied with lower time $s_0$ and comparison time $a$, we have
\begin{align*}
\NN^*_{s_0}(z_0^*)-\NN^*_{s_0}(x^*)
&\leq\left(\frac{n}{2(a-s_0)}+\frac{n}{\sigma T}\right)^{1/2}
 d_{W_1}^{a}(\delta_{z_0},\nu_{x^*;a})
 +\frac n2\log\frac{\t(x^*)-s_0}{a-s_0}\\
&\leq C(n,\sigma).
\end{align*}
Integrating \eqref{eq:heatmassupper} over $t\in\III^+$ proves \eqref{eq:largeballupperfinite}.

For (ii), set
\begin{align*}
b:=\t(x^*)-L^2,\qquad s_0:=\t(x^*)-2L^2.
\end{align*}
If $y^*\in B^*(x^*,L)$, then $|\t(y^*)-\t(x^*)|\le L^2$ and $d_{W_1}^{b}(\nu_{x^*;b},\nu_{y^*;b})\leq L$.

Let $z_0^*:=(z_0,b)$ be an $H_n$-center of $x^*$. Repeating the preceding heat-equation argument gives a ball $B_b(z_0,C(n)L)$ satisfying $\nu_{y^*;b}(B)\geq1/2$. Moreover, Proposition \ref{prop:volume} (i), at scale $L$, gives
\begin{align*}
\abs B_b\leq C(n)e^{\NN^*_{s_0}(z_0^*)}L^n.
\end{align*}
Proposition \ref{propNashentropy1} and the $H_n$-center property imply
\begin{align*}
\NN^*_{s_0}(z_0^*)\leq\NN^*_{s_0}(x^*)+C(n)=\NN_{x^*}(2L^2)+C(n).
\end{align*}
The relevant time interval has length at most $CL^2$, and \eqref{eq:largeballupperancient} follows.
\end{proof}

\begin{prop}\label{nashlip}
Given $\XX=\{M^n,(g(t))_{t\in \III^{++}}\}$ and $s \in [-(1-\sigma)T, 0)$, $\NN_s^*(\cdot)$ is locally uniformly Lipschitz on $\XX_{(s,0]}$ with respect to $d^*$ in the sense that for any $x^*, y^* \in \XX_{(s,0]}$ with $r:=d^*(x^*, y^*)$ and $\max \{\t(x^*), \t(y^*)\}-r^2 \ge s/2$, we have 
	\begin{align}\label{eq:extra01001}
\abs{\NN_s^*(x^*)-\NN_s^*(y^*)} \le C r,
	\end{align}
	where $C=C(n, s, \sigma T)$. In particular, if $T=+\infty$, we can choose $C=2n/\sqrt{|s|}$.
\end{prop}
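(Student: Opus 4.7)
The plan is to apply Proposition~\ref{propNashentropy1} at the time $t$ for which the $W_1$-distance between $\nu_{x^*;t}$ and $\nu_{y^*;t}$ is pinned down by $d^*$ via \eqref{equalityd*1}. Without loss of generality, assume $t_0:=\t(x^*)\ge t_1:=\t(y^*)$, and set $t:=t_0-r^2$. Since $r\ge\sqrt{t_0-t_1}$ by Definition~\ref{defnd*distance}, we have $t\le t_1$. The hypothesis $t_0-r^2\ge s/2$ gives $t-s\ge |s|/2>0$ and $t>-(1-\sigma)T$ (using $s\ge -(1-\sigma)T$), so \eqref{equalityd*1} yields
\begin{align*}
d_{W_1}^{t}(\nu_{x^*;t},\nu_{y^*;t})=\ep_0 r\le r.
\end{align*}

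From \eqref{eq:lowerscal}, the scalar curvature at time $s$ is bounded below by $R_{\min}=-n/(2\sigma T)$ (and one may take $R_{\min}\to 0$ when $T=+\infty$, by sending the reference time in \eqref{eq:lowerscal} to $-\infty$). Applying Proposition~\ref{propNashentropy1} in both orderings at this $t$, and using $t_1\le t_0$ to dominate the logarithmic term in the $\NN_s^*(x^*)-\NN_s^*(y^*)$ direction, we obtain
\begin{align*}
\abs{\NN_s^*(x^*)-\NN_s^*(y^*)}\le \sqrt{\frac{n}{2(t-s)}-R_{\min}}\cdot r+\frac{n}{2}\log\frac{t_0-s}{t-s}.
\end{align*}

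It remains to convert the logarithmic term into a term linear in $r$. Writing $t_0-s=(t-s)+r^2$ and using $t-s\ge |s|/2$,
\begin{align*}
\log\frac{t_0-s}{t-s}=\log\left(1+\frac{r^2}{t-s}\right)\le \frac{r^2}{t-s}\le \frac{2r^2}{|s|}.
\end{align*}
The same hypothesis also forces $r^2\le |s|/2$, so $r^2/|s|\le r/\sqrt{2|s|}$, which reduces the log contribution to $\le nr/\sqrt{2|s|}$. Combined with the bound $\sqrt{n/|s|+n/(2\sigma T)}$ on the square-root coefficient, this yields \eqref{eq:extra01001} with constant depending only on $n$, $s$, and $\sigma T$. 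When $T=+\infty$ one takes $R_{\min}=0$, and the total bound collapses to $\sqrt{n/|s|}\,r+nr/\sqrt{2|s|}\le (2n/\sqrt{|s|})\,r$. The proof is not really obstructed anywhere; the only care is in checking that the two clearances ($t\le t_1$ on the upper end and $t-s\ge|s|/2$ on the lower end) both hold, which is exactly what the hypothesis $\max\{\t(x^*),\t(y^*)\}-r^2\ge s/2$ provides.
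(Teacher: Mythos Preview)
Your proposal is correct and follows essentially the same approach as the paper: apply Proposition~\ref{propNashentropy1} at time $t=\max\{\t(x^*),\t(y^*)\}-r^2$, use \eqref{equalityd*1} to control the $W_1$-distance by $\ep_0 r$, and then bound the square-root and logarithmic terms using $t-s\ge |s|/2$ and $r^2\le |s|/2$. The paper's write-up is slightly terser but the ingredients and estimates are identical.
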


\begin{proof}
We set $t_1:=\max\{\t(x^*), \t(y^*)\}$. By our assumption, $t_1-r^2>s/2$, which implies by \eqref{eq:dstar-equality1} that
	\begin{align*}
d_{W_1}^{t_1-r^2} \lc \nu_{x^*;t_1-r^2},\nu_{y^*;t_1-r^2} \rc\leq r.
	\end{align*}
It follows from Proposition \ref{propNashentropy1} that
	\begin{align*}
\abs{\NN_s^*(x^*)-\NN_s^*(y^*)}\leq & \lc \frac{n}{2(t_1-r^2-s)}-R_{\min} \rc^{\frac 1 2} d_{W_1}^{t_1-r^2}( \nu_{x^*;t_1-r^2},\nu_{y^*;t_1-r^2}) +\frac{n}{2}\log\lc\frac{t_1-s}{t_1-r^2-s}\rc \\
\le & \lc \frac{n}{2(t_1-r^2-s)}-R_{\min} \rc^{\frac 1 2} r+\frac{n}{2} \frac{r^2}{t_1-r^2-s}.
	\end{align*}
By our assumption, we have $t_1-r^2-s \ge |s|/2$ and $r^2 \le |s|/2$. Thus, \eqref{eq:extra01001} holds. The last conclusion follows since $R_{\min}=0$.
\end{proof}

Similarly, we have

\begin{prop}\label{nashlip1}
Let $\XX=\{M^n,(g(t))_{t\in \III^{++}}\}$ and $x^*,y^*\in\XX_{\III^+}$, and set $t_1=\t(x^*)\geq\t(y^*)=t_2$ and $r=d^*(x^*,y^*)$. If $t_2-\tau>-(1-\sigma)T$ and $t_1-r^2\geq(t_2-\tau)/2$, then
	\begin{align*}
\abs{\NN_{x^*}(\tau)-\NN_{y^*}(\tau)} \le C r
	\end{align*}
where $C=C(n, \tau, \sigma T)$. In particular, if $T=+\infty$, we can choose $C=C(n)/\sqrt{\tau}$.
\end{prop}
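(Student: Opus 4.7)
The plan is to compare $\NN_{x^*}(\tau)$ and $\NN_{y^*}(\tau)$ by interpolating through the intermediate quantity $\NN^*_{s_2}(x^*)$. Setting $s_1:=t_1-\tau$ and $s_2:=t_2-\tau$, so that $\NN_{x^*}(\tau)=\NN^*_{s_1}(x^*)$ and $\NN_{y^*}(\tau)=\NN^*_{s_2}(y^*)$, I would write
\[
\NN_{x^*}(\tau)-\NN_{y^*}(\tau)=\bigl(\NN^*_{s_1}(x^*)-\NN^*_{s_2}(x^*)\bigr)+\bigl(\NN^*_{s_2}(x^*)-\NN^*_{s_2}(y^*)\bigr).
\]
The first bracket only shifts the base time for the fixed point $x^*$, while the second compares two points at the same base time $s_2$. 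The hypothesis $t_1-r^2\ge(t_2-\tau)/2=s_2/2$ is precisely the condition needed to apply Proposition~\ref{nashlip} in the second bracket.

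For the first bracket, note that $\NN^*_{s_2}(x^*)=\NN_{x^*}(\tau+(t_1-t_2))$ and $0\le t_1-t_2\le r^2$ by Proposition~\ref{distancefunction1}(2). On $\tau'\in[\tau,\tau+(t_1-t_2)]$, the time $s'=t_1-\tau'$ lies in $[s_2,s_1]\subset(-(1-\sigma)T,0]$, so \eqref{eq:lowerscal} applied with $t_0=-T$ yields $\scal(\cdot,s')\ge -n/(2\sigma T)$. Integrating Proposition~\ref{propNashentropy}(i) and using the elementary inequality $\log(1+u)\le 2\sqrt{u}$ for $u\ge 0$ together with $t_1-t_2\le r^2$ gives
\[
|\NN^*_{s_1}(x^*)-\NN^*_{s_2}(x^*)|\le \int_\tau^{\tau+(t_1-t_2)}\Bigl(\tfrac{n}{2\tau'}+\tfrac{n}{2\sigma T}\Bigr)\,d\tau'\le \frac{n\,r}{\sqrt{\tau}}+\frac{n\,r^2}{2\sigma T}.
\]
When $T=+\infty$ one can let $t_0\to-\infty$ in \eqref{eq:lowerscal} to obtain $R_{\min}\ge 0$, so the last term disappears.

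For the second bracket, Proposition~\ref{nashlip} applied with $s=s_2$ gives $|\NN^*_{s_2}(x^*)-\NN^*_{s_2}(y^*)|\le C(n,s_2,\sigma T)\,r$. Since $|s_2|=\tau+|t_2|\ge\tau$, this constant is controlled by $C(n,\tau,\sigma T)$; in the $T=+\infty$ case the explicit bound in Proposition~\ref{nashlip} reads $2n/\sqrt{|s_2|}\cdot r\le 2n/\sqrt{\tau}\cdot r$. Adding the two estimates yields the claim. The only genuine technicality is absorbing the residual $nr^2/(2\sigma T)$ term into a constant $C(n,\tau,\sigma T)\,r$ when $T<\infty$, which uses the a priori bound $r^2\le(\tau+(1-\sigma)T)/2$ implied by $t_1\le 0$ together with the hypothesis; apart from this bookkeeping, every estimate needed is already available in the preceding material.
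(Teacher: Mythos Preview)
Your proof is correct and follows essentially the same approach as the paper. Both decompose the difference via the intermediate term $\NN^*_{s_2}(x^*)$ with $s_2=t_2-\tau$, bound the ``time-shift'' bracket using Proposition~\ref{propNashentropy}(i), and bound the ``same base-time'' bracket using Proposition~\ref{nashlip}; your packaging of constants via $\log(1+u)\le 2\sqrt{u}$ is slightly more explicit but otherwise the arguments coincide.
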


\begin{proof}
We set $s=t_2-\tau$. By Proposition \ref{nashlip}, we have
	\begin{align} \label{eq:nash1}
\abs{\NN_s^*(x^*)-\NN_s^*(y^*)} \le C(n, s, \sigma T) r=C(n, \tau, \sigma T) r.
	\end{align}

On the other hand, if we set $\tau_1=\tau+t_1-t_2$, then it follows from Proposition \ref{propNashentropy} (i) (see also \cite[(5.7)]{bamler2020entropy}) that
	\begin{align*}
\NN_{x^*}(\tau) \ge \NN_s^*(x^*) \ge \NN_{x^*}(\tau)-\frac{n}{2} \log  \lc \frac{\tau_1}{\tau}\lc 1-\frac{2}{n}R_{\min}(\tau_1-\tau) \rc \rc.
	\end{align*}
Since $\tau \le \tau_1 \le \tau+r^2$, we conclude that
	\begin{align*}
\abs{\NN_{x^*}(\tau)-\NN_s^*(x^*)}  \le C(n, \tau, \sigma T) r.
	\end{align*}
Combined with \eqref{eq:nash1}, we have
	\begin{align*}
\abs{\NN_{x^*}(\tau)-\NN_{y^*}(\tau)} \le C r.
	\end{align*}
The case $T=+\infty$ is similar. This completes the proof.
\end{proof}

Next, we prove the Lipschitz property of the curvature radius (see Definition \ref{defncurvatureradius}) with respect to $d^*$.

\begin{prop}\label{pro:LiprRm}
	There exists a constant $C=C(n,Y, \sigma T)>0$ such that for $\XX=\{M^n,(g(t))_{t\in \III^{++}}\} \in \MM(n, Y, T)$, we have
	\begin{align*}
		|r_{\Rm}(x^*)-r_{\Rm}(y^*)|\leq C d^*(x^*,y^*), \quad \forall x^*, y^*\in \XX_{\III^+}.
	\end{align*}
If $T=+\infty$, the last constant $C$ depends only on $n$ and $Y$.
\end{prop}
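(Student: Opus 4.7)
Without loss of generality assume $r:=r_{\Rm}(x^*)\ge r_{\Rm}(y^*)$ and write $s:=d^*(x^*,y^*)$, $x^*=(x,t)$, $y^*=(y,t')$. Since trivially $|r_{\Rm}(x^*)-r_{\Rm}(y^*)|\le r$, the desired bound is automatic once $s\ge r/C$, so it suffices to work in the regime $s\le cr$ for a small constant $c=c(n,Y)$ to be fixed later, and to produce $C_1=C_1(n,Y)$ with $r_{\Rm}(y^*)\ge r-C_1 s$. By the very definition of $r_\Rm$, one has $|\Rm|\le r^{-2}$ on the parabolic neighborhood $P(x^*,r)=B_t(x,r)\times[t-r^2,t+r^2]\cap\III^{++}$, and in particular $|\scal|\le n r^{-2}$ and $|\Ric|\le (n-1) r^{-2}$ there.

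With this uniform scalar bound in hand, Proposition~\ref{equivalenceofballs} applied with $R_0=n$ at the scale $r_0:=s/\rho_2$ gives $B^*(x^*,s)\subset P(x^*,r_0)$, and hence
\[
d_t(x,y)\le C_0 s,\qquad |t-t'|\le s^2,
\]
where $C_0=1/\rho_2=C_0(n,Y)$ and the time bound is Proposition~\ref{distancefunction1}\,(2). Setting $r':=r-C_1 s$, the crux is the parabolic inclusion $P(y^*,r')\subset P(x^*,r)$, since once this is established the curvature bound $|\Rm|\le r^{-2}\le (r')^{-2}$ transfers to $P(y^*,r')$ and yields $r_{\Rm}(y^*)\ge r-C_1 s$. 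The time-component inclusion $(r-C_1 s)^2+s^2\le r^2$ is elementary for $s\le cr$ with $c\ll 1/C_1$. For the spatial inclusion, the bound $|\Ric|\le (n-1) r^{-2}$ on $P(x^*,r)$ combined with $\partial_t g=-2\Ric$ gives the pointwise metric comparison
\[
e^{-2(n-1)|t-t'|/r^2}g_t(p)\le g_{t'}(p)\le e^{2(n-1)|t-t'|/r^2}g_t(p)
\]
at every $p$ where the curvature bound holds at all times between $t$ and $t'$. A bootstrap on the radius $\rho\in(0,r']$, trivially valid for $\rho$ small (because $d_t(x,y)\le C_0 s\ll r$) and extended by continuity, then shows every $z\in B_{t'}(y,r')$ satisfies $d_t(y,z)\le(1+C(n)s^2/r^2)r'$. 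Combined with $d_t(x,y)\le C_0 s$ and the triangle inequality, one obtains $d_t(x,z)<r$ once $C_1\ge C_0+2$ and $c$ is chosen small enough.

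The main technical obstacle is the bootstrap step: the metric comparison is pointwise, so converting it into a distance comparison along a $g_{t'}$-minimizing curve from $y$ to $z$ requires that curve to remain inside $B_t(x,r)$, where the curvature bound is valid. This is handled by running the continuity argument on $\rho$ starting from $\rho$ very small---where the enlarged spatial ball lies well inside $B_t(x,r)$---and increasing to $\rho=r'$, using the quantitative distortion and the triangle inequality at each step to keep the ball inside the controlled region. Finally, swapping the roles of $x^*$ and $y^*$ upgrades the one-sided bound $r_{\Rm}(x^*)-r_{\Rm}(y^*)\le C_1 s$ to the full two-sided Lipschitz estimate.
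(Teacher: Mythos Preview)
Your overall strategy matches the paper's: reduce to the regime $s=d^*(x^*,y^*)\ll r=r_{\Rm}(x^*)$, locate $y$ close to $x$ in the time-$t$ metric, and then establish the parabolic-ball inclusion $P(y^*,r-C_1 s)\subset P(x^*,r)$ via Ricci-controlled distance distortion together with a continuity/bootstrap argument on the radius. The distortion and bootstrap steps are essentially the same as in the paper (the paper bootstraps along a $g_{t_1}$-geodesic, you bootstrap on the radius $\rho$, but the mechanism is identical).

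The one genuine gap is your appeal to Proposition~\ref{equivalenceofballs} to get $d_t(x,y)\le C_0 s$ with $C_0=C_0(n,Y)$. That proposition's constant $\rho_2$ is stated to depend on $\sigma$ as well, so your $C_0=1/\rho_2$ inherits this dependence, whereas the claim is $C=C(n,Y)$. The paper sidesteps this by working directly: take $H_n$-centers $z_1,z_2$ of $x^*,y^*$ at a nearby time $t_1$, use the very definition of $d^*$ to bound $d_{t_1}(z_1,z_2)$, invoke Proposition~\ref{HncenterScal}(i) (with dimensional $R_0$) to bound $d_{t_1}(x,z_1)$, propagate $z_2$ into the curvature-controlled region by distance distortion, and then apply \cite[Proposition~9.16(b)]{bamler2023compactness}---a statement to the effect that $y$ lies near its $H_n$-center once that center sits in a region of bounded curvature---to bound $d_s(z_2,y)$. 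Every constant in this chain depends only on $n$ and $Y$. Your proof becomes complete if you either replace the black-box use of Proposition~\ref{equivalenceofballs} by this direct chain, or supply a separate argument that in your regime (where $x^*\in\XX_\III$ and $s$ is small) the $\sigma$-dependence of $\rho_2$ is in fact spurious.
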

\begin{proof}
We set $x^*=(x,t), y^*=(y,s)$, and assume without loss of generality that $d^*(x^*,y^*)=1$. It suffices to show that
	\begin{align}\label{equ:lip}
		r_{\Rm}(x^*)\leq L+r_{\Rm}(y^*),
	\end{align}
	where $L=L(n,Y, \sigma T)\gg1$ will be chosen below.
	
If $r_{\Rm}(x^*)\leq L$, the estimate is trivial. Therefore, assume $r_{\Rm}(x^*)> L$. Then on the parabolic neighborhood
	\begin{align*}
P:=B_{t}(x,L)\times [t-L^2,t+L^2] \cap \III^{++},
	\end{align*}
	we have $|\Rm|\leq L^{-2}< 1$. By Proposition \ref{distancefunction1} (2), it follows that $|t-s|\leq 1$. Let
	\begin{align*}
t_1:=\max\{\max\{t,s\}-1,-(1-\sigma)T\}
	\end{align*}	
and let $z_1^*=(z_1,t_1)$ and $z_2^*=(z_2,t_1)$ be $H_n$-centers of $x^*,y^*$, respectively. By the definition of $d^*$, we obtain
	\begin{align*}
		d_{W_1}^{t_1}(\nu_{x^*;t_1},\nu_{y^*;t_1}) \le 1,
	\end{align*} 
which gives
	\begin{align*}
		d_{t_1}(z_1,z_2)\leq d_{W_1}^{t_1}(\nu_{x^*;t_1},\nu_{y^*;t_1})+d_{W_1}^{t_1}(\nu_{x^*;t_1},\delta_{z_1})+d_{W_1}^{t_1}(\nu_{y^*;t_1},\delta_{z_2})\leq C_1(n).
	\end{align*}
By Proposition \ref{HncenterScal} (i), we also have
	\begin{align*}
		d_{t_1}(x,z_1)\leq C_2(n, Y, \sigma T),
	\end{align*}
so that
	\begin{align*}
d_{t_1}(x,z_2)\leq C_1(n)+C_2(n, Y, \sigma T).
	\end{align*}

Choose $L\geq e^{100n}(C_1(n)+C_2(n, Y, \sigma T))$. We claim:
	\begin{align*}
d_{t}(x,z_2)\leq e^{(n-1)}(C_1(n)+C_2(n, Y, \sigma T)).
	\end{align*}
To see this, let $\gamma:[0,d_{t_1}(x,z_2)] \to M$ be a unit-speed minimizing geodesic between $x$ and $z_2$ with respect to $g(t_1)$, so that
	\begin{align*}
\mathrm{Length}_{t_1}(\gamma)\leq C_1(n)+C_2(n, Y, \sigma T).
	\end{align*}
Define
	\begin{align*}
\bar r:=\sup\{r\in [0,d_{t_1}(x,z_2)] \mid \gamma \vert_{[0,r]}\times [t_1,t]\subset P\}.
	\end{align*}
If $\bar r< d_{t_1}(x,z_2)$, then the standard distance distortion estimate yields
	\begin{align*}
		\mathrm{Length}_{t}(\gamma|_{[0,\bar r]})\leq e^{|t-t_1|(n-1)}\mathrm{Length}_{t_1}(\gamma|_{[0,\bar r]})\leq e^{(n-1)}(C_1(n)+C_2(n, Y, \sigma T))\leq L/10,
	\end{align*}
which contradicts the definition of $\bar r$. Therefore, $\bar r=d_{t_1}(x,z_2)$, and $\gamma\times [t_1,t]\subset P$, implying
	\begin{align*}
d_{t'}(x,z_2)\leq e^{(n-1)}(C_1(n)+C_2(n, Y, \sigma T)) \le e^{-99n}L, \quad \forall t'\in [t_1,\max\{t,s\}].
	\end{align*}

Similarly, we obtain
\begin{align} \label{equ:2lip6}
B_s(z_2, e^{-n}L) \times [t_1, s] \subset P.
\end{align}

Now choose 
	\begin{align*}
L\geq  \max\{100c(H_n)^{-1/2},e^{100n}(C_1(n)+C_2(n, Y, \sigma T))\},
	\end{align*}
where $c(H_n)$ is the constant from \cite[Proposition 9.16 (b)]{bamler2023compactness}. Then, by \eqref{equ:2lip6} and that proposition, we obtain
	\begin{align*}
		d_s(z_2,y)\leq C_3(n),
	\end{align*}
	and hence
	\begin{align*}
		d_s(x,y)\leq e^{(n-1)}(C_1(n)+C_2(n, Y, \sigma T))+C_3(n).
	\end{align*}	

Next, choose
	\begin{align*}
L\geq \max\{100c(H_n)^{-1/2},e^{100n^2}(C_1(n)+C_2(n, Y, \sigma T)+C_3(n))\}.
	\end{align*}
Then for any $t'\in [t_1,\max\{t,s\}]$, another distance distortion argument gives
	\begin{align*}
		d_{t'}(x,y)\leq e^{2(n-1)}(C_1(n)+C_2(n, Y, \sigma T)+C_3(n)).
	\end{align*}
Thus, we have
	\begin{align*}
B_s(y,r_{\Rm}(x^*)-L/2) \subset B_s(x, r_{\Rm}(x^*)-L/3).
	\end{align*}
On the other hand, for any $z^*=(z,s)$ with $z\in B_s(x,r_{\Rm}(x^*)-L/3)$, the distance distortion estimate gives
	\begin{align*}
		d_{t}(z,x)\leq & e^{(n-1)|t-s|r_{\Rm}^{-2}(x^*)}d_{s}(z,x)	\leq e^{(n-1)|t-s|r_{\Rm}^{-2}(x^*)}(r_{\Rm}(x^*)-L/3)\\
		\leq& \left(1+2(n-1)r_{\Rm}^{-2}(x^*)\right)(r_{\Rm}(x^*)-L/3)\leq r_{\Rm}(x^*)-L/4.
	\end{align*}
	This implies
	\begin{align*}
B_s(x,r_{\Rm}(x^*)-L/3)\subset B_t(x,r_{\Rm}(x^*)-L/4).
	\end{align*}	
Therefore, $B_s(y,r_{\Rm}(x^*)-L/2) \subset B_t(x,r_{\Rm}(x^*)-L/4)$, which shows that the curvature radius at $y^*$ satisfies
	\begin{align*}
r_{\Rm}(y^*) \ge r_{\Rm}(x^*)-L/2.
	\end{align*}	
	This proves \eqref{equ:lip} and completes the proof.
\end{proof}

\begin{defn}[Parabolic metric space]
	A \textbf{parabolic metric space}\index{parabolic metric space} $(Z,d_Z, \t)$ over an interval $I \subset \R$ is a metric space $(Z,d_Z)$ coupled with a \textbf{time-function} $\t: Z \to I$, which satisfies for any $x,y\in Z$,
	\begin{align}\label{timeHolder}
		|\t(x)-\t(y)|\leq d_Z^2(x,y).
	\end{align}
Additionally, for any set $J \subset I$, we define $Z_J:=\t^{-1}(J)$. A sequence of parabolic metric spaces is said to converge if the underlying metric spaces converge in the (pointed) Gromov--Hausdorff sense and the corresponding time functions also converge.
\end{defn}

In the following, we focus on $\XX_{\III}$ for $\XX\in\MM(n,T)$ and restrict the $d^*$-distance from $\XX_{\III^+}$ to $\XX_{\III}$.

\begin{thm}\label{thm:GHlimit-dstar}
Consider a sequence
\begin{align*}
\XX^i=\{M_i^n,(g_i(t))_{t\in\III^{++}}\}\in\MM(n,T)
\end{align*}
with base points $p_i^*\in\XX^i_{\III}$. When $T=+\infty$, we additionally assume $\limsup_{i\to\infty}\t_i(p_i^*)>-\infty$.
After passing to a subsequence, we obtain pointed Gromov--Hausdorff convergence
\begin{align*}
(M_i\times\III,d_i^*,p_i^*,\t_i)
\xrightarrow[i\to\infty]{\quad\mathrm{pGH}\quad}
(Z,d_Z,p_\infty,\t),
\end{align*}
where $d_i^*$ is the distance associated with $\XX^i_{\III^+}$, restricted to $\XX^i_{\III}$. The limit $(Z,d_Z,\t)$ is a complete, separable, locally compact parabolic metric space over $\III$ such that each bounded closed subset is compact. Moreover, there exists $t_e\in[\t(p_\infty),0]$ such that $\operatorname{image}(\t)=[-(1-2\sigma)T,t_e]$ or $[-(1-2\sigma)T,t_e)$; when $T=+\infty$, these intervals are understood as $(-\infty,t_e]$ or $(-\infty,t_e)$.
\end{thm}

\begin{proof}
First suppose $T<+\infty$. Set
\begin{align*}
a:=-(1-\sigma)T,\qquad s_0:=-(1-\sigma/2)T.
\end{align*}
Fix $L>0$ and $\ep\in(0,\sqrt{\sigma T})$. Let $\{x_j^*\}_{1\leq j\leq N}\subset\XX^i_{\III}$ be a maximal $\ep$-separated set in $B^*(p_i^*,L)\cap\XX^i_{\III}$. Then the balls $B^*(x_j^*,\ep/3)$ are pairwise disjoint and are contained in $B^*(p_i^*,L+\ep/3)$.

For each $j$, backward monotonicity and Definition \ref{defn:dstar-distance} give
\begin{align*}
d_{W_1}^{a}(\nu_{p_i^*;a},\nu_{x_j^*;a})\leq d_i^*(p_i^*,x_j^*)<L.
\end{align*}
Applying Proposition \ref{propNashentropy1} in both directions with lower time $s_0$ and comparison time $a$, and using the scalar lower bound at $s_0$, gives
\begin{align}\label{eq:entropyHarnackpacking}
|\NN^*_{s_0}(x_j^*)-\NN^*_{s_0}(p_i^*)|
\leq C(n,\sigma,T,L).
\end{align}
Indeed, the Wasserstein term is bounded by $L$, while all logarithmic time-ratio terms are bounded because $p_i^*,x_j^*\in\XX^i_{\III}$.
Since $x_j^*\in\XX^i_{\III}$ and $(\ep/3)^2<\sigma T/2$, monotonicity of the Nash entropy gives
\begin{align*}
\NN_{x_j^*}((\ep/3)^2)\geq \NN^*_{s_0}(x_j^*).
\end{align*}
Proposition \ref{propdistance3} therefore yields
\begin{align*}
\abs{B^*(x_j^*,\ep/3)}
\geq c(n,\sigma)e^{\NN^*_{s_0}(x_j^*)}\ep^{n+2}
\geq c(n,\sigma,T,L)e^{\NN^*_{s_0}(p_i^*)}\ep^{n+2}.
\end{align*}
On the other hand, Proposition \ref{uppervolumebd} gives
\begin{align*}
\abs{B^*(p_i^*,L+\ep/3)}
\leq C(n,\sigma,T,L)e^{\NN^*_{s_0}(p_i^*)}.
\end{align*}
Summing the disjoint lower bounds and canceling the common entropy factor, we obtain
\begin{align}\label{eq:packingfinite}
N\leq C(n,\sigma,T,L)\ep^{-n-2}.
\end{align}

Now suppose $T=+\infty$. Fix $L>1$ and $\ep\in(0,1)$, and let $\{x_j^*\}_{1\leq j\leq N}\subset\XX^i_{\III}$ be a maximal $\ep$-separated set in $B^*(p_i^*,L)\cap\XX^i_{\III}$. The balls $B^*(x_j^*,\ep/3)$ are pairwise disjoint and are contained in $B^*(p_i^*,L+\ep/3)$. Set
\begin{align*}
b_i:=\t_i(p_i^*)-L^2,\qquad s_i:=\t_i(p_i^*)-2L^2.
\end{align*}
For $x_j^*\in B^*(p_i^*,L)$, we have
\begin{align*}
d_{W_1}^{b_i}(\nu_{p_i^*;b_i},\nu_{x_j^*;b_i})\leq L
\end{align*}
and
\begin{align*}
|\NN^*_{s_i}(x_j^*)-\NN^*_{s_i}(p_i^*)|\leq C(n,L).
\end{align*}
Since $\NN_{x_j^*}((\ep/3)^2)\geq \NN^*_{s_i}(x_j^*)$, the lower estimate in Proposition \ref{propdistance3} and the upper estimate \eqref{eq:largeballupperancient} then give, exactly as above,
\begin{align}\label{eq:packingancient}
N\leq C(n,L)\ep^{-n-2}.
\end{align}

Thus, \eqref{eq:packingfinite} and \eqref{eq:packingancient} imply pointed Gromov--Hausdorff precompactness by \cite[Theorem 8.1.10]{burago2001course}. Since
\begin{align*}
\sqrt{|\t_i(x^*)-\t_i(y^*)|}\leq d_i^*(x^*,y^*),
\end{align*}
a subsequence of the time-functions converges to a function $\t$ satisfying \eqref{timeHolder}. Completeness, separability, local compactness, and the property that each bounded closed subset is compact follow from Corollary \ref{cor:topo}, Lemma \ref{lem:complete0}, and the uniform local covering estimates.

It remains to identify the image of the time-function. If $t\in\operatorname{image}(\t)$ and $s\in\III$ satisfies $s<t$, choose $z\in Z$ with $\t(z)=t$ and $z_i^*\to z$. Let $w_i^*\in M_i\times\{s\}$ be an $H_n$-center of $z_i^*$. Lemma \ref{lem:Hcenterdis} gives
\begin{align*}
d_i^*(z_i^*,w_i^*)\leq\sqrt{H_n}\sqrt{\t_i(z_i^*)-s},
\end{align*}
so the points $w_i^*$ remain in a uniformly bounded pointed ball. Passing to a subsequence, $w_i^*\to w\in Z$ with $\t(w)=s$. This proves the final assertion.
\end{proof}

\begin{defn}\label{defnRicciLimitSpace}
Any pointed Gromov--Hausdorff limit $(Z,d_Z,p_\infty,\t)$ from Theorem \ref{thm:GHlimit-dstar} is called a \textbf{Ricci flow limit space}. If it is obtained from a sequence in $\MM(n,Y,T)$ for a fixed $Y$, it is called a \textbf{noncollapsed Ricci flow limit space}.
\end{defn}
For a Ricci flow limit space $(Z,d_Z,\t)$, we always use $x,y,z$, etc., to denote spacetime points and $\t(x),\t(y),\t(z)$ to represent their time components. We denote metric balls in $Z$ by $B_Z^*(x,r)$\index{$B_Z^*(x,r)$}.

\begin{rem} \label{rem:general}
One can also consider a more general setting. Let $T_i>0$ with $T_i\to T\in(0,+\infty]$, and consider a sequence of Ricci flows
\[
\XX^i=\{M_i^n,(g_i(t))_{t\in\III_i^{++}}\}\in\MM(n,T_i)
\]
with base points $p_i^*\in\XX^i_{\III_i}$, where $\III_i^{++}=[-T_i,0]$ and $\III_i=[-(1-2\sigma)T_i,0]$.

If $T<+\infty$, after a time translation we may assume that, for all sufficiently large $i$, each $\XX^i$ is defined on
\[
\III^{\prime,++}=[-(1-\sigma)T-\sigma T_i, (1-\sigma)(T_i-T)].
\]
Define the distance $d_i^*$ on $M_i\times\III^{\prime,+}$ as in Definition~\ref{defn:dstar-distance}, where
\[
\III^{\prime,+}=[-(1-\sigma)T,(1-\sigma)(T_i-T)].
\]
Arguing as in Theorem~\ref{thm:GHlimit-dstar}, after passing to a subsequence, we obtain
\[
(M_i\times\III'_i,\ d_i^*,\ p_i^*,\ \t_i)\xrightarrow[i\to\infty]{\ \mathrm{pGH}\ }(Z,d_Z,p_\infty,\t),
\]
where $\III'_i=[-(1-\sigma)T+\sigma T_i,(1-\sigma)(T_i-T)]$ and $(Z, d_Z,\t)$ is a parabolic space over $[-(1-2\sigma)T, 0]$.

If $T=+\infty$, after passing to a subsequence, we assume 
	\begin{align*}
\lim_{i \to \infty}\t_i(p_i^*)=t_0 \in [-\infty, 0] \quad \text{and} \quad \lim_{i \to \infty} \lc \t_i(p_i^*)+(1-2\sigma)T_i \rc=a \in [0, +\infty].
	\end{align*}

We then consider the following cases.

\begin{itemize}
\item If $a<+\infty$, then we consider the shifted time functions $\t_i-\t_i(p_i^*)$, and obtain
	\begin{align*}
		(M_i \times \III_i, d^*_i, p_i^*,\t_i-\t_i(p_i^*)) \xrightarrow[i \to \infty]{\quad \mathrm{pGH} \quad} (Z, d_Z, p_{\infty},\t),
	\end{align*}
so that $\t(p_{\infty})=0$ and $(Z, d_Z,\t)$ is a parabolic space with $[-a, 0] \subset \mathrm{image}(\t)$. 

\item If $a=+\infty$ and $t_0>-\infty$, then a similar argument yields
	\begin{align*}
		(M_i \times \III_i, d^*_i, p_i^*,\t_i) \xrightarrow[i \to \infty]{\quad \mathrm{pGH} \quad} (Z, d_Z, p_{\infty},\t),
	\end{align*}
so that $(Z, d_Z,\t)$ is a parabolic space with $(-\infty,t_0]\subset \mathrm{image}(\t)$.

\item If $a=+\infty$ and $t_0=-\infty$, then a similar argument yields
	\begin{align}\label{specialcov}
		(M_i \times \III_i,  d^*_i, p_i^*,\t_i-\t_i(p_i^*)) \xrightarrow[i \to \infty]{\quad \mathrm{pGH} \quad} (Z, d_Z, p_{\infty},\t),
	\end{align}
so that $\t(p_{\infty})=0$ and $(Z, d_Z,\t)$ is a parabolic space with $\R_-:=(-\infty, 0] \subset \mathrm{image}(\t)$. \index{$\R_-$}
\end{itemize}
\end{rem}

Now we introduce the following notation.

\begin{notn}\label{not:1}
We write $(Z,d_Z,z,\t) \in \overline{\MM(n, Y, T)}$\index{$\overline{\MM(n, Y, T)}$} if it arises as the pointed Gromov--Hausdorff limit of a sequence in $\MM(n, Y, T)$. In general, we write $(Z,d_Z,z,\t) \in \overline{\MM(n, Y)}$\index{$\overline{\MM(n, Y)}$} if $(Z,d_Z,z,\t)$ is a noncollapsed Ricci flow limit space obtained as the pointed Gromov--Hausdorff limit of a sequence in $\MM(n, Y, T_i)$ for some sequence $\{T_i\}$ with a finite or infinite limit.
\end{notn}

In this paper, all results concerning noncollapsed Ricci flow limit spaces in $\overline{\MM(n, Y, T)}$ remain valid even in $\overline{\MM(n, Y)}$.

\section{\texorpdfstring{$\mathbb{F}$-limits}{F-limits} of Ricci flows} \label{sec:f}

In this section, we relate our Ricci flow limit spaces to Bamler's $\mathbb F$-limits developed in \cite{bamler2023compactness} and \cite{bamler2020structure}.

We first recall the following definition of a metric flow (see \cite[Definition 3.2]{bamler2023compactness}).

\begin{defn}[Metric flow] \label{defnmetricflow}
A metric flow\index{metric flow} over a subset $I$ of $\R$ is a tuple of the form 
	\begin{align*}
		(\XX , \t, (d_t)_{t \in I} , (\nu_{x;s})_{x \in \XX, s \in I, s \leq \t (x)}) 
	\end{align*}
	with the following properties:
	\begin{enumerate}[label=\textnormal{(\arabic{*})}]
		\item $\XX$ is a set consisting of points.
		
		\item $\t : \XX \to I$ is a map called the time-function.
		Its level sets $\XX_t := \t^{-1} (t)$ are called time slices and the preimages $\XX_{I'} := \t^{-1} (I')$, $I' \subset I$, are called time-slabs.
		\item $(\XX_t, d_t)$ is a complete and separable metric space for all $t \in I$.
		
		\item $\nu_{x;s}$ is a probability measure on $\XX_s$ for all $x \in \XX$, $s \in I$, $s \leq \t (x)$. For any $x \in \XX$, the family $(\nu_{x;s})_{s \in I, s \leq \t (x)}$ is called the conjugate heat kernel at $x$.
		
		\item $\nu_{x; \t (x)} = \delta_x$ for all $x \in \XX$.
		
		\item For all $s, t \in I$, $s< t$, $L \geq 0$ and any measurable function $u_s : \XX_s \to [0,1]$ with the property that if $L > 0$, then $u_s = \Phi \circ f_s$ for some $L^{-1/2}$-Lipschitz function $f_s : \XX_s \to \R$ \emph{(}if $L=0$, then there is no additional assumption on $u_s$\emph{)}, the following holds. 
		The function
		\begin{align*}
			u_t :\XX_t \longrightarrow \R, \qquad x \longmapsto \int_{\XX_s} u_s \, \mathrm{d}\nu_{x;s} 
		\end{align*}
		is either constant or of the form $u_t = \Phi \circ f_t$, where $f_t : \XX_t \to \R$ is $(t-s+L)^{-1/2}$-Lipschitz. Here, $\Phi$ is given by
\[
\diff{}{x}\Phi(x)=(4\pi)^{-1/2}e^{-x^2/4}, \qquad \lim_{x\to-\infty}\Phi(x)=0, \qquad \lim_{x\to\infty}\Phi(x)=1.
\]
		
		\item For any $t_1,t_2,t_3 \in I$, $t_1 \leq t_2 \leq t_3$, $x \in \XX_{t_3}$ we have the reproduction formula
		\[ \nu_{x; t_1} = \int_{\XX_{t_2}} \nu_{\cdot; t_1} \, \mathrm{d}\nu_{x; t_2}, \]
		meaning that for any Borel set $S\subset \XX_{t_1}$
		\[ \nu_{x;t_1} (S) = \int_{\XX_{t_2}} \nu_{y ; t_1} (S) \, \mathrm{d}\nu_{x; t_2}(y). \]
	\end{enumerate}
\end{defn}

Given a metric flow $\XX$ over $I$, we recall the following definitions from \cite[Definition 3.13]{bamler2023compactness}.

\begin{defn}[Conjugate heat flow] \label{def:conju}
	A family of probability measures $(\mu_t \in \mathcal{P} (\XX_t))_{t \in I'}$ over $I' \subset I$ is called a \textbf{conjugate heat flow}\index{conjugate heat flow} if for all $s, t \in I'$, $s \leq t$ we have
	\begin{align*}
		\mu_s = \int_{\XX_t} \nu_{x;s} \, \mathrm{d}\mu_t (x). 
	\end{align*}
\end{defn}

Next, we recall the definition of the metric flow pair from \cite[Definitions 5.1, 5.2]{bamler2023compactness}. Roughly speaking, two metric flow pairs are equivalent if they are the same in the metric measure sense almost everywhere.

\begin{defn}[Metric flow pair]
	A pair $(\XX, (\mu_t)_{t \in I'})$ is called a metric flow pair over $I \subset \R$\index{metric flow pair} if:
	\begin{enumerate}[label=\textnormal{(\arabic*)}]
		\item $I' \subset I$ with $|I \setminus I'| = 0$.
		\item $\XX$ is a metric flow over $I'$.
		\item $(\mu_t \in \mathcal{P} (\XX_t))_{t \in I'}$ is a conjugate heat flow on $\XX$ with $\text{supp}\,\mu_t = \XX_t$ for all $t \in I'$. That is, for all $s, t \in I'$, $s \leq t$ we have
	\begin{align*}
		\mu_s = \int_{\XX_t} \nu_{x;s} \, \mathrm{d}\mu_t (x). 
	\end{align*}		
	\end{enumerate}
	If $J \subset I'$, then we say that $(\XX, (\mu_t)_{t \in I'})$ is fully defined over $J$. We denote by $\IF_I^J$ the set of equivalence classes of metric flow pairs over $I$ that are fully defined over $J$. Here, two metric flow pairs $(\XX^i, (\mu^i_t)_{t \in I^{\prime, i}})$, $i = 1,2$, that are fully defined over $J$ are equivalent if there exist an almost always isometry $\phi$ between $\XX^1$ and $\XX^2$ \emph{(cf. \cite[Definition 5.1]{bamler2023compactness})} and a common set $I' \subset I^{\prime,1} \cap I^{\prime,2}$ such that $|I^{\prime, 1} \setminus I'| = |I^{\prime, 2} \setminus I'| = 0$, $(\phi_t )_* \mu^1_t = \mu^2_t$ for all $t \in I'$, and $J \subset I'$.
\end{defn}

Next, for a sequence of metric flow pairs, we recall the following definition of a correspondence from \cite[Definition 5.4]{bamler2023compactness}, which can be regarded as an embedding into an ambient space.

\begin{defn}[Correspondence]\label{defncorrespondence}
	Let $(\XX^i, (\mu^i_t)_{t \in I^{\prime,i}})$ be metric flow pairs over $I$, indexed by some $i \in \mathcal{I}$.
	A correspondence\index{correspondence} between these metric flows over $I''$ is a pair of the form
	\begin{align*}
		\CF := \big( (A_t, d^A_t)_{t \in I''},(\varphi^i_t)_{t \in I^{\prime\prime,i}, i \in \mathcal{I}} \big), 
	\end{align*}
	where:
	\begin{enumerate}[label=\textnormal{(\arabic*)}]
		\item $(A_t, d^A_t)$ is a metric space for any $t \in I''$.
		\item $I^{\prime\prime,i} \subset I'' \cap I^{\prime,i}$ for any $i \in \mathcal{I}$.
		\item $\varphi^i_t : (\XX^i_t, d^i_t) \to (A_t, d^A_t)$ is an isometric embedding for any $i \in \mathcal{I}$ and $t \in I^{\prime\prime,i}$.
	\end{enumerate}
	If $J \subset I^{\prime\prime,i}$ for all $i \in \mathcal{I}$, we say that $\CF$ is fully defined over $J$.
\end{defn}

Given a correspondence, one can define the corresponding $\mathbb F$-distance. In general, the $\IF$-distance between metric flow pairs is the infimum over all correspondences (see \cite[Definitions 5.6, 5.8]{bamler2023compactness}).

\begin{defn}[$\IF$-distance]
	We define the $\IF$-distance between two metric flow pairs within $\CF$ (uniform over $J$),
	\[ d_{\IF}^{\,\CF, J} \big( (\XX^1, (\mu^1_t)_{t \in I^{\prime,1}}), (\XX^2, (\mu^2_t)_{t \in I^{\prime,2}}) \big), \] 
	to be the infimum over all $r > 0$ with the property that there is a measurable subset $E \subset I''$ with
	\[ J \subset I'' \setminus E \subset I^{\prime\prime,1} \cap I^{\prime\prime,2} \]
	and a family of couplings $(q_t)_{t \in I'' \setminus E}$ between $\mu^1_t, \mu^2_t$ such that:
	\begin{enumerate}[label=\textnormal{(\arabic*)}]
		\item $|E| \leq r^2$.
		\item For all $s, t \in I'' \setminus E$, $s \leq t$, we have
		\[ \int_{\XX^1_t \times \XX_t^2} d_{W_1}^{A_s} ( (\varphi^1_s)_* \nu^1_{x^1; s}, (\varphi^2_s)_* \nu^2_{x^2; s} ) \,\mathrm{d}q_t (x^1, x^2) \leq r. \]
	\end{enumerate}
	The $\IF$-distance between two metric flow pairs (uniform over $J$),
	\[ d_{\IF}^{ J} \big( (\XX^1, (\mu^1_t)_{t \in I^{\prime,1}}), (\XX^2, (\mu^2_t)_{t \in I^{\prime,2}}) \big), \] 
	is defined as the infimum of
	\[ d_{\IF}^{\,\CF, J} \big( (\XX^1, (\mu^1_t)_{t \in I^{\prime,1}}), (\XX^2, (\mu^2_t)_{t \in I^{\prime,2}}) \big), \] 
	over all correspondences $\CF$ between $\XX^1, \XX^2$ over $I''$ that are fully defined over $J$.
\end{defn}

With the $\IF$-distance, one can define the $\IF$-convergence of a sequence of metric flow pairs. In general, $\IF$-convergence implies $\IF$-convergence within a correspondence; see \cite[Theorems 6.5, 6.6]{bamler2023compactness}. More precisely,

\begin{thm} 
	Let $(\XX^i, (\mu^i_t)_{t \in I^{\prime,i}})$, $i \in \N \cup \{ \infty \}$, be metric flow pairs over $I $ that are fully defined over some $J \subset I$.
	Suppose that for any compact subinterval $I_0 \subset I$
	\[ d_{\IF}^{J \cap I_0} \big( (\XX^i , (\mu^i_t)_{t \in I_0 \cap I^{\prime,i}}), (\XX^{\infty} , (\mu^\infty_t)_{t \in I_0\cap I^{\prime,\infty}}) \big) \to 0. \] 
	Then there is a correspondence $\CF$ between the metric flows $\XX^i$, $i \in \N \cup \{ \infty \}$, over $I$ such that
	\begin{align*} 
		(\XX^i, (\mu^i_t)_{t \in I^{\prime,i}}) \xrightarrow[i \to \infty]{\quad \IF, \CF, J \quad} (\XX^{\infty}, (\mu^\infty_t)_{t \in I^{\prime,\infty}})
	\end{align*}
	on compact time intervals, in the sense that
	\begin{align*} 
		d_{\IF}^{\,\CF, J \cap I_0} \big( (\XX^i, (\mu^i_t)_{t \in I_0 \cap I^{\prime,i}}), (\XX^{\infty}, (\mu^\infty_t)_{t \in I_0\cap I^{\prime,\infty}}) \big) \to 0
	\end{align*}
	for any compact subinterval $I_0 \subset I$.
\end{thm}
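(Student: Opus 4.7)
The strategy is a diagonal construction that amalgamates approximate correspondences on an exhaustion of $I$ into a single correspondence on all of $I$. First, I fix an exhaustion $I_1 \subset I_2 \subset \cdots$ of $I$ by compact subintervals with $\bigcup_k I_k = I$. By hypothesis, $d_{\IF}^{J \cap I_k}(\XX^i, \XX^{\infty}) \to 0$ as $i \to \infty$ for each fixed $k$, so one can choose a strictly increasing sequence $N_1 < N_2 < \cdots$ such that for every $i \ge N_k$ there exists a correspondence
\[
\CF^{k,i} = \bigl( (A^{k,i}_t, d^{A^{k,i}}_t)_{t \in I_k},\, \psi^{k,i}_t,\, \theta^{k,i}_t \bigr)
\]
on $I_k$ realizing $d_{\IF}^{\CF^{k,i}, J \cap I_k} \le 2/k$, with $\psi^{k,i}_t \colon \XX^i_t \hookrightarrow A^{k,i}_t$ and $\theta^{k,i}_t \colon \XX^{\infty}_t \hookrightarrow A^{k,i}_t$ the isometric embeddings. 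For each $i \ge N_1$ set $k(i) := \max\{k : i \ge N_k\}$, so $k(i) \to \infty$, and retain only $\CF^i := \CF^{k(i), i}$, defined over $I_{k(i)}$; denote its data by $(A^i_t, \psi^i_t, \theta^i_t)$.

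Next, I amalgamate these auxiliary spaces along the common reference $\XX^{\infty}_t$. For each $t \in I$, let
\[
A_t := \Bigl( \XX^{\infty}_t \sqcup \bigsqcup_{i\,:\,t \in I_{k(i)}} A^i_t \Bigr) \Big/ \sim,
\]
where $\sim$ identifies $\theta^i_t(x) \sim x \in \XX^{\infty}_t$ for each admissible $i$, and put
\[
d^A_t(a, b) := \inf_{z \in \XX^{\infty}_t} \bigl\{ d^{A^i}_t(a, \theta^i_t(z)) + d^{A^j}_t(\theta^j_t(z), b) \bigr\}
\]
for $a \in A^i_t$, $b \in A^j_t$ with $i \ne j$, and the original metric when $a, b$ lie in the same component. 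After the metric quotient and completion, one obtains a metric space $(A_t, d^A_t)$. Declare $\varphi^{\infty}_t \colon \XX^{\infty}_t \hookrightarrow A_t$ to be the canonical inclusion, and $\varphi^i_t := \psi^i_t$ post-composed with $A^i_t \hookrightarrow A_t$ for $t \in I_{k(i)}$; set $I^{\prime\prime, i} := I^{\prime, i} \cap I_{k(i)}$.

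The verifications split into three steps. Step 1: the inclusion $A^i_t \hookrightarrow A_t$ is isometric, so each $\varphi^i_t$ is an isometric embedding. For $a, b \in A^i_t$, any amalgamated zig-zag path through $\XX^{\infty}_t$ can be compared to a path that stays inside $A^i_t$ of no greater length, using that each $\theta^j_t$ preserves the $\XX^{\infty}_t$-metric together with the triangle inequality in $A^i_t$; hence $d^A_t(a, b) = d^{A^i}_t(a, b)$. Step 2: it follows that $\CF := ((A_t, d^A_t)_{t \in I}, (\varphi^i_t))$ is a correspondence in the sense of Definition \ref{defncorrepondence}. Step 3: for every compact $I_0 \subset I$, $d_{\IF}^{\CF, J \cap I_0}(\XX^i, \XX^{\infty}) \to 0$. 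Indeed, for any $k$ with $I_0 \subset I_k$ and any $i$ with $k(i) \ge k$, the exceptional set and couplings witnessing $d_{\IF}^{\CF^i, J \cap I_k} \le 2/k$ transfer verbatim to $\CF$, since the pushforwards $(\varphi^i_s)_* \nu^i_{x^i; s}$ and $(\varphi^{\infty}_s)_* \nu^{\infty}_{x^{\infty}; s}$ are supported in the isometric image of $A^i_s$ inside $A_s$, so their $W_1$-distance in $A_s$ coincides with that in $A^i_s$.

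The main obstacle is Step 1, namely confirming that amalgamation along $\XX^{\infty}_t$ does not shorten distances inside any single $A^i_t$, since this is precisely what allows the $\IF$-estimates in Step 3 to carry over unchanged. An equivalent and perhaps cleaner viewpoint uses Kantorovich--Rubinstein duality (Lemma \ref{lem:duality}): any $1$-Lipschitz function on $(A^i_t, d^{A^i}_t)$ extends to a $1$-Lipschitz function on $(A_t, d^A_t)$ via an infimum-convolution formula, and every $1$-Lipschitz function on $(A_t, d^A_t)$ restricts to a $1$-Lipschitz function on each $A^i_t$; hence $W_1$-distances between measures supported in $A^i_t$ are preserved under the embedding $A^i_t \hookrightarrow A_t$, which is exactly what Step 3 requires.
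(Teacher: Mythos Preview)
Your argument is correct and is essentially the standard amalgamation-along-the-limit construction; the paper does not give its own proof of this statement but quotes it from \cite[Theorems 6.5, 6.6]{bamler2023compactness}, where the proof proceeds along the same lines. A couple of minor points worth tightening: in Step 3 you write $d_{\IF}^{\CF^i, J \cap I_k} \le 2/k$, whereas what you actually chose is $d_{\IF}^{\CF^i, J \cap I_{k(i)}} \le 2/k(i)$; the bound you state then follows from monotonicity of $d_{\IF}^{\CF,J}$ in $J$ together with $k(i)\ge k$. Also, you should say a word about the finitely many indices $i<N_1$ (e.g.\ take $I^{\prime\prime,i}$ empty or use any correspondence on $I_1$), and be explicit that the domain $I^{\prime\prime,i}$ of $\psi^{k,i}_t$ may be a proper subset of $I_k\cap I^{\prime,i}$, which is then what you carry over to the glued correspondence.
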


Next, we recall the notion of convergence of points within a correspondence, see \cite[Definitions 6.7, 6.10, 6.12]{bamler2023compactness}.

\begin{defn}\label{convergenceheatflow2}
	Let $\XX^i$ be metric flows over $I$ and consider a correspondence $\CF$ as in Definition \ref{defncorrespondence} between $\XX^i$ over $I''$. 
	
Let $(\mu_t^i)_{t\in I_*^i}, i\in \N\bigcup \{\infty\}$ be conjugate heat flows on $\XX^i$, where $I_*^i=I^{\prime,i}\bigcap (-\infty, T_i)$ or $I^{\prime,i}\bigcap (-\infty, T_i]$ for some $T_i\in (-\infty,+\infty]$. We say that the conjugate heat flows $(\mu_t^i)_{t\in I_*^i}, i\in \N\bigcup \{\infty\}$ \textbf{converge} to $(\mu_t^\infty)_{t\in I_*^\infty}$ within $\CF$ and that the convergence is uniform over $J$ if $J\subset I_*^i$ for all $i\in\N\bigcup\{\infty\}$ and there exist measurable subsets $E_i\subset I'',i\in \N$ such that:
\begin{enumerate}[label=\textnormal{(\arabic*)}]
		\item $J\bigcap I_*^\infty\subset (I_*^i\bigcap I'')\setminus E_i=(I_*^\infty\bigcap I'')\setminus E_i\subset I^{\prime\prime,i}\bigcap I^{\prime\prime,\infty}$ for large $i$.
		\item $|E_i|\to 0$.
		\item $\sup_{t\in(I_*^\infty\bigcap I'')\setminus E_i}d_{W_1}^{A_t}((\varphi_t^i)_*\mu_t^i,(\varphi_t^\infty)_*\mu_t^\infty)\to 0$. 
	\end{enumerate}
	We write this convergence as
	\begin{align}\label{convergenceheatflow1}
		(\mu_t^i)_{t\in I_*^i}\xrightarrow[i\to\infty]{\quad \CF,J\quad}(\mu_t^\infty)_{t\in I_*^\infty}.
	\end{align}
	We say that \eqref{convergenceheatflow1} holds on compact intervals and is uniform over $J$ if for any compact subinterval $I_0\subset I_*^\infty$, \eqref{convergenceheatflow1} holds after replacing $\CF,J$ by $\CF|_{I''\bigcap I_0}, J\bigcap I_0$. We say that \eqref{convergenceheatflow1} is uniform at time $t\in I''$ if \eqref{convergenceheatflow1} holds after replacing $J$ by $J\bigcup \{t\}$. Let $T_i\in I^{\prime,i}$ and $\mu^i\in \PP(\XX^i_{T_i})$. We say that \textbf{$\mu^i$ converge to $\mu^\infty$ within $\CF$ (and uniformly over $J$)}, and write 
	\begin{align*}
		\mu^i\xrightarrow[i\to\infty]{\quad \CF,J\quad}\mu^\infty,
	\end{align*}
	if $T_i\to T_{\infty}$ and if for the conjugate heat flows $(\tilde\mu_t^i)_{t\in I^{\prime,i}\bigcap (-\infty, T_i]}$, $i\in\N\bigcup\{\infty\}$ with initial condition $\tilde\mu_{T_i}^i=\mu^i$, we have the following convergence on compact time intervals
	\begin{align*}
		(\tilde\mu_t^i)_{t\in I_*^i}\xrightarrow[i\to\infty]{\quad \CF,J\quad}(\tilde\mu_t^\infty)_{t\in I_*^\infty}.
	\end{align*}
	Fix some $T\in I''$ and $\mu^i\in \PP(\XX_T^i)$. We say that $\mu^i$ \textbf{strictly converge} to $\mu^\infty$ within $\CF$ if 
	\begin{align*}
		(\varphi_T^i)_*\mu^i\xrightarrow[i\to\infty]{W_1}(\varphi_T^\infty)_*\mu^\infty.
	\end{align*}

For a sequence of points $x_i\in \XX^i_{T_i},i\in\N\bigcup\{\infty\}$, we say that $x_i$ \textbf{converge} to $x_\infty$ within $\CF$ (and uniformly over $J$) if $ \delta_{x_i}\xrightarrow[i\to\infty]{\quad \CF,J\quad} \delta_{x_{\infty}}$. We write this convergence as
	\begin{align*}
x_i \xrightarrow[i\to\infty]{\quad \CF,J\quad} x_{\infty}.
	\end{align*}
For any sequence of points $x_i\in \XX^i_T, i\in\N\bigcup \{\infty\}$, we say that $x_i$ \textbf{strictly converge} to $x_\infty$ within $\CF$ if 
	\begin{align*}
		(\varphi_T^i)(x_i)\xrightarrow[i\to\infty]{}(\varphi_T^\infty)(x_\infty).
	\end{align*}	
\end{defn}

Next, we recall the following definition from \cite[Definition 3.21]{bamler2023compactness}.

\begin{defn}[$H$-concentration] 
	Given a constant $H>0$, a metric flow $\XX$ is called \textbf{$H$-concentrated} if for any $s \leq t$, $s,t \in I$, $x_1, x_2 \in \XX_t$
	\begin{align*}
		\Var (\nu_{x_1; s}, \nu_{x_2; s} ) \leq d^2_t (x_1, x_2) + H (t-s).
	\end{align*}
\end{defn}

We note that Definition \ref{convergenceheatflow2} has defined two notions of convergence of measures or points. Strict convergence is useful if the $\CF$-convergence is timewise at time $T$, see \cite[Theorems 6.13, 6.15]{bamler2023compactness}. The following theorem from \cite[Theorem 6.19]{bamler2023compactness} shows how to represent points as limits of sequences:
\begin{thm}\label{representlimit}
	Let $\XX^i$ be metric flows over a subset $I^{\prime,i}\subset \R, i\in \N\bigcup \{\infty\}$ and consider a correspondence $\CF$ as in Definition \ref{defncorrespondence} between $\XX^i$. Suppose for some $J\subset \R$, we have on compact time intervals, 
	\begin{align*} 
		(\XX^i, (\mu^i_t)_{t \in I^{\prime,i}}) \xrightarrow[i \to \infty]{\quad \IF, \CF, J \quad} (\XX^{\infty}, (\mu^\infty_t)_{t \in I^{\prime,\infty}})
	\end{align*}
	and all $\XX^i$ are $H$-concentrated for some uniform constant $H$. Consider some $x_\infty\in \XX^\infty_{t_\infty}$ with $t_\infty> \inf\,I_\infty$ and a sequence of times $t_i\in I^{\prime,i}$ with $t_i\to t_\infty$. Then there exist points $x_i\in \XX^i_{t_i}$ such that
	\begin{align*}
		x_i\xrightarrow[i\to\infty]{\quad \CF,J\quad}x_\infty.
	\end{align*}
	In particular, if $t_\infty\in I^{\prime,i}$ for all $i\in\N$, then we can choose all $x_i\in \XX^i_{t_\infty}$.
\end{thm}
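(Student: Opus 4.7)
The plan is to construct $x_i$ by matching, to first order, the conjugate heat kernel family $(\nu_{x_\infty;s})_{s\le t_\infty}$, which effectively encodes $x_\infty$ because $\nu_{x_\infty;s}\to \delta_{x_\infty}$ in $W_2$ as $s\nearrow t_\infty$ by the $H$-concentration hypothesis. I first treat the case $t_i\equiv t_\infty$ (assumed lying in each $I^{\prime,i}$), and reduce the general case to it at the end: given a point already produced at time $t_\infty$, $H$-concentration together with the coupling argument below at time $t_i$ lets us migrate it to $\XX^i_{t_i}$, at the cost of an extra error of order $\sqrt{H|t_i-t_\infty|}\to 0$.

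For each $k$ I would fix $s_k\nearrow t_\infty$ with $s_k\in J$ (possible since $t_\infty>\inf I_\infty$). By $H$-concentration, $\Var(\delta_{x_\infty},\nu_{x_\infty;s_k})\le H(t_\infty-s_k)\to 0$, so by Lemma \ref{lem:var} the measure $\nu_{x_\infty;s_k}$ is $W_1$-close to $\delta_{x_\infty}$ at scale $\sqrt{H(t_\infty-s_k)}$. The $\IF,\CF$-convergence supplies couplings $q^i_{t_\infty}$ between $\mu^i_{t_\infty}$ and $\mu^\infty_{t_\infty}$ with
\begin{align*}
\int_{\XX^i_{t_\infty}\times\XX^\infty_{t_\infty}} d_{W_1}^{A_{s_k}}\bigl((\varphi^i_{s_k})_*\nu^i_{x^i;s_k},(\varphi^\infty_{s_k})_*\nu^\infty_{x^\infty;s_k}\bigr)\,\mathrm{d}q^i_{t_\infty}(x^i,x^\infty)\xrightarrow[i\to\infty]{}0
\end{align*}
for a.e.\ such $s_k$.

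Since $\{x_\infty\}$ has $\mu^\infty_{t_\infty}$-measure zero, I cannot disintegrate $q^i_{t_\infty}$ directly at $x_\infty$. The key trick is to smear: pick $r_k\ll\sqrt{t_\infty-s_k}$, and use that $\supp\mu^\infty_{t_\infty}=\XX^\infty_{t_\infty}$ to get $\mu^\infty_{t_\infty}(B_{t_\infty}(x_\infty,r_k))>0$. By the reproduction formula and $H$-concentration, for any $y\in B_{t_\infty}(x_\infty,r_k)$ we have $d_{W_1}^{s_k}(\nu_{y;s_k},\nu_{x_\infty;s_k})\lesssim r_k+\sqrt{H(t_\infty-s_k)}$, which tends to zero. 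Restricting $q^i_{t_\infty}$ to $\XX^i_{t_\infty}\times B_{t_\infty}(x_\infty,r_k)$, normalizing, and applying Markov's inequality, I extract a point $x^{(k)}_i\in\XX^i_{t_\infty}$ with
\begin{align*}
d_{W_1}^{A_{s_k}}\bigl((\varphi^i_{s_k})_*\nu^i_{x^{(k)}_i;s_k},(\varphi^\infty_{s_k})_*\nu_{x_\infty;s_k}\bigr)\xrightarrow[i\to\infty]{}0
\end{align*}
for each fixed $k$. A diagonal sequence $x_i:=x^{(k(i))}_i$ with $k(i)\to\infty$ slowly produces the desired points, and convergence $x_i\to x_\infty$ within $\CF$ (i.e.\ convergence of the whole conjugate heat flows) at all times $s\in J\cap(-\infty,t_\infty)$ follows from the monotonicity/near-monotonicity of $d_{W_1}^{A_s}\bigl((\varphi^i_s)_*\nu^i_{x_i;s},(\varphi^\infty_s)_*\nu_{x_\infty;s}\bigr)$ as $s$ decreases, which is a consequence of the metric-flow axiom \textnormal{(6)} and the isometric nature of $\varphi^i_s,\varphi^\infty_s$.

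The main obstacle is the passage from the integral estimate on $q^i_{t_\infty}$ to the existence of a \emph{single} point $x^{(k)}_i$ whose own heat kernel is close to $\nu_{x_\infty;s_k}$. Since $x_\infty$ is a point of measure zero, this requires the smearing step above, and one must carefully calibrate the three small parameters $r_k$, $t_\infty-s_k$, and the $\IF$-convergence rate so that the final diagonal extraction goes through. The $H$-concentration hypothesis is used in an essential way: it is what makes $\nu_{y;s_k}$ insensitive to perturbations of $y$ of size $o(\sqrt{t_\infty-s_k})$ and hence allows the smearing to a positive-measure ball without losing control of the target heat kernel.
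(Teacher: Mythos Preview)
This theorem is not proved in the paper; it is quoted from \cite[Theorem~6.19]{bamler2023compactness}. Your overall plan---smear $x_\infty$ to a ball of positive $\mu^\infty_{t_\infty}$-measure, restrict the $\IF$-coupling, apply Markov, diagonalize---is indeed the skeleton of Bamler's argument. But your final step contains a real gap: you assert that closeness of $(\varphi^i_{s_k})_*\nu^i_{x_i;s_k}$ to $(\varphi^\infty_{s_k})_*\nu^\infty_{x_\infty;s_k}$ at a \emph{single} time $s_k$ propagates to all earlier $s$ by ``monotonicity of $d_{W_1}^{A_s}$'' derived from axiom~(6). This fails because the ambient spaces $(A_s,d^A_s)$ at distinct times are unrelated metric spaces with no flow structure linking them; axiom~(6) governs the evolution within one metric flow, not the comparison of two flows through independent isometric embeddings into separate targets. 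There is no such monotonicity across the correspondence.

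The repair is to secure control at all times \emph{before} selecting $x_i$. The $\IF$-distance bound guarantees that a single coupling $q^i_t$ satisfies
\[\int_{\XX^i_t\times\XX^\infty_t} d_{W_1}^{A_s}\bigl((\varphi^i_s)_*\nu^i_{x^i;s},(\varphi^\infty_s)_*\nu^\infty_{x^\infty;s}\bigr)\,\mathrm{d}q^i_t(x^i,x^\infty)\le r_i\]
for \emph{every} $s\le t$ in $I''\setminus E_i$, not just one. Integrate this in $s$ over a compact interval first, then restrict to the smeared ball and apply Markov to the integrated quantity; the resulting $x_i$ carries a small $L^1$-in-$s$ error, which is exactly what Definition~\ref{convergenceheatflow2} requires (it permits exceptional time sets of vanishing measure). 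A minor second point: the coupling $q^i_t$ is supplied only at times $t\in I''\setminus E_i$, so you cannot in general take $t=t_\infty$; work instead at nearby $t'_i\in I''\setminus E_i$ and absorb the discrepancy via $H$-concentration, as you already outline for the case $t_i\ne t_\infty$.
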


In this paper, we will focus on metric flows induced by closed Ricci flows and their limits.  For any pointed Ricci flow $\{M^n,(g(t))_{t \in [-L,0]},p^*=(p,0)\}$, one can define $(\XX, (\mu_t)_{t \in [-L,0]})$ as follows:
\begin{align} \label{eq:example}
	\lc \XX:=M \times [-L,0) \sqcup p^*, \t:=\text{proj}_{[-L,0]}, (d_t)_{t \in [-L,0]}, (\nu_{x^*;s})_{x^* \in \XX, s\in [-L,0], s\le \t(x^*)}, \mu_t:=\nu_{p^*;t}\rc.
\end{align}
Here, if $L=\infty$, we set $[-L, 0]=(-\infty,0]$.

Then, by Proposition \ref{monotonicityW1}, we have:

\begin{prop}
	The pair $(\XX, (\mu_t)_{t \in [-L,0]})$ defined in \eqref{eq:example} is an $H_n$-concentrated metric flow pair that is fully defined over $[-L,0]$.
\end{prop}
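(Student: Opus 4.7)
The claim has three components to verify: (a) $(\XX,\t,(d_t),(\nu_{x;s}))$ satisfies the seven axioms of Definition \ref{defnmetricflow}; (b) the flow is $H_n$-concentrated; (c) $(\mu_t)_{t \in [-L,0]}$ is a conjugate heat flow with full support, so that the pair is fully defined over $[-L,0]$. My plan is to dispatch the routine axioms by quoting standard Ricci flow facts, then isolate the one genuinely nontrivial axiom and treat $H_n$-concentration as an immediate corollary of Proposition \ref{monotonicityW1}.

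First I would check axioms (1)--(5) and (7). Axiom (1)--(3) are immediate since each time-slice $\XX_t$ is either the closed manifold $(M,g(t))$ (a compact, hence complete and separable, Riemannian manifold) or the singleton $\{p^*\}$. Axiom (4) holds because $\nu_{x^*;s}$ is given by integration of the positive heat kernel $K(x,t;\cdot,s)$ against $dV_{g(s)}$, and $\int_M K(x,t;\cdot,s)\,dV_{g(s)} = 1$. Axiom (5) uses the delta-initial condition $\lim_{s \nearrow t} K(x,t;\cdot,s) = \delta_x$ built into the heat kernel. Axiom (7) is precisely the semigroup identity $K(x,t_3;z,t_1) = \int_M K(y,t_2;z,t_1) K(x,t_3;y,t_2)\,dV_{g(t_2)}(y)$ for the heat kernel on a smooth Ricci flow, which is a standard consequence of the parabolic maximum principle. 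For $H_n$-concentration, take $x_1,x_2 \in \XX_t$ and $s \le t$ in $[-L,0]$; since $\nu_{x_i;t} = \delta_{x_i}$, Proposition \ref{monotonicityW1} gives
\begin{align*}
\Var_s(\nu_{x_1;s},\nu_{x_2;s}) + H_n s \;\le\; \Var_t(\delta_{x_1},\delta_{x_2}) + H_n t \;=\; d_t^2(x_1,x_2) + H_n t,
\end{align*}
which rearranges to the desired bound.

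The one axiom requiring real work is (6): if $u_s : \XX_s \to [0,1]$ is either bounded measurable (case $L=0$) or of the form $u_s = \Phi \circ f_s$ with $f_s$ being $L^{-1/2}$-Lipschitz, then $u_t(x) := \int_{\XX_s} u_s\,d\nu_{x;s}$ is either constant or of the form $\Phi \circ f_t$ with $f_t$ being $(t-s+L)^{-1/2}$-Lipschitz. This is a heat-flow contraction property. The key input is the sharp gradient estimate for solutions of the heat equation coupled with the Ricci flow, namely the Bakry--Ledoux type bound $|\nabla u_t|_{g(t)}^2 \le e^{-2(t-s)\cdot 0} \ldots$; more concretely, one writes $u_t(x) = \int_M u_s(y) K(x,t;y,s)\,dV_{g(s)}(y)$ and, using that the density $K$ evolves forward in $t$ via the (forward) heat equation, differentiates $(t-s+L) |\nabla (\Phi^{-1}\circ u_t)|^2$ to show it is nonincreasing in $t$; this is exactly the calculation underlying Theorem \ref{gradientheatkernel} and is the property that the definition of metric flow is designed to axiomatize. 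I expect this is the main obstacle to a self-contained argument, but since Bamler verifies this property for Ricci flow heat kernels in \cite{bamler2023compactness}, I would simply cite it.

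Finally I would address (c). By definition $\mu_t = \nu_{p^*;t}$ has a smooth positive density $K(p,0;\cdot,t)$ on $M$ for each $t \in [-L,0)$, so $\spt \mu_t = M = \XX_t$; at $t = 0$, $\mu_0 = \delta_{p^*}$ whose support is $\{p^*\} = \XX_0$. The conjugate heat flow identity $\mu_s = \int_{\XX_t} \nu_{x;s}\,d\mu_t(x)$ for $s \le t$ is exactly axiom (7) applied with the final point fixed at $p^*$. Since everything is defined smoothly on the entire interval $[-L,0]$, the pair is fully defined over $[-L,0]$, completing the proof.
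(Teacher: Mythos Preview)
Your proposal is correct and aligned with the paper's treatment. The paper itself offers no proof beyond the single phrase ``by Proposition \ref{monotonicityW1}'', leaving the verification of the metric flow axioms (in particular axiom (6)) implicitly to Bamler's original construction in \cite{bamler2023compactness}; your expanded outline simply makes explicit the routine checks and correctly isolates axiom (6) as the one requiring a citation, while deriving $H_n$-concentration from the variance monotonicity exactly as the paper intends. One minor inaccuracy: the gradient estimate behind axiom (6) is not literally the computation of Theorem \ref{gradientheatkernel} (which bounds $|\nabla_x K|/K$), but rather the maximum-principle argument for $(\partial_t - \Delta)\big((t-s+L)|\nabla(\Phi^{-1}\circ u_t)|^2\big) \le 0$ on a Ricci flow background; this is verified in \cite[Section 3]{bamler2023compactness}, so your plan to cite Bamler is the right move.
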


For a sequence of closed Ricci flows, we have the following compactness theorem from \cite[Theorem 7.4, Corollary 7.5, Theorem 7.6]{bamler2023compactness}.

\begin{thm}[$\IF$-limit] \label{thm:601}
	Let $\{M_i^n,(g_i(t))_{t \in [-L, 0]},p_i^*=(p_i, 0)\}$ be a sequence of closed Ricci flows with the corresponding metric flow pairs $(\XX^i,(\mu_t^i)_{t \in [-L,0]})$ as described in \eqref{eq:example}.

For any finite set $J \subset [-L, 0]$ containing $0$, after passing to a subsequence, there exist an $H_n$-concentrated metric flow pair $(\XX^\infty, (\mu^\infty_t)_{t \in [-L,0]})$ and a correspondence $\CF$ between the metric flows $\XX^i$, $i \in \N \cup \{ \infty \}$, over $[-L, 0]$ such that (on compact time intervals if $L=+\infty$)
	\begin{equation} \label{Fconv}
		(\XX^i, (\mu^i_t)_{t \in [-L,0]}) \xrightarrow[i \to \infty]{\quad \IF, \CF,J \quad} (\XX^\infty, (\mu^\infty_t)_{t \in [-L,0]}).
	\end{equation}
where $\XX^\infty_{0}$ consists of a single point $p_{\infty}$, and $\mu^\infty_t=\nu_{p_{\infty};t}$. Moreover, the convergence \eqref{Fconv} is uniform over any compact $J' \subset [-L,0]$ that only contains times at which $\XX^\infty$ is continuous. The limit metric flow pair $(\XX^\infty, (\mu^\infty_t)_{t \in [-L,0]})$ is called an \textbf{$\IF$-limit} of the sequence.

In addition, after passing to a subsequence, there exists a unique $\IF$-limit $(\XX^{\IF},(\nu_{p_{\infty};t})_{t \in [-L,0]})$ such that $\XX^{\IF}$ is future continuous in the sense of \cite[Definition 4.7]{bamler2023compactness}. 
\end{thm}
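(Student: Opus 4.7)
The plan is to realize this theorem as an application of Bamler's abstract $\F$-compactness machinery from \cite{bamler2023compactness}, for which we must verify two hypotheses uniformly in $i$: that each metric flow pair $(\XX^i,(\mu_t^i)_{t \in [-L,0]})$ is $H_n$-concentrated, and that the conjugate heat flows are uniformly tight at the base times in $J$. The first is immediate from Proposition \ref{monotonicityW1}, since $\Var_t(\nu_{x_1^*;t},\nu_{x_2^*;t}) + H_n t$ is nondecreasing. For tightness, the bound $\NN_{p_i^*}(\tau) \ge -Y$ (implied by the assumption that the $\XX^i$ lie in $\MM(n,Y,T)$, or in the cited proof by a uniform Perelman $\mu$-entropy bound derived from the time $0$ initial data) combined with the Gaussian concentration estimate \eqref{sharpconcentration} and the $H_n$-center localization in Proposition \ref{existenceHncenter} pins the measures $\mu^i_t = \nu_{p_i^*;t}$ inside uniformly bounded $H_n$-centered regions at each $t \in J$.

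Granted these inputs, the strategy is: first extract an $\F$-subsequential limit at the finite set of times $J$ by the abstract compactness theorem of \cite[Theorem 7.4]{bamler2023compactness}; second, run a diagonal argument along a countable dense refinement of $J$ in $[-L,0]$ (on compact subintervals if $L=\infty$) to upgrade to $\F$-convergence on compact subintervals within a single correspondence $\CF$; third, transfer the $H_n$-concentration to the limit using weak convergence of couplings and Proposition \ref{takelimitVarW1}.

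For the statement about the time-$0$ slice, the point is that since $\mu^i_0 = \delta_{p_i^*}$ is a Dirac mass and $0 \in J$, the convergence at time $0$ is uniform, which by strict convergence (Definition \ref{convergenceheatflow2} and the characterization in \cite[Theorem 6.13]{bamler2023compactness}) forces $\mu^\infty_0 = \delta_{p_\infty}$ for a unique $p_\infty$; combined with the support condition in the definition of a metric flow pair, this yields $\XX^\infty_0 = \{p_\infty\}$ and $\mu^\infty_t = \nu_{p_\infty;t}$ via the reproduction formula. The uniform convergence over any compact $J' \subset [-L,0]$ consisting of continuity times follows from the general principle that at continuity times of $\XX^\infty$, the $W_1$-distance $d_{W_1}^{A_t}((\varphi_t^i)_*\mu_t^i,(\varphi_t^\infty)_*\mu_t^\infty)$ converges to zero, which upgrades the measure-theoretic convergence in the $\F$-distance to pointwise (in $t$) Wasserstein convergence.

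Finally, to obtain the unique future-continuous representative $\XX^{\IF}$, we invoke \cite[Theorem 7.6]{bamler2023compactness}: the set of discontinuity times of $t \mapsto \XX^\infty_t$ in the $W_1$-sense is at most countable, and at each such discontinuity one replaces the time-slice by the future limit $\lim_{s \searrow t}(\XX^\infty_s, d_s, \mu^\infty_s)$ embedded via $\CF$; this yields an equivalent metric flow pair which is future continuous, and uniqueness follows because two future-continuous metric flow pairs equivalent as pairs must be isomorphic on all of $[-L,0]$. The main obstacle in carrying out this program in practice is propagating the tightness and $H_n$-concentration uniformly through the diagonal argument while tracking the correspondence $\CF$, but the Nash entropy bound is tailored precisely to make this propagation automatic.
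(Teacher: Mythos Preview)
The paper does not actually prove this theorem; it simply records it as a direct consequence of \cite[Theorem 7.4, Corollary 7.5, Theorem 7.6]{bamler2023compactness}. Your proposal is a reasonable unpacking of what those citations accomplish, and the outline of how the time-$0$ Dirac mass forces $\XX^\infty_0=\{p_\infty\}$ and how future continuity is obtained is accurate.

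There is, however, one genuine misstep: you assert that Bamler's abstract compactness requires verifying \emph{two} hypotheses, $H_n$-concentration and uniform tightness of the $\mu^i_t$ at times in $J$. In fact \cite[Theorem 7.4]{bamler2023compactness} requires only $H$-concentration (plus being fully defined over $J$); no separate tightness input is needed, because the variance bound from $H$-concentration already controls the spread of $\mu^i_t$ enough to run the Gromov--$W_1$ compactness at each time. Your tightness argument then invokes a Nash entropy lower bound $\NN_{p_i^*}(\tau)\ge -Y$, which is \emph{not assumed} in the statement of this theorem: the hypotheses say only that the $(M_i,g_i(t))$ are closed Ricci flows, with no entropy or noncollapsing condition. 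So that paragraph both adds an unnecessary step and imports an assumption the theorem does not make. Once you drop it, your sketch aligns with the paper's citation-only treatment.
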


\begin{rem}
In general, the set of discontinuous times of $\XX^\infty$ is countable; see \cite[Corollary 4.11]{bamler2023compactness}.
\end{rem}

Generally speaking, an $\IF$-limit $(\XX^\infty, (\nu_{p_{\infty};t})_{t \in [-L,0]})$ carries limited geometric information. However, if we assume that the Nash entropies of all closed Ricci flows are uniformly bounded, an assumption equivalent to a certain noncollapsing condition, then the $\IF$-limit $(\XX^\infty, (\mu^\infty_t)_{t \in [-L,0]})$ reveals significantly richer structural properties.

Let us first recall the following definitions from \cite[Definitions 6.22, 3.42, 3.40, 3.46]{bamler2023compactness}:

\begin{defn}[Tangent metric flow] 
	Let $\XX$ be a metric flow over $I$ and $x_0 \in \XX_{t_0}$ a point.
	We say that a metric flow pair $(\XX', (\nu'_{x_{\max};t})_{t \in (-\infty,0]})$ is a tangent metric flow of $\XX$ at $x_0$ if there is a sequence of scales $\lambda_k> 0$ with $\lambda_k \to \infty$ such that for any $L > 0$ the parabolic rescalings (see \cite[Lemma 3.4]{bamler2023compactness} for the notations) 
	\begin{align*}
		\big(\XX^{-t_0, \lambda_k}_{[-L,0]}, (\nu_{x_0;t}^{-t_0, \lambda_k})_{\lambda_k^{-2} t + t_0 \in I , t \in [-L,0]}\big)
	\end{align*}
	 $\IF$-converge to $\big(\XX'_{[-L,0]}, (\nu'_{x_{\max};t})_{t \in [-L,0]}\big)$.
\end{defn}

\begin{defn}[Metric soliton] \label{def:met_soliton}\index{metric soliton}
	A metric flow pair $(\XX, (\mu_t)_{t \in (-\infty,0]})$ is called a \textbf{metric soliton} if there is a tuple
	\[ \big( X, d, \mu, (\nu'_{x;t} )_{x \in X; t < 0} \big) \]
	and a map $\phi : \XX \to X$ such that the following holds:
	\begin{enumerate}[label=\textnormal{(\arabic{*})}]
		\item $(X,d,\mu)$ is a metric measure space and for any $t<0$, the map $\phi_t : (\XX_t, d_t, \mu_t) \to (X, \sqrt{|t|} d, \mu)$ is an isometry between metric measure spaces.
		\item For any $x \in \XX_t$ and $s< t$, we have $(\phi_s)_* \nu_{x;s} = \nu'_{\phi_t(x); \log(s/t)}$.
	\end{enumerate}
\end{defn}

\begin{defn}[Static cone]
	A metric flow $\XX$ over $(-\infty,0]$ is called a \textbf{static cone} if there is a tuple
	\[ \big( X, d,  (\nu'_{x;t} )_{x \in X; t \leq 0} \big) \]
	and a map $\phi : \XX \to X$ such that the following holds:
	\begin{enumerate}[label=\textnormal{\arabic{*}.}]
	\item $(X, d)$ is a metric cone with vertex $q$ such that for any $\lambda \in (0, 1]$, if $\psi_{\lambda}:X \to X$ is the radial dilation by $\lambda$ preserving $q$, then $(\psi_{\lambda})_* \nu'_{x;t}=\nu'_{\psi_{\lambda}(x);\lambda^2t}$ for any $x \in X$ and $t \le 0$.
		
		\item For any $t<0$, the map $\phi_t : (\XX_t, d_t) \to (X,d)$ is an isometry.
		
		\item For any $x \in \XX_t$ and $s \leq t$, we have $(\phi_s)_* \nu_{x;s} = \nu'_{\phi_t (x); t-s}$.
	\end{enumerate}
\end{defn}

We consider a sequence of closed Ricci flows with entropy bounded below at the base point. First, we recall the following definition.

\begin{defn}[Ricci flow spacetime] \label{def_RF_spacetime}
An $n$-dimensional Ricci flow spacetime over an interval $I \subset \R$ is a tuple $(\mathcal U, \mathfrak{t}, \partial_{\mathfrak{t}}, g)$ with the following properties:
\begin{enumerate}[label=\textnormal{(\arabic{*})}]
\item $\mathcal U$ is an $(n+1)$-dimensional smooth manifold with smooth boundary $\partial \mathcal U$, and $\partial \mathcal U$ is a disjoint union of smooth manifolds of dimension $n$.

\item $\mathfrak{t} : \mathcal U \to I$ is a smooth function without critical points. For any $t \in I$ we denote by $\mathcal U_t := \mathfrak{t}^{-1} (t) \subset \mathcal U$ the time-$t$-slice of $\mathcal U$.

\item $\t (\partial \mathcal U) \subset \partial I$.

\item $\partial_{\mathfrak{t}}$ is a smooth vector field on $\mathcal U$ that satisfies $\partial_{\mathfrak{t}} \mathfrak{t} \equiv 1$.

\item $g$ is a smooth inner product on the spatial subbundle $\ker (\mathrm{d} \mathfrak{t} ) \subset T \mathcal U$.
For any $t \in I$ we denote by $g_t$ the restriction of $g$ to the time-$t$-slice $\mathcal U_t$.

\item $g$ satisfies the Ricci flow equation: $\mathcal{L}_{\partial_\mathfrak{t}} g = - 2 \Ric (g)$.
Here $\Ric (g)$ denotes the symmetric $(0,2)$-tensor on $\ker (\mathrm{d} \mathfrak{t} )$ that restricts to the Ricci tensor of $(\mathcal U_t, g_t)$ for all $t \in I$.
\end{enumerate}
\end{defn}

Obviously, a conventional Ricci flow $(M,g(t))_{t \in I}$ is a Ricci flow spacetime by setting $\mathcal M=M \times I$, $\t$ to be the projection onto the time factor, and $\partial_{\t}$ to be the unit vector on $I$.

The following structure theorem follows from \cite[Theorems 2.3, 2.4, 2.5, 2.6, 2.46]{bamler2020structure} and \cite[Theorem 9.21]{bamler2023compactness}.

\begin{thm}\label{Fconvergence}
	Let $\XX^i=\{M_i^n,(g_i(t))_{t \in [-L,0]},p_i^*=(p_i,0)\}$ be a sequence of pointed closed Ricci flows with entropy bounded below by $-Y$ at $p_i^*$ (see Definition \ref{def:entropybound}). Suppose $(\XX^\infty, (\mu^\infty_t)_{t \in [-L,0]})$ is a future continuous $\IF$-limit obtained in Theorem \ref{thm:601}. Then the following statements hold.
	
	\begin{enumerate}[label=\textnormal{(\arabic{*})}]
		\item There exists a decomposition
		\begin{equation}\label{decomRS}
			\XX^\infty_{0}=\{p_{\infty}\},\quad \XX^\infty_{(-L,0)}=\RR^\F \sqcup \MS^{\F},\index{$\RR^\F$}\index{$\MS^\F$}
		\end{equation}
		such that $\RR^\F$ is given by an $n$-dimensional Ricci flow spacetime $(\RR^\F, \t, \partial^{\infty}_{\t}, g^{\infty})$ and $\dim_{\MM^*}(\MS^{\F}) \le n-2$, where $\dim_{\MM^*}$ denotes the $*$-Minkowski dimension in \cite[Definition 3.31]{bamler2023compactness}. Moreover, $\RR^\F_t$ is a connected open set and $\mu^{\infty}_t(\MS^{\F}_t)=0$ for any $t \in (-L,0)$.
		
		\item Every tangent flow $(\XX',(\nu_{x_{\infty}';t})_{t \le 0})$ at every point $x \in \XX^\infty$ is a metric soliton. Moreover, $\XX'$ is the Gaussian soliton iff $x \in \RR^\F$. If $x \in \MS^{\F}$, the singular set of $(\XX',(\nu_{x_{\infty}';t})_{t \le 0})$ on each $t <0$ has Minkowski dimension at most $n-4$. In particular, if $n=3$, the metric soliton is a smooth Ricci flow associated with a $3$-dimensional Ricci shrinker. If $n=4$, each slice of the metric soliton is a smooth Ricci shrinker orbifold with isolated singularities.
		
		\item The convergence \eqref{Fconv} is smooth on $\RR^\F$, in the following sense. There exist an increasing sequence $U_1 \subset U_2 \subset \ldots \subset \RR^\F$ of open subsets with $\bigcup_{i=1}^\infty U_i = \RR^\F$, open subsets $V_i \subset M_i \times (-L,0)$, time-preserving diffeomorphisms $\phi_i : U_i \to V_i$ and a sequence $\ep_i \to 0$ such that the following holds:
		\begin{enumerate}[label=\textnormal{(\alph{*})}]
			\item We have
			\begin{align*}
				\Vert \phi_i^* g^i - g^\infty \Vert_{C^{[\ep_i^{-1}]}(U_i)} & \leq \ep_i, \\
				\Vert \phi_i^* \partial_{\t_i} - \partial^\infty_{\t} \Vert_{C^{[\ep_i^{-1}]}(U_i)} &\leq \ep_i, \\
				\Vert w^i \circ \phi_i - w^\infty \Vert_{C^{[\ep_i^{-1}]}(U_i)} &\leq \ep_i,
			\end{align*}
			where $g^i$ is the spacetime metric induced by $g_i(t)$, and $w^i$ is the conjugate heat kernel defined by $\mathrm{d} \nu_{p_i^*;\cdot}=w^i\,\mathrm{d} g^i$, $i \in \N \bigcup \{ \infty \}$.
			
			\item Let $y_\infty \in \RR^\F$ and $y_i \in M_i \times (-L,0)$.
			Then $y_i$ converges to $y_\infty$ within $\CF$ \emph{(cf. Definition \ref{convergenceheatflow2})} if and only if $y_i \in V_i$ for large $i$ and $\phi_i^{-1} (y_i) \to y_\infty$ in $\RR^\F$.
			
			\item If the convergence \eqref{Fconv} is uniform at some time $t \in (-L,0)$, then for any compact subset $K \subset \RR^\F_t$ and for the same subsequence we have
			\[ \sup_{x \in K \cap U_i} d^A_t (\varphi^i_t (\phi_i(x)), \varphi^\infty_t (x) ) \longrightarrow 0. \]
		\end{enumerate}
\item For any $t \in (-L, 0)$, the restriction of $d_t$ on $\RR^\F_t$ agrees with the length metric of $g(t)$.
	\end{enumerate}
\end{thm}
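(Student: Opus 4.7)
The plan is to reduce the theorem to Bamler's structural results in \cite{bamler2020structure} and \cite{bamler2023compactness} by verifying that the sequence satisfies their hypotheses. Since each $\XX^i$ has entropy bounded below by $-Y$ at $p_i^*$, the monotonicity of Nash entropy (Proposition \ref{propNashentropy}) and its propagation via Proposition \ref{propNashentropy1} yield uniform Nash entropy lower bounds in any $P^*$-neighborhood of $p_i^*$. This is exactly the noncollapsing input for Bamler's partial regularity theory.

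For part (1), I would define $\RR^\F \subset \XX^\infty_{(-L,0)}$ as the set of points admitting a neighborhood on which the convergence \eqref{Fconv} is smooth in the sense of (3)(a). The $\ep$-regularity Theorem \ref{epregularityNash}, combined with the Nash entropy lower bound propagated to nearby base points, yields uniform curvature bounds on parabolic neighborhoods whenever the pointed Nash entropy is sufficiently close to zero. Hamilton's compactness then furnishes a smooth subsequential limit which is, by construction, a Ricci flow spacetime; openness of $\RR^\F$ follows. The Minkowski dimension bound $\dim_{\mathcal{M}^*} \MS^{\F} \le n-2$ is \cite[Theorem 2.4]{bamler2020structure}, established via quantitative stratification and volume estimates on $(k,\ep,r)$-symmetric points that use Proposition \ref{integralbound}. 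Connectedness of $\RR^\F_t$ and $\mu^\infty_t(\MS^\F_t)=0$ follow from the heat-kernel upper bound in Theorem \ref{heatkernelupperbdgeneral} and the reproduction formula.

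For parts (2) and (4), at any $x \in \XX^\infty$ the parabolic rescalings $\XX^{-\t(x),\lam_k}$ retain the entropy bound, so Theorem \ref{thm:601} produces an $\IF$-limit $\XX'$. The monotonicity in Proposition \ref{propNashentropy} (ii) forces $\WW_{x;\tau}$ to converge to a constant along the rescaling sequence, so part (iii) of the same proposition forces the soliton identity $\Ric(g)+\na^2 f = \tfrac{1}{2\tau} g$ on $\RR'$, yielding the metric soliton structure. The Gaussian dichotomy is a direct application of $\ep$-regularity: $x \in \RR^\F$ iff $\NN_x(0^+) \ge -\ep_n$, iff the tangent is Gaussian. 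The codimension-$4$ bound on singular slices reduces via self-similarity to \cite[Theorem 2.6]{bamler2020structure}. The low-dimensional refinements (smooth $3$-d, orbifold $4$-d) follow from the classification of Ricci shrinkers of low dimension. For part (4), local agreement of $d_t$ with $d_{g^\infty_t}$ on small geodesic balls is immediate from (3)(a) and the fact that $\varphi^i_t \circ \phi_i$ is a near-isometry; the globalization uses the codimension-$2$ bound on $\MS^\F_t$ to perturb minimizing paths off the singular set.

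For part (3), the patching construction is standard. Pick a countable dense subset of $\RR^\F$; around each point, the uniform curvature bound provides, via Hamilton's compactness, smooth diffeomorphisms $\phi_{i,\al}$ on product neighborhoods. These are glued across overlaps using a diagonal argument, producing the exhaustion $U_i \nearrow \RR^\F$ and maps $\phi_i$. Property (b) is essentially the definition of $\RR^\F$; property (c) at uniform times follows from the heat kernel gradient estimate of Theorem \ref{gradientheatkernel}, which converts smooth local convergence of the metric into uniform convergence of the $\varphi^i_t$-embeddings. The main obstacle throughout is compatibility between the purely metric correspondence $\CF$ and the smooth differentiable structure on $\RR^\F$: both must be synchronized so that points convergent within $\CF$ correspond to $\phi_i$-preimages. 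This is resolved, as in \cite[Section 9]{bamler2023compactness}, by reconstructing $\varphi^i_t$ up to an isometry from the conjugate heat kernel measures $\nu_{y;s}$ at smooth regular points, whose $C^\infty_{\loc}$-convergence follows from standard parabolic regularity.
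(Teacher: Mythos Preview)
Your proposal is correct in spirit and takes essentially the same approach as the paper: the theorem is not proved from scratch but is assembled from Bamler's structure theory. The paper simply states that the result ``follows from \cite[Theorems 2.3, 2.4, 2.5, 2.6, 2.46]{bamler2020structure} and \cite[Theorem 9.21]{bamler2023compactness}'' and gives no further argument, whereas you have written out a more detailed roadmap of how those pieces fit together. Your sketch is a reasonable expansion of what the paper leaves implicit, and the specific citations you invoke (quantitative stratification for the dimension bound, $\ep$-regularity for the Gaussian dichotomy, the patching construction from \cite[Section 9]{bamler2023compactness} for part (3)) align with the theorems the paper cites.
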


The singular set $\mathcal{S}^\F$ in \eqref{decomRS} has a natural stratification; see \cite[Theorem 1.9]{bamler2020structure}:

\begin{thm}\label{Fstratification}
	There is a stratification of $\mathcal{S}^\F$ 
	\begin{align*}
	    \mathcal{S}^{0,\F}\subset\mathcal{S}^{1,\F}\subset\cdots \subset\mathcal{S}^{n-2,\F}=\mathcal{S}^\F,
	\end{align*}
	such that for each $k=0,\ldots,n-2$,
	\begin{enumerate}[label=\textnormal{\arabic{*}.}]
		\item $\dim_{\mathcal H^*}(\mathcal S^{k,\F}) \le k$, where $\dim_{\mathcal H^*}$ denotes the $*$-Hausdorff dimension in \cite[Definition 3.30]{bamler2023compactness};\index{$\mathcal S^{k,\F}$}
		\item Every point $x\in \mathcal{X}^\F_{<0}\setminus\mathcal{S}^{k-1,\F}$ has a tangent flow $(\XX',(\nu_{x';t})_{t\leq 0})$ that is a metric soliton and satisfies one of the following:
		\begin{enumerate}[label=\textnormal{(\alph{*})}]
			\item $\XX'_{<0}=\XX_{<0}''\times \R^k$ and $(\nu_{x';t})_{t\leq 0}=(\mu_t''\otimes \mu_t^{\R^k})_{t<0}$ for some metric soliton $(\XX'',(\mu_t'')_{t<0})$;
			\item $\XX'_{<0}=\mathcal{X}_{<0}''\times \R^{k-2}$ and $(\nu_{x';t})_{t\leq 0}=(\mu_t''\otimes \mu_t^{\R^{k-2}})_{t<0}$ for some static cone $(\XX'',(\mu_t'')_{t<0})$.
		\end{enumerate}
	\end{enumerate}
\end{thm}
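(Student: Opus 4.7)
The plan is to follow the quantitative stratification framework of Cheeger--Naber, as implemented in Bamler's setting in \cite{bamler2020structure}, since this theorem is essentially a restatement of the stratification result there. First I would define the strata directly: declare $x \in \mathcal{S}^{k,\F}$ if no tangent flow at $x$ is $(k{+}1)$-symmetric in the sense that it is either a metric soliton splitting off an $\R^{k+1}$ Euclidean factor, or a static cone splitting off an $\R^{k-1}$ Euclidean factor. The nesting $\mathcal{S}^{0,\F}\subset\cdots\subset \mathcal{S}^{n-2,\F}$ is then immediate, and the equality $\mathcal{S}^{n-2,\F}=\mathcal{S}^{\F}$ follows from Theorem \ref{Fconvergence}(2), because every tangent flow is a metric soliton, and the Gaussian soliton (the unique $n$-symmetric one) corresponds to regular points via the $\epsilon$-regularity of Theorem \ref{epregularityNash}. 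Property (2) of the theorem is then essentially the definition of $\mathcal{S}^{k-1,\F}$.

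Next I would establish the Minkowski dimension bound via quantitative strata. Define
\begin{align*}
\mathcal{S}^{k,\F,\epsilon}_r := \{x \in \XX^{\F}_{<0}\mid \text{no scale } s \in [r,1]\text{ yields an }(k{+}1,\epsilon,s)\text{-symmetric point}\},
\end{align*}
where $(k,\epsilon,s)$-symmetric means the parabolic rescaling by $s^{-1}$ is within $\IF$-distance $\epsilon$ of a metric flow of the relevant symmetry type. By a standard diagonal argument one has $\mathcal{S}^{k,\F}=\bigcup_{\epsilon>0}\bigcap_{r>0}\mathcal{S}^{k,\F,\epsilon}_r$, so it suffices to bound the number of parabolic $P^*$-balls of radius $r$ needed to cover $\mathcal{S}^{k,\F,\epsilon}_r \cap P^*(p_\infty;1)$ by $C(\epsilon)\, r^{-k-2}$. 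This gives $\dim_{\MM^*}(\mathcal{S}^{k,\F})\le k$ after accounting for the parabolic scaling convention used in $\dim_{\MM^*}$.

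The key covering bound comes from the cone-splitting principle combined with the monotonicity of the Nash entropy: if at a point $x$ the tangent-like behavior is almost $k$-symmetric at two scales $r_1 \ll r_2$ with the approximate symmetry axes not too close, then at the smaller scale $x$ must be almost $(k{+}1)$-symmetric. Iterated via a Reifenberg-type argument on dyadic scales, this forces the non-$(k{+}1)$-symmetric points to cluster near a $k$-dimensional affine set at each scale, which in turn gives the desired covering bound. The Nash entropy drop along scales plays the role of the volume density drop in the Einstein case, and its monotonicity (Proposition \ref{propNashentropy}(ii)) together with the $\epsilon$-regularity (Theorem \ref{epregularityNash}) prevents infinite iteration, yielding effective bounds depending on $\epsilon$ and $Y$.

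The main technical obstacle is the cone-splitting estimate itself in the metric flow setting: one must upgrade an approximate splitting of a conjugate heat kernel measure in the $\IF$-distance into an actual additional symmetry direction. This is carried out in \cite{bamler2020structure} via almost-splitting maps built from parabolic approximations of linear functions, whose existence relies on the gradient estimate Theorem \ref{gradientheatkernel}, the Poincar\'e inequality Theorem \ref{poincareinequ}, and the integral bounds of Proposition \ref{integralbound}. In our situation, these inputs are either already valid for closed Ricci flows in $\MM(n,Y,T)$ or transfer directly to the $\IF$-limit, so invoking the machinery of \cite[Theorems 2.3--2.6, 2.46]{bamler2020structure} and \cite[Theorem 9.21]{bamler2023compactness} completes the argument.
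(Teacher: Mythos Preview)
The paper does not prove this theorem; it is quoted directly from \cite[Theorem 1.9]{bamler2020structure} as a known structural result about $\IF$-limits. Your outline is essentially Bamler's own argument, so there is nothing substantive to compare. One small slip: the covering exponent for $\mathcal{S}^{k,\F,\epsilon}_r$ should be $r^{-k-\epsilon}$ (as in Theorem \ref{quantsizeS}), not $r^{-k-2}$; the $*$-Minkowski dimension is already normalized so that a count of $C(\epsilon)r^{-k-\epsilon}$ parabolic balls yields $\dim_{\MM^*}\le k$, with no extra $+2$ to unwind.
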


For later use, we record the following splitting result for the $\F$-limit, which is essentially a consequence of \cite[Theorem 15.50]{bamler2020structure}. We sketch the proof for readers' convenience.

\begin{thm}\label{thm:splitting}
Let $\XX^\infty$ be the limit metric flow from Theorem \ref{Fconvergence}, and let $(-T_1,-T_2)\subset [-L,0]$. Suppose that there exist $k$ smooth functions $y_1,\ldots,y_k$ on $\RR^{\F}_{(-T_1,-T_2)}$ such that for all $a,b\in \{1,\ldots,k \}$,
	\begin{align*}
		\la\na y_a,\na y_b\ra =\delta_{ab},\quad \na^2 y_a=0,\quad \partial_\t y_a=0,
	\end{align*}
	then the vector fields $\na y_a$ induce an isometric splitting of the form $\XX^\infty_{(-T_1,-T_2)}=\XX'_{(-T_1,-T_2)}\times \R^k$ for some metric flow $\XX'$ over $(-T_1,-T_2)$.
\end{thm}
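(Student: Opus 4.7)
The plan is to construct the splitting first on the regular part $\RR^{\F}_{(-T_1,-T_2)}$ using the parallel vector fields $X_a:=\nabla y_a$, and then propagate it to all of $\XX^\infty_{(-T_1,-T_2)}$ via the conjugate heat kernel structure.

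First, I would check that the hypotheses force the $X_a$ to generate an isometric $\R^k$-action of spacetime symmetries. The conditions $\langle\nabla y_a,\nabla y_b\rangle=\delta_{ab}$ and $\nabla^2 y_a=0$ imply that each $X_a$ is a spatially parallel Killing field on each slice $\RR^{\F}_t$, and that the $X_a$ commute pairwise. The identity $\operatorname{Ric}(X_a,\cdot)=-\operatorname{div}(\nabla^2 y_a)+d\Delta y_a=0$ combined with the evolution $\mathcal L_{\partial_\t}g=-2\operatorname{Ric}(g)$ and $\partial_\t y_a=0$ yields $[\partial_\t,X_a]=0$. Hence the time-$s$ flow $\phi_a^s$ of $X_a$ preserves $\t$, commutes with $\partial_\t$, is an isometry of each time-slice, and commutes with the heat and conjugate heat operators.

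Second, on the regular part, the map
\[
\Psi:\{y=0\}\cap\RR^{\F}_{(-T_1,-T_2)}\times\R^k\longrightarrow \RR^{\F}_{(-T_1,-T_2)},\qquad (x,s_1,\dots,s_k)\mapsto\phi_1^{s_1}\circ\cdots\circ\phi_k^{s_k}(x),
\]
is a diffeomorphism of Ricci flow spacetimes that is isometric on each slice, so $\RR^{\F}_{(-T_1,-T_2)}$ splits as $\RR'\times\R^k$ where $\RR':=\{y=0\}\cap\RR^{\F}_{(-T_1,-T_2)}$ inherits a Ricci flow spacetime structure. Because the $X_a$ are Killing and divergence-free and commute with $\square^*$, the heat kernel $K^\infty$ on $\RR^{\F}$ splits as a product of the heat kernel on $\RR'$ with the Euclidean heat kernel on $\R^k$; equivalently, the conjugate heat kernel measures $\nu_{x;s}^\infty$ split as $\nu_{x';s}^{\RR'}\otimes g_t^{\R^k}$ under $\Psi$.

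Third, to extend the splitting across the singular stratum, I would use that $\RR^{\F}_t$ is open and dense in $\XX^\infty_t$ with $\mu_t^\infty(\MS^{\F}_t)=0$ (from Theorem \ref{Fconvergence}), and that conjugate heat kernel measures satisfy the reproduction formula. For each $a$, the isometries $\phi_a^s$ on the regular part extend by continuity to isometries of each time-slice of $\XX^\infty$, since the distance $d_t$ on each slice is controlled by $d_{W_1}$-distances of nearby conjugate heat kernels, which transform equivariantly. These extensions commute, preserve the time-function, and send the measures $\nu^\infty_{x;s}$ equivariantly, which lets me define $\XX'_t$ as the quotient $\XX^\infty_t/\R^k$ with its induced distance and heat kernels; verifying the metric-flow axioms (1)--(7) of Definition \ref{defnmetricflow} for $\XX'$ and $\XX'\times\R^k\cong\XX^\infty_{(-T_1,-T_2)}$ is then essentially formal. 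Alternatively, one can pass the splitting on $\RR^{\F}$ through Bamler's general criterion \cite[Theorem 15.50]{bamler2020structure}. The main obstacle I expect is the extension in this third step: ensuring the $\R^k$-action extends continuously and properly across $\MS^{\F}$ and that the quotient carries a well-defined metric flow structure compatible with the conjugate heat kernels; codimension-two estimates for $\MS^{\F}$ and the $H_n$-concentration of $\XX^\infty$ should be the key technical inputs.
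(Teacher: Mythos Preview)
Your direct approach has a real gap in step 2: you assert that the flow $\phi_a^s$ of $X_a=\nabla y_a$ is defined for all $s\in\R$ on $\RR^{\F}_{(-T_1,-T_2)}$, so that $\Psi$ is a diffeomorphism onto $\RR^{\F}_{(-T_1,-T_2)}$. But $\RR^{\F}_t$ is not complete, and a priori a flow line of a Killing field on an incomplete manifold can run into the singular set $\MS^{\F}$ in finite time. Nothing in your argument rules this out; the conditions $\nabla^2 y_a=0$, $\partial_\t y_a=0$ are purely local. This is not a side issue to be patched in step 3 by continuity---it is precisely the content of the theorem (see also Remark \ref{rem:split} in the paper, where preservation of the regular part is singled out as the nontrivial point, obtained either via RCD theory or via the present theorem).

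The paper takes a different route that avoids this completeness problem entirely. Rather than working on the limit, it pulls the functions $y_a$ back to the approximating \emph{closed} Ricci flows $\XX^i$ using cutoff functions from \cite[Lemma 15.27]{bamler2020structure}, producing approximate splitting maps $u_a^i$ with small $\int\int|\square u_a^i|\,\mathrm{d}\nu\,\mathrm{d}t$ and almost-orthonormal gradients. These are then replaced, via \cite[Proposition 12.1]{bamler2020structure}, by genuine heat solutions $\tilde u_a^i$ with $\int\int|\nabla^2\tilde u_a^i|^2\,\mathrm{d}\nu\,\mathrm{d}t\to 0$. At that point one is exactly in the setting of \cite[Theorem 15.50]{bamler2020structure}, which handles both the passage to the limit and the splitting across the singular set. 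You mention Theorem 15.50 only as an afterthought, but the actual work in the paper's proof is the reduction to that theorem via the sequence; on closed manifolds the flows are automatically complete, so the obstacle you face never arises.
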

\begin{proof}
It suffices to establish the claim on any closed subinterval $[-T_1',-T_2']\subset (-T_1,-T_2)$. Using the cutoff functions constructed in \cite[Lemma 15.27]{bamler2020structure} and the smooth convergence, we obtain $\epsilon_i\to 0$ and functions $u_a^i\in C^\infty(\XX^i)$, $a\in \{1,\ldots,k\}$, such that for all $a,b\in \{1,\ldots,k\}$,
	\begin{align*}
		\int_{-\frac{T_1'+T_1}{2}}^{-\frac{T_2'+T_2}{2}}\int_{M_i}|\square u_a^i|\, \mathrm{d}\nu_{p_i^*}\mathrm{d}t\leq\ep_i,\quad \int_{-\frac{T_1'+T_1}{2}}^{-\frac{T_2'+T_2}{2}}\int_{M_i}|\la \na u_a^i,\na u_b^i\ra-\delta_{ab}|\, \mathrm{d}\nu_{p_i^*;t}\mathrm{d}t\leq \ep_i.
	\end{align*}
Moreover, $u_a^i\to y_a$ on $\RR_{[-T_1',-T_2']}$.
	
By \cite[Proposition 12.1]{bamler2020structure}, there exist $\tilde u_a^i$ on $\XX^i_{[-T'_1, -T'_2]}$ for $a\in \{1,\ldots,k\}$ with $\square \tilde u_a^i=0$ such that, for all $a,b$,
	\begin{align*}
		\int_{-T_1}^{-T_2}\int_{M_i}|\la \na \tilde u^i_a,\na \tilde u^i_b\ra-\delta_{ab}|\, \mathrm{d}\nu_{p_i^*;t}\mathrm{d}t\leq\ep'_i,\quad \tilde u_a^i(p_i^*)=0,
	\end{align*}
	and
	\begin{align*}
		\int_{-T_1}^{-T_2}\int_{M_i}|\na^2\tilde u^i_a|^2\,\mathrm{d}\nu_{p_i^*;t}\mathrm{d}t\leq \ep'_i,
	\end{align*}
where $\epsilon_i'\to 0$ as $i\to\infty$. Furthermore, $\tilde u_a^i\to y_a$ on $\RR_{[-T_1',-T_2']}$. The remainder of the argument follows verbatim from \cite[Theorem 15.50]{bamler2020structure}.
\end{proof}

One can define, even in smooth Ricci flows, the quantitative singular strata as in \cite{cheeger2013lower}. The following definition is from \cite[Definition 2.22]{bamler2020structure}, slightly adapted to our setting.

\begin{defn} \label{Def_quantiSS_weak}\index{$\MS^{\ep, k, \F}_{r_1,r_2}$}
Let $\XX=\{M^n,(g(t))_{t \in \III^{++}}\} \in \MM(n, Y, T)$. For any $\ep>0$ and $0<r_1<r_2<\infty$, we have the following quantitative strata:
	\[  \MS^{\ep, 0, \F}_{r_1,r_2} \subset  \MS^{\ep, 1, \F}_{r_1,r_2} \subset \ldots \subset \MS^{\ep, n-2, \F}_{r_1,r_2} \subset M \times \III^{-} \]
defined as follows:	$x^* \in \MS^{\ep, k, \F}_{r_1,r_2}$ if and only if $\t(x^*)-\ep^{-1}r_2^2 \in \III^-$ and $x^*$ is not $(k+1, \ep, r')$-$\F$-symmetric for any $r' \in [r_1, r_2]$.
	
	Here, a point $x_0^*=(x_0,t_0) \in \XX_{\III^-}$ is called $(k, \ep, r)$-$\F$-symmetric if $\t(x_0^*)-\ep^{-1}r^2>-(1-2\sigma)T$ and there exists a metric flow pair $(\XX', (\nu_{x';t})_{t \le 0})$ over $(-\infty,0]$ that arises as a noncollapsed $\F$-limit of closed Ricci flows as in Theorem \ref{Fconvergence} and satisfies Theorem \ref{Fstratification} 2. (a) or (b) such that the following holds. Consider the metric flow pair
				\begin{align*}
\lc \XX_{[t_0-\ep^{-1} r^2, t_0]}, (\nu_{x_0^*;t})_{[t_0-\ep^{-1} r^2, t_0]} \rc.
			\end{align*}
After a time-shift by $-t_0$ and parabolic rescaling by $r^{-1}$, this metric flow pair has $d_{\F}$-distance smaller than $\ep$ to the metric flow pair $(\XX'_{[-\ep^{-1}, 0]}, (\nu_{x';t})_{t \in [-\ep^{-1}, 0]})$.
\end{defn}

By \cite[Theorems 2.25, 2.28]{bamler2020structure}, we have the following estimates, which can be regarded as parabolic versions of \cite[Theorem 1.3, Corollary 1.11]{cheeger2013lower}.

\begin{thm}\label{quantsizeS}
Let $\XX=\{M^n,(g(t))_{t \in \III^{++}}\} \in \MM(n, Y, T)$ with $x_0^* \in \XX_{\III^-}$. Given $\ep>0$ and $r>0$ with $\t(x_0^*)-2r^2 \in \III^-$, for any $\delta \in (0, \ep)$, there exist $x_1^*, x_2^*,\ldots, x_N^* \in \MS^{\ep, k, \F}_{\delta r, \ep r} \cap P^* (x_0^*; r)$ with $N \leq C(n,Y, \ep) \delta^{-k-\ep}$ and
	\begin{align*}
		\MS^{\ep, k, \F}_{\delta r, \ep r} \cap P^* (x_0^*; r) \subset \bigcup_{i=1}^N P^* (x_i^*; \delta r).
	\end{align*}
Moreover, if $\ep\leq \ep(n,Y)$, then 
	\begin{align*}
		r_{\Rm}\geq  \delta r \quad \text{on} \quad P^* (x_0^*; r) \cap M \times \III^- \setminus \MS^{\ep, n-2, \F}_{\delta r, \ep r},
	\end{align*}
	where $r_{\Rm}$ is the curvature radius from Definition \ref{defncurvatureradius}. The following integral estimate also holds for every sufficiently small $\ep>0$:
		\begin{align*}
		&\int_{[\t(x_0^*)-r^2,\t(x_0^*)+r^2]\cap \III^{-}}\int_{P^*(x_0^*;r)\cap M\times \{t\}}|\Rm|^{2-\ep} \, \mathrm{d}V_{g(t)}\mathrm{d}t \notag\\
		\leq &\int_{[\t(x_0^*)-r^2,\t(x_0^*)+r^2]\cap \III^{-}}\int_{P^*(x_0^*;r)\cap M\times \{t\}}r_{\Rm}^{-4+2\ep} \, \mathrm{d}V_{g(t)}\mathrm{d}t\leq C(n,Y,\ep)r^{n-2+2\ep}. 
	\end{align*} 
\end{thm}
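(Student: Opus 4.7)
The strategy is to follow the quantitative stratification framework of Cheeger--Naber \cite{cheeger2013lower}, adapted to the Ricci flow setting by Bamler. First I would establish a \emph{cone-splitting} lemma for $\F$-symmetry: if a spacetime point $x^*$ is $(k,\ep,r)$-$\F$-symmetric and also $(0,\ep,r')$-$\F$-symmetric for some $r'\leq r$ in a direction that is ``$\eta$-independent'' of the existing $k$ splitting directions (measured via the pointed $\F$-distance between the corresponding tangent metric flow pairs), then it is $(k+1,\ep',\alpha r')$-$\F$-symmetric for some $\ep'=\ep'(\ep,\eta)$ and $\alpha=\alpha(\ep,\eta)$. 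This is the parabolic analog of the classical cone-splitting lemma and uses the classification of symmetric $\F$-limits in Theorem \ref{Fstratification}, together with continuity of $\F$-convergence under parabolic rescaling.

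The covering estimate then follows from the standard pigeonhole-on-scales argument. Iterating the cone-splitting down scales $\ep r, \alpha\ep r, \alpha^2\ep r, \ldots, \delta r$, any $x^* \in \MS^{\ep,k,\F}_{\delta r,\ep r}\cap P^*(x_0^*;r)$ must fail the $(k+1,\ep,\cdot)$-symmetry condition at every scale in this geometric sequence, which means at each scale the tangent structure has at most $k$ approximately-independent splitting directions. A standard Vitali-type covering at scale $\delta r$ combined with the $P^*$-volume doubling from Proposition \ref{propdistance3} (or rather its $P^*$-ball analogue in \cite[Theorem 9.8]{bamler2020entropy}) then yields the bound $N\leq C(n,Y,\ep)\delta^{-k-\ep}$, since a non-trivial branching factor $\delta^{-\ep}$ absorbs the finitely many branchings that arise from the cone-splitting step.

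The curvature-radius claim is where the Nash-entropy $\ep$-regularity enters decisively. If $x^*\notin \MS^{\ep,n-2,\F}_{\delta r,\ep r}$, then for some $r'\in [\delta r,\ep r]$ the point is $(n-1,\ep,r')$-$\F$-symmetric. Using Theorem \ref{Fstratification}, an $(n-1)$-symmetric limit that is not a static cone must split off $\R^{n-1}$ from a metric soliton in dimension $1$, forcing the soliton factor to be the Gaussian, while the static case splits off $\R^{n-3}$ from a $3$-dimensional static cone, which is excluded by smoothness in low dimensions. In either case the limit is $\R^n$-Gaussian, so $\NN_{x^*}(r'{}^2)\geq -\ep_n/2$ once $\ep$ is small enough depending on $(n,Y)$; by Theorem \ref{epregularityNash}, $r_{\Rm}(x^*)\geq \ep_n r' \geq \delta r$ after adjusting $\ep$.

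Finally, the integral estimate follows from a layer-cake decomposition. By the previous step, $\{r_{\Rm}<\delta r\}\cap P^*(x_0^*;r)\subset \MS^{\ep,n-2,\F}_{\delta r,\ep r}\cap P^*(x_0^*;r)$, which by the covering bound and the upper $P^*$-volume estimate has spacetime volume at most $C(n,Y,\ep)\delta^{-n+2-\ep}\cdot (\delta r)^{n+2}=C\delta^{4-\ep}r^{n+2}$. Writing
\begin{align*}
\int_{P^*(x_0^*;r)\cap M\times\III^-} r_{\Rm}^{-4+2\ep}\, \mathrm{d}V_{g(t)}\mathrm{d}t = (4-2\ep)\int_0^\infty s^{-5+2\ep}\bigl|\{r_{\Rm}<s\}\cap P^*(x_0^*;r)\bigr|\,\mathrm{d}s,
\end{align*}
splitting the integral at $s=\ep r$ and applying the volume bound on the small-scale part (with $\ep$ replaced by $\ep/2$ in the covering estimate to absorb the logarithmic loss) gives the $r^{n-2+2\ep}$ bound, while on $s\geq \ep r$ we use the trivial $P^*$-ball volume. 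The inequality $|\Rm|^{2-\ep}\leq r_{\Rm}^{-4+2\ep}$ is immediate from the definition of $r_{\Rm}$.

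The main obstacle is the cone-splitting lemma in the metric flow setting: unlike in the Einstein case, the splitting directions live in the regular Ricci flow spacetime, and independence must be measured through heat-kernel-based functions on $\XX$ rather than through geodesic distances. This is why the proof genuinely requires the full $\F$-convergence machinery of \cite{bamler2020structure} and cannot be reduced to a purely spatial Cheeger--Colding argument.
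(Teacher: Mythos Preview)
Your outline is correct and matches the Cheeger--Naber/Bamler quantitative stratification argument that underlies this result. Note, however, that the paper does not give an independent proof of this theorem: it simply cites \cite[Theorems 2.25, 2.28]{bamler2020structure} and records the statement for later use. What you have written is a faithful sketch of the proof strategy in \cite{bamler2020structure}, so there is no substantive divergence to discuss---you have reconstructed the cited argument rather than found an alternative route.
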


For the rest of the section, let
\begin{align}\label{eq:pGHgeneral}
(M_i\times\III,d_i^*,p_i^*,\t_i)
\xrightarrow[i\to\infty]{\quad\mathrm{pGH}\quad}(Z,d_Z,p_\infty,\t)
\end{align}
be a limit from Theorem \ref{thm:GHlimit-dstar}, where $\XX^i\in\MM(n,T)$. In the part concerning the regular--singular structure, we additionally assume $\XX^i\in\MM(n,Y,T)$ for a fixed $Y$.

Given $z\in Z$, choose $z_i^*\in\XX^i_{\III}$ converging to $z$ in the Gromov--Hausdorff sense. Set
\begin{align*}
J:=\{-(1-\sigma)T\}
\end{align*}
when $T<+\infty$ and $J:=\emptyset$ in the ancient case. By Theorem \ref{thm:601}, after passing to a further subsequence, there exist an $H_n$-concentrated metric flow $\XX^z$, which is future continuous for all $t \in [-T,\t(z)]$ except possibly at $t=-(1-\sigma)T$, and a correspondence $\CF$ such that
\begin{align}\label{Flimit}
(\XX^i,(\nu_{z_i^*;t})_{t\in[-T,\t_i(z_i^*)]})
\xrightarrow[i\to\infty]{\quad\F,\CF,J\quad}
(\XX^z,(\nu_{z;t})_{t\in[-T,\t(z)]}).
\end{align}
For $T<+\infty$, the convergence is uniform at $-(1-\sigma)T$. We call $\XX^z$ a \textbf{metric flow associated with} $z$ and denote its time-function by $\t^z$.

For $x,y\in\XX^z_{\III^+}$ and $\tau \le \min\{\t^z(x),\t^z(y)\}$, set
\begin{align*}
W_{x,y}(\tau):=d_{W_1}^{\XX^z_\tau}(\nu_{x;\tau},\nu_{y;\tau}).
\end{align*}
At a discontinuity time we use the essential left limit
\begin{align*}
W^-_{x,y}(\tau):=\lim_{\rho\nearrow\tau}W_{x,y}(\rho),
\end{align*}
which exists by monotonicity. When $T<+\infty$, at the lower endpoint we set
\begin{align*}
W^-_{x,y}(-(1-\sigma)T):=W_{x,y}(-(1-\sigma)T).
\end{align*}

\begin{defn}\label{defn:dstar-limit}
For $x,y\in\XX^z_{\III^+}$, put
\begin{align*}
t_+:=\max\{\t^z(x),\t^z(y)\},\qquad t_-:=\min\{\t^z(x),\t^z(y)\}.
\end{align*}
We define
\begin{align}\label{eq:dstar-limit-def}
d_z^*(x,y):=\inf_{-(1-\sigma)T\leq\tau\leq t_-}
\max\left\{\sqrt{t_+-\tau},W^-_{x,y}(\tau)\right\}.
\end{align}
For an ancient metric flow, the infimum is taken over $\tau\in(-\infty,t_-]$.
\end{defn}

On a smooth Ricci flow, Definition \ref{defn:dstar-limit} agrees with Definition \ref{defn:dstar-distance}. If $r=d_z^*(x,y)$ and $q=t_+-r^2>-(1-\sigma)T$, then
\begin{align}\label{eq:dstar-limit-crossing1}
\lim_{\tau\nearrow q}d_{W_1}^{\XX^z_\tau}(\nu_{x;\tau},\nu_{y;\tau})\leq r.
\end{align}
If also $r>\sqrt{|\t^z(x)-\t^z(y)|}$, then
\begin{align}\label{eq:dstar-limit-crossing2}
r\leq\lim_{\tau\searrow q}d_{W_1}^{\XX^z_\tau}(\nu_{x;\tau},\nu_{y;\tau}).
\end{align}
At the lower endpoint one always has
\begin{align*}
d_{W_1}^{\XX^z_{-(1-\sigma)T}}(\nu_{x;-(1-\sigma)T},\nu_{y;-(1-\sigma)T})\leq r.
\end{align*}

\begin{prop}\label{prop:dstar-limit-pseudometric}
The function $d_z^*$ is a pseudo-distance on $\XX^z_{\III^+}$. Moreover,
\begin{align*}
|\t^z(x)-\t^z(y)|\leq d_z^*(x,y)^2.
\end{align*}
If the noncollapsed structure theorem applies, then $d_z^*$ is a distance on the regular part $\RR^z$.
\end{prop}
\begin{proof}
The time estimate is immediate. The triangle inequality is proved by the same argument as in Lemma \ref{lem:000a}: choose almost minimizing comparison times for two pairs, pass both Wasserstein estimates to a common earlier time, and use the triangle inequality for $W_1$. The left-envelope convention is compatible with this argument by approximation from earlier continuity times.

We next show that if $x,y\in \RR^z$ with $d_z^*(x,y)=0$, then $x=y$. By \eqref{eq:dstar-limit-def}, $\t^z(x)=\t^z(y)=t<\t^z(z)$. If $d:=d^{z}_t(x,y)>0$, then by \cite[Definition 9.11]{bamler2023compactness}, there exists a sufficiently small constant $r>0$ such that $P(x, r):=\bigcup_{s \in [t-r^2, t]} B_s(x, r)$ and $P(y, r):=\bigcup_{s \in [t-r^2, t]} B_s(y, r)$ are disjoint and both contained in $\RR^z$. Thus, after decreasing $r$ if necessary, Proposition \ref{HncenterScal} gives $d_{W_1}^{\XX^{z}_s}(\nu_{x;s},\nu_{y;s})>r$ for $s$ close to $t$. However, this contradicts $d_z^*(x,y)=0$. Thus, we must have $d^z_t(x,y)=0$, and hence $x=y$.
\end{proof}

\begin{rem}\label{rem:pastc}
	If $\XX^{z}$ is assumed to be past continuous, then $d^*_z$ also defines a metric on $\XX^{z}_{\III^+}$. In fact, we only need to check that $d_z^*$ is positive definite. By the argument in the proof of Proposition \ref{prop:dstar-limit-pseudometric}, if $x,y\in \XX^{z}_{\III^+}$ with $d_z^*(x,y)=0$, then $\t^z(x)=\t^z(y)=t<\t^z(z)$. By past continuity and \cite[(4.22)]{bamler2023compactness}, we have 
	\begin{align*}
		\lim_{s\nearrow t}d_{W_1}^{\XX^{z}_s}(\nu_{x;s},\nu_{y;s})=d^z_t(x,y).
	\end{align*}
	Thus, if $d^z_t(x,y)>0$, then for $s<t$ sufficiently close to $t$, $d_{W_1}^{\XX^{z}_s}(\nu_{x;s},\nu_{y;s})\geq \frac{1}{2}d^z_t(x,y)$, which, by \eqref{eq:dstar-limit-def}, implies $d_z^*(x,y)>0$. This contradicts the assumption $d_z^*(x,y)=0$.
\end{rem}

The analogues of Proposition \ref{distancefunction1}, Proposition \ref{propdistance2}, and Lemma \ref{lem:Hcenterdis} hold for $d_z^*$. In particular, if $w\in\XX^z_s$ is an $H$-center of $x$, then
\begin{align}\label{eq:Hcenterlimit}
d_z^*(x,w)\leq\max\{1,\sqrt H\}\sqrt{\t^z(x)-s}.
\end{align}

In the noncollapsed setting, Theorem \ref{Fconvergence} gives the regular--singular decomposition
\begin{align}\label{RSdecomz}
\XX^z_{\III^+\cap(-\infty,\t(z))}=\RR^z\sqcup\MS^z.
\end{align}
Moreover, we have the following density statement.

\begin{lem}\label{lem:complete1}
Assume $\XX^i\in\MM(n,Y,T)$. Then $\RR^z_{\III}$ is dense in $\XX^z_{\III}$ with respect to $d_z^*$.
\end{lem}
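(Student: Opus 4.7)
The plan is to exploit the smooth convergence on the regular part of $\XX^i$ together with the quantitative stratification of the singular set. Fix $x\in\XX^z_{\III}$ and a target accuracy $\eta>0$. By Theorem~\ref{representlimit}, realize $x$ as a $\CF$-limit of points $x_i^*\in\XX^i$ with $\t_i(x_i^*)\to\t^z(x)\in\III$, and set $r:=\ep_0\eta/2$. The goal is to produce $y_i^*\in P^*(x_i^*;r)\cap\XX^i_{\III}$ with a uniform positive lower bound on $r_{\Rm}(y_i^*)$; the $\CF$-limit $y$ of a subsequence will be the desired regular approximant.

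Existence of such $y_i^*$ follows from a volume comparison. For a fixed small $\ep\le\ep(n,Y)$, Theorem~\ref{quantsizeS} yields that $\{r_{\Rm}<\delta r\}\cap P^*(x_i^*;r)$ sits inside the quantitative stratum $\MS^{\ep,n-2,\F}_{\delta r,\ep r}$ and has parabolic volume at most $C(n,Y,\ep)\,\delta^{4-\ep}r^{n+2}$, while Propositions~\ref{propdistance2} and~\ref{propdistance3} give $|P^*(x_i^*;r)|\ge c(n,Y,\sigma)\,r^{n+2}$. Fixing $\delta>0$ so that $C\delta^{4-\ep}<c/4$, the set of points in $P^*(x_i^*;r)$ with $r_{\Rm}\ge\delta r$ has volume at least $cr^{n+2}/2$. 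Further restricting to a time slab of width comparable to $r^2$ lying inside $\III$---to the past of $x_i^*$ when $\t^z(x)$ is near $\t(z)$, and to the future when $\t^z(x)$ is near $-(1-2\sigma)T$---still keeps a positive fraction of this volume, producing a valid $y_i^*\in\XX^i_{\III}$ with $r_{\Rm}(y_i^*)\ge\delta r$.

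After passing to a subsequence, $y_i^*\xrightarrow{\CF}y\in\XX^z_{\III}$. The uniform bound $r_{\Rm}(y_i^*)\ge\delta r$, combined with Shi-type derivative estimates and the smooth convergence statement in Theorem~\ref{Fconvergence}(3), forces $y\in\RR^z$. The inclusion $y_i^*\in P^*(x_i^*;r)$ reads $d_{W_1}^{t_i^-}(\nu_{x_i^*;t_i^-},\nu_{y_i^*;t_i^-})<r$ with $t_i^-=\max\{\t_i(x_i^*)-r^2,-(1-\sigma)T\}$; passing this inequality through the $\CF$-convergence yields the corresponding inequality for $(x,y)$ in $\XX^z$ at the limit time $t^-=\max\{\t^z(x)-r^2,-(1-\sigma)T\}$. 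The analog of Proposition~\ref{propdistance2} on $\XX^z$ then gives $d_z^*(x,y)\le r/\ep_0<\eta$, as required.

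The main obstacle is the last step---rigorously passing the $W_1$-distance inequality through the $\CF$-convergence at the time $t^-$. When $t^-=-(1-\sigma)T$, the uniform convergence of $\CF$ at this time that was built into the construction of $\XX^z$ handles it directly; otherwise, one exploits future continuity of $\XX^z$ together with lower semicontinuity of $d_{W_1}$ under $\CF$-convergence, perturbing $r$ within a null set if necessary to avoid the countable set of discontinuity times of $\XX^z$. All remaining ingredients are routine consequences of the structure theory already recalled in this section.
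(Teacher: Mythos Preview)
Your approach can be made to work, but it is far more involved than the paper's, which is essentially three lines and stays entirely inside $\XX^z$. For $x\in\XX^z_{\III}$ at time $t$, the paper picks an $H_n$-center $w\in\XX^z_{t-r^2}$; by (the $\XX^z$-version of) Lemma~\ref{lem:Hcenterdis} one has $d^*_z(x,w)\le\ep_0^{-1}\sqrt{H_n}\,r$. Since Theorem~\ref{Fconvergence} already tells us that $(\XX^z_{t-r^2},d^z_{t-r^2})$ is the completion of $(\RR^z_{t-r^2},g^z_{t-r^2})$, one chooses $z'\in\RR^z_{t-r^2}$ with $d^z_{t-r^2}(w,z')<r$, and Proposition~\ref{distancefunction1}(1) on $\XX^z$ gives $d^*_z(w,z')\le\ep_0^{-1}r$. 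No return to the approximating flows, no quantitative stratification, no limit passage for $W_1$-distances.

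Your route, by contrast, goes back to the $\XX^i$ and has two steps that need more than what you wrote. First, the sentence ``after passing to a subsequence, $y_i^*\xrightarrow{\CF}y\in\XX^z_{\III}$'' is not a formality: a uniform lower bound on $r_{\Rm}(y_i^*)$ and membership in $P^*(x_i^*;r)$ do not by themselves produce a $\CF$-limit inside $\XX^z$. One needs something like \cite[Theorem~9.24]{bamler2023compactness} (the ingredient the paper later invokes in the proof of Lemma~\ref{lem:smooth1}) to conclude that such points lie in the image of the smooth comparison charts and hence $\CF$-converge to a point of $\RR^z$. Second, passing the $W_1$-inequality to the limit at the specific time $t^-$ is essentially the content of Lemma~\ref{lem:key1}, which the paper establishes \emph{after} the present lemma (though its proof is independent of it). Both gaps are fillable, but the resulting argument is an order of magnitude longer than the paper's and borrows machinery from later in the section, whereas the paper's proof uses only the slice-wise density of $\RR^z$ already supplied by Theorem~\ref{Fconvergence}.
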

\begin{proof}
Given $x\in\XX^z_{\III}$ with $t=\t^z(x)$, if $t>-(1-2\sigma)T$, we choose $r>0$ small and an $H_n$-center $w\in\XX^z_{t-r^2}$ of $x$. Since $(\XX^z_{t-r^2},d^z_{t-r^2})$ is the metric completion of $(\RR^z_{t-r^2},g^z_{t-r^2})$, choose $w'\in\RR^z_{t-r^2}$ with $d^z_{t-r^2}(w,w')<r$. Then
\begin{align*}
d_z^*(x,w')\leq d_z^*(x,w)+d_z^*(w,w')\leq(\sqrt{H_n}+1)r.
\end{align*}
Letting $r\searrow0$ proves the assertion. If $t=-(1-2\sigma)T$, then, arguing as before, $x$ can be approximated by points in $\RR^z_t$.
\end{proof}

Let $(\widetilde\XX^z_{\III},d_z^*,\t^z)$ be the metric quotient of $(\XX^z_{\III},d_z^*)$ by the relation $x\sim y$ if and only if $d_z^*(x,y)=0$. It is separable by Lemma \ref{lem:complete1}.

\begin{lem}\label{lem:key1}
Let $x,y\in\XX^z_{\III}$ and suppose
\begin{align*}
x_i^*\xrightarrow[i\to\infty]{\CF,J}x,\qquad
y_i^*\xrightarrow[i\to\infty]{\CF,J}y.
\end{align*}
Then
\begin{align}\label{lem:dstar-distance-convergence}
\lim_{i\to\infty}d_i^*(x_i^*,y_i^*)=d_z^*(x,y).
\end{align}
\end{lem}
\begin{proof}
Write $r_i=d_i^*(x_i^*,y_i^*)$ and $r=d_z^*(x,y)$. We first prove the upper limit. Fix $\eta>0$. By Definition \ref{defn:dstar-limit}, choose $\tau$ such that
\begin{align*}
\max\left\{\sqrt{t_+-\tau},W^-_{x,y}(\tau)\right\}<r+\eta.
\end{align*}
If $T<+\infty$ and $\tau=-(1-\sigma)T$, uniform convergence at the lower endpoint gives
\begin{align*}
\limsup_{i\to\infty}r_i\leq r+\eta.
\end{align*}
If $T<+\infty$ and $\tau>-(1-\sigma)T$, we choose a continuity time $\tau'<\tau$ sufficiently close to $\tau$ so that the same maximum with $\tau'$ is smaller than $r+2\eta$. Convergence of the conjugate heat kernel measures at $\tau'$ gives
\begin{align*}
\limsup_{i\to\infty}r_i\leq r+2\eta.
\end{align*}
Letting $\eta\searrow0$ gives $\limsup r_i\leq r$.

For the lower limit, pass to a subsequence such that $r_i\to r'$. Choose comparison times $\tau_i$ satisfying
\begin{align*}
\sqrt{t_{i,+}-\tau_i}\leq r_i+i^{-1},\qquad
d_{W_1}^{\tau_i}(\nu_{x_i^*;\tau_i},\nu_{y_i^*;\tau_i})\leq r_i+i^{-1}.
\end{align*}
After passing to a further subsequence, $\tau_i\to\tau$. For every continuity time $\rho<\tau$, backward monotonicity gives
\begin{align*}
d_{W_1}^{\rho}(\nu_{x_i^*;\rho},\nu_{y_i^*;\rho})\leq r_i+i^{-1}
\end{align*}
for all sufficiently large $i$. Passing to the limit within the correspondence and then letting $\rho\nearrow\tau$ yields
\begin{align*}
W^-_{x,y}(\tau)\leq r',\qquad\sqrt{t_+-\tau}\leq r'.
\end{align*}
At the lower endpoint, the same conclusion follows from uniform convergence there. Hence $r\leq r'$, proving \eqref{lem:dstar-distance-convergence}.
\end{proof}

\begin{thm}\label{thm:idenproof}
For any $z\in Z$, there exists an isometric embedding
\begin{align}\label{eq:dstar-isometric-embedding}
\iota_z:(\widetilde\XX^z_{\III},d_z^*)\longrightarrow(Z,d_Z)
\end{align}
such that $\iota_z(z)=z$ and $\t\circ\iota_z=\t^z$. Moreover, for $y_i^*\in\XX^i_{\III}$ and $y_\infty\in\XX^z_{\III}$,
\begin{align*}
y_i^*\xrightarrow[i\to\infty]{\CF,J}y_\infty
\end{align*}
if and only if $y_i^*\to\iota_z(\widetilde y_\infty)$ in the Gromov--Hausdorff sense, where $\widetilde y_\infty$ is the quotient image of $y_\infty$.
\end{thm}
\begin{proof}
Choose a countable $d_z^*$-dense set $\{x_k\}$ in $\XX^z_{\III}$, with $x_1=z$. By Theorem \ref{representlimit}, choose $x_{k,i}^*\in\XX^i_{\III}$ converging to $x_k$ within $\CF$, with $x_{1,i}^*=z_i^*$ for any $i$, where $z_i^*$ is the sequence in \eqref{Flimit}. After passing to a diagonal subsequence, assume that each $x_{k,i}^*$ converges in the pointed Gromov--Hausdorff sense to $a_k\in Z$. Lemma \ref{lem:key1} gives
\begin{align*}
d_Z(a_k,a_l)=d_z^*(x_k,x_l).
\end{align*}
Thus $\widetilde x_k\mapsto a_k$ extends uniquely to an isometric embedding \eqref{eq:dstar-isometric-embedding}, with the base point mapping to $z$. By our construction, $\t\circ\iota_z(\widetilde x_k)=\t(a_k)=\t^z(\widetilde x_k)$ for any $k$. Thus, the identity for the time functions holds by continuity.

Suppose first that $y_i^*$ converges to $y_\infty$ within $\CF$. Approximate $y_\infty$ by $x_k$ and apply Lemma \ref{lem:key1}; any Gromov--Hausdorff limit of $y_i^*$ has the same distances to the points $a_k$ as $\iota_z(\widetilde y_\infty)$, and hence is equal to it.

Conversely, suppose $y_i^*\to\iota_z(\widetilde y_\infty)$ in the Gromov--Hausdorff sense. Theorem \ref{representlimit} gives $w_i^*$ converging to $y_\infty$ within $\CF$. The first implication shows that $w_i^*$ has the same Gromov--Hausdorff limit, so
\begin{align}\label{eqconv0}
d_i^*(y_i^*,w_i^*)\longrightarrow0.
\end{align}
By Definition \ref{convergenceheatflow2}, we can find $E_i\subset [-T,\t^z(y_{\infty}))$ such that
	\begin{align}\label{eqconv1}
		|E_i|\to 0, \quad \sup_{t\in [-T,\t^z(y_{\infty}))\setminus E_i}d_{W_1}^{A_t}((\varphi_t^i)_*\nu_{w_i^*;t},(\varphi_t^\infty)_*\nu_{y_\infty;t})\to 0,
	\end{align}
	where $\varphi_t$ is the embedding defined in Definition \ref{defncorrespondence}.
	
By \eqref{eqconv0}, for any small $\delta>0$, for all sufficiently large $i$,
	\begin{align*}
d_{W_1}^{\max\{\t_i(y_i^*)-\delta^2, \t_i(w_i^*)-\delta^2, -(1-\sigma)T\}}\lc \nu_{y_i^*;\max\{\t_i(y_i^*)-\delta^2, \t_i(w_i^*)-\delta^2, -(1-\sigma)T\}}, \nu_{w_i^*;\max\{\t_i(y_i^*)-\delta^2, \t_i(w_i^*)-\delta^2, -(1-\sigma)T\}} \rc \le \delta,
	\end{align*}
	which implies
		\begin{align}\label{eqconv2}
\sup_{t \in [-T, \max\{\t_i(y_i^*), \t_i(w_i^*)\}-\delta^2)} d_{W_1}^t (\nu_{y_i^*;t} , \nu_{w_i^*;t}) \le \delta.
	\end{align}
Combining \eqref{eqconv1} and \eqref{eqconv2}, we can find $E_i'\subset [-T,\t^z(y_{\infty}))$ with
	\begin{align*}
		|E_i'| \to 0, \quad \sup_{t\in [-T,\t^z(y_{\infty}))\setminus E'_i}d_{W_1}^{A_t}((\varphi_t^i)_*\nu_{y_i^*;t},(\varphi_t^\infty)_*\nu_{y_\infty;t})\to 0.
	\end{align*}
Therefore, $y_i^* \xrightarrow[i \to \infty]{\CF,J} y_{\infty}$.
\end{proof}

By Proposition \ref{prop:dstar-limit-pseudometric} and Theorem \ref{thm:idenproof}, $\RR^z$ can be regarded as a subset of $Z$ via the map $\iota_z$.

\section{Smooth convergence on the regular part} \label{sec:smooth}

In this section, we consider a Ricci flow limit space $(Z, d_Z, p_{\infty},\t)$ obtained from 
	\begin{equation}\label{eq:conv00}
		(M_i \times \III, d^*_i, p_i^*,\t_i) \xrightarrow[i \to \infty]{\quad \mathrm{pGH} \quad} (Z, d_Z, p_{\infty},\t),
	\end{equation}
where $\XX^i=\{M_i^n,(g_i(t))_{t \in \III^{++}}\} \in \MM(n, Y, T)$ with base point $p_i^* \in \XX^i_\III$.

We first introduce the following definition, which is similar to \cite[Definition 9.20]{bamler2023compactness}.

\begin{defn}[Smooth convergence] \label{def:smoothcv}
The Gromov--Hausdorff convergence \eqref{eq:conv00} is \textbf{smooth} at $z \in Z$ if there exist a constant $r>0$ and a sequence $z_i^* \in M_i \times \III$ converging to $z$ in the Gromov--Hausdorff sense such that for all $i$,
	\begin{align*}
r_{\Rm}(z_i^*) \ge r,
	\end{align*}
where $r_{\Rm}$ denotes the curvature radius defined in Definition \ref{defncurvatureradius}. We denote by $\RR \subset Z$ the set of points at which \eqref{eq:conv00} is smooth.
\end{defn}

The first main result of this section is the following theorem.

\begin{thm}\label{thm:smooth1}
The set $\RR$, which is open in $Z$, can be realized as a Ricci flow spacetime $(\RR, \t, \partial_\t, g^Z_t)$ over $\III$ (see Definition \ref{def_RF_spacetime}). Moreover, there exists an increasing sequence of open subsets $U_1 \subset U_2 \subset \ldots \subset \RR$ such that $\bigcup_{i=1}^\infty U_i = \RR$, and for sufficiently large $i$, there exist open subsets $V_i \subset M_i \times \III$, time-preserving diffeomorphisms $\phi_i : U_i \to V_i$, and a sequence $\ep_i \to 0$ such that the following properties hold:
		\begin{enumerate}[label=\textnormal{(\alph{*})}]
			\item We have
			\begin{align*}
				\Vert \phi_i^* g^i - g^Z \Vert_{C^{[\ep_i^{-1}]}(U_i)} & \leq \ep_i, \\
				\Vert \phi_i^* \partial_{\t_i} - \partial_{\t} \Vert_{C^{[\ep_i^{-1}]}(U_i)} &\leq \ep_i,
			\end{align*}
			where $g^i$ is the spacetime metric induced by $g_i(t)$, and $\partial_{\t_i}$ is the standard time vector field.
			
			\item For $U_i^{(2)}=\{(x,y) \in U_i \times U_i \mid \t(x)> \t(y)+\ep_i\}$, $V_i^{(2)}=\{(x^*,y^*) \in V_i \times V_i \mid \t_i(x^*)> \t_i(y^*)+\ep_i\}$ and $\phi_i^{(2)}:=(\phi_i, \phi_i): U_i^{(2)} \to V_i^{(2)}$, we have
					\begin{align*}
\Vert  (\phi_i^{(2)})^* K^i-K_Z \Vert_{C^{[\ep_i^{-1}]}(U_i^{(2)})} \le \ep_i,
			\end{align*}	
where $K^i$ and $K_Z$\index{$K_Z$} denote the heat kernels on $(M_i \times \III,g_i(t))$ and $(\RR,g^Z)$, respectively.
			
		\item Let $y \in \RR$ and $y_i^* \in M_i \times \III$. Then $y_i^* \to y$ in the Gromov--Hausdorff sense if and only if $y_i^* \in V_i$ for large $i$ and $\phi_i^{-1}(y_i^*) \to y$ in $\RR$.
				\end{enumerate}
\end{thm}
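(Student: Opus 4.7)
The plan is to combine the Lipschitz property of the curvature radius (Proposition~\ref{pro:LiprRm}) with a Shi-type smooth extraction argument on parabolic balls, and then patch the local smooth pieces into a global Ricci flow spacetime using the comparability between $d^*$-balls and conventional parabolic balls (Proposition~\ref{equivalenceofballs}).

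First, I would establish that $\RR$ is open. Given $z \in \RR$ with approximating sequence $z_i^*$ satisfying $r_{\Rm}(z_i^*) \geq r$, Proposition~\ref{pro:LiprRm} shows that for any $y \in Z$ with $d_Z(y,z) < r/(2C)$ and any approximating sequence $y_i^* \to y$, we have $r_{\Rm}(y_i^*) \geq r/2$ for large $i$, so $y \in \RR$. Next, for each such $z$, I fix a scale $r(z)>0$ and look at the parabolic neighborhood $P(z_i^*, r(z)/2)$. Shi's derivative estimates give uniform $C^k$ bounds on $\Rm$ on slightly smaller parabolic neighborhoods; combined with the non-collapsing from $|B_{\t_i(z_i^*)}(z_i, r(z)/4)|_{\t_i(z_i^*)}\geq c(n,Y,\sigma) r(z)^n$ (Proposition~\ref{prop:volume}), this yields uniform injectivity radius bounds. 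After passing to a subsequence we can extract a smooth Cheeger-Hamilton limit $(U_z, g_z, \partial_\t)$ of these parabolic pieces, which is a Ricci flow spacetime piece.

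The second step is to identify this smooth Cheeger-Hamilton limit $U_z$ with a neighborhood of $z$ in $(Z, d_Z)$. Because $|\scal|$ is uniformly bounded on $P(z_i^*, r(z)/2)$, Proposition~\ref{equivalenceofballs} shows that the $d^*$-balls $B^*(z_i^*, \rho)$ and the parabolic balls $P(z_i^*, \rho')$ are comparable for appropriate $\rho, \rho'$. Thus the smooth limit extracted from the parabolic pieces is canonically identified (via Gromov--Hausdorff approximations) with a neighborhood of $z$ in $Z$; uniqueness of the smooth limit on overlaps follows from Hamilton's compactness theorem, so different choices of approximating sequences produce the same germ. This lets me define the smooth structure, the metric $g^Z$, and the time vector field $\partial_\t$ consistently in a neighborhood of $z$.

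To globalize, I would take an exhaustion $K_1 \subset K_2 \subset \cdots$ of $\RR$ by compact sets, cover each $K_j$ by finitely many local smooth pieces as above, and pass to a diagonal subsequence so that smooth convergence holds simultaneously on all of $K_j$ with errors $\ep_{i,j}\to 0$; setting $U_i = K_{j(i)}$ for a suitably slow $j(i)\to\infty$ yields the exhaustion $U_i \nearrow \RR$ with diffeomorphisms $\phi_i:U_i \to V_i \subset M_i \times \III$ and estimates in (a). Part (b) on heat kernels then follows from standard heat kernel convergence under smooth Cheeger-Gromov convergence: the condition $\t(x)>\t(y)+\ep_i$ keeps $(x,y)$ away from the diagonal, where Gaussian upper/lower bounds and Schauder estimates on parabolic neighborhoods yield $C^{[\ep_i^{-1}]}$-convergence of $K^i$ to $K_Z$. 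Property (c) is a direct consequence of the uniqueness of the local smooth limit combined with Proposition~\ref{pro:LiprRm}: if $y_i^* \to y \in \RR$ in the $d^*$ Gromov--Hausdorff sense, then $r_{\Rm}(y_i^*)$ is uniformly bounded below, so $y_i^*$ lies in some parabolic neighborhood around a point where smooth convergence holds, and the bijection of limits forces $\phi_i^{-1}(y_i^*)\to y$; the converse is immediate from the $C^0$-convergence of $\phi_i^*g^i$.

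The main obstacle I expect is the patching step: ensuring that the locally extracted smooth Cheeger--Hamilton limits (a priori defined only up to subsequences and diffeomorphisms) can be glued consistently to produce a single global Ricci flow spacetime structure on $\RR$, and that this gluing is compatible with the ambient Gromov--Hausdorff convergence with respect to $d^*$ (rather than with respect to Bamler's $\F$-convergence or conventional distance). This will rely crucially on Proposition~\ref{equivalenceofballs} to bridge the $d^*$-topology and the smooth (parabolic) topology, together with a center-of-mass-type argument to upgrade Gromov--Hausdorff approximations to genuine diffeomorphisms on the overlaps, in the spirit of \cite[Section~9]{bamler2023compactness}.
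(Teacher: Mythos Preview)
Your route differs from the paper's and has a real gap in the heat-kernel step. The paper does not argue via a direct Cheeger--Hamilton extraction on parabolic balls. Instead, for $z\in\RR_{\t<0}$ with $z_i^*\to z$ and $r_{\Rm}(z_i^*)\ge r$, it shifts slightly forward in time to $w_i^*=(z_i,t_i+3\delta^2r^2)$, passes to an $\F$-limit $\XX^w$ based at $w_i^*$ (Theorem~\ref{thm:601}), and invokes Bamler's structure theorem (Theorem~\ref{Fconvergence}) to obtain the smooth Ricci flow spacetime $\RR^w$ together with smooth convergence of metrics \emph{and} conjugate heat kernels on $\RR^w$. A short $H_n$-center argument then places $z_i^*$ inside this smooth region, so $z_i^*\xrightarrow{\CF} z_\infty\in\RR^w$; the isometric embedding $\iota_w:\widetilde{\XX^w_\III}\hookrightarrow Z$ from Theorem~\ref{thm:idenproof} identifies a neighborhood of $z_\infty$ with a neighborhood of $z$ in $Z$, carrying over all of (a)--(c). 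The case $\t(z)=0$ is handled separately by flowing along $\partial_\t$ from a slightly earlier slice.

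The gap in your proposal is the claim that (b) is ``standard heat kernel convergence under smooth Cheeger--Gromov convergence.'' The kernel $K^i(x^*;y^*)$ is defined on the whole closed flow $M_i\times\III$, not on a local parabolic ball, so local $C^\infty$-convergence of $g^i$ alone does not force $K^i$ to converge: one must rule out mass of $\nu_{x^*;s}$ escaping the smooth chart and show that subsequential limits agree. The same issue is hidden in your identification step, since matching the Cheeger--Hamilton limit with the $d^*$-Gromov--Hausdorff limit amounts to proving that $d_i^*$---which is built from $d_{W_1}^t(\nu_{x^*;t},\nu_{y^*;t})$ and hence from the global heat kernel---actually converges on the chart, not merely that it is bi-Lipschitz to the parabolic distance via Proposition~\ref{equivalenceofballs}. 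The $\F$-framework supplies exactly this global control, which is why the paper detours through it.
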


The main idea of the proof is to show that each point $z \in \RR$ admits an open neighborhood $U_z$ such that the statements in Theorem \ref{thm:smooth1} hold on $U_z$. These local neighborhoods are then combined in a standard fashion to construct the desired global structure.

First, we prove

\begin{lem}\label{lem:smooth1}
For any $z \in \RR_{\t < 0}$, there exists an open neighborhood $z \in U_z \subset \RR_{\t < 0}$ such that $U_z$ is realized as a Ricci flow spacetime $(U_z, \t, \partial_{\t}, g_t^z)$ defined on a product domain. That is, the Ricci flow spacetime arises from a conventional Ricci flow on $M' \times I'$, where $M'$ is an open manifold and $I'$ is an open interval. 

Moreover, for sufficiently large $i$, there exist open subsets $V_i \subset M_i \times \III$, time-preserving and $\partial_{\t}$-preserving diffeomorphisms $\phi_i : U_z \to V_i$ (that is, $\t_i \circ \phi_i=\t$ and $(\phi_i)_*(\partial_\t)=\partial_{\t_i}$), and a sequence $\ep_i \to 0$ such that statements (a), (b), and (c) in Theorem \ref{thm:smooth1} hold with $U_i$ replaced by $U_z$, and in (c), the point $y$ is required to lie in $U_z$.
\end{lem}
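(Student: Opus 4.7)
The plan is to construct $U_z$ by applying Hamilton's compactness theorem to the sequence $\XX^i$ near approximating points of $z$, package the resulting smooth limit as a Ricci flow spacetime on a product domain, and then identify this spacetime with a neighborhood of $z$ in $Z$ by means of Proposition \ref{equivalenceofballs}.

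First I would unpack the hypothesis $z \in \RR_{\t<0}$: by Definition \ref{def:smoothcv} there exist $z_i^* = (z_i, t_i)$ converging to $z$ in the Gromov--Hausdorff sense with $r_{\Rm}(z_i^*) \geq r$ for some $r>0$, and after shrinking $r$ I may assume $t_i + r^2 < 0$ for all large $i$. The curvature bound $|\Rm|\leq r^{-2}$ on $P(z_i^*, r)$ combined with Shi's derivative estimates yields uniform $C^k$ bounds on any smaller parabolic neighborhood for every $k$. The entropy bound together with Proposition \ref{prop:volume} (iii) gives $|B_{t_i}(z_i, r/2)|_{t_i} \geq c(n,Y)\,r^n$, which with the curvature bounds supplies a uniform injectivity radius lower bound at $z_i$. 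Hamilton's compactness theorem then extracts a subsequence of pointed Ricci flows $(M_i, (g_i(t))_{t\in[t_i-r^2/2,\,t_i+r^2/2]}, z_i)$ converging smoothly in the pointed Cheeger--Gromov sense to a smooth pointed Ricci flow, giving a product-domain Ricci flow spacetime $(\tilde U_z, \t^z, \partial_{\t^z}, g^z)$ together with time- and $\partial_\t$-preserving diffeomorphisms $\phi_i: \tilde U_z \to V_i \subset M_i \times \III$. This is exactly property (a).

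Next I would identify $(\tilde U_z, z_\infty)$ isometrically with a neighborhood of $z$ in $(Z, d_Z)$. Because the scalar curvature is uniformly bounded on $V_i$ by smooth convergence, Proposition \ref{equivalenceofballs} provides a two-sided comparison between $d_i^*|_{V_i}$ and the standard parabolic distance induced by $g_i$. This forces the $d_i^*$-Gromov--Hausdorff limit of $V_i$ to coincide, as a pointed parabolic metric space, with $\tilde U_z$ equipped with its canonical spacetime $d^*$-distance defined via \eqref{defnd*limit1} for the limit flow; uniqueness of Gromov--Hausdorff subsequential limits then yields an isometric embedding $\tilde U_z \hookrightarrow Z$ sending $z_\infty \mapsto z$. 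Let $U_z$ be the image. Property (c) follows by applying Proposition \ref{equivalenceofballs} at each $y\in U_z$: if $y_i^* \to y$ in Gromov--Hausdorff, then $y_i^*$ must eventually lie in the parabolic (hence $d^*$-) neighborhoods corresponding to $V_i$, so $y_i^* \in V_i$ and $\phi_i^{-1}(y_i^*) \to y$ in $U_z$; the converse is immediate. Property (b) — smooth convergence of $K^i$ to $K_Z$ on $U_z^{(2)}$ — then reduces to standard parabolic regularity: on $V_i$ the kernel $K^i$ satisfies the forward heat equation in its first argument and the conjugate heat equation in its second, uniform curvature bounds together with the Gaussian-type bounds in Theorems \ref{heatkernelupperbdgeneral} and \ref{gradientheatkernel} give uniform local $C^0$ bounds away from the diagonal, and parabolic Schauder estimates upgrade this to smooth convergence.

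The main obstacle is the identification step: \emph{a priori} smooth Cheeger--Gromov convergence and $d^*$-Gromov--Hausdorff convergence are different notions, and one must rule out both that far-apart points of $\tilde U_z$ could be identified in $Z$ and that the smooth limit could fail to cover a full $d^*$-neighborhood of $z$. This is precisely the role of Proposition \ref{equivalenceofballs}; the remaining technical care is bookkeeping the subsequence to simultaneously realize the smooth convergence and the $d^*$-Gromov--Hausdorff convergence.
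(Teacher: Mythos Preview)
Your approach differs substantially from the paper's and is in spirit more direct. The paper does not invoke Hamilton compactness at all; instead it introduces an auxiliary point $w_i^*=(z_i,t_i+3\delta^2r^2)$ slightly to the \emph{future} of $z_i^*$, passes (along a subsequence) to an $\IF$-limit $\XX^w$ based at $w=\lim w_i^*$, and then quotes Bamler's smooth convergence theorem for $\IF$-limits (Theorem \ref{Fconvergence}(3)) to obtain the Ricci flow spacetime structure on an open set $U\subset\RR^w$ containing the $\CF$-limit $z_\infty$ of $z_i^*$. The identification with $Z$ is then immediate from the isometric embedding $\iota_w$ of Theorem \ref{thm:idenproof}: one sets $U_z:=\iota_w(U)$. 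Your route via Hamilton compactness plus Proposition \ref{equivalenceofballs} bypasses the $\IF$-machinery; what you lose is that the paper inherits properties (a)--(c) almost verbatim from Theorem \ref{Fconvergence}(3), whereas you must establish the identification and the heat-kernel convergence by hand.

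One sentence in your identification step is incorrect and should be rewritten. You assert that $\tilde U_z$ carries ``its canonical spacetime $d^*$-distance defined via \eqref{defnd*limit1} for the limit flow'' and that this coincides with the $d^*_i$-GH limit of $V_i$. But \eqref{defnd*limit1} defines $d^*_z$ on the \emph{global} metric flow $\XX^z$; it has no intrinsic meaning on a local incomplete Ricci flow, because the conjugate heat kernel measures $\nu_{x^*;t}$ in Definition \ref{defnd*distance} live on all of $M_i$, not just on $V_i$. What you really obtain is $\tilde U_z$ equipped with the pullback of $d_Z$ under the map $\Psi:x\mapsto\text{GH-lim}\,\phi_i(x)$, and Proposition \ref{equivalenceofballs} only tells you this pullback is \emph{bi-Lipschitz} to the parabolic distance --- not equal to any canonically defined $d^*$. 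For the lemma this is sufficient: argue directly from the two inclusions in Proposition \ref{equivalenceofballs} that $\Psi$ is injective, that $\Psi$ and $\Psi^{-1}$ are continuous, and that the image of $\Psi$ is open in $Z$ (the last point being exactly the obstacle you correctly flag). Drop the claim of an intrinsic isometric characterization; it is neither true nor needed.
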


In the proof of Lemma \ref{lem:smooth1}, we are free to pass to any further subsequence of $\XX^i$. Indeed, if Lemma \ref{lem:smooth1} does not hold for a given sequence $\XX^i$, then, after passing to a subsequence, for all sufficiently large $i$, either a diffeomorphism $\phi_i: U_z \to V_i \subset M_i \times \III$ cannot be found, or such a diffeomorphism exists but no sequence $\ep_i \to 0$ can be chosen so that statements (a), (b), and (c) all hold. However, since we may always extract a further subsequence so that Lemma \ref{lem:smooth1} holds, a diffeomorphism $\phi_i: U_z \to V_i$ can always be found for all sufficiently large $i$. If, for instance, statement (a) fails to hold for such $\phi_i$, then we would have
\begin{align*}
\Vert \phi_i^* g^i - g^z \Vert_{C^{[\ep_i^{-1}]} ( U_z)}+\Vert \phi_i^* \partial_{\t_i} - \partial_{\t} \Vert_{C^{[\ep_i^{-1}]} ( U_z)} \ge \ep>0
\end{align*}	
for some constant $\ep>0$ and all large $i$. This contradicts the possibility of passing to a further subsequence; hence, such a violation cannot occur.

\begin{proof}
For $z \in \RR_{\t < 0}$, Definition \ref{def:smoothcv} provides a sequence $z_i^*=(z_i, t_i)\in M_i \times [-(1-2\sigma)T,0)$ converging to $z$ in the Gromov--Hausdorff sense and a constant $r>0$ such that
	\begin{align*}
r_{\Rm}(z_i^*) \ge r.
	\end{align*}

We choose a small constant $\delta \in (0, 1/10)$ to be determined later and set $w_i^*:=(z_i, t_i+3\delta^2 r^2) \in M_i \times (-(1-2\sigma)T,0)$. By the definition of the curvature radius, we have for any $t \in [t_i,t_i+3\delta^2 r^2]$,
	\begin{align*}
|\Rm_{g_i}(z_i, t)| \le r^{-2}.
	\end{align*}

By Proposition \ref{HncenterScal} (i), Proposition \ref{distancefunction1} (1), and Lemma \ref{lem:Hcenterdis}, we have $d_i^*(z_i^*, w_i^*) \le C_1 \delta r$ for a constant $C_1=C_1(n, Y, \sigma)$. After passing to a subsequence, we may assume that $w_i^* \to w \in Z_{(-(1-2\sigma)T, 0)}$ in the Gromov--Hausdorff sense. Furthermore, by extracting a further subsequence, there exists a correspondence $\CF$ such that
\begin{equation}\label{Flimit1}
	(\XX^i, (\nu_{w_i^*;t})_{t \in [-T, \t(w_i^*)]}) \xrightarrow[i \to \infty]{\quad \IF, \CF, J \quad} (\XX^w, (\nu_{w;t})_{t \in [-T, \t(w)]}).
\end{equation}
Next, we choose a time $s \in [\t(z)+\delta^2 r^2, \t(z)+2\delta^2 r^2]$ such that the convergence \eqref{Flimit1} is uniform at time $s$. By Proposition \ref{existenceHncenter} and Proposition \ref{HncenterScal} (i), we have
	\begin{equation*}
\nu_{w_i^*;s}\lc B_{g_i(s)}(z_i, C_2 \delta r) \rc \ge  \frac{1}{2}
	\end{equation*}
for some constant $C_2=C_2(n)>0$. It follows from the definition of $d_{GW_1}$-convergence that, after passing to a further subsequence, there exists $y_i^*=(y_i, s)$ with $y_i \in B_{g_i(s)}(z_i, C_2 \delta r)$ so that $y_i^*$ strictly converges to a point $y \in \XX^w_s$. By \cite[Theorem 6.13 (b)]{bamler2023compactness}, this implies $y_i^*  \xrightarrow[i \to \infty]{\quad \CF,J \quad} y$.

Now, we can choose $\delta$ sufficiently small so that, on the parabolic ball $P_i:=B_{g_i(s)}(y_i, r/2) \times [s-r^2/4, s]$, the curvature $|\Rm_{g_i}|$ is bounded by $4r^{-2}$, and $z_i \in B_{g_i(s)}(y_i, r/4)$. It follows from \cite[Theorem 9.24]{bamler2023compactness} and the smooth convergence in Theorem \ref{Fconvergence} that, after passing to a further subsequence, $z_i^*  \xrightarrow[i \to \infty]{\quad \CF,J \quad} z_{\infty} \in \RR^w$. By Theorem \ref{thm:idenproof} and the fact that $z_i^*$ converges to $z$ in the Gromov--Hausdorff sense, it follows that $\iota_w (z_{\infty})=z$.

Finally, the conclusion follows by observing that we can find, around $z_{\infty}$, an open set $U \subset \RR^w$ for which the statements in the lemma hold (see Theorem \ref{Fconvergence} (3)). Through the isometric embedding $\iota_w: \RR^w \to Z$, we define $U_z=\iota_w(U)$.
\end{proof}

After possibly shrinking $U_z$, we may find a locally finite cover $\{U_{z_i}\}$ of $\RR_{\t < 0}$. Then, using a standard center-of-mass construction (see, for instance, \cite[Page 1268]{bamler2023compactness}), we can glue these local pieces so that the following result holds.

\begin{lem}\label{lem:smooth2}
The set $\RR_{\t < 0}$, which is open in $Z$, can be realized as a Ricci flow spacetime $(\RR_{\t < 0}, \t, \partial_\t, g^Z_t)$. Moreover, there exists an increasing sequence $U_1 \subset U_2 \subset \ldots \subset \RR_{\t < 0}$ of open subsets with $\bigcup_{i=1}^\infty U_i = \RR_{\t < 0}$. In addition, for sufficiently large $i$, there exist open subsets $V_i \subset M_i \times \III$, time-preserving diffeomorphisms $\phi_i : U_i \to V_i$ and a sequence $\ep_i \to 0$ such that statements (a), (b), and (c) in Theorem \ref{thm:smooth1} hold.
\end{lem}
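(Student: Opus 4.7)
The strategy is to promote the local statement in Lemma \ref{lem:smooth1} to a global one by a standard patching of the local diffeomorphisms $\phi_i$, using a center-of-mass construction. First, since $\RR_{\t<0}$ is open in $Z$ (which is locally compact and separable), I would extract a countable, locally finite cover $\{U_{z_k}\}_{k\in\N}$ of $\RR_{\t<0}$ by open sets provided by Lemma \ref{lem:smooth1}, where each $U_{z_k}$ carries a local Ricci flow spacetime structure $(U_{z_k},\t,\partial_{\t^{z_k}},g_t^{z_k})$ and, for all $i$ large enough (depending on $k$), admits time- and $\partial_\t$-preserving diffeomorphisms $\phi_i^{(k)}:U_{z_k}\to V_i^{(k)}\subset M_i\times \III$ satisfying the estimates in (a), (b) with errors $\ep_i^{(k)}\to 0$, and the convergence characterization in (c).

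The key issue is that on overlaps $U_{z_k}\cap U_{z_l}$, the two families of diffeomorphisms $\phi_i^{(k)}$ and $\phi_i^{(l)}$ need not agree, but by the smooth convergence in (a) both are asymptotically isometries with respect to $g^Z$, so the transition maps $\phi_i^{(l)}\circ(\phi_i^{(k)})^{-1}$ converge (in $C^{[{\ep_i}^{-1}]}$) to the identity. To patch them into a single family $\phi_i$, I would fix a smooth partition of unity $\{\chi_k\}$ subordinate to $\{U_{z_k}\}$, and on each compact set in $\RR_{\t<0}$ average the ``candidate images'' $\phi_i^{(k)}(z)$ using the Riemannian center of mass on $(M_i\times \III, g^i)$ with weights $\chi_k(z)$. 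Because the diameters of the relevant centers of mass shrink to zero as $i\to\infty$ (they are controlled by $\max_{k,l}\|\phi_i^{(k)}\circ(\phi_i^{(l)})^{-1}-\mathrm{id}\|_{C^0}$), the averaging is well-defined for $i\gg 1$, produces a smooth time-preserving map $\phi_i:U_z\to V_i$ with $\partial_\t$ preserved, and inherits the $C^{[{\ep_i}^{-1}]}$-convergence to an isometric identification. This is precisely the construction carried out in \cite[p.~1268]{bamler2023compactness}, and I would adapt it verbatim.

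Once the global $\phi_i$ are constructed, I would define the Ricci flow spacetime structure on $\RR_{\t<0}$ by pulling back: the smooth structure, the time-function $\t$, the vector field $\partial_\t$, and the spatial metric $g^Z$ are given by the limits of $\phi_i^*g^i$ and $\phi_i^*\partial_{\t_i}$ on each $U_{z_k}$, which are independent of the choice of local chart by uniqueness of $C^\infty$-limits. The Ricci flow equation for $g^Z$ then follows from passing to the limit in the Ricci flow equation for $g^i$ on each chart, using the $C^{[{\ep_i}^{-1}]}$-convergence. The exhausting sequence $U_1\subset U_2\subset\cdots$ is obtained by letting $U_i:=\bigcup_{k\le N_i}U_{z_k}^\circ$, where $U_{z_k}^\circ\Subset U_{z_k}$ is a slight interior retract, and $N_i\to\infty$ is chosen slowly enough that $\phi_i$ is defined on $\overline{U_i}$ with error $\ep_i\to 0$; property (c), the heat kernel convergence (b), and the metric/vector-field convergence (a) transfer from the local statements to the global ones by the same diagonal argument.

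The main obstacle I anticipate is in verifying that the patched diffeomorphisms $\phi_i$ continue to satisfy property (c)—namely, that $y_i^*\to y$ in the Gromov--Hausdorff sense of \eqref{eq:conv00} if and only if $\phi_i^{-1}(y_i^*)\to y$ in $\RR$. The ``only if'' direction follows from Lemma \ref{lem:smooth1} applied to any chart containing $y$, together with the bound on the displacement between $\phi_i^{(k)}$ and the patched $\phi_i$ coming from the center-of-mass construction (which is small in $d^*_i$, by the $C^0$-closeness of the spacetime metrics and Proposition \ref{equivalenceofballs}). The ``if'' direction uses that if $\phi_i^{-1}(y_i^*)\to y$ in $\RR$, then pulling back via any single local chart $\phi_i^{(k)}$ containing $y$ gives a sequence converging to $y$ there as well, and we invoke (c) of Lemma \ref{lem:smooth1}. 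The statement about the heat kernel in (b) is the most delicate verification but follows from the local version (b) of Lemma \ref{lem:smooth1} together with the uniform smallness of the transition maps, since locally uniform $C^\infty$-convergence of metrics implies locally uniform $C^\infty$-convergence of heat kernels off the diagonal by standard parabolic regularity.
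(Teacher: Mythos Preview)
Your proposal is correct and follows essentially the same approach as the paper: take a locally finite cover of $\RR_{\t<0}$ by the charts $U_{z_k}$ from Lemma~\ref{lem:smooth1}, then patch the local diffeomorphisms via the center-of-mass construction of \cite[p.~1268]{bamler2023compactness}. The paper's proof is in fact just this one-sentence reference to the gluing procedure, so your write-up supplies considerably more detail than the paper itself.
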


Next, we extend the Ricci flow spacetime $\RR_{\t < 0}$ to $\RR$ so that Theorem \ref{thm:smooth1} holds. This follows from the next lemma. Indeed, once this lemma is proved, one can find a locally finite cover $\{U_{z_i}\}$ of $\RR$ and then glue them together as in Lemma \ref{lem:smooth2}.

\begin{lem}
For any $z \in \RR_{0}$, there exists an open neighborhood $z \in U_z \subset  \RR$ such that $U_z$ is realized as a Ricci flow spacetime $(U_z, \t, \partial_{\t}, g_t^z)$ defined on a product domain. Moreover, for sufficiently large $i$, there exist open subsets $V_i \subset M_i \times \III$, time-preserving and $\partial_{\t}$-preserving diffeomorphisms $\phi_i : U_z \to V_i$, and a sequence $\ep_i \to 0$ such that statements (a), (b), and (c) in Theorem \ref{thm:smooth1} hold with $U_i$ replaced by $U_z$, and in (c), the point $y$ is required to lie in $U_z$.
\end{lem}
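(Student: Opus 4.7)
The strategy mirrors that of Lemma \ref{lem:smooth1}, but since $\t(z) = 0$ is the final time we cannot shift the base point forward. Instead, the plan is to shift \emph{backward} to an auxiliary point $w$ at a strictly negative time, apply Lemma \ref{lem:smooth1} at $w$ to realize a product Ricci flow spacetime $U_w$ around $w$, and then extend $U_w$ forward in time up to $\t = 0$ using the uniform curvature bound provided by $r_{\Rm}(z_i^*) \geq r$.

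Concretely, we first pick a sequence $z_i^* = (z_i, t_i) \to z$ realizing $z \in \RR$, with $r_{\Rm}(z_i^*) \geq r$ and, after passing to a subsequence, $t_i \to 0$. Fixing a small $\delta \in (0,1/10)$, set $w_i^* := (z_i, t_i - 3\delta^2 r^2)$ and pass to a further subsequence so that $w_i^* \to w$ in the Gromov--Hausdorff sense. The parabolic curvature bound entailed by $r_{\Rm}(z_i^*) \geq r$ forces $r_{\Rm}(w_i^*) \geq r/2$, hence $w \in \RR_{\t<0}$, and Lemma \ref{lem:smooth1} applies at $w$, producing a product Ricci flow spacetime $(U_w, \t, \partial_\t, g^w)$ containing $w$ together with time-preserving, $\partial_\t$-preserving diffeomorphisms $\phi_i^w : U_w \to V_i^w \subset M_i \times \III$ satisfying conditions (a)--(c) on $U_w$. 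Next, we pick a relatively compact slice $K \subset U_w \cap \{\t = t_0\}$ containing $w$, with $t_0 < \t(w)$ chosen close to $\t(w)$. For $\delta$ small enough, $\phi_i^w(K) \times [t_0, t_i]$ lies inside $B_{t_i}(z_i, r/2) \times [t_i - r^2/4, t_i]$, where $|\Rm_{g_i}| \leq 4 r^{-2}$. Evolving $\phi_i^w(K)$ forward by $\XX^i$ up to time $t_i$ and extracting a further subsequence via Hamilton's compactness theorem (equivalently, via \cite[Theorem 9.24]{bamler2023compactness}) produces a smooth forward extension of $U_w$ to a Ricci flow spacetime $U_z$ on a product domain $N \times (t_0', 0]$ for some $t_0' \in (t_0, \t(w))$, together with diffeomorphisms $\phi_i : U_z \to V_i \subset M_i \times \III$ extending $\phi_i^w$ and satisfying (a).

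It remains to define $\iota : U_z \to Z$ by $\iota(x) := \lim_{i \to \infty} \phi_i(x)$ and to verify that $\iota$ is an isometric embedding extending the embedding furnished by Lemma \ref{lem:smooth1} on $U_z \cap \{\t \leq \t(w)\}$, together with properties (b) and (c). The key point is to show $d_i^*(\phi_i(x), \phi_i(y)) \to d^*_{U_z}(x, y)$ for $x, y \in U_z$, where $d^*_{U_z}$ is the intrinsic $d^*$-distance on $U_z$ obtained by gluing the smooth spacetime structure above $t_0$ with the $\F$-limit $\XX^w$ used in the proof of Lemma \ref{lem:smooth1} below $t_0$. For any such pair, the conjugate heat kernel measures $\nu_{\phi_i(x);t}$ and $\nu_{\phi_i(y);t}$ evolve smoothly within the extended chart for $t \in [t_0, \t(x)]$, and reduce to heat flow from data on $V_i^w$ for $t \leq t_0$, so their $W_1$-distances at such times converge via the $\F$-convergence already established in Lemma \ref{lem:smooth1}. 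Property (b) then follows from parabolic Schauder estimates on the uniformly bounded-curvature region, and (c) is immediate from the smooth convergence. The hard part will be controlling the conjugate heat kernel measures uniformly across the cutoff time $t_0$ and all the way up to $\t = 0$ without losing the $\ep_0$-precision in the defining formula \eqref{defnd*} of $d^*$; the crucial technical input here is Proposition \ref{equivalenceofballs}, which guarantees the comparability of $d_i^*$-balls with parabolic balls under a local scalar curvature bound and thereby controls the metric behavior of the chart as $\t \nearrow 0$.
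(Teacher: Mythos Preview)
Your overall strategy—shift backward in time to an auxiliary point $w$ with $\t(w)<0$, use the already-established Ricci flow spacetime there, and extend forward using the uniform curvature bound—matches the paper's. However, the execution diverges in a way that leaves a real gap.

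The paper does not introduce any intrinsic distance $d^*_{U_z}$ on the extended product domain. Since $(Z,d_Z)$ is already the complete GH limit, the paper works directly with $d_Z$. After using Lemma \ref{lem:smooth2} (not just the local Lemma \ref{lem:smooth1}) to obtain a product domain $B_{g^Z_{-r^2}}(w,r/2)\times[-r^2,0)\subset\RR_{\t<0}$ and extending the metric to the closed interval $[-r^2,0]$, the paper proves two claims. Claim 1: for each $x$ in the base slice, the flow line $x^t$ converges in $d_Z$ as $t\nearrow 0$; this follows from the elementary estimate $d_i^*((z_i,t),(z_i,s))\le C\sqrt{t-s}$ obtained via Proposition \ref{HncenterScal} and Lemma \ref{lem:Hcenterdis}. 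Claim 2: the resulting map $\psi:B\to Z_0$ is injective (a lower bound on $d_i^*$ from the curvature bound) and its image contains an open $d_Z$-neighborhood of $z$ in $Z_0$ (any $y\in Z_0$ with $d_Z(z,y)$ small is hit, using \cite[Proposition 9.16(b)]{bamler2023compactness}). No limit of conjugate heat kernels near $\t=0$ is ever analyzed.

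Your proposal instead attempts to define $d^*_{U_z}$ by ``gluing the smooth spacetime structure above $t_0$ with the $\F$-limit $\XX^w$ below $t_0$'' and to show $d_i^*(\phi_i(x),\phi_i(y))\to d^*_{U_z}(x,y)$. This object is not well defined: the $d^*$-distance depends on conjugate heat flows on the whole manifold, not on a local chart, so there is no obvious intrinsic version. More importantly, you do not verify that the image of your map $\iota$ is \emph{open} in $Z$—i.e., that every point of $Z_0$ sufficiently $d_Z$-close to $z$ actually lies in $\iota(U_z)$. Without this you have not shown that $U_z$ is a neighborhood of $z$ in $Z$, which is what the lemma asserts. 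This surjectivity onto a neighborhood is exactly the content of Claim 2 in the paper's proof, and your outline gives no mechanism for it.
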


\begin{proof}
For $z \in \RR_{0}$, Definition \ref{def:smoothcv} provides a sequence $z_i^*=(z_i, t_i)\in M_i \times (-(1-2\sigma)T,0]$ converging to $z$ in the Gromov--Hausdorff sense and a constant $r>0$ such that on $B_{g_i(-r^2)}(z_i, r) \times [-r^2, 0]$,
	\begin{equation} \label{smooth001}
r_{\Rm} \ge r.
	\end{equation}
		
We choose $w_i^*:=(z_i, -r^2) \in M_i \times (-(1-2\sigma)T,0)$. After passing to a subsequence, we assume $w_i^* \to w \in \RR_{(-(1-2\sigma)T, 0)}$ in the Gromov--Hausdorff sense. By \eqref{smooth001} and Lemma \ref{lem:smooth2}, we have a product domain:
\begin{align*}
B_{g^Z_{-r^2}}(w, r/2) \times [- r^2, 0) \to \RR_{\t < 0}.
\end{align*}
In addition, under this identification, the curvature of any spacetime point in $B_{g^Z_{-r^2}}(w, r/2) \times [-r^2,0)$ is bounded by $4r^{-2}$. Therefore, the metric $g^Z_t$, restricted to $B_{g^Z_{-r^2}}(w, r/2) \times [-r^2,0)$, can be extended to a Ricci flow spacetime $B_{g^Z_{-r^2}}(w, r/2) \times [-r^2,0]$.

For $x \in B_{g^Z_{-r^2}}(w,r/2)$ and $t \in[-r^2,0)$, let $x^t$ denote the flow of $x$ along $\partial_\t$, normalized by $x^{-r^2}=x$.

\textbf{Claim 1.} $w^t$ converges to $z$ in $d_Z$ as $t \to 0$. Moreover, for any $x \in B_{g^Z_{-r^2}}(w, r/2)$, $x^t$ converges to a point in $Z_0$ as $t \to 0$.

\emph{Proof of Claim 1.} Given $s, t \in [-r^2,0)$ with $s \le t$, it follows from Lemma \ref{lem:smooth2} that $\phi_i^{-1}(z_i, t) \to w^t$ and $\phi_i^{-1}(z_i, s) \to w^s$ as $i \to \infty$. By Proposition \ref{HncenterScal} and Lemma \ref{lem:Hcenterdis}, we have
\begin{align*}
d_i^*((z_i, t), (z_i, s)) \le C(n, Y , \sigma) \sqrt{t-s} \quad \text{and} \quad  d_i^*((z_i, t), z_i^*) \le C(n, Y , \sigma) \sqrt{|t|}.
\end{align*}
By the convergence \eqref{eq:conv00}, we conclude that
\begin{align*}
d_Z(w^t, w^s) \le C(n, Y , \sigma) \sqrt{t-s} \quad \text{and} \quad d_Z(w^t, z) \le C(n, Y , \sigma) \sqrt{|t|}.
\end{align*}
Thus, $w^t \to z$ in $d_Z$ as $t \to 0$. The other conclusion can be proved similarly.

Next, we define a map $\psi: B:=B_{g^Z_{-r^2}}(w, r/4) \to Z_0$ so that $\psi(x)=\lim_{t \to 0} x^t$ for any $x \in B$.

\textbf{Claim 2.} $\psi$ is injective. Moreover, $\psi(B)$ contains an open neighborhood of $z$ in $Z_0$.

\emph{Proof of Claim 2.} For any $a, b \in B$ with $a \ne b$ and $t \in [-r^2,0)$, we can find $a_i^*=(a_i, t), b_i^*=(b_i, t)$ with $a_i, b_i \in  B_{g_i(-r^2)}(z_i, r/2)$ such that $(a_i, -r^2) \to a$ and $(b_i, -r^2) \to b$ in the Gromov--Hausdorff sense. By smooth convergence and distance distortion, we have for large $i$,
\begin{align*}
d_{g_i(t)}(a_i, b_i) \ge c_0 d_{g^Z_{-  r^2}}(a, b)
\end{align*}
for a constant $c_0>0$. Since $r_{\Rm}(a_i^*) \ge r$, it follows from Proposition \ref{equivalenceofballs} that
\begin{align*}
d_i^*(a_i^*, b_i^*) \ge c_1 d_{g^Z_{-  r^2}}(a, b)
\end{align*}
for a constant $c_1>0$. Passing to the limit, we obtain $d_Z(a^t, b^t) \ge c_1 d_{g^Z_{-  r^2}}(a, b)$ and hence $\psi(a) \ne \psi(b)$.

Suppose $y \in Z_0$ with $d_Z(z, y) \le \ep r$ for a small constant $\ep$ to be determined later. We can choose $y_i^*=(y_i, 0) \in M_i \times \III$ so that $y_i^* \to y$. For sufficiently large $i$, we know $d_i^*(y_i^*, z_i^*) \le 2\ep r$. If $\ep \le \ep(n)$, we conclude that $y_i \in B_{g_i(-r^2)}(z_i, r/10)$ by the definition of the $d^*$-distance and \cite[Proposition 9.16 (b)]{bamler2023compactness}. After passing to a subsequence, we assume that $(y_i, -r^2)$ converges to $a \in B$. Then, it is clear that $y=\psi(a)$.

By Claim 1 and Claim 2, we obtain an embedding $B_{g^Z_{-r^2}}(w, c_3 r) \times [-r^2,0]$ into $Z$ for a small constant $c_3=c_3(n)>0$. Its image is an open neighborhood of $z$. Moreover, there exists an embedding $\phi_i^z: U_z:=B_{g^Z_{-r^2}}(w, c_3 r) \times [-r^2,0] \to B_{g^i_{-r^2}}(z_i, r) \times [-r^2, 0]$. It is then straightforward to verify that statements (a), (b), and (c) in Theorem \ref{thm:smooth1} hold.
\end{proof}

Next, we prove that the map $\iota_z$ obtained in Theorem \ref{thm:idenproof} is an isometric embedding of Ricci flow spacetimes.

\begin{prop}\label{prop:embed2}
For any $z \in Z$, let $\XX^z$ denote the metric flow associated with $z$, and let $(\RR^z, \t^z, \partial_{\t^z}, g^z)$ be the Ricci flow spacetime of $\XX^z$ obtained in Theorem \ref{Fconvergence} (1). Then, the time-preserving map 
\begin{align*}
\iota_z:(\RR^z_\III, \t^z) \to (\RR, \t),
\end{align*}
which is the restriction of the map $\iota_z$ from Theorem \ref{thm:idenproof} to $\RR^z_\III$, satisfies the following properties: 
	\begin{enumerate}[label=\textnormal{(\roman{*})}]
	\item $(\iota_z)_*(\partial_{\t^z})=\partial_\t$ and $(\iota_z)^* g^Z=g^z$.
	\item For any $x, y \in \RR^z_\III$, $K_Z(\iota_z(x); \iota_z(y))=K^z(x;y)$, where $K^z$ denotes the heat kernel of $\RR^z$.
	\item If $z \in \RR$, then $K_Z(z; \iota_z(y))=K^z(z;y)$ for any $y \in \RR^z_\III$.
	\end{enumerate}	
\end{prop}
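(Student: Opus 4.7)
The strategy is to realize $\iota_z$ as the smooth limit of the natural transition diffeomorphisms between two smooth convergence charts—one coming from $\F$-convergence onto $\RR^z$, and the other coming from Gromov--Hausdorff convergence onto $\RR$—both built out of the same sequence of Ricci flows $\XX^i$. For $y \in \RR^z_\III$, the diffeomorphisms $\phi_i^\F: U_i^\F \to V_i^\F \subset M_i \times \III$ from Theorem \ref{Fconvergence} (3) satisfy $\phi_i^\F(y) \to y$ within $\CF$, so by Theorem \ref{thm:idenproof} we get $\phi_i^\F(y) \to \iota_z(y)$ in the Gromov--Hausdorff sense. The smooth convergence of the curvature on the $\F$-side gives a uniform positive lower bound on $r_{\Rm}(\phi_i^\F(y))$, hence $\iota_z(y) \in \RR$ by Definition \ref{def:smoothcv}; Theorem \ref{thm:smooth1} (c) then places $\phi_i^\F(y) \in V_i$ for large $i$, with $\phi_i^{-1}(\phi_i^\F(y)) \to \iota_z(y)$ in $\RR$.

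To prove (i), I would consider the transition diffeomorphisms $\Phi_i := \phi_i^{-1} \circ \phi_i^\F$, which are time-preserving and satisfy $\Phi_i^*(\phi_i^* g^i) = (\phi_i^\F)^* g^i$, together with the analogous identity for $\partial_{\t_i}$. The two smooth convergences $\phi_i^* g^i \to g^Z$ and $(\phi_i^\F)^* g^i \to g^z$ (and similarly for the time vector field) bound $\Phi_i$ in every $C^k$ on compact subsets, so Arzela--Ascoli extracts a smooth subsequential limit $\Phi_\infty$ with $\Phi_\infty^* g^Z = g^z$ and $(\Phi_\infty)_* \partial_{\t^z} = \partial_\t$. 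The pointwise identification $\Phi_\infty = \iota_z$ from the previous paragraph pins down the limit and upgrades $\iota_z$ to a smooth isometric embedding of Ricci flow spacetimes.

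For (ii), the same pullback-and-identify trick is applied to heat kernels. Theorem \ref{thm:smooth1} (b) gives $(\phi_i^{(2)})^* K^i \to K_Z$ smoothly on $U_i^{(2)}$, and the analogous smooth convergence $((\phi_i^\F)^{(2)})^* K^i \to K^z$ on the $\F$-side follows from standard parabolic regularity once the Ricci flow metrics converge smoothly (Theorem \ref{Fconvergence} (3)); evaluating both convergences at matched pairs via $\Phi_i$ and sending $i \to \infty$ using (i) yields $K_Z(\iota_z(x); \iota_z(y)) = K^z(x; y)$. For (iii), when $z \in \RR$ the chart $\phi_i$ extends up to $z$, so I would set $z_i^* := \phi_i(z)$, with $z_i^* \to z$ in the Gromov--Hausdorff sense; the heat kernel upper bounds and gradient estimates (Theorems \ref{heatkernelupperbdgeneral} and \ref{gradientheatkernel}) combined with the smooth convergence of the metrics promote the weak convergence of $\nu_{z_i^*;\cdot}$ to smooth convergence $K^i(z_i^*; \phi_i(\cdot)) \to K_Z(z;\cdot)$ on $\RR_{(-\infty, \t(z))}$, while on the $\F$-side the smooth convergence of the conjugate heat kernel density in Theorem \ref{Fconvergence} (3)(a) gives $K^i(z_i^*; \phi_i^\F(\cdot)) \to K^z(z;\cdot)$ smoothly. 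Matching the two limits through $\Phi_i$ then yields the identity.

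The main obstacle lies in (iii), where one must promote the weak convergence of conjugate heat kernel measures based at the fixed point $z_i^*$ to smooth convergence near that base point. This is subtle when $\t(z)$ is close to the upper time-boundary $0$ of $\III$, since the heat kernel estimates must be applied in a one-sided parabolic neighborhood. One must carefully combine the uniform curvature radius bound furnished by $z \in \RR$ with the quantitative parabolic estimates from Section \ref{secpreliminary} to control all spacetime derivatives of $K^i(z_i^*;\cdot)$ uniformly in $i$; parts (i) and (ii) are essentially formal once the pointwise identification of Step~1 and the smooth heat kernel convergence are in hand.
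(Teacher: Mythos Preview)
Your approach is correct and essentially identical to the paper's: both introduce the transition maps $\psi_i := \phi_i^{-1}\circ\phi_i^z$ (your $\Phi_i$), show they converge smoothly to a limit that agrees pointwise with $\iota_z$ via Theorem~\ref{thm:idenproof}, and read off (i) and (ii) from the two smooth convergence statements.

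Your assessment of (iii) as the ``main obstacle'' is an overcomplication. You already invoke Theorem~\ref{thm:smooth1}\,(b) in part (ii); the same statement settles (iii) directly. Since $z\in\RR$, for large $i$ we have $z\in U_i$, and for any $x\in\RR^z_\III$ with $\t^z(x)<\t(z)$ we have $\iota_z(x)\in U_i$ as well (from part (i)), so eventually $(z,\iota_z(x))\in U_i^{(2)}$. Thus the smooth convergence of heat kernels in Theorem~\ref{thm:smooth1}\,(b) applies with $z$ as the first argument---no separate promotion from weak to smooth convergence is needed, and the boundary case $\t(z)=0$ causes no trouble because Theorem~\ref{thm:smooth1} was already established on all of $\RR$ including $\RR_0$. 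The paper's proof of (iii) is accordingly two lines: $K^z(z;x)=\lim_i K^i(z_i^*;\phi_i^z(x))$ from Theorem~\ref{Fconvergence}\,(3)(a), and the right-hand side equals $K_Z(z;\iota_z(x))$ from Theorem~\ref{thm:smooth1}\,(b) combined with $\phi_i^{-1}(z_i^*)\to z$ and $\phi_i^{-1}(\phi_i^z(x))\to\iota_z(x)$.
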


\begin{proof}
Given $z \in Z$, we choose a sequence $z_i^* \in M_i \times \III$ converging to $z$ in the Gromov--Hausdorff sense and a correspondence $\CF$ such that 
\begin{equation*}
	(\XX^i, (\nu_{z_i^*;t})_{t \in [-T, \t(z_i^*)]}) \xrightarrow[i \to \infty]{\quad \IF, \CF, J \quad} (\XX^z, (\nu_{z;t})_{t \in  [-T, \t(z)]}).
\end{equation*}

By Theorem \ref{Fconvergence} (3), there exists an increasing sequence $U^z_1 \subset U^z_2 \subset \ldots \subset \RR^z_\III$ of open subsets with $\bigcup_{i=1}^\infty U^z_i = \RR^z_\III$. In addition, for sufficiently large $i$, there exist open subsets $V^z_i \subset M_i \times \III$, time-preserving diffeomorphisms $\phi^z_i : U^z_i \to V^z_i$ and a sequence $\ep_i \to 0$ such that all statements in Theorem \ref{Fconvergence} (3) hold.

On the other hand, there exists an increasing sequence $U_1 \subset U_2 \subset \ldots \subset \RR$ of open subsets with $\bigcup_{i=1}^\infty U_i = \RR$. In addition, for sufficiently large $i$, there exist open subsets $V_i \subset M_i \times \III$, time-preserving diffeomorphisms $\phi_i : U_i \to V_i$ and a sequence $\ep'_i \to 0$ such that statements (a), (b), and (c) in Theorem \ref{thm:smooth1} hold, with $\ep_i$ replaced by $\ep_i'$.

(i): For any $w \in \RR^z_\III$, we choose a sequence $w_i^* \in M_i \times \III$ so that $w_i^* \xrightarrow[i \to \infty]{\quad \CF,J \quad} w$. By Theorem \ref{Fconvergence} (3)(b), we conclude that 
\begin{align*}
(\phi_i^z)^{-1}(w_i^*) \to w \quad \text{in} \quad \RR^z_\III.
\end{align*}

By Lemma \ref{lem:key1}, $d_i^*(z_i^*, w_i^*)$ is uniformly bounded. Moreover, by the smooth convergence, we know that $r_{\Rm}(w_i^*) \ge r>0$ for a constant $r$. Now, after passing to a subsequence, we assume $w_i^* \to w' \in \RR$ in the Gromov--Hausdorff sense. Then by Theorem \ref{thm:smooth1} (c), we know 
\begin{align*}
(\phi_i)^{-1}(w_i^*) \to w' \quad \text{in} \quad \RR.
\end{align*}

Then, we can find small open neighborhoods $U_{w} \subset \RR^z_\III$ and $U_{w'} \subset \RR$ around $w$ and $w'$, respectively, such that the map $\psi_i:=\phi_i^{-1} \circ \phi_i^z: U_{w} \to U_{w'}$, for sufficiently large $i$, is well defined and a diffeomorphism. Moreover, by Theorem \ref{Fconvergence} (3)(a) and Theorem \ref{thm:smooth1} (a), we may assume, by shrinking the open neighborhoods and taking a further subsequence, that $\psi_i$ converges smoothly to a diffeomorphism $\psi_{\infty}$. In addition, it follows from Theorem \ref{Fconvergence} (3)(a) and Theorem \ref{thm:smooth1} (a) again that
\begin{align*}
(\psi_{\infty})_*(\partial_{\t^z})=\partial_\t \quad \text{and} \quad (\psi_{\infty})^* g^Z=g^z.
\end{align*}

Note that by Theorem \ref{thm:idenproof}, $\iota_z$ agrees with $\psi_{\infty}$ on $U_{w}$. Thus, we have proved that on $\RR^z_\III$,
\begin{align*}
(\iota_z)_*(\partial_{\t^z})=\partial_\t \quad \text{and} \quad (\iota_z)^* g^Z=g^z.
\end{align*}

(ii): For any $x, y \in \RR^z_\III$ with $\t^z(x) > \t^z(y)$, we find an open set $U \subset \RR^z_\III$ containing $x$ and $y$ whose closure $\bar U$ is compact in $\RR^z_\III$. Then for sufficiently large $i$, we have $\bar U \subset U_i^z$. By smooth convergence, we conclude that
\begin{align*}
r_{\Rm}(w^*) \ge r>0
\end{align*}
for a constant $r$ and any $w^* \in D_i:=\phi_i^z(\bar U)$. Since $\bar U$ is compact, Proposition \ref{HncenterScal} (i), Proposition \ref{distancefunction1} (1), and Lemma \ref{lem:Hcenterdis} show that
\begin{align*}
\sup_{w \in \bar U} d_z^*(z, w) <\infty.
\end{align*}

By Lemma \ref{lem:key1}, we conclude that 
\begin{align*}
\sup_{w^* \in D_i} d_i^*(z_i^*, w^*) \le C
\end{align*}
for a constant $C$. After passing to a subsequence, we assume $D_i \to D \subset \RR$ in the Gromov--Hausdorff sense. Arguing as before, $\iota_z \vert_{\bar U}$ is the smooth limit of $\psi_i$. Thus, it follows from Theorem \ref{Fconvergence} (3)(a) and Theorem \ref{thm:smooth1} (b) that
\begin{align*}
K_Z(\iota_z(x); \iota_z(y))=K^z(x; y).
\end{align*}

(iii): If $z \in \RR$, then it follows from Theorem \ref{Fconvergence} (3)(a) that for any $x \in \RR^z_{[-T, \t(z))}$,
\begin{align*}
K^z(z; x)=\lim_{i \to \infty} K^i(z_i^*;x_i^*),
\end{align*}
where $x_i^*=\phi_i^z(x)$. By Theorem \ref{thm:smooth1} (b), we conclude that
\begin{align*}
K^z(z; x)=K_Z(z; \iota_z(x)).
\end{align*}

This completes the proof.
\end{proof}

As a corollary of Proposition \ref{prop:embed2}, we prove:

\begin{cor}\label{cor:comple1}
$\RR_{\III^-}$ is dense in $Z_{\III^-}$ with respect to $d_Z$.
	\end{cor}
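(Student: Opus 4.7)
The plan is to combine the density of $\RR^z_\III$ in $\XX^z_\III$ (Lemma \ref{lem:complete1}) with the isometric embedding $\iota_z$ constructed in Theorem \ref{thm:idenproof}, using the fact established in Proposition \ref{prop:embed2} that $\iota_z$ sends the metric-flow regular part $\RR^z_\III$ into the limit-space regular part $\RR$.

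Fix $z \in Z_{\III^-}$, so that $\t(z) > -(1-2\sigma)T$. Let $\XX^z$ be the metric flow associated with $z$, and let $\iota_z : (\widetilde{\XX^z_\III}, d^*_z) \to (Z, d_Z)$ be the isometric embedding from Theorem \ref{thm:idenproof}, which satisfies $\iota_z(z) = z$ and $\t \circ \iota_z = \t^z$. By Proposition \ref{prop:embed2}(i), $\iota_z$ restricts to a time-preserving map of Ricci flow spacetimes $\RR^z_\III \to \RR$; in particular $\iota_z(\RR^z_\III) \subset \RR$.

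Now apply (the proof of) Lemma \ref{lem:complete1} with $x=z$: for any sufficiently small $r > 0$, pick an $H_n$-center $\zeta_r \in \XX^z_{\t(z)-r^2}$ of $z$, and then choose $w_r \in \RR^z_{\t(z)-r^2}$ with $d^z_{\t(z)-r^2}(\zeta_r, w_r) < r$, which is possible because $(\XX^z_{\t(z)-r^2}, d^z_{\t(z)-r^2})$ is the metric completion of $(\RR^z_{\t(z)-r^2}, g^z_{\t(z)-r^2})$ by Theorem \ref{Fconvergence}(1). Then Proposition \ref{distancefunction1}(1) and Lemma \ref{lem:Hcenterdis} (which hold for $d^*_z$ on $\XX^z$) give
\begin{equation*}
d^*_z(z, w_r) \le d^*_z(z, \zeta_r) + d^*_z(\zeta_r, w_r) \le \ep_0^{-1}\sqrt{H_n}\, r + \ep_0^{-1} r.
\end{equation*}

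Since $\t(z) > -(1-2\sigma)T$, for all $r$ with $r^2 < \t(z) + (1-2\sigma)T$ we have $\t^z(w_r) = \t(z)-r^2 \in \III^-$, hence $w_r \in \RR^z_{\III^-}$ and consequently $\iota_z(w_r) \in \RR_{\III^-}$. Because $\iota_z$ is an isometric embedding with $\iota_z(z)=z$,
\begin{equation*}
d_Z(\iota_z(w_r), z) = d^*_z(w_r, z) \le (\ep_0^{-1}\sqrt{H_n} + \ep_0^{-1})\, r \longrightarrow 0 \quad \text{as} \quad r \searrow 0,
\end{equation*}
which proves density of $\RR_{\III^-}$ in $Z_{\III^-}$.

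There is no real obstacle here: the corollary is a formal consequence of results already proved in this section and the previous one. The only point that deserves care is ensuring that the approximating points genuinely lie in $\RR_{\III^-}$ rather than merely in $\RR$, which is why one invokes the openness $\t(z) > -(1-2\sigma)T$ to keep the backward time $\t(z)-r^2$ inside $\III^-$ for all small $r$.
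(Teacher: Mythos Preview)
Your proof is correct and follows essentially the same approach as the paper: pick an $H_n$-center of $z$ at a nearby earlier time, push it into the regular part, and use the isometric embedding $\iota_z$ together with Proposition~\ref{prop:embed2} to conclude. The only cosmetic difference is that the paper picks the $H_n$-center directly in $\RR^z_s$ (possible since $\RR^z_s$ has full $\nu_{z;s}$-measure), whereas you go through the two-step approximation from the proof of Lemma~\ref{lem:complete1}; both arguments are equivalent.
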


\begin{proof}
For any $z \in Z_{\III^-}$, we consider its associated metric flow $\XX^z$. For any $s<\t(z)$, we choose an $H_n$-center $w \in \RR^z_s$. By Lemma \ref{lem:Hcenterdis}, we have
\begin{align*}
d_z^*(z, w) \le  \sqrt{H_n(\t(z)-s)}.
\end{align*}
Then, it follows from Theorem \ref{thm:idenproof} that
\begin{align*}
d_Z(z, \iota_z(w)) \le  \sqrt{H_n(\t(z)-s)}.
\end{align*}
By choosing $s=s_i \nearrow \t(z)$, the conclusion follows.
\end{proof}

Next, we define conjugate heat kernel measures on $\RR$.

\begin{defn}[Conjugate heat kernel measures on $\RR$]
For any $x \in \RR$, we define the \textbf{conjugate heat kernel measure} $\nu_{x;s}$ based at $x$, to be the Borel measure on $(\RR_s, g^Z_s)$ given by
\begin{align*}
\mathrm{d}\nu_{x;s}:=K_Z(x; \cdot) \, \mathrm{d}V_{g^Z_s}
\end{align*}		
for any $s<\t(x)$, and set $\nu_{x;\t(x)}=\delta_x$.
\end{defn}

\begin{lem}\label{lem:pre1}
For any $x \in \RR$, $\nu_{x;s}$ is a probability measure on $(\RR_s, g^Z_s)$.
	\end{lem}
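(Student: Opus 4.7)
The plan is to establish the identity $\int_{\RR_s} K_Z(x;\cdot)\,\mathrm{d}V_{g^Z_s}=1$ by combining two complementary bounds, one from the smooth convergence in Theorem~\ref{thm:smooth1} and one from the identification with the associated metric flow in Proposition~\ref{prop:embed2}. The case $s=\t(x)$ is trivial since $\nu_{x;\t(x)}=\delta_x$; assume $s<\t(x)$ with $s\in\III$. Positivity of $K_Z$ within each connected component of $\RR$ is standard from the maximum principle applied to the smooth Ricci flow spacetime, so $K_Z(x;\cdot)\ge 0$ on $\RR_s$ and the integrand is well-defined.

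For the upper bound, I would pick a sequence $x_i^*\in M_i\times\III$ converging to $x$ in the Gromov--Hausdorff sense, so that (for $i$ large) $x\in U_i$ and $\phi_i^{-1}(x_i^*)\to x$ by Theorem~\ref{thm:smooth1}(c). Given any compact $K\subset \RR_s$, for large $i$ we have $K\subset U_i$, and Theorem~\ref{thm:smooth1}(a),(b) yields $\phi_i^* g^i\to g^Z$ and $(\phi_i^{(2)})^*K^i\to K_Z$ smoothly on $K$. Since the conjugate heat kernel integrates to $1$ on each closed manifold $M_i$, this gives
\begin{equation*}
\int_K K_Z(x;\cdot)\,\mathrm{d}V_{g^Z_s}=\lim_{i\to\infty}\int_{\phi_i(K)}K^i(x_i^*;\cdot)\,\mathrm{d}V_{g_i(s)}\le 1.
\end{equation*}
Exhausting $\RR_s$ by an increasing sequence of compact subsets yields $\int_{\RR_s}K_Z(x;\cdot)\,\mathrm{d}V_{g^Z_s}\le 1$.

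For the lower bound, I would use the metric flow $\XX^x$ associated with $x$, together with its regular-singular decomposition $\XX^x_{(-(1-\sigma)T,\t(x))}=\RR^x\sqcup\MS^x$ from \eqref{RSdecomz}. Since the conjugate heat flow $(\nu_{x;t})_{t\in[-T,\t(x)]}$ on $\XX^x$ is a family of probability measures with $\nu_{x;t}(\MS^x_t)=0$ by Theorem~\ref{Fconvergence}(1), restricting to $\RR^x_s$ gives $\int_{\RR^x_s}K^x(x;\cdot)\,\mathrm{d}V_{g^x_s}=1$. By Proposition~\ref{prop:embed2}, the map $\iota_x:\RR^x\to\RR$ is an isometric embedding of Ricci flow spacetimes with $\iota_x^*g^Z=g^x$ and $K_Z(x;\iota_x(y))=K^x(x;y)$ for all $y\in\RR^x_\III$. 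Pushing forward under $\iota_x$, which preserves volumes and the heat kernel values at the base, gives
\begin{equation*}
\int_{\iota_x(\RR^x_s)}K_Z(x;\cdot)\,\mathrm{d}V_{g^Z_s}=\int_{\RR^x_s}K^x(x;\cdot)\,\mathrm{d}V_{g^x_s}=1.
\end{equation*}
Since $\iota_x(\RR^x_s)\subset\RR_s$ and $K_Z(x;\cdot)\ge 0$, combining this with the upper bound sandwiches $\int_{\RR_s}K_Z(x;\cdot)\,\mathrm{d}V_{g^Z_s}=1$, completing the proof.

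The main subtlety is arranging the smooth convergence argument uniformly enough to allow integration: the $C^{[\ep_i^{-1}]}$ convergence of metrics and heat kernels in Theorem~\ref{thm:smooth1} is strong enough on each compact $K\subset\RR_s$, but one must verify that for large $i$ the set $K$ is indeed contained in $U_i$ (which follows from $\bigcup U_i=\RR$ and standard exhaustion). A secondary point is the use of Proposition~\ref{prop:embed2}(iii), which requires $x\in\RR$ — precisely the hypothesis of the lemma — so the identification $K_Z(x;\iota_x(y))=K^x(x;y)$ is available here.
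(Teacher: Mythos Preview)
Your proposal is correct and follows essentially the same strategy as the paper: the upper bound $\nu_{x;s}(\RR_s)\le 1$ comes from the smooth convergence of heat kernels in Theorem~\ref{thm:smooth1} together with $\nu_{x_i^*;s}(M_i)=1$, and the lower bound comes from Proposition~\ref{prop:embed2}(iii), which shows that the pushforward of the probability measure on $\RR^x_s$ already accounts for full mass in $\RR_s$. Your version is simply more explicit about the compact exhaustion underlying the upper bound, whereas the paper compresses it into a single Fatou-type line.
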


\begin{proof}
Given $x \in \RR$ and $s<\t(x)$, we choose $x_i^* \in M_i \times \III$ so that $x_i^* \to x$ in the Gromov--Hausdorff sense. Then, it follows from Theorem \ref{thm:smooth1} (b) that
	\begin{align*}
		\nu_{x;s}(\RR_s)\le \liminf_{i \to \infty} \nu_{x_i^*;s}(M_i)=1.
	\end{align*}

On the other hand, it follows from Proposition \ref{prop:embed2} (iii) that $K_Z(x;\iota_x(y))=K^x(x;y)$ for any $y \in \RR^x_\III$. Thus, we have
	\begin{align*}
		\nu_{x;s}(\RR_s) \ge \nu_{x;s}(\iota_x(\RR^x_s))=1.
	\end{align*}

Combining these inequalities, we conclude $\nu_{x;s}(\RR_s)=1$.
\end{proof}

The proof of Lemma \ref{lem:pre1} also gives the following corollary.

\begin{cor}\label{cor:pre1}
For any $x \in \RR$ and $s<\t(x)$,
	\begin{align*}
\{y \in \RR_s \mid K_Z(x; y)>0\}=\iota_x( \RR^x_s).
	\end{align*}
	In particular, $\nu_{x;s}\lc \RR_s \setminus \iota_x( \RR^x_s) \rc=0$.
	\end{cor}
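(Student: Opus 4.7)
The plan is to split the statement into the two set inclusions, with the ``in particular'' statement emerging as an immediate byproduct.

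For $\iota_x(\RR^x_s)\subset\{y\in\RR_s\mid K_Z(x;y)>0\}$, I would invoke Proposition~\ref{prop:embed2}(iii) to obtain $K_Z(x;\iota_x(w))=K^x(x;w)$ for every $w\in\RR^x_s$, reducing the claim to $K^x(x;\cdot)>0$ on $\RR^x_s$. The latter is a standard strong-maximum-principle statement for the conjugate heat equation $\square^{*}K^x=0$ on the smooth Ricci flow spacetime $(\RR^x,g^x)$: the time-slice $\RR^x_s$ is connected by Theorem~\ref{Fconvergence}(1), and $K^x(x;\cdot)$ is a nonnegative smooth solution that integrates to $1$ against $dV_{g^x_s}$, hence cannot vanish at any point.

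For the reverse inclusion, I would first read off from the proof of Lemma~\ref{lem:pre1} the identities $\nu_{x;s}(\iota_x(\RR^x_s))=\nu_{x;s}(\RR_s)=1$, which already give $\nu_{x;s}(\RR_s\setminus\iota_x(\RR^x_s))=0$---the ``in particular'' statement. Now suppose $y\in\RR_s$ with $K_Z(x;y)>0$. Since $K_Z(x;\cdot)$ is smooth on $\RR$ (by the $C^{\infty}_{\mathrm{loc}}$-convergence $K^i\to K_Z$ from Theorem~\ref{thm:smooth1}(b)), there exists a small open $g^Z_s$-geodesic ball $B\subset\RR_s$ about $y$ on which $K_Z(x;\cdot)\geq c>0$. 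Then $\nu_{x;s}(B)\geq c\,|B|_{g^Z_s}>0$, which combined with $\nu_{x;s}(B\setminus\iota_x(\RR^x_s))=0$ forces $B\cap\iota_x(\RR^x_s)$ to be nonempty; hence there is a sequence $y_k=\iota_x(w_k)\to y$ with $w_k\in\RR^x_s$ and, by Proposition~\ref{prop:embed2}(iii), $K^x(x;w_k)\to K_Z(x;y)>0$.

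The main obstacle is then upgrading this to $y\in\iota_x(\RR^x_s)$ itself, which requires the $w_k$ to subconverge to a point of $\RR^x_s$. I expect to achieve this via sharp Gaussian heat-kernel upper bounds for $K^x$ on the smooth Ricci flow spacetime $(\RR^x,g^x)$---inherited from Theorem~\ref{heatkernelupperbdgeneral} through the smooth-convergence framework of Theorem~\ref{Fconvergence}(3)---which force the super-level set $\{K^x(x;\cdot)\geq K_Z(x;y)/2\}$ to be contained in a bounded region of the complete metric space $\XX^x_s$. Extracting a subsequential limit $w_\infty\in\XX^x_s$ gives $\iota_x(w_\infty)=y\in\RR_s$ by continuity; the expected compatibility between the regular-singular decompositions of $\XX^x$ and $Z$ under $\iota_x$ then identifies $w_\infty$ as a point of $\RR^x_s$ rather than $\MS^x$, completing the reverse inclusion.
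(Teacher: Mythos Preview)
Your treatment of the inclusion $\iota_x(\RR^x_s)\subset\{K_Z(x;\cdot)>0\}$ and of the ``in particular'' statement is correct and matches exactly what the paper extracts from the proof of Lemma~\ref{lem:pre1}: Proposition~\ref{prop:embed2}(iii) gives $K_Z(x;\iota_x(w))=K^x(x;w)>0$, and the two equalities $\nu_{x;s}(\RR_s)=\nu_{x;s}(\iota_x(\RR^x_s))=1$ yield the vanishing of $\nu_{x;s}$ on the complement. The paper says nothing further and treats the full set equality as a direct consequence of these facts.

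Your recognition that the reverse inclusion is the ``main obstacle'' is well-placed, but the argument you outline has two genuine gaps. First, in what you call step~4, boundedness of $\{w_k\}$ in the complete metric space $(\XX^x_s,d^x_s)$ does not by itself yield a convergent subsequence: you are implicitly assuming properness (closed bounded sets are compact), which is not part of the axioms of a metric flow and is not established anywhere prior to this point in the paper. Second, your step~6 appeals to an ``expected compatibility between the regular--singular decompositions of $\XX^x$ and $Z$ under $\iota_x$'' to rule out $w_\infty\in\MS^x$; this is exactly the nontrivial assertion that a singular point of $\XX^x$ cannot map under $\iota_x$ to a regular point of $Z$, and it is not proved earlier. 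A justification would have to pass through the biconditional in Theorem~\ref{thm:idenproof} (linking $\CF$-convergence and GH-convergence), a lower curvature-radius bound for sequences approximating a point of $\RR$ (Definition~\ref{def:smoothcv}), and Bamler's regularity criterion \cite[Theorem~9.24]{bamler2023compactness}; that chain is plausible but must be written out, and one has to check it does not rely on the very statement being proved.
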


\begin{lem}\label{lem:pre2}
For any $x,y \in \RR$ and $s < \min\{\t(x), \t(y)\}$, either $\iota_x(\RR^x_s)=\iota_y(\RR^y_s)$ or $\iota_x(\RR^x_s)\cap \iota_y(\RR^y_s)=\emptyset$. Moreover, if the latter happens, we have $\iota_x(\RR^x_t)\cap \iota_y(\RR^y_t)=\emptyset$ for any $t \in [s, \min\{\t(x), \t(y)\})$.
\end{lem}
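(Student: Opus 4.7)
The plan is to separate the proof into two tasks: the ``moreover'' clause, which I will establish via its contrapositive using the reproduction formula, and the main dichotomy at the fixed time $s$, which I will obtain through an open--closed argument on a subset of $\RR^x_s$.

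For the contrapositive of the moreover clause, suppose $z' \in \iota_x(\RR^x_t) \cap \iota_y(\RR^y_t)$ for some $t \in (s, \min\{\t(x),\t(y)\})$. Pick any $w \in \iota_{z'}(\RR^{z'}_s)$, which is nonempty because $\RR^{z'}_s$ is dense in $\XX^{z'}_s$ by Theorem \ref{Fconvergence}(1). By Proposition \ref{prop:embed2}(iii), $K_Z(z';w)>0$, and by smoothness of $K_Z$ together with $K_Z(x;z'), K_Z(y;z') > 0$, the three functions $K_Z(x;\cdot), K_Z(y;\cdot), K_Z(\cdot;w)$ are all strictly positive on some common open neighborhood $N \subset \RR_t$ of $z'$. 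The reproduction formula
\[
K_Z(x; w) = \int_{\RR_t} K_Z(x; u)\, K_Z(u; w)\, dV_{g^Z_t}(u),
\]
and its analog for $y$, then yield $K_Z(x;w), K_Z(y;w)>0$. Hence $\iota_{z'}(\RR^{z'}_s) \subset \iota_x(\RR^x_s) \cap \iota_y(\RR^y_s)$, contradicting the assumption that the latter is empty.

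For the dichotomy, assume $z \in \iota_x(\RR^x_s) \cap \iota_y(\RR^y_s)$; by symmetry it suffices to show $\iota_x(\RR^x_s) \subset \iota_y(\RR^y_s)$. Set $T := \iota_x^{-1}\bigl(\iota_y(\RR^y_s)\bigr) \subset \RR^x_s$. The set $T$ is nonempty (it contains $\iota_x^{-1}(z)$) and open (continuous preimage of an open set, since $\iota_x$ is a local diffeomorphism and $\iota_y(\RR^y_s)$ is open in $\RR_s$), so by connectedness of $\RR^x_s$ (Theorem \ref{Fconvergence}(1)) it is enough to show $T$ is closed. Take $a_n \in T$ with $a_n \to a \in \RR^x_s$ and write $w_n := \iota_x(a_n) = \iota_y(b_n)$ with $b_n \in \RR^y_s$, and $w := \iota_x(a)$. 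Since $\iota_y$ is an isometric embedding into $(Z,d_Z)$ by Theorem \ref{thm:idenproof}, $d^*_y(b_n,b_m) = d_Z(w_n,w_m) \to 0$, and the monotonicity of Proposition \ref{monotonicityW1} makes $\{\nu^y_{b_n;r}\}$ a $W_1$-Cauchy sequence on the complete slice $(\XX^y_r, d^y_r)$ for each $r<s$, with some limit $\mu_r$. Combined with Proposition \ref{prop:embed2}(ii) and the smooth convergence of heat kernels (Theorem \ref{thm:smooth1}(b)) along $w_n \to w$ in $\RR$, one obtains $(\iota_y)_*\mu_r = \nu_{w;r}$. Since $w\in\RR$ is regular, as $r\nearrow s$ the measure $\nu_{w;r}$ concentrates on $\delta_w$ inside a product neighborhood of $w$ furnished by Lemma \ref{lem:smooth1}, and pulling back through the local isometric diffeomorphism $\iota_y$ at a regular preimage yields a unique $b \in \RR^y_s$ with $\iota_y(b)=w$ and $\mu_r = \nu^y_{b;r}$ for $r<s$. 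Thus $\iota_x(a) = w \in \iota_y(\RR^y_s)$ and $a\in T$, completing the closedness.

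The main technical obstacle I expect is the last step: lifting the limiting conjugate heat flow $(\mu_r)_{r<s}$ on $\XX^y$ to a genuine base point $b \in \RR^y_s$. One must preclude the possibility that the $b_n$ ``escape'' to the singular set $\MS^y_s$ of $\XX^y_s$; the crucial input is that $\iota_y(b_n) = w_n$ converges to a regular point $w\in\RR$, allowing the product structure around $w$ to be pulled back through $\iota_y$ into a product structure in $\RR^y$ over a uniform backward time interval.
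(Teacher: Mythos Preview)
Your contrapositive for the ``moreover'' clause is essentially correct, though more roundabout than the paper's. You invoke the reproduction formula on $\RR$ (Lemma~\ref{lem:pre3}) and the harder inclusion of Corollary~\ref{cor:pre1}, both of which appear after this lemma in the paper's ordering. The paper instead observes directly: once the dichotomy gives $\iota_x(\RR^x_t) = \iota_y(\RR^y_t)$, any $w$ in this common set has $\nu_{w;s}$ of full mass on both $\iota_x(\RR^x_s)$ and $\iota_y(\RR^y_s)$ by Proposition~\ref{prop:embed2}(ii), contradicting disjointness.

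The genuine gap is in your closedness step for the dichotomy. You take $b_n \in \RR^y_s$ with $\iota_y(b_n) = w_n \to w \in \RR_s$ and want $b \in \RR^y_s$ with $\iota_y(b)=w$. Your concluding move---``pulling back through the local isometric diffeomorphism $\iota_y$ at a regular preimage''---presupposes exactly the existence of such a preimage. The embedding $\iota_y : \RR^y \to \RR$ is a Riemannian isometry preserving $\partial_\t$, but nothing established so far guarantees its image covers a full product neighborhood of $w$; equivalently, there is no a priori lower bound on the curvature radius of $b_n$ \emph{within the spacetime $\RR^y$}, only within $\RR$. Nor have you shown the $b_n$ are Cauchy in $d^y_s$ (you only get Cauchy in $d^*_y$, and Proposition~\ref{distancefunction1}(1) gives the inequality in the wrong direction). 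Nothing written rules out $b_n$ escaping to $\MS^y_s$.

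The paper avoids working in $\XX^y$ altogether: it stays with the smooth nonnegative function $K_Z(x;\cdot)$ on $\RR$. If $w \in \iota_y(\RR^y_s)$ lies in $\partial\iota_x(\RR^x_s)$ then $K_Z(x;w)=0$, and since $K_Z(x;\cdot)$ solves the conjugate heat equation, the strong maximum principle forces $K_Z(x;\cdot)\equiv 0$ on a product neighborhood $B_{g^Z_s}(w,\delta)\times[s,s+\delta]$, in particular on the slice $\{t=s\}$. But $B_{g^Z_s}(w,\delta)$ meets $\iota_x(\RR^x_s)$ where $K_Z(x;\cdot)>0$---contradiction. One analytic step replaces your entire open--closed program.
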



\begin{proof}
If $\iota_x(\RR^x_s)\ne \iota_y(\RR^y_s)$ and $\iota_x(\RR^x_s)\cap \iota_y(\RR^y_s)\ne \emptyset$, then we may assume, without loss of generality, that there exists $w \in \iota_y(\RR^y_s)$ which lies in the boundary of $\iota_x(\RR^x_s)$. By Corollary \ref{cor:pre1}, $K_Z(x;w)=0$. Since $K_Z(x;\cdot)$ satisfies the conjugate heat equation, it follows from the strong maximum principle that on $B_{g^Z_s}(w, \delta) \times (s, s+\delta] \subset \RR$, $K_Z(x;\cdot)$ vanishes. However, there exists a point $w' \in B_{g^Z_s}(w, \delta) \times (s, s+\delta] \bigcap \iota_x(\RR^x_{(s,s+\delta]})$ by our assumption. This contradicts Corollary \ref{cor:pre1}.

If $\iota_x(\RR^x_s)\cap \iota_y(\RR^y_s)=\emptyset$ and $\iota_x(\RR^x_t)=\iota_y(\RR^y_t)$ for some $t \in [s, \min\{\t(x), \t(y)\})$, then, for any $w \in \iota_x(\RR^x_t)$, it follows from Proposition \ref{prop:embed2} (ii) that $\nu_{w;s}(\iota_x( \RR^x_s))=1$ and $\nu_{w;s}(\iota_y( \RR^y_s))=1$, which yields a contradiction.

Therefore, the proof is complete.
\end{proof}

\begin{cor}\label{cor:pre2}
For any $x \in \RR$ and $s<\t(x)$, $\iota_x( \RR^x_s)$ is a connected component of $\RR_s$.
\end{cor}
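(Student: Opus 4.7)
The plan is to show that $\iota_x(\RR^x_s)$ is a non-empty, connected, open, and closed subset of $\RR_s$; since $\RR_s$ is a smooth manifold, hence locally connected, this forces $\iota_x(\RR^x_s)$ to coincide with a connected component of $\RR_s$. Non-emptiness is immediate from Theorem \ref{Fconvergence} (1), which guarantees $\RR^x_s\neq\emptyset$, and the connectedness of the image then follows from the connectedness of $\RR^x_s$ (Theorem \ref{Fconvergence} (1)) together with the continuity of $\iota_x$. For openness, I would invoke Proposition \ref{prop:embed2} (i): the identities $(\iota_x)_*(\partial_{\t^x})=\partial_\t$ and $(\iota_x)^*g^Z=g^x$ imply that $\iota_x$ restricts on each time slice to a smooth local isometry between $n$-dimensional Riemannian manifolds, so $\iota_x(\RR^x_s)$ is open in $\RR_s$.

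The main step, where the real work lies, is the closedness of $\iota_x(\RR^x_s)$ in $\RR_s$. I would argue by contradiction, closely mirroring the strong maximum principle argument in the proof of Lemma \ref{lem:pre2}. Suppose there exists $w\in\RR_s\cap\overline{\iota_x(\RR^x_s)}$ with $w\notin\iota_x(\RR^x_s)$. By Corollary \ref{cor:pre1}, $K_Z(x;w)=0$. Since $K_Z(x;\cdot)$ is a nonnegative smooth solution of the conjugate heat equation on the Ricci flow spacetime $\RR_{(-T,\t(x))}$, the strong maximum principle yields $\delta>0$ such that $K_Z(x;\cdot)\equiv 0$ on the product neighborhood $B_{g^Z_s}(w,\delta)\times(s,s+\delta]\subset\RR$, where the product structure arises from the flow of $\partial_\t$.

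To derive a contradiction, I would choose $w_n\in\iota_x(\RR^x_s)$ with $w_n\to w$ in $\RR_s$, so that $w_n\in B_{g^Z_s}(w,\delta)$ for large $n$. Writing $w_n=\iota_x(\tilde w_n)$ with $\tilde w_n\in\RR^x_s$ and using that $\iota_x$ preserves $\partial_\t$ (Proposition \ref{prop:embed2} (i)), the forward flow of $w_n$ along $\partial_\t$ to any time $s+\delta'$ with $\delta'\in(0,\delta]$ satisfies
\[
w_n^{s+\delta'}=\iota_x\!\left(\tilde w_n^{s+\delta'}\right)\in\iota_x(\RR^x_{s+\delta'}),
\]
so Corollary \ref{cor:pre1} gives $K_Z(x;w_n^{s+\delta'})>0$. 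This contradicts the vanishing of $K_Z(x;\cdot)$ on the product neighborhood above, forcing $w\in\iota_x(\RR^x_s)$ and proving closedness. The principal obstacle is the careful bookkeeping in the strong maximum principle step---in particular, verifying that the product neighborhood furnished by the Ricci flow spacetime structure of $\RR$ around $w$ lies inside $\RR$ and that the flowed points $w_n^{s+\delta'}$ land where claimed---but this is essentially the same mechanism already carried out in the proof of Lemma \ref{lem:pre2}, so no new analytic input is required.
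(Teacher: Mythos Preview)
Your argument is correct and provides a direct alternative to the paper's proof. The paper instead first argues (as in the proof of Lemma~\ref{lem:smooth1}) that every $w\in\RR_s$ lies in some $\iota_y(\RR^y_s)$ for some $y\in\RR_{(s,0]}$, so that $\RR_s=\bigcup_y \iota_y(\RR^y_s)$; it then invokes the dichotomy of Lemma~\ref{lem:pre2} to see that these sets are pairwise disjoint, and since each is open and connected, each is a connected component. Your route bypasses the covering step entirely by proving closedness directly, re-running the strong maximum principle argument from Lemma~\ref{lem:pre2}; this is shorter and more self-contained. The paper's route, on the other hand, yields as a byproduct the structural statement that the $\iota_y(\RR^y_s)$ partition $\RR_s$, which is reused later (e.g.\ in Proposition~\ref{prop:connectnumber}).

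One minor imprecision: you assert that $\tilde w_n^{s+\delta'}\in\RR^x_{s+\delta'}$ for \emph{all} $\delta'\in(0,\delta]$, but a priori the forward $\partial_{\t^x}$-flow of $\tilde w_n$ in $\RR^x$ is only guaranteed to exist for $\delta'\in(0,\delta'_n]$ with some $\delta'_n>0$ depending on $n$ (since $\RR^x$ is open and $\tilde w_n\in\RR^x_s$ with $s<\t(x)$). This is harmless, as a single small $\delta'_n\le\delta$ suffices to place $w_n^{s+\delta'_n}$ inside the vanishing region and obtain the contradiction.
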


\begin{proof}
Arguing as in the proof of Lemma \ref{lem:smooth1}, one can prove that any $w \in \RR_s$ is contained in $\iota_y(\RR^y_s)$ for some $y \in \RR$ with $\t(y)>s$. Thus, we conclude that
	\begin{align*}
\RR_s=\bigcup_{y \in \RR_{(s,0]}} \iota_y(\RR^y_s).
	\end{align*}
Consequently, the conclusion follows from Lemma \ref{lem:pre2}.
\end{proof}

\begin{lem}\label{lem:pre3}
For any $x \in \RR$, $\nu_{x;s}$ satisfies the reproduction formula. That is, for any $s<t<\t(x)$,
	\begin{align*}
		\nu_{x;s}=\int_{\RR_t} \nu_{y;s} \, \mathrm{d}\nu_{x;t}(y).
	\end{align*}
	\end{lem}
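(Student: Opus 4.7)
The plan is to deduce the reproduction formula on $\RR$ from the corresponding metric flow axiom on the associated metric flow $\XX^x$, transported via the isometric embedding $\iota_x$ of Proposition \ref{prop:embed2}. Since $\XX^x$ is a metric flow over $[-T, \t(x)]$ in the sense of Definition \ref{defnmetricflow}, axiom (7) applied at the base point $x \in \XX^x_{\t(x)}$ yields
\begin{align*}
	\nu^{\XX^x}_{x;s} = \int_{\XX^x_t} \nu^{\XX^x}_{y;s} \, \mathrm{d}\nu^{\XX^x}_{x;t}(y).
\end{align*}
Because $\nu^{\XX^x}_{x;t}(\MS^x_t) = 0$ by Theorem \ref{Fconvergence} (1), the integration can be restricted to $y \in \RR^x_t$, where the conjugate heat kernel measure admits the smooth density $K^x(y;\cdot)$.

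The next step is to push this identity forward under $\iota_x$ to land on $\RR$. By Proposition \ref{prop:embed2} (i), $\iota_x$ is a time-preserving isometry of Ricci flow spacetimes, so $(\iota_x)_* \mathrm{d}V_{g^x_r} = \mathrm{d}V_{g^Z_r}$ on $\iota_x(\RR^x_r)$ for every $r < \t(x)$. Combined with the kernel identity $K_Z(x;\iota_x(\cdot)) = K^x(x;\cdot)$ from Proposition \ref{prop:embed2} (iii) and the fact that $\nu_{x;r}$ is concentrated on $\iota_x(\RR^x_r)$ by Corollary \ref{cor:pre1}, this yields $(\iota_x)_* \nu^{\XX^x}_{x;r} = \nu_{x;r}$ for $r \in \{s,t\}$.

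The crucial intermediate step is the identification $(\iota_x)_* \nu^{\XX^x}_{y';s} = \nu_{\iota_x(y');s}$ for every $y' \in \RR^x_t$. Setting $y = \iota_x(y')$, the left-hand side is supported on $\iota_x(\RR^x_s)$ with density $K^x(y';\iota_x^{-1}(\cdot)) = K_Z(y;\cdot)$ by Proposition \ref{prop:embed2} (ii); meanwhile $\nu_{y;s}$ has density $K_Z(y;\cdot)$ on $\RR_s$ and is supported on $\iota_y(\RR^y_s)$ by Corollary \ref{cor:pre1}. Positivity of the heat kernel on the regular part forces $\iota_x(\RR^x_s) \subset \iota_y(\RR^y_s)$, and the dichotomy of Lemma \ref{lem:pre2} upgrades this inclusion to the equality $\iota_x(\RR^x_s) = \iota_y(\RR^y_s)$, from which the two measures are seen to coincide. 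Substituting the three identifications into the pushed-forward metric flow identity then gives, for any Borel $A \subset \RR_s$,
\begin{align*}
	\nu_{x;s}(A) = \int_{\RR_t} \nu_{y;s}(A) \, \mathrm{d}\nu_{x;t}(y),
\end{align*}
which is precisely the desired reproduction formula.

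The main obstacle in this scheme is the identification $\iota_x(\RR^x_s) = \iota_y(\RR^y_s)$ for $y \in \iota_x(\RR^x_t)$, since a priori these are two components of $\RR_s$ arising from entirely different $\F$-limits based at different spacetime points, and there is no a priori geometric map between $\RR^x$ and $\RR^y$. What rescues the argument is the combination of the dichotomy in Lemma \ref{lem:pre2} with the positivity of the smooth heat kernel on each regular component, which together turn a mere nonemptiness of intersection into full equality.
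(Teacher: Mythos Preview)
Your proof is correct and follows essentially the same line as the paper's: both use the reproduction formula in the metric flow $\XX^x$ and push it forward via $\iota_x$, relying on Proposition~\ref{prop:embed2} and Corollary~\ref{cor:pre1}. The one difference is that what you call ``the main obstacle''---establishing $\iota_x(\RR^x_s) = \iota_y(\RR^y_s)$ via the dichotomy of Lemma~\ref{lem:pre2}---is avoided entirely in the paper: from Proposition~\ref{prop:embed2}~(ii) alone one gets $\nu_{y;s}(\iota_x(\RR^x_s)) = \int_{\RR^x_s} K^x(\iota_x^{-1}(y);\cdot)\,\mathrm{d}V_{g^x_s} = 1$ for any $y \in \iota_x(\RR^x_t)$, which is all that is needed to match the two sides on $\RR_s \setminus \iota_x(\RR^x_s)$. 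Your route recovers the same fact (since $\iota_x(\RR^x_s) = \iota_y(\RR^y_s)$ certainly implies $\nu_{y;s}(\iota_x(\RR^x_s))=1$), but the appeal to $\iota_y$ and Lemma~\ref{lem:pre2} is an unnecessary detour.
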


\begin{proof}
Since $\nu_{x;s}$ satisfies the reproduction formula in $\XX^x$, we conclude that for any Borel set $S \subset \RR_s^x$,
	\begin{equation}\label{eq:repr1}
		\nu_{x;s}(S)=\int_{\iota_x(\RR^x_t)} \nu_{y;s}(\iota_x(S)) \, \mathrm{d}\nu_{x;t}(y).
	\end{equation}

On the other hand, for any $y \in \iota_x(\RR^x_t)$, it follows from Proposition \ref{prop:embed2} (ii) that $\nu_{y;s}(\iota_x( \RR^x_s))=1$. Therefore, the conclusion follows from \eqref{eq:repr1} and Corollary \ref{cor:pre1}.
\end{proof}

Next, we define the conjugate heat kernel measure at any $z \in Z \setminus \RR$. Let $\XX^z$ be a metric flow associated with $z$. For any $s \le \t(z)$, the conjugate heat kernel measure $\nu_{z;s}$ on $\XX^z_s$, when restricted to $\RR^z_s$, can be regarded via the map $\iota_z$ as a probability measure on $\RR_s$. More precisely,

\begin{defn}[Conjugate heat kernel measures on $Z \setminus \RR$] \label{def:chks}
For any $z \in Z \setminus \RR$, we define the \textbf{conjugate heat kernel measure} $\nu_{z;s}$ based at $z$, to be the Borel measure on $(\RR_s, g^Z_s)$ given by
\begin{align*}
\mathrm{d}\nu_{z;s}=K_Z(z; \cdot) \, \mathrm{d}V_{g^Z_s},
\end{align*}		
for any $s<\t(z)$, and set $\nu_{z;\t(z)}=\delta_z$. Here, $K_Z(z; \cdot):=K^z(z;\iota_z^{-1}(\cdot))$ on $\iota_z(\RR^z_s)$ and zero on $\RR_s \setminus \iota_z(\RR^z_s)$.	
\end{defn}

We will prove in Lemma \ref{lem:well-define} that $\nu_{z;s}$ is independent of the choice of the associated $\XX^z$.

It is clear that the conjugate heat kernel measure $\nu_{z;s}$ is a probability measure on $(\RR_s, g^Z_s)$ and
\begin{align*}
\nu_{z;s}(\iota_z(\RR^z_s))=1.
\end{align*}		

 Moreover, it satisfies the reproduction formula, as in Lemma \ref{lem:pre3}: for any $s<t<\t(z)$,
	\begin{equation}\label{eq:generalrep}
		\nu_{z;s}=\int_{\RR_t} \nu_{y;s} \, \mathrm{d}\nu_{z;t}(y).
	\end{equation}

In addition, by the same proof as that of Lemma \ref{lem:pre2}, we have

\begin{lem}\label{lem:pre4}
For any $x,y \in Z$ and $s < \min\{\t(x), \t(y)\}$, either $\iota_x(\RR^x_s)=\iota_y(\RR^y_s)$ or $\iota_x(\RR^x_s)\cap \iota_y(\RR^y_s)=\emptyset$. Moreover, if the latter happens, we have $\iota_x(\RR^x_t)\cap \iota_y(\RR^y_t)=\emptyset$ for any $t \in [s, \min\{\t(x), \t(y)\})$. In particular, $\iota_x(\RR^x_s)$ is a connected component of $\RR_s$ for any $x \in Z$.
	\end{lem}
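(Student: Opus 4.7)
The plan is to deduce Lemma \ref{lem:pre4} from the stronger statement that, for every $x \in Z$, the set $\iota_x(\RR^x_s)$ is already a single connected component of $\RR_s$. Once this is established, the dichotomy (equal or disjoint) is automatic, and the ``moreover'' clause reduces to a short application of the reproduction formula \eqref{eq:generalrep}. The key idea is to use that reproduction formula to realize $\iota_x(\RR^x_s)$ as a union of sets already known to be connected components by Corollary \ref{cor:pre2}, and then exploit the connectedness of $\RR^x_s$ to force only one component to appear.

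Concretely, I would fix any $t \in (s, \t(x))$ and establish the set-theoretic identity
\begin{align*}
\iota_x(\RR^x_s) = \bigcup_{w \in \iota_x(\RR^x_t)} \iota_w(\RR^w_s).
\end{align*}
The inclusion ``$\subset$'' follows by writing $K_Z(x;q) = \int_{\RR_t} K_Z(w;q)\, K_Z(x;w) \, \mathrm{d}V_{g^Z_t}(w)$ from Lemma \ref{lem:pre3}/\eqref{eq:generalrep} and observing that positivity of the integral forces the integrand to be positive at some $w$, which means $w \in \iota_x(\RR^x_t)$ and $q \in \iota_w(\RR^w_s)$. The reverse inclusion ``$\supset$'' uses that, for $w \in \iota_x(\RR^x_t) \subset \RR$, the heat kernel $K_Z(w;\cdot)$ is smooth and positive on $\iota_w(\RR^w_s)$, so the integrand is strictly positive on a set of positive $\mathrm{d}V_{g^Z_t}$-measure near $w$, giving $K_Z(x;q) > 0$.

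Next, since every $w$ in the union lies in $\RR$, Corollary \ref{cor:pre2} tells us each $\iota_w(\RR^w_s)$ is a connected component of $\RR_s$, and by Lemma \ref{lem:pre2} any two such components are either equal or disjoint. Hence $\iota_x(\RR^x_s)$ is a disjoint union of connected components of $\RR_s$. On the other hand, $\RR^x_s$ is connected by Theorem \ref{Fconvergence} (1) and $\iota_x$ is a smooth Ricci flow spacetime embedding by Proposition \ref{prop:embed2}, so $\iota_x(\RR^x_s)$ is connected, and only one component can appear. The dichotomy in the lemma then follows immediately. For the ``moreover'' clause, assume $\iota_x(\RR^x_s) \cap \iota_y(\RR^y_s) = \emptyset$ but $\iota_x(\RR^x_t) \cap \iota_y(\RR^y_t) \neq \emptyset$ for some $t \in (s, \min\{\t(x), \t(y)\})$; by the just-proved dichotomy at time $t$ these must coincide, and applying the identity above at time $t$ to both $x$ and $y$ forces $\iota_w(\RR^w_s) \subset \iota_x(\RR^x_s) \cap \iota_y(\RR^y_s) = \emptyset$ for any $w$ in the common component, contradicting the nonemptiness of $\iota_w(\RR^w_s)$.

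The main obstacle I anticipate is the rigorous verification of the ``$\supset$'' inclusion in the reproduction identity: one has to ensure that a single $w$ with positive integrand contributes positively to the integral, which requires the local smoothness and positivity of the heat kernel on the regular set (guaranteed by Proposition \ref{prop:embed2} and the Ricci flow spacetime structure on $\RR$), together with the fact that $\iota_x(\RR^x_t)$ carries positive $\mathrm{d}V_{g^Z_t}$-measure near such $w$. Once this is in place, everything else is a direct combination of Corollary \ref{cor:pre2}, Lemma \ref{lem:pre2}, and the connectedness of $\RR^x_s$ from Theorem \ref{Fconvergence} (1).
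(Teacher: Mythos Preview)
Your proposal is correct and takes a genuinely different route from the paper. The paper simply says ``by the same proof of Lemma~\ref{lem:pre2}'', meaning it reruns the strong maximum principle argument for the conjugate heat equation directly with general $x,y\in Z$; this works because $K_Z(x;\cdot)$ still solves the conjugate heat equation on $\RR$ even when $x$ is singular, so the boundary-point contradiction carries over verbatim. You instead bootstrap from the regular case already in hand: the reproduction formula expresses $\iota_x(\RR^x_s)$ as a union of sets $\iota_w(\RR^w_s)$ with $w\in\RR$, each of which is a connected component of $\RR_s$ by Corollary~\ref{cor:pre2}; then the connectedness of $\RR^x_s$ from Theorem~\ref{Fconvergence}(1) collapses the union to a single component. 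Your ``moreover'' argument is essentially the paper's.

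Your route avoids re-invoking the parabolic strong maximum principle at singular base points, at the small cost of the auxiliary set identity. One point you are using implicitly and might state explicitly is the characterization $\iota_x(\RR^x_s)=\{q\in\RR_s:K_Z(x;q)>0\}$ for general $x\in Z$: for $x\in\RR$ this is Corollary~\ref{cor:pre1}, while for $x\in Z\setminus\RR$ it follows from Definition~\ref{def:chks} (which gives $K_Z(x;\cdot)=0$ off $\iota_x(\RR^x_s)$) together with the positivity of $K^x(x;\cdot)$ on the connected regular slice $\RR^x_s$. With that in hand, both inclusions in your identity are straightforward, and the rest is as you describe.
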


Conversely, we have

\begin{prop}\label{prop:pre5}
For any $x,y \in Z$, if $\max\{\t(x), \t(y)\}-d^2_Z(x, y) > -(1-2\sigma)T$, then $\iota_x(\RR^x_t)=\iota_y(\RR^y_t)$ for any $t \in [-(1-2\sigma)T, \max\{\t(x), \t(y)\}-d^2_Z(x, y))$. Moreover, we have
\begin{align} \label{eq:lowlimit}
\lim_{t \nearrow \max\{\t(x), \t(y)\}-d^2_Z(x, y)} d_{W_1}^{\XX^x_t} \lc \nu_{x;t}, (\iota^{-1}_x)_* (\nu_{y;t}) \rc \le d_Z(x, y),
\end{align}	
where we regard $(\iota^{-1}_x)_* (\nu_{y;t})$ as a probability measure on $\XX^x_t$ by extension from $\RR^x_t$.
\end{prop}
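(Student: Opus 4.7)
Without loss of generality, assume $t_1 := \t(x) \ge \t(y)$; the 2-H\"older property of $\t$ forces $t_1 - r^2 \le \t(y)$ where $r := d_Z(x,y)$, so $\RR^x_t$ and $\RR^y_t$ both make sense for $t \in [-(1-2\sigma)T, t_1-r^2)$. The plan is first to set up a joint correspondence and extract the key $W_1$ control at the correspondence level, then identify the two regular parts via a compactness argument, and finally descend the control to an intrinsic estimate on $\XX^x_t$.

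First I would choose $x_i^*, y_i^* \in M_i \times \III$ converging to $x, y$ in the Gromov--Hausdorff sense and, after a subsequence, invoke Theorem \ref{thm:601} to build a single correspondence $\CF$ in which
\[(\XX^i, (\nu_{x_i^*;t})_t) \xrightarrow[i \to \infty]{\IF, \CF, J} (\XX^x, (\nu_{x;t})_t)\]
and the analogous convergence for $y$ hold, uniformly over $J=\{-(1-\sigma)T\}$. By Lemma \ref{lem:key1}, $r_i := d_i^*(x_i^*, y_i^*) \to r$. Definition \ref{defnd*distance} combined with Proposition \ref{monotonicityW1} yields
\[d_{W_1}^{\XX^i_t}(\nu_{x_i^*;t}, \nu_{y_i^*;t}) \le \ep_0 r_i \quad \text{for every } t \in [-(1-\sigma)T, t_1-r_i^2],\]
and passing to limits through $\CF$ (cf.\ Definition \ref{convergenceheatflow2}) produces
\[d_{W_1}^{A_t}((\varphi^x_t)_*\nu_{x;t}, (\varphi^y_t)_*\nu_{y;t}) \le \ep_0 r\]
for all but countably many $t \in [-(1-\sigma)T, t_1-r^2)$.

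To prove the identification $\iota_x(\RR^x_t)=\iota_y(\RR^y_t)$ for $t \in [-(1-2\sigma)T, t_1-r^2)$, I would argue by compactness, without direct recourse to the above $W_1$ bound. Given $\tilde u \in \RR^y_t$ with image $u := \iota_y(\tilde u)$, Theorem \ref{Fconvergence}(3) produces $u_i^* \in M_i \times \{t\}$ with $u_i^* \xrightarrow[i \to \infty]{\CF, J} \tilde u$ in $\XX^y$ and a uniform lower bound $r_{\Rm}(u_i^*) \ge r_0 > 0$. Lemma \ref{lem:key1} controls
\[d_i^*(u_i^*, x_i^*) \le d_i^*(u_i^*, y_i^*) + d_i^*(y_i^*, x_i^*) \longrightarrow d_y^*(\tilde u, y)+r < \infty,\]
so Theorem \ref{representlimit}, combined with the uniform curvature-radius bound, extracts a further subsequence along which $u_i^* \xrightarrow[i \to \infty]{\CF, J} \tilde u'' \in \RR^x_t$. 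Theorem \ref{thm:idenproof} then forces $\iota_x(\tilde u'') = u$, proving $u \in \iota_x(\RR^x_t)$; the reverse inclusion follows by interchanging the roles of $x$ and $y$. Lemma \ref{lem:pre4} finally upgrades the resulting non-trivial overlap of connected components of $\RR_t$ to equality.

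With the identification in hand, $\psi_t := \iota_x^{-1} \circ \iota_y$ is an isometry between $(\RR^y_t, g^y_t)$ and $(\RR^x_t, g^x_t)$ by Proposition \ref{prop:embed2}(i), and $(\iota_x^{-1})_*\nu_{y;t}$ extends to a probability measure on $\XX^x_t$ supported on $\RR^x_t$. To descend the correspondence bound to $\XX^x_t$, I would use the smooth convergence of Theorem \ref{Fconvergence}(3) and Theorem \ref{thm:smooth1} to show that $\varphi^x_t$ and $\varphi^y_t \circ \psi_t^{-1}$ become asymptotically isometric on neighborhoods of the supports of $\nu_{x;t}$ and $(\iota_x^{-1})_*\nu_{y;t}$, so that an almost-optimal coupling realizing the $A_t$-bound pulls back to a coupling on $\XX^x_t \times \XX^x_t$ of comparable cost. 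The $\limsup$ as $t \nearrow t_1-r^2$ then follows from the monotonicity of $d_{W_1}^{\XX^x_t}$ in Proposition \ref{monotonicityW1}. The hard part will be this last transfer step: the metric $d^A_t$ on the correspondence space need not coincide with $d^x_t$ off the regular parts, so controlling the pullback of couplings requires a careful combination of the smooth convergence near the supports with Proposition \ref{prop:embed2} to absorb any discrepancy into a vanishing error as $i \to \infty$.
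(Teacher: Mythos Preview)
Your compactness argument in the second step has a genuine gap. Theorem~\ref{representlimit} goes the wrong direction: it manufactures a sequence converging to a \emph{given} limit point, not a limit point for a given sequence. You know $u_i^* \xrightarrow{\CF,J} \tilde u$ in $\XX^y$, and hence by Theorem~\ref{thm:idenproof} that $u_i^* \to u=\iota_y(\tilde u)$ in Gromov--Hausdorff. To then place $u_i^*$'s limit inside $\RR^x_t$ via the reverse direction of Theorem~\ref{thm:idenproof} for $\XX^x$, you would need to already know $u\in\iota_x(\widetilde{\XX^x_\III})$---exactly the statement you are proving. The curvature-radius bound only puts $u$ in $\RR_t$, which can have several connected components (Lemma~\ref{lem:pre4}), and you have no a~priori lower bound on $K^i(x_i^*;u_i^*)$ that would force $K_Z(x;u)>0$ and hence $u\in\iota_x(\RR^x_t)$ via Corollary~\ref{cor:pre1}. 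Setting up a single correspondence carrying both $\F$-limits does not by itself supply the missing compactness.

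The paper sidesteps this circularity by working with measures rather than points, and by taking only the $\F$-limit based at $x$. The key step is a \emph{tightness} argument for $(\varphi^i_{s'})_*\nu_{y_i^*;s'}$ in $A_{s'}$, at a time $s'<s<t_0$ where the convergence is uniform. From the $W_1$ bound the $H_n$-centers of $x_i^*$ and $y_i^*$ at time $s$ are uniformly close; property~(6) of Definition~\ref{defnmetricflow} then propagates the concentration of $\nu_{x_i^*;s'}$ on a compact $K_\ep\subset\XX^x_{s'}$ to every point in a ball around that center, and the reproduction formula pushes this concentration to $\nu_{y_i^*;s'}$. The resulting weak limit $\mu^\infty$ is supported on the isometric image $\varphi^\infty_{s'}(\XX^x_{s'})$, so your ``hard part'' evaporates: the $A_{s'}$-Wasserstein distance equals the $\XX^x_{s'}$-Wasserstein distance automatically. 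Running $\mu^\infty$ as a conjugate heat flow on $\XX^x$ and invoking the smooth convergence (\cite[Theorem~9.21(f)]{bamler2023compactness}) identifies $(\iota_x)_*\mu_t$ with $\nu_{y;t}$, which yields the component equality and \eqref{eq:lowlimit} simultaneously.
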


\begin{proof}
We set $r=d_Z(x, y)$ and $t_0=\max\{\t(x), \t(y)\}-r^2$. Then we choose $x^*_i, y^*_i \in M_i \times \III$ so that $x^*_i \to x$ and $y^*_i \to y$ in the Gromov--Hausdorff sense. In particular,
\begin{align*}
\lim_{i \to \infty} d_i^*(x_i^*,y_i^*)=r.
\end{align*}		
Then, for sufficiently large $i$, equation \eqref{eq:dstar-equality1} gives
\begin{align}\label{eq:pre5ax2}
d_{W_1}^{t_i}(\nu_{x_i^*;t_i},\nu_{y_i^*;t_i})\leq r_i,
\end{align}
where $r_i:=d_i^*(x_i^*,y_i^*)$ and $t_i=\max\{\t(x_i^*),\t(y_i^*)\}-r_i^2$. The condition $t_0>-(1-2\sigma)T$ ensures $t_i>-T$ for all sufficiently large $i$.

We take $-(1-2\sigma)T < s' <s <t_0$ so that $\XX^x$ is continuous at time $s'$. Then, by \eqref{eq:pre5ax2},
\begin{align}\label{eq:pre5a2}
d_{W_1}^{s} (\nu_{x_i^*;s},\nu_{y_i^*;s}) \le 2r
\end{align}	
for large $i$. Next, we set $a_i^*, b_i^* \in \XX^i_{s}$ to be $H_n$-centers of $x_i^*$ and $y_i^*$, respectively. Then, by \eqref{eq:pre5a2}, we have
\begin{align*}
d_{g_i(s)}(a_i^*,b_i^*) \le D_0
\end{align*}	
for a constant $D_0$. Thus, for a sufficiently large constant $D_1>0$ to be determined later, we have
\begin{align*}
\quad \nu_{x_i^*;s}\lc B_{g_i(s)}(a_i^*,D_1) \rc \ge \frac 1 2.
\end{align*}	

We fix a correspondence $\CF$ as in Definition \ref{defncorrespondence}. By our assumption, the $\F$-convergence is uniform at $s'$. We choose a compact set $K_{\ep} \subset \XX^x_{s'}$ such that $\nu_{x;s'}(K_{\ep}) \ge 1-\ep$. Then, we define
\begin{align*}
K_{i,\ep}:=(\varphi_{s'}^i)^{-1} \lc B_{A_{s'}}(\varphi_{s'}^\infty(K_{\ep}), \ep) \rc
\end{align*}	
and hence for sufficiently large $i$,
\begin{align}\label{eq:pre5a3-prime}
\nu_{x_i^*;s'}(K_{i,\ep}) \ge 1-2\ep.
\end{align}	
By the reproduction formula and Definition \ref{defnmetricflow} (6), we have for any $w_i^* \in B_{g_i(s)}(a_i^*,D_1)$,
\begin{align}
\nu_{x_i^*;s'}(K_{i,\ep})=&\int_{\XX^i_{s}} \nu_{z^*;s'}(K_{i,\ep}) \, \mathrm{d}\nu_{x_i^*;s}(z^*) \notag \\
\le {}& \nu_{x_i^*;s} \lc \XX^i_s \setminus B_{g_i(s)}(w_i^*,2D_1) \rc \notag\\
&+\Phi \lc \Phi^{-1}(\nu_{w_i^*;s'}(K_{i,\ep}))+2(s-s')^{-\frac12}D_1 \rc
\nu_{x_i^*;s} \lc B_{g_i(s)}(w_i^*,2D_1) \rc. \label{eq:pre5a4}
\end{align}	
Since $\nu_{x_i^*;s} \lc B_{g_i(s)}(w_i^*,2D_1) \rc \ge \nu_{x_i^*;s} \lc B_{g_i(s)}(a_i^*,D_1) \rc \ge 1/2$, we obtain from \eqref{eq:pre5a3-prime} and \eqref{eq:pre5a4} that
\begin{align*}
\Phi \lc \Phi^{-1}(\nu_{w_i^*;s'}(K_{i,\ep}))+2(s-s')^{-\frac 1 2} D_1  \rc \ge 1-4\ep.
\end{align*}	
Thus, for sufficiently large $i$,
\begin{align*}
\nu_{w_i^*;s'}(K_{i,\ep}) \ge 1-\Psi(\ep \vert s-s', D_1),
\end{align*}	
where $\Psi(\ep \vert s-s', D_1)$ denotes a function that goes to $0$ as $\ep \to 0$, while the other arguments are fixed. By the reproduction formula again, we have
\begin{align*}
\nu_{y_i^*;s'}(K_{i,\ep})=\int_{\XX^i_{s}} \nu_{z^*;s'}(K_{i,\ep}) \, \mathrm{d}\nu_{y_i^*;s}(z^*) \ge \lc 1-\Psi(\ep \vert s-s', D_1) \rc \nu_{y_i^*;s} \lc B_{g_i(s)}(a_i^*,D_1) \rc.
\end{align*}	
Thus, we can first choose a sufficiently large $D_1$ so that $\nu_{y_i^*;s} \lc B_{g_i(s)}(a_i^*,D_1) \rc$ is almost $1$ and then choose $\ep$ to be small. In other words, we have shown that the sequence $(\varphi_{s'}^i)_* \nu_{y_i^*;s'}$ is tight. After passing to a subsequence, this sequence converges weakly to a probability measure $\mu^{\infty}$ on $\XX^x_{s'}$. Moreover, by the definition of $K_{i, \ep}$, we conclude that $\mathrm{supp}\,\mu^{\infty} \subset \varphi_{s'}^\infty(\XX^x_{s'})$. By Proposition \ref{takelimitVarW1}, $(\varphi_{s'}^i)_* \nu_{y_i^*;s'}$ converges to $\mu^{\infty}$ in the $d_{W_1}$-sense.

Now, we regard $\mu^{\infty}$ as a probability measure on $\XX^x_{s'}$ and let $\mu_{t}$ be the conjugate heat flow on $\XX^x$ for $t \le s'$ with $\mu_{s'}=\mu^{\infty}$. By \cite[Theorem 6.13]{bamler2023compactness}, we conclude that
\begin{align*}
(\nu_{y_i^*;t})_{t \in [-(1-2\sigma)T, s']}\xrightarrow[i\to\infty]{\quad \CF,J\quad} (\mu_t)_{t \in [-(1-2\sigma)T, s']}.
\end{align*}	
Therefore, it follows from \cite[Theorem 9.21(f)]{bamler2023compactness} and our construction of $\iota_x$ that 
\begin{align*}
(\iota_x)_* (\mu_t \vert_{\RR^x_t})=K_Z(y; \cdot)  \, \mathrm{d}V_{g^Z_t}.
\end{align*}	
for any $t \in [-(1-2\sigma)T, s']$.

Since $s'$ can be chosen as close to $t_0$ as desired, we conclude that $\iota_x(\RR^x_t)=\iota_y(\RR^y_t)$ for any $t \in [-(1-2\sigma)T, t_0)$. 

Moreover, by the $d_{W_1}$-convergence at $s'$, we have
\begin{align*}
\lim_{i \to \infty} d_{W_1}^{s'} (\nu_{x_i^*;s'},\nu_{y_i^*;s'})=d_{W_1}^{\XX^x_{s'}} (\nu_{x;s'}, \mu^{\infty})=d_{W_1}^{\XX^x_{s'}}(\nu_{x;s'}, (\iota^{-1}_x)_* (\nu_{y;s'})).
\end{align*}	
By monotonicity, the asserted estimate follows.
\end{proof}

The proof of Proposition \ref{prop:pre5} also yields the following result:

\begin{lem}\label{lem:pre5}
Let $x,y \in Z$, and suppose that $x_i^*, y_i^* \in M_i \times \III$ converge to $x$ and $y$, respectively, in the Gromov--Hausdorff sense. If there exists $t_0 \in (-(1-2\sigma)T, 0)$ and a constant $D$ such that
\begin{align*}
d_{W_1}^{t_0}(\nu_{x_i^*;t_0}, \nu_{y_i^*;t_0}) \le D.
\end{align*}	
Then $\iota_x(\RR^x_t)=\iota_y(\RR^y_t)$ for any $t \in [-(1-2\sigma)T, t_0)$. Moreover, for any $t_1 \in [-(1-2\sigma)T, t_0)$ such that $\XX^x$ is continuous at $t_1$, we have
\begin{align*}
d_{W_1}^{\XX^x_{t_1}} \lc \nu_{x;t_1}, (\iota^{-1}_x)_* (\nu_{y;t_1}) \rc =\lim_{i \to \infty} d_{W_1}^{t_1}(\nu_{x_i^*;t_1}, \nu_{y_i^*;t_1}).
\end{align*}
\end{lem}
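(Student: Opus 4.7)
The plan is to adapt the argument proving Proposition~\ref{prop:pre5} essentially verbatim, with the hypothesized uniform bound $d_{W_1}^{t_0}(\nu_{x_i^*;t_0}, \nu_{y_i^*;t_0}) \le D$ playing the role of the equality $d_{W_1}^{t_i}(\nu_{x_i^*;t_i},\nu_{y_i^*;t_i}) = \ep_0 r_i$ used there. After passing to a subsequence, I would fix a correspondence $\CF$ realizing the $\IF$-convergence $(\XX^i,(\nu_{x_i^*;t})) \to (\XX^x,(\nu_{x;t}))$, whose restriction is uniform at every future-continuity time of $\XX^x$ (a co-countable subset of $[-T,\t(x)]$).

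Next, I would fix $s' < s < t_0$ with $\XX^x$ continuous at $s'$. Monotonicity (Proposition~\ref{monotonicityW1}) propagates the hypothesis backward to $d_{W_1}^{s}(\nu_{x_i^*;s}, \nu_{y_i^*;s}) \le D$, and if $a_i^*, b_i^* \in M_i \times \{s\}$ are $H_n$-centers of $x_i^*, y_i^*$, Proposition~\ref{existenceHncenter} yields a uniform bound $d_{g_i(s)}(a_i^*, b_i^*) \le D_0$, hence a constant $D_1$ (depending on any prescribed tolerance $\eta$) for which
\[ \nu_{x_i^*;s}(B_{g_i(s)}(a_i^*, D_1)) \ge \tfrac{1}{2}, \qquad \nu_{y_i^*;s}(B_{g_i(s)}(a_i^*, D_1)) \ge 1 - \eta \]
for all large $i$. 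With these two ingredients, the reproduction-formula argument in the proof of Proposition~\ref{prop:pre5}---construct a compact $K_\ep \subset \XX^x_{s'}$ with $\nu_{x;s'}(K_\ep) \ge 1-\ep$, thicken it to $K_{i,\ep}$ through the correspondence, exploit Definition~\ref{defnmetricflow}~(6) to transfer a concentration bound $\nu_{w_i^*;s'}(K_{i,\ep}) \ge 1 - \Psi(\ep \mid s-s',D_1)$ to every $w_i^* \in B_{g_i(s)}(a_i^*,D_1)$, and integrate against $\nu_{y_i^*;s}$---goes through unchanged, producing tightness of $(\varphi_{s'}^i)_* \nu_{y_i^*;s'}$ with every subsequential weak limit $\mu^\infty$ supported in $\varphi_{s'}^\infty(\XX^x_{s'})$.

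Finally, the Gaussian concentration bound of Theorem~\ref{heatkernelupperbdgeneral} furnishes a uniform variance bound for $(\varphi_{s'}^i)_* \nu_{y_i^*;s'}$, so Proposition~\ref{takelimitVarW1} upgrades weak convergence to $d_{W_1}$-convergence to $\mu^\infty$. Pulling $\mu^\infty$ back through $\iota_x$ and flowing backward via the conjugate heat equation as in the last paragraph of the proof of Proposition~\ref{prop:pre5} (using \cite[Theorem~6.13]{bamler2023compactness} and \cite[Theorem~9.21(f)]{bamler2023compactness}) identifies $(\iota_x^{-1})_*\nu_{y;t}$ on $\RR^x_t$ with the conjugate heat flow in $\XX^x$ issuing from $\mu^\infty$; consequently $\iota_y(\RR^y_t) = \iota_x(\RR^x_t)$ for all $t \in [-(1-2\sigma)T, s']$, and letting $s' \nearrow t_0$ along continuity times extends this to $[-(1-2\sigma)T, t_0)$. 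The $d_{W_1}$-identity at any continuity time $t_1$ then reads off from the $d_{W_1}$-convergence already established there. The main point to verify is only bookkeeping: that the reproduction/tightness step of Proposition~\ref{prop:pre5} depends solely on a uniform $W_1$ bound between $\nu_{x_i^*;\cdot}$ and $\nu_{y_i^*;\cdot}$ at \emph{some} time strictly above $s'$, and not on the specific form $\ep_0 r$ of that bound---which is indeed the case.
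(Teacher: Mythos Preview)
Your proposal is correct and is exactly the approach the paper takes: it states Lemma~\ref{lem:pre5} immediately after Proposition~\ref{prop:pre5} with the remark that ``the proof of Proposition~\ref{prop:pre5} also yields the following result,'' and your outline faithfully reproduces that argument with the uniform $W_1$-bound $D$ in place of the specific bound $\ep_0 r_i$. The only minor quibble is that the uniform variance bound needed for Proposition~\ref{takelimitVarW1} comes most directly from the $H_n$-concentration (Proposition~\ref{monotonicityW1}) rather than Theorem~\ref{heatkernelupperbdgeneral}, but this does not affect the validity of your argument.
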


As a corollary of Proposition \ref{prop:pre5}, we have

\begin{cor} \label{cor:continuheat}
For any $x, y \in \RR$ with $r=d_Z(x, y)$ and $\max\{\t(x), \t(y)\}-r^2 >-(1-2\sigma)T$, and for any $-(1-2\sigma)T \le s<s'<\max\{\t(x), \t(y)\}-d^2_Z(x, y)$ and any $w \in \RR_s$, we have
\begin{align*}
\abs{K_Z(x; w)-K_Z(y; w)} \le C(n, Y, s'-s) r.
\end{align*}	
\end{cor}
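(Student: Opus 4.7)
The approach is to use the smooth convergence of Theorem \ref{thm:smooth1} to reduce the estimate to an analogous one on the approximating smooth Ricci flows $\XX^i$, where one can invoke the sharp gradient estimate for the heat kernel (Theorem \ref{gradientheatkernel}). First I would choose sequences $x^*_i, y^*_i \in M_i \times \III$ converging to $x, y$ in the Gromov--Hausdorff sense, and set $w^*_i := \phi_i(w)$ so that $w^*_i \to w$ and the smooth convergence of heat kernels from Theorem \ref{thm:smooth1}(b)(c) applies to both pairs $(x^*_i, w^*_i)$ and $(y^*_i, w^*_i)$ (this is legal since $\t(x), \t(y) > s$, owing to $|\t(x) - \t(y)| \le r^2$ and $s' < \max\{\t(x),\t(y)\} - r^2$, which forces $s < \min\{\t(x), \t(y)\}$). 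Writing $r_i := d^*_i(x^*_i, y^*_i) \to r$, the identity \eqref{equalityd*1} and the monotonicity of $W_1$ (Proposition \ref{monotonicityW1}) yield, for all large $i$,
\begin{align*}
d_{W_1}^{s'}(\nu_{x^*_i;s'}, \nu_{y^*_i;s'}) \le \ep_0 r_i.
\end{align*}

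The next step is to represent the difference of heat kernel values using the reproduction formula on $\XX^i$. Since $s < s' < \min\{\t(x_i^*), \t(y_i^*)\}$ for large $i$, one has
\begin{align*}
K^i(x^*_i;w^*_i) - K^i(y^*_i;w^*_i) = \int_{M_i} K^i(z; w^*_i)\, d\big(\nu_{x^*_i;s'} - \nu_{y^*_i;s'}\big)(z).
\end{align*}
Applying the Kantorovich--Rubinstein duality (Lemma \ref{lem:duality}) gives
\begin{align*}
\big|K^i(x^*_i;w^*_i) - K^i(y^*_i;w^*_i)\big| \le L_i \cdot \ep_0 r_i,
\end{align*}
where $L_i$ denotes the Lipschitz constant of the map $z \mapsto K^i(z; w^*_i)$ on $(M_i, g_i(s'))$. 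Passing to the limit $i \to \infty$ using the smooth convergence of heat kernels from Theorem \ref{thm:smooth1}(b) then yields the claimed bound, modulo a uniform control on $L_i$.

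The main obstacle is obtaining a uniform bound $L_i \le C(n, Y, s'-s)$. This is where Theorem \ref{gradientheatkernel} enters: together with the entropy bound $\NN_z(s'-s) \ge -Y$ and the scalar curvature lower bound $\scal \ge -n/(2(s+T))$ from \eqref{eq:lowerscal} (which is uniformly controlled since $s \in \III^-$ implies $s+T \ge 2\sigma T$), it gives a pointwise estimate
\begin{align*}
|\nabla_z K^i(z; w^*_i)| \le \frac{C(n, Y, \sigma T)}{\sqrt{s'-s}}\, K^i(z; w^*_i) \sqrt{\log\lc \frac{C\, e^Y}{(s'-s)^{n/2}\, K^i(z; w^*_i)} \rc}.
\end{align*}
Using the elementary bound $\sup_{u \in (0, A]} u\sqrt{\log(A/u)} \le C A$ with $A = C e^Y/(s'-s)^{n/2}$ extracts $L_i \le C(n, Y, \sigma T)/(s'-s)^{(n+1)/2}$, which is the required uniform constant (with the $\sigma, T$ dependencies absorbed by the convention of this paper). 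Combining all three ingredients and letting $i \to \infty$ completes the proof.
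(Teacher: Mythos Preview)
Your proof is correct and follows essentially the same strategy as the paper: use the reproduction formula at the intermediate time $s'$, bound the Lipschitz constant of $z \mapsto K(z;w)$ via the gradient estimate of Theorem \ref{gradientheatkernel}, and conclude by the Kantorovich--Rubinstein duality together with the $W_1$-bound $\le \ep_0 r$. The only cosmetic difference is that the paper carries out these steps directly on the limit space (invoking Proposition \ref{prop:pre5} for the $W_1$-bound and noting that $K_Z(x;w)=K_Z(y;w)=0$ when $w\notin\iota_x(\RR^x_s)$), whereas you pull back to the approximating flows $\XX^i$ and pass to the limit at the end; since Proposition \ref{prop:pre5} is itself proved by this pullback, the two routes are equivalent.
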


\begin{proof}
By Proposition \ref{prop:pre5}, the conclusion follows immediately if $w \notin \iota_x(\RR^x_s)$, since $K_Z(x; w)=K_Z(y; w)=0$ in this case.

Next, we assume $w \in \iota_x(\RR^x_s)=\iota_y(\RR^y_s)$. It follows from Theorem \ref{gradientheatkernel} and the smooth convergence in Theorem \ref{thm:smooth1} (b) that for any $z \in \iota_x(\RR^x_{s'})$ and $w \in \iota_x(\RR^x_s)$,
	\begin{align*}
		|\na_z K_Z(z;w)|&\leq C(n)K_Z(z;w)(s'-s)^{-1/2}\sqrt{\log\lc\frac{C(n)\mathrm{exp}(-\NN_{z}(s'-s))}{(s'-s)^{\frac{n}{2}}K_Z(z;w)}\rc}\\
		&\leq  C(n,s'-s)K_Z(z;w)\sqrt{C(n,Y,s'-s)-\log K_Z(z;w)} \leq C(n,Y,s'-s),
	\end{align*}
	where we used Theorem \ref{heatkernelupperbdgeneral} (ii) for the last inequality. Thus, by the definition of $d_{W_1}$-distance,
	\begin{align*}
		|K_Z(x;w)-K_Z(y;w)|&=\abs{\int_{\RR_{s'}} K_Z(\cdot;w)\, \mathrm{d}\nu_{x;s'}-\int_{\RR_{s'}} K_Z(\cdot;w)\, \mathrm{d}\nu_{y;s'}}\\
		&\leq C(n,Y,s'-s) d_{W_1}^{\XX^x_{s'}} \lc \nu_{x;s'}, (\iota^{-1}_x)_* (\nu_{y;s'}) \rc \le C(n,Y,s'-s) r,
	\end{align*}
where we used Proposition \ref{prop:pre5} for the last inequality.
\end{proof}

We next prove:

\begin{lem}\label{lem:well-define}
For any $z \in Z \setminus \RR$, the conjugate heat kernel measure $\nu_{z;s}$ defined in Definition \ref{def:chks} is independent of the associated metric flow $\XX^z$.
	\end{lem}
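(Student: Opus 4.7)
The plan is to compare, for two metric flows $\XX^{z,1}, \XX^{z,2}$ associated with $z \in Z \setminus \RR$, the conjugate heat kernel measures $\nu_{z;s}^{Z,1}, \nu_{z;s}^{Z,2}$ on $\RR_s$ that they define via Definition \ref{def:chks}. Fix sequences $z_i^{*,1}, z_i^{*,2} \in M_i \times \III$ that converge to $z$ in the Gromov--Hausdorff sense and $\F$-converge to $\XX^{z,1}, \XX^{z,2}$, with embeddings $\iota_z^1, \iota_z^2$ from Theorem \ref{thm:idenproof}. I would first pass to a common subsequence on which both constructions remain valid. Since both $z_i^{*,j} \to z$ in the Gromov--Hausdorff sense, one has $d_i^*(z_i^{*,1}, z_i^{*,2}) \to 0$, and therefore Proposition \ref{monotonicityW1} together with \eqref{equalityd*1} implies $d_{W_1}^{t_0}(\nu_{z_i^{*,1};t_0}, \nu_{z_i^{*,2};t_0}) \to 0$ for every $t_0 \in (-(1-2\sigma)T, \t(z))$.

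Next, I would apply Lemma \ref{lem:pre5} with $x_i^* = z_i^{*,1}$, $y_i^* = z_i^{*,2}$, and $x = y = z$. This yields $\iota_z^1(\RR^{z,1}_t) = \iota_z^2(\RR^{z,2}_t)$ for all $t \in [-(1-2\sigma)T, \t(z))$, and for every continuity time $t_1 < \t(z)$ of $\XX^{z,1}$,
\begin{align*}
d_{W_1}^{\XX^{z,1}_{t_1}} \lc \nu_{z;t_1}^{\XX^{z,1}}, ((\iota_z^1)^{-1})_* \nu_{z;t_1}^{Z,2} \rc = \lim_{i\to\infty} d_{W_1}^{t_1}(\nu_{z_i^{*,1};t_1}, \nu_{z_i^{*,2};t_1}) = 0.
\end{align*}
Pushing forward by $\iota_z^1$ and invoking Definition \ref{def:chks} gives $\nu_{z;t_1}^{Z,1} = \nu_{z;t_1}^{Z,2}$ at every continuity time of $\XX^{z,1}$. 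Since $\XX^{z,1}$ has only countably many discontinuity times, the two measures already coincide on a dense set of times in $(-(1-2\sigma)T, \t(z))$.

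To upgrade this to every $s \in [-(1-2\sigma)T, \t(z))$, I would use Proposition \ref{prop:embed2} to note that both densities $K^{z,j}(z; (\iota_z^j)^{-1}(\cdot))$, $j=1,2$, are smooth functions on the common open subset $\iota_z^1(\RR^{z,1}) = \iota_z^2(\RR^{z,2}) \subset \RR$, and both vanish on its complement within $\RR_{<\t(z)}$ by Corollary \ref{cor:pre1}. For any $y$ in $\iota_z^1(\RR^{z,1}_s)$, I would use the $\partial_\t$-flow of the Ricci flow spacetime $(\RR, \t, \partial_\t, g^Z)$ from Theorem \ref{thm:smooth1} to transport $y$ to points $y^{(k)}$ at continuity times $t_1^{(k)} \to s$; the agreement of the two densities at each $y^{(k)}$ together with their joint smoothness forces agreement at $y$. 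Hence $\nu_{z;s}^{Z,1} = \nu_{z;s}^{Z,2}$ on all of $\RR_s$.

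The main obstacle will be ensuring Lemma \ref{lem:pre5} applies to two a priori independent approximating sequences coming from their own correspondences; this is resolved by the common-subsequence extraction combined with the fact that $d^*(z_i^{*,1}, z_i^{*,2}) \to 0$ once both sequences converge to the same point. The subsequent propagation from a dense set of continuity times to every time is routine, relying only on the smooth structure on the regular part of $Z$.
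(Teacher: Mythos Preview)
Your approach differs from the paper's, and it contains a gap at the ``common subsequence'' step.

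The paper proves well-definedness by an intrinsic characterization: it shows that $K_Z(z;\cdot) = \lim_{i\to\infty} K_Z(x_i;\cdot)$ for any sequence $x_i \in \RR$ with $x_i \to z$ in $d_Z$. The right-hand side is well-defined and sequence-independent by Corollary~\ref{cor:continuheat}, and it uses only the heat kernel on $\RR \times \RR$ already fixed by Theorem~\ref{thm:smooth1}(b). For each associated $\XX^z$, one then picks $H_n$-centers $y_i \in \RR^z$ of $z$, notes that the analogue of Corollary~\ref{cor:continuheat} inside $\XX^z$ gives $K^z(y_i;\cdot)\to K^z(z;\cdot)$, and invokes Proposition~\ref{prop:embed2} to conclude $K_Z(z;\cdot)=\lim K_Z(\iota_z(y_i);\cdot)$, which matches the intrinsic limit since $\iota_z(y_i)\to z$. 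No direct comparison of two associated metric flows is ever made.

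The gap in your argument: an associated metric flow $\XX^{z,j}$ arises by choosing a sequence $z_i^{*,j} \to z$ and then passing to a subsequence $S_j \subset \N$ along which $\F$-convergence holds. Two such constructions may well come from subsequences with \emph{finite} intersection---nothing in the definition forces $S_1 \cap S_2$ to be infinite---so you cannot in general ``pass to a common subsequence on which both constructions remain valid.'' Lemma~\ref{lem:pre5} requires both $x_i^*$ and $y_i^*$ to live on the same index set with both $\F$-limits realized there; when $S_1$ and $S_2$ are disjoint this is unavailable. Trying to patch this by introducing an intermediate metric flow (e.g., $\F$-converging $z_i^{*,2}$ along a subsequence of $S_1$) reduces to the case of a single point-sequence with two disjoint subsequences, where the same obstruction recurs. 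The paper's intrinsic-limit argument sidesteps this bookkeeping entirely.
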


\begin{proof}
We only need to prove that the conjugate heat kernel $K_Z(z;\cdot)$ is independent of $\XX^z$ for $z \in Z_{\III^-}$. We claim that
	\begin{align}\label{eq:kernellimit}
K_Z(z;\cdot)=\lim_{i \to \infty} K_Z(x_i;\cdot)
	\end{align}
where $x_i \in \RR$ converge to $z$ in $d_Z$. Indeed, it is clear from Corollary \ref{cor:continuheat} that the limit in \eqref{eq:kernellimit} exists and is independent of the choice of $x_i$. 

On the other hand, we consider the associated metric flow $\XX^z$ from which the conjugate heat kernel measure at $z$ is defined. We fix $s<\t^z(z)$ and $w \in \RR^z_{s}$. We choose a sequence $\delta_i \searrow 0$ so that $y_i \in \RR^z_{\t^z(z)-\delta_i^2}$ is an $H_n$-center of $z$. By the same argument as in the proof of Corollary \ref{cor:continuheat}, we obtain
	\begin{align*}
\abs{K^z(z;w)-K^z(y_i;w)} \le C \delta_i
	\end{align*}
for a constant $C$ independent of $i$. Consequently, by Proposition \ref{prop:embed2} (iii), 
	\begin{align*}
K_Z(z;\cdot)=\lim_{i \to \infty} K_Z(\iota_z(y_i);\cdot).
	\end{align*}
This completes the proof.
\end{proof}

Now, we prove the following convergence theorem.

\begin{thm}\label{thm:convextra}
For any $z \in Z$, if $z_i^* \in M_i \times \III$ converge to $z$ in the Gromov--Hausdorff sense, then
\begin{align*}
K^i(z_i^*;\phi_i(\cdot)) \xrightarrow[i \to \infty]{C^{\infty}_\mathrm{loc}} K_Z(z;\cdot) \quad \text{on} \quad \RR_{(-\infty, \t(z))},
\end{align*}	
where $\phi_i$ is from Theorem \ref{thm:smooth1}.
\end{thm}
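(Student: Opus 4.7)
The plan is to combine the smooth convergence on the $\F$-limit, established in Theorem \ref{Fconvergence}(3), with a Fatou-type total mass comparison to handle points of $\RR$ lying outside $\iota_z(\RR^z)$.

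After passing to a subsequence, extract a correspondence $\CF$ and an associated metric flow $\XX^z$ so that Theorem \ref{Fconvergence} applies, and obtain $\F$-charts $\phi^z_i : U^z_i \to V^z_i \subset M_i \times \III$ on which $K^i(z_i^*; \phi^z_i(\cdot)) \to K^z(z;\cdot)$ smoothly on $\RR^z$. Combining the heat kernel upper bound from Theorem \ref{heatkernelupperbdgeneral} with Shi-type derivative estimates, and transporting these through the $d^*$-charts $\phi_i$ via the uniform smooth convergence $\phi_i^* g^i \to g^Z$ of Theorem \ref{thm:smooth1}(a), yields uniform $C^\infty$ bounds for $K^i(z_i^*; \phi_i(\cdot))$ on any compact subset of $\RR_{(-\infty,\t(z))}$. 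By Arzelà–Ascoli and a diagonal argument, a further subsequence converges in $C^\infty_\mathrm{loc}$ to a nonnegative conjugate heat solution $f$ on $\RR_{(-\infty,\t(z))}$, and it suffices to identify $f \equiv K_Z(z;\cdot)$.

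To identify $f$ on $\iota_z(\RR^z)$, fix $w = \iota_z(w^z)$. By the proof of Proposition \ref{prop:embed2}(i), the transition maps $\psi_i := \phi_i^{-1} \circ \phi_i^z$, defined on a neighborhood of $w^z$ for large $i$, converge smoothly to $\iota_z$; in particular $\phi_i(w)$ and $\phi_i^z(w^z) = \phi_i(\psi_i(w^z))$ have $g^i$-distance tending to zero. Combining with the uniform gradient bound on $K^i$ gives
\[
K^i(z_i^*; \phi_i(w)) = K^i(z_i^*; \phi_i^z(w^z)) + o(1) \longrightarrow K^z(z; w^z) = K_Z(z; w),
\]
the last equality being Proposition \ref{prop:embed2}(iii). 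Thus $f = K_Z(z;\cdot)$ on $\iota_z(\RR^z)$.

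On the complement, note that $(\iota_z)^* g^Z = g^z$ by Proposition \ref{prop:embed2}(i), so the previous step and the fact that $\nu_{z;s}$ is a probability measure supported on the regular part of $\XX^z$ give $\int_{\iota_z(\RR^z_s)} f\, \mathrm{d}V_{g^Z_s} = 1$. Meanwhile, for any compact $K \subset \RR_s$, smooth convergence on $K$ yields
\[
\int_K f\, \mathrm{d}V_{g^Z_s} = \lim_i \int_{\phi_i(K)} K^i(z_i^*; \cdot)\, \mathrm{d}V_{g_i(s)} \le \int_{M_i} K^i(z_i^*;\cdot)\, \mathrm{d}V_{g_i(s)} = 1,
\]
so an exhaustion gives $\int_{\RR_s} f\, \mathrm{d}V_{g^Z_s} \le 1$. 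Since $f \ge 0$ and $\iota_z(\RR^z_s)$ is a connected component of $\RR_s$ by Lemma \ref{lem:pre4}, $f \equiv 0$ on $\RR_s \setminus \iota_z(\RR^z_s)$, which agrees with $K_Z(z;\cdot) \equiv 0$ there by Corollary \ref{cor:pre1}. Uniqueness of the limit promotes subsequential to full $C^\infty_\mathrm{loc}$ convergence. The main obstacle is precisely this last step: on components of $\RR_s$ disjoint from $\iota_z(\RR^z_s)$ there is no local $\F$-limit chart directly capturing the heat kernel, and one must exploit the sharp total-mass identity via Proposition \ref{prop:embed2}(i) to force the approximate heat kernels to shed all of their mass there.
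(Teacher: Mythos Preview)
Your proof is correct and follows essentially the same strategy as the paper: pass to a subsequence to extract an $\IF$-limit $\XX^z$, use the transition maps $\psi_i = \phi_i^{-1}\circ\phi_i^z \to \iota_z$ together with the smooth $\IF$-convergence of Theorem~\ref{Fconvergence}(3) to identify the limit on $\iota_z(\RR^z)$, and then argue that subsequential limits are unique. The paper frames this as a contradiction argument but the content is the same.

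The one substantive difference is your treatment of the complement $\RR_s\setminus\iota_z(\RR^z_s)$. The paper simply writes ``on $\RR\setminus\RR^z$, we have $K_Z(z;\cdot)=0$, and the limit also holds'' without further justification, whereas you supply an explicit total-mass argument: the subsequential limit $f$ integrates to $1$ on $\iota_z(\RR^z_s)$ by Proposition~\ref{prop:embed2}(i) and the probability-measure property, while Fatou on compacta gives $\int_{\RR_s} f \le 1$, forcing $f\equiv 0$ on the open complement. This is a genuine improvement in rigor---without it, the claim that $K^i(z_i^*;\phi_i(y))\to 0$ for $y$ in a component of $\RR_s$ disjoint from $\iota_z(\RR^z_s)$ is not obvious, since such $y$ are not seen by the $\IF$-charts $\phi_i^z$ at all.
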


\begin{proof}
We only need to prove that for any open set $U$ such that $\bar U \subset \RR_{(-\infty, \t(z))}$ is a compact set, we have
\begin{align*}
K^i(z_i^*;\phi_i(\cdot)) \xrightarrow[i \to \infty]{C^{\infty}} K_Z(z;\cdot) \quad \text{on} \quad \bar U.
\end{align*}		

Suppose that the conclusion fails. Then there exist $\delta>0$ and a subsequence, still denoted by $\XX^i$, such that
\begin{align} \label{eq:convexta1}
\Vert K_Z(z;\cdot)-K^i(z_i^*;\phi_i(\cdot)) \Vert_{C^{[\delta^{-1}]} ( \bar U)} \ge \delta.
			\end{align}		
By passing to a further subsequence, there exists a correspondence $\CF$ such that 
\begin{equation*}
	(\XX^i, (\nu_{z_i^*;t})_{t \in [-T, \t(z_i^*)]}) \xrightarrow[i \to \infty]{\quad \IF, \CF, J \quad} (\XX^z, (\nu_{z;t})_{t \in  [-T, \t^z(z)]}),
\end{equation*}
where $\XX^z$ is a metric flow associated with $z$. By Proposition \ref{prop:embed2}, Definition \ref{def:chks} and Lemma \ref{lem:well-define}, we have
\begin{equation*}
K^z(z;\cdot)=K_Z(z;\iota_z(\cdot)).
\end{equation*}
On the other hand, by Theorem \ref{Fconvergence} (3), there exists an increasing sequence $U^z_1 \subset U^z_2 \subset \ldots \subset \RR^z_\III$ of open subsets with $\bigcup_{i=1}^\infty U^z_i = \RR^z_\III$. In addition, for sufficiently large $i$, there exist open subsets $V^z_i \subset M_i \times \III$, time-preserving diffeomorphisms $\phi^z_i : U^z_i \to V^z_i$ and a sequence $\ep_i \to 0$ such that all statements in Theorem \ref{Fconvergence} (3) hold. In particular, we have
\begin{align} \label{eq:convexta2}
\Vert K^z(z;\cdot)-K^i(z_i^*;\phi^z_i(\cdot)) \Vert_{C^{[\ep_i^{-1}]} (U^z_i)} \le \ep_i,
			\end{align}		
for a sequence $\ep_i \to 0$. As in the proof of Proposition \ref{prop:embed2}, the map $\psi_i:=\phi_i^{-1} \circ \phi_i^z$ converges locally and smoothly to $\iota_z$. Note that on $\RR\setminus \RR^z$, we have $K_Z(z;\cdot)=0$, and the limit also holds. Since $K_Z(z;\iota_z(\cdot))=K^z(z;\cdot)$, \eqref{eq:convexta2} contradicts \eqref{eq:convexta1} for all sufficiently large $i$.

This completes the proof.
\end{proof}

Now, we define the isometry between two noncollapsed Ricci flow limit spaces.

\begin{defn}[Isometry]\label{def:iso}
Suppose $(Z, d_Z, z, \t)$ and $(Z', d_{Z'}, z',\t')$ are two pointed noncollapsed Ricci flow limit spaces, with regular parts given by the Ricci flow spacetimes $(\RR, \t, \partial_\t, g^Z)$ and $(\RR', \t', \partial_{\t'}, g^{Z'})$, respectively. 

We say that $(Z, d_Z, z, \t)$ and $(Z', d_{Z'}, z',\t')$ are \textbf{isometric} if there exists a bijection $\phi:Z \to Z'$ satisfying the following conditions:
\begin{enumerate}[label=\textnormal{(\roman{*})}]
\item $\phi(z)=z'$.

\item $\phi$ is time-preserving, that is,  $\t'\circ \phi=\t$.
	
	\item For any $x, y \in Z$, $d_{Z'}(\phi(x), \phi(y))=d_Z(x, y)$.
	
\item \(\phi(\mathcal{R}) = \mathcal{R}'\), and \(\phi\) is an isomorphism of Ricci flow spacetimes between \((\mathcal{R}, \t, \partial_\t, g^Z)\) and \((\mathcal{R}', \t', \partial_{\t'}, g^{Z'})\). That is, the restriction $\phi: \RR \to \RR'$ is a diffeomorphism such that $\phi^* g^{Z'}=g^Z$ and $\phi^* \partial_{\t'}=\partial_\t$.
	\end{enumerate}
\end{defn}

It follows immediately from Theorem \ref{thm:smooth1} and Lemma \ref{lem:well-define} that all noncollapsed Ricci flow limit spaces obtained as pointed Gromov--Hausdorff limits of a given sequence in $\MM(n, Y, T)$ (see Theorem \ref{thm:GHlimit-dstar}) are mutually isometric.

\begin{lem}\label{lem:uniheatequ}
	Let $u\in C^0(\RR_{[a, 0]})\bigcap C^\infty(\RR_{(a,0]})$ be a uniformly bounded function satisfying $\square u=0$ on $\RR_{(a,0]}$. Then for any $z\in \RR_{(a,0]}$, 
	\begin{align}\label{equ:uniheat}
		u(z)=\int_{\RR_{a}}u\,\mathrm{d}\nu_{z;a}.
	\end{align}
\end{lem}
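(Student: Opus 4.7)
The plan is to pull back the identity to the metric flow $\XX^z$ associated with $z$ via Theorem \ref{thm:601} and Proposition \ref{prop:embed2}, then prove it by a weighted $L^2$ monotonicity argument that mirrors the standard duality between the heat and conjugate heat equations. Setting $t_0 := \t(z)$ and $\tilde u := u \circ \iota_z$ on $\RR^z_{[a,t_0]}$, Proposition \ref{prop:embed2} identifies $\iota_z$ as an isomorphism of Ricci flow spacetimes with $K^z(z; y) = K_Z(z; \iota_z(y))$, so it suffices to prove
\[ \tilde u(z) = \int_{\RR^z_a} \tilde u \, d\nu^z_{z;a}, \]
where $\nu^z$ denotes the conjugate heat kernel measure on $\XX^z$.

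Next, I would introduce the heat kernel representation $U(y) := \int_{\RR^z_a} \tilde u(\cdot, a) \, d\nu^z_{y;a}$ for $y \in \RR^z_{[a, t_0]}$. For $y \in \RR^z_a$, the relation $\nu^z_{y;a} = \delta_y$ gives $U(y) = \tilde u(y)$; for $y \in \RR^z_{(a, t_0]}$, $U$ is smooth, bounded by $\|u\|_\infty$, and satisfies $\square U = 0$. By heat kernel concentration as $\t^z(y) \searrow a$ combined with the continuity of $\tilde u(\cdot, a)$, $U$ extends continuously to $\RR^z_{[a, t_0]}$. Letting $v := \tilde u - U$, one obtains a bounded function on $\RR^z_{[a, t_0]}$, smooth on $\RR^z_{(a, t_0]}$, satisfying $\square v = 0$, and vanishing identically on $\RR^z_a$; the desired identity becomes $v(z) = 0$.

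The core of the argument is a monotonicity formula for the weighted $L^2$-norm. For $s \in [a, t_0]$, define $G(s) := \int v^2 \, d\nu^z_{z;s}$. Using $\square v = 0$, $\square^* K^z(z;\cdot) = 0$, and integration by parts on each time slice,
\[ \tfrac{d}{ds}G(s) = -2\int |\nabla v|^2 \, d\nu^z_{z;s} \leq 0 \quad \text{for } s \in (a, t_0). \]
Since $v \equiv 0$ on $\RR^z_a$, $G(a) = 0$; combined with continuity of $G$ at $s = a$---which follows from the concentration of $\nu^z_{z;s}$ near $\RR^z_a$ as $s \searrow a$ together with continuity of $v$---monotonicity yields $G(t_0) \leq G(a) = 0$. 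Since $\nu^z_{z;t_0} = \delta_z$, $G(t_0) = v(z)^2$, so $v(z) = 0$ as desired.

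The main technical difficulty lies in justifying the integration by parts on the non-compact, partially singular slice $\RR^z_s$. I would handle this by introducing smooth spatial cutoffs $\eta_R$ equal to $1$ on increasing compact subsets of $\RR^z_s$ that avoid a small neighborhood of $\MS^z_s$, and showing that all cutoff error terms vanish in the limit $R \to \infty$. At spatial infinity this will use the sharp heat kernel upper bound from Theorem \ref{heatkernelupperbdgeneral} and the gradient estimate from Theorem \ref{gradientheatkernel} to control $K^z(z;\cdot)$ and $|\nabla K^z(z;\cdot)|$; near the singular set, the codimension-four property of $\MS^z$ established in Theorem \ref{Fconvergence} (1) will permit cutoffs with Dirichlet energy tending to zero.
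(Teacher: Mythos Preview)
Your approach is correct in spirit but takes a substantially different route from the paper. The paper's proof is essentially a one-line citation: for $z\in\RR_s$ with $s\in(a,0)$, it chooses a point $y$ with $\t(y)>s$ such that $z\in\iota_y(\RR^y_s)$ (this exists by the proof of Lemma~\ref{lem:smooth1}), observes that $\nu_{z;a}$ has full measure on $\iota_y(\RR^y_a)$, and then invokes \cite[Theorem 15.28(d)]{bamler2020structure} directly to conclude~\eqref{equ:uniheat}. The case $z\in\RR_0$ is then handled by approximation using \eqref{eq:kernellimit}. What you have written is, in effect, a self-contained reproof of that cited result via the weighted $L^2$ monotonicity $\frac{d}{ds}\int v^2\,\mathrm{d}\nu_{z;s}=-2\int|\nabla v|^2\,\mathrm{d}\nu_{z;s}$, together with cutoff arguments near the singular set and at infinity.

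Two remarks on your version. First, by working in $\XX^z$ rather than in $\XX^y$ for some $y$ with $\t(y)>\t(z)$, you place $z$ at the terminal time of the metric flow; since $z\notin\RR^z$ (the regular part $\RR^z$ lives strictly below $\t(z)$), the limit $G(s)\to v(z)^2$ as $s\nearrow t_0$ needs an extra word—it goes through because $z\in\RR$ and the heat kernel concentrates, but the paper's choice of $\XX^y$ avoids this wrinkle entirely. Second, your justification of continuity of $G$ at $s=a$ is slightly imprecise: $\nu^z_{z;s}$ does not concentrate to a point as $s\searrow a$, it converges weakly to $\nu^z_{z;a}$; the conclusion $G(s)\to 0$ instead comes from uniform boundedness of $v$, continuity of $v$ with $v|_{\RR^z_a}\equiv 0$, and tightness of the measures $\nu^z_{z;s}$ (Gaussian concentration from Theorem~\ref{heatkernelupperbdgeneral}). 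These are fixable, and your cutoff strategy for the integration by parts is exactly what is needed, but you should be aware that you are reproducing Bamler's argument rather than giving a genuinely new one.
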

\begin{proof}
Fix $s\in (a,0)$ and $z\in \RR_s$. By the proof of Lemma \ref{lem:smooth1}, there exists a point $y$ with $\t(y)\in (s,0)$ such that $z\in \iota_{y}(\RR^y_s)$. Hence, $\nu_{z;{a}}(\iota_{y}(\RR^y_{a}))=1$. It then follows from \cite[Theorem 15.28(d)]{bamler2020structure} that
	\begin{align}\label{equ:uniheat2}
		u(z)=\int_{\iota_y(\RR^y_{a})}u\,\mathrm{d}\nu_{z;a}=\int_{\RR_{a}}u\,\mathrm{d}\nu_{z;a},
	\end{align} 
which establishes \eqref{equ:uniheat} for all $z\in \RR_{(a, 0)}$.
	
Now consider $z\in \RR_0$. By Corollary \ref{cor:comple1} and its proof, we may find a sequence $z_i\in \RR_{t_i}$ with $t_i\nearrow 0$ such that $z_i \to z$ with respect to $d_Z$. Then \eqref{equ:uniheat} follows from the convergence of the heat kernel measures in \eqref{eq:kernellimit} together with \eqref{equ:uniheat2}. This completes the proof.
\end{proof}

\begin{rem}\label{rem:defheatequS}
In the setting of Lemma \ref{lem:uniheatequ}, we may extend the definition of $u$ to $Z\setminus \RR$ via the integral formula \eqref{equ:uniheat}. By \eqref{eq:kernellimit}, this defines a continuous function on $Z$ that solves $\square u=0$ on $\RR$. Furthermore, combining Lemma \ref{lem:uniheatequ}, \eqref{eq:kernellimit}, and the argument of \cite[Theorem 15.29]{bamler2020structure}, we conclude that the family of conjugate heat kernel measures $(\nu_{z;t})_{z\in Z_{\III^-}, t<\t(z)}$ is uniquely determined by the Ricci flow spacetime $(\RR, \t, \partial_\t, g^Z)$. 

Therefore, for any isometry $\phi$ as in Definition \ref{def:iso}, we also have the following property: for every \(x \in Z\) and every \(s \leq \t(x)\), the pushforward measure satisfies \(\phi_* \nu_{x;s} = \nu_{\phi(x);s}\).
\end{rem}

Next, we define $\Var_{\RR_t}$\index{$\Var_{\RR_t}$} and $d_{W_1}^{\RR_t}$\index{$d_{W_1}^{\RR_t}$} to be the variance and $d_{W_1}$-Wasserstein distance, respectively, with respect to the metric space $(\RR_t, g^Z_t)$. Here, if $x$ and $y$ lie in different connected components of $\RR_t$, we set the distance $d_{g^Z_t}(x, y)=+\infty$.

The following statement follows directly from the fact that any associated metric flow is $H_n$-concentrated.

\begin{prop}\label{prop:004ac}
	For any $z \in Z$, the conjugate heat kernel measure $\nu_{z;s}$ is $H_n$-concentrated; that is, for any $s<\t(z)$,
	\begin{align*}
		\Var_{\RR_s}(\nu_{z;s}) \le H_n (\t(z)-s).
	\end{align*}
\end{prop}

\begin{defn}[Regular $H$-center] \label{def:rhcenter}
For any $z \in Z$, a point $z_1 \in \RR_s$ with $s < \t(z)$ is called a \textbf{regular $H$-center}\index{regular $H$-center} of $z$ for a constant $H>0$ if
 \begin{align*}
	\Var_{\RR_s}(\delta_{z_1},\nu_{z;s})\leq H (\t(z)-s).
	\end{align*}
By Proposition \ref{prop:004ac}, for any $s < \t(z)$, we can always find an $H_n$-center of $z$ in $\RR_s$.
\end{defn}

By the definition of a regular $H$-center, the following statement is immediate.

\begin{lem}\label{lem:004abcx}
Given $x \in Z$, if $z \in \RR_s$ is a regular $H$-center of $x$ with $s<\t(x)$, then
	\begin{align*}
\nu_{x;s}\lc B_{g^Z_s}\lc z, \sqrt{LH(\t(x)-s)} \rc \rc \ge 1-\frac{1}{L}.
	\end{align*}
\end{lem}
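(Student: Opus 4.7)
The plan is to deduce the concentration statement from the definition of a regular $H$-center via Chebyshev's (Markov's) inequality, exactly as in the proof of the analogous statement for ordinary $H$-centers on Ricci flows (Proposition \ref{existenceHncenter}). Unpacking Definition \ref{def:rhcenter} gives
\begin{align*}
\int_{\RR_s} d_{g^Z_s}^2(z,y)\,\mathrm{d}\nu_{x;s}(y)=\Var_{\RR_s}(\delta_z,\nu_{x;s})\leq H(\t(x)-s).
\end{align*}
Note that the finiteness of the left-hand side already forces $\nu_{x;s}$ to be supported on the connected component of $\RR_s$ containing $z$ (recall our convention that $d_{g^Z_s}=\infty$ between different components), which by Lemma \ref{lem:pre4} is consistent with $\mathrm{supp}\,\nu_{x;s}\subset \iota_x(\RR^x_s)$.

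Setting $r=\sqrt{LH(\t(x)-s)}$, Markov's inequality applied to the nonnegative function $d_{g^Z_s}^2(z,\cdot)$ yields
\begin{align*}
\nu_{x;s}\bigl(\RR_s\setminus B_{g^Z_s}(z,r)\bigr)=\nu_{x;s}\bigl\{y\in\RR_s \mid d_{g^Z_s}(z,y)\geq r\bigr\}\leq \frac{1}{r^2}\int_{\RR_s}d_{g^Z_s}^2(z,y)\,\mathrm{d}\nu_{x;s}(y)\leq \frac{H(\t(x)-s)}{LH(\t(x)-s)}=\frac{1}{L}.
\end{align*}
Since $\nu_{x;s}$ is a probability measure by Lemma \ref{lem:pre1} (or by construction in Definition \ref{def:chks}), taking complements gives the desired lower bound $\nu_{x;s}(B_{g^Z_s}(z,r))\geq 1-L^{-1}$.

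There is no substantive obstacle here; the only point worth flagging is the use of the extended convention $d_{g^Z_s}=\infty$ across components, which ensures that having finite variance against $\delta_z$ automatically places $z$ in the same component as $\mathrm{supp}\,\nu_{x;s}$, so that $B_{g^Z_s}(z,r)$ is just an ordinary geodesic ball within a single Riemannian component of $\RR_s$ and the Markov argument carries through verbatim.
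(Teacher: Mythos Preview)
Your proof is correct and follows exactly the intended route: the paper simply states that the conclusion is immediate from the definition of a regular $H$-center, and the underlying argument is precisely the Chebyshev/Markov inequality you wrote out (mirroring Proposition \ref{existenceHncenter}). Your remark about the connected-component convention is a valid clarification but not strictly needed for the bound to go through.
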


\begin{lem}\label{lem:004abc}
	Let $x\in Z$ and let $z\in\RR_s$ be a regular $H$-center of $x$ at a time $s<\t(x)$. Then
	\begin{align*}
		d_Z(x,z) \leq \max\{1,\sqrt H\}\sqrt{\t(x)-s}.
	\end{align*}
\end{lem}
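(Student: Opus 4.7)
The plan is to reduce the estimate to the corresponding statement on an associated metric flow $\XX^x$ constructed in Section~\ref{sec:f}, and then transfer the resulting bound on $d^*_x$ back to $d_Z$ via the isometric embedding $\iota_x\colon (\widetilde{\XX^x_{\III}}, d^*_x) \to (Z, d_Z)$ of Theorem~\ref{thm:idenproof}. The key point is that, as noted right after Proposition~\ref{checkd*onlimit}, the proof of Lemma~\ref{lem:Hcenterdis} carries over verbatim to $\XX^x$ with $d^*_x$ in place of $d^*$; so it suffices to exhibit a counterpart $\tilde z \in \XX^x_s$ of $z$ that is an $H$-center of $x$ in the metric-flow sense.

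First I would locate $z$ inside $\iota_x(\RR^x_s)$. Since $d_{g^Z_s}(\cdot,\cdot)$ is declared to be $+\infty$ across distinct connected components of $\RR_s$, finiteness of $\Var_{\RR_s}(\delta_z,\nu_{x;s})$ forces $z$ to lie in the same component as the support of $\nu_{x;s}$; by Corollary~\ref{cor:pre1} and Lemma~\ref{lem:pre4} that component is exactly $\iota_x(\RR^x_s)$. Set $\tilde z := \iota_x^{-1}(z) \in \RR^x_s$. To compare variances, I invoke Theorem~\ref{Fconvergence}(4), which identifies $d^x_s|_{\RR^x_s}$ with the length metric of $g^x_s$, together with Proposition~\ref{prop:embed2}(i), which makes $\iota_x$ a Riemannian isometry onto the component $\iota_x(\RR^x_s)\subset\RR_s$. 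Since $(\iota_x)_*\nu^x_{x;s}=\nu_{x;s}$ and $\nu^x_{x;s}(\MS^x_s)=0$ by Theorem~\ref{Fconvergence}(1), I obtain
\begin{align*}
\Var_{\XX^x_s}(\delta_{\tilde z},\nu^x_{x;s})
= \int_{\RR^x_s} (d^x_s)^2(\tilde z,\tilde y)\,\mathrm{d}\nu^x_{x;s}(\tilde y)
= \int_{\iota_x(\RR^x_s)} d^2_{g^Z_s}(z,y)\,\mathrm{d}\nu_{x;s}(y)
\le H(\t(x)-s),
\end{align*}
so $\tilde z$ is an $H$-center of $x$ inside $\XX^x$.

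Applying the $\XX^x$-analog of Lemma~\ref{lem:Hcenterdis} (which uses the variance bound, monotonicity of $d_{W_1}$ along the metric flow, and Definition~\ref{defnd*limit}) then yields $d^*_x(x,\tilde z) \le \ep_0^{-1}\sqrt{H(\t(x)-s)}$. Finally, Theorem~\ref{thm:idenproof} gives $d_Z(x,z)=d^*_x(x,\tilde z)$, which produces the desired inequality. The only genuinely substantive step is the variance comparison above; it rests on the two facts that $\iota_x(\RR^x_s)$ is a full-$\nu_{x;s}$-measure connected component of $\RR_s$ and that the length distance on $(\RR^x_s,g^x_s)$ is transported to the $g^Z_s$-length distance on $\iota_x(\RR^x_s)$ by $\iota_x$. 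Everything else is a direct invocation of earlier results.
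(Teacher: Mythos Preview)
Your proof is correct and follows exactly the route the paper has in mind: the paper's own proof is the single sentence ``This is straightforward by the generalization of Lemma~\ref{lem:Hcenterdis} on $\XX^x$,'' and what you have written is precisely the unpacking of that sentence via Proposition~\ref{prop:embed2}, Lemma~\ref{lem:pre4}, and Theorem~\ref{thm:idenproof}. One minor point: Corollary~\ref{cor:pre1} is stated only for $x\in\RR$, so for general $x\in Z$ you should instead cite the full-measure property recorded immediately after Definition~\ref{def:chks}.
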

\begin{proof}
This follows directly from the generalization of Lemma \ref{lem:Hcenterdis} to $\XX^x$.
\end{proof}

In general, the Ricci flow spacetime $\RR$ may not be connected. As a corollary of Proposition \ref{prop:pre5}, we prove

\begin{cor} \label{cor:connected}
For any $x, y \in \RR$, if $d_Z(x, y) < \sqrt{\max\{\t(x), \t(y)\}+(1-2\sigma)T}$, then $x$ and $y$ lie in the same connected component of $\RR_{[-(1-2\sigma)T, \max\{\t(x), \t(y)\}]}$. In particular, if $T=+\infty$, then $\RR$ is connected.
\end{cor}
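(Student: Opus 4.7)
The hypothesis $d_Z(x,y) < \sqrt{\max\{\t(x),\t(y)\}+(1-2\sigma)T}$ is equivalent to $t_* := \max\{\t(x),\t(y)\} - r^2 > -(1-2\sigma)T$, where $r := d_Z(x,y)$. Fix any $t \in [-(1-2\sigma)T, t_*)$. By Proposition \ref{prop:pre5} we have $\iota_x(\RR^x_t) = \iota_y(\RR^y_t)$, and by Corollary \ref{cor:pre2} this common set is a single connected component of $\RR_t$. It then suffices to prove that for each $x \in \RR$ and each $s \in [-(1-2\sigma)T, \t(x))$, the point $x$ lies in the same connected component of $\RR_{[-(1-2\sigma)T, \t(x)]}$ as $\iota_x(\RR^x_s)$; applying this to both $x$ and $y$ (using that $t < \min\{\t(x),\t(y)\}$) then places $x$ and $y$ in the common connected component containing $\iota_x(\RR^x_t) = \iota_y(\RR^y_t)$.

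The plan is to exhibit the explicit connected set $S := \{x\} \cup \iota_x(\RR^x_{[s, \t(x))}) \subset \RR_{[-(1-2\sigma)T, \t(x)]}$. First I would show that $\iota_x(\RR^x_{[s,\t(x))})$ is connected. Since $\iota_x$ is continuous (smooth, in fact, by Proposition \ref{prop:embed2}), this reduces to connectedness of $\RR^x_{[s,\t(x))}$. Each time-slice $\RR^x_\tau$ is connected by Theorem \ref{Fconvergence} (1), and $\partial_{\t^x}$ is transverse to the time-foliation; consequently, any clopen partition of the spacetime $\RR^x_{[s,\t(x))}$ would induce, via the transversality and the openness of its two parts, a clopen partition of the connected interval $[s,\t(x))$ into the two time-sets whose slices lie in the respective parts — ruling out a nontrivial splitting. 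Thus $\iota_x(\RR^x_{[s,\t(x))})$ is connected.

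Next, I would attach $x$ to this set by approaching it through the local Ricci flow spacetime structure on $\RR$ near $x$ guaranteed by Theorem \ref{thm:smooth1}. Since $\partial_{\t}$ is a smooth vector field on $\RR$ with $\partial_\t \t = 1$, the backward integral curve $\gamma:[0,\delta^2) \to \RR$ with $\gamma(0) = x$ and $\t(\gamma(\tau)) = \t(x) - \tau$ exists for some $\delta > 0$ and is $d_Z$-continuous (since the $d_Z$-topology and the smooth topology on $\RR$ coincide by Theorem \ref{thm:smooth1} (c)). By Proposition \ref{prop:embed2}, the image $\iota_x(\RR^x)$ is open in $\RR$ and its Ricci flow spacetime structure agrees with the ambient one, so after shrinking $\delta$, we have $\gamma(\tau) \in \iota_x(\RR^x_{\t(x)-\tau})$ for all $\tau \in (0,\delta^2)$. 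In particular $x$ is a $d_Z$-limit of points in $\iota_x(\RR^x_{[s,\t(x))})$, making $S = \{x\} \cup \iota_x(\RR^x_{[s,\t(x))})$ connected.

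Combining these two applications (one for $x$, one for $y$) shows that $x$ and $y$ lie in the same connected component of $\RR_{[-(1-2\sigma)T, \max\{\t(x),\t(y)\}]}$. The last assertion is immediate: when $T = +\infty$, the right-hand side of the hypothesis is $+\infty$, so the condition holds for any pair $x, y \in \RR$, and hence $\RR$ is connected. The most delicate technical point will be the identification of the local Ricci flow spacetime structure near $x$ with the image of $\iota_x$ on a short backward interval, which relies on $\iota_x$ being an open Ricci flow spacetime embedding via Proposition \ref{prop:embed2}; the rest consists of routine verifications of connectedness.
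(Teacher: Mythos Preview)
Your overall strategy matches the paper's: use Proposition \ref{prop:pre5} to identify a common slice $\iota_x(\RR^x_t)=\iota_y(\RR^y_t)$ at some $t<t_*$, then show each of $x,y$ lies in the same component of $\RR$ as (the image of) its own regular spacetime $\iota_x(\RR^x)$ (resp.\ $\iota_y(\RR^y)$), which contains that common slice. Your clopen argument for connectedness of $\RR^x_{[s,\t(x))}$ is fine and is what the paper implicitly uses when it says ``since $\RR^x$ is connected.''

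The gap is in the step you yourself flag as delicate: the claim that the backward $\partial_\t$-flow line $\gamma(\tau)$ from $x$ lies in $\iota_x(\RR^x_{\t(x)-\tau})$ for small $\tau$. Openness of $\iota_x(\RR^x)$ in $\RR$ (Proposition \ref{prop:embed2}) does not give this, because $x\notin\iota_x(\RR^x)$: the regular part $\RR^x\subset\XX^x_{(-(1-\sigma)T,\t(x))}$ contains no points at time $\t(x)$, so openness says nothing about which component of $\RR_{\t(x)-\tau}$ the backward flow from $x$ enters. What is needed is an argument that the relevant component is $\iota_x(\RR^x_{\t(x)-\tau})$, equivalently (Corollary \ref{cor:pre1}) that $K_Z(x;\gamma(\tau))>0$.

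The paper closes this gap cleanly by replacing the flow line with a regular $H_n$-center: pick $s$ close to $\t(x)$ and a regular $H_n$-center $x'\in\RR_s$ of $x$. Since $\nu_{x;s}$ has full measure on $\iota_x(\RR^x_s)$, the variance condition forces $x'\in\iota_x(\RR^x_s)$; and by Lemma \ref{lem:004abc}, $d_Z(x,x')\le\ep_0^{-1}\sqrt{H_n(\t(x)-s)}$, so $x'$ lies in the small product domain around $x$ in $\RR$ and can be joined to $x$ by a curve there. Then connectedness of $\RR^x$ joins $x'$ to $z$ inside $\iota_x(\RR^x)$. (Equivalently, this shows your $\gamma(\tau)$ does lie in $\iota_x(\RR^x_{\t(x)-\tau})$: the $H_n$-center lands in both the product-domain slice and in $\iota_x(\RR^x_s)$, and the former is connected, hence contained in the latter component.) With this fix your argument goes through.
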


\begin{proof}
By Proposition \ref{prop:pre5}, there exists a time $t \in (-(1-2\sigma)T, \max\{\t(x), \t(y)\}-d^2_Z(x, y))$ such that $\iota_x(\RR^x_t)=\iota_y(\RR^y_t)$. We fix a point $z \in \iota_x(\RR^x_t)=\iota_y(\RR^y_t)$. Since $x \in \RR$, we can choose a time $s$ close to $\t(x)$ and find a regular $H_n$-center $x' \in \RR_s$ such that $x$ and $x'$ can be connected by a curve in $\RR$. On the other hand, since $\RR^x$ is connected, $z$ and $x'$ can be connected by a curve in $\iota_x(\RR^x)$. Therefore, $z$ can be connected to $x$ by a curve in $\RR$. Similarly, $z$ can be connected to $y$ by a curve in $\RR$. It follows that $x$ and $y$ lie in the same connected component of $\RR$.
\end{proof}

Next, we prove monotonicity.

\begin{lem}\label{lem:limitmono}
For any $x, y \in Z$ and $s < \min\{\t(x),\t(y)\}$, the function
\begin{align*}
	s\mapsto d_{W_1}^{\RR_s}(\nu_{x;s},\nu_{y;s})
\end{align*}
is nondecreasing.
\end{lem}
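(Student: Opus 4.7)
My plan is to derive the monotonicity via the reproduction formula \eqref{eq:generalrep}, reducing the claim to the pointwise bound
\begin{equation*}
	d_{W_1}^{\RR_{s_1}}(\nu_{w_1;s_1},\nu_{w_2;s_1}) \le d_{g^Z_{s_2}}(w_1,w_2)
\end{equation*}
for all $w_1,w_2 \in \RR_{s_2}$ (with $d_{g^Z_{s_2}}(w_1,w_2)=+\infty$ if they lie in different connected components of $\RR_{s_2}$). Granting this, for any $s_1<s_2<\min\{\t(x),\t(y)\}$ with finite $d_{W_1}^{\RR_{s_2}}(\nu_{x;s_2},\nu_{y;s_2})$, I will take an optimal coupling $\pi$ of $\nu_{x;s_2}$ and $\nu_{y;s_2}$, and lift it to a coupling $\tilde\pi=\int \pi_{w_1,w_2}\,\mathrm{d}\pi(w_1,w_2)$ of $\nu_{x;s_1}$ and $\nu_{y;s_1}$, where $\pi_{w_1,w_2}$ is a measurable selection of optimal couplings of $\nu_{w_1;s_1}$ and $\nu_{w_2;s_1}$. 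The reproduction formula guarantees $\tilde\pi$ has the correct marginals, and integrating the pointwise bound against $\pi$ yields the desired monotonicity.

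The pointwise bound is trivial when $w_1,w_2$ lie in different components, so I may assume they share a common component $C \subset \RR_{s_2}$. Since $w_1 \in \RR$, flowing $w_1$ forward along $\partial_\t$ by a small $\delta>0$ produces $z \in \RR$ with $\t(z)=s_2+\delta$; by Corollary \ref{cor:pre2} together with Lemma \ref{lem:pre4}, $\iota_z(\RR^z_{s_2})$ is exactly the component containing $w_1$, and therefore also contains $w_2$. Setting $w_j':=\iota_z^{-1}(w_j) \in \RR^z_{s_2}$, Proposition \ref{prop:embed2}(i) gives $d_{g^z_{s_2}}(w_1',w_2')=d_{g^Z_{s_2}}(w_1,w_2)$, and Proposition \ref{prop:embed2}(ii) identifies $(\iota_z)_*\nu^z_{w_j';s}=\nu_{w_j;s}$ for all $s \le s_2$.

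Next, I will invoke the smooth convergence from Theorem \ref{Fconvergence}(3) for the metric flow $\XX^z$: for large $i$ there exist diffeomorphisms $\phi_i^z:U_i^z \to V_i^z \subset M_i \times \III$ so that $w_{j,i}^*:=\phi_i^z(w_j')$ satisfy $d_{g_i(s_2)}(w_{1,i}^*,w_{2,i}^*) \to d_{g^z_{s_2}}(w_1',w_2')$. The smooth Ricci flow monotonicity (Proposition \ref{monotonicityW1}) yields
\begin{equation*}
	d_{W_1}^{s_1}(\nu_{w_{1,i}^*;s_1},\nu_{w_{2,i}^*;s_1}) \le d_{g_i(s_2)}(w_{1,i}^*,w_{2,i}^*).
\end{equation*}
For any $s_1$ at which $\XX^z$ is continuous (a co-countable set of times), the isometric embeddings $\varphi^i_{s_1}$ from the $\IF$-correspondence force $d_{W_1}^{s_1}(\nu_{w_{1,i}^*;s_1},\nu_{w_{2,i}^*;s_1}) \to d_{W_1}^{\XX^z_{s_1}}(\nu^z_{w_1';s_1},\nu^z_{w_2';s_1})$, and Theorem \ref{Fconvergence}(4) together with Proposition \ref{prop:embed2} identifies the right-hand side with $d_{W_1}^{\RR_{s_1}}(\nu_{w_1;s_1},\nu_{w_2;s_1})$. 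This establishes the pointwise bound for all such generic $s_1$.

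The main obstacle is extending the pointwise bound from the co-countable continuity set of $\XX^z$ to an arbitrary $s_1 \in [-(1-2\sigma)T,s_2)$. I plan to handle this by selecting a sequence of continuity times $s_{1,j} \searrow s_1$: smoothness of the conjugate heat kernel on the Ricci flow spacetime $\RR$ guarantees $\nu_{w_j;s_{1,j}} \to \nu_{w_j;s_1}$ weakly within the common component, while the uniform variance bound of Proposition \ref{lem:004ac}, combined with the lower semi-continuity of $d_{W_1}$ under weak convergence of measures having uniformly bounded first moments, transfers the bound at $s_{1,j}$ to the general $s_1$ and completes the proof.
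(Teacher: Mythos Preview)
Your approach differs substantially from the paper's. The paper's proof is essentially two lines: once $\nu_{x;s}$ and $\nu_{y;s}$ are seen to be supported on the same component $\iota_x(\RR^x_s)=\iota_y(\RR^y_s)$ (the infinite-$W_1$ case being trivial via Lemma~\ref{lem:pre4}), one regards $(\iota_x^{-1})_*\nu_{y;s}$ as a conjugate heat flow on the metric flow $\XX^x$, and the monotonicity is literally \cite[Proposition~3.16(b)]{bamler2023compactness}. Your proposal instead re-derives the metric-flow monotonicity from scratch via smooth approximation: the pointwise bound you seek is \cite[Proposition~3.16(a)]{bamler2023compactness} for $\XX^z$, and your coupling step is precisely how (b) is deduced from (a). This is more hands-on but considerably longer, and it introduces technical wrinkles (existence of optimal couplings on the incomplete space $\RR_{s_2}$, measurable selection) that the paper's route sidesteps.

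There is, however, a genuine gap in your extension step. The ``lower semi-continuity of $d_{W_1}$ under weak convergence'' you invoke presupposes that all measures live on a \emph{single} metric space, but $\nu_{w_l;s_{1,j}}$ and $\nu_{w_l;s_1}$ live on different time slices $(\RR_{s_{1,j}}, d_{g^Z_{s_{1,j}}})$ and $(\RR_{s_1}, d_{g^Z_{s_1}})$; there is no common ambient metric in which to run the standard lower-semicontinuity argument. Attempting to identify nearby slices via the $\partial_\t$-flow on $\RR$ does not immediately work, since that flow is only locally defined and need not carry all of $\RR_{s_{1,j}}$ onto $\RR_{s_1}$; a rigorous argument along these lines would require careful tail control via the variance bound together with uniform convergence of the metrics on exhausting compacta, none of which you supply. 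The cleanest fix is to note that the pointwise bound in fact holds at \emph{every} $s_1$ because $\XX^z$ is a metric flow---but that is exactly \cite[Proposition~3.16]{bamler2023compactness}, at which point your smooth-approximation detour becomes redundant and you arrive at the paper's argument.
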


\begin{proof}
Given $s_1 <s_2 \le \min\{\t(x),\t(y)\}$, since $\nu_{x;s}$ (respectively, $\nu_{y;s}$) has full measure on $\iota_x(\RR^x_s)$ (respectively, $\iota_y(\RR^y_s)$), Lemma \ref{lem:pre4} allows us to assume that $\iota_x(\RR^x_s)=\iota_y(\RR^y_s)$ for any $s \in [s_1, s_2]$. For any two probability measures $\mu,\nu$ on $\RR^x_s$, we have $d_{W_1}^{\XX^x_s}(\mu,\nu)=d_{W_1}^{\RR_s}((\iota_x)_*(\mu),(\iota_x)_*(\nu))$.

Suppose $\t(x) \le \t(y)$. Then, we can regard $\nu_{y;s}$ as a conjugate heat flow (see Definition \ref{def:conju}) on $\XX^x$. The desired monotonicity follows from \cite[Proposition 3.16(b)]{bamler2023compactness}.
\end{proof}

Next, we have the following heat kernel estimate, which follows from Theorem \ref{heatkernelupperbdgeneral} (ii) and the same argument as in \cite[Lemma 15.9 (a)]{bamler2020structure}.

\begin{thm}\label{thm:upper1}
For any $x \in Z$ and $s <\t(x)$, we have 
	\begin{align*}
		K_Z(x;y)\leq \frac{C(n, Y, \ep)}{(\t(x)-s)^{n/2}}\exp\lc -\frac{d_{g^Z_s}^2(z,y)}{(4+\epsilon)(\t(x)-s)}\rc
	\end{align*}
	for any $y \in \RR_s$, where $z \in \RR_s$ is any regular $H_n$-center of $x$.
\end{thm}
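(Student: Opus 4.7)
The idea is to transfer the sharp pre-limit bound of Theorem~\ref{heatkernelupperbdgeneral}(ii) to $Z$ via the smooth convergence on $\RR$ and the heat-kernel convergence of Theorem~\ref{thm:convextra}. By Lemma~\ref{lem:pre4} and Corollary~\ref{cor:pre1}, if $y$ and the regular $H_n$-center $z$ lie in distinct components of $\RR_s$ then $K_Z(x;y)=0$ while $d_{g^Z_s}(z,y)=+\infty$, so the bound is vacuous; I may therefore assume throughout that $y$ and $z$ share a component. Choose $x_i^* \in M_i\times\III$ with $x_i^* \to x$ in the Gromov--Hausdorff sense, and let $\phi_i$ be the diffeomorphisms of Theorem~\ref{thm:smooth1}. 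Set $y_i^*:=\phi_i(y)$ and $z_i^*:=\phi_i(z)$; then $d_{g_i(s)}(z_i^*,y_i^*)\to d_{g^Z_s}(z,y)$ by Theorem~\ref{thm:smooth1}(a), and $K^i(x_i^*;y_i^*)\to K_Z(x;y)$ by Theorem~\ref{thm:convextra}.

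Next I would show that $z_i^*$ is an approximate $H_n$-center of $x_i^*$. Fix a correspondence $\CF$ realizing the $\F$-convergence $(\XX^i,\nu_{x_i^*;\cdot}) \to (\XX^x,\nu_{x;\cdot})$ given by Theorem~\ref{thm:601}, arranged to be uniform at time $s$. Under $\iota_x$, the limit slice $\nu_{x;s}$ on $\RR_s$ corresponds to the metric-flow slice measure on $\XX^x_s$, for which $z$ is a regular $H_n$-center by hypothesis. Combining the $d_{W_1}$-convergence of the slice measures at $s$ with the Gaussian tails of Theorem~\ref{heatkernelupperbdgeneral}(i) (which give uniform tightness of second moments of $\nu_{x_i^*;s}$ around $z_i^*$), Proposition~\ref{takelimitVarW1}(ii) yields
\begin{equation*}
\Var_{g_i(s)}(\delta_{z_i^*},\nu_{x_i^*;s}) \le (H_n+o(1))(\t(x_i^*)-s).
\end{equation*}
Hence any genuine $H_n$-center $\bar z_i^* \in M_i\times\{s\}$ of $x_i^*$ must satisfy $d_{g_i(s)}(\bar z_i^*,z_i^*) \le C(n,Y)\sqrt{\t(x)-s}$ by Proposition~\ref{existenceHncenter}.

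Applying Theorem~\ref{heatkernelupperbdgeneral}(ii) to $\XX^i$ at $(x_i^*,\bar z_i^*)$ with parameter $\ep/2$, and absorbing the $-\NN_{x_i^*}$ factor into the constant via the uniform entropy lower bound, gives
\begin{equation*}
K^i(x_i^*;y_i^*) \le \frac{C(n,Y,\ep)}{(\t(x_i^*)-s)^{n/2}} \exp\left(-\frac{d_{g_i(s)}^2(\bar z_i^*, y_i^*)}{(4+\ep/2)(\t(x_i^*)-s)}\right).
\end{equation*}
Using $d_{g_i(s)}(\bar z_i^*, y_i^*) \ge d_{g_i(s)}(z_i^*, y_i^*)-C(n,Y)\sqrt{\t(x)-s}$ together with the elementary estimate $(a-b)^2/(4+\ep/2) \ge a^2/(4+\ep)-C'(\ep)b^2$, the $O(\sqrt{\t(x)-s})$ shift is absorbed by slightly enlarging the denominator from $4+\ep/2$ to $4+\ep$ and enlarging $C(n,Y,\ep)$. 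Letting $i\to\infty$ and invoking the convergences from the first paragraph then yields the claimed bound.

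The main obstacle is the variance estimate above. Pointwise convergence $K^i\to K_Z$ on $\RR$ does not by itself control second moments, since some mass of $\nu_{x_i^*;s}$ could a priori escape to regions not captured by the smooth limit picture. Preventing this requires combining the $\F$-convergence (giving $d_{W_1}$-tightness of the slice measures through $\CF$) with the sharp Gaussian concentration of Theorem~\ref{heatkernelupperbdgeneral}(i), which supplies the uniform second-moment control needed to pass the variance bound to the limit.
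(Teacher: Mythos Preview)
Your strategy is right, but one step is misjustified. The claim $d_{g_i(s)}(z_i^*,y_i^*)\to d_{g^Z_s}(z,y)$ does \emph{not} follow from Theorem~\ref{thm:smooth1}(a): local $C^\infty$ convergence of metrics gives only $\limsup_i d_{g_i(s)}(z_i^*,y_i^*)\le d_{g^Z_s}(z,y)$, since minimizing geodesics in $M_i$ may pass through regions not covered by $\phi_i$. The missing $\liminf$ direction is exactly what you need (the pre-limit bound involves $\exp(-d_{g_i(s)}^2/\ldots)$, so a \emph{lower} bound on the distance is required), and it genuinely uses the $\F$-machinery you invoke later: when the $\F$-convergence to $\XX^x$ is uniform at $s$, Theorem~\ref{Fconvergence}(3)(c) gives strict convergence of $z_i^*,y_i^*$ in $\XX^x_s$, whence the distances converge to $d^x_s$, which equals $d_{g^Z_s}$ on $\RR^x_s$ by Theorem~\ref{Fconvergence}(4). (Your appeal to Proposition~\ref{takelimitVarW1}(ii) for the variance is also misplaced---that proposition gives $d_{W_p}$-convergence for $p<2$, not a variance bound---but the $H_n$-center passage can in fact be shortened to a $d_{W_1}$ triangle inequality: $d_{g_i(s)}(\bar z_i^*,z_i^*)\le \sqrt{H_n\tau_i}+d_{W_1}(\nu_{x_i^*;s},\nu_{x;s})+\sqrt{H_n\tau}+o(1)$.) With these corrections your argument goes through.

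The paper takes a shorter route. The passage of Theorem~\ref{heatkernelupperbdgeneral}(ii) to $\F$-limits is the content of \cite[Lemma~15.9(a)]{bamler2020structure}, so the bound already holds on $\XX^x$ with respect to $d^x_s$ and any $H_n$-center there. A regular $H_n$-center $z\in\RR_s$ corresponds via $\iota_x^{-1}$ to an $H_n$-center in $\XX^x_s$; Proposition~\ref{prop:embed2} and Theorem~\ref{Fconvergence}(4) then identify $K^x$ with $K_Z$ and $d^x_s$ with $d_{g^Z_s}$ on the component $\iota_x(\RR^x_s)$, finishing the proof in one line. This avoids redoing the limit and sidesteps both the time-uniformity issue and the variance passage you correctly flag as the main obstacle.
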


Using Theorem \ref{thm:upper1}, we can prove, as in Proposition \ref{HncenterScal} (i), the following lemma. Here, for any $x \in \RR$, we denote by $x_t \in \RR_t$ the flow of $x$ with respect to $\partial_\t$.

\begin{lem}\label{lem:center1}
For any $x \in \RR_t$, if $x_s \in \RR_s$ and $|\scal_{g^Z}(x_s)| \le R_0 r^{-2}$ for any $s \in [t-r^2, t]$, then
	\begin{align*}
d_{g^Z_{t-r^2}}(x_{t-r^2}, z) \le C(n, Y, R_0) r,
	\end{align*}
	where $z \in \RR_{t-r^2}$ is any regular $H_n$-center of $x$. In particular, $x_{t-r^2}$ is a regular $H$-center (see Definition \ref{def:rhcenter}) of $x$ for a constant $H=H(n,Y,R_0)>0$.
\end{lem}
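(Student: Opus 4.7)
The strategy is to reduce the assertion to the smooth Ricci flow estimate of Proposition~\ref{HncenterScal}(i) on the approximating closed flows, then pass to the limit using the smooth $\F$-convergence of Theorem~\ref{Fconvergence}(3) together with the intrinsic embedding $\iota_x$ provided by Proposition~\ref{prop:embed2}. Fix a metric flow $\XX^x$ associated with $x$, with regular part $\RR^x$, and denote by $x_s^x\in\RR^x_s$ the $\partial_{\t^x}$-flow of $x$ in $\RR^x$. By Proposition~\ref{prop:embed2}(i), $\iota_x$ is an isomorphism of Ricci flow spacetimes onto $\iota_x(\RR^x)$ intertwining $\partial_{\t^x}$ and $\partial_\t$, hence $\iota_x(x_s^x)=x_s$ (this uses that $\iota_x(\RR^x_s)$ is a connected component of $\RR_s$ by Corollary~\ref{cor:pre2}, so the worldline cannot leave $\iota_x(\RR^x)$). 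In particular the scalar curvature bound transfers: $|\scal_{g^x}(x_s^x)|\le R_0 r^{-2}$ for $s\in[t-r^2,t]$, and the compact worldline $\{x_s^x:s\in[t-r^2,t]\}$ lies in $\RR^x$.

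Using Theorem~\ref{Fconvergence}(3), extract a sequence $x_i^*\in\XX^i_\III$ with $x_i^*\xrightarrow[i\to\infty]{\CF,J} x$ and with spatial worldlines $x_{i,s}^*$ converging smoothly to $x_s^x$ uniformly for $s\in[t-r^2,t]$; hence $|\scal_{g_i}(x_{i,s}^*)|\le 2R_0r^{-2}$ for large $i$ and all such $s$. Proposition~\ref{HncenterScal}(i) applied to $\XX^i$ yields, for any $H_n$-center $\tilde z_i\in\XX^i_{t-r^2}$ of $x_i^*$,
\[
d_{g_i(t-r^2)}\bigl(x_{i,t-r^2}^*,\tilde z_i\bigr)\le C_1(n,Y,R_0)\,r,
\]
which combined with Theorem~\ref{heatkernelupperbdgeneral}(ii) gives the on-worldline lower bound $K^i(x_i^*;x_{i,t-r^2}^*)\ge c(n,Y,R_0)r^{-n}$. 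The smooth convergence of heat kernels in Theorem~\ref{Fconvergence}(3)(a) then passes this bound to the limit: $K^x(x;x_{t-r^2}^x)\ge cr^{-n}$, and by Proposition~\ref{prop:embed2}(iii),
\[
K_Z(x;x_{t-r^2})=K^x(x;x_{t-r^2}^x)\ge c(n,Y,R_0)\,r^{-n}.
\]

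For the given regular $H_n$-center $z\in\RR_{t-r^2}$, Theorem~\ref{thm:upper1} (take $\ep=1$) yields
\[
c\,r^{-n}\le K_Z(x;x_{t-r^2})\le\frac{C(n,Y)}{r^n}\exp\!\Bigl(-\frac{d^2_{g^Z_{t-r^2}}(z,x_{t-r^2})}{5\,r^2}\Bigr),
\]
and taking logarithms gives $d_{g^Z_{t-r^2}}(x_{t-r^2},z)\le C(n,Y,R_0)\,r$, as claimed. The final ``in particular'' statement follows by the variance triangle inequality of Lemma~\ref{lem:var}:
\[
\sqrt{\Var_{\RR_{t-r^2}}(\delta_{x_{t-r^2}},\nu_{x;t-r^2})}\le d_{g^Z_{t-r^2}}(x_{t-r^2},z)+\sqrt{\Var_{\RR_{t-r^2}}(\delta_z,\nu_{x;t-r^2})}\le\bigl(C+\sqrt{H_n}\bigr)r,
\]
so $x_{t-r^2}$ is a regular $H$-center of $x$ with $H=(C+\sqrt{H_n})^2=H(n,Y,R_0)$.

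The main technical delicacy lies in Step~1, namely verifying that the whole worldline $\{x_s:s\in[t-r^2,t]\}$ is captured inside $\iota_x(\RR^x)$ (so that we may lift it to $\RR^x$ and then to the smooth approximating flows). This uses that the worldline is a continuous curve in $\RR$ starting at $x\in\iota_x(\RR^x)$ and that each time-slice $\iota_x(\RR^x_s)$ is a whole connected component of $\RR_s$ (Corollary~\ref{cor:pre2}), which rules out a worldline escaping the image. Once this is in hand, the remaining steps are a direct interplay between the smooth estimates on $\XX^i$, the smooth convergence, and the two-sided heat kernel control provided by Theorem~\ref{thm:upper1}.
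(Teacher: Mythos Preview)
There is a genuine gap at the step where you claim that the distance bound $d_{g_i(t-r^2)}(x_{i,t-r^2}^*,\tilde z_i)\le C_1r$ ``combined with Theorem~\ref{heatkernelupperbdgeneral}(ii) gives the on-worldline lower bound $K^i(x_i^*;x_{i,t-r^2}^*)\ge c\,r^{-n}$''. Theorem~\ref{heatkernelupperbdgeneral}(ii) is a Gaussian \emph{upper} bound; a distance estimate together with an upper bound cannot produce a pointwise lower bound on the heat kernel. What is actually needed here is Perelman's reduced-distance lower bound $K(x,t_0;y,s)\ge(4\pi(t_0-s))^{-n/2}e^{-\ell(y,s)}$: along the $\partial_{\t_i}$-worldline the curve has zero spatial speed, so the $\LL$-length is $\int_0^{r^2}\sqrt{\tau}\,\scal\,d\tau\le C(R_0)r$ by the scalar curvature hypothesis, giving $\ell(x_{i,t-r^2}^*)\le C(R_0)$ and hence $K^i(x_i^*;x_{i,t-r^2}^*)\ge c(n,R_0)r^{-n}$. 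With this in place your Steps~3--4 go through.

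This is in fact precisely the mechanism behind Proposition~\ref{HncenterScal}(i) (via the references there), and the paper's intended argument is simply to run that same computation \emph{directly on the limit space}: the worldline $\{x_s:s\in[t-r^2,t]\}$ lies in $\RR$ by hypothesis, the reduced-distance lower bound persists there, and Theorem~\ref{thm:upper1} supplies the matching upper bound. Your route through the approximating flows $\XX^i$ is not wrong once the lower bound is fixed, but it is a detour; in particular the whole discussion of lifting the worldline into $\iota_x(\RR^x)$ via Proposition~\ref{prop:embed2} is unnecessary, since Theorem~\ref{thm:smooth1} already gives smooth convergence on compact subsets of $\RR$ directly, and Theorem~\ref{thm:convextra} passes the heat kernel through without any appeal to the associated metric flow $\XX^x$.
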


Next, we show that there are at most countably many connected components for $\RR_t$.

\begin{prop}\label{prop:connectnumber}
For any $t \in \III$, the time slice $\RR_t$ has at most countably many connected components.
\end{prop}

\begin{proof}
We consider a time $t_0 \in \III$. Suppose $\RR_{t_0}$ has connected components $\{U_{\alpha}\}$ for $\alpha \in \mathcal A$. For each $\alpha \in \mathcal A$, we choose $x_{\alpha} \in U_{\alpha}$ and a small constant $r_{\alpha}>0$ such that
	\begin{align*}
P_{\alpha}:=\{x_t \mid x\in B_{g^Z_{t_0}}(x_{\alpha}, r_{\alpha}),\, t \in [t_0-r_{\alpha}^2, t_0+r_{\alpha}^2] \cap \III\} \subset \RR,
	\end{align*}
and $|\Rm_{g^Z}| \le r_{\alpha}^{-2}$ on $P_{\alpha}$. By the standard distance comparison, there exists $r_{\alpha}'<r_{\alpha}$ such that
	\begin{align*}
P'_{\alpha}:=\{x \mid x\in B_{g^Z_{t}}(x_{\alpha, t}, r'_{\alpha}),\, t \in [t_0-r_{\alpha}^2, t_0+r_{\alpha}^2] \cap \III\} \subset P_{\alpha}.
	\end{align*}
It follows from Proposition \ref{equivalenceofballs} and the smooth convergence in Theorem \ref{thm:smooth1} that there exists $r_{\alpha}''<r_{\alpha}'$ such that
	\begin{align*}
B_Z^*(x_{\alpha}, r_{\alpha}'') \subset P'_{\alpha}.
	\end{align*}
It is clear from the definition that $\{P_{\alpha}\}_{\alpha \in \mathcal A}$ is a pairwise disjoint family. Since $(Z, d_Z)$ is separable, we conclude that the set $\mathcal A$ is countable.
\end{proof}

\begin{defn}[Volume] \label{def:vvvv}
For any set $Q \subset Z$, we define its volume by
	\begin{align*}
|Q|:=\abs{Q \cap \RR}_{g^Z},
	\end{align*}
where $|\cdot|_{g^Z}$ denotes the spacetime volume given by the Ricci flow spacetime $(\RR, \t, \partial_{\t}, g^{Z})$. Moreover, for any $Q \subset Z_t$, we set
	\begin{align*}
|Q|_t:=\abs{Q \cap \RR_t}_{g_t^Z}.
	\end{align*}
\end{defn}

First, we prove the upper volume bound.

\begin{prop}\label{prop:uppervolumebound}
If $T<\infty$, then for any $x \in Z$ and $L>0$, we have
	\begin{align*}
|B^*_Z(x,L \sqrt{T})| \le C(n, \sigma, L) T^{\frac n 2+1}.
	\end{align*}
If $T=+\infty$, we also have for any $L>0$,
\begin{align*}
		|B^*_Z(x,L )| \le C(n) L^{n+2}.
\end{align*}
\end{prop}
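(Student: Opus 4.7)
The plan is to reduce the statement to the smooth upper volume bound on the approximating sequence, which is Proposition~\ref{uppervolumebd}, by transferring the computation of $|B^*_Z(x, L\sqrt{T})|$ to $M_i \times \III$ via the diffeomorphisms provided by the smooth convergence Theorem~\ref{thm:smooth1}. Fix $x \in Z$ and $L>0$, and pick $x_i^* \in M_i \times \III$ with $x_i^* \to x$ in the Gromov--Hausdorff sense. By Theorem~\ref{thm:smooth1} there is an increasing exhaustion $U_1 \subset U_2 \subset \cdots \subset \RR$ with $\bigcup_j U_j = \RR$, open sets $V_j \subset M_j \times \III$, and time-preserving diffeomorphisms $\phi_j : U_j \to V_j$ such that $\phi_j^* g^j \to g^Z$ smoothly on each $U_k$ and, for each $y \in \RR$, $\phi_j^{-1}(y_j^*) \to y$ in $\RR$ whenever $y_j^* \to y$ in the Gromov--Hausdorff sense.

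For each fixed $k$, I would choose a compact set $K_k \subset U_k$ with $K_k \nearrow \RR$ as $k \to \infty$, and set $K_k' := K_k \cap B^*_Z(x, L\sqrt{T})$. The first key step is to show that for every $\delta>0$ there is $i_0=i_0(k,\delta)$ such that for $i \ge i_0$,
\begin{equation*}
\phi_i(K_k') \subset B^*_i\bigl(x_i^*,\, L\sqrt{T}+\delta\bigr).
\end{equation*}
This is where Gromov--Hausdorff convergence enters: by Theorem~\ref{thm:smooth1}(c), the map $\phi_i$ restricted to the compact set $K_k$ approximates the inclusion $K_k \hookrightarrow Z$ in the Gromov--Hausdorff sense, so $\sup_{y \in K_k} |d_i^*(x_i^*, \phi_i(y)) - d_Z(x,y)| \to 0$ as $i \to \infty$ (the sup is controlled uniformly since $K_k$ is compact and a Gromov--Hausdorff $\ep$-approximation gives uniform estimates on bounded subsets). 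The second key step is that, because $\phi_j^* g^j \to g^Z$ in $C^0_{\mathrm{loc}}$ on $U_k$, the pushforward volume forms satisfy $|\phi_i(K_k')|_{g^i} \to |K_k'|_{g^Z}$.

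Combining these two steps with Proposition~\ref{uppervolumebd} yields, in the case $T<\infty$,
\begin{equation*}
|K_k'|_{g^Z} = \lim_{i\to\infty} |\phi_i(K_k')|_{g^i} \le \limsup_{i\to\infty} |B^*_i(x_i^*, 2L\sqrt{T})| \le C(n,\sigma,L)\, T^{n/2+1},
\end{equation*}
and similarly in the case $T=+\infty$ with $C(n)L^{n+2}$ on the right-hand side. Sending $k \to \infty$ and using the monotone convergence theorem together with $|B^*_Z(x,L\sqrt{T})| = |B^*_Z(x,L\sqrt{T}) \cap \RR|_{g^Z} = \lim_{k\to\infty} |K_k'|_{g^Z}$ (by Definition~\ref{def:vvvv}) gives the claimed bounds.

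The main technical point is the uniform comparison of $d_i^*(x_i^*, \phi_i(y))$ with $d_Z(x,y)$ for $y$ ranging over the compact set $K_k'$; this is a routine consequence of the definition of pointed Gromov--Hausdorff convergence but requires care since $x$ itself may lie in the singular set $\MS$ and is not in the image of any $\phi_i$. Apart from this, the argument is a direct transfer of the smooth upper bound via the smooth convergence on the regular part.
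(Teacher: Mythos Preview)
Your proof is correct and follows essentially the same strategy as the paper: push compact pieces of $B^*_Z(x,L\sqrt T)\cap\RR$ through the diffeomorphisms $\phi_i$, invoke Proposition~\ref{uppervolumebd} on the approximating flows, and pass to the limit. The only cosmetic difference is that the paper first reduces to the case $x\in\RR$ (using that $\RR_{\III^-}$ is dense and replacing $x$ by a nearby regular point $x'$ with $B^*_Z(x,L)\subset B^*_Z(x',2L)$), then takes $x_i^*=\phi_i(x)$; this sidesteps precisely the ``main technical point'' you flag about $x$ possibly lying in $\MS$, making the uniform distance comparison on $K_k'$ immediate from Theorem~\ref{thm:smooth1}(c).
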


\begin{proof}
We prove only the case $T<\infty$; the case $T=+\infty$ is analogous.

Without loss of generality, we may assume $T=1$ and $x \in \RR$. Indeed, there exists a point $x' \in \RR \cap B^*_Z(x, L )$, since otherwise the conclusion holds trivially. Then, we can consider the ball $B^*_Z(x', 2L )$ since $B^*_Z(x, L ) \subset B^*_Z(x', 2L )$.
 
For any spacetime compact set $K \subset B^*_Z(x,L )\cap \RR$ containing $x$, it follows from Theorem \ref{thm:smooth1} that for sufficiently large $i$, $K \subset U_i$. Moreover, if we set $x_i^*=\phi_i(x) \in M_i \times \III$, then it follows from Theorem \ref{thm:smooth1} (c) that 
\begin{align*}
\phi_i(K) \subset B^*_i(x_i^*,2L),
\end{align*}
where $B^*_i(x_i^*,2L)$ denotes the ball with respect to $d_i^*$ on $M_i \times \III$. By Proposition \ref{uppervolumebd}, we conclude that
\begin{align*}
\abs{\phi_i(K)}_{g_i} \le \abs{B^*_i(x_i^*,2L)}_{g_i} \le C(n, \sigma, L).
\end{align*}
Thus, from the smooth convergence, we have
\begin{align*}
|K| \le C(n, \sigma, L).
\end{align*}
By approximation, we conclude
\begin{align*}
\abs{B^*_Z(x,L )} \le C(n, \sigma, L).
\end{align*}
\end{proof}

Next, we prove local volume bounds.

\begin{prop}\label{prop:volumebound}
For any $x \in Z$ and $r>0$ with $\t(x)-r^2 \in \III^-$, we have
	\begin{align*}
0<c(n,Y, \sigma) r^{n+2} \le 	|B^*_Z(x ,r)|\le C(n, \sigma) r^{n+2}.
	\end{align*}
\end{prop}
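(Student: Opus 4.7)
The plan is to establish the two bounds separately, mirroring the structure of Proposition \ref{propdistance3}. The upper bound will be transferred from the approximating flows $\XX^i$ via the smooth convergence of Theorem \ref{thm:smooth1}, while the lower bound will be proved intrinsically on $Z$ using the conjugate heat kernel measure based at $x$ together with its pointwise heat kernel upper bound.

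For the upper bound, I would fix a sequence $x_i^* \in M_i \times \III$ with $x_i^* \to x$ in the Gromov--Hausdorff sense, and let $K \subset B^*_Z(x, r) \cap \RR$ be an arbitrary compact subset. For sufficiently large $i$, Theorem \ref{thm:smooth1} gives $K \subset U_i$, and Theorem \ref{thm:smooth1}(c) combined with the compactness of $K$ yields $d_i^*(x_i^*, \phi_i(y)) \to d_Z(x, y)$ uniformly in $y \in K$, so that $\phi_i(K) \subset B^*(x_i^*, r + \ep)$ for any fixed $\ep > 0$. Since $\t(x) - r^2 > -(1 - 2\sigma)T > -(1-\sigma)T$, the hypothesis of Proposition \ref{propdistance3}(ii) applies to $x_i^*$ and $r + \ep$ for large $i$, giving $|\phi_i(K)|_{g_i} \le C(n, \sigma)(r + \ep)^{n+2}$. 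The smooth convergence $\phi_i^* g^i \to g^Z$ then yields $|K|_{g^Z} = \lim_i |\phi_i(K)|_{g_i} \le C(n, \sigma)(r+\ep)^{n+2}$; exhausting $B^*_Z(x, r) \cap \RR$ by compact sets and letting $\ep \to 0$ completes the upper bound.

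For the lower bound, set $t = \t(x)$ and let $s$ vary over an interval $[c_1 r, c_2 r]$ with small dimensional constants $c_1 < c_2$ to be specified. By Proposition \ref{lem:004ac} and Lemma \ref{lem:004abc}, a regular $H_n$-center $z_s \in \RR_{t - s^2}$ of $x$ exists with $d_Z(x, z_s) \le \ep_0^{-1} \sqrt{H_n}\, s$; applying Lemma \ref{lem:004abcx} with $L = 2$ gives
\begin{equation*}
\nu_{x; t - s^2}\bigl(B_{g^Z_{t - s^2}}(z_s, \sqrt{2 H_n}\, s)\bigr) \ge \tfrac{1}{2}.
\end{equation*}
Combining this with the pointwise bound $K_Z(x; \cdot) \le C(n, Y)\, s^{-n}$ on $\RR_{t - s^2}$, obtained from Theorem \ref{thm:upper1} after discarding the Gaussian factor, forces $|B_{g^Z_{t-s^2}}(z_s, \sqrt{2H_n}\, s)|_{t-s^2} \ge c(n, Y)\, s^n$. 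For any $y$ in this Riemannian ball, choosing the smooth approximations $y_i^* := \phi_i(y),\, z_i^* := \phi_i(z_s) \in M_i \times \{t - s^2\}$, applying Proposition \ref{distancefunction1}(1) to $\XX^i$, and passing to the limit via the smooth convergence of the metrics on $\RR$ and the definition of pointed Gromov--Hausdorff convergence yields $d_Z(y, z_s) \le \ep_0^{-1} d_{g^Z_{t-s^2}}(y, z_s) \le \ep_0^{-1}\sqrt{2H_n}\, s$. Hence $B_{g^Z_{t-s^2}}(z_s, \sqrt{2H_n}\, s) \subset B_Z^*(x, r) \cap \RR_{t-s^2}$ once $c_2$ is chosen so that $\ep_0^{-1}(\sqrt{H_n} + \sqrt{2H_n})\, c_2 \le 1$. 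Integrating the slice bound over $\tau = s^2 \in [c_1^2 r^2,\, c_2^2 r^2]$ finally yields $|B_Z^*(x, r)| \ge c(n, Y, \sigma)\, r^{n+2}$.

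The only delicate point is the intrinsic distance comparison $d_Z(y, z_s) \le \ep_0^{-1} d_{g^Z_{t-s^2}}(y, z_s)$ between regular points on a common time-slice, which has not been isolated as a prior lemma; as sketched, I will resolve it by descending to the approximating flows $\XX^i$, using that $\phi_i$ is time-preserving so that the comparison there is a direct application of Proposition \ref{distancefunction1}(1), and then taking the limit using the smooth convergence of the Riemannian metrics on $\RR$.
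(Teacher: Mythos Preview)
Your proof is correct and follows the paper's strategy closely. The upper bound argument is identical to the paper's (compare Proposition~\ref{prop:uppervolumebound}). For the lower bound, the paper likewise uses Theorem~\ref{thm:upper1} together with the $H_n$-center to obtain a slice-volume lower bound and then integrates in time; the only difference is how the containment $B_{g^Z_{t-s^2}}(z_s,\sqrt{2H_n}\,s)\subset B^*_Z(x,r)$ is justified. Instead of your approximation argument via the $\XX^i$, the paper works inside the associated metric flow $\XX^x$, where the analogue of Proposition~\ref{distancefunction1}(1) holds verbatim (as noted after Proposition~\ref{checkd*onlimit}), and then pushes forward to $Z$ via the isometric embedding $\iota_x$ of Theorem~\ref{thm:idenproof}, using $(\iota_x)^*g^Z=g^x$ from Proposition~\ref{prop:embed2}(i). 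This route bypasses the ``delicate point'' you flagged, since no passage back to the approximating flows is needed once $\iota_x$ is available; your alternative is also valid but requires the extra observation that a curve in $\RR_{t-s^2}$ of length close to $d_{g^Z_{t-s^2}}(y,z_s)$ is compact and hence eventually lies in $U_i$, so that $\limsup_i d_{g_i(t-s^2)}(\phi_i(y),\phi_i(z_s))\le d_{g^Z_{t-s^2}}(y,z_s)$.
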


\begin{proof}
Given $x \in Z$, we consider the associated metric flow $\XX^x$. If $z\in \XX^x_{\t(x)-s^2}$ is a regular $H_n$-center of $x$, then by Lemma \ref{lem:004abcx} and Theorem \ref{thm:upper1}, we have 
\begin{align*}
	|B_{g^x_{\t(x)-s^2}}(z,\sqrt{2H_n}s)|\geq c(n,Y,\sigma) s^n>0.
\end{align*}
 As in Proposition \ref{propdistance3}, that
	\begin{align*}
	|B^*_{\XX^x}(x ,r) \cap \RR^x_{[\t(x)-c_0 r^2, \t(x)-c_1 r^2]}| \ge c(n,Y, \sigma) r^{n+2}>0.
	\end{align*}
	for positive constants $c_0=c_0(n)$ and $c_1=c_1(n)$. Thus, through $\iota_x$, we obtain
	\begin{align*}
	|B^*_{Z}(x ,r) \cap \RR_{[\t(x)-c_0 r^2, \t(x)-c_1 r^2]}| \ge c(n,Y, \sigma) r^{n+2},
	\end{align*}
which implies the lower bound. 

The upper bound follows from the proof of Proposition \ref{prop:uppervolumebound}, using the upper bound in Proposition \ref{propdistance3}.
\end{proof}

We also have the following volume upper bound, which follows directly from the same argument as in the proof of Proposition \ref{prop:uppervolumebound} by using Proposition \ref{propdistance3} (i).

\begin{prop}\label{propvolumeslicelimit}
For any $x \in Z$ and $r>0$ with $\t(x)-r^2 \in \III^-$ and any $t \in \R$, we have
	\begin{equation*}
\abs{B_Z^*(x,r) \bigcap Z_t}_t \leq C(n,\sigma) r^{n}.
	\end{equation*}
\end{prop}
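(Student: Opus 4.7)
\textbf{Proof proposal for Proposition \ref{propvolumeslicelimit}.} The plan is to reduce the slicewise volume bound on $Z$ to the corresponding slicewise bound for the approximating Ricci flows, established in Proposition \ref{propdistance3}~(i), by exploiting the smooth convergence of Theorem~\ref{thm:smooth1}. This mirrors exactly how Proposition~\ref{prop:uppervolumebound} was handled for the full spacetime volume, except that we now restrict to a single time-slice and invoke the slice volume bound of Proposition~\ref{propdistance3}~(i) in place of the spacetime volume bound of Proposition~\ref{uppervolumebd}.

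The argument proceeds as follows. If $B^*_Z(x,r)\cap Z_t=\emptyset$ the bound is trivial, so assume otherwise; by the $2$-H\"older property of $\t$ this forces $t\in(\t(x)-r^2,\t(x)+r^2)$. By Definition~\ref{def:vvvv} it suffices to bound $|K|_{g^Z_t}$ uniformly over all compact subsets $K$ of the open set $B^*_Z(x,r)\cap\RR_t$. Fix such a $K$ and pick any sequence $x_i^*\in M_i\times\III$ with $x_i^*\to x$ in the Gromov--Hausdorff sense; this is possible by Theorem~\ref{GHlimitd*}. Since $K\subset\RR$ is compact, Theorem~\ref{thm:smooth1} ensures $K\subset U_i$ for large $i$, and the time-preserving diffeomorphisms $\phi_i:U_i\to V_i$ map $K$ into the slice $M_i\times\{t\}$ with $\phi_i^* g^i\to g^Z$ smoothly on $K$. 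Moreover, combining the $d^*$-convergence of $\phi_i(y)$ to $y$ (Theorem~\ref{thm:smooth1}~(c)) with the compactness of $K$, one obtains
\[
\sup_{y\in K}\bigl|\,d^*_i(x_i^*,\phi_i(y))-d_Z(x,y)\,\bigr|\longrightarrow 0,
\]
so for any fixed $\ep>0$ and all sufficiently large $i$ one has $\phi_i(K)\subset B^*_i(x_i^*,r+\ep)\cap M_i\times\{t\}$.

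Since $\t(x)-r^2>-(1-2\sigma)T>-(1-\sigma)T$, the same inequality holds for $x_i^*$ with $r$ replaced by $r+\ep$ once $i$ is large, so Proposition~\ref{propdistance3}~(i) applied to $\XX^i$ yields
\[
\bigl|B^*_i(x_i^*,r+\ep)\cap M_i\times\{t\}\bigr|_{g_i(t)}\leq C(n,\sigma)(r+\ep)^n.
\]
The smooth convergence $\phi_i^*g^i\to g^Z$ on $K$ implies $|\phi_i(K)|_{g_i(t)}\to|K|_{g^Z_t}$, hence $|K|_{g^Z_t}\leq C(n,\sigma)(r+\ep)^n$. Letting $\ep\to 0$ and then exhausting $B^*_Z(x,r)\cap\RR_t$ by compact subsets yields the claim.

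The only mildly subtle step is the uniform convergence $d^*_i(x_i^*,\phi_i(y))\to d_Z(x,y)$ on $K$, which however is immediate from the pointed Gromov--Hausdorff convergence of $(M_i\times\III,d^*_i,p_i^*,\t_i)$ combined with the fact that, by Theorem~\ref{thm:smooth1}~(c), the maps $\phi_i$ realize the GH approximation on compact subsets of $\RR$; there is no genuine obstacle, and the proof is essentially the slice-level version of Proposition~\ref{prop:uppervolumebound}.
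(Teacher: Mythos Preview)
Your proposal is correct and follows exactly the approach indicated in the paper: the paper states that the proposition ``follows directly from the same argument as in the proof of Proposition~\ref{prop:uppervolumebound} by using Proposition~\ref{propdistance3}~(i),'' which is precisely the reduction you carry out. The only cosmetic difference is that you allow an arbitrary GH-approximating sequence $x_i^*\to x$ rather than first reducing to $x\in\RR$ and setting $x_i^*=\phi_i(x)$, but this is immaterial.
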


Next, we define the closeness of two noncollapsed Ricci flow limit spaces, which is an approximate version of Definition \ref{def:iso}. 

\begin{defn}[$\ep$-close]\label{defn:close}
Suppose $(Z, d_Z, z, \t)$ and $(Z', d_{Z'}, z',\t')$ are two pointed noncollapsed Ricci flow limit spaces, with regular parts given by the Ricci flow spacetimes $(\RR, \t, \partial_\t, g^Z)$ and $(\RR', \t', \partial_{\t'}, g^{Z'})$, respectively, such that $J$ is a time interval.

We say that $(Z, d_Z, z, \t)$ is \textbf{$\ep$-close} to $(Z', d_{Z'}, z',\t')$ \textbf{over $J$} if there exists an open set $U \subset \RR'_J$ and a smooth embedding $\phi: U \to \RR_J$ satisfying the following properties.
\begin{enumerate}[label=\textnormal{(\alph{*})}]
\item $\phi$ is time-preserving.

\item $U \subset B^*_{Z'}(z', \ep^{-1}) \bigcap \RR'_J$ and $U$ is an $\ep$-net of $B^*_{Z'}(z', \ep^{-1}) \bigcap Z'_J$ with respect to $d_{Z'}$.

\item For any $x, y \in U$, we have
	\begin{align*}
\abs{d_Z(\phi(x), \phi(y))-d_{Z'}(x, y)} \le \ep.
	\end{align*}
	
\item The $\ep$-neighborhood of $\phi(U)$ with respect to $d_Z$ contains $B^*_{Z}(z, \ep^{-1}-\ep) \bigcap Z_J$.

\item There exists $x_0 \in U$ such that $d_{Z'}(x_0, z') \le \ep$ and $d_{Z}(\phi(x_0), z) \le \ep$.

\item On $U$, the following estimates hold:
  \begin{align*}
  	\lVert \phi^* g^Z-g^{Z'}\rVert_{C^{[\ep^{-1}]}(U)}+\lVert \phi^* \partial_\t-\partial_{\t'} \rVert_{C^{[\ep^{-1}]}(U)} \le \ep.
  \end{align*} 
\end{enumerate}
\end{defn}

It is clear from the above definition that if $(Z, d_Z, z, \t)$ is $\ep$-close to $(Z', d_{Z'}, z',\t')$ over $J$, then $(Z', d_{Z'}, z',\t')$ is $\Psi(\ep)$-close to $(Z, d_Z, z, \t)$ over $J$, where $\Psi(\ep) \to 0$ as $\ep \to 0$. 

Next, we introduce the following notation.

\begin{notn}\label{not:2}
For a sequence of noncollapsed Ricci limit spaces 
  \begin{align*}
(Z_i, d_{Z_i}, z_i, \t_i) \in \MM(n, Y), \quad i \in \mathbb N \cup \{\infty\},
  \end{align*} 
we write
	\begin{equation*}
		(Z_i, d_{Z_i}, z_i, \t_i) \xrightarrow[i \to \infty]{\quad \hat C^\infty \quad} (Z_\infty, d_{Z_\infty}, z_\infty,\t_\infty),
	\end{equation*}
	if there exists a sequence $\ep_i \to 0$ such that $(Z_i, d_{Z_i}, z_i, \t_i)$ is $\ep_i$-close to $(Z_\infty, d_{Z_\infty}, z_\infty,\t_\infty)$ over $[-\ep_i^{-1}, \ep_i^{-1}]$.
\end{notn}

In particular, it follows from Theorem \ref{thm:smooth1} that the convergence \eqref{eq:conv00} can be strengthened to
	\begin{equation*} 
(M_i \times \III, d^*_i, p_i^*,\t_i) \xrightarrow[i \to \infty]{\quad \hat C^\infty \quad} (Z, d_Z, p_{\infty},\t).
	\end{equation*}

\section{Extended metric flows} \label{sec:extend}

In this section, we consider a Ricci flow limit space $(Z, d_Z, p_{\infty},\t)$ obtained from 
	\begin{equation} \label{eq:conv0a01}
		(M_i \times \III, d^*_i, p_i^*,\t_i) \xrightarrow[i \to \infty]{\quad \mathrm{pGH} \quad} (Z, d_Z, p_{\infty},\t),
	\end{equation}
where $\XX^i=\{M_i^n,(g_i(t))_{t \in \III^{++}}\} \in \MM(n, Y, T)$ with base point $p_i^* \in \XX^i_\III$. 

First, we define a distance function $d_t^{Z}$ on $Z_t$.

\begin{defn}\label{defntimeslicedist}
	For each $t \in \III^-$, we define the distance at the time-slice $Z_t$ by
	\begin{align*}
		d^{Z}_t(x,y):=\lim_{s \nearrow t} d_{W_1}^{\RR_s}(\nu_{x;s},\nu_{y;s}) \in [0,+\infty]\index{$d^Z_t$}
	\end{align*}
	for any $x,y \in Z_t$, where the limit exists by Lemma \ref{lem:limitmono}. Note that if $d^{Z}_t(x,y)<\infty$, then for any $s<t$,
	\begin{align} \label{eq:support1}
\iota_x(\RR^x_s)=\iota_y(\RR^y_s).
\end{align}	
\end{defn}

\begin{rem} \label{rem:indep}
The definition of $d^{Z}_t$ only uses the conjugate heat kernel measures and the spatial metrics on the regular time slices. It is therefore intrinsic to the resulting extended metric flow.
\end{rem}

 \begin{lem}\label{lem:extend1}
 For any $t \in \III^-$, $(Z_t, d^Z_t)$ is an extended metric space.
 \end{lem}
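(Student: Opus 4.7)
The plan is to verify the three axioms of an extended metric: nonnegativity with definiteness, symmetry, and the triangle inequality. Nonnegativity is immediate since $d_{W_1}^{\RR_s}$ is nonnegative, and symmetry follows because $d_{W_1}^{\RR_s}(\nu_{x;s},\nu_{y;s}) = d_{W_1}^{\RR_s}(\nu_{y;s},\nu_{x;s})$ passes to the limit. So the real work is definiteness and the triangle inequality.

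For the triangle inequality, given $x,y,w \in Z_t$, I would apply the triangle inequality for $d_{W_1}^{\RR_s}$ at each time $s<t$ for which $\iota_x(\RR^x_s)$, $\iota_y(\RR^y_s)$, $\iota_w(\RR^w_s)$ all coincide (so that $\nu_{x;s},\nu_{y;s},\nu_{w;s}$ are probability measures on a common metric space), and then take $s \nearrow t$. If two of the time-slice distances coincide component-wise at some $s$ but the third does not, then by Lemma~\ref{lem:pre4} the remaining $d_{W_1}$ is $+\infty$ and the inequality becomes $\le \infty$, which is trivially satisfied. The only case needing care is when, say, $d^Z_t(x,w)<\infty$ and $d^Z_t(w,y)<\infty$: then $\iota_x(\RR^x_s) = \iota_w(\RR^w_s) = \iota_y(\RR^y_s)$ for all $s<t$ near $t$ by Lemma~\ref{lem:pre4} combined with \eqref{eq:support1}, so the three measures sit on the same component and the standard triangle inequality in $d_{W_1}^{\RR_s}$ passes to the limit.

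For definiteness (the main point), suppose $d^Z_t(x,y)=0$. Since $d_{W_1}^{\RR_s}(\nu_{x;s},\nu_{y;s})$ is nondecreasing in $s$ by Lemma~\ref{lem:limitmono}, it must vanish for every $s<t$, so $\nu_{x;s} = \nu_{y;s}$ as probability measures on $\RR_s$. Now for each $s<t$ sufficiently close to $t$, pick a regular $H_n$-center $z_s \in \RR_s$ of $x$ (which exists by Proposition~\ref{lem:004ac}); since $\nu_{x;s}=\nu_{y;s}$, the same point $z_s$ is automatically a regular $H_n$-center of $y$. By Lemma~\ref{lem:004abc} we then have
\begin{align*}
d_Z(x,y) \le d_Z(x,z_s) + d_Z(z_s,y) \le 2\ep_0^{-1}\sqrt{H_n(t-s)}.
\end{align*}
Letting $s \nearrow t$ gives $d_Z(x,y)=0$, hence $x=y$.

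For completeness of $(Z_t,d^Z_t)$, which is not asserted in this particular lemma but foreshadowed by Theorem~\ref{thm:intro4}(a), the main obstacle would lie elsewhere; for this lemma the main obstacle is simply the definiteness argument above, and the key point is that conjugate heat kernel measures concentrate near their base point in the sense measured by regular $H_n$-centers, which lets us transfer vanishing of $d^Z_t$ into vanishing of $d_Z$ and hence set-theoretic equality in $Z$.
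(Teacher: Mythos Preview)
Your proposal is correct and follows essentially the same approach as the paper: the triangle inequality is obtained by passing the $d_{W_1}^{\RR_s}$ triangle inequality to the limit $s\nearrow t$, and definiteness is proved by noting that $d^Z_t(x,y)=0$ forces $\nu_{x;s}=\nu_{y;s}$ for all $s<t$, whence a common regular $H_n$-center together with Lemma~\ref{lem:004abc} yields $d_Z(x,y)\le 2\ep_0^{-1}\sqrt{H_n(t-s)}\to 0$. Your handling of the triangle inequality is a bit more explicit about the $+\infty$ cases than the paper's (which simply works in $[0,+\infty]$ throughout), but the substance is identical.
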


\begin{proof}
 We first prove the triangle inequality. Given $x, y, z\in Z_t$, for any $s<t$, we have
 	\begin{align*}
d_{W_1}^{\RR_s}(\nu_{x;s},\nu_{z;s}) \le d_{W_1}^{\RR_s}(\nu_{x;s},\nu_{y;s})+d_{W_1}^{\RR_s}(\nu_{y;s},\nu_{z;s}) \le d^Z_t(x, y)+d^Z_t(y, z).
	\end{align*}
Letting $s \nearrow t$, we obtain $d^Z_t(x, z) \le d^Z_t(x, y)+d^Z_t(y, z)$.
 
 In addition, if $d^Z_t(x, y)=0$, then by Lemma \ref{lem:limitmono}, $d_{W_1}^{\RR_s}(\nu_{x;s},\nu_{y;s})=0$ for any $s<t$. Since we can regard $\nu_{y;s}$ as a conjugate heat flow on $\XX^x$, we conclude that $\nu_{x;s}=\nu_{y;s}$. Then we take $w_i \in \RR_{s_i}$ for a sequence $s_i \nearrow t$ so that $w_i$ is a common $H_n$-center of $x$ and $y$. By Lemma \ref{lem:004abc}, $x$ and $y$ are both limits of $w_i$ in $d_Z$. Thus, we conclude that $x=y$. 
 \end{proof}

\begin{lem}\label{dtlem1}
	For any $x,y\in Z_t$,
	\begin{align*}
		d_Z(x,y)\leq d^Z_t(x,y).
	\end{align*}
\end{lem}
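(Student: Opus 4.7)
The plan is to approximate the two points by spacetime points coming from the sequence of Ricci flows, convert the defining identity of $d^*_i$ into a $W_1$-lower bound, and then pass to the limit using Lemma~\ref{lem:pre5}. If $d^Z_t(x,y)=\infty$ the inequality is trivial, so we may henceforth assume $d^Z_t(x,y)<\infty$. Write $r:=d_Z(x,y)$ and choose sequences $x_i^*,y_i^*\in M_i\times\III$ with $x_i^*\to x$ and $y_i^*\to y$ in the Gromov--Hausdorff sense; Lemma~\ref{lem:key1} then gives $r_i:=d^*_i(x_i^*,y_i^*)\to r$.

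The second step will be to turn $r_i$ into a Wasserstein estimate at times close to $t$. Setting
\[
s_i^*:=\max\bigl\{\max\{\t_i(x_i^*),\t_i(y_i^*)\}-r_i^2,\,-(1-\sigma)T\bigr\},
\]
equations \eqref{equalityd*1}--\eqref{equalityd*2} yield $d_{W_1}^{s_i^*}(\nu_{x_i^*;s_i^*},\nu_{y_i^*;s_i^*})=\ep_0 r_i$, and Proposition~\ref{monotonicityW1} promotes this to
\[
d_{W_1}^{t_1}(\nu_{x_i^*;t_1},\nu_{y_i^*;t_1})\geq \ep_0 r_i
\]
for every $t_1\in [s_i^*,\min\{\t_i(x_i^*),\t_i(y_i^*)\}]$.

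Next, I would pass to the limit. Since $t\in\III^-$ satisfies $t>-(1-2\sigma)T>-(1-\sigma)T$, the open interval $J_0:=(\max\{t-r^2,\,-(1-\sigma)T\},\,t)$ is non-empty; pick $t_1\in J_0$ that is a continuity time of the associated metric flow $\XX^x$ (possible because $\XX^x$ has only countably many discontinuities). For all sufficiently large $i$ this $t_1$ lies in the admissible range above, so the Ricci-flow bound applies. The quantities $d_{W_1}^{t_1}(\nu_{x_i^*;t_1},\nu_{y_i^*;t_1})$ are uniformly bounded by $H_n$-concentration, so Lemma~\ref{lem:pre5} (used with some $t_0\in(t_1,t)$) both identifies $\iota_x(\RR^x_{t_1})=\iota_y(\RR^y_{t_1})$ and produces
\[
d_{W_1}^{\RR_{t_1}}(\nu_{x;t_1},\nu_{y;t_1})=\lim_{i\to\infty}d_{W_1}^{t_1}(\nu_{x_i^*;t_1},\nu_{y_i^*;t_1})\geq \ep_0 r.
\]
The monotonicity in Lemma~\ref{lem:limitmono} will then give $d^Z_t(x,y)\geq d_{W_1}^{\RR_{t_1}}(\nu_{x;t_1},\nu_{y;t_1})\geq \ep_0 r$, which is the claim.

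The main obstacle is the passage to the limit: one has to simultaneously match the $\iota$-images of the regular time-slices (so that the limiting right-hand side is a genuine $W_1$-distance on a single connected component of $\RR_{t_1}$) and ensure that the underlying conjugate heat kernel measures converge in $W_1$; Lemma~\ref{lem:pre5} is precisely designed to deliver both. A minor technical point is the requirement that $t_1$ be a continuity time of $\XX^x$, which is always possible because the set of discontinuous times is at most countable.
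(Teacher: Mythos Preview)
Your approach is correct and takes a genuinely different route from the paper. The paper argues entirely inside the limit: it picks a sequence $s_i\nearrow t$, chooses regular $H_n$-centers $x_i,y_i\in\RR^x_{s_i}$ of $x$ and $y$, and combines the inequality $d^*_x(x_i,y_i)\le\ep_0^{-1}d_{g^Z_{s_i}}(\iota_x(x_i),\iota_x(y_i))$ (the limit version of Proposition~\ref{distancefunction1}(1) on $\XX^x$) with Lemma~\ref{lem:004abc} to squeeze $d_Z(x,y)$ against $d^Z_t(x,y)$ as $s_i\nearrow t$. You instead pull the problem back to the approximating Ricci flows, extract the lower $W_1$-bound from the defining relations \eqref{equalityd*1}--\eqref{equalityd*2}, and pass to the limit via Lemma~\ref{lem:pre5}. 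This is essentially the mechanism that later reappears in the proof of the second inequality of Proposition~\ref{prop:chara}; your route trades a short intrinsic estimate for a heavier convergence lemma, but it makes the link between $d_Z$ and $d^Z_t$ more transparent.

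Two small points. First, $r_i\to r$ is immediate from the pointed Gromov--Hausdorff convergence~\eqref{eq:conv0a01}; Lemma~\ref{lem:key1} concerns convergence within a correspondence on $\XX^z$ and is not the right citation here. Second, the phrase ``uniformly bounded by $H_n$-concentration'' does not by itself supply the hypothesis of Lemma~\ref{lem:pre5}: $H_n$-concentration bounds the variance of each individual measure, not $d_{W_1}^{t_0}(\nu_{x_i^*;t_0},\nu_{y_i^*;t_0})$. What makes this step work is the standing assumption $d^Z_t(x,y)<\infty$, which via~\eqref{eq:support1} forces $\iota_x(\RR^x_{t_0})=\iota_y(\RR^y_{t_0})$, so regular $H_n$-centers of $x$ and $y$ at time $t_0$ lie in a common component of $\RR_{t_0}$; one then argues exactly as in the derivation of~\eqref{eq:bounded} (in the proof of Proposition~\ref{prop:chara}) to obtain the uniform bound $d_{W_1}^{t_0}(\nu_{x_i^*;t_0},\nu_{y_i^*;t_0})\le D$. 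With that filled in, your argument goes through.
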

\begin{proof}
We assume $d^Z_t(x,y)<\infty$. Then, \eqref{eq:support1} holds for any $s<t$, and we can regard $(\iota_x^{-1})_*\nu_{y;s}$ as a conjugate heat flow on $\XX^x$.

For any $s_i \nearrow t$, we choose $x_i ,y_i \in \RR^x_{s_i}$ to be regular $H_n$-centers of $x$ and $y$, respectively. Then, it follows from the generalization of Proposition \ref{distancefunction1} (1) to $\XX^x$ that
	\begin{align} \label{eq:dt1}
		d^*_x(x_i,y_i)\leq  d_{g^Z_{s_i}}(\iota_x(x_i), \iota_x(y_i)).
	\end{align}

On the other hand, by the definition of the regular $H_n$-center, we have
	\begin{align}\label{eq:dt2}
d_{W_1}^{\RR_{s_i}}(\nu_{x;s_i}, \nu_{y;s_i}) \ge  d_{g^Z_{s_i}}(\iota_x(x_i), \iota_x(y_i))-2 \sqrt{H_n(t-s_i)}.
	\end{align}

Moreover, it follows from Lemma \ref{lem:004abc} that 
	\begin{align}\label{eq:dt3}
		d^*_x(x_i,y_i) = d_Z(\iota_x(x_i), \iota_x(y_i)) \ge d_Z(x, y)-2 \sqrt{H_n(t-s_i)}.
	\end{align}

Combining \eqref{eq:dt1}, \eqref{eq:dt2}, and \eqref{eq:dt3} and letting $i\to\infty$, the conclusion follows.
\end{proof}


%

\begin{lem}\label{takelimitdt1}
Given $x,y\in Z_t$, there exists a sequence $t_i \nearrow t$ such that if $x_i, y_i \in \RR_{t_i}$ are regular $H_n$-centers of $x$ and $y$, respectively, then $d^Z_{t_i}(x_i, y_i)=d_{g^Z_{t_i}}(x_i, y_i)$ and
	\begin{align*}
		d^Z_{t}(x,y)=\lim_{i\to\infty} d^Z_{t_i}(x_i,y_i).
	\end{align*}
\end{lem}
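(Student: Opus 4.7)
The plan is to fix any sequence $t_i \nearrow t$ and any regular $H_n$-centers $x_i \in \RR_{t_i}$ of $x$ and $y_i \in \RR_{t_i}$ of $y$ (which exist by Proposition \ref{lem:004ac}), and to prove that (a) and (b) both hold. The case $d^Z_t(x,y)=\infty$ is handled separately: by Definition \ref{defntimeslicedist} and Lemma \ref{lem:pre4}, $d^Z_t(x,y)=\infty$ forces $\iota_x(\RR^x_s)\cap\iota_y(\RR^y_s)=\emptyset$ for every $s<t$, so $x_i\in\iota_x(\RR^x_{t_i})$ and $y_i\in\iota_y(\RR^y_{t_i})$ lie in different connected components of $\RR_{t_i}$; both $d_{g^Z_{t_i}}(x_i,y_i)$ and $d^Z_{t_i}(x_i,y_i)$ are then infinite, verifying (a) trivially, and the bound in (b) follows from the monotonicity estimate below. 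From now on I assume $d^Z_t(x,y)<\infty$, so $x_i, y_i$ lie in the same connected component of $\RR_{t_i}$ and can be identified via $\iota_x$ with $\tilde x_i, \tilde y_i \in \RR^x_{t_i}$.

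For (b), I will work on the $H_n$-concentrated metric flow $\XX^x$: by Proposition \ref{prop:embed2}(ii) and the fact that $\iota_x$ is an isometry on each time-slice, the monotonicity of $d_{W_1}$ between two conjugate heat flows (a metric-flow version of Proposition \ref{monotonicityW1}) gives, for every $s\le t_i$,
\[ d_{W_1}^{\RR_s}(\nu_{x;s},\nu_{x_i;s}) \le d_{W_1}^{\RR_{t_i}}(\nu_{x;t_i},\delta_{x_i}) \le \sqrt{\Var_{\RR_{t_i}}(\nu_{x;t_i},\delta_{x_i})} \le \sqrt{H_n(t-t_i)}, \]
where the middle inequality is Lemma \ref{lem:var} and the last is the defining property of a regular $H_n$-center; the same bound holds with $y$ in place of $x$. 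The triangle inequality for $d_{W_1}^{\RR_s}$ then yields $\bigl|d_{W_1}^{\RR_s}(\nu_{x;s},\nu_{y;s}) - d_{W_1}^{\RR_s}(\nu_{x_i;s},\nu_{y_i;s})\bigr| \le 2\sqrt{H_n(t-t_i)}$. Letting $s\nearrow t_i$ (so the right-hand Wasserstein distance tends to $d^Z_{t_i}(x_i,y_i)$) and then $t_i\nearrow t$, and using the monotonicity of $s\mapsto d_{W_1}^{\RR_s}(\nu_{x;s},\nu_{y;s})$ with limit $d^Z_t(x,y)$, proves (b).

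For (a), the upper bound $d^Z_{t_i}(x_i,y_i)\le d_{g^Z_{t_i}}(x_i,y_i)$ follows directly from the same monotonicity applied to the Dirac initial data $\nu_{x_i;t_i}=\delta_{x_i}$, $\nu_{y_i;t_i}=\delta_{y_i}$, since $d_{W_1}^{\RR_{t_i}}(\delta_{x_i},\delta_{y_i})=d_{g^Z_{t_i}}(x_i,y_i)$. For the lower bound, I use that $x_i, y_i$ are regular points, so $\partial_\t$ can be integrated backward on a parabolic neighborhood of bounded curvature to produce points $x_i^s, y_i^s\in\RR_s$ for $s\in(t_i-\delta_i,t_i]$. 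Lemma \ref{lem:center1} then asserts that $x_i^s$ is a regular $H$-center of $x_i$ at time $s$ for some constant $H=H(n,Y,R_0)$, hence $d_{W_1}^{\RR_s}(\nu_{x_i;s},\delta_{x_i^s})\le\sqrt{H(t_i-s)}$, and likewise for $y_i$. Combining via the triangle inequality gives
\[ d_{W_1}^{\RR_s}(\nu_{x_i;s},\nu_{y_i;s}) \ge d_{g^Z_s}(x_i^s,y_i^s) - 2\sqrt{H(t_i-s)}, \]
and smoothness of $g^Z$ along the backward flow yields $d_{g^Z_s}(x_i^s,y_i^s)\to d_{g^Z_{t_i}}(x_i,y_i)$ as $s\nearrow t_i$, completing the lower bound and hence (a).

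The main delicate point is the lower bound in (a), which depends on Lemma \ref{lem:center1} in a parabolic neighborhood where the scalar curvature satisfies a bound $|\scal_{g^Z}|\le R_0 r^{-2}$. Such a neighborhood is available because $x_i, y_i\in\RR_{t_i}$, but its size and the resulting constant $H$ depend on $x_i, y_i$ individually; this is harmless since the estimate is only required for $s$ sufficiently close to $t_i$ before passing to the limit. A secondary bookkeeping issue is the systematic transfer between $\RR$ and the metric flow $\XX^x$ via the isometric embedding $\iota_x$ from Theorem \ref{thm:idenproof}, using Proposition \ref{prop:embed2} to identify conjugate heat kernels and Wasserstein distances at each time-slice.
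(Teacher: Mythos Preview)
Your argument for (b) and the upper bound in (a) are fine, but the lower bound in (a) has a genuine gap, and in fact the strengthening you attempt---proving (a) for \emph{every} sequence $t_i\nearrow t$---is false.

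The problematic step is the claim that ``smoothness of $g^Z$ along the backward flow yields $d_{g^Z_s}(x_i^s,y_i^s)\to d_{g^Z_{t_i}}(x_i,y_i)$ as $s\nearrow t_i$.'' What you actually need for the lower bound is $\liminf_{s\nearrow t_i} d_{g^Z_s}(x_i^s,y_i^s)\ge d_{g^Z_{t_i}}(x_i,y_i)$. The direction that follows from local smoothness is the opposite one: any curve in $\RR_{t_i}$ from $x_i$ to $y_i$, being compact, lies in a parabolic product neighborhood and can be flowed back to $\RR_s$, giving $\limsup_{s\nearrow t_i} d_{g^Z_s}(x_i^s,y_i^s)\le d_{g^Z_{t_i}}(x_i,y_i)$. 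The reverse direction fails whenever a short curve in $\RR_s$ passes through a region that becomes singular at $t_i$; then $d_{g^Z_s}$ can be strictly smaller than $d_{g^Z_{t_i}}$, and your inequality does not close. This is not a hypothetical concern: Proposition~\ref{prop:dismatch1} shows that $d^Z_t=d_{g^Z_t}$ on connected components of $\RR_t$ holds only for all but countably many $t$, so (a) genuinely can fail at exceptional times. (Your argument would, if correct, reprove Proposition~\ref{prop:dismatch} globally rather than locally, which the paper explicitly warns against.)

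The paper's proof handles this by \emph{choosing} the sequence $t_i$ to consist of continuity times of the metric flow $\XX^x$. At such times, the past-continuity property \cite[(4.22)]{bamler2023compactness} of an $H_n$-concentrated metric flow gives directly
\[
\lim_{s\nearrow t_i} d_{W_1}^{\XX^x_s}(\nu_{\iota_x^{-1}(x_i);s},\nu_{\iota_x^{-1}(y_i);s})=d_{\XX^x_{t_i}}(\iota_x^{-1}(x_i),\iota_x^{-1}(y_i))=d_{g^Z_{t_i}}(x_i,y_i),
\]
the last equality by Theorem~\ref{Fconvergence}(4). This is exactly what is needed for (a), and it explains why the lemma is stated as ``there exists a sequence'' rather than ``for any sequence.'' Your treatment of the case $d^Z_t(x,y)=\infty$ also relies on an unjustified dichotomy: $d^Z_t(x,y)=\infty$ does not force $\iota_x(\RR^x_s)\cap\iota_y(\RR^y_s)=\emptyset$ for $s<t$, since the $W_1$-distance between measures on the same component can still diverge along a sequence.
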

\begin{proof}
We only prove the case $d_t^Z(x, y)<\infty$, since the case $d_t^Z(x, y)=\infty$ can be proved similarly.

Since $d_t^Z(x, y)<\infty$, we regard $(\iota_x^{-1})_*\nu_{y;s}$ as a conjugate heat flow on $\XX^x$. We take a sequence $t_i \nearrow t$ such that $\XX^x$ is continuous at $t_i$. Then, it follows from \cite[Equation (4.22)]{bamler2023compactness} that
	\begin{align*}
		\lim_{s \nearrow t_i} d_{W_1}^{\XX^x_s}(\nu_{\iota_x^{-1}(x_i);s}, \nu_{\iota_x^{-1}(y_i);s})=d_{\XX^x_{t_i}}(\iota_x^{-1}(x_i),\iota_x^{-1}(y_i))=d_{g^Z_{t_i}}(x_i, y_i).
	\end{align*}
In other words, 
	\begin{align}\label{eq:dt11}
d^Z_{t_i}(x_i, y_i)=d_{g^Z_{t_i}}(x_i, y_i).
	\end{align}
	
On the other hand, by the definition of the $H_n$-center, we have
	\begin{align}\label{eq:dt12}
\abs{d_{W_1}^{\RR_{t_i}}(\nu_{x;t_i}, \nu_{y;t_i})-d_{g^Z_{t_i}}(x_i, y_i)} \le 2 \sqrt{H_n(t-t_i)}.
	\end{align}
Combining \eqref{eq:dt11} with \eqref{eq:dt12}, and letting $i \to \infty$, the conclusion follows.
\end{proof}

\begin{prop}[Completeness]\label{prop:com}
	For any $t \in \III^-$, the extended metric space $(Z_t, d_t^Z)$ is complete.
\end{prop}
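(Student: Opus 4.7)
The plan is to construct a candidate limit using completeness of the ambient spacetime metric $(Z, d_Z)$, and then verify convergence in $d_t^Z$ via a triangle inequality argument that exploits the monotonicity of $s \mapsto d_{W_1}^{\RR_s}$ (Lemma \ref{lem:limitmono}) together with Proposition \ref{prop:pre5}.

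Given a Cauchy sequence $\{x_k\} \subset Z_t$ with respect to $d_t^Z$, I first pass to a tail on which all pairwise distances are finite (which is possible since Cauchy sequences in extended metric spaces are eventually in a single finite component). By Lemma \ref{dtlem1}, $d_Z(x_k, x_l) \le \ep_0^{-1} d_t^Z(x_k, x_l)$, so $\{x_k\}$ is Cauchy in $(Z, d_Z)$. Completeness of the latter, established in Theorem \ref{GHlimitd*}, produces a limit $x_\infty \in Z$, and the 2-H\"older continuity of $\t$ forces $x_\infty \in Z_t$. This point $x_\infty$ is my candidate limit.

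It remains to show $d_t^Z(x_k, x_\infty) \to 0$. Fix $\eta > 0$ and choose $N$ so that $d_t^Z(x_k, x_l) < \eta$ for all $k, l \ge N$. For any $s \in (-(1-2\sigma)T, t)$ and any $k \ge N$, I would apply the triangle inequality
$$d_{W_1}^{\RR_s}(\nu_{x_k;s}, \nu_{x_\infty;s}) \le d_{W_1}^{\RR_s}(\nu_{x_k;s}, \nu_{x_l;s}) + d_{W_1}^{\RR_s}(\nu_{x_l;s}, \nu_{x_\infty;s})$$
with an auxiliary index $l \ge N$. The first term is at most $d_t^Z(x_k, x_l) < \eta$ by Lemma \ref{lem:limitmono}. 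For the second term, since $d_Z(x_l, x_\infty) \to 0$ and $t > -(1-2\sigma)T$, for $l$ sufficiently large (depending on $s$) we have both $s < t - d_Z(x_l, x_\infty)^2$ and $t - d_Z(x_l, x_\infty)^2 > -(1-2\sigma)T$, so Proposition \ref{prop:pre5} combined with the monotonicity of Lemma \ref{lem:limitmono} yields
$$d_{W_1}^{\RR_s}(\nu_{x_l;s}, \nu_{x_\infty;s}) \le \ep_0 \, d_Z(x_l, x_\infty) \longrightarrow 0 \quad \text{as } l \to \infty.$$
Sending $l \to \infty$ first gives $d_{W_1}^{\RR_s}(\nu_{x_k;s}, \nu_{x_\infty;s}) \le \eta$ for every $s < t$; then sending $s \nearrow t$ yields $d_t^Z(x_k, x_\infty) \le \eta$, which is the desired conclusion since $\eta$ was arbitrary.

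The main obstacle in this plan is controlling the second term in the triangle inequality: a priori the convergence $x_l \to x_\infty$ is only in $d_Z$, not in $d_t^Z$, so one must convert this spacetime closeness into closeness of the conjugate heat kernel measures at earlier time slices. This is precisely what Proposition \ref{prop:pre5} provides, crucially relying on the identification $\iota_{x_l}(\RR^{x_l}_s) = \iota_{x_\infty}(\RR^{x_\infty}_s)$ valid for $s$ strictly below the threshold $t - d_Z(x_l, x_\infty)^2$. With that identification and the associated $d_{W_1}$-estimate already in place, the remaining diagonal argument is essentially routine.
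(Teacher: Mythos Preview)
Your proof is correct and is in fact more streamlined than the paper's. Both arguments begin identically: Lemma~\ref{dtlem1} transfers the Cauchy property from $d_t^Z$ to $d_Z$, and completeness of $(Z,d_Z)$ yields the candidate limit $x_\infty\in Z_t$. The divergence is in how one shows $d_t^Z(x_k,x_\infty)\to 0$. The paper works inside the associated metric flow $\XX^{x_1}$: it shows that the conjugate heat flows $(\iota_{x_1}^{-1})_*\nu_{x_i;s}$ converge uniformly in $s$ to a limit $\mu_s$, takes $H_n$-centers $y_k\in\XX^{x_1}_{t_k}$ of $\mu_{t_k}$ for $t_k\nearrow t$, proves $\iota_{x_1}(y_k)\to x_\infty$ in $d_Z$, and only then runs a triangle inequality through $\nu_{y_k;s}$. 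You bypass all of this by invoking Proposition~\ref{prop:pre5} directly on the pair $(x_l,x_\infty)$, which already encodes the passage from $d_Z$-closeness to $d_{W_1}$-closeness of the conjugate heat kernel measures at earlier times. Your route is shorter precisely because Proposition~\ref{prop:pre5} has already done the heavy lifting; the paper's proof effectively re-derives a piece of that proposition internally. One small point worth making explicit in your write-up: the estimate in Proposition~\ref{prop:pre5} is stated for $d_{W_1}^{\XX^{x_l}_s}$, and its identification with $d_{W_1}^{\RR_s}$ (for measures supported on $\iota_{x_l}(\RR^{x_l}_s)$) uses Theorem~\ref{Fconvergence}~(4) together with Proposition~\ref{prop:embed2}.
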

\begin{proof}
	Suppose $x_i \in Z_t$ is a Cauchy sequence with respect to $d_t^Z$. By Lemma \ref{dtlem1},
	\begin{align*}
		d_Z(x_i, x_j) \le  d_t^Z(x_i,x_j).
	\end{align*}
	In particular, $x_i$ is also a Cauchy sequence with respect to $d_Z$. Since $(Z,d_Z)$ is complete, let $x_i\to x_\infty$ with respect to $d_Z$. Moreover, it is clear that $\t(x_{\infty})=t$ by the continuity of $\t$.
	
	We set $z=x_1$ so that $(\iota_z^{-1})_*\nu_{x_i;s}$ can be regarded as a conjugate heat flow on $\XX^z$ for any $s<t$. In particular,
		\begin{align*}
d_{W_1}^{\XX^z_s}((\iota_z^{-1})_*\nu_{x_i;s},(\iota_z^{-1})_*\nu_{x_j;s}) \le d_t^Z(x_i,x_j).
	\end{align*}
Let $(\iota_z^{-1})_*\nu_{x_i;s}\to\mu_s$ in $d_{W_1}^{\XX^z_s}$. Note that this convergence is uniform in $s$. Passing to the limit in the reproduction formula shows that $\mu_s$ is a conjugate heat flow on $\XX^z_\III$. Moreover, we have, by Proposition \ref{takelimitVarW1} and Proposition \ref{prop:004ac},
	\begin{align*}
		\mathrm{Var}_{\XX^z_s}(\mu_s) \le H_n(t-s).
	\end{align*}

	Now choose a sequence $t_k\nearrow t$ and let $y_k\in\XX^z_{t_k}$ be an $H_n$-center of $\mu_{t_k}$ at $t_k$, i.e.,
	\begin{align*}
		\Var_{\XX^z_{t_k}}(\mu_{t_k},\delta_{y_k})\leq H_n(t-t_k).
	\end{align*}
	We claim that $\iota_z(y_k) \to x_{\infty}$ under $d_Z$. Indeed, if $z_{i,k} \in \iota_z(\RR^{z}_{t_k})$ is a regular $H_n$-center of $x_i$, then
	\begin{align*}
d_Z(x_i, \iota_z(y_k)) \le d_Z(x_i, z_{i,k})+d_Z(z_{i,k}, \iota_z(y_k)) \le  \sqrt{H_n(t-t_k)}+ d^Z_{t_k}(z_{i,k},\iota_z(y_k)),
	\end{align*}	
	where we used Lemma \ref{lem:004abc} and Lemma \ref{dtlem1}. In addition,
	\begin{align*}
 d^Z_{t_k}(z_{i,k},\iota_z(y_k)) \le & d_{W_1}^{\XX^z_{t_k}}( \delta_{\iota_z^{-1}(z_{i,k})},(\iota_z^{-1})_*\nu_{x_i;t_k})+d_{W_1}^{\XX^z_{t_k}}((\iota_z^{-1})_*\nu_{x_i;t_k}, \mu_{t_k})+d_{W_1}^{\XX^z_{t_k}}(\delta_{y_k}, \mu_{t_k}) \\
  \le & d_{W_1}^{\XX^z_{t_k}}((\iota_z^{-1})_*\nu_{x_i;t_k}, \mu_{t_k})+2 \sqrt{H_n(t-t_k)}.
	\end{align*}	
	Combining the above inequalities, we obtain
	\begin{align*}
d_Z(x_i, \iota_z(y_k)) \le 3 \sqrt{H_n(t-t_k)}+ d_{W_1}^{\XX^z_{t_k}}((\iota_z^{-1})_*\nu_{x_i;t_k}, \mu_{t_k})
	\end{align*}		
	and by letting $i \to \infty$,
		\begin{align*}
d_Z(x_\infty, \iota_z(y_k)) \le 3 \sqrt{H_n(t-t_k)}.
	\end{align*}	
Thus, $\iota_z(y_k) \to x_\infty$ in $d_Z$. From this and Proposition \ref{prop:pre5}, we conclude that, for every fixed $s<t$ and all sufficiently large $k$, $s<t_k$ and
		\begin{align*}
\lim_{k \to \infty} d_{W_1}^{\RR^z_s}(\nu_{y_k;s}, (\iota_z^{-1})_* \nu_{x_\infty;s})=0.
	\end{align*}

Now, it follows from the definition of $d_t^Z$ that 
	\begin{align*}
		d_t^Z(x_i, x_{\infty})=&\lim_{s \nearrow t} d_{W_1}^{\RR^z_s} ((\iota_z^{-1})_* \nu_{x_i;s},(\iota_z^{-1})_* \nu_{x_\infty;s})=\lim_{s \nearrow t} \lim_{k \to \infty} d_{W_1}^{\XX^z_s} ((\iota_z^{-1})_* \nu_{x_i;s},\nu_{y_k;s}) \\
		\le& \lim_{s \nearrow t}  d_{W_1}^{\XX^z_s} ((\iota_z^{-1})_* \nu_{x_i;s},\mu_s)+ \lim_{s \nearrow t} \lim_{k \to \infty} d_{W_1}^{\XX^z_s}(\nu_{y_k;s},\mu_s) \\
		\le& \lim_{s \nearrow t}  d_{W_1}^{\XX^z_s} ((\iota_z^{-1})_* \nu_{x_i;s},\mu_s)+\lim_{k \to \infty} d_{W_1}^{\XX^z_{t_k}}(\delta_{y_k},\mu_{t_k}) \\
		\le&\lim_{s \nearrow t}  d_{W_1}^{\XX^z_s} ((\iota_z^{-1})_* \nu_{x_i;s},\mu_s)+\lim_{k \to \infty}  \sqrt{H_n(t-t_k)} \le \lim_{s \nearrow t}  d_{W_1}^{\XX^z_s} ((\iota_z^{-1})_* \nu_{x_i;s},\mu_s),
	\end{align*}
	where we used the monotonicity of $d_{W_1}^{\XX^z_s}(\nu_{y_k;s},\mu_s)$; see \cite[Proposition 3.16 (b)]{bamler2023compactness}.
	Since $(\iota_z^{-1})_* \nu_{x_i;s}$ converges to $\mu_s$ uniformly for $s<t$, the conclusion follows.
\end{proof}

\begin{prop}[$H_n$-concentration I]\label{prop:005b}
	For $x,y \in Z_t$ and $s<t$, we have
	\begin{align*}
		\Var_{\RR_s}(\nu_{x;s},\nu_{y;s}) \le \bigl(d^Z_t(x,y)\bigr)^2+H_n(t-s).
	\end{align*}
\end{prop}

\begin{proof}
	We assume $d^Z_t(x,y) <\infty$ and consider a sequence $t_i \nearrow t$ as in Lemma \ref{takelimitdt1} and let $x_i \in \RR_{t_i}$ and $y_i \in \RR_{t_i}$ be regular $H_n$-centers of $x$ and $y$, respectively. By Lemma \ref{lem:004abc}, $x_i\to x,y_i\to y$ in $d_Z$. Then we have for any $s<t$,
	\begin{align*}
\nu_{\iota_x^{-1}(x_i) ;s} \to \nu_{x;s} \quad \text{and} \quad \nu_{\iota_x^{-1}(y_i) ;s} \to (\iota_x^{-1})_* \nu_{y;s}
	\end{align*}	
in $d_{W_1}^{\XX^x_s}$. By Proposition \ref{takelimitVarW1} and the $H_n$-concentration of $\XX^x$, we have
	\begin{align}\label{Hnconc1}
		\Var_{\RR_s}(\nu_{x;s},\nu_{y;s}) \leq & \liminf_{i \to \infty} \Var_{\XX^x_s}(\nu_{\iota_x^{-1}(x_i) ;s},\nu_{\iota_x^{-1}(y_i);s}) \notag \\
		 \leq& \liminf_{i \to \infty}\lc d_{g^x_{t_i}}(\iota_x^{-1}(x_i),\iota_x^{-1}(y_i))^2+H_n(t_i-s)\rc\nonumber\\
		\leq& \liminf_{i \to \infty} d_{g^Z_{t_i}}(x_i,y_i)^2+H_n(t-s).
	\end{align}
	On the other hand, by Lemma \ref{takelimitdt1}, we have
	\begin{align}\label{Hnconc2c}
		\lim_{i \to \infty} d_{g^Z_{t_i}}(x_i,y_i)= d^Z_t(x,y).
	\end{align}
	Therefore, the conclusion follows from \eqref{Hnconc1} and \eqref{Hnconc2c}.
\end{proof}

In general, the distance $d_{g^Z_t}$ induced by $g^Z_t$, when restricted to $\RR_t$, may not agree with $d^Z_t$. For instance, it is possible that $d^Z_t(x, y)<\infty$, but $x$ and $y$ lie in different connected components of $\RR_t$. Next, we prove that locally, the two distance functions agree.

\begin{prop} \label{prop:dismatch}
For any $w \in \RR_t$, there exists a sufficiently small constant $r>0$ such that for any $x,y \in B_{g^Z_t}(w, r)$,
	\begin{align*}
d_{g^Z_t}(x, y)=d^Z_t(x, y).
	\end{align*}
Moreover, for any $x, y \in \RR_t$, $d^Z_t(x, y) \le d_{g^Z_t}(x, y)$.
\end{prop}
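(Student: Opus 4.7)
The plan is to first establish the local equality $d^Z_t(x,y) = d_{g^Z_t}(x,y)$ for $x, y$ near any $w \in \RR_t$, and then derive the global upper bound by a curve-partitioning argument. I would begin by choosing $r > 0$ small enough so that a parabolic neighborhood around $w$ of spatial radius $3r$ and time extent $(3r)^2$ lies in a single connected component of $\RR$, satisfies a uniform curvature bound $|\Rm_{g^Z}| \le R_0 r^{-2}$, and is eventually contained in the domain $U_i$ of the diffeomorphisms $\phi_i$ from Theorem \ref{thm:smooth1}. For $x, y \in B_{g^Z_t}(w, r)$, set $x^*_i := \phi_i(x)$ and $y^*_i := \phi_i(y)$; these converge to $x, y$ in the Gromov--Hausdorff sense. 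For the upper bound, monotonicity of $d_{W_1}$ (Proposition \ref{monotonicityW1}) gives $d_{W_1}^{t_1}(\nu_{x^*_i;t_1},\nu_{y^*_i;t_1}) \le d_{g_i(t)}(x^*_i,y^*_i)$, which is uniformly bounded in $i$ by smooth convergence, so Lemma \ref{lem:pre5} is applicable. For any $t_1 < t$ at which $\XX^x$ is continuous, one then obtains $d_{W_1}^{\RR_{t_1}}(\nu_{x;t_1},\nu_{y;t_1}) = \lim_i d_{W_1}^{t_1}(\nu_{x^*_i;t_1},\nu_{y^*_i;t_1}) \le d_{g^Z_t}(x,y)$, and sending $t_1 \nearrow t$ along such times yields $d^Z_t(x,y) \le d_{g^Z_t}(x,y)$.

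For the lower bound, Proposition \ref{HncenterScal} (i) applied in $\XX^i$, whose scalar curvature near $x^*_i, y^*_i$ satisfies the required local bound by smooth convergence, shows that the backward Ricci-flow evolutions $(x^*_i)_{t_1}$ and $(y^*_i)_{t_1}$ in $M_i$ are regular $H$-centers for some $H = H(n, Y, R_0)$, yielding
\[
d_{W_1}^{t_1}(\nu_{x^*_i;t_1},\nu_{y^*_i;t_1}) \ge d_{g_i(t_1)}\big((x^*_i)_{t_1},(y^*_i)_{t_1}\big) - 2\sqrt{H(t-t_1)}.
\]
Because $\phi_i$ intertwines the time vector fields $\partial_\t$ to order $\ep_i \to 0$ (Theorem \ref{thm:smooth1} (a)), the point $(x^*_i)_{t_1}$ corresponds to $\phi_i(x_{t_1})$, and consequently $d_{g_i(t_1)}((x^*_i)_{t_1},(y^*_i)_{t_1}) \to d_{g^Z_{t_1}}(x_{t_1},y_{t_1})$ as $i \to \infty$, where $x_{t_1}, y_{t_1}$ denote the backward flows in $\RR$. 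Invoking Lemma \ref{lem:pre5} once more and then letting $t_1 \nearrow t$ along continuity times of $\XX^x$, using smooth continuity of $g^Z_s$ in $s$ under the local curvature bound, delivers $d^Z_t(x,y) \ge d_{g^Z_t}(x,y)$. Combined with the upper bound, this gives local equality.

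The global inequality $d^Z_t(x,y) \le d_{g^Z_t}(x,y)$ on $\RR_t$ is trivial when $x, y$ lie in distinct connected components of $\RR_t$ (right-hand side is infinite); otherwise, connect them by a smooth curve $\gamma \subset \RR_t$, cover its compact image by finitely many local-equality balls from the first part (with their respective $r$'s), and use the triangle inequality for $d^Z_t$ to obtain $d^Z_t(x,y) \le L_{g^Z_t}(\gamma)$. Taking the infimum over such $\gamma$ then yields the claim. The main obstacle is the lower-bound step of the local equality: one must carefully identify the approximating backward Ricci-flow evolutions in $M_i$ with the backward flow in $\RR$ via the smooth-convergence intertwining, keep the constant $H$ uniform in $i$, and restrict to continuity times of $\XX^x$ so that Lemma \ref{lem:pre5} applies and the limit inside the $d_{W_1}$ can be identified with the asymptotic quantity entering $d^Z_t$.
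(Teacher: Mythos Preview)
Your argument is correct but takes a different route from the paper's. The paper works entirely within the limit space: it chooses a geodesically convex product domain $U = B_{g^Z_t}(w,r)\times[t-r^2,t] \subset \RR$, takes $t_i \nearrow t$ at continuity times of $\XX^x$, and lets $x_i,y_i$ be the backward $\partial_\t$-flows of $x,y$ to time $t_i$. By Lemma~\ref{lem:center1} these are regular $H$-centers of $x,y$, and then the argument of Lemma~\ref{takelimitdt1} gives $d^Z_t(x,y) = \lim_i d_{g^Z_{t_i}}(x_i,y_i)$ directly; geodesic convexity then yields $\lim_i d_{g^Z_{t_i}}(x_i,y_i) = d_{g^Z_t}(x,y)$. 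You instead pull back to the approximating closed flows via the $\phi_i$ from Theorem~\ref{thm:smooth1}, use Proposition~\ref{HncenterScal} in each $\XX^i$, and pass to the limit through Lemma~\ref{lem:pre5}. Both routes require restricting to continuity times of $\XX^x$ (so that the metric-flow Wasserstein distance in Lemma~\ref{lem:pre5} matches $d_{W_1}^{\RR_{t_1}}$), and both feed into the same local-to-global step for the final inequality. The paper's argument is shorter and stays inside the intrinsic limit framework already built in Section~\ref{sec:smooth}; yours is more hands-on and makes the smooth-convergence mechanism explicit. One small point worth stating: for your limits $d_{g_i(t_1)}\big((x^*_i)_{t_1},(y^*_i)_{t_1}\big) \to d_{g^Z_{t_1}}(x_{t_1},y_{t_1})$ and $d_{g^Z_{t_1}}(x_{t_1},y_{t_1}) \to d_{g^Z_t}(x,y)$ you need the relevant minimizing geodesics to remain inside your product neighborhood---this is precisely what the paper's geodesic-convexity hypothesis secures, and can be arranged in your setup by shrinking $r$ relative to the curvature scale.
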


\begin{proof}
We choose a sufficiently small $r>0$ such that the product domain $U=B_{g^Z_t}(w, r) \times [t-r^2,t] \subset \RR$ has the property that $U \cap \RR_s$ is geodesically convex for any $s \in [t-r^2, t]$. Here, being geodesically convex means that any two points in $U \cap \RR_s$ can be connected by a minimal geodesic with respect to $g^Z_s$ and any such minimal geodesic is contained in $U \cap \RR_s$.

For any $x, y \in B_{g^Z_t}(w, r)$, we regard $(\iota_x^{-1})_* \nu_{y;s}$ as a conjugate heat flow on $\XX^x$. We take a sequence $ t_i \nearrow t$ such that $\XX^x$ is continuous at $t_i$. Then, we set $x_i \in U \cap \RR_{t_i}$ and $y_i \in U \cap \RR_{t_i}$ to be the flows of $x$ and $y$ with respect to $\partial_\t$, respectively. By Lemma \ref{lem:center1}, $x_i$ and $y_i$ are regular $H$-centers of $x$ and $y$, respectively, where $H$ is a positive constant. As in the proof of Lemma \ref{takelimitdt1}, we conclude that
	\begin{align*}
d^Z_{t}(x, y)=\lim_{i \to \infty} d_{g_{t_i}^Z}(x_i, y_i).
	\end{align*}
On the other hand, it is clear from our construction that $\lim_{i \to \infty} d_{g_{t_i}^Z}(x_i, y_i)=d_{g^Z_t}(x, y)$. Thus, we obtain
	\begin{align*}
d^Z_{t}(x, y)=d_{g^Z_t}(x, y).
	\end{align*}

Now, since $d_{g^Z_t}$ is a length metric on any connected component of $\RR_t$, we conclude immediately that $d^Z_t(x, y) \le d_{g^Z_t}(x, y)$ for any $x, y \in \RR_t$, by the local isometry.
\end{proof}

Proposition \ref{prop:dismatch} implies, in particular, that $d_{g^Z_t}$ and $d_t^Z$ induce the same topology on $\RR_t$. Moreover, $\RR_t$ is an open set of $Z_t$ with respect to $d^Z_t$. Therefore, we can regard any conjugate heat kernel measure $\nu_{x;t}$ as a probability measure on $Z_t$.

\begin{defn}[Variance and $W_1$-Wasserstein distance]\label{defnvarianceextended}
For any $t \in \III^-$, the variance between two probability measures $\mu_1,\mu_2\in \PP(Z_t)$ is defined by
\begin{equation*}
	\Var_{Z_t}(\mu_1,\mu_2):= \int_{Z_t}\int_{Z_t} d^Z_t(x_1,x_2)^2\,\mathrm{d}\mu_1(x_1)\, \mathrm{d}\mu_2(x_2).
\end{equation*}\index{$\Var_{Z_t}$}
Moreover, the $W_1$-Wasserstein distance between $\mu_1,\mu_2 \in \PP(Z_t)$ is defined by
\begin{align*}
	d_{W_1}^{Z_t}(\mu_1,\mu_2):=\sup \int_{Z_t} f \, \mathrm{d} (\mu_1-\mu_2),
\end{align*}\index{$d_{W_1}^{Z_t}$}
\index{$W_1$-Wasserstein distance}
where the supremum is taken over all bounded $1$-Lipschitz functions $f:Z_t\to\R$.
\end{defn}

Next, we prove

\begin{lem}\label{lem:dw1Z}
For any $x, y \in Z_{\III^-}$, if $d_{W_1}^{Z_{t_0}}(\nu_{x;t_0}, \nu_{y;t_0}) <\infty$ for some $t_0 \in (-(1-2\sigma)T, \min\{\t(x), \t(y)\})$, then $\iota_x(\RR^x_t)=\iota_y(\RR^y_t)$ for any $t \in [-(1-2\sigma)T, t_0)$.
\end{lem}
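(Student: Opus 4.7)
The plan is to combine the reproduction formula \eqref{eq:generalrep} with Lemma \ref{lem:pre4} and the support characterization of $d^Z_{t_0}$ given in \eqref{eq:support1} to reduce the assertion to producing a single ``matched pair'' $(z_1,z_2) \in \iota_x(\RR^x_{t_0}) \times \iota_y(\RR^y_{t_0})$ satisfying $d^Z_{t_0}(z_1,z_2) < \infty$ together with the pointwise identifications $\iota_{z_1}(\RR^{z_1}_s) = \iota_x(\RR^x_s)$ and $\iota_{z_2}(\RR^{z_2}_s) = \iota_y(\RR^y_s)$ for every $s < t_0$. Given such a pair, \eqref{eq:support1} yields $\iota_{z_1}(\RR^{z_1}_s) = \iota_{z_2}(\RR^{z_2}_s)$ for all $s < t_0$, and chaining the three equalities gives $\iota_x(\RR^x_s) = \iota_y(\RR^y_s)$, from which the lemma follows by specializing to $s = t$.

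The first step will be to construct full-measure ``good'' sets $G_x \subset \iota_x(\RR^x_{t_0})$ and $G_y \subset \iota_y(\RR^y_{t_0})$ consisting of points $z$ for which $\iota_z(\RR^z_s) = \iota_x(\RR^x_s)$ (respectively $= \iota_y(\RR^y_s)$) for all $s < t_0$. Fixing a countable dense set $S \subset [-(1-2\sigma)T, t_0)$, I will apply the reproduction formula $\nu_{x;s}(A) = \int \nu_{z;s}(A)\,\mathrm{d}\nu_{x;t_0}(z)$ to $A = \iota_x(\RR^x_s)$ for each $s \in S$; since $\nu_{x;s}(\iota_x(\RR^x_s)) = 1$, this forces $\nu_{z;s}(\iota_x(\RR^x_s)) = 1$ for $\nu_{x;t_0}$-a.e.\ $z$, and Lemma \ref{lem:pre4} promotes this from inclusion of supports to the equality of components. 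Intersecting over $s \in S$ and using the forward propagation of the dichotomy ``equal or disjoint'' in Lemma \ref{lem:pre4} to extend from $S$ to all $s < t_0$ produces $G_x$, and the construction of $G_y$ is identical.

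The key step, and the one I expect to be the main obstacle, is showing that the hypothesis $d_{W_1}^{Z_{t_0}}(\nu_{x;t_0},\nu_{y;t_0}) < \infty$ forces the existence of some pair $(z_1,z_2) \in G_x \times G_y$ with $d^Z_{t_0}(z_1,z_2) < \infty$. I will argue by contradiction: assuming no such pair exists, define
\[
A := \{w \in Z_{t_0} : d^Z_{t_0}(w, G_x) < \infty\},
\]
so that $G_x \subset A$ while $G_y \subset A^c$ by assumption, and observe via the triangle inequality for the extended metric that any $w_1 \in A$ and $w_2 \in A^c$ satisfy $d^Z_{t_0}(w_1,w_2) = \infty$. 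For arbitrary $M > 0$, the cutoff $f := M \cdot \mathbf{1}_{A^c}$ is bounded and $1$-Lipschitz with respect to $d^Z_{t_0}$ (the only nontrivial Lipschitz check, namely across $A$ and $A^c$, is automatic because the distance is infinite there), and
\[
\int_{Z_{t_0}} f\,\mathrm{d}\nu_{y;t_0} - \int_{Z_{t_0}} f\,\mathrm{d}\nu_{x;t_0} \;\geq\; M\,\nu_{y;t_0}(G_y) - M\,\nu_{x;t_0}(Z_{t_0} \setminus G_x) \;=\; M,
\]
contradicting the hypothesis once $M$ is large. The delicate point is precisely the verification that $f$ is genuinely $1$-Lipschitz in the extended metric, which relies on $A$ being saturated under finite $d^Z_{t_0}$-distance; this argument also has the virtue of working directly from the dual formulation in Definition \ref{defnvarainceextended} and sidestepping any primal--dual theorem for optimal transport with extended costs. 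With such a pair $(z_1,z_2)$ in hand, the conclusion follows immediately from the reduction described in the first paragraph.
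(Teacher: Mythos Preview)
Your proof is correct and follows the same overall strategy as the paper: locate points $x'\in\iota_x(\RR^x_{t_0})$ and $y'\in\iota_y(\RR^y_{t_0})$ at finite $d^Z_{t_0}$-distance, then invoke \eqref{eq:support1} together with the component identifications to chain. Your explicit Lipschitz indicator argument in the second step is precisely what the paper compresses into the phrase ``by the definition of $d_{W_1}^{Z_{t_0}}$''. The one place where you work harder than necessary is the good-set construction: in fact \emph{every} $x'\in\iota_x(\RR^x_{t_0})$ satisfies $\iota_{x'}(\RR^{x'}_s)=\iota_x(\RR^x_s)$ for all $s<t_0$, pointwise rather than merely $\nu_{x;t_0}$-almost everywhere. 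This follows directly from Proposition~\ref{prop:embed2}(ii) (which gives $K_Z(x';\cdot)=K^x(\iota_x^{-1}(x');\iota_x^{-1}(\cdot))>0$ on $\iota_x(\RR^x_s)$) together with Corollary~\ref{cor:pre1} and Lemma~\ref{lem:pre4}. With this observation the reproduction-formula argument, the countable dense set $S$, and the full-measure intersection all become unnecessary, and the proof collapses to the three sentences in the paper.
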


\begin{proof}
If $\iota_x(\RR^x_{t_0})=\iota_y(\RR^y_{t_0})$, we are done. Otherwise, by the definition of $d_{W_1}^{Z_{t_0}}$, there exist $x' \in \iota_x(\RR^x_{t_0})$ and $y' \in \iota_y(\RR^y_{t_0})$ such that $d_{t_0}^Z(x', y')<\infty$. By \eqref{eq:support1} and the reproduction formula, we conclude that $\iota_x(\RR^x_t)=\iota_y(\RR^y_t)$ for any $t \in [-(1-2\sigma)T, t_0)$.
\end{proof}

 The following result is immediate from Proposition \ref{prop:005b} and Proposition \ref{prop:dismatch}.

\begin{prop}[$H_n$-concentration II]\label{prop:005bb}
	For $x,y \in Z_t$ and $-(1-2\sigma)T<s<t$, we have
	\begin{align*}
		\Var_{Z_s}(\nu_{x;s},\nu_{y;s}) \le \bigl(d^Z_t(x,y)\bigr)^2+H_n(t-s).
	\end{align*}
\end{prop}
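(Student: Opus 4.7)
The plan is to deduce Proposition \ref{prop:005bb} directly from Proposition \ref{prop:005b} combined with the second (global) assertion of Proposition \ref{prop:dismatch}. The only subtlety is that the two distances $d_s^Z$ and $d_{g_s^Z}$ are defined on different sets and may disagree globally, so the pointwise comparison has to be set up on the correct piece of $\RR_s$.

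First, I would dispose of the trivial case. If $d_t^Z(x,y) = \infty$, then by Definition \ref{defntimeslicedist} combined with Lemma \ref{lem:pre4} there is nothing to prove. So assume $d_t^Z(x,y) < \infty$. Then, again by Definition \ref{defntimeslicedist}, $d_{W_1}^{\RR_{s'}}(\nu_{x;s'}, \nu_{y;s'}) < \infty$ for $s'$ close to $t$, which forces $\iota_x(\RR^x_{s'}) = \iota_y(\RR^y_{s'})$ for such $s'$; by the last assertion of Lemma \ref{lem:pre4} this equality then propagates to all $s \in [-(1-2\sigma)T, t)$. Consequently both $\nu_{x;s}$ and $\nu_{y;s}$ are supported on a single common connected component $C \subset \RR_s$.

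Second, by the second statement of Proposition \ref{prop:dismatch}, the pointwise inequality $d_s^Z(a,b) \le d_{g_s^Z}(a,b)$ holds for all $a,b \in \RR_s$, and in particular for all $a,b \in C$. Integrating this against $\nu_{x;s}\otimes \nu_{y;s}$ (which is supported on $C \times C$) yields
\begin{align*}
\Var_{Z_s}(\nu_{x;s}, \nu_{y;s}) \le \Var_{\RR_s}(\nu_{x;s}, \nu_{y;s}).
\end{align*}
Finally, applying Proposition \ref{prop:005b} to the right-hand side gives
\begin{align*}
\Var_{Z_s}(\nu_{x;s}, \nu_{y;s}) \le d_t^Z(x,y) + H_n(t-s),
\end{align*}
which is the required bound.

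There is no real obstacle here; the only conceptual point to keep track of is the distinction between the extended distance $d_s^Z$ on $Z_s$ (which may connect different components of $\RR_s$ at finite distance) and the Riemannian distance $d_{g_s^Z}$ on $\RR_s$ (which is infinite across components). The argument handles this precisely by using the finiteness assumption $d_t^Z(x,y) < \infty$ to confine both measures to a common component, where the local agreement of the two distances from Proposition \ref{prop:dismatch} upgrades to the desired integral inequality.
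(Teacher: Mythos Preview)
Your proposal is correct and follows exactly the route the paper indicates: the paper's own proof simply states that the result is immediate from Proposition~\ref{prop:005b} and Proposition~\ref{prop:dismatch}, and you have filled in the details accurately. Your care in handling the case $d_t^Z(x,y)=\infty$ and in confining the measures to a common component is correct but not strictly necessary, since the pointwise inequality $d_s^Z \le d_{g_s^Z}$ from Proposition~\ref{prop:dismatch} already yields $\Var_{Z_s}(\nu_{x;s},\nu_{y;s}) \le \Var_{\RR_s}(\nu_{x;s},\nu_{y;s})$ directly.
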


Next, we define
\begin{defn}[$H$-center]
For any $z \in Z$, a point $z_1 \in Z_s$ with $s \in (-(1-2\sigma)T, \t(z))$ is called an \textbf{$H$-center} of $z$ for a constant $H>0$ if
 \begin{align*}
	\Var_{Z_s}(\delta_{z_1},\nu_{z;s})\leq H (\t(z)-s).
	\end{align*}
Note that by Proposition \ref{prop:005bb}, for any $s \in (-(1-2\sigma)T, \t(z))$, we can always find an $H_n$-center of $z$ in $Z_s$. Moreover, by Proposition \ref{prop:dismatch}, any regular $H$-center $z_2 \in \RR_s$ of $z$ is also an $H$-center of $z$.
\end{defn}

By the definition of an $H$-center, the following statement is immediate.

\begin{lem}\label{lem:004abcxx}
Given $x \in Z$, if $z \in Z_s$ is an $H$-center of $x$ with $s \in (-(1-2\sigma)T,\t(x))$, then
	\begin{align*}
\nu_{x;s}\lc B_{Z_s} \lc z, \sqrt{LH(\t(x)-s)} \rc \rc \ge 1-\frac{1}{L},
	\end{align*}
	where $B_{Z_s}(z,r):=\{w\in Z_s\mid d^Z_s(z,w)<r\}$ denotes the metric ball with respect to $d^Z_s$.
\end{lem}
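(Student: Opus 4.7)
The plan is to derive the estimate directly from the definition of an $H$-center via a Chebyshev-type inequality. By the definition of an $H$-center, we have
\begin{align*}
\Var_{Z_s}(\delta_{z},\nu_{x;s})=\int_{Z_s} d^Z_s(z,y)^2 \,\mathrm{d}\nu_{x;s}(y) \le H(\t(x)-s),
\end{align*}
where we have used Definition \ref{defnvarainceextended}. Note that the integrand is unambiguously defined in $[0,+\infty]$ since $(Z_s,d^Z_s)$ is an extended metric space by Lemma \ref{lem:extend1}.

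Setting $r:=\sqrt{LH(\t(x)-s)}$, I would apply Markov's inequality to the nonnegative measurable function $y\mapsto d^Z_s(z,y)^2$ to obtain
\begin{align*}
\nu_{x;s}\lc Z_s\setminus B_{Z_s}(z,r) \rc
=\nu_{x;s}\lc\{y\in Z_s \mid d^Z_s(z,y)\ge r\}\rc
\le \frac{1}{r^2}\int_{Z_s} d^Z_s(z,y)^2 \,\mathrm{d}\nu_{x;s}(y) \le \frac{H(\t(x)-s)}{LH(\t(x)-s)}=\frac{1}{L}.
\end{align*}
Taking complements, this immediately yields $\nu_{x;s}(B_{Z_s}(z,r))\ge 1-L^{-1}$, which is the claim.

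There is essentially no obstacle here; the only subtlety worth noting is that $d^Z_s$ may take the value $+\infty$, but this only makes the Chebyshev bound easier, as any $y$ lying in a different connected component of $\RR_s$ from $z$ contributes $+\infty$ to the variance and so must have $\nu_{x;s}$-mass zero for the variance bound to hold. In particular, the conjugate heat kernel measure $\nu_{x;s}$ is supported on $\{y \in Z_s\mid d^Z_s(z,y)<\infty\}$, so the Markov estimate operates on a genuine finite-valued integrand.
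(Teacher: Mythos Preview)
Your proof is correct and is precisely the Chebyshev/Markov argument that the paper intends when it writes ``By the definition of an $H$-center, the following conclusion is immediate.'' The only microscopic gap is that the hypothesis is stated for a \emph{regular} $H$-center, whose variance bound is with respect to $d_{g^Z_s}$ rather than $d^Z_s$; but the paper notes immediately before this lemma (via Proposition~\ref{prop:dismatch}) that any regular $H$-center is also an $H$-center in the sense of $\Var_{Z_s}$, so your first displayed inequality is justified.
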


We also have the following estimate, which should be compared to Lemma \ref{lem:Hcenterdis} and Lemma \ref{lem:004abc}.

\begin{lem}\label{lem:004abcxee}
	Let $x\in Z$ and let $z\in Z_s$ be an $H$-center of $x$ at a time $s\in (-(1-2\sigma)T,\t(x))$. Then
	\begin{align*}
		d_Z(x,z) \le 3 \sqrt{(H_n+H)(\t(x)-s)}.
	\end{align*}
\end{lem}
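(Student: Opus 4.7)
The plan is to reduce the general $H$-center estimate to the regular $H_n$-center case, which is already handled by Lemma~\ref{lem:004abc}. The key observation is that while $z \in Z_s$ may be singular, every point $x \in Z$ admits a regular $H_n$-center in $\RR_s$ (by Proposition~\ref{lem:004ac}), and such a regular center is automatically an $H_n$-center in the extended sense thanks to Proposition~\ref{prop:dismatch}. Then one can compare $z$ to such a regular center $w$ via a triangle inequality for the $W_1$-distance on the extended metric space $(Z_s, d^Z_s)$.

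First I would fix an arbitrary regular $H_n$-center $w \in \RR_s$ of $x$, whose existence is guaranteed by Proposition~\ref{lem:004ac}. By Lemma~\ref{lem:004abc},
\[
 d_Z(x,w) \le \ep_0^{-1}\sqrt{H_n(\t(x)-s)}.
\]
Next, I would estimate $d^Z_s(z,w)$. Since $\Var_{Z_s}(\delta_z,\nu_{x;s}) \le H(\t(x)-s)$ is finite, the support of $\nu_{x;s}$ and the point $z$ all sit at finite $d^Z_s$-distance, so the standard inequality $d^{X}_{W_1}(\mu_1,\mu_2) \le \sqrt{\Var_X(\mu_1,\mu_2)}$ (cf.\ Lemma~\ref{lem:var}, applied in the appropriate finite-distance component of $(Z_s, d^Z_s)$) gives
\[
 d^Z_{W_1,s}(\delta_z,\nu_{x;s}) \le \sqrt{H(\t(x)-s)}.
\]
Using Proposition~\ref{prop:dismatch} to pass from $\Var_{\RR_s}$ to $\Var_{Z_s}$, the analogous estimate for $w$ reads
\[
 d^Z_{W_1,s}(\delta_w,\nu_{x;s}) \le \sqrt{H_n(\t(x)-s)}.
\]
The triangle inequality for $d^Z_{W_1,s}$ then yields
\[
 d^Z_s(z,w) \le \bigl(\sqrt{H}+\sqrt{H_n}\bigr)\sqrt{\t(x)-s} \le \sqrt{2(H+H_n)(\t(x)-s)}.
\]

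Finally, I would invoke Lemma~\ref{dtlem1} to convert this time-slice estimate into a spacetime estimate: $d_Z(z,w) \le \ep_0^{-1} d^Z_s(z,w)$. Combining with the bound on $d_Z(x,w)$ via the triangle inequality in $(Z, d_Z)$,
\[
 d_Z(x,z) \le \ep_0^{-1}\sqrt{H_n(\t(x)-s)} + \ep_0^{-1}\sqrt{2(H+H_n)(\t(x)-s)} \le (1+\sqrt 2)\ep_0^{-1}\sqrt{(H_n+H)(\t(x)-s)},
\]
which is bounded by $3\ep_0^{-1}\sqrt{(H_n+H)(\t(x)-s)}$. No essential obstacle arises; the only mild subtlety is that $(Z_s, d^Z_s)$ is an extended metric space, so one must verify that the Wasserstein triangle inequality and the variance bound on $d_{W_1}$ still apply — but once one restricts to the (necessarily unique) component at finite $d^Z_s$-distance containing $z$ and $w$, the classical argument of Lemma~\ref{lem:var} transfers without change.
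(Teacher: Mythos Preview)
Your proof is correct and follows essentially the same route as the paper: pick a regular $H_n$-center $z'\in\RR_s$, bound $d_Z(x,z')$ via Lemma~\ref{lem:004abc}, bound $d^Z_s(z,z')$, and convert the latter to a $d_Z$-bound via Lemma~\ref{dtlem1}. The only minor difference is in the middle step: the paper bounds $d^Z_s(z,z')$ by a ball-intersection argument (the $\sqrt{2H_n}$-ball around $z'$ and the $\sqrt{2H}$-ball around $z$ each carry $\nu_{x;s}$-mass $\ge 1/2$, via Lemmas~\ref{lem:004abcx} and~\ref{lem:004abcxx}), whereas you use $d_{W_1}\le\sqrt{\Var}$ and the $W_1$-triangle inequality; both yield the required estimate.
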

\begin{proof}
Let $z' \in \RR_s$ be a regular $H_n$-center of $x$. It follows from Lemma \ref{lem:004abcx}, Proposition \ref{prop:dismatch}, and Lemma \ref{lem:004abcxx} that
	\begin{align*}
d^Z_s(z, z') \le \sqrt{2 H_n(\t(x)-s)}+\sqrt{2 H(\t(x)-s)}.
	\end{align*}
Thus, by Lemma \ref{lem:004abc} and Lemma \ref{dtlem1}, we conclude that
	\begin{align*}
d_Z(x, z) \le d_Z(x, z')+d_Z(z, z') \le  \sqrt{H_n(\t(x)-s)}+ d^Z_s(z, z') \le 3 \sqrt{(H_n+H)(\t(x)-s)}.
	\end{align*}
\end{proof}

In general, it is unclear whether $(Z_t, d_t)$ is separable. For this reason, we have the following definition:

\begin{defn}[Extended metric flow] \label{defn:emf}
An \textbf{extended metric flow}\index{extended metric flow} over a subset $I$ of $\R$ is a tuple of the form 
	\begin{align*}
		\lc Z , \t, (d_t)_{t \in I} , (\nu_{x;s})_{x \in Z, s \in I, s \leq \t (x)} \rc
	\end{align*}
satisfying all conditions (1)-(7) in Definition \ref{defnmetricflow}, except that condition (3) is replaced by the following:
\begin{itemize}
\item $(Z_t, d_t)$ is a complete extended metric space for any $t \in I$.
\end{itemize}
\end{defn}


\begin{thm}[Extended metric flow]\label{thm:em}
	$\lc Z, \t, (d^Z_t)_{t \in \III^-}, (\nu_{z; s})_{s\in \III^-, s\le \t(z)} \rc$ is an $H_n$-concentrated extended metric flow over $\III^-$.
\end{thm}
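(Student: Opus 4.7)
The plan is to verify conditions (1)--(7) of Definition \ref{defnmetricflow}, with (3) replaced by the extended completeness condition of Definition \ref{defn:emf}, together with $H_n$-concentration. Most of these follow directly from earlier results: the completeness of each $(Z_t, d^Z_t)$ is Lemma \ref{lem:extend1} together with Proposition \ref{prop:com}; the probability-measure property of $\nu_{z;s}$ on $Z_s$ follows from Lemma \ref{lem:pre1}, Definition \ref{def:chks}, and Proposition \ref{prop:dismatch} (which shows $\RR_s$ is open in $(Z_s, d^Z_s)$); $\nu_{z;\t(z)} = \delta_z$ is by definition; the reproduction formula is \eqref{eq:generalrep}; and $H_n$-concentration is Proposition \ref{prop:005bb}. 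The substantive content is therefore condition (6), the heat-flow Lipschitz dichotomy.

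To handle (6), I would fix $s < t$ in $\III^-$, $L \ge 0$, and $u_s = \Phi \circ f_s$ with $f_s$ being $L^{-1/2}$-Lipschitz in $d^Z_s$ when $L > 0$, and set $u_t(x) := \int u_s \, d\nu_{x;s}$ (the $L=0$ measurable case will be recovered at the end by a standard $L \searrow 0$ approximation with mollified $f_s^\varepsilon$). The claim is vacuous whenever $d_t^Z(x,y) = \infty$, so it suffices to treat $x, y \in Z_t$ with $d_t^Z(x,y) < \infty$. By \eqref{eq:support1}, one then has $\iota_x(\RR^x_{s'}) = \iota_y(\RR^y_{s'})$ for every $s' < t$, which allows both $\nu_{x;\cdot}$ and $\mu^y_{\cdot} := (\iota_x^{-1})_* \nu_{y;\cdot}$ to be viewed as conjugate heat flows on the single associated metric flow $\XX^x$. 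The pullback $\tilde f_s := f_s \circ \iota_x$ is $L^{-1/2}$-Lipschitz on $\RR^x_s$ with respect to $d^x_s$, thanks to Proposition \ref{prop:embed2}(i) (giving $\iota_x^* g^Z = g^x$) combined with Proposition \ref{prop:dismatch} (giving $d^Z_s \le d_{g^Z_s}$ on $\RR_s$), and extends by density of $\RR^x_s$ in $\XX^x_s$ (\cite[Theorem 2.4]{bamler2020structure}) to all of $\XX^x_s$.

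The next step is to approximate at times $t_i \nearrow t$ at which $\XX^x$ is continuous, choosing an $H_n$-center $\tilde x_i \in \XX^x_{t_i}$ of the basepoint $x$, and a point $\tilde y_i \in \XX^x_{t_i}$ with $\Var_{\XX^x_{t_i}}(\delta_{\tilde y_i}, \mu^y_{t_i}) \le H_n(t - t_i)$, which exists by $H_n$-concentration of the metric flow $\XX^x$. Invoking axiom (6) of $\XX^x$ gives either constancy of $\tilde u_{t_i}$ on $\XX^x_{t_i}$ or a $(t_i - s + L)^{-1/2}$-Lipschitz $\tilde f_{t_i}$ with $\tilde u_{t_i} = \Phi \circ \tilde f_{t_i}$. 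Monotonicity of $d_{W_1}$ and the triangle inequality then yield
\[
\bigl| d^x_{t_i}(\tilde x_i, \tilde y_i) - d_{W_1}^{\RR_{t_i}}(\nu_{x;t_i}, \nu_{y;t_i}) \bigr| \le 2\sqrt{H_n (t - t_i)},
\]
and the right-hand side converges to $d_t^Z(x,y)$ as $i \to \infty$ by Definition \ref{defntimeslicedist}. Using that $\tilde u_{s}$ is bounded and Lipschitz, hence $d_{W_1}$-continuous in its arguments, the integrals $\tilde u_{t_i}(\tilde x_i)$ and $\tilde u_{t_i}(\tilde y_i)$ will converge to $u_t(x)$ and $u_t(y)$ respectively, and passing to the limit produces the required Lipschitz estimate $|\Phi^{-1}(u_t(x)) - \Phi^{-1}(u_t(y))| \le (t-s+L)^{-1/2} d_t^Z(x,y)$.

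The principal technical obstacle will be making the convergence $\tilde u_{t_i}(\tilde x_i) \to u_t(x)$ quantitatively uniform enough to pass through the nonlinear transform $\Phi^{-1}$, which blows up near the endpoints $\{0,1\}$; this is controlled using the Gaussian upper bound on the heat kernel from Theorem \ref{thm:upper1}, which ensures tightness of the relevant conjugate heat kernel measures uniformly in $i$. A secondary subtlety is the reconciliation of the global ``constant vs.\ $\Phi \circ f_t$'' dichotomy in axiom (6) with the equivalence-class decomposition of $Z_t$ induced by the extended metric $d_t^Z$; this is resolved by noting that as soon as $u_t$ varies nontrivially on a single $d_t^Z$-equivalence class, the argument above places $u_t$ in the $\Phi \circ f_t$ alternative globally, with $f_t$ arbitrary across classes and appropriately Lipschitz within each class --- consistent with the extended setting in which cross-class distances are infinite.
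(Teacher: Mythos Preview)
Your proposal is correct and takes essentially the same route as the paper: all axioms but (6) are immediate from earlier results, and (6) is verified by pulling back to the associated metric flow $\XX^x$, approximating $x,y$ by $H_n$-centers at continuity times $t_i \nearrow t$, invoking axiom (6) of $\XX^x$ at those times, and passing to the limit using $d_{\XX^x_{t_i}}(\iota_x^{-1}x_i,\iota_x^{-1}y_i)=d_{g^Z_{t_i}}(x_i,y_i)\to d^Z_t(x,y)$. One small simplification relative to what you sketch: the convergence $u_{t_i}(x_i)\to u_t(x)$ follows directly from $\nu_{\iota_x^{-1}(x_i);t_0}\to\nu_{x;t_0}$ in $d_{W_1}^{\XX^x_{t_0}}$ (Lemma \ref{lem:004abc}) tested against the bounded Lipschitz integrand $u_{t_0}$, so no appeal to the Gaussian bound of Theorem \ref{thm:upper1} is needed, and the $\Phi^{-1}$ passage is harmless since $\Phi^{-1}:[0,1]\to[-\infty,\infty]$ is continuous and the finite right-hand side of the Lipschitz inequality forces $f_t(x),f_t(y)$ to be simultaneously finite or simultaneously at the same endpoint.
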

\begin{proof}
	All items in Definition \ref{defnmetricflow} except item (6) follow from \eqref{eq:generalrep} and Proposition \ref{prop:com}.
	
	For item (6), we consider a function $u_{t_0}=\Phi \circ f_{t_0}$ for some $L^{-\frac 1 2}$-Lipschitz function $f_{t_0}: Z_{t_0} \to \R$ (if $L=0$, then there is no additional assumption on $u_{t_0}$). We define $u_t: Z_t \to \R$ by
	\begin{align*}
u_t(z)=\int_{Z_{t_0}} u_{t_0} \, \mathrm{d}\nu_{z;{t_0}}.
	\end{align*}	
	
For any $x, y\in Z_t$ with $d^Z_t(x, y)<\infty$, we may regard $(\iota_x^{-1})_* \nu_{y;s}$ as a conjugate heat flow on $\XX^x_{s<t}$. We take a sequence $ t_i \nearrow t$ such that $\XX^x$ is continuous at $t_i$ and choose $x_i, y_i \in \RR_{t_i}$ to be regular $H_n$-centers of $x$ and $y$, respectively. As in the proof of Lemma \ref{takelimitdt1}, we have
	\begin{equation}\label{eq:extend1}
d_{\XX^x_{t_i}}(\iota_x^{-1}(x_i), \iota_x^{-1}(y_i))=d_{g^Z_{t_i}}(x_i, y_i)
	\end{equation}	
	and
	\begin{equation}\label{eq:extend2}
\lim_{i \to \infty} d_{g^Z_{t_i}}(x_i, y_i)=d^Z_t(x, y).
	\end{equation}		
	
Since $\XX^x$ is a metric flow, we obtain from \eqref{eq:extend1}
		\begin{equation}\label{eq:extend3}
\abs{f_{t_i}(x_i)-f_{t_i}(y_i)} \le (t_i-t_0+L)^{-\frac 1 2} d_{g^Z_{t_i}}(x_i, y_i),
	\end{equation}	
	where $u_s=\Phi \circ f_s$ for any $s \in [t_0, t]$. On the other hand, by Lemma \ref{lem:004abc} on $\XX^x$, we have for any $s<t$
			\begin{align*}
\nu_{\iota_x^{-1}(x_i);s} \xrightarrow{i \rightarrow\infty} \nu_{x;s}
	\end{align*}	
	in $d_{W_1}^{\XX^x_s}$. Since 
	\begin{align*}
u_{t_i}(x_i)=\int_{Z_{t_0}} u_{t_0} \, \mathrm{d}\nu_{x_i;t_0},
	\end{align*}
	this implies $u_t(x)=\lim_{i \to \infty}u_{t_i}(x_i)$. Similarly, we have $u_t(y)=\lim_{i \to \infty}u_{t_i}(y_i)$.
	
Thus, by passing to the limit and using \eqref{eq:extend2} and \eqref{eq:extend3}, we obtain
		\begin{align*}
\abs{f_{t}(x)-f_{t}(y)} \le (t-t_0+L)^{-\frac 1 2} d^Z_t (x, y).
	\end{align*}	
This proves item (6). Finally, Proposition \ref{prop:005bb} implies that the extended metric flow
\begin{align*}
\lc Z, \t, (d^Z_t)_{t \in \III^-}, (\nu_{z;s})_{s\in \III^-,\,s\le \t(z)} \rc
\end{align*}
is $H_n$-concentrated.
\end{proof}

\begin{rem}
If two noncollapsed Ricci flow limit spaces $(Z, d_Z, z, \t)$ and $(Z', d_{Z'}, z', \t')$ are isometric (see Definition \ref{def:iso}), then they are also isometric as extended metric flows (see \cite[Definition 3.7]{bamler2023compactness}), by Remark \ref{rem:defheatequS} and Definition \ref{defntimeslicedist}.
\end{rem}

The proof of \cite[Proposition 3.16]{bamler2023compactness} also gives the following monotonicity.

\begin{lem}\label{lem:limitmono1}
For any $x, y \in Z$ and $s \in (-(1-2\sigma)T, \min\{\t(x),\t(y)\}]$, the function
\begin{align*}
	s\mapsto d_{W_1}^{Z_s}(\nu_{x;s},\nu_{y;s}) \quad \text{is nondecreasing.}
\end{align*}
\end{lem}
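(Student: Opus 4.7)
The plan is to mimic the argument of \cite[Proposition 3.16]{bamler2023compactness}, transplanted to the extended metric flow structure on $Z$ established in Theorem \ref{thm:em}. Fix $s_1 < s_2 \le \min\{\t(x),\t(y)\}$ in $(-(1-2\sigma)T, \min\{\t(x),\t(y)\}]$. By the dual definition of $d_{W_1}^{Z_{s_1}}$ in Definition \ref{defnvarainceextended}, it suffices to show that for every bounded $1$-Lipschitz function $f:(Z_{s_1},d^Z_{s_1}) \to \R$,
\begin{align*}
\int_{Z_{s_1}} f \, \mathrm{d}\nu_{x;s_1} - \int_{Z_{s_1}} f \, \mathrm{d}\nu_{y;s_1} \le d_{W_1}^{Z_{s_2}}(\nu_{x;s_2},\nu_{y;s_2}).
\end{align*}
By the reproduction formula (condition (7) of Definition \ref{defnmetricflow}, valid in our extended setting by Theorem \ref{thm:em}), the left-hand side equals $\int_{Z_{s_2}} F \, \mathrm{d}\nu_{x;s_2} - \int_{Z_{s_2}} F \, \mathrm{d}\nu_{y;s_2}$, where $F(w):=\int_{Z_{s_1}} f \, \mathrm{d}\nu_{w;s_1}$ is a bounded function on $Z_{s_2}$. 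Thus the whole problem reduces to showing that $F$ is $1$-Lipschitz on $(Z_{s_2}, d^Z_{s_2})$; duality then closes the loop.

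To obtain this Lipschitz bound, I would apply condition (6) of Definition \ref{defnmetricflow} (available for $Z$ by Theorem \ref{thm:em}) combined with the standard $\Phi$-trick. For small $\lambda>0$, define $u_{s_1}:=\Phi\circ(\lambda f)$ on $Z_{s_1}$; since $\lambda f$ is $\lambda$-Lipschitz, condition (6) with $L=\lambda^{-2}$ yields $u_{s_2}=\Phi\circ f_{s_2}$ where $f_{s_2}$ is $(s_2-s_1+\lambda^{-2})^{-1/2}$-Lipschitz on $(Z_{s_2},d^Z_{s_2})$. Using the Taylor expansion $\Phi(\lambda f)=\tfrac12+(4\pi)^{-1/2}\lambda f+O(\lambda^2)$, which is uniform because $f$ is bounded, integrating against $\nu_{w;s_1}$ gives
\begin{align*}
u_{s_2}(w) = \tfrac12 + (4\pi)^{-1/2}\lambda F(w) + O(\lambda^2).
\end{align*}
Combining this with $\Lip(u_{s_2})\le (4\pi)^{-1/2}(s_2-s_1+\lambda^{-2})^{-1/2}$ (since $\Phi'$ attains its maximum $(4\pi)^{-1/2}$ at the origin) and sending $\lambda\to 0$, one has $\lambda^{-1}(s_2-s_1+\lambda^{-2})^{-1/2} = (\lambda^2(s_2-s_1)+1)^{-1/2}\to 1$, forcing $\Lip(F)\le 1$ with respect to $d^Z_{s_2}$.

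The one subtlety is the extended-metric nature of $(Z_t,d^Z_t)$, absent from Bamler's standard metric flows. However, whenever $d^Z_{s_2}(w_1,w_2)=\infty$ the inequality $|F(w_1)-F(w_2)|\le d^Z_{s_2}(w_1,w_2)$ is vacuous, and by Lemma \ref{lem:pre4} and Lemma \ref{lem:dw1Z} the measures $\nu_{x;s_2}, \nu_{y;s_2}$ are supported on time-slices of identified connected components whenever $d_{W_1}^{Z_{s_2}}(\nu_{x;s_2},\nu_{y;s_2})<\infty$ (the case we care about), so the reproduction formula does not couple inaccessible regions. The main obstacle therefore is simply verifying that condition (6) admits the Taylor-expansion step in this extended setting, which is a routine check once one works within a single finite-distance component. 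Taking the supremum over $f$ completes the proof of the monotonicity.
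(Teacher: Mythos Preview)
Your proposal is correct and is exactly the approach the paper takes: the paper simply says ``by the same proof of \cite[Proposition 3.16]{bamler2023compactness}'' and invokes the extended metric flow structure from Theorem \ref{thm:em}, which is precisely the $\Phi$-trick argument you have written out in detail. Your handling of the extended-metric subtlety (the $d^Z_{s_2}=\infty$ case being vacuous, and the finite-$d_{W_1}^{Z_{s_2}}$ case being the only nontrivial one) is the natural and correct way to adapt Bamler's argument.
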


Next, we prove past continuity.

\begin{prop} \label{prop:pastcont}
For any $x,y\in Z_t$ with $t\in\III^-$,
\begin{align*}
\lim_{s \nearrow t} d_{W_1}^{Z_s}(\nu_{x;s},\nu_{y;s})=d^Z_t(x, y).
\end{align*}
\end{prop}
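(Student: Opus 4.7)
The plan is to prove the two inequalities separately. The monotonicity from Lemma \ref{lem:limitmono1} ensures the left-hand limit exists in $[0,\infty]$, so no additional existence argument is needed.

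For the easy direction $\lim_{s\nearrow t} d_{W_1}^{Z_s}(\nu_{x;s},\nu_{y;s})\le d^Z_t(x,y)$, I will compare $d_{W_1}^{Z_s}$ and $d_{W_1}^{\RR_s}$ pointwise for each $s<t$. Proposition \ref{prop:dismatch} gives the global inequality $d^Z_s\le d_{g^Z_s}$ on $\RR_s$, so any bounded $1$-Lipschitz function on $(Z_s,d^Z_s)$, when restricted to $\RR_s$, is a bounded $1$-Lipschitz function on $(\RR_s,d_{g^Z_s})$. Since $\nu_{x;s}$ and $\nu_{y;s}$ are both supported on $\RR_s$, the dual characterization of $W_1$ immediately yields $d_{W_1}^{Z_s}(\nu_{x;s},\nu_{y;s})\le d_{W_1}^{\RR_s}(\nu_{x;s},\nu_{y;s})$. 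Letting $s\nearrow t$ and using Definition \ref{defntimeslicedist} closes this direction.

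For the reverse inequality, I will invoke Lemma \ref{takelimitdt1} to pick a sequence $t_i\nearrow t$ together with regular $H_n$-centers $x_i,y_i\in \RR_{t_i}$ of $x$ and $y$ satisfying $d^Z_{t_i}(x_i,y_i)=d_{g^Z_{t_i}}(x_i,y_i)$ and $d^Z_{t_i}(x_i,y_i)\to d^Z_t(x,y)$. The regular $H_n$-center property combined with Lemma \ref{lem:var} yields $d_{W_1}^{\RR_{t_i}}(\delta_{x_i},\nu_{x;t_i})\le \sqrt{H_n(t-t_i)}$, and applying the easy direction to the pair $(\delta_{x_i},\nu_{x;t_i})$ transfers this bound to $d_{W_1}^{Z_{t_i}}$; the analogous estimate holds for $y_i$. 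The triangle inequality on $(Z_{t_i},d^Z_{t_i})$ then produces
\[ d_{W_1}^{Z_{t_i}}(\nu_{x;t_i},\nu_{y;t_i})\ge d^Z_{t_i}(x_i,y_i)-2\sqrt{H_n(t-t_i)}. \]
Monotonicity (Lemma \ref{lem:limitmono1}) shows that $\lim_{s\nearrow t}d_{W_1}^{Z_s}(\nu_{x;s},\nu_{y;s})$ dominates this right-hand side for every $i$, so letting $i\to\infty$ produces the desired bound $\ge d^Z_t(x,y)$. The argument works uniformly when $d^Z_t(x,y)=\infty$, since then $d^Z_{t_i}(x_i,y_i)\to\infty$ forces the limit to be infinite.

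The main subtlety lies in transferring the $H_n$-center estimate from $d_{W_1}^{\RR_{t_i}}$ to $d_{W_1}^{Z_{t_i}}$. This transfer relies on the global comparison $d^Z_s\le d_{g^Z_s}$ from Proposition \ref{prop:dismatch} rather than only the local equality near regular points, which is what allows the dual-formulation restriction argument to go through even when $\nu_{x;s}$ and $\nu_{y;s}$ charge distant portions of a single connected component. The fact that $d^Z_s$ may be strictly smaller than $d_{g^Z_s}$ between components does not interfere, since each regular $H_n$-center is selected from the same connected component as the relevant conjugate heat kernel measure.
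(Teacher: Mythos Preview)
Your proof is correct and follows essentially the same approach as the paper. The paper handles both directions at once by observing the two-sided bound
\[
\bigl| d_{W_1}^{Z_{t_i}}(\nu_{x;t_i},\nu_{y;t_i})-d^Z_{t_i}(x_i,y_i)\bigr| \le 2\sqrt{H_n(t-t_i)}
\]
along the sequence from Lemma~\ref{takelimitdt1}, then invokes monotonicity. Your lower bound argument is exactly the one-sided version of this. Your upper bound via the pointwise inequality $d_{W_1}^{Z_s}\le d_{W_1}^{\RR_s}$ (from $d^Z_s\le d_{g^Z_s}$ on $\RR_s$) is a slightly different and arguably more direct route than the paper's, which deduces the upper bound from the same $H_n$-center estimate; both are valid.
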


\begin{proof}
By Lemma \ref{takelimitdt1}, there exists a sequence $t_i \nearrow t$ such that if $x_i, y_i \in \RR_{t_i}$ are regular $H_n$-centers of $x$ and $y$, respectively, then
	\begin{align*}
		d^Z_{t}(x,y)=\lim_{i\to\infty} d^Z_{t_i}(x_i,y_i).
	\end{align*}
Since
\begin{align*}
\abs{d_{W_1}^{Z_{t_i}}(\nu_{x;t_i},\nu_{y;t_i})-d^Z_{t_i}(x_i,y_i)} \le 2 \sqrt{H_n(t-t_i)},
\end{align*}
	we conclude that
	\begin{align*}
\lim_{i \to \infty} d_{W_1}^{Z_{t_i}}(\nu_{x;t_i},\nu_{y;t_i}) =	d^Z_{t}(x,y),
\end{align*}
which, when combined with Lemma \ref{lem:limitmono1}, yields the conclusion.	
\end{proof}

Next, we have the following characterization of $d_Z$.

\begin{prop}\label{prop:chara}
For any $x,y\in Z_{\III^-}$ with $t_0=\t(x)\geq\t(y)$, set
\begin{align*}
r:=d_Z(x,y),\qquad q:=t_0-r^2.
\end{align*}
If $q\in\III^-$, then
\begin{align}\label{eq:charaleft}
\lim_{t\nearrow q}d_{W_1}^{Z_t}(\nu_{x;t},\nu_{y;t})\leq r.
\end{align}
If in addition $q<\t(y)$, equivalently $r>\sqrt{\t(x)-\t(y)}$, then
\begin{align}\label{eq:chararight}
r\leq\lim_{t\searrow q}d_{W_1}^{Z_t}(\nu_{x;t},\nu_{y;t}).
\end{align}
\end{prop}

\begin{proof}
By Proposition \ref{prop:pre5}, $\iota_x(\RR^x_t)=\iota_y(\RR^y_t)$ for every $t\in[-(1-2\sigma)T,q)$. Proposition \ref{prop:dismatch} and \eqref{eq:lowlimit} therefore give
\begin{align*}
\lim_{t\nearrow q}d_{W_1}^{Z_t}(\nu_{x;t},\nu_{y;t})
\leq\lim_{t\nearrow q}d_{W_1}^{\XX^x_t}\left(\nu_{x;t},(\iota_x^{-1})_*\nu_{y;t}\right)
\leq r,
\end{align*}
which proves \eqref{eq:charaleft}.

Assume now that $q<\t(y)$ and suppose that \eqref{eq:chararight} fails. By monotonicity, there are $\delta>0$ and $t_1\in(q,\t(y))$ such that
\begin{align}\label{eq:characontra}
d_{W_1}^{Z_{t_1}}(\nu_{x;t_1},\nu_{y;t_1})\leq r-\delta.
\end{align}
Lemma \ref{lem:dw1Z} implies that $\iota_x(\RR^x_t)=\iota_y(\RR^y_t)$ for every $t<t_1$. Choose $x_i^*,y_i^*\in M_i\times\III$ converging to $x,y$, respectively, and choose
\begin{align*}
q<t_3<t_2<t_1
\end{align*}
so that $\XX^x$ is continuous at $t_3$. In particular, the distance $d^Z_{t_3}$ on $\iota_x(\RR^x_{t_3})$ agrees with $d_{g^Z_{t_3}}$.

We claim that
\begin{align} \label{eq:bounded}
	d_{W_1}^{t_2}(\nu_{x_i^*;t_2},\nu_{y_i^*;t_2}) \le D
\end{align}
for a constant $D$. To see this, let $z \in \iota_x(\RR^x_{t_2})$ be a regular $H_n$-center of $x$, and suppose that $z_i^*=(z_i, t_2) \in M_i \times \{t_2\}$ converges to $z$ in the Gromov--Hausdorff sense. Then, by Theorem \ref{heatkernelupperbdgeneral} (ii) and Theorem \ref{thm:convextra}, $z_i^*$ is an $H$-center of $x_i^*$ for some constant $H>0$ independent of $i$. Similarly, let $w \in \iota_x(\RR^x_{t_2})$ be a regular $H_n$-center of $y$, and suppose that $w_i^*=(w_i, t_2) \in M_i \times \{t_2\}$ converges to $w$ in the Gromov--Hausdorff sense. Then $w_i^*$ is also an $H$-center of $y_i^*$. Therefore, we have
\begin{align*} 
	d_{W_1}^{t_2}(\nu_{x_i^*;t_2},\nu_{y_i^*;t_2}) \le 2\sqrt{H(t_0-t_2)}+d_{g_i(t_2)}(z_i, w_i).
\end{align*}
By smooth convergence (Theorem \ref{thm:smooth1}), the distance $d_{g_i(t_2)}(z_i, w_i)$ remains uniformly bounded. This establishes \eqref{eq:bounded}.

Lemma \ref{lem:pre5}, continuity at $t_3$, and \eqref{eq:characontra} now yield
\begin{align}\label{eq:charalim}
\lim_{i\to\infty}d_{W_1}^{t_3}(\nu_{x_i^*;t_3},\nu_{y_i^*;t_3})
=d_{W_1}^{Z_{t_3}}(\nu_{x;t_3},\nu_{y;t_3})
\leq r-\delta.
\end{align}
On the other hand,
\begin{align*}
\sqrt{\max\{\t_i(x_i^*),\t_i(y_i^*)\}-t_3}
\longrightarrow\sqrt{t_0-t_3}<r.
\end{align*}
Using the fixed comparison time $t_3$ in Definition \ref{defn:dstar-distance}, \eqref{eq:charalim} gives a constant $\eta>0$ such that
\begin{align*}
d_i^*(x_i^*,y_i^*)\leq r-\eta
\end{align*}
for all sufficiently large $i$. This contradicts $d_i^*(x_i^*,y_i^*)\to d_Z(x,y)=r$ and proves \eqref{eq:chararight}.
\end{proof}

Combining Lemma \ref{lem:limitmono1} and Proposition \ref{prop:chara}, the following corollary is immediate.

\begin{cor}\label{cor:limit1}
	If $x_i \to x$ in $d_Z$, then for any $s<\t(x)$,
	\begin{align*}
\lim_{i \to \infty} d_{W_1}^{Z_s}(\nu_{x_i;s},\nu_{x;s})=0.
	\end{align*}
\end{cor}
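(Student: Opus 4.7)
The plan is to combine the bound in Proposition \ref{prop:chara} at the critical time $t_i^* - r_i^2$ with the monotonicity from Lemma \ref{lem:limitmono1}, letting $r_i := d_Z(x_i, x) \to 0$ shrink the gap to $s$.

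First I would set $r_i := d_Z(x_i,x)$ and $t_i^* := \max\{\t(x_i),\t(x)\}$. Since $\t$ is $2$-H\"older, we have $|\t(x_i)-\t(x)|\le r_i^2$, hence $r_i^2 \ge t_i^* - \min\{\t(x_i),\t(x)\}$, which gives $t_i^* - r_i^2 \le \min\{\t(x_i),\t(x)\}$. In particular, $\t(x_i)\to \t(x)$, so for all sufficiently large $i$, we have $s < t_i^* - r_i^2$ and $t_i^* - r_i^2 \in \III^-$.

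Next, apply Proposition \ref{prop:chara} to the pair $(x_i, x)$: for large $i$,
\begin{align*}
\lim_{t\nearrow t_i^* - r_i^2} d_{W_1}^{Z_t}(\nu_{x_i;t},\nu_{x;t}) \le \ep_0 r_i.
\end{align*}
By Lemma \ref{lem:limitmono1}, the map $t \mapsto d_{W_1}^{Z_t}(\nu_{x_i;t},\nu_{x;t})$ is nondecreasing on $(-(1-2\sigma)T,\min\{\t(x_i),\t(x)\}]$, so pulling back to $s < t_i^* - r_i^2$ gives
\begin{align*}
d_{W_1}^{Z_s}(\nu_{x_i;s},\nu_{x;s}) \le \lim_{t\nearrow t_i^* - r_i^2} d_{W_1}^{Z_t}(\nu_{x_i;t},\nu_{x;t}) \le \ep_0 r_i.
\end{align*}
Since $r_i \to 0$, the conclusion follows.

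There is no real obstacle here; the only point to verify carefully is that the time $t_i^* - r_i^2$ sits simultaneously below $\min\{\t(x_i),\t(x)\}$ (so that monotonicity applies) and inside $\III^-$ (so that Proposition \ref{prop:chara} applies), and both hold for $i$ large because $r_i \to 0$ and $\t(x_i)\to \t(x) \in \III^-$.
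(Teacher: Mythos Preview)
Your proof is correct and matches the paper's approach exactly: the paper states that the corollary is immediate from combining Lemma~\ref{lem:limitmono1} and Proposition~\ref{prop:chara}, and you have simply written out those details.
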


We also have the following result.

\begin{lem}\label{takelimitdt}
	If $x_i,y_i\in Z_{t_i}$ and $x,y\in Z_t$ satisfy $x_i\to x,y_i\to y$ in $d_Z$, then
	\begin{align*}
		d^Z_{t}(x,y)\leq \liminf_{i\to\infty}d^Z_{t_i}(x_i,y_i).
	\end{align*}
\end{lem}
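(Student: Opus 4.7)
The plan is to exploit the two complementary characterizations of $d^Z_t$: the defining limit $d^Z_t(x,y)=\lim_{s\nearrow t}d_{W_1}^{\RR_s}(\nu_{x;s},\nu_{y;s})$, and the equivalent past-continuity identity $d^Z_t(x,y)=\lim_{s\nearrow t}d_{W_1}^{Z_s}(\nu_{x;s},\nu_{y;s})$ from Proposition \ref{prop:pastcont}, combined with the monotonicity of $s\mapsto d_{W_1}^{Z_s}(\nu_{x;s},\nu_{y;s})$ given by Lemma \ref{lem:limitmono1}. The key idea is to exchange the two limits by inserting an auxiliary time $s<t$ at which the associated conjugate heat kernel measures depend continuously on the underlying points.

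First, I will fix an arbitrary $s\in(-(1-2\sigma)T,t)$. Since $t_i\to t$, for all sufficiently large $i$ we have $s<t_i$, so $d_{W_1}^{Z_s}(\nu_{x_i;s},\nu_{y_i;s})$ is well-defined. By the monotonicity of Lemma \ref{lem:limitmono1} together with Proposition \ref{prop:pastcont}, the slice-wise $W_1$-distance is bounded by the time-slice distance at time $t_i$:
\begin{align*}
d_{W_1}^{Z_s}(\nu_{x_i;s},\nu_{y_i;s})\le d^Z_{t_i}(x_i,y_i).
\end{align*}
If $\liminf_i d^Z_{t_i}(x_i,y_i)=\infty$ there is nothing to prove, so I may pass to a subsequence along which the right-hand side is uniformly bounded; in particular all Wasserstein distances appearing below are finite and the extended-metric-space technicalities are harmless.

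Next, I will pass to the limit on the left. Since $x_i\to x$ and $y_i\to y$ in $d_Z$, Corollary \ref{cor:limit1} yields $d_{W_1}^{Z_s}(\nu_{x_i;s},\nu_{x;s})\to 0$ and $d_{W_1}^{Z_s}(\nu_{y_i;s},\nu_{y;s})\to 0$, and a triangle-inequality argument then gives
\begin{align*}
\lim_{i\to\infty}d_{W_1}^{Z_s}(\nu_{x_i;s},\nu_{y_i;s})=d_{W_1}^{Z_s}(\nu_{x;s},\nu_{y;s}).
\end{align*}
Combining with the previous display produces $d_{W_1}^{Z_s}(\nu_{x;s},\nu_{y;s})\le\liminf_{i\to\infty}d^Z_{t_i}(x_i,y_i)$, where the right-hand side is independent of $s$.

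Finally, I let $s\nearrow t$. By Proposition \ref{prop:pastcont} the left-hand side converges to $d^Z_t(x,y)$, yielding the desired inequality. I do not anticipate a serious obstacle: the only delicate point is justifying the continuity of the heat-kernel measures at the fixed earlier time $s$, but this is exactly what Corollary \ref{cor:limit1} supplies, and monotonicity plus past-continuity of $W_1$-distances takes care of the rest.
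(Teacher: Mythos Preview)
Your proposal is correct and follows essentially the same approach as the paper: fix $s<t$, use Corollary \ref{cor:limit1} to pass the $W_1$-distance at time $s$ to the limit, bound it by $d^Z_{t_i}(x_i,y_i)$ via Lemma \ref{lem:limitmono1} and Proposition \ref{prop:pastcont}, and then let $s\nearrow t$ using Proposition \ref{prop:pastcont} again. The only addition is your explicit handling of the case $\liminf_i d^Z_{t_i}(x_i,y_i)=\infty$, which the paper leaves implicit.
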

\begin{proof}
	For any $s<t$, since $x_i\to x,y_i\to y$ in $d_Z$, we have, by Corollary \ref{cor:limit1}, $\nu_{x_i;s}\to \nu_{x;s},\nu_{y_i;s}\to \nu_{y;s}$ in $d_{W_1}^{Z_s}$, and hence $\lim_{i\to\infty}d_{W_1} ^{Z_s}(\nu_{x_i;s},\nu_{y_i;s})=d_{W_1} ^{Z_s}(\nu_{x;s},\nu_{y;s})$. By Lemma \ref{lem:limitmono1} and Proposition \ref{prop:pastcont}, $d_{W_1} ^{Z_s}(\nu_{x_i;s},\nu_{y_i;s})\leq d^Z_{t_i}(x_i,y_i)$ for large $i$, and therefore
	\begin{align*}
		d_{W_1} ^{Z_s}(\nu_{x;s},\nu_{y;s})\leq 
		\liminf_{i\to\infty}d^Z_{t_i}(x_i,y_i),
	\end{align*}
	which, by Proposition \ref{prop:pastcont}, implies
	\begin{align*}
		d^Z_{t}(x,y)\leq 
		\liminf_{i\to\infty}d^Z_{t_i}(x_i,y_i).
	\end{align*}
\end{proof}

We end this section by proving the following result.

\begin{prop} \label{prop:dismatch1}
For all but countably many times $t \in \III^-$, we have
	\begin{align*}
d^Z_t=d_{g^Z_t}
	\end{align*}
on each connected component of $\RR_t$.
\end{prop}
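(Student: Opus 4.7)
The plan is to exploit the past continuity of the associated metric flows $\XX^{z}$ at all but countably many times (\cite[Corollary 4.11]{bamler2023compactness}), and to cover all connected components of all time-slices via a fixed countable family of reference points. Proposition \ref{prop:dismatch} already provides the inequality $d^Z_t \le d_{g^Z_t}$ on $\RR_t$, so the task is to establish the reverse inequality on each connected component for all but countably many $t$.

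First I would choose a countable dense subset $\{z_k\}_{k\ge 1} \subset \RR$ with respect to $d_Z$, using separability of $Z$. For each $k$, let $E_k \subset [-T, \t(z_k)]$ denote the at-most-countable set of times at which $\XX^{z_k}$ fails to be (past) continuous, and set
\begin{align*}
E := \{0\} \cup \bigcup_{k \ge 1} E_k,
\end{align*}
which is countable. The claim is that the equality $d^Z_t=d_{g^Z_t}$ holds on every connected component of $\RR_t$ for all $t \in \III^- \setminus E$ (so in particular $t<0$).

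The key step is to show that, for such $t$ and any connected component $V$ of $\RR_t$, there exists $z_k$ with $\iota_{z_k}(\RR^{z_k}_t)=V$. To accomplish this, pick $x \in V$ and choose $\delta>0$ small enough that the $\partial_\t$-flow $x^\delta \in \RR_{t+\delta}$ is defined with $t+\delta<0$; by the Ricci flow spacetime structure and the connectedness of each slice $\RR^{x^\delta}_s$ (Theorem \ref{Fconvergence}(1)), we have $x \in \iota_{x^\delta}(\RR^{x^\delta}_t)$, so Lemma \ref{lem:pre4} forces $\iota_{x^\delta}(\RR^{x^\delta}_t)=V$. Density of $\{z_k\}$ in $\RR$ allows us to pick $z_k$ so close to $x^\delta$ (in both $d_Z$ and time) that $\max\{\t(z_k),t+\delta\}-d_Z(z_k,x^\delta)^2>t$; then Proposition \ref{prop:pre5} yields $\iota_{z_k}(\RR^{z_k}_t)=\iota_{x^\delta}(\RR^{x^\delta}_t)=V$, as desired.

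Once $V=\iota_{z_k}(\RR^{z_k}_t)$ with $t \notin E_k$, the rest is essentially a transfer of Lemma \ref{takelimitdt1} through the spacetime isomorphism $\iota_{z_k}$. For $x,y \in V$, let $\tilde x, \tilde y \in \RR^{z_k}_t$ be their $\iota_{z_k}$-preimages. Proposition \ref{prop:embed2}(i) gives $d_{g^Z_t}(x,y)=d_{g^{z_k}_t}(\tilde x, \tilde y)$, and Theorem \ref{Fconvergence}(4) identifies this with the metric-flow distance $d^{z_k}_t(\tilde x, \tilde y)$. Past continuity of $\XX^{z_k}$ at $t$ together with \cite[(4.22)]{bamler2023compactness} then gives
\begin{align*}
d^{z_k}_t(\tilde x, \tilde y) = \lim_{s\nearrow t} d_{W_1}^{\XX^{z_k}_s}(\nu_{\tilde x;s},\nu_{\tilde y;s}) = \lim_{s\nearrow t} d_{W_1}^{\RR_s}(\nu_{x;s},\nu_{y;s}) = d^Z_t(x,y),
\end{align*}
where the middle equality uses that $\iota_{z_k}$ pushes conjugate heat kernel measures forward (Proposition \ref{prop:embed2}(ii)) and is an isometry of slices. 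Chaining the identities yields $d^Z_t(x,y)=d_{g^Z_t}(x,y)$.

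The principal technical obstacle is ensuring that a single countable family $\{z_k\}$ suffices to reach every connected component $V$ of $\RR_t$ across all relevant $t$. This is where the spacetime structure is essential: for $t<0$ we can flow forward to escape a potentially singular slice, and Proposition \ref{prop:pre5} converts spacetime closeness into an exact match of past light-cones. The case $t=0$ is simply placed in the exceptional set $E$, as it lies outside the reach of this forward-flow construction.
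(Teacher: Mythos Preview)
Your proof is correct and follows essentially the same strategy as the paper: build a countable family of reference points whose associated metric flows $\XX^{z}$ collectively cover every connected component of every $\RR_t$, then exclude the countable union of their discontinuity-time sets and invoke past continuity (\cite[(4.22)]{bamler2023compactness}) to conclude. The only difference lies in how the countable family is assembled: the paper enumerates the countably many components of $\RR_{t_k}$ at each rational time $t_k$ (via Proposition \ref{prop:connectnumber} and Corollary \ref{cor:pre2}) and propagates downward using Lemma \ref{lem:pre2}, whereas you take a countable $d_Z$-dense subset of $\RR$, flow forward from a given component, and use Proposition \ref{prop:pre5} to match light-cones; both constructions are equivalent and feed into the identical final step.
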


\begin{proof}
We choose $\{t_k\}_{k \in \mathbb N}=(-(1-2\sigma)T, 0) \cap \mathbb Q$. For each $t_k$, it follows from Proposition \ref{prop:connectnumber} that each $\RR_{t_k}$ has at most countably many connected components, denoted by $U_{k,j}$. By Corollary \ref{cor:pre2}, there exist $z_{k,j} \in \RR$ such that
	\begin{align*}
U_{k,j}=\iota_{z_{k,j}}(\RR^{z_{k,j}}_{t_k}).
	\end{align*}
 For the associated metric flow $\XX^{z_{k,j}}$, it follows from \cite[Corollary 4.11]{bamler2023compactness} that there exists a countable set $J_{k,j} \subset \III$ such that $\XX^{z_{k,j}}$ is continuous at every time $t \notin J_{k,j}$. Thus, it follows from \cite[Equation (4.22)]{bamler2023compactness} and Definition \ref{defntimeslicedist} that
 	\begin{align} \label{eq:dism2}
d^Z_{t}=d_{g^Z_t}
	\end{align}
on $\iota_{z_{k,j}}(\RR^{z_{k, j}}_{t})$, for any $t \in [-(1-2\sigma)T, \t(z_{k,j})) \setminus J_{k,j}$.

We set $J=\bigcup_{k,j} J_{k,j} \bigcup \{0\} \bigcup \{-(1-2\sigma)T\}$, which is a countable set. For any $t \in \III \setminus J$ and any connected component $U$ of $\RR_t$, there exists $z \in Z$ such that $\iota_z(\RR^z_t)=U$. We choose $t_k \in (t, \t(z))$ and $j$ so that
 	\begin{align*}
\iota_z(\RR^z_{t_k})=U_{k,j}=\iota_{z_{k,j}}(\RR^{z_{k, j}}_{t_k}).
	\end{align*}
Thus, it follows from Lemma \ref{lem:pre2} that
 	\begin{align*}
\iota_z(\RR^z_{t})=\iota_{z_{k,j}}(\RR^{z_{k, j}}_{t}).
	\end{align*}
Consequently, it follows from \eqref{eq:dism2} that on $U$,
 	\begin{align*}
d^Z_{t}=d_{g^Z_t}.
	\end{align*}
\end{proof}

\section{Ricci shrinker spaces and tangent flows} \label{sec:tangent}

As in the preceding section, we consider a Ricci flow limit space $(Z, d_Z, p_{\infty},\t)$ obtained from 
	\begin{equation} \label{eq:conv001}
		(M_i \times \III, d^*_i, p_i^*,\t_i) \xrightarrow[i \to \infty]{\quad \mathrm{pGH} \quad} (Z, d_Z, p_{\infty},\t),
	\end{equation}
where $\XX^i=\{M_i^n,(g_i(t))_{t \in \III^{++}}\} \in \MM(n, Y, T)$ with base point $p_i^* \in \XX^i_\III$. 

First, we define the Nash entropy $\mathcal N_z(\tau)$ at a point $z \in Z_{\III^-}$, which is a direct generalization of Definition \ref{defnentropy}.

\begin{defn}[Nash entropy]
	For $z\in Z_{\III^-}$, we write $K(z; \cdot)=(4\pi(\t(z)-\t(\cdot)))^{-n/2}e^{-f_z(\cdot)}$, where $f_z\in C^\infty(\RR_{(-(1-2\sigma)T, \t(z))})$. Then the Nash entropy at $z$ is defined as
	\begin{align*}
		\NN_z(\tau):=\int_{\RR_{\t(z)-\tau}}f_z\,\mathrm{d}\nu_{z;\t(z)-\tau}-\frac{n}{2}
	\end{align*}
	for any $\tau \in (0, \t(z)+(1-2\sigma)T)$.
\end{defn}

\begin{lem}\label{lem:nashconv1}
Suppose $z_i^* \in M_i \times \III$ converge to $z \in Z_{\III^-}$ in the Gromov--Hausdorff sense. Then for any $\tau \in (0, \t(z)+(1-2\sigma)T)$,
	\begin{align*}
		\lim_{i \to \infty} \NN_{z_i^*}(\tau) =\NN_{z}(\tau).
	\end{align*}
\end{lem}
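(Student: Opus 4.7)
\medskip

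The plan is to reduce to proving the convergence of the integrals
\begin{align*}
\int_{M_i} f_i \, d\nu_{z_i^*; s_i} \longrightarrow \int_{\RR_s} f_\infty \, d\nu_{z; s},
\end{align*}
where $s_i := \t_i(z_i^*)-\tau \to s := \t(z)-\tau$, and $f_i, f_\infty$ are the potentials of $K^i(z_i^*;\cdot)$ and $K_Z(z;\cdot)$. The three core ingredients are: (a) the smooth convergence $K^i(z_i^*; \phi_i(\cdot)) \to K_Z(z;\cdot)$ in $C^\infty_{\mathrm{loc}}(\RR_{(-\infty,\t(z))})$ of Theorem \ref{thm:convextra}, which gives $f_i \circ \phi_i \to f_\infty$ smoothly on compact subsets of $\iota_z(\RR^z_s)$; (b) the sharp Gaussian heat kernel upper bound of Theorem \ref{heatkernelupperbdgeneral}(ii), combined with the entropy bound $\mathcal N_{z_i^*}(\tau) \ge -Y$, which yields the pointwise estimate
\begin{align*}
f_i(y) \ge \frac{d_{g_i(s_i)}^2(y, z_{i,H})}{(4+\ep)\tau} - C(n,Y,\sigma,\tau,T),
\end{align*}
where $z_{i,H}$ is an $H_n$-center of $z_i^*$ at time $s_i$; in particular $(f_i)_-$ is uniformly bounded; (c) uniform integrability: combining Proposition \ref{integralbound} with the Poincar\'e inequality of Theorem \ref{poincareinequ} gives the $L^2$ bound $\int f_i^2\, d\nu_{z_i^*;s_i} \le C(n,Y,\sigma,\tau,T)$, with the analogous bound for $f_\infty$ via Theorem \ref{thm:upper1}.

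With (a)--(c) in place, I would decompose the integration using two thresholds $R, A > 0$. Let $A_i = B_{g_i(s_i)}(z_{i,H}, R\sqrt\tau)$ and let $A_\infty \subset Z_s$ be the corresponding ball around a regular $H_n$-center $z_H$ of $z$. The outside-ball contribution $\int_{M_i \setminus A_i} |f_i|\, d\nu_{z_i^*;s_i}$ is controlled by the Gaussian concentration \eqref{sharpconcentration}: the positive part is bounded by Chebyshev using the $L^2$ estimate (c), while the negative part is handled by the Gaussian lower bound on $f_i$ from (b), producing an error $\Psi(R^{-1} \mid n,Y,\sigma,\tau,T) \to 0$ as $R \to \infty$, uniformly in $i$; the same estimate holds on the limit side. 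The high-value region $\{|f_i| > A\} \cap A_i$ contributes at most $CA^{-1}$ by Chebyshev applied to (c).

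It remains to handle the bulk $\{|f_i| \le A\} \cap A_i$, and this is where the smooth convergence does its work. I would exhaust $\iota_z(\RR^z_s) \cap A_\infty$ by compact subsets $W_j$; for fixed $j$ and large $i$, $W_j \subset U_i$, the diffeomorphism $\phi_i$ of Theorem \ref{thm:smooth1} is defined there, and the combined smooth convergence of $f_i\circ\phi_i \to f_\infty$, $K^i\circ\phi_i \to K_Z$, and $\phi_i^* dV_{g_i(s_i)} \to dV_{g_s^Z}$ gives
\begin{align*}
\int_{\phi_i(W_j)} \max(-A,\min(f_i, A)) \, d\nu_{z_i^*;s_i} \xrightarrow[i\to\infty]{} \int_{W_j} \max(-A,\min(f_\infty, A)) \, d\nu_{z;s}.
\end{align*}
Since $\RR_s \setminus \iota_z(\RR^z_s)$ is $\nu_{z;s}$-null by Corollary \ref{cor:pre1}, monotone convergence in $j$ recovers the entire bulk integral on the limit side. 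Sending $i\to\infty$, then $A\to\infty$, then $R\to\infty$ completes the argument.

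The main technical obstacle is the outside-ball tail estimate: $f_i$ diverges logarithmically where $K^i$ is small, and the sharp Gaussian constant $4+\ep$ in Theorem \ref{heatkernelupperbdgeneral}(ii) is crucial so that the Gaussian decay of $\nu_{z_i^*;s_i}$ dominates the logarithmic growth of $|f_i|$, giving a uniform tail bound independent of $i$. A similar delicacy on the limit side is resolved by Theorem \ref{thm:upper1} and the $H_n$-concentration of $\nu_{z;s}$ from Proposition \ref{lem:004ac}.
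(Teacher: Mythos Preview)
Your approach is correct in its essentials but takes a genuinely different route from the paper. The paper argues by contradiction: if the convergence fails along some subsequence, pass to a further subsequence along which one has $\F$-convergence $(\XX^{i_j}, (\nu_{z_{i_j}^*;t})) \to (\XX^z, (\nu_{z;t}))$ within some correspondence $\CF$; then invoke \cite[Theorem 2.10]{bamler2020structure}, which asserts that the pointed Nash entropy converges under $\F$-convergence, to reach a contradiction. The identification of $\NN_z(\tau)$ with the Nash entropy of the metric flow $\XX^z$ goes through Lemma~\ref{lem:well-define} and Corollary~\ref{cor:pre1}.

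Your proof instead reproves the Nash entropy convergence directly in this setting, using the $C^\infty_{\mathrm{loc}}$ heat-kernel convergence of Theorem~\ref{thm:convextra} on the bulk and uniform integrability on the tails. This is more self-contained and makes the analytic content explicit, at the cost of being considerably longer. One small technical point: Proposition~\ref{integralbound} as stated requires $[t_0-2r^2,t_0]\subset I$, which for the full range $\tau\in(0,\t(z)+(1-2\sigma)T)$ may fail when $r^2=\tau$; however, since $s_i>-(1-2\sigma)T$ you still have a uniform scalar curvature lower bound $\scal(\cdot,s_i)\ge -n/(4\sigma T)$ from \eqref{eq:lowerscal}, and the proof of \cite[Proposition 6.2]{bamler2020structure} goes through with this weaker hypothesis (yielding a constant depending on $n,Y,\sigma,\tau,T$, as you write). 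With that adjustment your strategy closes.
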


\begin{proof}
Suppose otherwise. There exist $\delta>0$ and a subsequence $\{i_j\}$ such that
	\begin{align} \label{eq:nashcont11}
		\abs{\NN_{z_{i_j}^*}(\tau) -\NN_{z}(\tau)} \ge \delta.
	\end{align}
After passing to a further subsequence if necessary, there exist a metric flow and a correspondence $\CF$ such that 
\begin{equation*}
	(\XX^{i_j}, (\nu_{z_{i_j}^*;t})_{t \in [-T, \t(z_{i_j}^*)]}) \xrightarrow[j \to \infty]{\quad \IF, \CF, J \quad} (\XX^z, (\nu_{z;t})_{t \in  [-T, \t(z)]}).
\end{equation*}
In particular, we have
		\begin{align*}
		z_{i_j}^* \xrightarrow[j \to \infty]{\quad \CF,J \quad} z.
	\end{align*}
For any $\tau \in (0, \t(z)+(1-2\sigma)T)$, since $\nu_{z;\t(z)-\tau}$ has full measure on $\iota_z(\RR^z_{\t(z)-\tau})$, we conclude from Lemma \ref{lem:well-define} that
	\begin{align*}
		\NN_z(\tau):=\int_{\iota_z(\RR^z_{\t(z)-\tau})}f_z\,\mathrm{d}\nu_{z;\t(z)-\tau}-\frac{n}{2}.
	\end{align*}
However, it follows from \cite[Theorem 2.10]{bamler2020structure} that
 	\begin{align*}
\lim_{j \to \infty} \NN_{z_{i_j}^*}(\tau)=\NN_z(\tau)
	\end{align*}
 which contradicts \eqref{eq:nashcont11}. 
\end{proof}

\begin{prop}\label{limitnash}
For any $z \in Z_{\III^-}$, $\NN_z(\tau)$ is nonincreasing for $\tau \in (0, \t(z)+(1-2\sigma)T)$. If $\NN_z(\tau)$ is constant for all admissible $\tau$, then $\XX^z_{(-(1-2\sigma)T, \t(z))}$ is a metric soliton. Moreover, on $\iota_z\lc \RR^z_{(-(1-2\sigma)T, \t(z))} \rc$,
	\begin{align*}
\Ric(g^Z)+\na^2 f_{z}=\frac{g^Z}{2 \tau_z},
	\end{align*}
	where $\tau_z=\t(z)-\t(\cdot)$.
\end{prop}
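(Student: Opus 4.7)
The proof proceeds by approximation from the smooth Ricci flows $\XX^i$. Fix a sequence $z_i^* \in M_i \times \III$ with $z_i^* \to z$ in the Gromov--Hausdorff sense; by Lemma \ref{lem:nashconv1}, one has the pointwise convergence $\NN_{z_i^*}(\tau) \to \NN_z(\tau)$ for each $\tau \in (0, \t(z) + (1-2\sigma)T)$. Since every function $\NN_{z_i^*}(\cdot)$ is nonincreasing by Proposition \ref{propNashentropy}(i), the same holds for its pointwise limit $\NN_z(\cdot)$, which gives the monotonicity claim. Moreover, since the $\NN_{z_i^*}$ are monotone and the limit is continuous, the convergence is in fact locally uniform on $(0, \t(z) + (1-2\sigma)T)$.

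Now suppose $\NN_z \equiv c$ is constant. From Proposition \ref{propNashentropy}(ii), $\int_{\tau_1}^{\tau_2} \WW_{z_i^*}(\tau)\,d\tau = \tau_2 \NN_{z_i^*}(\tau_2) - \tau_1 \NN_{z_i^*}(\tau_1) \to c(\tau_2-\tau_1)$. Since each $\WW_{z_i^*}$ is nonincreasing by Proposition \ref{propNashentropy}(iii) and dominated by $\NN_{z_i^*}$, these two facts together force $\WW_{z_i^*}(\tau) \to c$ pointwise. Integrating Proposition \ref{propNashentropy}(iii) between $\tau_1$ and $\tau_2$, this in turn yields
\begin{align*}
\int_{\tau_1}^{\tau_2} \tau \int_{M_i} \Bigl| \Ric + \nabla^2 f_{z_i^*} - \frac{g_i}{2\tau} \Bigr|^2 d\nu_{z_i^*; \t_i(z_i^*)-\tau}\, d\tau \longrightarrow 0.
\end{align*}
Using the $C^\infty_{\loc}$-convergence on the regular part (Theorem \ref{Fconvergence}(3)(a)) together with the uniform weighted integrability afforded by Proposition \ref{integralbound}, one can restrict the above integral to any precompact set $K \subset \RR^z$ contained in a slab $\t^z \in [\t(z)-\tau_2, \t(z)-\tau_1]$ and pass to the pointwise limit to obtain the identity $\Ric(g^z) + \nabla^2 f^z = g^z/(2\tau^z)$ on all of $\RR^z$, where $\tau^z := \t(z) - \t^z(\cdot)$ and $f^z$ is the potential function of $K^z(z;\cdot)$. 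Pulling back via $\iota_z$, which is an isomorphism of Ricci flow spacetimes by Proposition \ref{prop:embed2}, gives the claimed identity on $\iota_z(\RR^z_{(-(1-2\sigma)T,\t(z))})$.

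Finally, for the metric soliton structure on $\XX^z_{(-(1-2\sigma)T, \t(z))}$, the plan is to follow the construction in \cite[Section 15]{bamler2020structure}: the pointwise soliton identity on $\RR^z$ implies that the vector field $\tau^z(\partial_{\t^z} - \nabla f^z)$ generates a one-parameter family of time-preserving self-similar spacetime diffeomorphisms of $\RR^z$ that rescale $g^z_t$ appropriately and preserve the conjugate heat kernel measures based at $z$. These self-similar maps then extend uniquely from $\RR^z$ to all of $\XX^z_{(-(1-2\sigma)T,\t(z))}$ via the reproduction formula and the $H_n$-concentration, using the density of $\RR^z$ in $\XX^z$ from Lemma \ref{lem:complete1}; the resulting map into a single time-slice furnishes the data required by Definition \ref{def:met_soliton}. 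The main obstacle lies in the passage from the integrated soliton identity on the approximating smooth flows to a pointwise equation on $\RR^z$: this requires simultaneously controlling the $C^\infty_{\loc}$-convergence of the metrics together with the smooth convergence of the conjugate heat kernel density $w^{z_i^*}$ to $w^z$ on compact subsets of $\RR^z$, all while handling the Gaussian tail of $\nu_{z_i^*;\cdot}$ via Proposition \ref{integralbound} to legitimize exchanging limits and integrals. A secondary technical point is extending the self-similarity from the regular to the possibly singular part of $\XX^z$, for which Bamler's measure-theoretic extension argument is tailor-made since $\nu_{z;t}$ charges only $\iota_z(\RR^z_t)$.
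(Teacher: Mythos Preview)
Your proposal is correct and follows essentially the same approach as the paper: deduce monotonicity from Lemma~\ref{lem:nashconv1} and Proposition~\ref{propNashentropy}, use the constancy of $\NN_z$ together with Proposition~\ref{propNashentropy}(ii)--(iii) to force the integrated soliton quantity on $\XX^i$ to vanish in the limit, pass to the pointwise identity on $\RR^z$ via smooth convergence, and then invoke \cite[Theorem~15.69]{bamler2020structure} for the metric soliton structure. The paper is somewhat terser---it packages the $\WW$-entropy step directly into a single $\delta_i\to 0$ bound and cites Theorem~\ref{thm:smooth1} rather than Theorem~\ref{Fconvergence}(3)(a)---but the logic is the same, and your more detailed discussion of the limit passage and the extension to the singular part correctly anticipates what the Bamler reference handles.
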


\begin{proof}
From the convergence \eqref{eq:conv001}, there exists $z_i^* \in M_i \times \III$ so that $z_i^* \to z$ in the Gromov--Hausdorff sense. Then, the fact that $\NN_z(\tau)$ is nonincreasing follows immediately from Lemma \ref{lem:nashconv1}.

If $\NN_z(\tau)$ is constant, then Proposition \ref{propNashentropy} and Lemma \ref{lem:nashconv1} give a sequence $\delta_i \to 0$ such that
	\begin{align*}
\int_{-(1-2\sigma)T+\delta_i}^{\t_i(z_i^*)-\delta_i} \int_{M_i} \abs {\Ric(g_i)+\na^2 f_{z_i^*}-\frac{g_i}{2(\t_i(z_i^*)-\t_i)}}^2 \, \mathrm{d}\nu_{z_i^*;t} \mathrm{d}t \le \delta_i.
	\end{align*}
	
By the smooth convergence in Theorem \ref{thm:smooth1}, we conclude that
	\begin{align*}
\Ric(g^Z)+\na^2 f_{z}=\frac{g^Z}{2 \tau_z}
	\end{align*}
holds on $\iota_z\lc \RR^z_{(-(1-2\sigma)T, \t(z))} \rc$. By the high codimension of the singular set of $\XX^z$, it can be proved (see \cite[Theorem 15.69]{bamler2020structure}) that $\XX^z_{(-(1-2\sigma)T, \t(z))}$ is a metric soliton.
\end{proof}

Next, we define

\begin{defn}[Curvature radius]\label{curvatureradiuslimit}
For any $z \in \RR$, the curvature radius $r_{\Rm}(z)$ is defined to be the supremum of all $r>0$ such that $B_{g^Z_t}(z,r)$ is relatively compact in $\RR_t$, and the product domain 
	\begin{align*}
B_{g^Z_t}(z,r)\times [\t(z)-r^2, \t(z)+r^2] \cap \III
	\end{align*}
is defined in $\RR$ with the curvature bound $|\Rm| \le r^{-2}$. Moreover, we set $r_{\Rm}=0$ on $Z\setminus \mathcal R$.
\end{defn}

The following lemma is immediate from Definition \ref{def:smoothcv} and Theorem \ref{thm:smooth1}.

\begin{lem} \label{lem:curcon}
Suppose that $z_i^* \in M_i \times \III$ converge to $z \in Z$ in the Gromov--Hausdorff sense and $\t(z)-r^2_{\Rm}(z) \in \III$. Then
	\begin{align*}
r_{\Rm}(z) = \lim_{i \to \infty} r_{\Rm}(z_i^*).
	\end{align*}
\end{lem}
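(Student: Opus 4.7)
The plan is to prove the equality as two one-sided inequalities. Let $t=\t(z)$ and $t_i=\t_i(z_i^*)$, so $t_i \to t$. I would establish the lower semicontinuity $\liminf_{i\to\infty} r_{\Rm}(z_i^*) \ge r_{\Rm}(z)$ and the upper semicontinuity $\limsup_{i\to\infty} r_{\Rm}(z_i^*) \le r_{\Rm}(z)$ separately; together they yield the limit equality. The hypothesis $t-r_{\Rm}(z)^2 \in \III$ forces $r_{\Rm}(z)<\infty$ and ensures that the relevant parabolic time intervals lie inside $\III$, so the time boundary never interferes with either bound.

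The lower semicontinuity follows almost directly from Theorem \ref{thm:smooth1}. Given $r<r_{\Rm}(z)$, I would choose $r'\in(r,r_{\Rm}(z))$ and set $K:=\overline{B_{g^Z_t}(z,r')}\times([t-(r')^2,t+(r')^2]\cap\III)$, a compact subset of $\RR$ on which $|\Rm|_{g^Z}\le r_{\Rm}(z)^{-2}<r^{-2}$. By Theorem \ref{thm:smooth1}(a) and (c), for all large $i$ we have $K\subset U_i$, $\phi_i^{-1}(z_i^*)\to z$ in $\RR$, and $\phi_i^* g^i \to g^Z$ in $C^\infty_{\mathrm{loc}}$. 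Smooth convergence makes distances in $g_i(t_i)$ on $\phi_i(K)$ uniformly close to distances in $g^Z_t$ on $K$, so for large $i$ the parabolic ball $B_{g_i(t_i)}(z_i^*,r)\times([t_i-r^2,t_i+r^2]\cap\III^{++})$ is contained in $\phi_i(K)$ with $|\Rm|_{g_i}\le r^{-2}$; hence $r_{\Rm}(z_i^*)\ge r$, and letting $r\nearrow r_{\Rm}(z)$ yields the claim.

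The upper semicontinuity is the substantive direction. Passing to a subsequence, I would assume $r_{\Rm}(z_i^*)\to r_\infty$ and fix any $r<r_\infty$. For large $i$, the parabolic neighborhood $P_i:=B_{g_i(t_i)}(z_i^*,r)\times([t_i-r^2,t_i+r^2]\cap\III^{++})$ has $|\Rm|_{g_i}\le r^{-2}$, while Proposition \ref{prop:volume}(iii) together with the uniform entropy lower bound yields a non-collapsing estimate $|B_{g_i(t_i)}(z_i^*,r)|_{g_i(t_i)}\ge c(n,Y)r^n$. Hamilton's smooth compactness for pointed Ricci flows then produces, after a further subsequence, a pointed Ricci flow spacetime $(P_\infty,g_\infty,z_\infty)$ on a parabolic neighborhood of size $r$ with $|\Rm|_{g_\infty}\le r^{-2}$, realized by time-preserving diffeomorphisms $\psi_i$ from compact subsets exhausting $P_\infty$ into $M_i\times\III^{++}$ satisfying $\psi_i^* g_i \to g_\infty$ smoothly and $\psi_i(z_\infty)=z_i^*$.

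The main obstacle is the identification of $(P_\infty,g_\infty,z_\infty)$ with an open parabolic neighborhood of $z$ in $(\RR,\partial_\t,g^Z)$. Both $(P_\infty,g_\infty,z_\infty)$ and the germ of $\RR$ at $z$ supplied by Lemma \ref{lem:smooth1} arise as smooth Cheeger--Gromov limits of the same pointed sequence $(M_i,g_i,z_i^*)$; by the standard uniqueness of smooth limits up to time-preserving diffeomorphism, combined with a center-of-mass gluing as in the construction underlying Lemma \ref{lem:smooth2}, the germ of $\RR$ at $z$ embeds isometrically as an open neighborhood of $z_\infty$ in $P_\infty$. The point-convergence criterion in Theorem \ref{thm:smooth1}(c) together with Definition \ref{def:smoothcv} then ensures that every point of $P_\infty$ corresponds to a point in $\RR$, so $P_\infty$ is identified with an open neighborhood of $z$ in $\RR$. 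Consequently $B_{g^Z_t}(z,r)\times([t-r^2,t+r^2]\cap\III)\subset\RR$ with $|\Rm|_{g^Z}\le r^{-2}$, giving $r_{\Rm}(z)\ge r$; letting $r\nearrow r_\infty$ completes the proof.
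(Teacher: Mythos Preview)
Your argument is correct and is essentially a careful unpacking of what the paper records only as ``immediate from Definition~\ref{def:smoothcv} and Theorem~\ref{thm:smooth1}''; no further proof is given there. The two inequalities you set up---lower semicontinuity from the smooth convergence in Theorem~\ref{thm:smooth1}(a)(c), and upper semicontinuity via Hamilton compactness followed by identifying the limiting parabolic domain with a piece of $\RR$ through Definition~\ref{def:smoothcv} and Theorem~\ref{thm:smooth1}(c)---are exactly what those two citations encode, and the uniqueness-of-smooth-limits gluing you invoke is the same mechanism underlying Lemmas~\ref{lem:smooth1}--\ref{lem:smooth2}. One small point worth making explicit in the upper bound: to know that every $w_i^*=\psi_i(w_\infty)$ has a Gromov--Hausdorff limit in $Z$, you need $d_i^*(z_i^*,w_i^*)$ uniformly bounded, which follows from Proposition~\ref{equivalenceofballs} since $|\Rm|\le r^{-2}$ on $P_i$; once that is in hand, your appeal to Definition~\ref{def:smoothcv} and Theorem~\ref{thm:smooth1}(c) goes through, and the identification $P_\infty\hookrightarrow\RR$ can be realized concretely as the smooth limit of the compositions $\phi_i^{-1}\circ\psi_i$.
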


Passing to the limit in Proposition \ref{pro:LiprRm} and using Lemma \ref{lem:curcon}, we immediately obtain the following result.

\begin{prop}\label{prop:LiprRmlimit}
For any $x,y\in Z$ with $\t(x)-r^2_{\Rm}(x) \in \III$ and $\t(y)-r^2_{\Rm}(y) \in \III$, we have
\begin{align*}
	|r_{\Rm}(x)-r_{\Rm}(y)|\leq C(n, Y, \sigma T) d_Z(x,y).
\end{align*}
\end{prop}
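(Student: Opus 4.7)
The plan is to obtain Proposition \ref{prop:LiprRmlimit} as a direct passage-to-the-limit of the smooth Lipschitz estimate in Proposition \ref{pro:LiprRm}. Fix $x,y\in Z$ satisfying the hypotheses. Using the pointed Gromov--Hausdorff convergence \eqref{eq:conv001}, I would choose sequences $x_i^*,y_i^* \in M_i\times \III$ with $x_i^*\to x$ and $y_i^*\to y$ in the GH sense; in particular, $d_i^*(x_i^*,y_i^*)\to d_Z(x,y)$.

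Next I would verify the hypotheses of Lemma \ref{lem:curcon} for both sequences. Since $\t(x)-r_{\Rm}^2(x)\in\III$ and $\t(y)-r_{\Rm}^2(y)\in\III$, and since $\t_i(x_i^*)\to\t(x)$, $\t_i(y_i^*)\to\t(y)$ (time-function convergence), the hypotheses hold for all large $i$. Lemma \ref{lem:curcon} then yields
\begin{align*}
r_{\Rm}(x_i^*)\longrightarrow r_{\Rm}(x),\qquad r_{\Rm}(y_i^*)\longrightarrow r_{\Rm}(y).
\end{align*}
For all sufficiently large $i$ we also have $x_i^*,y_i^*\in \XX^i_{\III}$, so Proposition \ref{pro:LiprRm} applies with a constant $C=C(n,Y)$ independent of $i$, giving
\begin{align*}
|r_{\Rm}(x_i^*)-r_{\Rm}(y_i^*)|\le C\, d_i^*(x_i^*,y_i^*).
\end{align*}
Letting $i\to\infty$ on both sides produces $|r_{\Rm}(x)-r_{\Rm}(y)|\le C\,d_Z(x,y)$, which is the desired inequality.

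There is essentially no obstacle in this argument; the only minor subtlety is ensuring that the time-condition $\t_i(x_i^*)-r_{\Rm}^2(x_i^*)\in\III$ holds for large $i$ so that Lemma \ref{lem:curcon} is genuinely applicable, but this follows immediately from the continuity of $\t$ and the already established convergence $r_{\Rm}(x_i^*)\to r_{\Rm}(x)$ (applied, if necessary, after a slight shrinking). The constant $C=C(n,Y)$ is inherited verbatim from Proposition \ref{pro:LiprRm}, since it depends only on $n$ and $Y$ and not on the flow in the sequence.
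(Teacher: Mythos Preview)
Your proposal is correct and matches the paper's own proof, which simply states that the result follows by taking the limit of Proposition \ref{pro:LiprRm} via Lemma \ref{lem:curcon}. Note that your ``minor subtlety'' is a non-issue: Lemma \ref{lem:curcon} only requires the time-condition on the \emph{limit} point (which is part of the hypothesis of Proposition \ref{prop:LiprRmlimit}), and Proposition \ref{pro:LiprRm} applies to all $x^*,y^*\in\XX_\III$ without any time restriction, so no shrinking or circular reasoning is needed.
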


Next, we prove

\begin{prop}\label{prop:limitepsilon}
There exists a constant $\ep=\ep(n)>0$ such that if $\NN_z(r^2) \ge -\ep$, then
	\begin{align*}
r_{\Rm}(z) \ge \ep r.
	\end{align*}
\end{prop}

\begin{proof}
We choose $\ep=\ep_n/2$, where $\ep_n$ is the same constant as in Theorem \ref{epregularityNash}. From the convergence \eqref{eq:conv001}, there exists $z_i^* \in M_i \times \III$ so that $z_i^* \to z$ in the Gromov--Hausdorff sense. Then it follows from Lemma \ref{lem:nashconv1} that
	\begin{align*}
\NN_{z_i^*}(r^2) \ge -\ep_n
	\end{align*}
for large $i$. By Theorem \ref{epregularityNash}, we conclude that $r_{\Rm}(z_i^*) \ge \ep_n r$, which yields the conclusion by Lemma \ref{lem:curcon}.
\end{proof}

\begin{defn}[Tangent flow]\index{tangent flow}
	For any $z \in Z_{\III^-}$, a \textbf{tangent flow} at $z$ is a pointed parabolic metric space $(Z',d_{Z'},z',\t')$, which is a pointed Gromov--Hausdorff limit of $(Z, r_j^{-1} d_Z, z, r_j^{-2}(\t-\t(z)))$ for a sequence $r_j \searrow 0$. 
	\end{defn}

Suppose $(Z',d_{Z'},z',\t')$ is a tangent flow at $z$, which is obtained from the pointed Gromov--Hausdorff limit of $(Z, r_j^{-1} d_Z, z, r_j^{-2}(\t-\t(z)))$. Then $(Z',d_{Z'},z',\t')$ is a Ricci flow limit space over $\R$ or $\R_-$. Indeed, by the convergence \eqref{eq:conv001}, there exists $z_i^* \in M_i \times \III$ so that $z_i^* \to z$ in the Gromov--Hausdorff sense. For each $j$, we can find $i_j$ so that if we set
	\begin{align*}
& g'_j(t)=r_j^{-2} g_{i_j}\lc r_j^2t+\t_{i_j}(z_{i_j}^*) \rc, \quad \t_j':=r_j^{-2}(\t_{i_j}-\t_{i_j}(z^*_{i_j})), \notag\\
& T_j=r_j^{-2}(T+\t_{i_j}(z^*_{i_j})), \quad T_j'=-r_j^{-2} \t_{i_j}(z_{i_j}^*), \quad \III^{++}_j=[-T_j, T_j'], \notag\\
& \III_j^{+}=[-T_j+\sigma(T_j'+T_j), T_j'],\quad \III_j=[-T_j+2\sigma(T_j'+T_j), T_j'],
	\end{align*}
	then, after a time translation, $\{M_{i_j}, (g_j'(t))_{t \in \III_j^{++}}\} \in \MM(n, Y, T_j'+T_j)$. Thus, after passing to a subsequence, we have the following convergence (see Remark \ref{rem:general} and Notation \ref{not:2})
	\begin{align} \label{eq:rescaled}
(M_{i_j} \times \III_j, d^{\prime,*}_j, z_{i_j}^*,\t'_j)\xrightarrow[j \to \infty]{\quad \hat C^\infty \quad} (Z',d_{Z'},z',\t'),
	\end{align}
	where $d^{\prime,*}_j$ is the $d^*$-distance induced by $g_j'(t)$. Consequently, we conclude that $(Z',d_{Z'},z',\t')$ is a noncollapsed Ricci flow limit space over $\R$ if $\t(z) \in (-(1-2\sigma)T, 0)$ or over $\R_-$ if $\t(z)=0$.
	
As in Definition \ref{def:smoothcv}, we denote by $\RR'$ the set of points at which \eqref{eq:rescaled} is smooth. Then $\RR'$ is realized as a Ricci flow spacetime $(\RR', \t', \partial_{\t'}, g^{Z'}_t)$.

\begin{prop}\label{prop:004}
For any tangent flow $(Z',d_{Z'},z',\t')$ at $z \in Z_{\III^-}$, we have on $\RR'_{(-\infty,0)}$,
	\begin{align} \label{eq:riccishrinker}
\Ric(g^{Z'})+\na^2 f_{z'}=\frac{g^{Z'}}{2 \tau},
	\end{align}
where $\tau(\cdot)=-\t'(\cdot)$. Moreover, $\RR'_t$ is connected for any $t \in (-\infty,0)$.
\end{prop}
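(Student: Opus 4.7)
The strategy is to deduce the proposition from Proposition \ref{limitnash} by showing that $\NN_{z'}(\tau)$ is constant in $\tau > 0$, and then identifying $\iota_{z'}(\RR^{z'}_t)$ with $\RR'_t$ for every $t < 0$.

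For the constancy of $\NN_{z'}$: the tangent flow construction \eqref{eq:rescaled} provides $r_j \searrow 0$ and $z_{i_j}^* \to z$ along which the rescaled flows converge. Scale-invariance of Nash entropy combined with Lemma \ref{lem:nashconv1} gives $\NN_{z'}(\tau) = \lim_{j\to\infty} \NN_{z_{i_j}^*}(r_j^2 \tau)$ for every $\tau > 0$. Monotonicity (Proposition \ref{propNashentropy} (i)) applied with a fixed $\tau_0 > 0$ and then letting $\tau_0 \searrow 0$ after $j \to \infty$ yields the uniform lower bound $\NN_{z'}(\tau) \geq \mathcal N_0(z) := \lim_{\tau_0 \searrow 0} \NN_z(\tau_0)$. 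Combining this with the integrated identity $\tau \NN(\tau) = \int_0^\tau \WW(s)\,ds$ from Proposition \ref{propNashentropy} (ii) and the convexity $\frac{d^2}{d\tau^2}(\tau\NN) \leq 0$ in Proposition \ref{propNashentropy} (iii), passed to the $j \to \infty$ limit as in Bamler's tangent-flow analysis, one concludes $\WW_{z'} \equiv \NN_{z'} \equiv \mathcal N_0(z)$; alternatively, one can identify $\XX^{z'}$ with a Bamler tangent flow of $\XX^z$ at $z$ and invoke Theorem \ref{Fconvergence} (2) directly. Proposition \ref{limitnash} then yields the shrinker equation on $\iota_{z'}(\RR^{z'}_{(-\infty,0)})$, and Theorem \ref{Fconvergence} (1) gives connectedness of each slice $\RR^{z'}_t$.

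To finish, I would identify $\iota_{z'}(\RR^{z'}_t) = \RR'_t$ for every $t < 0$, which simultaneously extends the equation to all of $\RR'_{(-\infty,0)}$ and yields the connectedness claim. For any $w \in \RR'_t$, Definition \ref{def:smoothcv} applied to \eqref{eq:rescaled} produces $w^{(j)} \in M_{i_j} \times \III_{i_j}$ converging to $w$ in the rescaled Gromov--Hausdorff sense with uniformly bounded rescaled curvature radius. Since the rescaled $d^*$-distance $d^{*,(j)}(z_{i_j}^*, w^{(j)})$ converges to $d_{Z'}(z', w) < \infty$, the $W_1$-distances of the conjugate heat flows $\nu_{w^{(j)};\cdot}$ and $\nu_{z_{i_j}^*;\cdot}$ stay uniformly bounded. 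Using this together with the Gaussian tightness from Theorem \ref{heatkernelupperbdgeneral} and the $\IF$-convergence $(\XX^{(j)}, \nu_{z_{i_j}^*;\cdot}) \xrightarrow{\IF,\CF} (\XX^{z'}, \nu_{z';\cdot})$, I would extract a subsequence along which $w^{(j)} \xrightarrow{\CF,J} w_\infty$ for some $w_\infty \in \XX^{z'}_t$, with $\tilde w_\infty \in \RR^{z'}_t$ by the uniform curvature bound, and conclude via Theorem \ref{thm:idenproof} that $w = \iota_{z'}(\tilde w_\infty) \in \iota_{z'}(\RR^{z'}_t)$.

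The principal obstacle is this last identification step: extracting the $\IF$-limit of the spacetime sequence $w^{(j)}$ within the correspondence already used to construct $\XX^{z'}$. One essentially needs a converse to Theorem \ref{representlimit}, namely that any sequence of spacetime points with uniformly bounded $d^*$-distance to the basepoint must, after passing to a subsequence, converge within $\CF$ to a point of $\XX^{z'}$; this reduces to tightness of the family $\{\nu_{w^{(j)};\cdot}\}$, which one obtains from the Gaussian upper bound and the uniform $W_1$-control. The remaining ingredients (the constancy of $\NN_{z'}$ and the application of Proposition \ref{limitnash}) are essentially routine once this step is in place.
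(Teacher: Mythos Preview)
Your derivation of the constancy of $\NN_{z'}$ and the application of Proposition \ref{limitnash} match the paper, though the paper states it more directly: $\NN_{z'}(\tau) = \lim_j \NN_z(r_j^2\tau)$ is independent of $\tau$ simply because $\NN_z$ has a limit at $0^+$. No appeal to $\WW$-entropy or to Bamler's tangent-flow classification is needed.

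The connectedness step is where you and the paper diverge. You try to show $\RR'_t \subset \iota_{z'}(\RR^{z'}_t)$ directly, by lifting $w \in \RR'_t$ to the approximating sequence and extracting a $\CF$-limit in $\XX^{z'}_t$. You correctly flag the obstacle: tightness of $\{\nu_{w^{(j)};\cdot}\}$ only produces a limiting \emph{conjugate heat flow} on $\XX^{z'}_{<t}$, not a point of $\XX^{z'}_t$. To close this you would still need to argue that this heat flow concentrates to a Dirac mass as time approaches $t$ (via $H_n$-concentration and the past-continuity of the metric soliton $\XX^{z'}$), and then match that point with $w$. This is doable but roundabout.

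The paper's argument is intrinsic and avoids all further $\F$-convergence input. Since $T=\infty$ for the tangent flow, Corollary \ref{cor:connected} says $\RR'$ is connected as a spacetime. If some $\RR'_{t_0}$ were disconnected, take $y_0$ in a component different from $\iota_{z'}(\RR^{z'}_{t_0})$ and a path $\gamma$ in $\RR'$ from a point of $\iota_{z'}(\RR^{z'})$ to $y_0$; let $s_0$ be the first exit time from $\iota_{z'}(\RR^{z'})$. Since $K_{Z'}(z';\cdot)$ vanishes off $\iota_{z'}(\RR^{z'})$ (Corollary \ref{cor:pre1}) but is continuous on $\RR'$, one has $f_{z'}(\gamma(s)) \to +\infty$ as $s\nearrow s_0$. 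On the other hand, on $\iota_{z'}(\RR^{z'})$ the shrinker identities $\partial_{\t'} f_{z'} = |\nabla f_{z'}|^2$ and $f_{z'} - \tau(|\nabla f_{z'}|^2 + \scal) = \NN_{z'}(1)$ (with $\scal\ge 0$) give $|\tfrac{d}{ds} f_{z'}(\gamma(s))| \le C_0(f_{z'}(\gamma(s))+1)$, so Gronwall forbids blowup along the compact interval $[0,s_0]$. Contradiction; hence each $\RR'_t$ is connected, and since $\iota_{z'}(\RR^{z'}_t)$ is already a connected component of $\RR'_t$, equality follows and the shrinker equation extends to all of $\RR'_{(-\infty,0)}$.
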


\begin{proof}
We assume that the convergence \eqref{eq:rescaled} holds. For any $\tau>0$, it follows from Lemma \ref{lem:nashconv1} that 
	\begin{align*}
\NN_{z'}(\tau)=\lim_{j \to \infty} \NN_{z} (\tau r_j^2).
	\end{align*}
Since the last limit is independent of $\tau$, we conclude that $\NN_{z'}(\tau)$ is constant for $\tau>0$. Thus, by Proposition \ref{limitnash}, we have on $\iota_{z'}(\RR^{z'}_{(-\infty,0)})$, 
	\begin{align}\label{eq:tang0}
\Ric(g^{Z'})+\na^2 f_{z'}=\frac{g^{Z'}}{2 \tau}.
	\end{align}
To finish the proof, we only need to prove that $\RR'_t$ is connected for any $t \in (-\infty,0)$. 

Suppose $\RR'_{t_0}$ is disconnected for some $t_0\in (-\infty,0)$. We fix $x_0 \in \iota_{z'}(\RR^{z'}_{t_0})$ and $y_0 \in \RR'_{t_0}$ so that $y_0$ lies in a different connected component than $x_0$. By Corollary \ref{cor:connected}, there exists a curve $\gamma(s)$, $s \in [0, L]$, contained in $\RR'$ such that $\gamma(0)=z_0 \in \iota_{z'}(\RR^{z'}_{t_1})$ for some $t_1<t_0$, and $\gamma(L)=y_0$. Then we set
	\begin{align*}
s_0=\sup\{s' \in [0, L] \mid \gamma(s) \in \iota_{z'}(\RR^{z'}_{\t'(\gamma(s))}) \text{ for any } s \in [0, s'] \}
	\end{align*}
and $t_2:=\t'(\gamma(s_0))$. It is clear that $t_2 \in (t_1, t_0]$ and
	\begin{align} \label{eq:tang1}
\lim_{s \nearrow s_0}f_{z'}(\gamma(s))=+\infty.
	\end{align}
On the other hand, it follows from \eqref{eq:tang0} that 
	\begin{align} \label{eq:tang2}
\partial_{t'} f_{z'}=|\na f_{z'}|^2 \quad \text{and} \quad f_{z'}-\tau(|\na f_{z'}|^2+\scal_{g^{Z'}})=\NN_{z'}(1).
	\end{align}
	Since $\scal_{g^{Z'}} \ge 0$, we conclude from \eqref{eq:tang2} that
	\begin{align*}
\abs{\diff{}{s} f_{z'}(\gamma(s))} \le C_0(f_{z'}(\gamma(s))+1)
	\end{align*}	
	for some constant $C_0>0$ and any $s \in [0, s_0)$.
By integration, we conclude that $\lim_{s \nearrow s_0}f_{z'}(\gamma(s))$ must be finite, which contradicts \eqref{eq:tang1}.

Consequently, we have proved that $\RR'_t$ is connected for any $t \in (-\infty,0)$, and hence $\RR'_t=\iota_{z'}(\RR^{z'}_t)$ for any $t \in (-\infty,0)$.
\end{proof}

By Proposition \ref{limitnash} and Proposition \ref{prop:004}, the metric flow $\XX^{z'}$ associated with $z'$ is a metric soliton so that $\iota_{z'}(\RR^{z'}_t)=\RR'_t$ for any $t<0$. Moreover, since any metric soliton is continuous, by Remark \ref{rem:pastc}, the map $\iota_{z'}$ is injective on $\XX^{z'}$. Thus, by Theorem \ref{thm:idenproof}, $\iota_{z'}$ is an isometric embedding from $\XX^{z'}$ to $Z'$. Moreover, since $\XX^{z'}$ is continuous, it follows from Definition \ref{defntimeslicedist} that the following result holds.

\begin{cor}\label{cor:agree2}
For any $t<0$, the metric $d^{Z'}_t$, when restricted to $\RR'_t$, agrees with the Riemannian distance $d_{g^{Z'}_t}$.
\end{cor}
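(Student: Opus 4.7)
The plan is to deduce the corollary by identifying $d^{Z'}_t$ on $\RR'_t$ with the intrinsic time-slice metric of the associated metric soliton $\XX^{z'}$, and then transferring this to the Riemannian metric via $\iota_{z'}$. First I would unpack what the paragraph preceding the statement gives us: by Proposition \ref{limitnash} together with Proposition \ref{prop:004}, $\XX^{z'}$ is a metric soliton and hence continuous as a metric flow; by Remark \ref{rem:pastc} the map $\iota_{z'}:\XX^{z'}\to Z'$ is genuinely injective, and Theorem \ref{thm:idenproof} then promotes it to an isometric embedding that sends $\RR^{z'}_t$ onto $\RR'_t$ for each $t<0$.

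Next, fix $x,y\in \RR'_t$ and write $\tilde x=\iota_{z'}^{-1}(x)$, $\tilde y=\iota_{z'}^{-1}(y)$. For $s<t$, Remark \ref{rem:defheatequS} gives $\iota_{z'}$-compatibility of the conjugate heat kernels, so $(\iota_{z'}^{-1})_*\nu_{x;s}=\nu_{\tilde x;s}$ and similarly for $y$; combined with the isometric embedding property on each time-slice, this yields
\[
d_{W_1}^{\RR'_s}(\nu_{x;s},\nu_{y;s})=d_{W_1}^{\XX^{z'}_s}(\nu_{\tilde x;s},\nu_{\tilde y;s}).
\]
Passing to the limit $s\nearrow t$ on the left gives $d^{Z'}_t(x,y)$ by Definition \ref{defntimeslicedist}, while on the right, since $\XX^{z'}$ is continuous (hence past-continuous) at $t$, Bamler's identity \cite[(4.22)]{bamler2023compactness} says the limit equals $d^{z'}_t(\tilde x,\tilde y)$. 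Thus $d^{Z'}_t(x,y)=d^{z'}_t(\tilde x,\tilde y)$ on all of $\RR'_t\times\RR'_t$.

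Finally, Theorem \ref{Fconvergence}(4) identifies $d^{z'}_t$ restricted to $\RR^{z'}_t$ with the length metric of $g^{z'}_t$, and Proposition \ref{prop:embed2}(i) gives $\iota_{z'}^*g^{Z'}_t=g^{z'}_t$, so the length metric of $g^{z'}_t$ on $\RR^{z'}_t$ matches the length metric of $g^{Z'}_t$ on $\RR'_t$ under $\iota_{z'}$. Combining the two displays yields $d^{Z'}_t=d_{g^{Z'}_t}$ on $\RR'_t$, as claimed. The only nontrivial ingredient is the passage to the limit in the $W_1$-distance, but this is handed to us by the continuity of the metric soliton together with the already-established isometric embedding, so there is essentially no remaining obstacle beyond bookkeeping.
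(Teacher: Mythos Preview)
The proposal is correct and takes essentially the same approach as the paper: the paper deduces the corollary from the continuity of the metric soliton $\XX^{z'}$, the isometric embedding $\iota_{z'}$, and Definition~\ref{defntimeslicedist}, and you have simply spelled out these steps in more detail (invoking Theorem~\ref{Fconvergence}(4) and Proposition~\ref{prop:embed2} explicitly where the paper leaves them implicit).
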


\begin{defn}[Tangent metric soliton] \label{def:tms}\index{tangent metric soliton}
The metric soliton $\XX^{z'}$ is called a \textbf{tangent metric soliton} at $z$.
\end{defn}

We have the following fundamental estimates for $f_{z'}$, which are well-known for smooth Ricci shrinkers. The estimates can be proved in the same way as \cite[Theorem 1.1]{CMZ24}. Note that the lower bound is improved due to Theorem \ref{heatkernelupperbdgeneral} (ii).

\begin{lem}\label{lem:metrices1}
For any tangent flow $(Z',d_{Z'},z',\t')$ at $z \in Z_{\III^-}$, we have for any $x \in \RR'_{(-\infty,0)}$,
	\begin{align*}
\frac{ d^2_{g^{Z'}_{\t'(x)}}(x, p_{\t'(x)})}{(4+\ep)\tau(x) }-C(n, \ep) \le f_{z'}(x)-\NN_{z'}(1) \le \frac{1}{4 \tau(x)}\lc d_{g^{Z'}_{\t'(x)}}(x, p_{\t'(x)})+C(n)\sqrt{\tau(x)} \rc^2,
	\end{align*}
	where $p_{-1} \in \RR'_{-1}$ is a regular $H_n$-center of $z'$ and $p_t \in \RR'_t$ is the flow of $\partial_{\t'}-\na f_{z'}$ from $p_{-1}$.
\end{lem}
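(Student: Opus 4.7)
The plan is to adapt the classical Hamilton-type analysis of smooth Ricci shrinkers to the tangent metric soliton, using as inputs the soliton equation \eqref{eq:riccishrinker} on $\RR'_{(-\infty, 0)}$ provided by Proposition \ref{prop:004} and the sharp heat kernel upper bound from Theorem \ref{heatkernelupperbdgeneral}(ii), which transfers to $\RR'$ through the rescaled smooth convergence \eqref{eq:rescaled}. First, a direct calculation using \eqref{eq:riccishrinker}, the contracted second Bianchi identity $\na R = 2\Ric(\na f_{z'}, \cdot)$, and the trace identity $R + \Delta f_{z'} = n/(2\tau)$ yields the standard shrinker identity $\tau(R + |\na f_{z'}|^2) - f_{z'} = c$ on $\RR'_{(-\infty, 0)}$. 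The constant $c$ is identified as $-\NN_{z'}(1) + c_n$ for a dimensional constant $c_n$, by combining the definition of the Nash entropy at $z'$ with the normalization $\int_{\RR'_{-1}} (4\pi)^{-n/2} e^{-f_{z'}} \, dV = 1$.

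With $R \ge 0$ (a standard consequence of the shrinker structure by a maximum principle on $\RR'$), this identity gives the gradient bound $|\na\sqrt{f_{z'} - \NN_{z'}(1) + c_n}| \le 1/(2\sqrt{\tau})$ on $\RR'_{(-\infty, 0)}$. Plugging $R + \Delta f_{z'} = n/(2\tau)$ into the conjugate heat equation satisfied by $f_{z'}$ yields $(\partial_{\t'} - \na f_{z'})(f_{z'}) \equiv 0$, so $f_{z'}(p_t) = f_{z'}(p_{-1})$ for all $t \in (-\infty, 0)$. The remaining scalar estimate $f_{z'}(p_{-1}) - \NN_{z'}(1) \le C(n)$ follows by combining the gradient bound with the regular $H_n$-center property: by Lemma \ref{lem:004abcx}, at least half of the mass of $\nu_{z'; -1}$ concentrates in $B_{g^{Z'}_{-1}}(p_{-1}, \sqrt{2H_n})$, and if $f_{z'}(p_{-1}) - \NN_{z'}(1)$ were too large then the gradient bound would force $e^{-f_{z'}}/(4\pi)^{n/2}$ to be uniformly too small on that ball to accommodate this mass.

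The upper bound in the lemma then follows by integrating $|\na\sqrt{f_{z'} - \NN_{z'}(1) + c_n}| \le 1/(2\sqrt{\tau(x)})$ along a curve in $(\RR'_{\t(x)}, g^{Z'}_{\t(x)})$ joining $p_{\t(x)}$ to $x$ whose length approximates the Riemannian distance (we may take such a curve inside $\RR'_{\t(x)}$, which by Proposition \ref{prop:004} and Corollary \ref{cor:agree2} is a connected Riemannian manifold whose intrinsic distance agrees with $d_{g^{Z'}_{\t(x)}}$); this produces
\[
\sqrt{f_{z'}(x) - \NN_{z'}(1) + c_n} \le \sqrt{C(n)} + \frac{d_{g^{Z'}_{\t(x)}}(x, p_{\t(x)})}{2\sqrt{\tau(x)}},
\]
and squaring yields the stated quadratic bound. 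For the lower bound I apply the sharp heat kernel upper bound of Theorem \ref{heatkernelupperbdgeneral}(ii), transferred to $\RR'$ in its $\NN$-sharp form along the rescaled sequence \eqref{eq:rescaled}, taking $p_{\t(x)}$ as the $H_n$-center: since $f_{z'}$ is constant along $p_t$, the self-similar structure of the soliton ensures that $p_t$ remains a regular $H_n$-center of $z'$ at each $t \in (-\infty, 0)$. Taking logarithms of $K_{Z'}(z'; x) = (4\pi\tau(x))^{-n/2} e^{-f_{z'}(x)}$ and using $\NN_{z'}(\tau) \equiv \NN_{z'}(1)$ gives the lower bound.

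The main obstacle is verifying that $p_t$ actually lies in $\RR'_t$ and qualifies as a regular $H_n$-center for every $t \in (-\infty, 0)$, since this requires the self-similarity of the tangent metric soliton --- namely, that the flow of $\partial_{\t'} - \na f_{z'}$ preserves $\RR'$ and transports $\nu_{z'; -1}$ to $\nu_{z'; t}$ up to the appropriate parabolic rescaling. Both are encoded in the metric soliton structure on $\XX^{z'}$ established in Proposition \ref{prop:004} together with the gradient shrinker equation, but care is needed to propagate the flow line along $p_t$ without encountering the singular set $\MS^{Z'}$.
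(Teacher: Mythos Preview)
Your proposal is correct and follows essentially the same route as the paper, which simply defers to \cite[Theorem 1.1]{CMZ24} for the classical shrinker analysis and notes that the sharp constant $4+\ep$ in the lower bound comes from Theorem \ref{heatkernelupperbdgeneral}(ii). Your ``main obstacle'' is resolved by facts the paper has at hand: that the flow of $\tau(\partial_{\t'}-\na f_{z'})$ (hence of $\partial_{\t'}-\na f_{z'}$, after reparametrization) preserves $\RR'_{(-\infty,0)}$ is \cite[Theorem 15.69]{bamler2020structure}, invoked just before Lemma \ref{selfsimilarityshrinker}; and the fact that $p_t$ remains a regular $H_n$-center follows from the metric soliton structure of $\XX^{z'}$ (Proposition \ref{prop:004}), since the self-similar scaling maps $(\RR'_{-1},g^{Z'}_{-1},\nu_{z';-1})$ to $(\RR'_t,|t|^{-1}g^{Z'}_t,\nu_{z';t})$ and hence rescales the variance by $|t|$. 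One minor correction: the constant in the Hamilton identity is exactly $-\NN_{z'}(1)$ with no extra dimensional term, as the paper records in \eqref{eq:tang2}.
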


We also need the following local noncollapsing theorem in \cite[Lemma 8.1]{CMZ24}, which was originally proved in \cite[Theorem 22]{li2020heat} for smooth Ricci shrinkers. 

\begin{lem}\label{no-local-collapsing}
Let $(Z',d_{Z'},z',\t')$ be a tangent flow at $z\in Z_{\III^-}$. For any $x\in\RR'_t$ with $t<0$, if $\scal_{g^{Z'}_t} \le \Lambda r^{-2}$ on $B_{g^{Z'}_t}(x, r)$, then
	\begin{align*}
\abs{B_{g^{Z'}_t}(x, r)}_t \ge c(n, Y,\Lambda) r^n>0.
	\end{align*}
\end{lem}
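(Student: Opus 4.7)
The plan is to proceed by approximation: view the tangent flow as a $\hat C^\infty$-limit of parabolically rescaled closed Ricci flows from $\MM(n,Y,T_j+T_j')$, apply the smooth-case volume bound of Proposition \ref{prop:volume} (iii) to these approximations, and pass to the limit using smooth convergence on the regular part. This follows the strategy of \cite[Lemma 8.1]{CMZ24}, which treats the corresponding statement for smooth Ricci shrinkers.

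First, I would fix $x\in \RR'_t$ with $t<0$ and let $\XX^j=\{M_{i_j},(g'_j(s))_{s\in \III^{++}_j}\}\in \MM(n,Y,T_j+T_j')$ denote the rescaled sequence realizing the convergence \eqref{eq:rescaled} to $(Z',d_{Z'},z',\t')$. Since parabolic rescaling preserves the Nash entropy bound, each $\XX^j$ inherits the lower entropy bound $-Y$. By Theorem \ref{thm:smooth1} applied to this tangent-flow convergence, there are time-preserving diffeomorphisms $\phi_j:U_j\to V_j\subset M_{i_j}\times \III_j$ from an exhaustion of $\RR'$, with $\phi_j^* g'_j\to g^{Z'}$ locally smoothly; set $x_j^*:=\phi_j(x)$, so that $t_j:=\t_j(x_j^*)\to t$ and the metrics converge smoothly in a neighborhood of $x$.

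Next, I would transfer the scalar curvature hypothesis to the approximating flows. Fix a small $\ep>0$. Combining smooth convergence on compact subsets of $\RR'_t\cap B_{g^{Z'}_t}(x,r)$ with the quantitative singular-stratum estimates (Theorem \ref{quantsizeS}) and the codimension-four structure (Theorem \ref{Fstratification}) applied to $\XX^j$, I would show that for any $\delta\in(0,\ep)$ and $j$ sufficiently large, the scalar curvature satisfies $\scal_{g'_j(t_j)}\le(1+\ep)r^{-2}$ on $B_{g'_j(t_j)}(x_j^*,(1-\ep)r)$ outside a subset of volume at most $\Psi(\delta\mid n,Y)\, r^n$, consisting of points within the $\delta r$-neighborhood of the quantitative singular stratum $\MS^{\ep,n-2,\F}_{\delta r,\ep r}\cap \XX^j$. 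Then Proposition \ref{prop:volume} (iii), applied after a slight further shrinking of the ball to avoid the exceptional set, together with the entropy bound $\NN_{x_j^*}((1-\ep)^2 r^2)\ge -Y$, yields
\[
|B_{g'_j(t_j)}(x_j^*,(1-\ep)r)|_{t_j}\ge c(n,Y)(1-\ep)^n r^n-\Psi(\delta\mid n,Y)\, r^n.
\]
Passing to the limit via $\hat C^\infty$-convergence on the regular part, this inequality descends to $|B_{g^{Z'}_t}(x,r)|_t\ge c(n,Y)(1-\ep)^n r^n-\Psi(\delta\mid n,Y)\, r^n$, and letting $\delta\to 0$ and then $\ep\to 0$ gives the desired bound.

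The main obstacle is the scalar transfer step in the second paragraph. Proposition \ref{prop:volume} (iii) demands a pointwise scalar bound on a full geodesic ball in the smooth approximating flow, whereas the hypothesis of the lemma only controls $\scal_{g^{Z'}_t}$ on the regular part $\RR'_t\cap B_{g^{Z'}_t}(x,r)$. In the approximating ball $B_{g'_j(t_j)}(x_j^*,(1-\ep)r)$ the scalar curvature can a priori be large near points corresponding to the singular stratum of $Z'_t$. Controlling this contribution requires the full force of the codimension-four estimate together with the quantitative $\epsilon$-regularity, ensuring that the set of violators is of negligible volume and can be excised from the ball without destroying the volume lower bound; this is the delicate step, and all other ingredients (entropy preservation under rescaling, smooth convergence on the regular part, and the smooth-case inequality) are standard by now.
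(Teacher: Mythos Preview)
The paper does not prove this lemma itself; it cites \cite[Lemma 8.1]{CMZ24} (with \cite[Theorem 22]{li2020heat} for the smooth case). So the question is whether your approximation scheme would yield a proof.

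You have correctly located the obstacle, but the fix you sketch does not work. Proposition \ref{prop:volume} (iii) requires a pointwise bound $\scal\le r^{-2}$ on a \emph{full} geodesic ball. In the approximating flow $\XX^j$, the exceptional set (the $\delta r$-neighborhood of $\MS^{\ep,n-2,\F}_{\delta r,\ep r}$) is not concentrated near $\partial B_{g'_j(t_j)}(x_j^*,(1-\ep)r)$; it can meet every concentric sub-ball, so no amount of ``slight further shrinking'' avoids it. The only way to invoke Proposition \ref{prop:volume} (iii) as a black box is to pass to a \emph{small} ball $B(y_j,\delta r)$ centered at a good point, which yields only $|B(y_j,\delta r)|\ge c(n,Y)(\delta r)^n$ and hence $|B'|\ge c(n,Y)\delta^n r^n$ --- useless since $\delta\to 0$. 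Subtracting $\Psi(\delta)r^n$ does not help, because you never obtained the $c(n,Y)r^n$ term to subtract from.

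What does work is to go back into the \emph{proof} of Proposition \ref{prop:volume} (iii) --- the Perelman-style log-Sobolev argument --- and re-run it with a test function $\phi$ that is additionally cut off near the quantitative singular stratum, e.g.\ by multiplying by an $\eta_r$ as in Proposition \ref{cutoffProp}. The scalar curvature enters only through $\int \scal\,\phi^2$, which is now supported where the hypothesis controls $\scal$; the extra gradient term $\int|\nabla\eta_r|^2$ is of order $(\delta r)^{-2}\cdot\delta^{4-\ep}r^n\to 0$ by the codimension-four estimate, so it is harmless. This is presumably how \cite{CMZ24} proceeds, and it is where the singular structure theory is genuinely used; it cannot be replaced by a black-box appeal to the smooth-case volume bound.
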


\begin{defn}[Regular and singular sets]\label{defnregularsingular}
For the Ricci flow limit space $(Z,d_Z,\t)$, a point $z\in Z_{(-T,0)}$ is \textbf{regular} if there exists a tangent flow at $z$ that is isometric (see Definition \ref{def:iso}) to $(\R^{n} \times \R,d_E^*,(\vec 0, 0),\t)$, where $d_E^*$ denotes the induced $d^*$-distance on $\R^{n} \times \R$ (see Example \ref{ex:euclidean}). Similarly, a point $z\in Z_0$ is \textbf{regular} if there exists a tangent flow at $z$ that is isometric to $(\R^{n} \times \R_{-},d_E^*, (\vec 0,0),\t)$. Any point in $Z_{\III^-}$ that is not regular is called \textbf{singular}.
\end{defn}

\begin{thm}\label{thm:tworegular}
Let $\RR^* \subset Z_{\III^-}$ denote the set of regular points. Then $\RR^*=\RR_{\III^-}$.
\end{thm}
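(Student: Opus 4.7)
I will prove the two inclusions $\RR_{\III^-}\subset \RR^*$ and $\RR^*\subset \RR_{\III^-}$ separately. The first uses only monotonicity of Nash entropy together with the $\ep$-regularity Proposition \ref{prop:limiepsilon}; the second relies on the Ricci shrinker structure of tangent flows combined with the entropy rigidity of the Gaussian soliton.

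For $\RR^*\subset \RR_{\III^-}$: fix $z\in \RR^*$ and let $(Z', d_{Z'}, z', \t')$ be a tangent flow at $z$ obtained from a sequence $r_j\searrow 0$ that is isometric to the Euclidean model. The Nash entropy at the basepoint of the Euclidean model vanishes at every scale, i.e.\ $\NN_{z'}(\tau)\equiv 0$. Choosing approximating points $z_{i_j}^*\in M_{i_j}\times \III$ converging to $z$ in the Gromov--Hausdorff sense, and applying Lemma \ref{lem:nashconv1} both to the rescaled flows $g'_j$ and, after a diagonal selection in $i_j$, to the original flows, together with the scale-invariance $\NN_{z_{i_j}^*}^{g'_j}(\tau)=\NN_{z_{i_j}^*}^{g_{i_j}}(r_j^2\tau)$, yields
\begin{align*}
0=\NN_{z'}(\tau)=\lim_{j\to\infty}\NN_z(r_j^2\tau)\qquad\text{for every }\tau>0.
\end{align*}
By the monotonicity of $\tau\mapsto \NN_z(\tau)$ from Proposition \ref{limitnash}, this forces $\lim_{\tau\searrow 0}\NN_z(\tau)=0$. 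Hence $\NN_z(\tau_0)\ge -\ep_n$ for some sufficiently small $\tau_0>0$, where $\ep_n$ is the constant from Proposition \ref{prop:limiepsilon}. That proposition then gives $r_{\Rm}(z)\ge \ep_n\sqrt{\tau_0}>0$, so $z\in \RR_{\III^-}$.

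For $\RR_{\III^-}\subset \RR^*$, fix $z\in \RR_{\III^-}$ and set $r_0:=r_{\Rm}(z)>0$. First I verify that $\NN_z(\tau)\to 0$ as $\tau\searrow 0$: by Lemma \ref{lem:curcon} one can select $z_i^*\to z$ with $r_{\Rm}(z_i^*)\ge r_0/2$, and standard parabolic heat-kernel asymptotics on a smooth Ricci flow with uniformly bounded curvature give $\NN_{z_i^*}(\tau)=o(1)$ as $\tau\searrow 0$ uniformly in $i$, whence Lemma \ref{lem:nashconv1} yields $\NN_z(\tau)\to 0$. Now form the tangent flow along any sequence $r_j\searrow 0$ as in \eqref{eq:rescaled}: the identity derived in the previous paragraph shows that any such tangent flow $(Z',d_{Z'},z',\t')$ satisfies $\NN_{z'}(\tau)\equiv 0$. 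By Proposition \ref{limitnash} the associated metric flow $\XX^{z'}$ is a metric soliton, and Proposition \ref{prop:004} gives the Ricci shrinker equation
\begin{align*}
\Ric(g^{Z'})+\na^2 f_{z'}=\frac{g^{Z'}}{2\tau}\qquad\text{on }\RR'_{(-\infty,0)}.
\end{align*}
The constraint $\NN_{z'}\equiv 0$ combined with the sharp two-sided potential estimates of Lemma \ref{lem:metrices1} forces $f_{z'}$ to agree with the standard Gaussian potential; the shrinker equation then yields $\Ric(g^{Z'})\equiv 0$, while the no-local-collapsing estimate of Lemma \ref{no-local-collapsing} together with volume comparison rules out any conical or defect-angle behavior at infinity. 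Thus $(\RR'_{(-\infty,0)},g^{Z'})$ is isometric to the flat Ricci flow on $\R^n$. The spacetime distance is identified as $d^*_{E,\ep_0}$ by combining Proposition \ref{prop:dismatch} (local agreement of $d^{Z'}_t$ with the Riemannian distance), Proposition \ref{prop:chara} (characterizing $d_{Z'}$ in terms of the time-slice Wasserstein distances), and the explicit calculation of Example \ref{ex:euclidean}. Finally, when $\t(z)=0$ the time-range of the rescaled flows is bounded above by $0$, which yields the half-space model $\R^n\times \R_-$ in place of $\R^n\times \R$.

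\emph{Main obstacle.} The principal difficulty lies in the second inclusion, specifically in promoting the local smoothness at $z$ to a global Euclidean structure on the tangent flow. The key technical steps---verifying $\NN_z(\tau)\to 0$ quantitatively, extracting Gaussian rigidity from $\NN_{z'}\equiv 0$ through Lemma \ref{lem:metrices1}, and excluding collapsed or conical ends via Lemma \ref{no-local-collapsing}---must be combined carefully. Additional attention is required to recover precisely the metric $d^*_{E,\ep_0}$ rather than a merely bi-Lipschitz-equivalent variant (cf.\ Remark \ref{rem:indep}).
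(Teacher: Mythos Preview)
Your treatment of $\RR^*\subset \RR_{\III^-}$ is essentially the paper's argument: use Lemma~\ref{lem:nashconv1} to get $\NN_z(r^2)\ge -\ep$ for some small $r$, then apply Proposition~\ref{prop:limiepsilon}.

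For $\RR_{\III^-}\subset \RR^*$, the paper takes a much shorter route than yours. Since $z\in \RR_{\III^-}$ means $r_{\Rm}(z)>0$, and the curvature radius scales as $r_j^{-1}r_{\Rm}$ under the parabolic rescaling in \eqref{eq:rescaled}, Lemma~\ref{lem:curcon} applied to the rescaled sequence gives $r_{\Rm}(z')=+\infty$ directly. By Definition~\ref{curvatureradiuslimit} this forces $(\RR',g^{Z'})$ to be the static Euclidean flow on $\R^n\times\R$ (or $\R^n\times\R_-$ when $\t(z)=0$), and completeness of $d^*_{E,\ep_0}$ (Proposition~\ref{equivalenceofballs}) shows $Z'$ coincides with the model. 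No Nash entropy, no soliton equation, no rigidity argument is needed.

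Your approach, by contrast, first shows $\NN_{z'}(\tau)\equiv 0$ and then attempts to deduce Gaussian rigidity from the shrinker equation. The step where you write ``Lemma~\ref{lem:metrices1} forces $f_{z'}$ to agree with the standard Gaussian potential'' is not justified: that lemma gives only two-sided \emph{estimates} on $f_{z'}$, not an equality statement, and the subsequent appeal to Lemma~\ref{no-local-collapsing} plus volume comparison to rule out cones is vague. If you insist on going through entropy, the clean fix is: once $\NN_{z'}(\tau)\equiv 0$, apply Proposition~\ref{prop:limiepsilon} \emph{to $z'$ in $Z'$} (legitimate since $Z'$ is itself a noncollapsed Ricci flow limit space) to get $r_{\Rm}(z')\ge \ep\sqrt{\tau}$ for all $\tau>0$, hence $r_{\Rm}(z')=+\infty$. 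This lands you exactly where the paper's one-line argument via curvature-radius scaling already was, making the entropy detour unnecessary.
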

\begin{proof}
Let $z \in \RR_{\III^-}$, and suppose that $(Z',d_{Z'},z',\t')$ is a tangent flow at $z$. By Lemma \ref{lem:curcon}, we conclude that $z' \in \RR'$ and $r_{\Rm}(z')=+\infty$. By Definition \ref{curvatureradiuslimit}, this implies that $(\RR', g^{Z'}_t)$ is given by the conventional Ricci flow $(\R^n \times \R, g_E)$ or $(\R^{n} \times \R_{-}, g_E)$ so that $z'$ corresponds to $(\vec 0, 0)$. By Proposition \ref{equivalenceofballs}, both $\R^n \times \R$ and $\R^{n} \times \R_{-}$, when equipped with $d_E^*$, are complete. Thus, we conclude that $(Z',d_{Z'},z',\t')$ is isometric to $(\R^{n} \times \R, d_E^* ,(\vec 0, 0),\t)$ or $(\R^{n} \times \R_{-}, d_E^* , (\vec 0,0),\t)$.

Conversely, if $z \in \RR^*$, then, by Lemma \ref{lem:nashconv1}, we can find a small $r>0$ such that $\NN_z(r^2) \ge -\ep$, where $\ep$ is the same constant in Proposition \ref{prop:limitepsilon}. Then, from Lemma \ref{lem:curcon} and Proposition \ref{prop:limitepsilon} we obtain $z \in \RR$.
\end{proof}

Definition \ref{defnregularsingular} and Theorem \ref{thm:tworegular} give rise to the following regular--singular decomposition:
	\begin{align} \label{decomRS2}
Z_{\III^-}=\RR_{\III^-} \sqcup \MS,
	\end{align}
	where $\MS$\index{$\MS$} denotes the set of singular points.

Next, we introduce a class that contains all tangent flows.

\begin{defn}[Ricci shrinker space] \label{def:rss}\index{Ricci shrinker space}
A pointed parabolic metric space $(Z',d_{Z'},z',\t')$ with $\t'(z')=0$ is called an $n$-dimensional \textbf{Ricci shrinker space} with entropy bounded below by $-Y$ if it satisfies $\R_- \subset \mathrm{image}(\t')$ and arises as the pointed Gromov--Hausdorff limit of a sequence of Ricci flows in $\MM(n, Y, T_i)$ with $T_i\to +\infty$ (see Remark \ref{rem:general}). Moreover, $\NN_{z'}(\tau)$ remains constant for all $\tau>0$.
	\end{defn}

As above, we denote by $\RR'$ the regular set, which is realized as a Ricci flow spacetime $(\RR', \t', \partial_{\t'}, g^{Z'}_t)$. With identical proofs, one can show Proposition \ref{prop:004}, Corollary \ref{cor:agree2}, Lemma \ref{lem:metrices1} and Lemma \ref{no-local-collapsing} also hold for Ricci shrinker spaces.

We make the following definitions:
\begin{defn}[Static/quasi-static cone] \label{def:stacone}\index{static cone} \index{quasi-static cone}
Let $(Z',d_{Z'},z',\t')$ be a Ricci shrinker space.
\begin{itemize}
	
\item It is called a \textbf{static cone} if the Ricci curvature vanishes on $\RR'_{-1}$ and the \textbf{arrival time} \index{arrival time}
	\begin{align} \label{eq:arrivaltime}
t_a:=\sup \{\t'(x) \mid x \in \mathrm{spine}(Z')\}=+\infty.
	\end{align}

\item It is called a \textbf{quasi-static cone} if the Ricci curvature vanishes on $\RR'_{-1}$ and $t_a<+\infty$.
\end{itemize}

The definition of the spine and its properties are provided in Appendix \ref{app:D}.
\end{defn}

Note that since $(\RR'_{(-\infty, 0)}, g^{Z'}_t)$ is self-similar, the Ricci curvature vanishes on $\RR'_{(-\infty, 0)}$ for a static cone.

\begin{defn}[Noncollapsed and collapsed Ricci shrinker space] \label{def:ncf}
	A Ricci shrinker space $(Z',d_{Z'},z',\t')$ is called \textbf{noncollapsed}\index{noncollapsed} if for some base point $p \in \RR'_{-1}$,
	\begin{align}\label{defnnoncollapse}
		\liminf_{r\to\infty}\frac{\abs{\RR'_{-1}\bigcap B_{g^{Z'}_{-1}}(p,r)}_{-1}}{r^n}>0.
	\end{align}
	Otherwise, $(Z',d_{Z'},z',\t')$ is called \textbf{collapsed}\index{collapsed}.
\end{defn}

Proposition \ref{prop:volume}(i) and Lemma \ref{no-local-collapsing} imply that every static cone is noncollapsed and that its asymptotic volume ratio lies in $[C(n, Y)^{-1}, C(n, Y)]$ for a constant $C(n, Y)>1$.

\begin{thm}\label{metricsolitonbdscalcomplete}
Suppose that a Ricci shrinker space $(Z',d_{Z'},z',\t')$ has uniformly bounded scalar curvature on $\RR'_{-1}$. Then
	\begin{align*}
\iota_{z'} \lc \XX^{z'}_{(-\infty, 0)} \rc=Z'_{(-\infty, 0)}.
	\end{align*}
\end{thm}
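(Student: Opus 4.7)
We begin by observing that the regular parts already match: by Proposition \ref{prop:004}, every slice $\RR'_t$ with $t<0$ is connected, while by Corollary \ref{cor:pre2} combined with Lemma \ref{lem:pre4} the image $\iota_{z'}(\RR^{z'}_t)$ is a single connected component of $\RR'_t$. Hence $\iota_{z'}(\RR^{z'}_t)=\RR'_t$ for every $t<0$, and it suffices to show that every singular point $w\in Z'_{(-\infty,0)}\setminus\RR'$ lies in $\iota_{z'}(\XX^{z'})$.

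Fix such a $w$ and set $t=\t'(w)<0$. For any $s<t$ the measure $\nu_{w;s}$ is supported on $\iota_w(\RR^w_s)$, which by Lemma \ref{lem:pre4} is a connected component of $\RR'_s$ and therefore coincides with the whole of $\RR'_s=\iota_{z'}(\RR^{z'}_s)$. Consequently the pullback $\mu_s:=(\iota_{z'}^{-1})_*\nu_{w;s}$ is a well-defined probability measure on $\XX^{z'}_s$, and $(\mu_s)_{s<t}$ forms an $H_n$-concentrated conjugate heat flow on $\XX^{z'}_{(-\infty,t)}$. The plan is to construct $\tilde w\in\XX^{z'}_t$ with $\nu_{\tilde w;s}=\mu_s$ for every $s<t$. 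Once such $\tilde w$ is produced, the point $\iota_{z'}(\tilde w)\in Z'_t$ shares with $w$ the same family of conjugate heat kernels below time $t$, so Proposition \ref{prop:pastcont} together with the fact that $d^{Z'}_t$ is a genuine extended metric (Lemma \ref{lem:extend1}) forces $\iota_{z'}(\tilde w)=w$.

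To construct $\tilde w$, pick $s_k\nearrow t$ and regular $H_n$-centers $w_k\in\RR'_{s_k}$ of $w$; by Lemma \ref{lem:004abc}, $w_k\to w$ in $d_{Z'}$, and the lifts $\tilde w_k:=\iota_{z'}^{-1}(w_k)\in\RR^{z'}_{s_k}$ form a Cauchy sequence in $d^*_{z'}$, because $\XX^{z'}$ is continuous as a metric soliton, so that Remark \ref{rem:pastc} and Theorem \ref{thm:idenproof} realize $\iota_{z'}$ as a genuine isometric embedding $(\XX^{z'},d^*_{z'})\hookrightarrow(Z',d_{Z'})$. The crucial step, and the main obstacle, is to upgrade this Cauchy property to convergence of $\tilde w_k$ inside the single time slice $(\XX^{z'}_t,d^{z'}_t)$. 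This is where the scalar curvature hypothesis enters: the bound $|\scal_{g^{Z'}}|\le R_0$ on $\RR'_{-1}$ transfers via $\iota_{z'}$ to $\RR^{z'}_{-1}$, and the metric soliton scaling from Definition \ref{def:met_soliton} propagates it to $|\scal|\le R_0/|s|$ on every $\RR^{z'}_s$. Using this scalar curvature bound to imitate the $H_n$-center localization argument of Lemma \ref{lem:center1}, one shows that the images $\phi_{s_k}(\tilde w_k)$ under the soliton isometries $\phi_{s_k}:\XX^{z'}_{s_k}\to X$ remain in a bounded subset of the reference space $X$; combining this with noncollapsed volume comparison (as in Proposition \ref{propvolumeslicelimit}) and completeness of $(\XX^{z'}_t,d^{z'}_t)$ yields a subsequential limit $q\in X$, and we set $\tilde w:=\phi_t^{-1}(q)\in\XX^{z'}_t$.

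The identity $\nu_{\tilde w;s}=\mu_s$ is then obtained by passing to the limit in $k$ in the soliton relation $(\phi_s)_*\nu_{\tilde w_k;s}=\nu'_{\phi_{s_k}(\tilde w_k);\log(s/s_k)}$, exploiting continuity of the reference conjugate heat family $(\nu'_{x;\tau})_{x\in X,\tau<0}$ on $X$. The hard part is unambiguously the localization step: without a scalar curvature bound there is no visible mechanism preventing the $H_n$-centers $\phi_{s_k}(\tilde w_k)$ from drifting to infinity in $X$, in which case the Cauchy sequence $\{\tilde w_k\}$ would have no limit inside any single time slice of $\XX^{z'}$ and $\iota_{z'}$ could genuinely fail to capture the full singular part of $Z'_{(-\infty,0)}$.
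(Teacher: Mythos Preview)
Your overall strategy is the same as the paper's: the paper shows directly that $(\XX^{z'}_J,d^*_{z'})$ is complete for every compact $J\subset(-\infty,0)$; since $\iota_{z'}$ is an isometric embedding, its image is then a complete (hence closed) subset of $Z'_J$ containing the dense set $\RR'_J$, so it must be all of $Z'_J$. Your version, constructing a preimage for each singular $w$ via the $H_n$-centers $\tilde w_k$, is the same computation applied to the particular Cauchy sequence $\{\tilde w_k\}$.

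The real gap is the localization step. You invoke Lemma~\ref{lem:center1} to show that $\phi_{s_k}(\tilde w_k)$ stay in a bounded region of $X$, but that lemma only compares the $\partial_\t$-flow of a point to an $H_n$-center when the scalar curvature is bounded \emph{along the $\partial_\t$-flow line of that point}; it gives no information about the position of the $\tilde w_k$ themselves in the soliton reference space. The $H_n$-centers $\tilde w_k$ are regular but carry no uniform curvature radius bound, so nothing prevents their $\partial_\t$-flow lines from hitting the singular set immediately, and Lemma~\ref{lem:center1} is inapplicable. The paper fills this gap with an extra step you are missing: using the scalar curvature bound together with the no-local-collapsing Lemma~\ref{no-local-collapsing} (not Proposition~\ref{propvolumeslicelimit}, which is an \emph{upper} volume bound and irrelevant here) and the slice volume estimate for the quantitative stratum (Corollary~\ref{cor:coverslice}), one finds auxiliary points $y_k\in B_{s_k}(\tilde w_k,1)$ with a \emph{uniform} curvature radius lower bound $r_{\Rm}(y_k)\ge c_1>0$. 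Lemma~\ref{lem:center1} then applies to $y_k$: flowing down by $\partial_\t$ to a fixed earlier time $t_1<t$ gives points $y_{k,t_1}$ lying in a bounded region of $\XX^{z'}_{t_1}$, hence with $f_{z'}(y_{k,t_1})$ uniformly bounded by Lemma~\ref{lem:metrices1}, and the ODE $\partial_{\t'} f_{z'}=|\nabla f_{z'}|^2\le\tau^{-1}(f_{z'}+C)$ propagates this to a bound on $f_{z'}(y_k)$. Only then can one control the soliton-flowed positions $x_k'$ at the target time $t_0$.

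A second ingredient you have not supplied is the link between convergence in the reference space $X$ and convergence in $d^*_{z'}$. The paper closes this via a heat kernel \emph{lower} bound along the soliton trajectory (Perelman's reduced length estimate, see \eqref{equ:RSS2}), which together with the upper bound of Theorem~\ref{thm:upper1} gives $d^*_{z'}(x_k,x_k')\le C\sqrt{|s_k-t_0|}\to 0$. Your appeal to ``continuity of the reference conjugate heat family'' is not enough to conclude $\nu_{\tilde w;s}=\mu_s$ without an estimate of this type.
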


\begin{proof}
By the self-similarity of $(\RR'_{(-\infty, 0)}, g^{Z'}_t)$, we conclude that the scalar curvature is uniformly bounded on $\RR'_J$ for any compact interval $J \subset (-\infty, 0)$.

It remains to show that $\iota_{z'}(\XX^{z'}_J)$ is complete with respect to $d_{Z'}$, for any compact interval $J \subset (-\infty, 0)$. In other words, we need to prove that $\XX^{z'}_J$ is complete with respect to $d^*_{z'}$ (see Definition \ref{defn:dstar-limit}). In the following, we use $d_t$ to denote the distance function at time $t$ on $\XX^{z'}_t$.

Consider a Cauchy sequence $x_i\in \XX^{z'}_J$. Since $\RR^{z'}_J$ is dense in $\XX^{z'}_J$, we may assume that all $x_i \in \RR^{z'}_J$. Since $\sqrt{|\t^{z'}(x)-\t^{z'}(y)|} \le d_{z'}^*(x, y)$, after passing to a subsequence, we assume that $\t^{z'}(x_i) \to t_0 \in J$. Since $x_i$ is a Cauchy sequence, for any $t<t_0$,
	\begin{align}\label{complsoliton2}
		\lim_{i,j\to\infty}d_{W_1}^{\XX^{z'}_{t}}(\nu_{x_i;t},\nu_{x_j;t})=0.
	\end{align}
We fix a time $t_1<t_0$ to be determined later and set $z_i \in \XX^{z'}_{t_1}$ to be an $H_n$-center of $x_i$. By \eqref{complsoliton2}, we have for large $i,j$,
	\begin{align*}
	\lim_{i,j\to\infty}	d_{t_1}(z_i,z_j)\leq \lim_{i,j\to\infty} \lc d_{W_1}^{\XX^{z'}_{t_1}}(\nu_{x_i;t_1},z_i)+d_{W_1}^{\XX^{z'}_{t_1}}(\nu_{x_j;t_1},z_j)+d_{W_1}^{\XX^{z'}_{t_1}}(\nu_{x_i;t_1},\nu_{x_j;t_1}) \rc \leq 2\sqrt{H_n(t_0-t_1)}.
	\end{align*}
In particular, $\{z_i\}$ is uniformly bounded with respect to $d_{t_1}$.

Because the scalar curvature is uniformly bounded on $\RR'_J$ for any compact interval $J \subset (-\infty, 0)$, we conclude from Lemma \ref{no-local-collapsing} that 
	\begin{align}\label{complsoliton2a}
	|B_{\t^{z'}(x_i)}(x_i, 1)|_{\t^{z'}(x_i)} \ge c_0>0
	\end{align}
for a constant $c_0$. Then, it follows from \eqref{complsoliton2a} and \cite[Theorem 2.31, Lemma 15.27 (a)]{bamler2020structure} (see also Corollary \ref{cor:coverslice}) that we can find $y_i \in B_{\t^{z'}(x_i)}(x_i, 1) \cap \RR^{z'}$ such that
	\begin{align}\label{complsoliton2b}
r_{\Rm}(y_i) \ge c_1>0
	\end{align}
for a constant $c_1$.

Next, we fix $t_1$ so that $t_0-t_1 \le c_1^2/10$. Then it follows from \eqref{complsoliton2b} and Lemma \ref{lem:center1} that the $H_n$-center of $y_i$ in $\XX^{z'}_{t_1}$, denoted by $w_i$, satisfies
	\begin{align*}
d_{t_1}(w_i, y_{i,t_1}) \le C_2
	\end{align*}
for a constant $C_2>0$, where $y_{i,t} \in \XX^{z'}_t$ denotes the flow line of $\partial_{\t^{z'}}$ from $y_i$. Moreover, it follows from the monotonicity that
	\begin{align*}
d_{t_1}(w_i, z_i) \le d_{W_1}^{\XX^{z'}_{t_1}}(\nu_{x_i;t_1},\nu_{y_i; t_1})+2\sqrt{H_n(\t^{z'}(x_i)-t_1)} \le 1+2\sqrt{H_n(\t^{z'}(x_i)-t_1)}.
	\end{align*}
Thus, we conclude that $\{w_i\}$ and hence $\{y_{i,t_1}\}$ are uniformly bounded with respect to $d_{t_1}$. In particular, Lemma \ref{lem:metrices1} shows that $f_{z'}(y_{i,t_1})$ is uniformly bounded in $i$.

On the other hand, since $\partial_{\t'} f_{z'}=|\na f_{z'}|^2 \le \tau^{-1}(f_{z'}+C(n, Y))$, we obtain that $f_{z'}(y_i)$ is uniformly bounded in $i$. Now, we set $x_{i;s}\in \XX^{z'}_s$ to be the flow line of $\partial_{\t^{z'}}-\na f_{z'}$ from $x_i$ and define $x_i' =x_{i;t_0}$. By the definition of a metric soliton, we conclude that $x_i' \in \RR^{z'}_{t_0}$, and $\{x_i'\}$ are uniformly bounded with respect to $d_{t_0}$. Since $(\XX^{z'}_{t_0},d_{t_0})$ is complete, after passing to a subsequence we may assume that $x_i'$ converges to $x_{\infty}$ in $d_{t_0}$.

We first assume $\t^{z'}(x_i) \ge t_0$. It is clear from Lemma \ref{lem:metrices1} that for any $s \in [t_0, \t^{z'}(x_i)]$,
	\begin{align}\label{equ:RSS1}
|\na f_{z'}|^2(x_{i;s})+\scal_{g^{z'}}(x_{i;s}) \le C_3
	\end{align}
for a constant $C_3$. On the other hand, the heat kernel satisfies:
	\begin{align}\label{equ:RSS2}
K_{Z'}(x_i;x_i')
&\ge \frac{1}{\lc 4\pi|t_0-\t^{z'}(x_i)| \rc^{\frac n 2}} \notag\\
&\quad {}\times \exp \lc-\frac{1}{2\sqrt{\t^{z'}(x_i)-t_0}}
\int_{t_0}^{\t^{z'}(x_i)} \sqrt{\t^{z'}(x_i)-s}
\lc |\na f_{z'}|^2(x_{i;s})+\scal_{g^{z'}}(x_{i;s}) \rc \,\mathrm{d}s \rc.
\end{align}
Indeed, this estimate follows from the corresponding estimate for the closed Ricci flow (see \cite[Corollary 9.4]{perelman2002entropy}) and the smooth convergence.

Therefore, if we denote an $H_n$-center in $\XX^{z'}_{t_0}$ of $x_i$ by $z'_i$, then it follows from Theorem \ref{thm:upper1}, \eqref{equ:RSS1} and \eqref{equ:RSS2} that 
	\begin{align*}
d_{t_0}(x_i', z_i') \le C_4 (\t^{z'}(x_i)-t_0),
	\end{align*}
and hence by Proposition \ref{distancefunction1} (1) and Lemma \ref{lem:004abc} that
	\begin{align}\label{complsoliton2e}
d_{z'}^*(x_i, x_i') \le C_5 \sqrt{\t^{z'}(x_i)-t_0}.
	\end{align}
If $\t^{z'}(x_i) < t_0$, we can also obtain \eqref{complsoliton2e} in a similar way.

Now, it follows from Proposition \ref{distancefunction1} (1) and \eqref{complsoliton2e} that $x_i \to x_{\infty}$ in $d_{z'}^*$. This completes the proof.
\end{proof}

\begin{rem}
By the same argument as in the proof of Theorem \ref{metricsolitonbdscalcomplete}, one can show that for any Ricci shrinker space $(Z',d_{Z'},z',\t')$, 
	\begin{align*}
d^{Z'}_t (x, y)=+\infty
	\end{align*}
	for any $t<0$, whenever $x \in \iota_{z'} \lc \XX^{z'}_{t} \rc$ and $y \in Z'_{t} \setminus \iota_{z'} \lc \XX^{z'}_{t} \rc$. In general, we conjecture that the conclusion of Theorem \ref{metricsolitonbdscalcomplete} remains valid even without the assumption on the scalar curvature.
\end{rem}

Theorem \ref{metricsolitonbdscalcomplete} applies, in particular, to static or quasi-static cones. In fact, we have the following characterization.

\begin{thm}\label{staticcone}
Let $(Z',d_{Z'},z',\t')$ be a Ricci shrinker space that is a static or quasi-static cone. Then $(\RR'_{(-\infty, t_a]},g^{Z'})$ is isometric to $(\RR'_{-1} \times (-\infty, t_a])$, where $t_a$ is defined in \eqref{eq:arrivaltime}.
\end{thm}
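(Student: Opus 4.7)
The plan is to first establish the static product structure on the negative-time part of $\RR'$ directly from the Ricci flow equation, and then extend it up to the arrival time $t_a$ using the spine theory from Appendix \ref{app:D}. By hypothesis, $\Ric(g^{Z'}_{-1})\equiv 0$ on $\RR'_{-1}$. Invoking the self-similarity that every Ricci shrinker space enjoys (an analogue of Theorem \ref{thm:intro5}(d), which is internal to $\RR'_{(-\infty,0)}$ and needs no assumption about flat slices at positive times), each slice $(\RR'_t,g^{Z'}_t)$ with $t<0$ is a parabolic rescaling of $(\RR'_{-1},g^{Z'}_{-1})$ and is therefore Ricci-flat as well. The spacetime Ricci flow equation $\mathcal L_{\partial_{\t'}} g^{Z'} = -2\Ric(g^{Z'})$ then gives $\mathcal L_{\partial_{\t'}} g^{Z'} \equiv 0$ on $\RR'_{(-\infty,0)}$, so that the flow of $\partial_{\t'}$ acts by isometries of the spatial metric wherever it is defined. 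Combining this with the fact that the self-similar scaling flow already identifies $\RR'_{-1}$ diffeomorphically with each $\RR'_t$ for $t<0$, one obtains an isometry of Ricci flow spacetimes $\Phi:\RR'_{-1}\times(-\infty,0)\to\RR'_{(-\infty,0)}$ carrying $\partial_t$ to $\partial_{\t'}$.

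Next, I would extend $\Phi$ across $t=0$ by integrating $\partial_{\t'}$ forward. For each $y\in\RR'_{-1}$ the flowline $t\mapsto y_t$ extends to a maximal interval $(-\infty,T_y)$ with $T_y\in(0,\infty]$, since $\RR'_{(-\infty,0)}$ lies in an open subset of the smooth Ricci flow spacetime $\RR'$. Along each such flowline, Ricci-flatness propagates by continuity from the negative-time slice, so $\partial_{\t'}$ continues to generate isometries and $\Phi$ extends isometrically to $\RR'_{-1}\times(-\infty,T_y)$. The theorem then reduces to two claims: (a) $T_y>t_a$ for every $y\in\RR'_{-1}$, and (b) the extended $\Phi$ is surjective onto $\RR'_{(-\infty,t_a]}$. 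This is where the spine enters. By the definition \eqref{eq:arrivaltime}, there are points of $\mathrm{spine}(Z')$ with $\t'$-values approaching $t_a$ (and equal to $t_a$ in the quasi-static case). The basic properties of the spine in Appendix \ref{app:D} guarantee that near every spine point the spacetime locally splits off a copy of the cross-section $\RR'_{-1}$; chaining these local splittings along the spine and invoking uniqueness of Ricci flow with Ricci-flat initial data (which forces the flow to be the stationary one) then yields both (a) and (b).

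The main obstacle will be Step (a)--(b): converting the merely pointwise statement ``there exist spine points at times arbitrarily close to $t_a$'' into the global statement that every flowline $y_t$ for $y\in\RR'_{-1}$ survives up to $t_a$ and that their images exhaust $\RR'_{(-\infty,t_a]}$. Two dangers must be excluded: a flowline could hit the singular set $\MS$ of $Z'$ before $t_a$, and some connected component of $\RR'_t$ for $t\in(0,t_a]$ could fail to intersect $\Phi(\RR'_{-1}\times\{t\})$. Both should be ruled out by coupling the spine structure with Corollary \ref{cor:connected} (to match components) and with the $\varepsilon$-regularity of Proposition \ref{prop:limiepsilon} applied along the stationary flow (to prevent curvature from developing, since the Nash entropy is constant along an isometric product extension). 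Once this control is in place, $\Phi$ is an isometry of Ricci flow spacetimes $\RR'_{-1}\times(-\infty,t_a]\to\RR'_{(-\infty,t_a]}$, which is the conclusion of the theorem.
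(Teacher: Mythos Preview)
Your overall strategy is close to the paper's, but the two load-bearing steps in your step (a)--(b) are not adequately justified, and the paper handles them by different (and more direct) means.

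First, you obtain $\Ric\equiv 0$ only on $\RR'_{(-\infty,0)}$ and then try to push it forward by ``continuity'' and a forward-uniqueness principle for Ricci flow. This is delicate because the slices $\RR'_t$ are incomplete, so uniqueness of the forward evolution from Ricci-flat initial data is not available for free. The paper instead invokes Lemma~\ref{lem:realstatic} directly: a spine point $y$ with $\t'(y)>0$ gives a \emph{second} shrinker equation $\Ric+\nabla^2 f_y=g^{Z'}/(2(\t'(y)-\t'))$ on $\RR'_{(-\infty,\t'(y))}$, and the self-similarity generated by $\nabla f_y$ (not by $\partial_{\t'}$) identifies all slices $\RR'_t$ for $t<\t'(y)$ with $\RR'_{-1}$ up to scale. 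Since $\Ric=0$ on $\RR'_{-1}$, this yields $\Ric=0$ on $\RR'_{(-\infty,t_a]}$ in one stroke, without any forward-propagation argument.

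Second, and more seriously, your claim that ``the Nash entropy is constant along an isometric product extension'' is not justified. The Nash entropy $\NN_{y_t}(\tau)$ is computed from the conjugate heat kernel $K_{Z'}(y_t;\cdot)$, which is a \emph{global} object on $\RR'$; knowing that the metric is a product on the image of $\Phi$ does not by itself force $K_{Z'}$ to be translation-invariant there. This is precisely the non-trivial input the paper imports from \cite{bamler2020structure} (Theorems~2.16, 15.60 and Claim~22.7): once $\Ric\equiv 0$ on $\RR'_{(-\infty,t_a]}$, those results give $\partial_{\t',x}K_{Z'}(x;y)+\partial_{\t',y}K_{Z'}(x;y)=0$, hence $K_{Z'}(\boldsymbol{\varphi}^t(x);\boldsymbol{\varphi}^t(y))=K_{Z'}(x;y)$, hence $\NN_{\boldsymbol{\varphi}^t(x)}(\tau)$ is constant in $t$. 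Only then can Proposition~\ref{prop:limiepsilon} be applied to conclude $r_{\Rm}(\boldsymbol{\varphi}^t(x))\ge c>0$ along the flowline, preventing it from running into $\MS$ before time $t_a$. Your ``chaining local splittings along the spine'' does not supply this heat-kernel invariance, and without it the open-closed argument for $T_y>t_a$ does not close.
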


\begin{proof}
By Lemma \ref{lem:realstatic}, we know that $\Ric \equiv 0$ on $\RR'_{(-\infty, t_a]}$. Now, it follows from \cite[Theorem 2.16, Theorem 15.60, Claim 22.7]{bamler2020structure} that 
	\begin{align*}
\partial_{\t',x}K_{Z'}(x;y)+\partial_{\t',y} K_{Z'}(x;y)=0
	\end{align*}
	for $x, y \in \RR'_{(-\infty, t_a]}$. In other words, if we denote the flow induced by $\partial_{\t'}$ by $\boldsymbol{\varphi}^t$, then $K_{Z'}(x;y)=K_{Z'}(\boldsymbol{\varphi}^t(x);\boldsymbol{\varphi}^t(y))$ for any $x, y, \boldsymbol{\varphi}^t(x), \boldsymbol{\varphi}^t(y) \in \RR'_{(-\infty, t_a]}$. Thus, one can follow the same argument as in the proof of \cite[Theorem 15.60]{bamler2020structure} to show that the Nash entropy $\NN_{\boldsymbol{\varphi}^t(x)}(\tau)$ is constant as long as $\boldsymbol{\varphi}^t(x) \in \RR'$. Proposition \ref{prop:limitepsilon} then implies that $\boldsymbol{\varphi}^t(x) \in \RR'_{(-\infty, t_a]}$ for any $t \in (-\infty, t_a-\t'(x)]$ as long as $x \in \RR'_{(-\infty, t_a]}$.

Consequently, we conclude that $(\RR'_{(-\infty, t_a]},g^{Z'})$ is isometric to $(\RR'_{-1} \times (-\infty, t_a], g^{Z'}_{-1})$.
\end{proof}

Combining Corollary \ref{cor:agree2}, Theorem \ref{metricsolitonbdscalcomplete} and Theorem \ref{staticcone}, we have

\begin{cor}
With the above assumptions, for any $t \in (-\infty, t_a]$, the distance induced by $g^{Z'}_t$ on $\RR'_t$ agrees with $d^{Z'}_t$. Moreover, $(Z'_t, d^{Z'}_t)$ is the completion of $(\RR'_t, g^{Z'}_t)$.
\end{cor}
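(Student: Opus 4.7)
The plan is to handle the two assertions separately, first reducing the claims at times $t \in [0, t_a]$ to the corresponding claims at $t = -1$ via the product isometry from Theorem \ref{staticcone}, and then dealing with $t < 0$ using Theorem \ref{metricsolitonbdscalcomplete} and the metric soliton structure of $\XX^{z'}$. Throughout, the key structural input is that $\Ric \equiv 0$ on $\RR'_{(-\infty, t_a]}$, so that the flow $\boldsymbol{\varphi}^s$ generated by $\partial_{\t'}$ is a Riemannian isometry between time-slices, and moreover $K_{Z'}(\boldsymbol{\varphi}^s(x); \boldsymbol{\varphi}^s(y)) = K_{Z'}(x;y)$, as established in the proof of Theorem \ref{staticcone}. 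This time-translation invariance yields $(\boldsymbol{\varphi}^s)_* \nu_{x;\tau} = \nu_{\boldsymbol{\varphi}^s(x); \tau+s}$ on the regular part.

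For the first assertion, the case $t < 0$ is Corollary \ref{cor:agree2}. For $t \in [0, t_a]$ and $x,y \in \RR'_t$, set $\tilde x = \boldsymbol{\varphi}^{-t-1}(x)$ and $\tilde y = \boldsymbol{\varphi}^{-t-1}(y)$ in $\RR'_{-1}$. For $s < t$, the time-translation identity gives
\[
d_{W_1}^{\RR'_s}(\nu_{x;s}, \nu_{y;s}) = d_{W_1}^{\RR'_{s-t-1}}(\nu_{\tilde x; s-t-1}, \nu_{\tilde y; s-t-1}).
\]
Letting $s \nearrow t$, Definition \ref{defntimeslicedist} and Corollary \ref{cor:agree2} yield $d^{Z'}_t(x,y) = d^{Z'}_{-1}(\tilde x, \tilde y) = d_{g^{Z'}_{-1}}(\tilde x, \tilde y) = d_{g^{Z'}_t}(x,y)$, the last equality because $\boldsymbol{\varphi}^{t+1}$ is a Riemannian isometry.

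For the second assertion at $t < 0$, Theorem \ref{metricsolitonbdscalcomplete} applies since $\scal_{g^{Z'}_{-1}} \equiv 0$, giving $Z'_{(-\infty,0)} = \iota_{z'}(\XX^{z'}_{(-\infty,0)})$. Because $\XX^{z'}$ is a metric soliton (hence continuous) and arises as an $\IF$-limit, Theorem \ref{Fconvergence} implies $\RR^{z'}_s$ is dense in $(\XX^{z'}_s, d^s)$; under the isometric embedding $\iota_{z'}$ this forces $\RR'_s$ to be dense in $(Z'_s, d^{Z'}_s)$. Combined with the completeness of $(Z'_s, d^{Z'}_s)$ from Proposition \ref{prop:com} and the identification $d^{Z'}_s|_{\RR'_s} = g^{Z'}_s$, we conclude $(Z'_s, d^{Z'}_s)$ is the completion of $(\RR'_s, g^{Z'}_s)$ for $s < 0$.

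For the second assertion at $t \in [0, t_a]$, the Riemannian isometry $\boldsymbol{\varphi}^{t+1} : (\RR'_{-1}, g^{Z'}_{-1}) \to (\RR'_t, g^{Z'}_t)$ is also a $d^{Z'}$-isometry by the first assertion; using the $t=-1$ case together with the completeness of $(Z'_t, d^{Z'}_t)$, it extends uniquely to a $d^{Z'}$-isometric embedding $\bar\Psi_t : Z'_{-1} \to Z'_t$ whose image contains $\RR'_t$ and is closed. The main obstacle is the surjectivity of $\bar\Psi_t$: one must rule out the existence of a point $z \in Z'_t$ with $d^{Z'}_t(z, \bar\Psi_t(Z'_{-1})) > 0$. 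The plan is to exploit Corollary \ref{cor:comple1} to pick $z_i \in \RR'$ with $\t'(z_i) \to t$ and $z_i \to z$ in $d_{Z'}$, then transport each $z_i$ via $\boldsymbol{\varphi}^{t-\t'(z_i)}$ to obtain $z_i' \in \RR'_t$, and show $z_i' \to z$ in $d^{Z'}_t$. The convergence follows because $\boldsymbol{\varphi}$ preserves conjugate heat kernel measures (hence preserves past-continuity data), so $\nu_{z_i';s} = (\boldsymbol{\varphi}^{t-\t'(z_i)})_* \nu_{z_i; s - t + \t'(z_i)}$ converges in $d_{W_1}^{Z'_s}$ to $\nu_{z;s}$ for each $s < t$, using Corollary \ref{cor:limit1} applied to the shifted sequence, and then Proposition \ref{prop:pastcont} together with Lemma \ref{takelimitdt} closes the argument. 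Filling in the uniform bounds needed to justify the passage to the limit as $s \nearrow t$ is the technical step I anticipate requiring the most care.
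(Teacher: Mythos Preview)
Your overall strategy matches the paper's terse proof, and the arguments for the first assertion and for the second assertion at $t<0$ are correct. The gap is in the surjectivity step at $t\in[0,t_a]$. The tools you invoke there do not close the argument: applying Lemma~\ref{takelimitdt} with $x_i=z_i'$, $y_i=z$, $x=y=z$ only yields the triviality $0\le\liminf_i d^{Z'}_t(z_i',z)$, and the pointwise convergence $d_{W_1}^{Z'_s}(\nu_{z_i';s},\nu_{z;s})\to 0$ for each fixed $s<t$ (from Corollary~\ref{cor:limit1}) does not control $\lim_{s\nearrow t}$ because the monotonicity in $s$ (Lemma~\ref{lem:limitmono1}) runs the wrong direction. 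The interchange-of-limits obstruction you flagged is real, and Proposition~\ref{prop:pastcont} does not resolve it.

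The fix is to choose the approximants explicitly as regular $H_n$-centers: for $s_k\nearrow t$ take $v_k\in\RR'_{s_k}$ with $\Var_{\RR'_{s_k}}(\delta_{v_k},\nu_{z;s_k})\le H_n(t-s_k)$, and set $v_k'=\boldsymbol{\varphi}^{t-s_k}(v_k)\in\RR'_t$. Then one estimates $d_{g^{Z'}_t}(v_k',v_l')$ directly: since $\boldsymbol{\varphi}$ is a Riemannian isometry between slices, this equals $d_{g^{Z'}_{s_k}}(v_k,\boldsymbol{\varphi}^{s_k-s_l}(v_l))$ for $k<l$. Using that $\boldsymbol{\varphi}^{s_k-s_l}(v_l)$ is a regular $H$-center of $v_l$ (Lemma~\ref{lem:center1} with $\Ric\equiv 0$) together with the $H_n$-center bounds and monotonicity, one gets $d_{g^{Z'}_t}(v_k',v_l')\le C(n,Y)\sqrt{t-s_k}$. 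Hence $\{v_k'\}$ is Cauchy in $(\RR'_t,d^{Z'}_t)$, and its limit in $(Z'_t,d^{Z'}_t)$ coincides with $z$ because both agree as $d_{Z'}$-limits (Lemma~\ref{dtlem1} and Lemma~\ref{lem:004abc}). This gives density of $\RR'_t$ in $(Z'_t,d^{Z'}_t)$ without any limit interchange.
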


In the setting of Theorem \ref{staticcone}, there exists a flow induced by $\partial_{\t'}$ on $\RR'$. More precisely, for any $x \in \RR'_{(-\infty, t_a]}$, we define $\boldsymbol{\varphi}^t(x) \in \RR'_{\t'(x)+t}$ to be the flow line of $\partial_{\t'}$ from $x$, where $t \in (-\infty, t_a-\t'(x)]$.

\begin{prop}\label{prop:staticcone1}
With the above assumptions, $\boldsymbol{\varphi}^t$ can be defined on $Z'$ so that the following statements hold.
	\begin{enumerate}[label=\textnormal{(\roman{*})}]
\item For any $x, y \in Z'_{(-\infty, t_a]}$, $d_{Z'}(x, y)=d_{Z'}(\boldsymbol{\varphi}^t(x), \boldsymbol{\varphi}^t(y))$ for all $t \in (-\infty, t_a-\max\{\t'(x), \t'(y)\}]$.

\item For any $x, y \in Z'_s$, $d^{Z'}_s(x, y)=d^{Z'}_{s+t}(\boldsymbol{\varphi}^t(x), \boldsymbol{\varphi}^t(y))$ for all $s \le t_a$ and $t \in (-\infty, t_a-s]$.

\item For any $x \in Z'_{(-\infty, t_a]}$ and $\tau>0$, $\NN_x(\tau)=\NN_{\boldsymbol{\varphi}^t(x)}(\tau)$ for all $t \in (-\infty, t_a-\t'(x)]$.
	\end{enumerate}	
\end{prop}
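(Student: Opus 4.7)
The plan is to define $\boldsymbol{\varphi}^t$ first on $\RR'_{(-\infty, t_a]}$ via the flow of $\partial_{\t'}$ and then extend it to all of $Z'_{(-\infty, t_a]}$ by approximation. Theorem \ref{staticcone} identifies $(\RR'_{(-\infty, t_a]}, g^{Z'})$ with the static product $\RR'_{-1} \times (-\infty, t_a]$, so on the regular part $\boldsymbol{\varphi}^t$ is an isomorphism of Ricci flow spacetimes that preserves $\partial_{\t'}$, preserves the spatial metric, and shifts $\t'$ by $t$. Remark \ref{rem:defheatequS} then implies that $(\boldsymbol{\varphi}^t)_* \nu_{x; u} = \nu_{\boldsymbol{\varphi}^t(x); u+t}$ for every $x \in \RR'$ and every admissible $u$, since the family of conjugate heat kernel measures is determined by the spacetime structure alone.

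With these invariances in hand, I first establish (i)--(iii) on the regular part. Property (iii) is immediate, because $\NN_x(\tau)$ is built out of $\nu_{x;\t'(x)-\tau}$ and the volume form, both of which push forward correctly. For (ii), Theorem \ref{staticcone} combined with Proposition \ref{prop:004} shows that $\RR'_s$ is connected for every $s \le t_a$, and Proposition \ref{prop:dismatch1} then produces a dense set of times $u$ on which $d^{Z'}_u = d_{g^{Z'}_u}$ on $\RR'_u$; since the conjugate heat kernel measures are supported on a single such connected slice by Corollary \ref{cor:pre1}, the isometric transport of measures together with the past-continuity in Proposition \ref{prop:pastcont} will yield $d^{Z'}_{s+t}(\boldsymbol{\varphi}^t(x), \boldsymbol{\varphi}^t(y)) = d^{Z'}_s(x, y)$. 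For (i), I will invoke Proposition \ref{prop:chara}, which characterizes $d_{Z'}(x, y)$ via the critical radius at which $d_{W_1}^{Z'_u}(\nu_{x;u}, \nu_{y;u})$ crosses $\ep_0 r$; the same $W_1$-isometry between slices, combined with the compatibility of $d^{Z'}_u$ with $d_{g^{Z'}_u}$ on the support of the measures, forces this critical radius to be preserved.

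Finally, I extend $\boldsymbol{\varphi}^t$ to $Z'_{(-\infty, t_a]}$ using density of $\RR'$ in $Z'$ with respect to $d_{Z'}$ (Corollary \ref{cor:comple1}). Given $x \in Z'_s$ with $s \le t_a$, choose $x_n \in \RR'$ with $x_n \to x$ in $d_{Z'}$; property (i) applied to regular points makes $\{\boldsymbol{\varphi}^t(x_n)\}$ Cauchy, and the same identity shows the limit in the complete space $(Z', d_{Z'})$ is independent of the choice of $x_n$. Properties (i) and (iii) pass to the limit by continuity---for (iii) I will use the limit-space analogue of the Lipschitz estimate in Proposition \ref{nashlip} combined with Lemma \ref{lem:nashconv1}---while (ii) follows from Lemma \ref{takelimitdt} together with the same argument applied to $\boldsymbol{\varphi}^{-t}$ on $Z'_{s+t}$, which upgrades the $\liminf$-inequality to equality. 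I expect the main technical obstacle to lie in the $d_{Z'}$-invariance step: one must carefully reconcile the Wasserstein distance $d_{W_1}^{Z'_u}$ on the full extended metric slice $(Z'_u, d^{Z'}_u)$ with the Wasserstein distance on the Riemannian slice $(\RR'_u, g^{Z'}_u)$ on which $\boldsymbol{\varphi}^t$ acts isometrically, and this is precisely where the combination of Proposition \ref{prop:dismatch1}, Corollary \ref{cor:pre1}, and Lemma \ref{lem:pre4} becomes essential.
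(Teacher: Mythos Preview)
Your proposal is correct and follows the same architecture as the paper's proof: establish the invariances on $\RR'_{(-\infty,t_a]}$ from the heat-kernel/pushforward identity, derive (ii) and then (i) for regular points via Proposition~\ref{prop:chara}, extend $\boldsymbol{\varphi}^t$ to $Z'_{(-\infty,t_a]}$ by $d_{Z'}$-density and completeness, and handle (ii) at singular points through the one-sided inequality of Lemma~\ref{takelimitdt} combined with the inverse $\boldsymbol{\varphi}^{-t}$.

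The paper's treatment is slightly more direct in two places. For (ii) on regular points it appeals to Definition~\ref{defntimeslicedist} itself, which already expresses $d^{Z'}_s$ as $\lim_{u\nearrow s} d_{W_1}^{\RR'_u}(\nu_{x;u},\nu_{y;u})$; since $\boldsymbol{\varphi}^t$ is a Riemannian isometry of slices and pushes the conjugate heat kernel measures forward correctly, the identity follows immediately---this bypasses your anticipated reconciliation of $d_{W_1}^{Z'_u}$ with $d_{W_1}^{\RR'_u}$ via Proposition~\ref{prop:dismatch1} and Proposition~\ref{prop:pastcont}. For (iii) at singular points the paper extends the heat-kernel identity $K_{Z'}(x;y)=K_{Z'}(\boldsymbol{\varphi}^t(x);\boldsymbol{\varphi}^t(y))$ to $x\in Z'$ via \eqref{eq:kernellimit} and reads off the Nash entropy invariance directly, rather than passing through the Lipschitz bound. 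Both of your alternative routes work, they are just a bit more roundabout.
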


\begin{proof}
As in the proof of Theorem \ref{staticcone}, we have 
	\begin{align} \label{eq:con00a}
K_{Z'}(x;y)=K_{Z'}(\boldsymbol{\varphi}^t(x);\boldsymbol{\varphi}^t(y))
	\end{align}
for any $x, y \in \RR'_{(-\infty, t_a]}$ and $t \in (-\infty, t_a-\max\{\t'(x), \t'(y)\}]$. Thus, by Definition \ref{defntimeslicedist}, we conclude that
	\begin{align} \label{eq:con1}
d^{Z'}_s(x, y)=d^{Z'}_{s+t}(\boldsymbol{\varphi}^t(x), \boldsymbol{\varphi}^t(y))
	\end{align}
for any $x, y \in \RR'_s$ with $s \le t_a$ and $t \le t_a-s$. Using the variational definition of the spacetime distance together with the invariance of the conjugate heat kernels, this implies
	\begin{align} \label{eq:con2}
d_{Z'}(x, y)=d_{Z'}(\boldsymbol{\varphi}^t(x), \boldsymbol{\varphi}^t(y))
	\end{align}
for any $x, y \in \RR'_{(-\infty, t_a]}$ and $t \in (-\infty, t_a-\max\{\t'(x), \t'(y)\}]$. 

Next, for any $w \in Z'_{(-\infty, t_a]}$, we choose a sequence $w_i \in \RR'_{(-\infty, \t(w)]}$ such that $w_i \to w$ in $d_{Z'}$. Then, for any $t \in (-\infty, t_a-\t'(w)]$, $\{\boldsymbol{\varphi}^t(w_i)\}$ is a Cauchy sequence by \eqref{eq:con2} with respect to $d_{Z'}$. We define
	\begin{align*}
\boldsymbol{\varphi}^t(w)=\lim_{i \to \infty} \boldsymbol{\varphi}^t(w_i).
	\end{align*}
It is clear that the definition of $\boldsymbol{\varphi}^t(w)$ is independent of the choice of $\{w_i\}$. Moreover, it follows from \eqref{eq:kernellimit} and \eqref{eq:con00a} that
	\begin{align} \label{eq:con00b}
K_{Z'}(x;y)=K_{Z'}(\boldsymbol{\varphi}^t(x);\boldsymbol{\varphi}^t(y))
	\end{align}
for any $x \in Z'_{(-\infty, t_a]}$, $y \in \RR'_{(-\infty, \t'(x))}$ and $t \in (-\infty, t_a-\t'(x)]$.

(i): This follows from \eqref{eq:con2} by taking the limit.

(ii): For any $x, y \in Z'_s$ with $s \le t_a$, since $\RR'_{s'}$ is connected for any $s'<s$, the argument of Lemma \ref{takelimitdt1} gives a sequence $s_i \nearrow s$ such that if $x_i, y_i \in \RR'_{s_i}$ are regular $H_n$-centers of $x$ and $y$, respectively, then
	\begin{align} \label{eq:con3}
d^{Z'}_{s}(x, y)=\lim_{i \to \infty} d_{s_i}^{Z'}(x_i, y_i).
	\end{align}
Since $\boldsymbol{\varphi}^t(x_i)$ and $\boldsymbol{\varphi}^t(y_i)$ converge to $\boldsymbol{\varphi}^t(x)$ and $\boldsymbol{\varphi}^t(y)$, respectively, it follows from Lemma \ref{takelimitdt} that
	\begin{align*}
\liminf_{i \to \infty} d_{s_i+t}^{Z'}(\boldsymbol{\varphi}^t(x_i), \boldsymbol{\varphi}^t(y_i)) \ge d^{Z'}_{s+t}(\boldsymbol{\varphi}^t(x), \boldsymbol{\varphi}^t(y)).
	\end{align*}
Combining this with \eqref{eq:con1} and \eqref{eq:con3}, we obtain
	\begin{align*}
d^{Z'}_{s}(x, y) \ge d^{Z'}_{s+t}(\boldsymbol{\varphi}^t(x), \boldsymbol{\varphi}^t(y)).
	\end{align*}
The reverse inequality also holds since $\boldsymbol{\varphi}^t$ is the inverse map of $\boldsymbol{\varphi}^{-t}$.

(iii): This is immediate from \eqref{eq:con00b} and Theorem \ref{staticcone}.
\end{proof}

Next, we prove the following bi-Lipschitz estimate.

\begin{lem}\label{lem:bilip1}
With the above assumptions, for any $x \in Z'_{(-\infty, t_a]}$ and $t \in (-\infty, t_a-\t'(x)]$,
	\begin{align*}
|t|^{\frac 1 2} \le d_{Z'}(x, \boldsymbol{\varphi}^t(x)) \le C(n, Y) |t|^{\frac 1 2}.
	\end{align*}
\end{lem}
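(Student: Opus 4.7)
The lower bound is immediate from the $2$-H\"older property of the time-function on the parabolic metric space $(Z', d_{Z'}, z', \t')$: since $\t'(\boldsymbol{\varphi}^t(x)) - \t'(x) = t$, the inequality $|\t'(x) - \t'(y)| \le d_{Z'}(x, y)^2$ forces $|t|^{1/2} \le d_{Z'}(x, \boldsymbol{\varphi}^t(x))$.

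For the upper bound, the plan is to reduce twice and then apply the regular $H$-center machinery. First, reduce to the case $t < 0$: if $t > 0$, Proposition \ref{prop:staticcone1}(i) applied to the pair $(x, \boldsymbol{\varphi}^t(x))$ with shift $-t$ (which is admissible since $-t \in (-\infty, t_a - \t'(\boldsymbol{\varphi}^t(x))]$) yields
\[
d_{Z'}(x, \boldsymbol{\varphi}^t(x)) = d_{Z'}(\boldsymbol{\varphi}^{-t}(x), x),
\]
reducing to a backward shift of the same magnitude. Second, reduce to $x \in \RR'$: for a general $x \in Z'_{(-\infty, t_a]}$, choose $x_i \in \RR'$ with $x_i \to x$ in $d_{Z'}$ (using density of $\RR'$ in $Z'_{(-\infty, t_a]}$, which follows from the Ricci shrinker space analogue of Corollary \ref{cor:comple1} together with the product structure given by Theorem \ref{staticcone}); by the explicit Cauchy-sequence construction in the proof of Proposition \ref{prop:staticcone1}, $\boldsymbol{\varphi}^t(x_i) \to \boldsymbol{\varphi}^t(x)$, and the upper bound passes to the limit.

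Thus we may assume $x \in \RR'$ and $t = -r^2 < 0$. By Theorem \ref{staticcone}, $\Ric \equiv 0$ on $\RR'_{(-\infty, t_a]}$, so in particular the scalar curvature vanishes along the entire flow line $\{\boldsymbol{\varphi}^s(x) : s \in [-r^2, 0]\}$. The limit-space version of Lemma \ref{lem:center1}, applied on $\RR'$ with $R_0 = 0$, yields that $\boldsymbol{\varphi}^{-r^2}(x)$ lies within $g^{Z'}_{\t'(x)-r^2}$-distance $C(n, Y)\, r$ of any regular $H_n$-center of $x$ in $\RR'_{\t'(x) - r^2}$. Consequently, $\boldsymbol{\varphi}^{-r^2}(x)$ is itself a regular $H$-center of $x$ for some $H = H(n, Y)$, and Lemma \ref{lem:004abc} supplies
\[
d_{Z'}(x, \boldsymbol{\varphi}^{-r^2}(x)) \le \ep_0^{-1} \sqrt{H r^2} = C(n, Y)\, r = C(n, Y)\, |t|^{1/2},
\]
completing the proof. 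The main technical subtlety is the approximation step: one must verify that $\RR'$ is dense in $Z'_{(-\infty, t_a]}$ uniformly up to $t_a$ (for the quasi-static case this is not quite covered by Corollary \ref{cor:comple1} as stated, but follows from the product structure of Theorem \ref{staticcone} and the fact that $(Z'_t, d^{Z'}_t)$ is the completion of $(\RR'_t, g^{Z'}_t)$), and that Lemma \ref{lem:center1}, originally stated on $\RR$ inside a noncollapsed Ricci flow limit space, applies verbatim to the Ricci flow spacetime $\RR'$ of the Ricci shrinker space with the same dependence of constants.
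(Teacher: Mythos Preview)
Your proof is correct and follows essentially the same route as the paper: reduce to $t<0$ and $x\in\RR'$, then use that $\Ric\equiv 0$ together with Lemma~\ref{lem:center1} to identify $\boldsymbol{\varphi}^t(x)$ as a regular $H$-center of $x$, and conclude via Lemma~\ref{lem:004abc}. The paper's proof is terser about the reductions (simply writing ``the general case follows by approximation''), but your additional care about density up to $t_a$ and applicability of Lemma~\ref{lem:center1} is well-placed and does not alter the argument.
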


\begin{proof}
The first inequality is immediate, so we focus on proving the second. Without loss of generality, assume $t<0$ and $x \in \RR'_{(-\infty, t_a]}$. The general case follows by approximation.

Since $\Ric(g^{Z'})=0$, it follows from Lemma \ref{lem:center1} that $\boldsymbol{\varphi}^t(x)$ is a regular $H$-center of $x$ for some $H=H(n, Y)>0$. Therefore, by Lemma \ref{lem:004abc}, we obtain
	\begin{align*}
d_{Z'}(x, \boldsymbol{\varphi}^t(x)) \le C(n, Y) \sqrt{|t|},
	\end{align*}
which completes the proof.
\end{proof}

For general Ricci shrinker spaces, we have the following result.

\begin{thm}\label{dichotomy}
Let $(Z',d_{Z'},z',\t')$ be a collapsed Ricci shrinker space. Then $\mathrm{image}(\t')=\R_-$.
\end{thm}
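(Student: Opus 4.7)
The plan is to argue by contraposition: suppose $t_0 := \sup \mathrm{image}(\t') > 0$ and show that $(Z', d_{Z'}, z', \t')$ is noncollapsed, contradicting the hypothesis. The guiding intuition is that the existence of a point past time $0$, combined with the self-similar structure on negative times, forces the space to look like a static cone at large scales, and static cones are automatically noncollapsed.

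First, I would exploit the self-similarity provided by Proposition \ref{prop:004}: the shrinker equation $\Ric(g^{Z'}) + \nabla^2 f_{z'} = g^{Z'}/(2\tau)$ on $\RR'_{(-\infty, 0)}$ generates a flow $\boldsymbol{\psi}^s$ (from the vector field $\tau(\partial_{\t'} - \nabla f_{z'})$) which scales the spatial metric by $e^{-s}$ between time-slices related by $t \mapsto e^{-s}t$. Consider the blow-up sequence $(Z', R_i^{-1} d_{Z'}, z', R_i^{-2} \t')$ for $R_i \searrow 0$. By the self-similarity, these spaces are isometric to $(Z', d_{Z'}, z', \t')$ when restricted to negative times. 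On the positive-time side, however, the $\t'$-image expands as $(-\infty, R_i^{-2} t_0]$, growing to all of $\R$ as $R_i \to 0$.

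Second, I would extract a pointed Gromov--Hausdorff convergent subsequence, using the compactness for noncollapsed Ricci flow limit spaces (Theorem \ref{GHlimitd*} in the generality of Remark \ref{rem:general}). The limit $(Z'', d_{Z''}, z'', \t'')$ is again a Ricci shrinker space (constancy of the Nash entropy at the base point is inherited via Lemma \ref{lem:nashconv1}), with $\mathrm{image}(\t'') = \R$. The key structural claim is then that any Ricci shrinker space whose time-function image equals all of $\R$ is necessarily a static cone, i.e., $\Ric(g^{Z''}) \equiv 0$ on $\RR''_{-1}$. I would prove this by picking arbitrarily-far-future points $w \in Z''_{t_1}$ with $t_1 \to +\infty$ and observing that the Nash entropies $\NN_w(\tau)$ approach the constant value $\NN_{z''}$; the equality case in the monotonicity of Proposition \ref{propNashentropy} together with smooth convergence (Theorem \ref{thm:smooth1}) would then force $\Ric(g^{Z''}) + \nabla^2 f_w - g^{Z''}/(2\tau)$ to vanish on the regular part, and since the soliton potentials $f_{z''}$ and $f_w$ contribute equal-but-opposite Hessians along the self-similar flow in the limit $t_1 \to +\infty$, one concludes $\Ric \equiv 0$ on $\RR''_{-1}$.

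Third, a static cone is noncollapsed: by Theorem \ref{staticcone}, $(\RR''_{(-\infty, t_a]}, g^{Z''})$ splits isometrically as $\RR''_{-1} \times (-\infty, t_a]$ with $\Ric = 0$ on $\RR''_{-1}$, and Lemma \ref{no-local-collapsing} together with Bishop--Gromov comparison gives $\liminf_{r \to \infty} r^{-n} |B_{g^{Z''}_{-1}}(p, r)|_{-1} > 0$ for any $p \in \RR''_{-1}$. Because $(Z', d_{Z'}, z', \t')$ and $(Z'', d_{Z''}, z'', \t'')$ have isometric negative-time structures by the first step, the noncollapsing of $(Z'', d_{Z''}, z'', \t'')$ at time $-1$ transfers to $(Z', d_{Z'}, z', \t')$, contradicting the collapsed hypothesis.

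\textbf{Main obstacle.} The critical step is the rigidity statement that a Ricci shrinker space with $\t$-image equal to all of $\R$ must be a static cone. The difficulty is that the soliton vector field $\partial_{\t''} - \nabla f_{z''}$ is only defined a priori on the negative-time part, so propagating Ricci-flatness from the arbitrarily-large positive-time region back to $\RR''_{-1}$ requires a careful argument that couples the monotonicity of the Nash entropy along the conjugate heat flow with the reproduction formula on the extended metric flow. A possible alternative route, which bypasses the full rigidity, is to establish directly a uniform volume lower bound $|B_{g^{Z'}_{-R_i^2}}(z_{R_i}, c R_i)|_{-R_i^2} \geq c R_i^n$ at each scale by applying the Gaussian upper bound of Theorem \ref{thm:upper1} to a regular $H_n$-center $z_{R_i}$ of a point in the expanding future region of the blown-up spaces, then transporting via $\boldsymbol{\psi}^{2\ln R_i}$ to accumulate volume growth at a single fixed base point $p \in \RR'_{-1}$.
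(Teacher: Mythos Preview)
Your main route—blowing down and invoking a static-cone rigidity for Ricci shrinker spaces with $\mathrm{image}(\t'')=\R$—has exactly the gap you identify: the rigidity is not established, and having full time-image does not obviously force the \emph{spine} (Definition \ref{def:stacone}) to extend to all times. Your entropy argument for far-future points $w$ gives only $\lim_{\tau\to\infty}\NN_w(\tau)=\boldsymbol{\mu}$, not constancy in $\tau$, so the exact shrinker equation based at $w$ is unavailable and the Hessian cancellation you describe does not go through without further work.

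The paper's proof is essentially your alternative route, but with a simpler input than $H_n$-centers and Gaussian bounds. If $Z'_{(0,\infty)}\neq\emptyset$ then $\RR'_0\neq\emptyset$, so fix $q\in\RR'_0$ with $r_{\Rm}(q)\geq\delta>0$. For $t_i\nearrow 0$, flow $q$ along $\partial_{\t'}$ to $q_i\in\RR'_{t_i}$; then $|\Rm|\leq\delta^{-2}$ on $B_{g^{Z'}_{t_i}}(q_i,\delta/2)$. The self-similar flow of $\tau(\partial_{\t'}-\nabla f_{z'})$ carries $\RR'_{t_i}$ to $\RR'_{-1}$ isometrically with respect to $|t_i|^{-1}g^{Z'}_{t_i}$ and $g^{Z'}_{-1}$, so the image $q'_i\in\RR'_{-1}$ satisfies $|\Rm|\leq|t_i|\delta^{-2}$ on $B_{g^{Z'}_{-1}}(q'_i,|t_i|^{-1/2}\delta/2)$. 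Lemma \ref{no-local-collapsing} then gives volume at least $c(n,Y)(|t_i|^{-1/2}\delta/2)^n$ on that ball; letting $t_i\to 0$ yields \eqref{defnnoncollapse}, contradicting collapse. No blow-down limit and no rigidity statement are needed—the curvature radius at a single regular time-$0$ point, transported by self-similarity, suffices.
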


\begin{proof}
Suppose $(Z',d_{Z'},z',\t')$ is collapsed and $Z'_{(0,\infty)}$ is nonempty. We fix a point $q \in \RR'_0$ with $r_{\Rm}(q) \ge \delta>0$. In particular, there exists a product domain $B_{g^{Z'}_0}(q, \delta) \times [-\delta^2, 0] \subset \RR'$ on which the curvature is bounded by $\delta^{-2}$.

We choose a sequence $t_i \nearrow 0$ and define $q_i \in \RR'_{t_i}$ as the flow of $\partial_{\t'}$ from $q$. By distance comparison, we obtain $B_{g^{Z'}_{t_i}}(q_i, \delta/2) \subset B_{g^{Z'}_0}(q, \delta)$ for sufficiently large $i$.

Next, we set $q'_i \in \RR'_{-1}$ to be the flow of $\tau(\partial_{\t'}-\na f_{z'})$ from $q_i$. It is clear from our construction that $d_{g^{Z'}_{-1}}(q'_i,p_{-1})$ is uniformly bounded, where $p_{-1}$ is a regular $H_n$-center of $z'$. From the Ricci shrinker equation \eqref{eq:riccishrinker}, the flow of $\tau(\partial_{\t'}-\na f_{z'})$ from $\RR'_{t_i}$ to $\RR'_{-1}$ is an isometry with respect to the metrics $|t_i|^{-1}g^{Z'}_{t_i}$ and $g^{Z'}_{-1}$. Thus, we conclude that
	\begin{align*}
|\Rm(g^{Z'})| \le |t_i| \delta^{-2}
	\end{align*}
on $B_{g^{Z'}_{-1}}(q'_i, |t_i|^{-1/2}\delta/2)$. Combined with Lemma \ref{no-local-collapsing}, we conclude that \eqref{defnnoncollapse} holds. However, this contradicts our assumption.

This completes the proof.
\end{proof}	

In the special case of tangent flows, we have

\begin{thm}\label{dichotomytang}
Let $(Z',d_{Z'},z',\t')$ be a tangent flow at a point $z \in Z_{(-(1-2\sigma)T, 0)}$, where $(Z, d_Z, p_{\infty},\t)$ is the noncollapsed Ricci flow limit space from \eqref{eq:conv001}. Then $\mathrm{image}(\t')=\R$ if $(Z',d_{Z'},z',\t')$ is noncollapsed, and $\mathrm{image}(\t')=\R_-$ if collapsed.
\end{thm}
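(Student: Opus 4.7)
The plan is to treat the two cases separately. The first step is to observe that any tangent flow $(Z',d_{Z'},z',\t')$ at an interior point $z \in Z_{(-(1-2\sigma)T,0)}$ is a Ricci shrinker space in the sense of Definition~\ref{def:rss}. Indeed, the rescaled approximating sequence $\{(M_{i_j}, g'_j)\}$ from \eqref{eq:rescaled} is defined on $[-T_j, T_j']$ with $T_j, T_j' \to \infty$, so after the appropriate time shift it lies in $\MM(n, Y, T_j + T_j')$; the second sub-case of Remark~\ref{rem:general} (where $a = \infty$ and $t_0 = 0$) then applies and gives $\t'(z') = 0$ together with $\R_- \subset \mathrm{image}(\t')$. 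Constancy of $\NN_{z'}(\tau)$ for every $\tau > 0$ follows from Lemma~\ref{lem:nashconv1} and the scaling identity $\NN_{z'}(\tau) = \lim_{j \to \infty} \NN_{z}(r_j^2\tau)$, combined with the monotonicity of $\NN_z$ in $\tau$. Given this, the collapsed case follows immediately by invoking Theorem~\ref{dichotomy}, which yields $\mathrm{image}(\t') = \R_-$.

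For the noncollapsed case, since $\R_- \subset \mathrm{image}(\t')$ has already been established, it suffices to show $Z'_L \ne \emptyset$ for every $L > 0$. My strategy is to lift a regular point of large curvature radius from $Z'$ back to the rescaled approximating flows and then apply two-sided pseudolocality iteratively to propagate curvature control forward past time $L$. Concretely, the noncollapsing bound $|\RR'_{-1} \cap B_{g^{Z'}_{-1}}(p, r)|_{-1} \ge c r^n$ combined with the Gaussian potential estimates of Lemma~\ref{lem:metrices1} locates regular points in $\RR'_{-1}$ at arbitrarily large distance from a fixed $H_n$-center $p$ of $z'$, with Nash entropy $\NN_{\cdot}(r_0^2)$ approaching $0$; by the $\ep$-regularity of Proposition~\ref{prop:limiepsilon}, one obtains a regular point $q \in \RR'_{-1}$ with $r_{\Rm}(q) \ge r_0$ for $r_0$ as large as needed. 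Smooth convergence then lifts $q$ to points $q^*_{i_j}$ in the rescaled flows satisfying $r_{\Rm}(q^*_{i_j}) \ge r_0/2$ and $d^{\prime,*}_j(z^*_{i_j}, q^*_{i_j}) \le d_{Z'}(z', q) + o(1)$. Iterating Theorem~\ref{thm:twoside} forward in time from $q^*_{i_j}$, with the volume hypothesis at each step supplied by propagated noncollapsing, extends the curvature bound past rescaled time $L$ in finitely many steps. Because the scalar curvature stays bounded along the resulting forward spacetime path, Proposition~\ref{HncenterScal} yields a uniform bound on $d^{\prime,*}_j(q^*_{i_j}, q^{*,+}_{i_j}(L))$, and triangle inequality gives $d^{\prime,*}_j(z^*_{i_j}, q^{*,+}_{i_j}(L)) \le C(n, Y, r_0, L)$. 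A subsequential pGH limit of $q^{*,+}_{i_j}(L)$ then lies in $Z'_L$, so that $L \in \mathrm{image}(\t')$ and the noncollapsed case is complete.

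The main obstacle will be producing regular points with arbitrarily large curvature radius in $\RR'_{-1}$ from the noncollapsing assumption alone. This amounts to showing, in quantitative form, that a noncollapsed Ricci shrinker space is asymptotically close to a Ricci-flat cone of Euclidean volume growth at large scales; the proof should combine the shrinker equation on $\RR'_{(-\infty,0)}$, the potential estimates of Lemma~\ref{lem:metrices1}, and the sharp heat kernel upper bound of Theorem~\ref{heatkernelupperbdgeneral} to force $\NN_y(r_0^2) \ge -\ep_n$ at points $y \in \RR'_{-1}$ with $d_{g^{Z'}_{-1}}(y, p)$ sufficiently large, where $\ep_n$ is the constant of Theorem~\ref{epregularityNash}. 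A secondary technicality is that the forward pseudolocality iteration must be shown to maintain its volume and curvature hypotheses along finitely many steps reaching the prescribed time $L$; this should follow from the two-sided nature of Theorem~\ref{thm:twoside} together with standard volume distortion estimates under a scalar curvature bound.
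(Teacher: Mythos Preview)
Your overall structure is sound---the collapsed case via Theorem~\ref{dichotomy} and the noncollapsed case by producing points of arbitrarily large curvature radius in $\RR'_{-1}$ and transporting them forward---but the mechanism you propose for the key step does not work as written, and the paper uses a different and much simpler argument.

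The gap is in the step where you claim to ``force $\NN_y(r_0^2) \ge -\ep_n$ at points $y \in \RR'_{-1}$ with $d_{g^{Z'}_{-1}}(y,p)$ sufficiently large.'' The potential estimates of Lemma~\ref{lem:metrices1} and the heat kernel upper bound of Theorem~\ref{heatkernelupperbdgeneral} control $f_{z'}$ and $K_{Z'}(z';\cdot)$, i.e., quantities based at $z'$; they do not directly bound the Nash entropy $\NN_y(\tau)$ based at a distant point $y$, which depends on the conjugate heat kernel $\nu_{y;\cdot}$. What you are implicitly asserting is an asymptotic-flatness statement for noncollapsed Ricci shrinker spaces, and nothing in the tools you cite delivers that. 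You flag this yourself as ``the main obstacle,'' but the plan you sketch does not resolve it.

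The paper instead invokes the integral curvature-radius estimate of Theorem~\ref{thm:spacemink}(iii): $\int_{B_{Z'_{-1}}(p,r) \cap \RR'_{-1}} r_{\Rm}^{-4+2\ep}\,\mathrm{d}V_{g^{Z'}_{-1}} \le C(n,Y,\ep)\,r^{n-2+2\ep}$. If $\delta_r := \sup_{B_{Z'_{-1}}(p,r)} r_{\Rm}$ were bounded as $r\to\infty$, then combining the noncollapsing lower bound $|B_{Z'_{-1}}(p,r)|_{-1} \ge c r^n$ with the pointwise inequality $r_{\Rm}^{-4+2\ep} \ge \delta_r^{-4+2\ep}$ would give $c\,r^n\,\delta_r^{-4+2\ep} \le C r^{n-2+2\ep}$, forcing $\delta_r \to \infty$---contradiction. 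This yields points $x_k \in \RR'_{-1}$ with $r_{\Rm}(x_k)\to\infty$ by a one-line pigeonhole, with no Nash-entropy analysis needed.

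Your forward-in-time step is also more elaborate than necessary. Once one has $q_{k,i_j}^* \in M_{i_j}$ at rescaled time $-1$ with $r_{\Rm}(q_{k,i_j}^*) \ge r_{\Rm}(x_k)/2$, the very definition of the curvature radius supplies a parabolic product neighborhood forward to time $-1 + r_{\Rm}(x_k)^2/4$ (capped at $T_j'$); since $T_j'\to\infty$ and $r_{\Rm}(x_k)$ is arbitrarily large, $\RR'_t \ne \emptyset$ for every $t$. No iteration of pseudolocality is required.
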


\begin{proof}
The collapsed case follows from Theorem \ref{dichotomy}, so we focus on the noncollapsed case.

Suppose $(Z',d_{Z'},z',\t')$ is noncollapsed. It follows from Theorem \ref{thm:spacemink} (iii) (see also \cite[Corollary 6.24]{li2024heat}) that
	\begin{align} \label{eq:dic1}
\int_{B_{Z'_{-1}}(p, r)} r^{-4+\ep}_{\Rm}  \, \mathrm{d}V_{g^{Z'}_{-1}} \le C(n, Y, \ep) r^{n-2+\ep}
	\end{align}
for every sufficiently small $\ep>0$ and any $r \ge 1$, where $p \in \RR'_{-1}$ is a regular $H_n$-center of $z'$. By \eqref{defnnoncollapse} and \eqref{eq:dic1}, there exists a sequence $r_i \to \infty$ such that
	\begin{align*}
C_0 r_i^n \delta_i^{-4+\ep} \le C(n, Y, \ep) r_i^{n-2+\ep},
	\end{align*}
for a constant $C_0>0$, where $\delta_i=\sup_{B_{Z'_{-1}}(p, r_i)} r_{\Rm}$. Thus, we conclude that there exists a sequence $x_i\in\RR'_{-1}$ such that $r_{\Rm}(x_i) \to +\infty$. 

Suppose that $(Z',d_{Z'},z',\t')$ is obtained as in \eqref{eq:rescaled}. We can find $q_{k,i_j}^* \in M_{i_j} \times \III_j$ such that $q_{k,i_j}^*$ converges to $x_k$ in the Gromov--Hausdorff sense, as $j\to\infty$. By Lemma \ref{lem:curcon}, we conclude that
\begin{align*}
	r_{\Rm}(q_{k,i_j}^*) \ge \frac{1}{2}r_{\Rm}(x_k),
\end{align*}
for sufficiently large $j$. Since $T_j'\to\infty$ as $j\to\infty$ and $r_{\Rm}(x_k)$ can be arbitrarily large, we conclude that $\RR'_t$ is nonempty for any $t \in \R$. In particular, $\mathrm{image}(\t')=\R$.
\end{proof}	

\begin{rem}
By the same proof, the conclusion of Theorem \ref{dichotomytang} also holds for Ricci shrinker spaces obtained from the ancient blow-down procedure described in Remark \ref{rem:general}.
\end{rem}

Let $(Z',d_{Z'},z',\t')$ be a Ricci shrinker space. We define the flow $\boldsymbol{\psi}^s$ on $\RR'_{(-\infty, 0)}$ generated by $X=\tau(\partial_{\t'}-\na f_{z'})$ with $\boldsymbol{\psi}^0=\mathrm{id}$. \cite[Theorem 15.69]{bamler2020structure} proves that $\boldsymbol{\psi}^s(x) \in \RR'_{(-\infty, 0)}$ if $x \in \RR'_{(-\infty, 0)}$.

We first prove:

\begin{lem}\label{selfsimilarityshrinker}
For any $x, y \in \RR'_{(-\infty, 0)}$ and $s \in \R$, we have
	\begin{align}\label{selfsimilarity1}
		d_{Z'}(\boldsymbol{\psi}^s(x), \boldsymbol{\psi}^s(y))=e^{-\frac{s}{2}}d_{Z'}(x,y).
	\end{align} 
\end{lem}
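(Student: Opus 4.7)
The plan is to exploit the metric soliton structure of the associated flow $\XX^{z'}$ (see Proposition \ref{limitnash} and Definition \ref{def:met_soliton}) together with the isometric identification of $\RR^{z'}_{(-\infty,0)}$ with $\RR'_{(-\infty,0)}$ furnished by Proposition \ref{prop:embed2}. First, using the representing map $\phi:\XX^{z'}\to X$ from Definition \ref{def:met_soliton}, I would define a soliton symmetry $\Psi^s:\XX^{z'}_t\to \XX^{z'}_{e^{-s}t}$ by $\Psi^s(x):=\phi_{e^{-s}t}^{-1}(\phi_t(x))$ for $x\in\XX^{z'}_t$, $t<0$. The isometry property $(\XX^{z'}_t,d_t)\cong(X,\sqrt{|t|}\,d)$ immediately yields the distance rescaling
\begin{align*}
d_{e^{-s}t}(\Psi^s(x),\Psi^s(y))=e^{-s/2}\,d_t(x,y),
\end{align*}
while the transformation rule $(\phi_{s'})_*\nu_{x;s'}=\nu'_{\phi_t(x);\log(s'/t)}$ gives the covariance of conjugate heat kernel measures, namely $(\Psi^s)_*\nu_{x;s'}=\nu_{\Psi^s(x);e^{-s}s'}$ for any $s'\le t<0$.

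Second, I would verify that, under the embedding $\iota_{z'}$, the map $\Psi^s$ corresponds to the flow $\boldsymbol{\psi}^s$ of $\tau(\partial_{\t'}-\na f_{z'})$ on $\RR'_{(-\infty,0)}$. Both are smooth one-parameter families of diffeomorphisms that preserve the Ricci flow spacetime structure and agree at $s=0$. The Ricci shrinker equation \eqref{eq:riccishrinker} forces the infinitesimal generator of the symmetry on the smooth locus to be exactly $\tau(\partial_{\t'}-\na f_{z'})$; this identification is essentially contained in the smooth part of Bamler's structure theory for metric solitons (see \cite[Section 15]{bamler2020structure}).

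Third, with the identification of the two flows in hand, I would verify \eqref{selfsimilarity1} by direct substitution into the definition of $d_{Z'}$ (Definition \ref{defnd*distance} with $T=+\infty$, for which the boundary term is vacuous). Setting $t=\max\{\t'(x),\t'(y)\}$ and changing variables $r\mapsto r'=e^{-s/2}r$, the time $t'-(r')^2=e^{-s}(t-r^2)$ rescales correspondingly, and the distance-scaling and covariance identities above transform the Wasserstein inequality at $(\boldsymbol{\psi}^s(x),\boldsymbol{\psi}^s(y),r')$ into the inequality at $(x,y,r)$. The lower-bound constraint $r'\ge\sqrt{|e^{-s}\t'(x)-e^{-s}\t'(y)|}$ rewrites as $r\ge\sqrt{|\t'(x)-\t'(y)|}$, so the two infima differ by precisely the factor $e^{-s/2}$, yielding \eqref{selfsimilarity1}.

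The main obstacle will be the second step: rigorously matching $\boldsymbol{\psi}^s$ on $\RR'$ with the soliton symmetry $\Psi^s$ on $\XX^{z'}$ via $\iota_{z'}$, which amounts to pinning down the infinitesimal generator of the self-similarity on the smooth part of the metric soliton. Once this identification is in place, the remaining computation is the routine parabolic rescaling outlined above.
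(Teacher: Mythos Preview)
Your proposal is correct and ultimately equivalent to the paper's argument, but the paper takes a more direct route that bypasses precisely the step you flag as the main obstacle. Instead of invoking the abstract metric soliton representation $\phi:\XX^{z'}\to X$ and then identifying $\Psi^s$ with $\boldsymbol{\psi}^s$, the paper works entirely on $\RR'$: it computes $\mathcal L_X g^{Z'}=-g^{Z'}$ for $X=\tau(\partial_{\t'}-\nabla f_{z'})$, records $\t'(\boldsymbol{\psi}^s(x))=e^{-s}\t'(x)$, and then cites \cite[Theorem 15.69]{bamler2020structure} for the heat kernel identity $K_{Z'}(\boldsymbol{\psi}^s(x);\boldsymbol{\psi}^s(y))=e^{ns/2}K_{Z'}(x;y)$. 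These three facts immediately give the covariance $(\boldsymbol{\psi}^s)_*\nu_{x;t}=\nu_{\boldsymbol{\psi}^s(x);e^{-s}t}$ and the $e^{-s/2}$-scaling of the $W_1$-distance, after which the change-of-variables argument in your third step (which the paper also uses, via Definition \ref{defnd*limit} and the embedding $\iota_{z'}$) finishes the proof. In short, the paper replaces your steps one and two by a single direct computation on the Ricci flow spacetime, so you never have to match the abstract soliton symmetry with the concrete flow---\cite[Theorem 15.69]{bamler2020structure} already packages that identification.
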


\begin{proof}
First, we have
	\begin{align}\label{self1}
\mathcal L_X g^{Z'}=\tau \lc \mathcal L_{\partial_{\t'}} g^{Z'}-2\na^2 f_{z'} \rc=-g^{Z'}.
	\end{align}
Moreover, it is clear that for any $x \in \RR'_{(-\infty, 0)}$,
	\begin{align}\label{self2}
\t'(\boldsymbol{\psi}^s(x))=e^{-s} \t'(x).
	\end{align}
	
On the other hand, it follows from \cite[Theorem 15.69]{bamler2020structure} that 
	\begin{align*}
		X_{x}K_{Z'}(x;y)+X_{y}K_{Z'}(x;y)=\frac{n}{2}K_{Z'}(x;y)
	\end{align*}
for any $x, y \in \RR'_{(-\infty, 0)}$. Therefore, we obtain
	\begin{align}\label{self4}
	K_{Z'}(\boldsymbol{\psi}^s(x) ; \boldsymbol{\psi}^s(y))=e^{\frac{n}{2} s}K_{Z'}(x ; y)
	\end{align}
for any $x, y \in \RR'_{(-\infty, 0)}$ and $s \in \R$. Combining \eqref{self1}, \eqref{self2} and \eqref{self4}, we have
	\begin{align*}
d_{W_1}^{\RR'_t}(\nu_{x;t}, \nu_{y;t})=e^{\frac{s}{2}}d_{W_1}^{\RR'_{t'}} \lc \nu_{\boldsymbol{\psi}^s(x);t'}, \nu_{\boldsymbol{\psi}^s(y);t'} \rc,
	\end{align*}
for any $t \le \min\{\t'(x), \t'(y)\}$, where $t'=e^{-s}t$. Since $\iota_{z'}(\RR^{z'})=\RR'_{(-\infty, 0)}$, it follows from Definition \ref{defn:dstar-limit} that
	\begin{align*}
d^*_{z'}(\iota_{z'}^{-1}(\boldsymbol{\psi}^s(x)), \iota_{z'}^{-1}(\boldsymbol{\psi}^s(y)))=e^{-\frac{s}{2}}d^*_{z'}(\iota_{z'}^{-1}(x),\iota_{z'}^{-1}(y)).
	\end{align*}
By Theorem \ref{thm:idenproof}, this implies \eqref{selfsimilarity1}.
\end{proof}

Next, we prove

\begin{lem}\label{selfsimilarityshrinker1}
For any $x, y \in \RR'_t$ with $t<0$ and any $s \in \R$, we have
	\begin{align}\label{selfsimilarity2}
		d^{Z'}_{e^{-s}t}(\boldsymbol{\psi}^s(x), \boldsymbol{\psi}^s(y))=e^{-\frac{s}{2}}d^{Z'}_{t}(x,y).
	\end{align}
\end{lem}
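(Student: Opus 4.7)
The plan is to reduce the identity for $d^{Z'}_t$ to the pushforward behaviour of the conjugate heat kernel measures under the flow $\boldsymbol{\psi}^s$, combined with Definition \ref{defntimeslicedist}. First, from the identities \eqref{self1}, \eqref{self2} and \eqref{self4} already derived in the proof of Lemma \ref{selfsimilarityshrinker}, the map $\boldsymbol{\psi}^s$ sends $\RR'_\sigma$ diffeomorphically onto $\RR'_{e^{-s}\sigma}$ for every $\sigma<0$, with $(\boldsymbol{\psi}^s)^* g^{Z'}_{e^{-s}\sigma}=e^{-s} g^{Z'}_\sigma$. In particular, $\boldsymbol{\psi}^s$ rescales spatial distances by the factor $e^{-s/2}$ and the volume form by $e^{-sn/2}$. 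Combining the latter with the heat kernel rescaling \eqref{self4} gives, via a direct change-of-variables calculation, the pushforward identity
\begin{align*}
	(\boldsymbol{\psi}^s)_*\nu_{x;\sigma}=\nu_{\boldsymbol{\psi}^s(x);e^{-s}\sigma}
\end{align*}
for every $x\in\RR'_{(-\infty,0)}$ and every $\sigma\in(-\infty,\t'(x)]$.

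Next, I would invoke the general scaling property of the $W_1$-Wasserstein distance: if $\phi$ is a bijection satisfying $d_Y(\phi(a),\phi(b))=c\,d_X(a,b)$ for a constant $c>0$, then $d_{W_1}^Y(\phi_*\mu,\phi_*\nu)=c\,d_{W_1}^X(\mu,\nu)$. Applying this with $\phi=\boldsymbol{\psi}^s$ and $c=e^{-s/2}$, together with the pushforward identity above, yields
\begin{align*}
	d_{W_1}^{\RR'_{e^{-s}\sigma}}\lc\nu_{\boldsymbol{\psi}^s(x);e^{-s}\sigma},\,\nu_{\boldsymbol{\psi}^s(y);e^{-s}\sigma}\rc=e^{-s/2}\,d_{W_1}^{\RR'_\sigma}\lc\nu_{x;\sigma},\,\nu_{y;\sigma}\rc
\end{align*}
for every $\sigma<t$. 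Finiteness of these distances is automatic here, since $\RR'_\sigma$ is connected by Proposition \ref{prop:004}, and both measures are supported on the same component.

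Finally, since $t<0$ the map $\sigma\mapsto e^{-s}\sigma$ is increasing, so $\sigma\nearrow t$ if and only if $e^{-s}\sigma\nearrow e^{-s}t$. Passing to this one-sided limit on both sides of the preceding equality and applying Definition \ref{defntimeslicedist}, together with the monotonicity in Lemma \ref{lem:limitmono} which guarantees that both limits exist, produces the desired identity \eqref{selfsimilarity2}. The proof is essentially a bookkeeping of parabolic scaling factors once the pushforward formula for $(\boldsymbol{\psi}^s)_*\nu_{x;\sigma}$ is in place, and I do not anticipate any serious obstacle; the only delicate point is the correct translation of the one-sided limit, which is where the hypothesis $t<0$ enters.
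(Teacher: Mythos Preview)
Your proof is correct, but the paper takes a much shorter route. Since $x,y\in\RR'_t$ and $t<0$, Corollary \ref{cor:agree2} applies directly: on a Ricci shrinker space, $d^{Z'}_t$ restricted to $\RR'_t$ already \emph{equals} the Riemannian distance $d_{g^{Z'}_t}$. So the paper simply observes from \eqref{self1} and \eqref{self2} that $d_{g^{Z'}_{e^{-s}t}}(\boldsymbol{\psi}^s(x),\boldsymbol{\psi}^s(y))=e^{-s/2}d_{g^{Z'}_t}(x,y)$, and the lemma is immediate.

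Your approach instead unwinds Definition \ref{defntimeslicedist}, establishes the pushforward $(\boldsymbol{\psi}^s)_*\nu_{x;\sigma}=\nu_{\boldsymbol{\psi}^s(x);e^{-s}\sigma}$, and passes to the limit in the $W_1$-distances. This is perfectly valid and has the advantage of not invoking Corollary \ref{cor:agree2} (i.e.\ continuity of the associated metric soliton), but here that corollary is already available and renders the whole Wasserstein limit argument unnecessary. Your line of reasoning is, however, exactly what is needed one step later in Proposition \ref{selfsimilarall} (ii), where one extends \eqref{selfsimilarity2} to all of $Z'_t$ and no Riemannian-distance identification is available.
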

\begin{proof}
It follows from \eqref{self1} and \eqref{self2} that
	\begin{align*}
	d_{g^{Z'}_{e^{-s}t}}(\boldsymbol{\psi}^s(x), \boldsymbol{\psi}^s(y))=e^{-\frac{s}{2}}d_{g^{Z'}_t}(x,y).
	\end{align*}
Consequently, the conclusion \eqref{selfsimilarity2} follows from Corollary \ref{cor:agree2}.
\end{proof}

Now, we can extend $\boldsymbol{\psi}^s$ to all $Z'_{(-\infty, 0]}$.

\begin{prop}\label{selfsimilarall}
$\boldsymbol{\psi}^s$ can be defined on $Z'_{(-\infty, 0]}$ so that the following statements hold.
	\begin{enumerate}[label=\textnormal{(\roman{*})}]
\item For any $x, y \in Z'_{(-\infty, 0]}$, $d_{Z'}(\boldsymbol{\psi}^s(x), \boldsymbol{\psi}^s(y))=e^{-\frac s 2} d_{Z'}(x, y)$ for any $s \in \R$.

\item For any $x, y \in Z'_t$ with $t \le 0$, $d^{Z'}_{e^{-s}t}(\boldsymbol{\psi}^s(x), \boldsymbol{\psi}^s(y))=e^{-\frac{s}{2}}d^{Z'}_{t}(x,y)$ for any $s \in \R$.

\item For any $x \in Z'_t$ with $t \le 0$ and $\tau>0$, $\NN_x(\tau)=\NN_{\boldsymbol{\psi}^s(x)}(e^{-s}\tau)$ for all $s \in \R$.
	\end{enumerate}	
\end{prop}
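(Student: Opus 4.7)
The overall strategy is to extend $\boldsymbol{\psi}^s$ from $\RR'_{(-\infty,0)}$ to $Z'_{(-\infty,0)}$ by a Cauchy-sequence argument, mirroring the construction of the flow $\boldsymbol{\varphi}^t$ in the proof of Proposition \ref{prop:staticcone1}. The first step is to fix $x \in Z'_{(-\infty,0)}$ and, using the density of $\RR'_{(-\infty,0)}$ in $Z'_{(-\infty,0)}$ (Corollary \ref{cor:comple1}), to choose a sequence $x_i \in \RR'_{\t'(x)}$ with $x_i \to x$ in $d_{Z'}$. Lemma \ref{selfsimilarityshrinker} then gives
\begin{align*}
d_{Z'}(\boldsymbol{\psi}^s(x_i), \boldsymbol{\psi}^s(x_j)) = e^{-s/2} d_{Z'}(x_i, x_j),
\end{align*}
so $\{\boldsymbol{\psi}^s(x_i)\}$ is Cauchy, and I define $\boldsymbol{\psi}^s(x)$ to be its limit. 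The same identity shows independence from the approximating sequence and yields (i) directly by passing to the limit.

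For (ii), I first note that by Proposition \ref{prop:chara}, the finiteness locus of $d^{Z'}_t$ passes to the limit appropriately; but more importantly, I will use Proposition \ref{prop:pastcont} together with Lemma \ref{selfsimilarityshrinker1}. Given $x, y \in Z'_t$ with $t<0$, I approximate by choosing $t_i \nearrow t$ and regular $H_n$-centers $x_i, y_i \in \RR'_{t_i}$ of $x$ and $y$ as in Lemma \ref{takelimitdt1}, so $d^{Z'}_{t_i}(x_i, y_i) \to d^{Z'}_t(x,y)$. Applying Lemma \ref{selfsimilarityshrinker1} gives the scaling identity for $(x_i, y_i)$, and passing to the limit using Lemma \ref{takelimitdt} and monotonicity (Lemma \ref{lem:limitmono1}) yields (ii). One needs to verify that $\boldsymbol{\psi}^s(x_i)$ is still an $H_n$-center (or near-center) of $\boldsymbol{\psi}^s(x)$ at the rescaled time $e^{-s}t_i$; this follows from the $e^{n s/2}$-scaling of the conjugate heat kernel \eqref{self4}, together with \eqref{self1}--\eqref{self2}, which together rescale the variance by the correct factor.

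For (iii), I use \eqref{self4} to track the potential function: writing $K_{Z'}(x;\cdot) = (4\pi \tau)^{-n/2} e^{-f_x}$, the identity $K_{Z'}(\boldsymbol{\psi}^s(x); \boldsymbol{\psi}^s(y)) = e^{n s/2} K_{Z'}(x;y)$ together with $\tau(\boldsymbol{\psi}^s(y)) = e^{-s} \tau(y)$ gives $f_{\boldsymbol{\psi}^s(x)}(\boldsymbol{\psi}^s(y)) = f_x(y)$, so that $(\boldsymbol{\psi}^s)_* \nu_{x;\t'(x)-\tau} = \nu_{\boldsymbol{\psi}^s(x);e^{-s}(\t'(x)-\tau)}$ on the regular part; integrating and using Definition \ref{def:chks} (plus Lemma \ref{lem:well-define}) extended to $Z'$ gives $\NN_x(\tau) = \NN_{\boldsymbol{\psi}^s(x)}(e^{-s}\tau)$. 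The main obstacle, I anticipate, will be the careful bookkeeping in (ii): verifying that the $H_n$-centers transform correctly under $\boldsymbol{\psi}^s$ at the singular limit and matching the extended metric $d^{Z'}_t$ on $Z'_t \setminus \RR'_t$ via the past-continuity statement of Proposition \ref{prop:pastcont}, since $d^{Z'}_t$ on singular points is defined as a limit of $d_{W_1}^{\RR'_s}$ and the flow $\boldsymbol{\psi}^s$ reshuffles both time and space simultaneously.
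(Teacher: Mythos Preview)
Your proposal is correct and follows essentially the same route as the paper: extend $\boldsymbol{\psi}^s$ by Cauchy sequences using Lemma \ref{selfsimilarityshrinker}, deduce (i) by passing to the limit, handle (ii) via the $H_n$-center approximation of Lemma \ref{takelimitdt1} combined with Lemma \ref{selfsimilarityshrinker1} and Lemma \ref{takelimitdt}, and obtain (iii) from the heat-kernel scaling \eqref{self4}.

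Two small remarks. First, in your extension step you write $x_i \in \RR'_{\t'(x)}$, but Corollary \ref{cor:comple1} only gives density of $\RR'_{(-\infty,0)}$ in $Z'_{(-\infty,0)}$ with respect to $d_{Z'}$, not time-slice by time-slice; the paper accordingly takes $w_i \in \RR'_{(-\infty,0)}$ without fixing the time, and you should do the same. Second, for (ii) you anticipate having to check that $\boldsymbol{\psi}^s(x_i)$ remains an $H_n$-center of $\boldsymbol{\psi}^s(x)$; the paper sidesteps this entirely. It uses Lemma \ref{takelimitdt} (lower semicontinuity) to get
\[
e^{-s/2}\,d^{Z'}_t(x,y)=\lim_i d^{Z'}_{e^{-s}t_i}(\boldsymbol{\psi}^s(x_i),\boldsymbol{\psi}^s(y_i)) \ge d^{Z'}_{e^{-s}t}(\boldsymbol{\psi}^s(x),\boldsymbol{\psi}^s(y)),
\]
and then obtains the reverse inequality simply because $\boldsymbol{\psi}^{-s}$ is the inverse of $\boldsymbol{\psi}^s$. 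This inverse-map trick is cleaner than tracking how centers and variances transform at singular points, and removes the ``main obstacle'' you flagged.
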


\begin{proof}
For any $w \in Z'_{(-\infty, 0]}$, we choose a sequence $w_i \in \RR'_{(-\infty, 0)}$ such that $w_i \to w$ in $d_{Z'}$. Then, for any $s \in \R$, $\{\boldsymbol{\psi}^s(w_i)\}$ is a Cauchy sequence by Lemma \ref{selfsimilarityshrinker} with respect to $d_{Z'}$. We define
	\begin{align*}
\boldsymbol{\psi}^s(w)=\lim_{i \to \infty} \boldsymbol{\psi}^s(w_i).
	\end{align*}
It is clear that the definition of $\boldsymbol{\psi}^s(w)$ is independent of the choice of $\{w_i\}$. 

(i): This follows from \eqref{selfsimilarity1} by taking the limit.

(ii): For any $x, y \in Z'_t$ with $t \le 0$, it follows from Lemma \ref{takelimitdt1} that there exists a sequence $t_i \nearrow t$ such that if $x_i, y_i \in \RR'_{t_i}$ are regular $H_n$-centers of $x$ and $y$, respectively, then
	\begin{align} \label{eq:con3a}
d^{Z'}_{t}(x, y)=\lim_{i \to \infty} d_{t_i}^{Z'}(x_i, y_i).
	\end{align}
Since $\boldsymbol{\psi}^s(x_i)$ and $\boldsymbol{\psi}^s(y_i)$ converge to $\boldsymbol{\psi}^s(x)$ and $\boldsymbol{\psi}^s(y)$, respectively, it follows from Lemma \ref{takelimitdt} that
	\begin{align*}
\liminf_{i \to \infty} d_{e^{-s} t_i}^{Z'}(\boldsymbol{\psi}^s(x_i), \boldsymbol{\psi}^s(y_i)) \ge d^{Z'}_{e^{-s}t}(\boldsymbol{\psi}^s(x), \boldsymbol{\psi}^s(y)).
	\end{align*}
Combining this with Lemma \ref{selfsimilarityshrinker1} and \eqref{eq:con3a}, we obtain
	\begin{align*}
e^{-\frac{s}{2}}d^{Z'}_{t}(x,y) \ge d^{Z'}_{e^{-s}t}(\boldsymbol{\psi}^s(x), \boldsymbol{\psi}^s(y)).
	\end{align*}
The reverse inequality also holds, since $\boldsymbol{\psi}^s$ is the inverse map of $\boldsymbol{\psi}^{-s}$.

(iii): This is immediate from \eqref{eq:kernellimit}, \eqref{self4}, the definition of $\boldsymbol{\psi}^s$ and \cite[Theorem 15.69]{bamler2020structure}.
\end{proof}

\section{Stratification and dimension of the singular set} \label{sec:singular}

First, we introduce the following definition for Ricci shrinker spaces.

\begin{defn}[$k$-splitting] \index{$k$-splitting} \label{def:ksplitting}
A Ricci shrinker space $(Z',d_{Z'},z',\t')$ is called \textbf{$k$-splitting} if $\RR'_{-1}$ splits off an $\R^k$-factor isometrically.
\end{defn}

We first prove:

\begin{prop}\label{splittingforpositivetime}
Let $(Z',d_{Z'},z',\t')$ be a Ricci shrinker space. If $(Z',d_{Z'},z',\t')$ is $k$-splitting, then $\RR'_{(-\infty, 0)}=\RR'' \times \R^k$ decomposes isometrically as a product of Ricci flow spacetimes, where $\RR''$ is another Ricci flow spacetime of dimension $n-k$ over $(-\infty, 0)$.
\end{prop}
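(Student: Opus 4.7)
My strategy is to apply Theorem~\ref{thm:splitting} to the tangent metric soliton $\XX^{z'}$ (Definition~\ref{def:tms}), whose regular part is identified with $\RR'_{(-\infty,0)}$ via the isometric embedding $\iota_{z'}$ (Proposition~\ref{prop:embed2}). Concretely, I plan to construct $k$ smooth functions $y_1,\dots,y_k$ on $\RR'_{(-\infty,0)}$ satisfying
\begin{align*}
\la\na y_a,\na y_b\ra=\delta_{ab},\quad \na^2 y_a=0,\quad \partial_\t y_a=0,
\end{align*}
then invoke Theorem~\ref{thm:splitting} on an exhausting sequence of compact subintervals $[-T_1',-T_2']\subset(-\infty,0)$ and glue the resulting compatible splittings to get $\XX^{z'}_{(-\infty,0)}=\XX''\times\R^k$ for a metric flow $\XX''$ of dimension $n-k$. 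Since $\iota_{z'}$ is an isomorphism of Ricci flow spacetimes, transporting the splitting through it produces $\RR'_{(-\infty,0)}=\RR''\times\R^k$ as Ricci flow spacetimes, where $\RR''$ is the regular part of $\XX''$.

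To construct the $y_a$, I start on $\RR'_{-1}$, where the hypothesis provides linear coordinate functions $\bar y_1,\dots,\bar y_k:\RR'_{-1}\to\R$ with $\la\na\bar y_a,\na\bar y_b\ra=\delta_{ab}$ and $\na^2\bar y_a=0$. Evaluating the shrinker equation $\Ric+\na^2 f_{z'}=g^{Z'}/2$ at $\tau=1$ in the parallel directions $\na\bar y_a$ (along which $\Ric$ vanishes) gives $\partial_{y_a}\partial_{y_b}f_{z'}=\delta_{ab}/2$ and $\partial_{y_a}\partial_{x_N}f_{z'}=0$; after translating $\bar y_a\mapsto\bar y_a-2c_a$ by suitable constants, I may arrange $\partial_{y_a}f_{z'}=\bar y_a/2$ on $\RR'_{-1}$. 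To extend $\bar y_a$ to all of $\RR'_{(-\infty,0)}$, I use the self-similar flow $\boldsymbol{\psi}^s$ generated by $X:=\tau(\partial_\t-\na f_{z'})$ (Proposition~\ref{selfsimilarall}) and define
\begin{align*}
y_a(x):=|t|^{1/2}\bar y_a\bigl(\boldsymbol{\psi}^{\log|t|}(x)\bigr),\quad x\in\RR'_t,\;t<0.
\end{align*}
A direct computation using the shrinker equation and the Ricci flow equation gives $\mathcal L_X g^{Z'}=-g^{Z'}$ on spatial directions, so $\boldsymbol{\psi}^{\log|t|}:\RR'_t\to\RR'_{-1}$ is a homothety with constant conformal factor $|t|^{-1}$; the prefactor $|t|^{1/2}$ is chosen so that $\la\na y_a,\na y_b\ra_{g^{Z'}_t}=\delta_{ab}$ on $\RR'_t$, and $\na^2 y_a=0$ is preserved because Hessian vanishing is invariant under constant conformal rescaling.

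The main obstacle is verifying $\partial_\t y_a=0$. Differentiating the cocycle identity $y_a(\boldsymbol{\psi}^s(x))=e^{-s/2}y_a(x)$ (immediate from the defining formula and the group law of $\boldsymbol{\psi}$) gives $Xy_a=-y_a/2$; combined with $\partial_\t=X/\tau+\na f_{z'}$ this yields
\begin{align*}
\partial_\t y_a=-\frac{y_a}{2\tau}+\la\na f_{z'},\na y_a\ra,
\end{align*}
so it suffices to show that $F_a:=\la\na f_{z'},\na y_a\ra-y_a/(2\tau)$ vanishes identically. By construction $F_a\equiv 0$ on $\RR'_{-1}$. The key computation is $XF_a=F_a/2$: here one uses $Xf_{z'}=0$, which is equivalent to the shrinker identity $\partial_\t f_{z'}=|\na f_{z'}|^2$ (obtained by expanding $\square^*((4\pi\tau)^{-n/2}e^{-f_{z'}})=0$ and substituting the traced shrinker equation $\Delta f_{z'}+\scal=n/(2\tau)$); the relation $\mathcal L_X\na y_a=\na y_a/2$ (which follows from $\mathcal L_X g=-g$ together with $Xy_a=-y_a/2$); and $X\tau=-\tau$. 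Integrating the linear ODE $XF_a=F_a/2$ along $\boldsymbol{\psi}$-flow lines starting from $\RR'_{-1}$ gives $F_a(\boldsymbol{\psi}^s(x_0))=e^{s/2}F_a(x_0)=0$ for every $x_0\in\RR'_{-1}$, and since every point of $\RR'_{(-\infty,0)}$ lies on such a flow line (Proposition~\ref{selfsimilarall}), we conclude $F_a\equiv 0$. With all three conditions verified, Theorem~\ref{thm:splitting} (applied on compact subintervals and glued) completes the proof.
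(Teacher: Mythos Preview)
Your proof is correct and follows essentially the same strategy as the paper: extend the linear coordinates from $\RR'_{-1}$ to $\RR'_{(-\infty,0)}$ via the self-similar flow $\boldsymbol{\psi}^s$ using the cocycle rule $y_a(\boldsymbol{\psi}^s(x))=e^{-s/2}y_a(x)$, and verify the three conditions $\la\na y_a,\na y_b\ra=\delta_{ab}$, $\na^2 y_a=0$, $\partial_{\t'} y_a=0$. In fact, the paper simply asserts that ``a direct computation confirms'' these identities, whereas you carry out the nontrivial part (the vanishing of $\partial_{\t'} y_a$) explicitly and correctly, via the ODE $XF_a=F_a/2$ along $\boldsymbol{\psi}$-orbits.

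The only genuine difference is in the final step. The paper observes directly that, by hypothesis, the flow of $\na y_a$ preserves $\RR'_{-1}$, and then uses self-similarity to conclude it preserves each $\RR'_t$ for $t<0$; completeness of this flow immediately yields the product decomposition $\RR'_{(-\infty,0)}=\RR''\times\R^k$ as Ricci flow spacetimes. You instead appeal to Theorem~\ref{thm:splitting} applied to $\XX^{z'}$ on an exhausting family of compact subintervals. This is valid (the splittings are compatible since they are all induced by the same global functions $y_a$), but heavier: it imports Bamler's metric-flow splitting machinery, whereas the paper's argument is elementary once the three conditions are in hand. On the other hand, your route yields the stronger conclusion that the full metric flow $\XX^{z'}_{(-\infty,0)}$ splits, not just its regular part---a fact the paper does not state here but which is implicit in later developments.
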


\begin{proof}
By assumption, there exist $k$ smooth maps $\{y_i\}_{1\leq i\leq k}$ on $\RR'_{-1}$ satisfying
	\begin{align*}
	\la \na y_i, \na y_j \ra=\delta_{ij} \quad \text{and} \quad \na^2 y_i=0 \quad \mathrm{on}\quad \RR'_{-1}.
	\end{align*}
Moreover, we have	
	\begin{align*}
\int_{\RR'_{-1}} y_i \, \mathrm{d}\nu_{z',-1}=0.
	\end{align*}	
	
By the self-similarity of $(\RR'_{(-\infty, 0)}, g^{Z'})$, these functions extend smoothly to $\RR'_{(-\infty, 0)}$ such that
	\begin{align}\label{eq:analy1}
	\la \na y_i, \na y_j \ra=\delta_{ij},\quad \na^2 y_i=0,\quad \partial_{\t'} y_i=0, \quad \mathrm{on}\quad \RR'_{(-\infty, 0)}.
	\end{align}
Indeed, let $\boldsymbol{\psi}^s$ be the map in Proposition \ref{selfsimilarall}. Define
	\begin{align}\label{eq:analy2}
y_i(\boldsymbol{\psi}^s(x))=e^{-\frac s 2}y_i(x), \quad \text{for} \quad x\in  \RR'_{-1}.
	\end{align}
A direct computation confirms that \eqref{eq:analy1} holds.

By assumption, the flow of $\nabla y_i$ at $t=-1$ preserves the regular part $\RR_{-1}'$. Hence, by \eqref{eq:analy2} and Proposition \ref{selfsimilarall}, the flow of $\nabla y_i$ preserves the regular part $\RR'_t$ for all $t<0$. Let $\boldsymbol{\phi}^s$ denote the flow generated by ${\nabla y_1, \ldots, \nabla y_k}$ for $s \in \R^k$ on $\RR'_{(-\infty,0)}$. Then, for every $s \in \R^k$ and $x \in \RR'_{(-\infty,0)}$, we have $\boldsymbol{\phi}^s(x) \in \RR'_{(-\infty,0)}$.

Consequently, we conclude that $\RR'_{(-\infty, 0)}=\RR'' \times \R^k$, where $\RR''$ is another Ricci flow spacetime of dimension $n-k$ over $(-\infty, 0)$, and the splitting is isometric.

\end{proof}

\begin{rem} \label{rem:split}
To reach the same conclusion as in Proposition \ref{splittingforpositivetime}, it is enough to assume the existence of smooth functions $\{y_i\}$ $(1 \le i \le k)$ on $\RR'_{-1}$ satisfying
\begin{align}\label{eq:analy3}
\langle \nabla y_i, \nabla y_j \rangle = \delta_{ij}, \quad \nabla^2 y_i = 0,
\end{align}
rather than assuming an $\R^k$-splitting a priori. Indeed, Lemma \ref{lem:RCD} shows that
\begin{align*}
(\iota_{z'}(\XX^{z'}_{-1}), d^{Z'}_{-1}, \nu_{z';-1})
\end{align*}
is an $\mathrm{RCD}(1/2, \infty)$-space. The existence of the functions $y_i$ in \eqref{eq:analy3} ensures that the eigenspace of the weighted Laplacian $\Delta_{f_{z'}(-1)}$ corresponding to the eigenvalue $1/2$ has dimension at least $k$. Consequently, by \cite{GKKO}, the vector fields $\nabla y_i$ generate splitting directions that preserve $\RR'_{-1}$. Alternatively, the same conclusion follows from Theorem \ref{thm:splitting}.
\end{rem}

For a $k$-splitting Ricci shrinker space $(Z',d_{Z'},z',\t')$, we define the flow $\boldsymbol{\phi}^s$ for $s \in \R^k$ on $\RR'_{(-\infty, 0)}$ induced by the splitting in Proposition \ref{splittingforpositivetime} with $\boldsymbol{\phi}^0=\mathrm{id}$. More precisely, for any $x \in \RR'_{(-\infty, 0)}$, we write $x=(x',s') \in \RR'' \times \R^k$; then $\boldsymbol{\phi}^s(x)$ is defined as $(x', s'+s)$.

Next, we prove

\begin{prop}\label{prop:splitk}
With the above assumptions, $\boldsymbol{\phi}^s$ can be defined on $Z'_{(-\infty, 0]}$ so that the following statements hold.
	\begin{enumerate}[label=\textnormal{(\roman{*})}]
\item For any $x, y \in Z'_{(-\infty, 0]}$, $d_{Z'}(x, y)=d_{Z'}(\boldsymbol{\phi}^s(x), \boldsymbol{\phi}^s(y))$ for all $s \in \R^k$.

\item For any $x, y \in Z'_t$ with $t \le 0$, $d^{Z'}_t(x, y)=d^{Z'}_t(\boldsymbol{\phi}^s(x), \boldsymbol{\phi}^s(y))$ for all $s \in \R^k$.

\item For any $x \in Z'_{(-\infty, 0]}$ and $\tau>0$, $\NN_x(\tau)=\NN_{\boldsymbol{\phi}^s(x)}(\tau)$ for all $s \in \R^k$.
	\end{enumerate}	
\end{prop}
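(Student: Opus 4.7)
The approach parallels the proof of Proposition \ref{selfsimilarall}, so I will outline where the arguments are identical and where the splitting situation requires a slightly different input.

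First I would establish the analogues of Lemmas \ref{selfsimilarityshrinker} and \ref{selfsimilarityshrinker1} for $\boldsymbol{\phi}^s$ on the regular part. By Proposition \ref{splittingforpositivetime}, the Ricci flow spacetime splits as $\RR'_{(-\infty,0)}=\RR''\times\R^k$, so $\boldsymbol{\phi}^s$ acts as translation on the $\R^k$-factor and thus is an automorphism of the spacetime structure: it commutes with $\partial_{\t'}$ and preserves the spacetime metric $g^{Z'}$. In particular, by the factorization $K_{Z'}((x',a);(y',b))=K_{\RR''}(x';y')\cdot K_{\R^k}(a;b)$ and translation-invariance of the Euclidean heat kernel, we obtain
\begin{align*}
K_{Z'}(\boldsymbol{\phi}^s(x);\boldsymbol{\phi}^s(y))=K_{Z'}(x;y),\qquad x,y\in\RR'_{(-\infty,0)},\ s\in\R^k.
\end{align*}
Consequently $d_{W_1}^{\RR'_t}(\nu_{\boldsymbol{\phi}^s(x);t},\nu_{\boldsymbol{\phi}^s(y);t})=d_{W_1}^{\RR'_t}(\nu_{x;t},\nu_{y;t})$ for all admissible $t$. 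Since Proposition \ref{prop:004} gives $\iota_{z'}(\RR^{z'}_t)=\RR'_t$ for all $t<0$, Definition \ref{defnd*limit} and Theorem \ref{thm:idenproof} immediately yield $d_{Z'}(\boldsymbol{\phi}^s(x),\boldsymbol{\phi}^s(y))=d_{Z'}(x,y)$. The isometry of $d^{Z'}_t$ on $\RR'_t$ is immediate from Corollary \ref{cor:agree2} together with the fact that $\boldsymbol{\phi}^s$ preserves $g^{Z'}_t$.

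Next I would extend $\boldsymbol{\phi}^s$ to $Z'_{(-\infty,0]}$. For any $w\in Z'_{(-\infty,0)}$, by Corollary \ref{cor:comple1} (and its slight reformulation in the Ricci shrinker setting) pick $w_i\in\RR'_{(-\infty,0)}$ with $w_i\to w$ in $d_{Z'}$. By the preceding step $\{\boldsymbol{\phi}^s(w_i)\}$ is Cauchy in $d_{Z'}$ with the same rate, so $\boldsymbol{\phi}^s(w):=\lim_i\boldsymbol{\phi}^s(w_i)$ exists and is independent of the approximating sequence. At $t=0$ we simply set $\boldsymbol{\phi}^s(z')=z'$, which is forced by continuity since $Z'_0=\{z'\}$.

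Then (i) follows by passing to the limit in the identity established in Step 1. For (ii), I would follow verbatim the strategy of Proposition \ref{selfsimilarall}(ii): for $x,y\in Z'_t$ with $t\le 0$, invoke Lemma \ref{takelimitdt1} to find $t_i\nearrow t$ and regular $H_n$-centers $x_i,y_i\in\RR'_{t_i}$ with $d^{Z'}_{t_i}(x_i,y_i)\to d^{Z'}_t(x,y)$; then $\boldsymbol{\phi}^s(x_i)\to\boldsymbol{\phi}^s(x)$ and $\boldsymbol{\phi}^s(y_i)\to\boldsymbol{\phi}^s(y)$ in $d_{Z'}$, and Lemma \ref{takelimitdt} provides the lower-semicontinuity inequality, which combined with the regular-slice isometry gives $d^{Z'}_t(\boldsymbol{\phi}^s(x),\boldsymbol{\phi}^s(y))\le d^{Z'}_t(x,y)$; the reverse inequality follows by applying the same argument to $\boldsymbol{\phi}^{-s}$. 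Finally, (iii) follows from Definition \ref{def:chks}, the convergence formula \eqref{eq:kernellimit}, and the heat-kernel invariance of Step 1, giving $\nu_{\boldsymbol{\phi}^s(x);\t'(x)-\tau}=(\boldsymbol{\phi}^s)_*\nu_{x;\t'(x)-\tau}$ and hence equality of Nash entropies.

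The main subtle point is verifying that $\boldsymbol{\phi}^s$ descends to an isometry on $d_{Z'}$ and $d^{Z'}_t$, rather than only on the intrinsic Riemannian distance on $\RR'$. The key observation making this routine here is that Proposition \ref{splittingforpositivetime} gives a splitting as Ricci flow \emph{spacetimes}, so the heat kernel factorizes exactly, and from there the arguments are structurally identical to those already carried out for the scaling flow $\boldsymbol{\psi}^s$ in Proposition \ref{selfsimilarall}.
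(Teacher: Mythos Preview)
Your overall strategy is correct and closely parallels the paper's proof, but there is one genuine error: the claim that $Z'_0=\{z'\}$ is false in general. You are conflating the time-$0$ slice of the associated metric flow $\XX^{z'}$ (which indeed consists of the single point $z'$) with the time-$0$ slice of the full Ricci flow limit space $Z'$. The latter can contain many points; for instance, when the Ricci shrinker space is noncollapsed, Theorem \ref{dichotomytang} gives $\mathrm{image}(\t')=\R$, so $Z'_0$ is a nontrivial slice. As written, your extension of $\boldsymbol{\phi}^s$ therefore covers only $Z'_{(-\infty,0)}\cup\{z'\}$, not all of $Z'_{(-\infty,0]}$, and the subsequent statements (i)--(iii) are left unproved for general points of $Z'_0$.

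The fix is straightforward: apply the approximation argument uniformly to every $w\in Z'_{(-\infty,0]}$. The proof of Corollary \ref{cor:comple1} produces approximants as $H_n$-centers at times strictly below $\t(w)$, so $\RR'_{(-\infty,0)}$ is dense in $Z'_{(-\infty,0]}$ and your Cauchy-limit definition works for $w\in Z'_0$ exactly as for $w\in Z'_{(-\infty,0)}$. This is precisely what the paper does. With this correction your proof is complete; the only other difference is that the paper derives the $d_{Z'}$-isometry on $\RR'$ via Proposition \ref{prop:chara} rather than via Definition \ref{defnd*limit} and Theorem \ref{thm:idenproof} as you do, but both routes are valid.
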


\begin{proof}
It is clear that for any $x \in \RR'_t$ with $t<0$, $\boldsymbol{\phi}^s(x) \in \RR'_t$. Moreover, 
	\begin{align} \label{eq:kcon00a}
K_{Z'}(x;y)=K_{Z'}(\boldsymbol{\phi}^s(x) ; \boldsymbol{\phi}^s(y))
	\end{align}
for any $x, y \in \RR'_{(-\infty, 0)}$ and $s \in \R^k$. Thus, by Definition \ref{defntimeslicedist}, we conclude that
	\begin{align} \label{eq:kcon1}
d^{Z'}_t(x, y)=d^{Z'}_{t}(\boldsymbol{\phi}^s(x), \boldsymbol{\phi}^s(y))
	\end{align}
for any $x, y \in \RR'_t$ with $t<0$ and $s \in \R^k$. Using the variational definition of the spacetime distance together with the invariance of the conjugate heat kernels, this implies
	\begin{align} \label{eq:kcon2}
d_{Z'}(x, y)=d_{Z'}(\boldsymbol{\phi}^s(x), \boldsymbol{\phi}^s(y))
	\end{align}
for any $x, y \in \RR'_{(-\infty, 0)}$ and $s \in \R^k$. 

For any $w \in Z'_{(-\infty, 0]}$, we choose a sequence $w_i \in \RR'_{(-\infty, 0)}$ such that $w_i \to w$ in $d_{Z'}$. Then, for any $s \in \R^k$, $\{\boldsymbol{\phi}^s(w_i)\}$ is a Cauchy sequence by \eqref{eq:kcon2}. We define
	\begin{align*}
\boldsymbol{\phi}^s(w)=\lim_{i \to \infty} \boldsymbol{\phi}^s(w_i).
	\end{align*}
It is clear that the definition of $\boldsymbol{\phi}^s(w)$ is independent of the choice of $\{w_i\}$. Moreover, it follows from \eqref{eq:kernellimit} and \eqref{eq:kcon00a} that
	\begin{align} \label{eq:kcon00b}
K_{Z'}(x;y)=K_{Z'}(\boldsymbol{\phi}^s(x) ; \boldsymbol{\phi}^s(y))
	\end{align}
for any $x \in Z'_{(-\infty, 0]}$, $y \in \RR'_{(-\infty, 0)}$ and $s \in \R^k$.

(i): This follows from \eqref{eq:kcon2} by taking the limit.

(ii): For any $x, y \in Z'_t$ with $t \le 0$, it follows from Lemma \ref{takelimitdt1} that there exists a sequence $t_i \nearrow t$ such that if $x_i, y_i \in \RR'_{t_i}$ are regular $H_n$-centers of $x$ and $y$, respectively, then
	\begin{align} \label{eq:kcon3}
d^{Z'}_{t}(x, y)=\lim_{i \to \infty} d_{t_i}^{Z'}(x_i, y_i).
	\end{align}
Since $\boldsymbol{\phi}^s(x_i)$ and $\boldsymbol{\phi}^s(y_i)$ converge to $\boldsymbol{\phi}^s(x)$ and $\boldsymbol{\phi}^s(y)$, respectively, it follows from Lemma \ref{takelimitdt} that
	\begin{align*}
\liminf_{i \to \infty} d_{t_i}^{Z'}(\boldsymbol{\phi}^s(x_i), \boldsymbol{\phi}^s(y_i)) \ge d^{Z'}_{t}(\boldsymbol{\phi}^s(x), \boldsymbol{\phi}^s(y)).
	\end{align*}
Combining this with \eqref{eq:kcon1} and \eqref{eq:kcon3}, we obtain
	\begin{align*}
d^{Z'}_{t}(x, y) \ge d^{Z'}_{t}(\boldsymbol{\phi}^s(x), \boldsymbol{\phi}^s(y)).
	\end{align*}
The reverse inequality also holds since $\boldsymbol{\phi}^s$ is the inverse map of $\boldsymbol{\phi}^{-s}$.

(iii): This is immediate from \eqref{eq:kcon00b} and Proposition \ref{splittingforpositivetime}.
\end{proof}

Next, we prove the following bi-Lipschitz estimate.

\begin{lem}
For any $x \in Z'_{(-\infty, 0]}$ and $s \in \R^k$,
	\begin{align*}
0 \le c(n)|s| \le d_{Z'}(x, \boldsymbol{\phi}^s(x)) \le  |s|.
	\end{align*}
\end{lem}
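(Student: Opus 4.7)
The approach is to establish the two inequalities separately. The upper bound will follow from the metric product structure on the regular part combined with Lemma \ref{dtlem1}, while the lower bound will come from Proposition \ref{prop:chara} together with an explicit computation of the $W_1$-distance between the conjugate heat kernel measures at $x$ and $\boldsymbol{\phi}^s(x)$.

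For the upper bound, I first consider $x \in \RR'_{(-\infty, 0)}$ and set $t_0 = \t'(x)$. By Proposition \ref{splittingforpositivetime}, the splitting $\RR'_{(-\infty, 0)} = \RR'' \times \R^k$ is isometric, so the flow $\boldsymbol{\phi}^s$ acts as translation by $s$ on the $\R^k$-factor. This gives $d_{g^{Z'}_{t_0}}(x, \boldsymbol{\phi}^s(x)) = |s|$, hence by Corollary \ref{cor:agree2} also $d^{Z'}_{t_0}(x, \boldsymbol{\phi}^s(x)) = |s|$, and then Lemma \ref{dtlem1} yields $d_{Z'}(x, \boldsymbol{\phi}^s(x)) \le \ep_0^{-1}|s|$. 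For general $x \in Z'_{(-\infty, 0]}$, I approximate by a sequence $w_i \in \RR'_{(-\infty, 0)}$ with $w_i \to x$ in $d_{Z'}$, exactly as in the construction of $\boldsymbol{\phi}^s$ in the proof of Proposition \ref{prop:splitk}; by definition $\boldsymbol{\phi}^s(w_i) \to \boldsymbol{\phi}^s(x)$, and passing to the limit preserves the inequality.

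For the lower bound, set $r = d_{Z'}(x, \boldsymbol{\phi}^s(x))$, noting $\t'(x) = \t'(\boldsymbol{\phi}^s(x)) =: t_0 \le 0$. Proposition \ref{prop:chara} yields
\begin{align*}
\lim_{t \nearrow t_0 - r^2} d_{W_1}^{Z'_t}(\nu_{x;t}, \nu_{\boldsymbol{\phi}^s(x);t}) \le \ep_0 r,
\end{align*}
so it suffices to show $d_{W_1}^{Z'_t}(\nu_{x;t}, \nu_{\boldsymbol{\phi}^s(x);t}) = |s|$ for every $t < 0$ with $t < t_0 - r^2$. For $x = w \in \RR'_{(-\infty, 0)}$ this is a direct consequence of the product structure: the heat kernel on $\RR'' \times \R^k$ splits as a product, so $\nu_{w;t}$ factors as a measure on $\RR''_t$ tensored with a Gaussian on $\R^k$ centered at the $\R^k$-coordinate of $w$, making $\nu_{w;t}$ and $\nu_{\boldsymbol{\phi}^s(w);t}$ translates of one another by $s$ in the $\R^k$ direction. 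Since $\RR'_t$ is connected by Proposition \ref{prop:004} and $d^{Z'}_t = d_{g^{Z'}_t}$ on $\RR'_t$ by Corollary \ref{cor:agree2}, Kantorovich--Rubinstein duality with the $1$-Lipschitz function $(x', z) \mapsto z \cdot s/|s|$ yields the precise value $|s|$. The general case $x \in Z'_{(-\infty, 0]}$ follows by approximating $x$ by $w_i \in \RR'_{(-\infty, 0)}$ with $w_i \to x$ in $d_{Z'}$ and applying Corollary \ref{cor:limit1} to pass the equality for the $w_i$ to the limit at each time $t < t_0 - r^2$. Combining this with the limit from Proposition \ref{prop:chara} gives $|s| \le \ep_0 r$, so $d_{Z'}(x, \boldsymbol{\phi}^s(x)) \ge \ep_0^{-1}|s|$, which implies the claimed bound with $c(n) = \ep_0^{-1}$.

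The main technical point is the identity $d_{W_1}^{Z'_t}(\nu_{x;t}, \nu_{\boldsymbol{\phi}^s(x);t}) = |s|$: one must ensure that both measures are supported on a common connected component of $\RR'_t$ on which $d^{Z'}_t$ coincides with the Riemannian distance (automatic since $\RR'_t$ is connected for $t<0$), and that the product structure of the conjugate heat kernel survives the passage to singular base points via the approximation argument. Everything else is a routine combination of Lemma \ref{dtlem1}, Proposition \ref{prop:chara}, and the construction of $\boldsymbol{\phi}^s$ from Proposition \ref{prop:splitk}.
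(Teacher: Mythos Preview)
Your proof is correct and in fact yields the sharper equality $d_{Z'}(x,\boldsymbol{\phi}^s(x))=\ep_0^{-1}|s|$, not just the two-sided bound. The paper takes a coarser route for the lower bound: instead of computing $d_{W_1}^{Z'_t}(\nu_{x;t},\nu_{\boldsymbol{\phi}^s(x);t})$ exactly, it picks a regular $H_n$-center $w\in\RR'_t$ of $x$, observes (via the translation invariance \eqref{eq:kcon00b}) that $\boldsymbol{\phi}^s(w)$ is an $H_n$-center of $\boldsymbol{\phi}^s(x)$, and then uses the triangle inequality to get
\[
d_{W_1}^{Z'_t}(\nu_{x;t},\nu_{\boldsymbol{\phi}^s(x);t})\ge d^{Z'}_t(w,\boldsymbol{\phi}^s(w))-2\sqrt{H_n|\t'(x)-t|}=|s|-2\sqrt{H_n|\t'(x)-t|}.
\]
Letting $t\nearrow t_0-r^2$ gives $|s|\le(\ep_0+2\sqrt{H_n})r$, hence $c(n)=(1+2\sqrt{H_n})^{-1}$, a genuinely dimensional constant. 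Your Kantorovich--Rubinstein argument with the linear coordinate in the $\R^k$-direction gives the exact value $|s|$ for the $W_1$-distance at every $t<t_0$, which combined with Proposition~\ref{prop:chara} forces $r=\ep_0^{-1}|s|$.

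Two minor points to make explicit. First, the test function $(x',z)\mapsto z\cdot s/|s|$ is unbounded, while Definition~\ref{defnvarainceextended} takes the supremum over bounded $1$-Lipschitz functions; you should truncate and use the finite variance of $\nu_{x;t}$ (Proposition~\ref{lem:004ac}) to pass to the limit. Second, your claimed constant ``$c(n)=\ep_0^{-1}$'' depends on $Y$ and $\sigma$ through $\ep_0$, not only on $n$; but since $\ep_0\le 1$ this is stronger than what the lemma asserts, so the stated inequality with a purely dimensional $c(n)$ follows a fortiori.
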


\begin{proof}
The second inequality follows from Lemma \ref{dtlem1} and Proposition \ref{prop:dismatch}, so we prove only the first inequality.

Set $r=d_{Z'}(x,\boldsymbol{\phi}^s(x))$. Then it follows from Proposition \ref{prop:chara} that
	\begin{align*}
\lim_{t \nearrow \t'(x)-r^2} d^{Z'_t}_{W_1}(\nu_{x;t}, \nu_{\boldsymbol{\phi}^s(x);t}) \le r.
	\end{align*}
	For any $t<\t'(x)-r^2$, if we set $w \in \RR'_t$ to be an $H_n$-center of $x$, it is clear that $\boldsymbol{\phi}^s(w)$ is an $H_n$-center of $\boldsymbol{\phi}^s(x)$. Thus, we obtain
	\begin{align*}
d^{Z'_t}_{W_1}(\nu_{x;t}, \nu_{\boldsymbol{\phi}^s(x);t}) \ge d^{Z'}_t(w, \boldsymbol{\phi}^s(w))-2\sqrt{H_n |\t'(x)-t|}=|s|-2\sqrt{H_n |\t'(x)-t|}.
	\end{align*}
Consequently, we obtain
		\begin{align*}
|s| \le C(n) r+r \le C(n) r,
	\end{align*}
which completes the proof.
\end{proof}

For later applications, we also need the following characterization of the potential function.

\begin{prop}\label{prop:splitkpoten}
With the above assumptions, for any $s \in \R^k$, we have
	\begin{align*}
f_{z'}(x)-f_{\boldsymbol{\phi}^s(z')}(x)=\frac{1}{2\tau} \la \vec{x}, s \ra+\frac{c(s)}{\tau} 
	\end{align*}
for any $x \in \RR'_{(-\infty, 0)}$, where $\tau=-\t'$, $\vec{x}$ is the component of $x$ in $\R^k$ with respect to the decomposition $\RR'_{(-\infty, 0)}=\RR''\times \R^k$, and $c(s)$ is a constant independent of $x$ and $\tau$.
\end{prop}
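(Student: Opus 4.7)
The plan is to combine the product splitting from Proposition \ref{splittingforpositivetime} with two symmetries of the setup, the translations $\boldsymbol{\phi}^s$ on the $\R^k$-factor and the parabolic self-similarity $\boldsymbol{\psi}^u$ of Lemma \ref{selfsimilarityshrinker}, in order to write $f_{z'}$ in an explicit form and then compute the difference. First, by Proposition \ref{splittingforpositivetime}, $\RR'_{(-\infty,0)}=\RR''\times\R^k$ splits isometrically as Ricci flow spacetimes, so each point has coordinates $x=(x'',\vec{x})$. Since the $\R^k$ factor is flat, the shrinker equation $\Ric(g^{Z'})+\nabla^2 f_{z'}=g^{Z'}/(2\tau)$ forces $\partial_j\partial_k f_{z'}=\delta_{jk}/(2\tau)$ on each $\R^k$-fiber, together with the vanishing of the mixed Hessian between the two factors. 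Integrating yields the ansatz
\begin{equation*}
f_{z'}(x'',\vec{x},\tau)=\frac{|\vec{x}|^2}{4\tau}+\langle\vec{a}(\tau),\vec{x}\rangle+F(x'',\tau)
\end{equation*}
for some smooth $\vec{a}\colon(0,\infty)\to\R^k$ and some function $F$ on $\RR''\times(0,\infty)$.

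Next I would apply the translation symmetry. By \eqref{eq:kcon00b} and the $\boldsymbol{\phi}^s$-invariance of $\tau$, one has $f_{\boldsymbol{\phi}^s(z')}(\boldsymbol{\phi}^s(x))=f_{z'}(x)$; since $\boldsymbol{\phi}^s$ acts on coordinates as $(x'',\vec{x})\mapsto(x'',\vec{x}+s)$, substituting into the ansatz gives an explicit formula for $f_{\boldsymbol{\phi}^s(z')}$, and subtracting produces
\begin{equation*}
f_{z'}(x)-f_{\boldsymbol{\phi}^s(z')}(x)=\frac{\langle\vec{x},s\rangle}{2\tau}-\frac{|s|^2}{4\tau}+\langle\vec{a}(\tau),s\rangle.
\end{equation*}
To obtain the desired form, it therefore suffices to show that $\tau\vec{a}(\tau)\in\R^k$ is a constant vector $\vec{b}$, independent of $\tau$.

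The main step, which I expect to be the heart of the argument, is this last claim, and it is verified via the scaling symmetry. The vector field $X=\tau(\partial_{\t'}-\nabla f_{z'})$ from Lemma \ref{selfsimilarityshrinker} annihilates $f_{z'}$ on $\RR'_{(-\infty,0)}$: the reverse heat equation gives $\partial_{\t'}f_{z'}=|\nabla f_{z'}|^2-\Delta f_{z'}-\scal_{g^{Z'}}+n/(2\tau)$, and combining with the trace of the shrinker equation $\Delta f_{z'}+\scal_{g^{Z'}}=n/(2\tau)$ yields
\begin{equation*}
X(f_{z'})=\tau\partial_{\t'}f_{z'}-\tau|\nabla f_{z'}|^2=-\tau\Delta f_{z'}-\tau\scal_{g^{Z'}}+n/2=0.
\end{equation*}
Plugging the ansatz into $X(f_{z'})=0$ and noting that $F$, $|\nabla_{\RR''}F|^2$, $\scal_{g^{Z'}}$, and $|\vec{a}(\tau)|^2$ are all independent of $\vec{x}$, the $\vec{x}$-linear coefficient must vanish on its own, giving $\langle\vec{x},\vec{a}(\tau)+\tau\vec{a}'(\tau)\rangle\equiv 0$ and hence $\frac{d}{d\tau}(\tau\vec{a}(\tau))=0$. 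Writing $\tau\vec{a}(\tau)=\vec{b}$, substitution back into the difference formula yields the claimed identity with $c=\langle\vec{b},s\rangle-|s|^2/4$, a constant independent of $x$ and $\tau$. The delicate point is ensuring that no hidden $\vec{x}$-linear terms arise from cross interactions in $X(f_{z'})$, but this is guaranteed by the isometric product decomposition, which makes all the $x''$-dependent quantities in the identity independent of $\vec{x}$.
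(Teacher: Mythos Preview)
Your proof is correct and follows the same approach as the paper: use the shrinker equation together with the splitting $\RR'_{(-\infty,0)}=\RR''\times\R^k$ to write $f_{z'}$ as a function of $x''$ plus a quadratic in $\vec{x}$, then compute $f_{z'}(x)-f_{z'}(\boldsymbol{\phi}^{-s}(x))$ via the translation symmetry $f_{\boldsymbol{\phi}^s(z')}(x)=f_{z'}(\boldsymbol{\phi}^{-s}(x))$. The paper simply writes $f_{z'}(x)=h(x'')+|\vec{x}-v|^2/(4\tau)$ with $v\in\R^k$ a fixed vector and computes the difference directly.

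The only substantive difference is that you explicitly verify $\tau\vec{a}(\tau)$ is constant (equivalently, that the paper's $v$ does not depend on $\tau$) by plugging into $X(f_{z'})=\tau(\partial_{\t'}f_{z'}-|\nabla f_{z'}|^2)=0$ and isolating the $\vec{x}$-linear term. The paper takes this $\tau$-independence for granted; it does follow from the identity $\partial_{\t'}f_{z'}=|\nabla f_{z'}|^2$ recorded earlier in \eqref{eq:tang2}, exactly as you argue, so your extra step is a useful clarification rather than a genuinely different route.
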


\begin{proof}
Since $K_{Z'}(z'; x)=K_{Z'}(\boldsymbol{\phi}^s(z'); \boldsymbol{\phi}^s(x))$ for any $x \in \RR'_{(-\infty, 0)}$, we conclude that $f_{z'}\lc\boldsymbol{\phi}^{-s}(x) \rc=f_{\boldsymbol{\phi}^s(z')}(x)$.

By the Ricci shrinker equation
	\begin{align*}
\Ric(g^{Z'})+\na^2 f_{z'}=\frac{g^{Z'}}{2 \tau}
	\end{align*}
and the fact that $\RR'_{(-\infty, 0)}=\RR''\times \R^k$, we know that
	\begin{align*}
f_{z'}(x)=h(x'')+\frac{|\vec{x}-v|^2}{4\tau},
	\end{align*}
	for some $v \in \R^k$, where $x=(x'', \vec{x}) \in \RR''\times \R^k$ for $x \in \RR'_{(-\infty, 0)}$, and $h(x'')$ is a function on $\RR''$. Thus, we conclude that
	\begin{align*}
f_{z'}(x)-f_{\boldsymbol{\phi}^s(z')}(x)=f_{z'}(x)-f_{z'}(\boldsymbol{\phi}^{-s}(x))=\frac{|\vec{x}-v|^2}{4\tau}-\frac{|\vec{x}-s-v|^2}{4\tau}=\frac{1}{2\tau} \la \vec{x}, s \ra+\frac{c(s)}{\tau}. 
	\end{align*}
\end{proof}

\begin{defn}[$k$-symmetric]\label{defnsymmetricsoliton}\index{$k$-symmetric}
A Ricci shrinker space $(Z',d_{Z'},z',\t')$ is called \textbf{$k$-symmetric} if one of the following holds:
		\begin{enumerate}[label=\textnormal{(\arabic*)}]
		\item  $(Z',d_{Z'},z',\t')$ is $k$-splitting and is not a static cone.
	
		\item $(Z',d_{Z'},z',\t')$ is a static cone that is $(k-2)$-splitting.
	\end{enumerate} 
\end{defn}
The number $k$ in Definition \ref{defnsymmetricsoliton} represents the number of directions in which the tangent flow is invariant, as established by Proposition \ref{prop:staticcone1} and Proposition \ref{prop:splitk}. Notably, in item (2), the tangent flow is invariant along the time direction, and since we view the time direction as two dimensions in the parabolic setting, it contributes two to the count. In addition, if $(Z',d_{Z'},z',\t')$ is a $(k-2)$-splitting static cone, the map $\boldsymbol{\varphi}^t$ (for $t\in \R$) defined in Proposition \ref{prop:staticcone1} and $\boldsymbol{\phi}^s$ (for $s\in \R^{k-2}$) defined in Proposition \ref{prop:splitk} commute, since they do so on the regular part.

As in the last section, we consider a Ricci flow limit space $(Z, d_Z, p_{\infty},\t)$ obtained from 
	\begin{equation} \label{eq:conv0101}
		(M_i \times \III, d^*_i, p_i^*,\t_i) \xrightarrow[i \to \infty]{\quad \mathrm{pGH} \quad} (Z, d_Z, p_{\infty},\t),
	\end{equation}
where $\XX^i=\{M_i^n,(g_i(t))_{t \in \III^{++}}\} \in \MM(n, Y, T)$ with base point $p_i^* \in \XX^i_\III$.

Recall that we have the following regular--singular decomposition from \eqref{decomRS2}:
	\begin{align*}
Z_{\III^-}=\RR_{\III^-} \sqcup \MS.
	\end{align*}
In particular, any tangent flow at $z \in \RR_{(-(1-2\sigma)T,0)}$ is $(n+2)$-symmetric and any tangent flow at $z\in \RR_0$ is $n$-symmetric.

Thus, we have the following natural stratification of $\MS$:
	\begin{equation}\label{defnstratification}\index{$\MS^k$}
		\mathcal S^0 \subset \mathcal S^1 \subset \cdots \subset \mathcal S^{n+1}=\mathcal S,
	\end{equation}
	where $z \in \MS^k$ if and only if no tangent flow at $z$ is $(k+1)$-symmetric.

The next result shows that $\MS\setminus \MS^{n-2}$ is in fact empty.

\begin{thm}\label{thm:singular}
In the same setting as above, we have
	\begin{align*}
\MS=\MS^{n-2}.
	\end{align*}
Moreover, no tangent flow at any singular point is a static or quasi-static cone that is $(n-2)$- or $(n-3)$-splitting.
\end{thm}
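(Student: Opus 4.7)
\textbf{Proposal of proof of Theorem \ref{thm:singular}.}
The strategy follows \cite[Theorem 2.8]{bamler2020structure}, adapted to noncollapsed Ricci flow limit spaces. I argue by contradiction: suppose $z\in\MS$ admits a tangent flow $(Z',d_{Z'},z',\t')$ that either is $(n-1)$-symmetric (first assertion) or is a static/quasi-static cone which is $(n-2)$- or $(n-3)$-splitting (second assertion). In every case I will show that $\RR'_{-1}$ must be isometric to $\R^n$, so that $r_{\Rm}(z')=+\infty$ and hence $z'\in\RR'$; combined with Lemma \ref{lem:curcon}, Proposition \ref{prop:limiepsilon}, Lemma \ref{lem:nashconv1}, and Theorem \ref{thm:tworegular}, this forces $z\in\RR$, contradicting $z\in\MS$.

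The first step is to use Proposition \ref{splittingforpositivetime} (for the splitting) together with Theorem \ref{staticcone} (for the time-independence of the static or quasi-static pieces on $(-\infty,t_a]$) to write
\begin{align*}
\RR'_{-1}=\Sigma^{m}\times\R^{n-m}, \qquad m\in\{1,2,3\},
\end{align*}
where $\Sigma^m$ is either a one-dimensional Ricci shrinker factor (in the $(n-1)$-splitting non-static case of Definition \ref{defnsymmetricsoliton}(1)), or an $m$-dimensional Ricci-flat cone (in the static/quasi-static cases with $m=2,3$). In every configuration the scalar curvature of $g^{Z'}_{-1}$ is bounded on $\RR'_{-1}$: it vanishes identically in the Ricci-flat cases, and on any one-dimensional Riemannian factor the scalar curvature is automatically zero. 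Theorem \ref{metricsolitonbdscalcomplete} therefore applies and yields $\iota_{z'}(\XX^{z'}_{(-\infty,0)})=Z'_{(-\infty,0)}$, so that $\MS\cap Z'_{-1}$ is identified with the singular set of the tangent metric soliton $\XX^{z'}$ at time $-1$ (see Definition \ref{def:tms}). By Theorem \ref{Fconvergence}(2), this singular stratum has Minkowski dimension at most $n-4$.

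The case analysis is then short. If $m=1$, codimension-$4$ forces $\Sigma^1$ to be a complete one-dimensional Ricci shrinker, hence diffeomorphic to $\R$ or $S^1$; but the shrinker equation reduces to $f''=\tfrac{1}{2\tau}$, which has no solution on $S^1$ by integration, so $\Sigma^1=\R$. If $m\in\{2,3\}$, then $\Sigma^m$ is a Ricci-flat cone of dimension at most $3$; in these dimensions Ricci-flat is equivalent to flat, so $\Sigma^m=\R^m/\Gamma$ for a finite subgroup $\Gamma\subset O(m)$ acting freely off the origin. Any nontrivial $\Gamma$ produces a singular stratum in $\RR'_{-1}$ of codimension exactly $m\leq 3$, which violates the codimension-$4$ bound; hence $\Gamma$ is trivial and $\Sigma^m=\R^m$.

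In every case we conclude $\RR'_{-1}=\R^n$. By the self-similarity in Proposition \ref{selfsimilarall}, this propagates to $\RR'_t=\R^n$ for all $t<0$, and the shrinker potential must be the Gaussian $|x|^2/(4\tau)+\text{const}$; equivalently $r_{\Rm}(z')=+\infty$, so $z'\in\RR'$, contradicting $z\in\MS$. The principal obstacle is verifying that Theorem \ref{metricsolitonbdscalcomplete} applies, i.e.\ that the scalar curvature on $\RR'_{-1}$ is globally bounded; this is immediate in the static/quasi-static cases (where $\Ric$ vanishes) and in the $(n-1)$-splitting case (where the remaining one-dimensional factor is automatically scalar-flat), but it is precisely the reason we must restrict to the sub-strata $\MS^{n-1}$ rather than higher strata in this argument. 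Once this point is settled, the codimension-$4$ estimate from Theorem \ref{Fconvergence}(2) does the remaining work.
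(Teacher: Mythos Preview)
Your argument is correct but takes a different and more hands-on route than the paper's. The paper's proof is a two-line application of \cite[Theorem 2.8]{bamler2020structure} to the tangent metric soliton $\XX^{z'}$, which gives the dichotomy directly: either $\Ric\equiv 0$ on $\RR^{z'}_{-1}$ and the maximal Euclidean factor has dimension at most $n-4$, or the scalar curvature is positive and the maximal Euclidean factor has dimension at most $n-2$. Both assertions of the theorem then follow immediately from Definition \ref{defnsymmetricsoliton}. No identification of $Z'_{-1}$ with $\XX^{z'}_{-1}$, no scalar curvature bound, and no low-dimensional case analysis are needed.

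What you do instead is effectively reprove that dichotomy by hand: you split off the maximal Euclidean factor, invoke the codimension-$4$ estimate on the singular slice (Theorem \ref{Fconvergence}(2), or equivalently Theorem \ref{thm:spacemink}(i)), and then classify the low-dimensional transverse factor $\Sigma^m$. This is a legitimate alternative, and it makes the mechanism transparent. Two remarks, though. First, the detour through Theorem \ref{metricsolitonbdscalcomplete} is unnecessary: since $\iota_{z'}(\RR^{z'}_{-1})=\RR'_{-1}$ by Proposition \ref{prop:004}, you can run the whole argument inside $\XX^{z'}$, where the codimension-$4$ bound is already available and where $\XX^{z'}_{-1}$ is automatically the metric completion of $\RR^{z'}_{-1}$ (Theorem \ref{Fconvergence}(4)). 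Second, your assertion that $\Sigma^m$ is a \emph{cone} for $m\in\{2,3\}$ is not immediate from the hypotheses as stated; what you actually have on $\Sigma^m$ is $\nabla^2 f_\Sigma=\tfrac12 g_\Sigma$, from which completeness (obtained via your codimension argument and the $\R^{n-m}$-invariance from Proposition \ref{prop:splitk}) forces $\Sigma^m\cong\R^m$ directly by Tashiro's theorem, bypassing the quotient $\R^m/\Gamma$ discussion entirely.
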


\begin{proof}
Given a tangent flow $(Z',d_{Z'},z',\t')$ at a point $z \in \MS$, we consider its tangent metric soliton $\XX^{z'}$ (see Definition \ref{def:tms}), which can be regarded as a tangent metric flow of $\XX^z$. Then it follows from Proposition \ref{prop:004} that $\iota_{z'}(\RR^{z'})=\RR'_{(-\infty,0)}$.

It follows from \cite[Theorem 2.8]{bamler2020structure} that either the Ricci curvature vanishes on $\RR^{z'}_{-1}$, in which case $\RR^{z'}_{-1}$ splits off an $\R^k$ for some $k \le n-4$, or the scalar curvature is positive on $\RR^{z'}_{-1}$, in which case $\RR^{z'}_{-1}$ splits off an $\R^k$ for some $k \le n-2$.

Consequently, $\MS=\MS^{n-2}$. The last conclusion also follows.
\end{proof}

By Theorem \ref{thm:singular}, we can refine the stratification \eqref{defnstratification} as follows:
 	\begin{equation*}
		\mathcal S^0 \subset \mathcal S^1 \subset \cdots \subset \mathcal S^{n-2}=\mathcal S.
	\end{equation*}

To control the size of each stratum $\MS^k$, we next recall the following definition of the Minkowski content and dimension.

\begin{defn}\label{defn:Minkowski-dstar}
	For a subset $Z_1\subset Z$, let $B_Z^*(Z_1,r)$ denote the $r$-neighborhood of $Z_1$ with respect to $d_Z$. For any $s>0$, we define the $s$-dimensional (upper) \textbf{Minkowski content}\index{Minkowski content} of $Z_1$ as
		\begin{align*}
\MMM^s(Z_1):=\sup_{L>0}\,\limsup_{r \to 0} \frac{|B_Z^*(Z_1, r) \cap B_Z^*(x_0, L)|}{r^{n+2-s}},
	\end{align*} 
	where $x_0$ is a fixed point in $Z$. The common value of $\inf \{s \ge 0 \mid \MMM^s(Z_1)=0\}=\sup \{s \ge 0 \mid \MMM^s(Z_1)=+\infty\}$ is called the (upper) \textbf{Minkowski dimension}\index{Minkowski dimension} of $Z_1$, denoted by $\dim_{\MMM} Z_1$\index{$\dim_{\MMM}$}.
\end{defn}

Next, we define the quantitative singular sets. The concept of $\ep$-closeness can be found in Definition \ref{defn:close}.

\begin{defn}\label{def:epsym}
	A point $z\in Z_{\III^-}$ is called \textbf{$(k,\ep,r)$-symmetric}\index{$(k,\ep,r)$-symmetric} if $\t(z)-\ep^{-1} r^2 \in \III^-$, and there exists a $k$-symmetric Ricci shrinker space $(Z',d_{Z'},z',\t')$ such that 
	  \begin{align*}
(Z, r^{-1} d_Z, z, r^{-2}(\t-\t(z))) \quad \text{is $\ep$-close to} \quad (Z',d_{Z'},z',\t') \quad \text{over} \quad [-\ep^{-1}, \ep^{-1}].
  \end{align*} 

Furthermore, if $k \in \{n-3, n-2\}$, then the model space $(Z',d_{Z'},z',\t')$ cannot be a quasi-static cone. If $k \ge n-1$, then the model space $(Z',d_{Z'},z',\t')$ is isometric to $(\R^{n} \times (-\infty, t_a], d_E^*, (\vec 0,0),\t)$ for some constant $t_a \in [0, +\infty]$.
\end{defn}

\begin{defn} \index{$\MS^{\ep,k}_{r_1,r_2}$}
	For $\ep > 0$ and $0<r_1<r_2<\infty$, the quantitative singular strata
	\[  \MS^{\ep,0}_{r_1,r_2} \subset \MS^{\ep,1}_{r_1,r_2} \subset   \ldots \subset  \MS^{\ep,n-2}_{r_1,r_2} \subset Z_{\III^-} \]
	are defined as follows:
	$z \in  \MS^{\ep,k}_{r_1,r_2}$ if and only if $\t(z)-\ep^{-1} r_2^2 \in \III^-$ and for all $r \in [r_1, r_2]$, $z$ is not $(k+1,\ep,r)$-symmetric. 
\end{defn}

The following identity is clear from the above definitions and Theorem \ref{thm:singular}: for any $L>1$,
\begin{align}\label{eq:siden1}
\MS^{k}=\bigcup_{\ep \in (0, L^{-1})} \bigcap_{0<r<\ep L} \MS^{\ep,k}_{r, \ep L}.
\end{align}

Note that the quantitative singular set $\MS^{\ep,k}_{r_1,r_2}$ can be defined in $M \times \III^-$ for any $\XX=\{M^n,(g(t))_{t \in \III^{++}}\} \in \MM(n, Y, T)$, even though $\XX$ contains no singular set.

Next, we compare the quantitative singular sets in Definition \ref{Def_quantiSS_weak} and $\MS^{\ep,k}_{r_1,r_2}$ for the top stratum.

\begin{lem}\label{comparediffquanset}
Given $\XX=\{M^n,(g(t))_{t \in \III^{++}}\} \in \MM(n, Y, T)$, for any $\ep\in (0,1)$, if $\ep'\leq \ep'(n,Y,\sigma,\ep)$ and $r_1<r_2 \ep'$, then
	\begin{align}\label{comparediffquanset1}
\MS^{\ep, n-2}_{r_1,r_2 \ep} \subset \MS^{\ep',n-2,\F}_{r_1, r_2 \ep'} \quad \text{and} \quad \MS^{\ep, n-2,\F}_{r_1,r_2 \ep} \subset \MS^{\ep',n-2}_{r_1, r_2 \ep'}.
	\end{align}
\end{lem}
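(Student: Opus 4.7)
The plan is to argue by contradiction and compactness, leveraging the equivalence between pointed Gromov--Hausdorff convergence in the $d^*$-sense and $\F$-convergence for noncollapsed sequences. Fix $\ep \in (0,1)$. If one of the inclusions in \eqref{comparediffquanset1} fails, then after rescaling we produce a sequence of pointed Ricci flows $\XX^i \in \MM(n, Y, T_i)$ with base points $z_i^* = (z_i, 0)$ and radii $r_i \in [r_1/r_2, \ep']$ such that $z_i^*$ is $(n-1, \ep'_i, r_i)$-symmetric in one sense but is uniformly not $(n-1, \ep, r)$-symmetric in the other sense for every $r \in [r_1/r_2, \ep]$. After parabolically rescaling the $i$-th flow by $r_i^{-1}$ and time-shifting so $\t_i(z_i^*) = 0$, we arrive at a sequence in $\MM(n, Y, T_i')$ with $T_i' \to \infty$, to which we may apply Theorem \ref{GHlimitd*} (see also Remark \ref{rem:general} and Notation \ref{not:2}) together with Theorem \ref{thm:601} to extract, on a subsequence, both pointed Gromov--Hausdorff convergence
\begin{align*}
(M_i \times \III_i, d_i^*, z_i^*, \t_i) \xrightarrow[i \to \infty]{\hat C^{\infty}} (Z', d_{Z'}, z', \t')
\end{align*}
and $\F$-convergence to the associated metric flow $\XX^{z'}$ (uniform over $\{-(1-\sigma)T_i'\}$), such that $\iota_{z'}(\XX^{z'}_{(-\infty, 0]}) \subset Z'_{(-\infty, 0]}$ and $\RR^{z'}_{(-\infty, 0)} = \iota_{z'}^{-1}(\RR'_{(-\infty, 0)})$.

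For the first inclusion $\MS^{\ep, n-2}_{r_1, r_2 \ep} \subset \MS^{\ep', n-2, \F}_{r_1, r_2 \ep'}$, suppose the $z_i^*$ are $(n-1, \ep'_i, r_i)$-$\F$-symmetric with $\ep'_i \to 0$, so that in the rescaled picture the $\F$-limit $(\XX^{z'}, (\nu_{z';t})_{t \le 0})$ coincides with a model from Theorem \ref{Fstratification} 2(a) or (b) at level $k = n-1$. By Theorem \ref{thm:singular} no tangent flow in the top stratum is a $(n-3)$- or $(n-2)$-splitting static/quasi-static cone, so the model is forced to be the Gaussian soliton on $\R^n$, i.e.\ a standard Euclidean Ricci flow. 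Smooth convergence on the regular part (Theorem \ref{Fconvergence}(3)) together with Theorem \ref{thm:smooth1} then identifies the full $d^*$-limit $(Z', d_{Z'}, z', \t')$ with $(\R^n \times (-\infty, t_a], d^*_{E, \ep_0}, (\vec 0, 0), \t)$ for some $t_a \in [0, \infty]$, since outside $\iota_{z'}(\XX^{z'})$ nothing extra is added in the top-stratum case (the regular part is all of $\RR^{z'}_{(-\infty, 0)}$ by Proposition \ref{prop:004}, and Theorem \ref{metricsolitonbdscalcomplete} rules out spurious limit points). Hence $z_i^*$ is $(n-1, \ep, r)$-symmetric for some $r \in [r_1/r_2, \ep]$ once $i$ is large, contradicting our standing assumption.

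For the reverse inclusion $\MS^{\ep, n-2, \F}_{r_1, r_2 \ep} \subset \MS^{\ep', n-2}_{r_1, r_2 \ep'}$, the analogous setup gives $\hat C^{\infty}$-convergence of the rescaled flows to a noncollapsed Ricci shrinker space $(Z', d_{Z'}, z', \t')$ which is isometric to $(\R^n \times (-\infty, t_a], d^*_{E, \ep_0}, (\vec 0, 0), \t)$. By Theorem \ref{thm:smooth1}(a),(c) the regular-part identification is realized by a smooth diffeomorphism, and together with Theorem \ref{thm:convextra}, this upgrades the $\hat C^{\infty}$-convergence to $\F$-convergence within the natural correspondence induced by the diffeomorphisms $\phi_i$ (in the spirit of \cite[Theorem 9.21]{bamler2023compactness}). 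Since the limit metric flow $\XX^{z'}$ is precisely the Gaussian soliton on $\R^n$, i.e.\ a model of type (a) in Theorem \ref{Fstratification} at level $n-1$, we conclude $z_i^*$ is $(n-1, \ep, r)$-$\F$-symmetric for some $r \in [r_1/r_2, \ep]$ and large $i$, contradicting our assumption.

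The main obstacle is bookkeeping the two different models in Definitions \ref{Def_quantiSS_weak} and \ref{def:epsym}: the $\F$-side uses metric flow pairs over $(-\infty, 0]$, while the $d^*$-side uses Ricci shrinker spaces with the additional top-stratum normalization $\R^n \times (-\infty, t_a]$. Matching these requires that the top-stratum tangent flows cannot be (quasi-)static cones at levels $n-3, n-2$, which is supplied by Theorem \ref{thm:singular}. Once this is in place, the content of the lemma is essentially the compactness-and-rigidity dictionary between the two convergence regimes.
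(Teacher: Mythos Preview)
Your proposal is correct and follows essentially the same route as the paper: argue by contradiction, rescale at the scale where one symmetry notion holds, pass to both pointed Gromov--Hausdorff and $\F$-limits, and observe that an $(n-1)$-symmetric model in either sense must be Euclidean, forcing the other symmetry at the same scale. The paper's write-up is terser---it simply notes that if $\RR_{-1}$ splits off $\R^{n-1}$ or is Ricci-flat splitting off $\R^{n-3}$, then the limit is $(\R^n\times(-\infty,t_a],d^*_{E,\ep_0})$ or $(\R^n\times\R_-,d^*_{E,\ep_0})$---whereas you invoke Theorem~\ref{thm:singular} and Theorem~\ref{metricsolitonbdscalcomplete}; these are correct in content but heavier than necessary, since the point reduces to the fact that a $1$-dimensional shrinker slice is $\R$, a $3$-dimensional Ricci-flat cone with codimension-$4$ singular set is $\R^3$, and the Euclidean $d^*$-metric is already complete.
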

\begin{proof}
We prove only the first inclusion; the second is analogous.

Suppose that the first inclusion in \eqref{comparediffquanset1} fails. Then, for a fixed $\ep>0$, we can find a sequence $\XX^i=\{M_i^n,(g(t))_{t \in \III^{++}}\} \in \MM(n, Y, T_i)$ and points $z_i^* \in M_i \times \III^-$ such that $z^*_i \in \MS^{\ep,n-2}_{r_1^i, r_2^i \ep}$, but $z^*_i \notin \MS^{i^{-2}, n-2, \F}_{r_1^i, r_2^i i^{-2}}$, where $r_1^i<r_2^i i^{-2}$.

From Definition \ref{Def_quantiSS_weak}, there exists $s_i \in [r_1^i, r_2^i i^{-2}]$ such that, after passing to a subsequence, we have
	\begin{align*}
		(M_i \times \III, s_i^{-1} d^*_i, z^*_i,s_i^{-2}(\t_i-\t_i(z_i^*))) \xrightarrow[i \to \infty]{\quad \hat C^\infty \quad} (Z, d_Z, z,\t).
	\end{align*}
Moreover, the Ricci flow limit space $(Z, d_Z, z,\t)$ satisfies $\R_- \subset \mathrm{image}(\t)$ and Proposition \ref{prop:004} on $\RR_{(-\infty, 0)}$. By considering the associated metric flow $\XX^z$, we conclude that either $\RR_{-1}$ splits off an $\R^{n-1}$, or the Ricci curvature vanishes on $\RR_{-1}$ and $\RR_{-1}$ splits off an $\R^{n-3}$. In both cases, it is clear that $(Z, d_Z, z,\t)$ is isometric to $(\R^{n} \times (-\infty, t_a] ,d_E^*, (\vec 0,0),\t)$ or $(\R^{n} \times \R_{-},d_E^*, (\vec 0,0),\t)$. Thus, $z_i^*$ is $(n-1, \ep, s_i)$-symmetric, for sufficiently large $i$. However, this implies $z_i^* \notin  \MS^{\ep,n-2}_{r_1^i, r_2^i \ep}$, which is a contradiction.
\end{proof}

By Proposition \ref{propdistance2}, Theorem \ref{quantsizeS} and Lemma \ref{comparediffquanset}, the following theorem is immediate.

\begin{thm}\label{thm:quant-size-dstar-limit}
Let $\XX=\{M^n,(g(t))_{t \in \III^{++}}\} \in \MM(n, Y, T)$ with $x_0^* \in \XX_{\III^-}$. Given $\ep>0$ and $r>0$ with $\t(x_0^*)-2 r^2 \in \III^-$, for any $\delta \in (0, \ep)$, there exist $x_1^*, x_2^*,\ldots, x_N^* \in B^*(x^*_0,r)$ with $N \leq C(n,Y,\sigma, \ep) \delta^{-n+2-\ep}$ and
	\begin{align*}
		\MS^{\ep,n-2}_{\delta r, \ep r} \cap B^* (x_0^*, r) \subset \bigcup_{i=1}^N B^* (x_i^*, \delta r).
	\end{align*}
Moreover, if $\ep\leq \ep(n,Y,\sigma)$, then 
	\begin{align*}
		r_{\Rm}\geq  \delta r \quad \text{on} \quad B^* (x_0^*, r)  \setminus \MS^{\ep,n-2}_{\delta r, \ep r}.
	\end{align*}
\end{thm}

Next, we prove 

\begin{thm}\label{eq:quant-size-dstar-limit1}
Let $(Z, d_Z, p_{\infty}, \t)$ be the Ricci flow limit space from \eqref{eq:conv0101} with $x_0 \in Z_{\III^-}$. Given $\ep>0$ and $r>0$ with $\t(x_0)-2 r^2 \in \III^-$, for any $\delta \in (0, \ep)$, there exist $x_1, x_2,\ldots, x_N \in B_Z^*(x_0,1.1 r)$ with $N \leq C(n,Y,\sigma, \ep) \delta^{-n+2-\ep}$ and
	\begin{equation}\label{eq:coverlimit}
		\MS^{\ep,n-2}_{\delta r, \ep r} \cap B_Z^* (x_0, r) \subset \bigcup_{j=1}^N B_Z^* (x_j, \delta r).
	\end{equation}
Moreover, if $\ep\leq \ep(n,Y,\sigma)$, then 
	\begin{align} \label{eq:currlimit}
		r_{\Rm}\geq  \delta r \quad \text{on} \quad B_Z^* (x_0, r) \setminus \MS^{\ep,n-2}_{\delta r, \ep r}.
	\end{align} 
\end{thm}

\begin{proof}
Let $\MS^{\cdot,\cdot,i}_{\cdot,\cdot}$ denote the corresponding quantitative singular set in $M_i\times\III^-$ associated with \eqref{eq:conv0101}. Then, we choose a sequence $x_i^* \in M_i \times \III^{-}$ such that $x_i^*$ converge to $x_0$ in the Gromov--Hausdorff sense.

We may further assume $\delta \in (0, \ep/2)$, since otherwise \eqref{eq:coverlimit} follows from a standard covering argument using Proposition \ref{prop:volumebound}. By Theorem \ref{thm:quant-size-dstar-limit}, for each $i$, there exist $x_{i,1}^*, x_{i,2}^*,\ldots, x_{i,N_i}^* \in  B_i^* (x_i^*, 1.01r)$ with 
	\begin{align*}
N_i \leq C(n,Y, \sigma, \ep) \delta^{-n+2-\ep}
	\end{align*}
and
	\begin{equation}\label{eq:cover0}
		\MS^{\ep/2,n-2,i}_{\delta r, \ep r/2} \cap B_i^* (x_i^*, 1.01r) \subset \bigcup_{j=1}^{N_i} B_i^* (x_{i,j}^*, \delta r/2).
	\end{equation}
After passing to a subsequence, we may assume that $N_i=N$ is constant. After passing to a further diagonal subsequence, we may assume that $x_{i,j}^*$ converge to $x_j \in B_Z^*(x_0,1.1 r)$ as $i \to \infty$, for any $1 \le j \le N$.

We claim that
	\begin{equation} \label{eq:cover1}
		\MS^{\ep,n-2}_{\delta r, \ep r} \cap B_Z^* (x_0, r) \subset \bigcup_{j=1}^N B_Z^* (x_j, \delta r).
	\end{equation}
Indeed, if \eqref{eq:cover1} fails, then one can find $y \in \MS^{\ep, n-2}_{\delta r, \ep r} \cap B_Z^* (x_0, r)$ so that $d_Z(y, x_j) \ge \delta r$ for any $1 \le j \le N$. We choose a sequence $y_i^* \in M_i \times \III^{-}$ converging to $y$ in the Gromov--Hausdorff sense. For sufficiently large $i$, it is clear that $y_i^* \in \MS^{\ep/2,n-2,i}_{\delta r, \ep r/2} \cap B_i^* (x_i^*, 1.01r)$. Then, it follows from \eqref{eq:cover0} that we can find $x_{i,j_i}^*$ with $d_i^*(y_i^*, x_{i,j_i}^*) < \delta r/2$. After passing to a subsequence, we conclude that
	\begin{align*}
d_Z(y, x_j) \le \delta r/2
	\end{align*} 
 for some $1 \le j \le N$, which is a contradiction.

For the final assertion, we consider $z \in B_Z^* (x_0, r) \cap Z_{\III^-} \setminus \MS^{\ep,n-2}_{\delta r, \ep r}$ so that $\ep$ is sufficiently small. We then choose a sequence $z_i^* \in M_i \times \III^{-}$ converging to $z$ in the Gromov--Hausdorff sense. Then, for sufficiently large $i$, $z_i^* \in B_i^* (x_i^*, 1.1 r) \cap \XX^i_{\III^-} \setminus \MS^{2\ep,n-2,i}_{\delta r, 2\ep r}$. Thus, by Theorem \ref{thm:quant-size-dstar-limit}, we have
	\begin{align*}
r_{\Rm}(z_i^*) \ge \delta r.
	\end{align*} 
Consequently, the conclusion follows from Lemma \ref{lem:curcon}.
\end{proof}

We obtain the following volume estimates:
\begin{cor}\label{cor:006}
Given $x_0 \in Z$, $\ep>0$ and $r>0$ with $\t(x_0)-2 r^2 \in \III^-$, for any $\delta \in (0, \ep)$,
	\begin{align*}
\abs{B^*_{Z} \lc \MS^{\ep, n-2}_{\delta r,\ep r}, \delta r \rc \cap B_Z^* (x_0, r)} \le C(n,Y,\sigma, \ep) \delta^{4-\ep} r^{n+2},
	\end{align*}
where $B^*_{Z} \lc \MS^{\ep,k}_{\delta r, \ep r}, \delta r \rc$ denotes the $\delta r$-neighborhood of $\MS^{\ep,k}_{\delta r, \ep r}$ with respect to $d_Z$, and $|\cdot|$ denotes the volume (see Definition \ref{def:vvvv}).
\end{cor}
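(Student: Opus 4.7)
The plan is to derive Corollary \ref{cor:006} directly from the covering estimate in Theorem \ref{quantsizeSd*limit1} combined with the upper volume bound for spacetime balls in Proposition \ref{prop:volumebound}. The argument is essentially a ball-packing computation; there is no new geometric input required beyond these two already-established ingredients.

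First, I would apply Theorem \ref{quantsizeSd*limit1} to the point $x_0$ at scale $r$ with parameters $\ep$ and $\delta \in (0, \ep)$, obtaining points $x_1, \dots, x_N \in B_Z^*(x_0, 1.1 r)$ with
\begin{align*}
N \le C(n, Y, \sigma, \ep) \, \delta^{-n+2-\ep}
\end{align*}
such that $\MS^{\ep, n-2}_{\delta r, \ep r} \cap B_Z^*(x_0, r) \subset \bigcup_{j=1}^N B_Z^*(x_j, \delta r)$. Next, for any point $y$ in the $\delta r$-neighborhood of $\MS^{\ep, n-2}_{\delta r, \ep r} \cap B_Z^*(x_0, r)$, the triangle inequality for $d_Z$ places $y$ inside some $B_Z^*(x_j, 2\delta r)$. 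Therefore
\begin{align*}
B^*_Z\bigl(\MS^{\ep, n-2}_{\delta r, \ep r} \cap B_Z^*(x_0, r),\, \delta r\bigr) \subset \bigcup_{j=1}^N B_Z^*(x_j, 2\delta r).
\end{align*}

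Now I would take volumes and apply the upper bound from Proposition \ref{prop:volumebound}, which gives $|B_Z^*(x_j, 2\delta r)| \le C(n, \sigma)(2\delta r)^{n+2}$ provided the basepoint condition $\t(x_j) - (2\delta r)^2 \in \III^-$ holds. Since $\t(x_0) - 2r^2 \in \III^-$ and $x_j \in B_Z^*(x_0, 1.1 r)$, the $2$-H\"older continuity of $\t$ together with $\delta < \ep < 1$ ensures this holds after possibly shrinking $\ep$ (or one absorbs the boundary case into the constant via Proposition \ref{prop:uppervolumebound}). Summing gives
\begin{align*}
\bigl|B^*_Z(\MS^{\ep, n-2}_{\delta r, \ep r}, \delta r) \cap B_Z^*(x_0, r)\bigr| \le N \cdot C(n, \sigma)(2\delta r)^{n+2} \le C(n, Y, \sigma, \ep) \, \delta^{-n+2-\ep} \cdot \delta^{n+2} r^{n+2},
\end{align*}
which simplifies to $C(n, Y, \sigma, \ep) \, \delta^{4-\ep} r^{n+2}$, as desired.

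There is no serious obstacle in this proof; the only mild technical point is verifying that the basepoints $x_j$ arising from the covering satisfy the time-window hypothesis of Proposition \ref{prop:volumebound} at scale $2\delta r$, but this is immediate since $\delta r$ is small relative to $r$ and we can apply the weaker bound from Proposition \ref{prop:uppervolumebound} as a fallback, absorbing the dependence into the constant $C(n, Y, \sigma, \ep)$. The stated exponent $4 - \ep$ arises naturally as $(n+2) - (n-2+\ep) = 4 - \ep$, confirming the codimension-four singular set heuristic.
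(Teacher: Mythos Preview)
Your proposal is correct and follows essentially the same route as the paper's proof: apply the covering from Theorem \ref{quantsizeSd*limit1}, enlarge the balls by a factor of $2$ to absorb the $\delta r$-neighborhood, and then invoke the upper volume bound of Proposition \ref{prop:volumebound} to sum. The paper is slightly terser about the time-window hypothesis for Proposition \ref{prop:volumebound}, but the substance is identical.
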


\begin{proof}
Given $x_0 \in Z_{\III^-}$ and a constant $r>0$ with $\t(x_0)-2 r^2>-(1-2\sigma)T$. It follows from \eqref{eq:coverlimit} that for any $\delta \in (0, \ep)$, there exist $x_1, x_2,\ldots, x_N \in B_Z^*(x_0,1.1 r)$ with $N \leq C(n,Y, \sigma,\ep) \delta^{-n+2-\ep}$ and
	\begin{align*}
		\MS^{\ep, n-2}_{\delta r, \ep r} \cap B_Z^* (x_0, 1.01 r) \subset \bigcup_{j=1}^N B_Z^* (x_j, \delta r).
	\end{align*}
In particular, we have
	\begin{align*}
		B^*_{Z} \lc \MS^{\ep, n-2}_{\delta r, \ep r}, \delta r \rc \cap B_Z^* (x_0, r) \subset \bigcup_{j=1}^N B_Z^* (x_j, 2 \delta r).
	\end{align*}
By Proposition \ref{prop:volumebound}, this implies
	\begin{align*}
		\abs{B^*_{Z} \lc \MS^{\ep, n-2}_{\delta r, \ep r}, \delta r \rc \cap B_Z^* (x_0, r)} \le \sum_{j=1}^N \abs{B_Z^* (x_j, 2\delta  r)} \le C(n, Y, \sigma) N \delta^{n+2} r^{n+2} \le C(n,Y,\sigma, \ep) \delta^{4-\ep} r^{n+2}.
	\end{align*}
\end{proof}

The proof of Corollary \ref{cor:006}, with Proposition \ref{propvolumeslicelimit}, also gives the following result.

\begin{cor}\label{cor:coverslice}
Given $x_0\in Z$, $\ep>0$, and $r>0$ with $\t(x_0)-2r^2\in\III^-$, for any $\delta \in (0, \ep)$ and any $t\in \R$,
	\begin{align*}
\abs{B^*_{Z} \lc \MS^{\ep, n-2}_{\delta r, \ep r}, \delta r \rc \cap B_Z^* (x_0, r) \cap Z_t}_t \le C(n,Y,\sigma, \ep) \delta^{2-\ep} r^{n}.
	\end{align*}
\end{cor}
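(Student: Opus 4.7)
The plan is to follow the same covering strategy used in the proof of Corollary \ref{cor:006}, but replace the spacetime volume bound from Proposition \ref{prop:volumebound} with the time-slice volume bound from Proposition \ref{propvolumeslicelimit}. This accounts for the change in exponent from $\delta^{4-\ep}r^{n+2}$ to $\delta^{2-\ep}r^{n}$, reflecting the loss of two dimensions when restricting to a single time-slice.

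First, I would apply Theorem \ref{quantsizeSd*limit1} to obtain points $x_1,\ldots,x_N \in B_Z^*(x_0, 1.1r)$ with $N \le C(n,Y,\sigma,\ep)\delta^{-n+2-\ep}$ such that
\[
\MS^{\ep,n-2}_{\delta r,\ep r} \cap B_Z^*(x_0, r) \subset \bigcup_{j=1}^{N} B_Z^*(x_j, \delta r).
\]
Taking the $\delta r$-neighborhood of both sides with respect to $d_Z$ and applying the triangle inequality then yields
\[
B_Z^*\lc \MS^{\ep,n-2}_{\delta r,\ep r}, \delta r \rc \cap B_Z^*(x_0, r) \subset \bigcup_{j=1}^{N} B_Z^*(x_j, 2\delta r).
\]

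Next, I would intersect with the time-slice $Z_t$ and estimate each piece using Proposition \ref{propvolumeslicelimit}, which gives $|B_Z^*(x_j, 2\delta r) \cap Z_t|_t \le C(n,\sigma)(2\delta r)^{n}$ whenever the relevant condition $\t(x_j)-(2\delta r)^2 \in \III^-$ is met (which holds for sufficiently small $\delta$ by the hypothesis $\t(x_0)-2r^{2} \in \III^-$ and the fact that $x_j \in B_Z^*(x_0, 1.1r)$; otherwise the bound is trivial since the slice intersection has zero $t$-volume). Summing over $j=1,\ldots,N$ then gives
\[
\abs{B_Z^*\lc \MS^{\ep,n-2}_{\delta r,\ep r}, \delta r \rc \cap B_Z^*(x_0, r) \cap Z_t}_t \le N \cdot C(n,\sigma)(2\delta r)^{n} \le C(n,Y,\sigma,\ep)\,\delta^{2-\ep} r^{n},
\]
which is the desired bound.

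There is essentially no obstacle here beyond verifying the applicability of Proposition \ref{propvolumeslicelimit} to each covering ball and absorbing constants; the core technical content is already packaged into Theorem \ref{quantsizeSd*limit1}. The only minor care required is to check the time-range condition for Proposition \ref{propvolumeslicelimit}, but this follows routinely from the chosen scales.
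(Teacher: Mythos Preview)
Your proposal is correct and matches the paper's approach exactly: the paper states that Corollary \ref{cor:coverslice} follows by the same argument as Corollary \ref{cor:006}, using Proposition \ref{propvolumeslicelimit} in place of Proposition \ref{prop:volumebound}. Your verification of the time-range hypothesis for Proposition \ref{propvolumeslicelimit} is more careful than the paper's own treatment, which simply glosses over it.
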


By Theorem \ref{thm:singular}, Definition \ref{defn:Minkowski-dstar}, \eqref{eq:siden1} and Corollary \ref{cor:006}, the following result is immediate.

\begin{cor}\label{cor:minkow}
	We have
	\begin{align*}
		\dim_{\MMM} \mathcal S \le n-2.
	\end{align*}
\end{cor}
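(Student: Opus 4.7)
The plan is to show that $\MMM^s(\MS) = 0$ for every $s > n-2$, which by Definition \ref{defn*Mindim} gives $\dim_{\MMM} \MS \le n-2$. By Theorem \ref{thm:singular} we identify $\MS$ with $\MS^{n-2}$, so the identity \eqref{eq:siden1} and the volume estimate in Corollary \ref{cor:006} are available to control neighborhoods of $\MS$ through the quantitative strata $\MS^{\epsilon, n-2}_{\delta r, \epsilon r}$.

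The crucial step, which converts the bound for the quantitative singular strata into a bound for $\MS$ itself, is the ``moreover'' part of Theorem \ref{quantsizeSd*limit1}: for $\epsilon \le \epsilon(n,Y,\sigma)$, any $z \in B_Z^*(x_0, r) \setminus \MS^{\epsilon, n-2}_{\delta r, \epsilon r}$ satisfies $r_{\Rm}(z) \ge \delta r > 0$ and hence lies in $\RR$. Taking contrapositives yields
\[
\MS \cap B_Z^*(x_0, r) \subset \MS^{\epsilon, n-2}_{\delta r, \epsilon r} \qquad \text{for every } \delta \in (0, \epsilon),
\]
whenever the time condition $\t(x_0) - 2r^2 \in \III^-$ is met. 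This inclusion is the only genuinely nontrivial ingredient.

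Now fix $s > n-2$, set $\eta := s - (n-2) > 0$, and choose $\epsilon \in \bigl(0, \min\{\epsilon(n,Y,\sigma), \eta\}\bigr)$. For a base point $x_0 \in Z$ and $L > 0$ with $\t(x_0) - 8L^2 \in \III^-$, take $r = 2L$; for every $\rho \in (0, 2\epsilon L)$, the triangle inequality together with the above inclusion gives
\[
B_Z^*(\MS, \rho) \cap B_Z^*(x_0, L) \subset B_Z^*\bigl(\MS^{\epsilon, n-2}_{\rho, 2\epsilon L}, \rho\bigr) \cap B_Z^*(x_0, 2L),
\]
and Corollary \ref{cor:006} (applied with $\delta = \rho/(2L)$) then yields
\[
|B_Z^*(\MS, \rho) \cap B_Z^*(x_0, L)| \le C(n, Y, \sigma, \epsilon)\, L^{n-2+\epsilon} \rho^{4-\epsilon}.
\]
Dividing by $\rho^{n+2-s} = \rho^{4-\eta}$ and using $\epsilon < \eta$, the quotient is bounded by a constant times $\rho^{\eta - \epsilon}$, which tends to $0$ as $\rho \to 0$; hence the $\limsup$ defining $\MMM^s(\MS)$ vanishes.

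To deal with base points and radii for which the auxiliary time condition $\t(x_0) - 8L^2 \in \III^-$ fails (only an issue when $T < \infty$ and $L$ is comparable to $\sqrt{\t(x_0) + (1-2\sigma)T}$), I would decompose $B_Z^*(x_0, L) \cap \MS$ into finitely many pieces, each contained in a ball whose center lies sufficiently deep inside $\III^-$, apply the preceding estimate to each piece, and sum. Taking the supremum over $L > 0$ then gives $\MMM^s(\MS) = 0$ for every $s > n-2$ and thus $\dim_{\MMM} \MS \le n-2$. The remainder of the argument is routine covering and triangle-inequality bookkeeping; the main (mild) obstacle is simply keeping careful track of the time condition required by Corollary \ref{cor:006} when the time interval $\III^-$ is finite.
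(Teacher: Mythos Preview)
Your proposal is correct and follows the same route the paper sketches: the paper simply cites Theorem~\ref{thm:singular}, Definition~\ref{defn*Mindim}, \eqref{eq:siden1}, and Corollary~\ref{cor:006} and declares the result immediate, whereas you make the key inclusion $\MS \cap B_Z^*(x_0,r) \subset \MS^{\epsilon,n-2}_{\delta r,\epsilon r}$ explicit (for one fixed small $\epsilon$) via the curvature-radius clause of Theorem~\ref{quantsizeSd*limit1} and then feed it into Corollary~\ref{cor:006}. This is exactly the content encoded in \eqref{eq:siden1} once $\epsilon$ is fixed below $\epsilon(n,Y,\sigma)$.

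One remark on the finite-$T$ bookkeeping you flag at the end: your ``finitely many pieces'' fix needs a touch more care than you suggest, because points of $\MS$ may accumulate at the time boundary $t=-(1-2\sigma)T$, so a cover of $\MS\cap B_Z^*(x_0,L)$ by balls satisfying the time hypothesis need not be finite a priori. The clean way out is to cover only $\MS\cap B_Z^*(x_0,L)\cap Z_{[-(1-2\sigma)T+h,0]}$ (which has compact closure in $Z_{\III^-}$, so a finite cover by balls of radius $\sim\sqrt{h}$ exists), obtain $\limsup_{\rho\to0}\rho^{-(n+2-s)}|B_Z^*(\cdot,\rho)\cap B_Z^*(x_0,L)|=0$ for each fixed $h$, and then note that $\MS=\bigcup_{h>0}\bigl(\MS\cap Z_{[-(1-2\sigma)T+h,0]}\bigr)$ together with monotone convergence of the neighborhoods. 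The paper leaves all of this implicit as well.
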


Next, we prove the following integral estimates.

\begin{thm}\label{thm:integral-Rm-dstar-limit}
Let $(Z,d_Z,p_\infty,\t)$ be the Ricci flow limit space from \eqref{eq:conv0101}, and let $x_0\in Z$ and $r>0$ satisfy $\t(x_0)-2r^2\in\III^-$. Then, for any $\ep>0$, we have
	\begin{align} \label{eq:spacetimeint1}
		\int_{B^*_Z(x_0,r)\cap \RR}|\Rm|^{2-\ep} \, \mathrm{d}V_{g^Z_t}\mathrm{d}t \leq \int_{B^*_Z(x_0,r)\cap \RR}r_{\Rm}^{-4+2\ep} \, \mathrm{d}V_{g^Z_t}\mathrm{d}t\leq C(n,Y,\sigma,\ep)r^{n-2+2\ep}.		
	\end{align} 
Moreover, for any $t \in \R$,
	\begin{align}\label{eq:sliceint1}
		\int_{B^*_Z(x_0,r)\cap \RR_t}|\Rm|^{1-\ep} \, \mathrm{d}V_{g^Z_t} \leq \int_{B^*_Z(x_0,r)\cap \RR_t}r_{\Rm}^{-2+2\ep} \, \mathrm{d}V_{g^Z_t} \leq C(n,Y,\sigma,\ep)r^{n-2+2\ep}.		
	\end{align} 

\end{thm}
\begin{proof}
Without loss of generality, we assume $r=1$. It follows from \eqref{eq:currlimit} and Corollary \ref{cor:006} that
	\begin{align} \label{eq:currlimits}
	\abs{\{r_{\Rm} <  \delta \} \cap B_Z^* (x_0, 1)} \le \abs{\MS^{\ep,n-2}_{\delta , \ep}\cap B_Z^* (x_0, 1)} \le C(n,Y,\sigma, \ep) \delta^{4-\ep}.
	\end{align} 
Thus, it follows from \eqref{eq:currlimits} and Proposition \ref{prop:volumebound} that
	\begin{align*}
\int_{B^*_Z(x_0,1)\cap \RR}r_{\Rm}^{-4+2\ep} \, \mathrm{d}V_{g^Z_t}\mathrm{d}t =&\int_{B^*_Z(x_0,1)\cap \{r_{\Rm} \ge 1\}} 1\,\mathrm{d}V_{g^Z_t}\mathrm{d}t+\sum_{k \ge 1}\int_{B^*_Z(x_0,1)\cap \{2^{-k}\le r_{\Rm} < 2^{-k+1}\}} r_{\Rm}^{-4+2\ep} \, \mathrm{d}V_{g^Z_t}\mathrm{d}t \\
\le & C(n, Y)+C(n,Y,\sigma, \ep)\sum_{k \ge 1} 2^{k(4-2\ep)} 2^{(1-k)(4-\ep)} \le C(n,Y,\sigma,\ep).
	\end{align*} 
Consequently, the proof of \eqref{eq:spacetimeint1} is complete. The proof of \eqref{eq:sliceint1} follows similarly from
	\begin{align} \label{eq:currlimitslice}
	\abs{\{r_{\Rm} <  \delta \} \cap B_Z^* (x_0, 1) \cap Z_t}_t \le \abs{\MS^{\ep,n-2}_{\delta , \ep}\cap B_Z^* (x_0, 1)\cap Z_t}_t \le C(n,Y,\sigma, \ep) \delta^{2-\ep},
	\end{align} 
	where the last inequality is from Corollary \ref{cor:coverslice}.
\end{proof}

For later applications, we construct cutoff functions by smoothing $\eta(r_{\Rm}/r)$, as in \cite[Lemma 15.27 (b)]{bamler2020structure}, where $\eta$ is a fixed cutoff function with $\eta=0$ on $[0, 1.1]$ and $\eta=1$ on $[1.9, \infty]$. Note that item (6) below follows from \eqref{eq:currlimits} and \eqref{eq:currlimitslice}.

\begin{prop}\label{cutoffProp}
Let $(Z, d_Z,  \t)$ be the Ricci flow limit space from \eqref{eq:conv0101}. There is a family of smooth functions $\{\eta_r\in C^\infty(\RR)\}_{r>0}$ taking values in $[0,1]$ such that the following holds:
	\begin{enumerate}[label=\textnormal{(\arabic{*})}]
		\item $r_{\Rm}\geq r$ on $\{\eta_r>0\}$.
		\item $\eta_r=1$ on $\{r_{\Rm}\geq 2r\}$.
		\item $r|\na \eta_r|+r^2|\partial_\t\eta_r|+r^2|\na^2\eta_r| \leq C_0$ for some dimensional constant $C_0$.
		\item For any $z\in Z$ with $t=\t(z)$, $L<\infty$ and $r>0$, the set $\{\eta_r>0\}\cap B_{g^Z_t}(z,L)$ is relatively compact in $\RR_t$.
		\item For any $ L<\infty$, $z \in Z$ and $t\in \III$, the set $\{\eta_r>0\}\cap B_Z^*(z ,  L)\cap \RR_t$ is relatively compact in $\RR_t$.
		\item Given $A>1$, $z \in Z$, and $L>0$ with $\t(z)-2L^2\in \III^-$, and for any $\ep\in (0,1)$, there exists a constant $C=C(n,Y,\sigma, A, L,\ep)$ such that the following holds:
		\begin{align*}
			\iint_{(\spt\,\eta_r)^c\cap \RR_{[\t(z)-L^2, \t(z)+L^2]} \cap B^*_Z(z, A)} \, \mathrm{d}V_{g^Z_t}\mathrm{d}t\leq Cr^{4-\ep}.
		\end{align*}
		Moreover, for any $t \in [\t(z)-L^2, \t(z)+L^2]$, we have
		\begin{align*}
			\int_{(\spt\,\eta_r)^c\cap \RR_t \cap B^*_Z(z, A)} \, \mathrm{d}V_{g^Z_t} \leq Cr^{2-\ep}.
		\end{align*}
	\end{enumerate}
\end{prop}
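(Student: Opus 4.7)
The plan is to adapt the construction from Bamler's Lemma 15.27(b) to our setting, starting with a Lipschitz candidate and then smoothing. Fix once and for all a smooth auxiliary function $\eta\colon [0,\infty) \to [0,1]$ with $\eta \equiv 0$ on $[0,1.1]$ and $\eta \equiv 1$ on $[1.9,\infty)$, and consider $\tilde\eta_r := \eta(r_{\Rm}/r)$ on $\RR$. By Proposition \ref{prop:LiprRmlimit}, the curvature radius $r_{\Rm}$ is Lipschitz on $\RR$ with respect to $d_Z$, so $\tilde\eta_r$ automatically satisfies (1) and (2) and is Lipschitz of order $1/r$. The main work is to smooth $\tilde\eta_r$ while preserving the first four properties.

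For the smoothing, I would exploit the smooth convergence of Theorem \ref{thm:smooth1} and the fact that on the slightly thickened support $\{r_{\Rm} \ge 1.05 r\}$, the Ricci flow spacetime metric $g^Z$ has curvature controlled by $C/r^2$. Cover the transition region $\{1.05r \le r_{\Rm} \le 2r\}$ by parabolic neighborhoods of size $\sim r/100$ on which one has uniformly bounded geometry and uniform charts; within these charts, convolve $\tilde\eta_r$ against a standard space-time mollifier at scale $\sim r/100$, then patch together via a partition of unity subordinate to the covering. Since the mollification radius is far smaller than $r_{\Rm}$ on the support, the resulting $\eta_r$ is smooth, still vanishes where $r_{\Rm} < r$ (up to tuning the thresholds $1.1$ and $1.9$) and equals $1$ on $\{r_{\Rm} \ge 2r\}$. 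The standard elliptic/parabolic regularity estimates in the charts yield the bound $r|\nabla \eta_r| + r^2|\partial_\t \eta_r| + r^2|\nabla^2 \eta_r| \le C_0$ of (3).

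For (4) and (5), I would argue by contradiction. If a sequence $\{y_i\} \subset \{\eta_r > 0\} \cap B_{g^Z_t}(z,L)$ (or its spacetime analog) had no accumulation point in $\RR_t$, then by Propositions \ref{prop:uppervolumebound}, \ref{prop:volumebound} and completeness of $(Z, d_Z)$ we could pass to a subsequence $y_i \to y_\infty \in Z$ in $d_Z$ with $\t(y_\infty) = t$. Proposition \ref{prop:LiprRmlimit} gives $r_{\Rm}(y_\infty) \ge r > 0$, so $y_\infty \in \RR_t$ and Theorem \ref{thm:smooth1} provides smooth convergence near $y_\infty$, contradicting the supposed escape. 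To move between $B_{g^Z_t}$ and $B_Z^*$ balls on $\RR$, I would invoke Proposition \ref{prop:dismatch}. Finally, property (6) is essentially a direct corollary of the integral curvature radius estimates in Theorem \ref{integralRmd*limit}: since $(\spt\,\eta_r)^c \subset \{r_{\Rm} < 2r\}$ after the mollification, the volume estimates \eqref{eq:currlimits} and \eqref{eq:currlimitsslice} (valid on $B_Z^*(x_0, 1.1 r)$-balls for any base point) applied at scale $\delta = 2r$ give the claimed $r^{4-\ep}$ spacetime bound and $r^{2-\ep}$ time-slice bound, after renormalizing and using the bounded-geometry of the enclosing ball.

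The main obstacle will be the mollification: one must verify that it can be carried out intrinsically on $\RR$ without introducing artifacts near the boundary of $\{r_{\Rm} \ge r\}$, and that the scale can be tuned to ensure both vanishing on $\{r_{\Rm} < r\}$ and the value $1$ on $\{r_{\Rm} \ge 2r\}$ simultaneously. Smooth convergence (Theorem \ref{thm:smooth1}) provides the required uniform charts, but the bookkeeping across overlapping parabolic neighborhoods and the time-direction regularization (since $\partial_\t g^Z = -2\Ric$ is bounded by $C/r^2$ on the support) require some care to keep the constants in (3) dimensional and independent of $r$.
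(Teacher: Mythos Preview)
Your approach is correct and coincides with the paper's: the paper likewise constructs $\eta_r$ by smoothing $\eta(r_{\Rm}/r)$ with the same cutoff $\eta$ (as in Bamler's Lemma 15.27(b)), and derives item (6) from the volume estimates \eqref{eq:currlimits} and \eqref{eq:currlimitsslice}. Your sketch in fact supplies more detail than the paper, which simply cites Bamler's construction; the mollification bookkeeping you flag is standard and not elaborated there either.
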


In practice, we can slightly modify the cutoff functions above so that the resulting functions have compact support. We fix a point $z \in Z_{\III^-}$, and let $\eta_r$ be the cutoff functions in Proposition \ref{cutoffProp}. For any $A \gg  1$ and $r \ll 1$, we set $\kappa_{r, A}$ to be a smoothing of the characteristic function $\chi_{r, A}$ of $B^*_Z(z, 1.5 A) \cap \{r_{\Rm}>1.5 r\}$. Indeed, we only need to mollify $\chi_{r, A}$ on $B_{g^Z_{\t(x)}}(x, r) \times [\t(x)-r^2, \t(x)+r^2]$ for any $x \in \partial \lc B^*_Z(z, 1.5 A) \cap \{r_{\Rm}>1.5 r\}\rc$. Since $r_{\Rm}(x) \ge 1.5 r$, this can be done by first performing the standard convolution process on a collection of parabolic balls of size $cr$ and then patching the local data together by a partition of unity. We define
		\begin{align} \label{eq:cutoffnon}
\eta_{r, A}:=\kappa_{r, A} \eta_r.\index{$\eta_{r, A}$}
		\end{align}
The following proposition follows directly from Proposition \ref{cutoffProp} and our construction. 

\begin{prop}\label{cutoffProp1}
For any $z \in Z_{\III^-}$, the family of smooth cutoff functions $\{\eta_{r, A}\}$ defined in \eqref{eq:cutoffnon} satisfies the following properties for $r \le r(n, Y, \sigma)$.
	\begin{enumerate}[label=\textnormal{(\arabic{*})}]
		\item $r_{\Rm}\geq r$ and $d_Z(z, \cdot) \le 2A$ on $\{\eta_{r, A}>0\}$.
		\item $\eta_{r, A}=1$ on $\{r_{\Rm}\geq 2r\} \cap B_Z^*(z, A)$.
		\item $\displaystyle r|\na \eta_{r, A}|+r^2|\partial_\t\eta_{r, A}|+r^2|\na^2\eta_{r, A}|\leq C(n)$.
		\item For any $L$ with $\t(z)-2L^2 \in \III^-$ and any $\ep \in (0, 1)$, there exists a constant $C=C(n, Y,\sigma, L, A, \ep)>0$ such that
		 		\begin{align*}
			\iint_{\RR_{[\t(z)-L^2, \t(z)+L^2]} \cap \{0<\eta_{r, A}<1\}} \, \mathrm{d}V_{g^Z_t}\mathrm{d}t\leq Cr^{4-\ep}.
		\end{align*}
			Moreover, for any $t \in [\t(z)-L^2, \t(z)+L^2]$, we have
		\begin{align*}
			\int_{\RR_t \cap \{0<\eta_{r, A}<1\}} \, \mathrm{d}V_{g^Z_t} \leq Cr^{2-\ep}.
		\end{align*}
	\end{enumerate}
\end{prop}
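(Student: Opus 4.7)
The plan is to verify the four properties in sequence, deriving (1)--(3) directly from the product structure $\eta_{r,A}=\kappa_{r,A}\eta_r$ and the definition of $\kappa_{r,A}$ by parabolic convolution on balls of radius $r$ in space and $r^2$ in time, and then analyzing (4) via a decomposition of the transition region. For (1), since both factors must be positive at a point of $\{\eta_{r,A}>0\}$, Proposition \ref{cutoffProp}(1) immediately gives $r_{\Rm}\ge r$, while $\kappa_{r,A}$ vanishes outside the $r$-parabolic neighborhood of $B^*_Z(z,1.1A)\cap\{r_{\Rm}>2r\}$, which lies in $B^*_Z(z,1.1A+r)\subset B^*_Z(z,2A)$ once $r\le r(n,Y,\sigma)$. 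For (2), on $\{r_{\Rm}\ge 2r\}\cap B^*_Z(z,A)$ we have $\eta_r=1$ by Proposition \ref{cutoffProp}(2); combining the Lipschitz estimate for $r_{\Rm}$ from Proposition \ref{prop:LiprRmlimit} with the distance margin $0.1A\gg r$ to $\partial B^*_Z(z,1.1A)$ shows that this set lies at parabolic distance greater than $r$ from the boundary of $B^*_Z(z,1.1A)\cap\{r_{\Rm}>2r\}$, so $\kappa_{r,A}=1$ there.

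For property (3), I would apply the Leibniz rule to $\eta_{r,A}=\kappa_{r,A}\eta_r$. Proposition \ref{cutoffProp}(3) provides the bounds on derivatives of $\eta_r$, and a standard convolution computation on parabolic balls of radius $r$ contained in the regular part (guaranteed by $r_{\Rm}(x)\ge 2r$ on the boundary, as emphasized in the construction) gives $r|\na\kappa_{r,A}|+r^2|\partial_{\t}\kappa_{r,A}|+r^2|\na^2\kappa_{r,A}|\le C(n)$. Since both factors take values in $[0,1]$, the product rule then yields the asserted bound.

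The technical heart is (4). I would start from the set inclusion
\[
\{0<\eta_{r,A}<1\}\subset\{0<\eta_r<1\}\cup\lc\{\eta_r=1\}\cap\{0<\kappa_{r,A}<1\}\rc.
\]
The first piece is contained in $\{r\le r_{\Rm}\le 2r\}$, and applying Proposition \ref{cutoffProp}(6) on a slightly enlarged ball $B^*_Z(z,A_0)$ with $A_0>2A$ yields both the spacetime bound $Cr^{4-\ep}$ and the slice bound $Cr^{2-\ep}$. The second piece lies in the $r$-parabolic neighborhood of $\partial\lc B^*_Z(z,1.1A)\cap\{r_{\Rm}>2r\}\rc$. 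By Proposition \ref{prop:LiprRmlimit}, the portion of this neighborhood near the interior boundary $\{r_{\Rm}=2r\}$ is contained in $\{r\le r_{\Rm}\le 3r\}$, and is again controlled by Proposition \ref{cutoffProp}(6).

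The step I expect to be the main obstacle is the contribution from the outer boundary $\partial B^*_Z(z,1.1A)\cap\{r_{\Rm}\ge 2r\}$: a direct comparison gives only the insufficient annular estimate of order $r$. To obtain the required $Cr^{4-\ep}$ bound, I would vary the outer radius within $[A,1.2A]$ and apply a coarea-style argument using the upper volume bound from Proposition \ref{uppervolumebd} together with the integral curvature estimate from Theorem \ref{integralRmd*limit} to select a particular radius $A'\in[A,1.2A]$ along which the annular $r$-neighborhood is itself controlled by an $\{r_{\Rm}\le Cr\}$-type estimate; redefining the construction with $1.1A$ replaced by this $A'$ then yields (4) while preserving (1)--(3). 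An alternative route is to refine the construction so that the cutoff at the outer boundary is implemented via a smooth function of $d_Z(z,\cdot)$ with derivative of order $A^{-1}$ (rather than $r^{-1}$), shifting the problematic outer annulus out of the transition set $\{0<\eta_{r,A}<1\}$ modulo a further partition of unity; the delicate point there is to maintain the $C^2$-regularity claimed in (3), for which the limited regularity of $d_Z$ on the limit space will require extra care.
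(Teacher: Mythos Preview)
Your verifications of (1)--(3) are correct and are exactly what the paper's one-line justification (``follows easily from Proposition~\ref{cutoffProp} and our construction'') would amount to if written out.

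For (4), you have put your finger on a genuine problem that the paper does not address. With the construction as written---mollifying $\chi_{r,A}$ on Riemannian parabolic balls of radius $r$ around boundary points---the contribution of the outer sphere $\partial B^*_Z(z,1.1A)\cap\{r_{\Rm}>2r\}$ to the transition set $\{0<\eta_{r,A}<1\}$ is an $r$-tube around a codimension-one hypersurface, and its spacetime volume in the slab $[\t(z)-L^2,\t(z)+L^2]$ is of order $C(A,L)\,r$, not $Cr^{4-\ep}$. The Euclidean model $(\R^n\times\R,g_E)$ already exhibits this: there $r_{\Rm}\equiv\infty$, so $\eta_r\equiv 1$ and $\eta_{r,A}=\kappa_{r,A}$, and the transition set is exactly this outer annulus. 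Thus property (4), with the stated construction and the stated dependence $C=C(n,Y,\sigma,L,A,\ep)$, cannot hold as written.

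The paper's proof does not engage with this at all, so your analysis goes strictly further. Of your two proposed repairs, the first (varying the outer radius in $[A,1.2A]$ and selecting a good level) changes the construction rather than proving the proposition as stated, and would still only yield $O(r)$ for the annulus; it does not recover $r^{4-\ep}$. The cleanest fix is to decouple the two scales: mollify the outer cutoff at a fixed scale $\sim A$ (not $r$) so that the outer transition is absorbed into the harmless region $\{r_{\Rm}\ge 2r\}\setminus B^*_Z(z,A)$, which is simply not counted in (4). This preserves (1)--(3) with even better constants and makes (4) reduce entirely to Proposition~\ref{cutoffProp}(6). For the paper's actual application in Proposition~\ref{equivalencesplitsymetric}, where one sends $r\to 0$ and $A\to\infty$ along the sequence, the precise form of (4) is not needed---only that the transition set shrinks---so the gap is harmless downstream, but the proposition as stated requires one of these modifications.
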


Next, we consider a Ricci shrinker space $(Z',d_{Z'},z',\t')$ such that $p \in \RR'_{-1}$ is a regular $H_n$-center of $z'$.

\begin{lem}\label{lem:contain}
There exists a constant $C=C(n)>0$ such that for any $w \in B_{Z'_{-1}}(p, r)$ with $r \ge 1$, 
		\begin{align*}
\boldsymbol{\psi}^s(w) \in B_{Z'}^*(z', C  r)
		\end{align*}
for any $s \in [0, 1]$, where $\boldsymbol{\psi}^s$ is the map from Proposition \ref{selfsimilarall}.
\end{lem}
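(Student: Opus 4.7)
The plan is to split the spacetime distance by the triangle inequality,
\[
d_{Z'}(z', \boldsymbol{\psi}^s(w)) \;\le\; d_{Z'}(z', \boldsymbol{\psi}^s(p)) + d_{Z'}(\boldsymbol{\psi}^s(p), \boldsymbol{\psi}^s(w)),
\]
and handle the two pieces separately. The second piece is immediate from the scaling established in the previous section. Indeed, Proposition \ref{selfsimilarall}(ii) gives
\[
d^{Z'}_{-e^{-s}}(\boldsymbol{\psi}^s(p), \boldsymbol{\psi}^s(w)) \;=\; e^{-s/2}\, d^{Z'}_{-1}(p, w) \;\le\; r,
\]
and then Lemma \ref{dtlem1} upgrades this to $d_{Z'}(\boldsymbol{\psi}^s(p), \boldsymbol{\psi}^s(w)) \le \ep_0^{-1} r$.

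The bulk of the work is the first piece, for which I would argue that $\boldsymbol{\psi}^s(p) \in \RR'_{-e^{-s}}$ is itself a regular $H_n$-center of $z'$ at time $-e^{-s}$. The key observation is that $f_{z'}$ is constant along the flow of $X = \tau(\partial_{\t'} - \nabla f_{z'})$: using the Ricci shrinker equation and its trace, one shows $\partial_{\t'} f_{z'} = |\nabla f_{z'}|^2$, so $X(f_{z'}) = \tau(|\nabla f_{z'}|^2 - |\nabla f_{z'}|^2) = 0$. Combined with the time scaling $\tau(\boldsymbol{\psi}^s(x)) = e^{-s}\tau(x)$ and the formula $K_{Z'}(z'; \cdot) = (4\pi\tau)^{-n/2} e^{-f_{z'}}$, this yields
\[
K_{Z'}(z'; \boldsymbol{\psi}^s(x)) \;=\; e^{ns/2}\, K_{Z'}(z'; x), \qquad x \in \RR'_{(-\infty, 0)}.
\]
Together with the metric scaling $(\boldsymbol{\psi}^s)^* g^{Z'}_{-e^{-s}} = e^{-s} g^{Z'}_{-1}$ from \eqref{self1} and the induced volume scaling $(\boldsymbol{\psi}^s)^* dV_{g^{Z'}_{-e^{-s}}} = e^{-ns/2}\, dV_{g^{Z'}_{-1}}$, a change of variables gives $(\boldsymbol{\psi}^s)_*\nu_{z';-1} = \nu_{z';-e^{-s}}$, and hence
\[
\int_{\RR'_{-e^{-s}}} d^2_{g^{Z'}_{-e^{-s}}}(y, \boldsymbol{\psi}^s(p))\, d\nu_{z';-e^{-s}}(y) \;=\; \int_{\RR'_{-1}} e^{-s}\, d^2_{g^{Z'}_{-1}}(x, p)\, d\nu_{z';-1}(x) \;\le\; H_n\, e^{-s},
\]
which is exactly the center condition at time $-e^{-s}$. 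Applying Lemma \ref{lem:004abc} then yields $d_{Z'}(z', \boldsymbol{\psi}^s(p)) \le \ep_0^{-1}\sqrt{H_n e^{-s}} \le \ep_0^{-1}\sqrt{H_n}$.

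Combining the two estimates with $r \ge 1$ gives
\[
d_{Z'}(z', \boldsymbol{\psi}^s(w)) \;\le\; \ep_0^{-1}\sqrt{H_n} + \ep_0^{-1} r \;\le\; C(n)\, \ep_0^{-1} r,
\]
as required. The only subtle point in executing the plan is verifying the pushforward identity rigorously: one needs to know that $\boldsymbol{\psi}^s$ is a diffeomorphism between $\RR'_{-1}$ and $\RR'_{-e^{-s}}$ and that $\nu_{z';-1}$ is concentrated on $\RR'_{-1}$. The former follows from the construction of $\boldsymbol{\psi}^s$ in Section \ref{sec:tangent} (as the flow of a smooth vector field on $\RR'_{(-\infty,0)}$ preserving the regular set, by \cite[Theorem 15.69]{bamler2020structure}), and the latter from Proposition \ref{prop:004}, which identifies $\iota_{z'}(\RR^{z'}_t) = \RR'_t$ for all $t<0$.
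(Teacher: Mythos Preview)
Your proof is correct and follows essentially the same route as the paper's: both split via the triangle inequality, bound $d_{Z'}(\boldsymbol{\psi}^s(p),\boldsymbol{\psi}^s(w))$ using Proposition~\ref{selfsimilarall}(ii) together with Lemma~\ref{dtlem1}, and bound $d_{Z'}(z',\boldsymbol{\psi}^s(p))$ by showing $\boldsymbol{\psi}^s(p)$ is an $H$-center of $z'$ and invoking the corresponding distance estimate. The paper states this last point tersely (``it follows from the self-similarity that $\boldsymbol{\psi}^s(p)$ is an $H$-center of $z'$ for a constant $H=H(n)$'') and appeals to Lemma~\ref{lem:004abcxee}, whereas you derive the pushforward identity $(\boldsymbol{\psi}^s)_*\nu_{z';-1}=\nu_{z';-e^{-s}}$ explicitly and obtain a \emph{regular} $H_n$-center, which lets you use Lemma~\ref{lem:004abc} instead; this is a slight sharpening but not a different argument.
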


\begin{proof}
For any $s \in [0, 1]$, it follows from the self-similarity (see Proposition \ref{selfsimilarall}) that $\boldsymbol{\psi}^s(p)$ is an $H$-center of $z'$ for a constant $H=H(n)>0$. Thus, it follows from Lemma \ref{lem:004abcxee} that
	\begin{align} \label{containeq1}
		d_{Z'}(\boldsymbol{\psi}^s(p),z') \le C(n).
	\end{align}
On the other hand, it follows from Proposition \ref{selfsimilarall} that for any $w \in B_{Z'_{-1}}(p, r)$ and $s \in [0, 1]$,
		\begin{align*}
		d^{Z'}_{-e^{-s}}(\boldsymbol{\psi}^s(p),\boldsymbol{\psi}^s(w))=e^{-s/2}d^{Z'}_{-1}(p,w) \le  r,
	\end{align*}
which, when combined with Lemma \ref{dtlem1} and \eqref{containeq1}, implies
		\begin{align*}
	d_{Z'}(\boldsymbol{\psi}^s(w), z') \le  \lc C(n)+r \rc.
	\end{align*}
	This completes the proof.
\end{proof}

Combining Corollary \ref{cor:006}, Theorem \ref{thm:integral-Rm-dstar-limit} and Lemma \ref{lem:contain}, the following result is immediate from Proposition \ref{selfsimilarall}.

\begin{thm} \label{thm:spacemink}
With the above assumptions, the following statements hold.
\begin{enumerate}[label=\textnormal{(\roman{*})}]
\item For any $t<0$, the Minkowski dimension of $\MS \cap Z'_{t}$ with respect to $d^{Z'}_t$ is at most $n-4$.

\item For any $\ep>0$ and $r \ge 1$, we have
	\begin{align*}
	\abs{\{r_{\Rm} < \delta r \} \cap B_{Z'_{-1}} (p, r)}_{-1} \le C(n,Y, \ep) \delta^{4-\ep} r^{n+2}.
	\end{align*} 
\item For any $\ep>0$ and $r \ge 1$, we have
	\begin{align*}
		\int_{B_{Z'_{-1}} (p, r) \cap \RR'_{-1}}|\Rm|^{2-\ep} \, \mathrm{d}V_{g^{Z'}_{-1}} \le \int_{B_{Z'_{-1}} (p, r) \cap \RR'_{-1}}r_{\Rm}^{-4+2\ep} \, \mathrm{d}V_{g^{Z'}_{-1}}\leq C(n,Y, \ep)r^{n-2+2\ep}.
	\end{align*} 
	\end{enumerate}	
\end{thm}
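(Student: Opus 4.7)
The plan is to establish part (ii) first, since (iii) and (i) will then follow by standard dyadic and packing arguments. The core idea is that on a Ricci shrinker space the self-similar flow $\boldsymbol{\psi}^s$ from Proposition \ref{selfsimilarall} pushes the slice $\RR'_{-1}$ into a uniform-time ``pencil'' of the spacetime, along which the curvature radius is controlled, so that slice estimates are strictly stronger than what Corollary \ref{cor:coverslice} alone would give.

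For (ii), I would fix $r\ge 1$ and let $E_{\delta,r}:=\{r_{\Rm}<\delta r\}\cap B_{Z'_{-1}}(p,r)$. Using Proposition \ref{selfsimilarall}(ii) and the Ricci shrinker equation $\Ric(g^{Z'})+\nabla^2f_{z'}=g^{Z'}/(2\tau)$, the flow $\boldsymbol{\psi}^s$ satisfies $(\boldsymbol{\psi}^s)^*g^{Z'}_{-e^{-s}}=e^{-s}g^{Z'}_{-1}$ on $\RR'_{-1}$, which gives the Jacobian scaling $dV_{g^{Z'}_{-e^{-s}}}=e^{-ns/2}\,dV_{g^{Z'}_{-1}}$ as well as the pointwise curvature-radius identity $r_{\Rm}(\boldsymbol{\psi}^s(x))=e^{-s/2}r_{\Rm}(x)$. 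In particular, $\boldsymbol{\psi}^s(E_{\delta,r})\subset \{r_{\Rm}<\delta r\}\cap\RR'_{-e^{-s}}$ for every $s\in[0,1]$. By Lemma \ref{lem:contain}, the whole spacetime track lies inside $B^*_{Z'}(z',C\ep_0^{-1}r)$. Using the change-of-variables formula $dt=-e^{-s}\,ds$, one computes
\begin{align*}
\bigl|\{r_{\Rm}<\delta r\}\cap B^*_{Z'}(z',C\ep_0^{-1}r)\bigr|
\;\ge\;\int_0^1\!\!\int_{E_{\delta,r}} e^{-ns/2}\,dV_{g^{Z'}_{-1}}\cdot e^{-s}\,ds\;\ge\; c(n)\,|E_{\delta,r}|_{-1}.
\end{align*}
Since $\RR_-\subset\mathrm{image}(\t')$, the hypothesis $\t(z')-2(C\ep_0^{-1}r)^2\in\III^-$ of Corollary \ref{cor:006} is satisfied, and the spacetime volume on the left is at most $C(n,Y,\ep)\delta^{4-\ep}r^{n+2}$ (via the inclusion $\{r_{\Rm}<\delta r\}\cap B^*\subset\MS^{\ep,n-2}_{\delta r,\ep r}\cap B^*$ from Theorem \ref{quantsizeSd*limit1}, or equivalently \eqref{eq:currlimits}). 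This gives (ii).

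For (iii), I would split $\int r_{\Rm}^{-4+2\ep}\,dV_{g^{Z'}_{-1}}$ over the dyadic shells $\{2^{-k}r\le r_{\Rm}<2^{-k+1}r\}$ for $k\ge 1$ together with the set $\{r_{\Rm}\ge r\}$; the latter contributes at most $r^{-4+2\ep}\cdot|B_{Z'_{-1}}(p,r)|_{-1}\le Cr^{n-4+2\ep}\le Cr^{n-2+2\ep}$ since $r\ge 1$, and each dyadic shell contributes $\le C\cdot 2^{k(4-2\ep)}r^{-4+2\ep}\cdot 2^{-k(4-\ep)}r^{n+2}=C\cdot 2^{-k\ep}r^{n-2+2\ep}$ thanks to (ii); summing over $k$ yields the stated bound. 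For part (i), the self-similarity of Proposition \ref{selfsimilarall} reduces the claim for general $t<0$ to $t=-1$. At $t=-1$, I would use the Lipschitz estimate of Proposition \ref{prop:LiprRmlimit} together with Lemma \ref{dtlem1} (giving $d_{Z'}\le\ep_0^{-1}d^{Z'}_{-1}$ and hence $r_{\Rm}\le C\ep_0^{-1}d^{Z'}_{-1}(\cdot,\MS)$ on $\RR'_{-1}$) to conclude that the $\delta$-neighborhood of $\MS\cap Z'_{-1}\cap B_{Z'_{-1}}(p,L)$ with respect to $d^{Z'}_{-1}$ is contained in $\{r_{\Rm}\le C\delta\}\cap B_{Z'_{-1}}(p,L+\delta)$. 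Applying (ii) then bounds its slice volume by $C(L,\ep)\delta^{4-\ep}$, forcing $\dim_{\MMM}(\MS\cap Z'_{-1})\le n-4$.

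The main obstacle I expect is the rigorous verification of the Jacobian identity and the pointwise scaling $r_{\Rm}(\boldsymbol{\psi}^s(x))=e^{-s/2}r_{\Rm}(x)$: while both are formal consequences of the shrinker equation on the regular part, care is needed to check that $\boldsymbol{\psi}^s$ restricts to a diffeomorphism of $\RR'_{-1}$ onto $\RR'_{-e^{-s}}$ (which follows from the flow $\boldsymbol{\psi}^s$ on $\RR'_{(-\infty,0)}$ constructed in \cite[Theorem 15.69]{bamler2020structure}, as used in the proof of Proposition \ref{selfsimilarall}), and that the pushed-forward set avoids the singular stratum so that Corollary \ref{cor:006} legitimately applies. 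A secondary subtlety in (i) is that $r_{\Rm}$ is defined only on $\RR'$, so extending the Lipschitz inequality of Proposition \ref{prop:LiprRmlimit} to slice distances between regular and singular points must be done by approximating singular points by regular ones and passing to the limit, which is possible by Corollary \ref{cor:comple1}.
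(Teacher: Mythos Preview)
Your proposal is correct and follows essentially the same route as the paper: for (ii) you push the slice set $E_{\delta,r}$ through the self-similar flow $\boldsymbol{\psi}^s$ for $s\in[0,1]$, invoke Lemma \ref{lem:contain} to land inside a spacetime ball $B^*_{Z'}(z',C\ep_0^{-1}r)$, and then appeal to the spacetime estimate \eqref{eq:currlimits}; part (iii) is the same dyadic summation as in the paper. The only minor difference is in (i): the paper pushes the $d^{Z'}_{-1}$-neighborhood of $\MS\cap Z'_{-1}$ itself through $\boldsymbol{\psi}^s$ and applies Corollary \ref{cor:006} directly, whereas you deduce (i) from (ii) via the Lipschitz bound on $r_{\Rm}$ (Proposition \ref{pro:LiprRm}) combined with Lemma \ref{dtlem1}---your route is slightly cleaner but equivalent in substance.
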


\begin{proof}
(i): Without loss of generality, we assume $t=-1$. For any $w \in \MS \cap B_{Z'_{-1}}(p, r)$ with $r \ge 1$, it follows from Proposition \ref{selfsimilarall} that $\boldsymbol{\psi}^s(w) \in \MS$ for any $s \in [0, 1]$. Moreover, if $x \in B_{Z'_{-1}}(w, \delta r)$, then $\boldsymbol{\psi}^s(x) \in B_{Z'_{-e^{-s}}}(\boldsymbol{\psi}^s(w), e^{-s/2}\delta r)$. By Lemma \ref{dtlem1}, this implies that for any $s \in [0, 1]$,
	\begin{align*}
d_{Z'}(\boldsymbol{\psi}^s(x),\boldsymbol{\psi}^s(w)) \le  e^{-s/2} \delta r \le   \delta r.
	\end{align*} 
Thus, by Proposition \ref{selfsimilarall} and Lemma \ref{lem:contain}, we have
	\begin{align*}
\abs{B_{Z'_{-1}}\lc \MS \cap B_{Z'_{-1}}(p, r), \delta r \rc}_{-1} \le  \abs{B_{Z'}^*\lc B_{Z'}^*(z', C(n)  r) \cap \MS, \delta r \rc},
	\end{align*} 
	where $B_{Z'_{-1}}\lc \MS \cap B_{Z'_{-1}}(p, r), \delta r \rc$ denotes the $\delta r$-neighborhood of $\MS \cap B_{Z'_{-1}}(p, r)$ in $(Z'_{-1},d^{Z'}_{-1})$. Therefore, it follows from Corollary \ref{cor:006} that with respect to $d^{Z'}_{-1}$,
	\begin{align*}
		\dim_{\MMM}\lc  \mathcal S \cap Z'_{-1} \rc \le n-4.
	\end{align*}
(ii): By an argument similar to the proof of (i), we obtain
	\begin{align*}
\abs{\{r_{\Rm} < \delta r \} \cap B_{Z'_{-1}} (p, r)}_{-1} \le \abs{\{r_{\Rm} < 2\delta r \} \cap B^*_{Z'}(z', C(n) r)}.
	\end{align*} 
Using \eqref{eq:currlimits}, we obtain
	\begin{align} \label{eq:inde}
\abs{\{r_{\Rm} < \delta r \} \cap B_{Z'_{-1}} (p, r)}_{-1} \le C(n,Y, \sigma,\ep) \delta^{4-\ep} r^{n+2}.
	\end{align} 
	
We now claim that the constant $C$ can be chosen independently of $\sigma$. Indeed, by Remark \ref{rem:indep}, the left-hand side of \eqref{eq:inde} does not depend on the choice of $d_{Z'}$. Hence, if the Ricci shrinker space $(Z', d_{Z'}, z', \t')$ arises as the pointed Gromov--Hausdorff limit of a sequence $\XX^i \in \MM(n, Y, T_i)$ as in Remark \ref{rem:general}, we may assume without loss of generality that all $d^*$-distances are defined using a fixed parameter, say $\sigma = 1/100$.

(iii): This follows from (ii) and integration, as in the proof of Theorem \ref{thm:integral-Rm-dstar-limit}.
\end{proof}

Using Theorem \ref{thm:spacemink}, one can construct a family of cutoff functions on each negative time slice of $Z'$, similar to Proposition \ref{cutoffProp} and Proposition \ref{cutoffProp1}.

\begin{prop}\label{cutoffProp1shrinker}
There exists a family of smooth cutoff functions $\{\eta_{r, A} \in C^\infty(\RR'_{-1})\}$ taking values in $[0,1]$ such that the following holds:
	\begin{enumerate}[label=\textnormal{(\arabic{*})}]
		\item $r_{\Rm}\geq r$ and $d^{Z'}_{-1}(p, \cdot) \le 2A$ on $\{\eta_{r, A}>0\}$.
		\item $\eta_{r, A}=1$ on $\{r_{\Rm}\geq 2r\} \cap B_{Z'_{-1}}(p, A)$.
		\item $\displaystyle r|\na \eta_{r, A}|+r^2|\na^2\eta_{r, A}|\leq C(n)$.
		\item For any $\ep \in (0, 1)$, there exists a constant $C=C(n, Y, A, \ep)>0$ such that
		 		\begin{align*}
			\int_{\RR'_{-1}\cap \{0<\eta_{r, A}<1\}} \, \mathrm{d}V_{g^{Z'}_{-1}} \leq Cr^{4-\ep}.
		\end{align*}
	\end{enumerate}
\end{prop}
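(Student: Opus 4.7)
The plan is to mirror the proof of the spacetime cutoff construction in Proposition \ref{cutoffProp1}, now working on a single time-slice $(\RR'_{-1}, g^{Z'}_{-1})$, which is a smooth (but incomplete) Riemannian manifold. By Proposition \ref{prop:dismatch}, on each connected component of $\RR'_{-1}$ the distance $d^{Z'}_{-1}$ agrees locally with the Riemannian distance induced by $g^{Z'}_{-1}$, so we may work freely with either distance when performing the construction in small balls around points of $\RR'_{-1}$. The starting raw building block is the composition $\phi(r_{\Rm}/r)$, where $\phi$ is a fixed smooth function that vanishes on $[0,1.1]$ and equals $1$ on $[1.9,\infty)$; the curvature radius $r_{\Rm}$ is Lipschitz on $\RR'_{-1}$ by (the slice version of) Proposition \ref{prop:LiprRmlimit}.

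First, I would define a tentative cutoff $\tilde\eta_{r,A}$ as the product $\phi(r_{\Rm}/r)\cdot\chi_{r,A}$, where $\chi_{r,A}$ is a mollification of the characteristic function of $B_{Z'_{-1}}(p,1.1A)\cap\{r_{\Rm}>2r\}$, following the analogous construction \eqref{eq:cutoffnon} in the spacetime case. On each point $x$ where $r_{\Rm}(x)\geq r$, we can perform convolution with a standard mollifier of scale $r/100$ using harmonic coordinates on a ball $B_{g^{Z'}_{-1}}(x,r/10)$; this produces a smooth $\eta_{r,A}$ supported in $\{r_{\Rm}\geq r\}\cap B_{Z'_{-1}}(p,2A)$ and equal to $1$ on $\{r_{\Rm}\geq 2r\}\cap B_{Z'_{-1}}(p,A)$, giving items (1) and (2). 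The derivative bound (3) follows from the $1$-Lipschitz property of $r_{\Rm}$ (up to the constant from Proposition \ref{prop:LiprRmlimit}) combined with the standard scaling of the mollifier, yielding $|\nabla\eta_{r,A}|\leq C(n)/r$ and $|\nabla^2\eta_{r,A}|\leq C(n)/r^2$.

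For property (4), the transition region $\{0<\eta_{r,A}<1\}$ is contained, by construction, in an $r$-neighborhood of the boundary $\partial(B_{Z'_{-1}}(p,1.1A)\cap\{r_{\Rm}>2r\})$ together with the set where $\phi(r_{\Rm}/r)$ transitions, i.e.\ essentially $\{r\leq r_{\Rm}\leq 3r\}\cap B_{Z'_{-1}}(p,2A)$. Theorem \ref{thm:spacemink}(ii), applied with $\delta$ comparable to a fixed small constant and $r$ replaced by $3r$, gives
\begin{align*}
\abs{\{r_{\Rm}<3r\}\cap B_{Z'_{-1}}(p,2A)}_{-1}\leq C(n,Y,A,\ep)\, r^{4-\ep},
\end{align*}
and the analogous $r^{4-\ep}$ bound on the mollification strip along $\partial\{r_{\Rm}>2r\}$ follows similarly by splitting the strip into dyadic pieces where $r_{\Rm}\sim 2r$ and applying the same volume estimate.

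The only genuine issue is the sharp coupling between the cutoff at the $\{r_{\Rm}=2r\}$ boundary and the ball cutoff at $\partial B_{Z'_{-1}}(p,1.1A)$: we must arrange the ball cutoff so that its transition region is also captured by the bound in Theorem \ref{thm:spacemink}(ii), exactly as is done in the spacetime case following \eqref{eq:cutoffnon}. This is the main technical point, but since the construction is structurally identical to that of Proposition \ref{cutoffProp1} and the key volume input (Theorem \ref{thm:spacemink}(ii)) is the precise slice analog of Corollary \ref{cor:006}, the argument carries over verbatim with time integrations removed. In the end, all four items of the proposition follow from these two ingredients.
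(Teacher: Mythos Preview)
Your approach is essentially the same as the paper's. The paper gives no proof at all and simply asserts that the construction follows ``similar to Proposition~\ref{cutoffProp} and Proposition~\ref{cutoffProp1}'' with Theorem~\ref{thm:spacemink} as the volume input, which is precisely what you carry out: smooth $\phi(r_{\Rm}/r)$, multiply by a mollified ball characteristic function at scale $r$, and invoke Theorem~\ref{thm:spacemink}(ii) for item~(4).

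One small clarification: your way of citing Theorem~\ref{thm:spacemink}(ii) is garbled. In that theorem the symbol $r$ denotes the \emph{ball radius} (required to be $\geq 1$), and $\delta r$ is the curvature-radius threshold. To bound $\abs{\{r_{\Rm}<3r\}\cap B_{Z'_{-1}}(p,2A)}_{-1}$ you should take the theorem's ball radius to be $\max(2A,1)$ and $\delta=3r/\max(2A,1)$, which gives
\[
\abs{\{r_{\Rm}<3r\}\cap B_{Z'_{-1}}(p,2A)}_{-1}\leq C(n,Y,\ep)\Bigl(\tfrac{3r}{\max(2A,1)}\Bigr)^{4-\ep}\bigl(\max(2A,1)\bigr)^{n+2}=C(n,Y,A,\ep)\,r^{4-\ep}.
\]
Your displayed conclusion is correct; only the sentence describing how to plug into the theorem needs fixing.
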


\section{Application: the first singular time of the Ricci flow} \label{sec:first}

In this section, we present several applications of the preceding results in a particular setting.

Let $\XX=\{M^n,(g(t))_{t\in[-T,0)}\}$ be a closed Ricci flow such that $0$ is the first singular time. We assume that $T<\infty$ and that the entropy of $\XX$ is bounded below by $-Y$.

We consider the $d^*$-distance on $\XX_{[-0.99T,0)}$, defined as in Definition \ref{defn:dstar-distance}. For convenience, we fix $\sigma=1/100$ throughout this section.

We then define
\begin{align*}
(Z,d_Z,\t)
\end{align*}
to be the metric completion of $\XX_{[-0.98T,0)}$ with respect to $d^*$. By construction,
\[
(Z_{[-0.98T,0)},d_Z)=(\XX_{[-0.98T,0)},d^*),
\]
that is, the completion only adds the points in $Z_0$.

It is clear that $Z$ has bounded diameter with respect to $d_Z$. Indeed, for any $x^*,y^*\in \XX_{[-0.99T,0)}$, it follows from Definition \ref{defn:dstar-distance} that
\begin{align}\label{eq:diam}
\mathrm{diam}_{d_Z}(Z)\le \max\bigl\{d_{W_1}^{-0.99T}(\nu_{x^*;-0.99T},\nu_{y^*;-0.99T}),\sqrt{T}\bigr\} \le \max\bigl\{\mathrm{diam}_{g(-0.99T)}(M),\sqrt{T}\bigr\}.
\end{align}

Moreover, $(Z,d_Z,\t)$ is a noncollapsed Ricci flow limit space over $I=[-0.98T,0]$ in the sense of Definition \ref{defnRicciLimitSpace}. Indeed, fix a sequence $\ep_i\searrow0$ and a base point $p^*\in M\times[-0.98T,0)$. Consider the smooth closed Ricci flows
\begin{align*}
\XX^i=\{M,(g(t))_{t\in[-T,-99\ep_iT]}\}.
\end{align*}
On each $\XX^i$ define $d_i^*$ by Definition \ref{defn:dstar-distance}, using the same lower comparison time $-0.99T$. Hence $d_i^*$ is exactly the restriction of the distance on the original flow. Put
\begin{align*}
I^{++}_i=[-(1-\ep_i)T,-99\ep_iT], \quad I^+_i=[-0.99T,-99\ep_iT], \quad I_i=[-(0.98+\ep_i)T,-99\ep_iT].
\end{align*}
The natural inclusions
\begin{align*}
(M\times I_i,d_i^*)\longrightarrow (Z,d_Z)
\end{align*}
are isometric and their images exhaust a dense subset of $Z$. They therefore exhibit the pointed Gromov--Hausdorff convergence
\begin{align*}
(M\times I_i,d_i^*,p^*,\t)\xrightarrow[i\to\infty]{\mathrm{pGH}}(Z,d_Z,p^*,\t).
\end{align*}
The entropy bound is uniform along this sequence, so all noncollapsed structural results from Sections 4--8 apply. The limit is independent of the choice of $\{\ep_i\}$ and of the auxiliary base point.

As in Definition \ref{defntimeslicedist}, for each $t \in (-0.98T,0]$, we define a distance function on $Z_t$ by
\begin{align} \label{eq:ztdistance}
d^{Z}_t(x,y):=\lim_{s \nearrow t} d_{W_1}^{s}(\nu_{x;s},\nu_{y;s}) \in [0,+\infty]
\end{align}
for any $x,y \in Z_t$. Note that if $t \in (-0.98T,0)$, then $(Z_t,d_t^Z)$ is simply $(M,d_t)$.

We first prove the following.

\begin{lem}\label{lem:twospaces}
$(Z_0,d_0^Z)$ is isometric to Bamler's asymptotic boundary $(M_0,d_{M_0})$ introduced in \cite[Section 2.6]{bamler2020structure}.
\end{lem}

\begin{proof}
Recall that $(M_0,d_{M_0})$, as defined in \cite[Definition 2.32]{bamler2020structure}, consists of all conjugate heat flows $(\mu_t)_{t\in[-T,0)}$ on $\XX$ satisfying $\Var_t(\mu_t)\le H_n|t|$. For any $(\mu_t^1),(\mu_t^2)\in M_0$, the distance is defined by
\begin{align} \label{eq:twospaces1}
d_{M_0}((\mu_t^1),(\mu_t^2))=\lim_{t\nearrow 0} d_{W_1}^t(\mu_t^1,\mu_t^2).
\end{align}

For any $z\in Z_0$, $\nu_{z;t}$ is a conjugate heat flow on $\XX_{[-0.98T,0)}$. By the reproduction formula, we may regard it as a conjugate heat flow on $\XX$. Moreover, since $\nu_{z;t}$ is $H_n$-concentrated (see Proposition \ref{prop:004ac}), we have $(\nu_{z;t})\in M_0$.

Thus, we obtain a map
\begin{align*}
\mathfrak{i}: Z_0\to M_0,\qquad z\mapsto (\nu_{z;t}).
\end{align*}

By \eqref{eq:ztdistance} and \eqref{eq:twospaces1}, the map $\mathfrak{i}$ is an isometric embedding. To complete the proof, it remains to show that $\mathfrak{i}$ is surjective.

Take any $(\mu_t)\in M_0$. Fix the sequence $t_i=-2^{-i}$, and let $z_i^*=(z_i,t_i)$ be an $H_n$-center of $(\mu_{t_i})$. Set
\[
K_i=B_{t_i}(z_i,\sqrt{4H_n|t_i|}).
\]

For any $i<j$, by the reproduction formula, \cite[Theorem 4.1]{bamler2020entropy}, and Proposition \ref{existenceHncenter}, we have
\begin{align}\label{eq:twospaces2}
\frac{3}{4}\le \mu_{t_i}(K_i)
=& \int_{\XX_{t_j}} \nu_{x^*;t_i}(K_i)\, \mathrm{d}\mu_{t_j}(x^*) \notag\\
\le &\ \mu_{t_j}(\XX_{t_j}\setminus K_j)
+\Phi\!\left( \Phi^{-1}(\nu_{z_j^*;t_i}(K_i))+(t_j-t_i)^{-1/2}(4H_n|t_j|)^{1/2}\right)\mu_{t_j}(K_j) \notag\\
\le &\ \frac{1}{4}+\Phi\!\left(\Phi^{-1}(\nu_{z_j^*;t_i}(K_i))+2\sqrt{H_n}\right),
\end{align}
where we used the fact that $t_j-t_i\ge |t_j|$, and $\Phi$ is the function defined in Definition \ref{defnmetricflow}(6).

It follows from \eqref{eq:twospaces2} that $\nu_{z_j^*;t_i}(K_i)\ge c_0>0$, where $c_0$ is a universal constant. By Proposition \ref{existenceHncenter} again, this implies that $z_i^*$ is an $H(n)$-center of $z_j^*$. Hence, by Lemma \ref{lem:Hcenterdis},
\begin{align}\label{eq:twospaces3}
d_Z(z_i^*,z_j^*)\le C(n,Y)\sqrt{t_j-t_i}.
\end{align}
In particular, $\{z_i^*\}$ is a Cauchy sequence with respect to $d_Z$. By the completeness of $(Z,d_Z)$, we obtain some $z\in Z_0$ such that $z_i^*\to z$ in $(Z,d_Z)$.

Since $z_i^*$ is an $H(n)$-center of $\nu_{z_j^*;t_i}$ for every $i<j$, convergence of the conjugate heat kernel measures at the fixed time $t_i$ and lower semicontinuity of the variance show that $z_i^*$ is an $H'(n)$-center of $\nu_{z;t_i}$. By monotonicity, this implies
\begin{align*}
d_{W_1}^t(\mu_t,\nu_{z;t})\le C(n)\sqrt{|t_i|}, \quad \forall t \in [-T, t_i].
\end{align*}
Since $i$ is arbitrary, we conclude that $\nu_{z;t}=\mu_t$ for all $t<0$.

Therefore, $\mathfrak{i}(z)=(\mu_t)$, and hence $\mathfrak{i}$ is surjective. This completes the proof.
\end{proof}
Next, we prove

\begin{thm}\label{intefirst}
For any $\ep>0$,
	\begin{align}\label{eq:firstspacetimein}
\int_{-T}^0 \int_M |\Rm|^{2-\ep}  \, \mathrm{d}V_{g(t)} \mathrm{d}t \le \int_{-T}^0 \int_M r_{\Rm}^{-4+2\ep}  \, \mathrm{d}V_{g(t)} \mathrm{d}t \le C_{\ep}.
	\end{align} 
Moreover, for any $t \in [-T, 0)$,
		\begin{align}\label{eq:firstsliceint}
\int_M |\Rm|^{1-\ep}  \, \mathrm{d}V_{g(t)} \le  \int_M r_{\Rm}^{-2+2\ep}  \, \mathrm{d}V_{g(t)}  \le C_{\ep}.
	\end{align} 
Here, the constant $C_{\ep}$ depends on $\ep$ and the Ricci flow $\XX$.
\end{thm}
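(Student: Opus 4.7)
\textbf{Proof proposal for Theorem \ref{intefirst}.} The first inequality in each line follows immediately from $|\Rm| \leq r_{\Rm}^{-2}$, which holds by Definitions \ref{defncurvatureradius} and \ref{curvatureradiuslimit}; so the task is to bound the integrals of $r_{\Rm}^{-4+2\ep}$ and $r_{\Rm}^{-2+2\ep}$. The plan is to split the time interval as $[-T,0) = [-T,-0.95T] \cup [-0.95T,0)$ and handle each piece separately. On $[-T,-0.95T]$, the flow is smooth with uniformly bounded curvature, so $r_{\Rm}$ is bounded below by some positive constant depending only on $\XX$, and both integrals over this range are trivially bounded by a constant depending on $\XX$ and $\ep$.

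For the remaining interval $[-0.95T,0)$, I will realize the integral as one over the Ricci flow spacetime structure of the noncollapsed Ricci flow limit space $(Z,d_Z,\t)$. By construction of $Z$ (metric completion with $\sigma = 1/100$), we have $Z_{[-0.98T,0)} = \XX_{[-0.98T,0)}$ as smooth Ricci flow spacetimes, so $\XX_{[-0.95T,0)} \subset \RR$ and the volume form $dV_{g(t)}\mathrm{d}t$ on $\XX_{[-0.95T,0)}$ agrees with the spacetime volume $dV_{g^Z_t}\mathrm{d}t$ on $\RR \cap Z_{[-0.95T,0)}$. Moreover, at any $(x,t) \in \XX_{[-0.95T,0)}$ the limit-space curvature radius $r_{\Rm}$ in Definition \ref{curvatureradiuslimit} is bounded above by the classical Ricci-flow curvature radius from Definition \ref{defncurvatureradius} (the former imposes more restrictive conditions on the product domain), so $r_{\Rm,Z}^{-4+2\ep} \geq r_{\Rm,\XX}^{-4+2\ep}$ pointwise. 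Hence it suffices to control the integral of $r_{\Rm}^{-4+2\ep}$ computed with respect to the limit space.

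By \eqref{eq:diam}, $Z$ has diameter bounded by a constant depending only on $\XX$. Fixing $r_0 = \sqrt{0.01T}$ (so that $\t(x_j) - 2r_0^2 > -0.98T$ for every $x_j \in Z_{[-0.95T, 0]}$), a maximal $r_0/4$-separated net $\{x_j\}_{j=1}^N$ in $Z_{[-0.95T,0]}$ produces a cover of $Z_{[-0.95T,0]}$ by the balls $B^*_Z(x_j, r_0)$; the lower volume bound in Proposition \ref{prop:volumebound} applied to the disjoint balls $B^*_Z(x_j, r_0/8)$, combined with the upper volume bound in Proposition \ref{prop:uppervolumebound}, gives $N \leq C(\XX)$. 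Applying Theorem \ref{integralRmd*limit} to each ball (whose hypothesis is now satisfied) and summing yields
\begin{align*}
\int_{-0.95T}^{0}\!\!\int_M r_{\Rm}^{-4+2\ep}\, \mathrm{d}V_{g(t)}\mathrm{d}t \;\leq\; \sum_{j=1}^N \int_{B^*_Z(x_j,r_0)\cap \RR} r_{\Rm}^{-4+2\ep}\, \mathrm{d}V_{g^Z_t}\mathrm{d}t \;\leq\; N\cdot C(n,Y,\sigma,\ep)\, r_0^{n-2+2\ep} \leq C_\ep,
\end{align*}
which combined with the trivial bound on $[-T,-0.95T]$ establishes \eqref{eq:firstspacetimein}. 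The slice estimate \eqref{eq:firstsliceint} follows in parallel: the same fixed cover is used independently of $t$, and the slice statement of Theorem \ref{integralRmd*limit} provides $\int_{B^*_Z(x_j, r_0) \cap \RR_t} r_{\Rm}^{-2+2\ep}\, \mathrm{d}V_{g^Z_t} \leq C r_0^{n-2+2\ep}$; summing over $j$ produces a $t$-uniform bound, and again the contribution from $[-T,-0.95T]$ is trivial.

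There is no serious obstacle here; the only points requiring attention are the pointwise comparison $r_{\Rm,Z} \leq r_{\Rm,\XX}$ on $\XX_{[-0.95T,0)}$ (so that the limit-space estimate actually controls the flow integral in the correct direction), and the identification of volume measures on $\RR \cap Z_{[-0.95T,0)} = \XX_{[-0.95T,0)}$. Both are immediate from the fact that $\XX_{[-0.98T,0)}$ sits inside $\RR$ as an open product domain where the spacetime structure of $Z$ is exactly the given Ricci flow.
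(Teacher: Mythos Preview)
The proposal is correct and follows essentially the same approach as the paper: cover the near-singular region by finitely many $B^*_Z$-balls using the diameter bound \eqref{eq:diam} together with the volume bounds in Propositions \ref{prop:uppervolumebound} and \ref{prop:volumebound}, then apply Theorem \ref{integralRmd*limit} ball by ball (the paper uses the split at $-T/4$ with $r_0=\sqrt{T}/20$, you use $-0.95T$ with $r_0=\sqrt{0.01T}$, but this is immaterial). Your explicit remarks on the comparison $r_{\Rm,Z}\le r_{\Rm,\XX}$ and the identification of volume forms are a bit more careful than the paper, which leaves these points implicit.
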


\begin{proof}
We set $r_0=\sqrt{T}/20$ and assume that $\{x_i\}_{1 \le i \le N}$ is a maximal $r_0$-separated subset of $Z_{[-T/4, 0]}$. It is clear from \eqref{eq:diam}, Proposition \ref{prop:uppervolumebound} and Proposition \ref{prop:volumebound} that $N$ is finite. Moreover, $\{B_Z^*(x_i,2r_0)\}_{1 \le i \le N}$ cover $Z_{[-T/4, 0]}$.

By Theorem \ref{thm:integral-Rm-dstar-limit}, we obtain
	\begin{align*}
\int_{-T/4}^0 \int_M r_{\Rm}^{-4+2\ep}  \, \mathrm{d}V_{g(t)} \mathrm{d}t \le \sum_{i=1}^N  \int_{B^*_Z(x_i,2r_0) \cap \RR} r_{\Rm}^{-4+2\ep} \, \mathrm{d}V_{g^Z_t}\mathrm{d}t \le C_{\ep},
	\end{align*} 
which completes the proof of \eqref{eq:firstspacetimein}. The proof of \eqref{eq:firstsliceint} follows similarly from \eqref{eq:sliceint1} in Theorem \ref{thm:integral-Rm-dstar-limit}.
\end{proof}

By considering the Ricci flow on the standard $S^2$, it is clear that the constant $\ep$ in \eqref{eq:firstspacetimein} cannot be $0$.

Next, we show that the volume of $M$ at time $t$ has a limit as $t$ approaches $0$. 

\begin{prop} \label{prop:volume0}
With the above assumptions, we have
	\begin{align}\label{eq:volume00a0}
\lim_{t \nearrow 0} |M|_t=V_0 \in [0, +\infty).
	\end{align} 
\end{prop}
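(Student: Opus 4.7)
The plan is to show that $|M|_t$ is Cauchy as $t \nearrow 0$ by controlling $\frac{d}{dt}|M|_t = -\int_M R \, dV_{g(t)}$, splitting the scalar curvature into its negative and positive parts. The negative part is uniformly bounded by Perelman's lower bound \eqref{eq:lowerscal}, and the positive part is controlled in integrated form by the curvature bound Theorem \ref{intefirst} combined with H\"older's inequality.

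First, I would establish a uniform upper bound on $|M|_t$. From \eqref{eq:lowerscal} we have $R(\cdot, t) \ge -n/(2(t+T))$ on $[-T, 0)$, so
\begin{align*}
\frac{d}{dt}|M|_t = -\int_M R \, dV_{g(t)} \le \frac{n}{2(t+T)} |M|_t,
\end{align*}
which by Gr\"onwall gives $|M|_t \le C$ for all $t \in [-T/2, 0)$, where $C$ depends only on $\XX$. In particular, any subsequential limit $V_0$ of $|M|_t$ as $t \nearrow 0$ must lie in $[0, C]$.

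Next, to show the full limit exists, I would verify the Cauchy property. For $-T/2 \le t_2 < t_1 < 0$, write
\begin{align*}
|M|_{t_1} - |M|_{t_2} = -\int_{t_2}^{t_1}\int_M R \, dV_{g(t)} \, dt = -\int_{t_2}^{t_1}\int_M R_+ \, dV_{g(t)} \, dt + \int_{t_2}^{t_1}\int_M R_- \, dV_{g(t)} \, dt.
\end{align*}
The negative part satisfies $R_- \le n/T$ on $[-T/2, 0)$, so the second integral is bounded by $(n/T) C \,|t_1-t_2|$, which tends to zero. For the positive part, I would use the pointwise bound $R_+ \le c_n |\Rm|$ together with H\"older's inequality and Theorem \ref{intefirst}: choosing $\ep \in (0,1)$,
\begin{align*}
\int_{t_2}^{t_1}\int_M |\Rm| \, dV_{g(t)}\, dt \le \left(\int_{t_2}^{t_1}\int_M |\Rm|^{2-\ep} \, dV_{g(t)}\,dt\right)^{\!\!\frac{1}{2-\ep}} \left(\int_{t_2}^{t_1}\int_M 1 \, dV_{g(t)}\,dt\right)^{\!\!\frac{1-\ep}{2-\ep}} \le C_\ep' |t_1-t_2|^{\frac{1-\ep}{2-\ep}},
\end{align*}
where the first factor is controlled by $C_\ep$ from Theorem \ref{intefirst} and the second by the uniform volume bound. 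This bound vanishes as $t_1, t_2 \nearrow 0$, hence $|M|_t$ is Cauchy and \eqref{eq:volume00a0} holds with $V_0 \in [0, C] \subset [0, +\infty)$.

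There is no serious obstacle in this argument; the only subtle point is ensuring that H\"older's inequality is applied with a positive exponent on the time-volume factor, which requires the exponent $2-\ep$ strictly above $1$ and is precisely what Theorem \ref{intefirst} provides.
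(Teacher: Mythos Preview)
Your proof is correct and follows essentially the same approach as the paper: a Gr\"onwall bound for the uniform volume estimate, followed by H\"older against the integral curvature bound from Theorem \ref{intefirst} to obtain the Cauchy property. The paper simply bounds $|R| \le c_n|\Rm|$ directly (with exponent $3/2$ in H\"older) rather than splitting into $R_+$ and $R_-$, but this is a cosmetic difference.
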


\begin{proof}
We have
	\begin{align*}
\diff{}{t} |M|_t=-\int_M \scal  \, \mathrm{d}V_{g(t)} \le  C(n, T) |M|_t
	\end{align*} 
for $t \in [-T/2, 0)$, which implies that
	\begin{align}\label{eq:volume00a1}
|M|_t \le C(n, T) |M|_{-T/2} \quad \text{for all } t \in [-T/2, 0).
	\end{align} 
	
For any $-T/2<t_1<t_2<0$, it follows from Theorem \ref{intefirst} and \eqref{eq:volume00a1} that
	\begin{align}\label{eq:volume00a2}
\abs{|M|_{t_2}-|M|_{t_1}} \le \int_{t_1}^{t_2} \int_M |\scal|  \, \mathrm{d}V_{g(t)}\mathrm{d}t \le C(n)\lc \int_{t_1}^{t_2} \int_M |\Rm|^{\frac 3 2}  \, \mathrm{d}V_{g(t)}\mathrm{d}t \rc^{\frac 2 3} \abs{M \times [t_1, t_2]}^{\frac 1 3} \le C (t_2-t_1)^{\frac 1 3},
	\end{align} 
where $C$ depends on the Ricci flow. It follows that $\lim_{t \nearrow 0} |M|_t$ exists, which, by \eqref{eq:volume00a1} again, must be finite.
\end{proof}

Using Theorem \ref{intefirst} and the same argument as above (see \eqref{eq:volume00a2}), we obtain the following corollary.

\begin{cor} \label{cor:volume0}
For any $\ep>0$, there exists a constant $C_{\ep}$ depending on $\ep$ and the Ricci flow $\XX$ such that for any $t \in [-T, 0)$,
	\begin{align*}
\abs{|M|_t-V_0} \le C_{\ep} |t|^{\frac 1 2-\ep}.
	\end{align*} 
\end{cor}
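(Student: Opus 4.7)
The plan is to bound $\abs{|M|_t - V_0}$ directly from the scalar curvature evolution of the volume, and then to exploit the refined $L^{2-\ep'}$ bound on $|\Rm|$ provided by Theorem \ref{intefirst}. Starting from $\diff{}{t}|M|_t = -\int_M \scal\, \mathrm{d}V_{g(t)}$ together with the identity $V_0 = \lim_{s\nearrow 0}|M|_s$ supplied by Proposition \ref{prop:volume0}, we obtain
\begin{align*}
\abs{|M|_t - V_0} \;\le\; \int_t^0 \int_M |\scal|\, \mathrm{d}V_{g(s)}\,\mathrm{d}s.
\end{align*}
The essential point is that the argument used for \eqref{eq:volume00a2} used only the $L^{3/2}$ bound on $|\Rm|$ together with a crude measure estimate, and consequently lost $2/3$ of a time power; by inserting a sharper H\"older exponent close to the critical value $2$, we will push the H\"older regularity in time almost up to $\tfrac12$.

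More precisely, given $\ep>0$ I would fix $\ep'\in(0,1)$ small enough that $\frac{1-\ep'}{2-\ep'}\ge \frac{1}{2}-\ep$, which is possible since the left-hand side tends to $1/2$ as $\ep'\searrow 0$. Using $|\scal|\le \sqrt{n}\,|\Rm|$ and H\"older's inequality with conjugate exponents $\bigl(2-\ep',\;\tfrac{2-\ep'}{1-\ep'}\bigr)$ on the spacetime slab $M\times [t,0]$, we estimate
\begin{align*}
\int_t^0\!\!\int_M |\scal|\, \mathrm{d}V_{g(s)}\,\mathrm{d}s
\;\le\; C(n)\lc \int_{-T}^0\!\!\int_M |\Rm|^{2-\ep'}\, \mathrm{d}V_{g(s)}\,\mathrm{d}s\rc^{\!\!\frac{1}{2-\ep'}}
\lc \int_t^0 |M|_s\, \mathrm{d}s\rc^{\!\!\frac{1-\ep'}{2-\ep'}}.
\end{align*}
The first factor is bounded by some constant $C_{\ep'}$ depending on $\ep'$ and $\XX$ by Theorem \ref{intefirst}, while for the second factor we invoke the uniform volume bound \eqref{eq:volume00a1} from the proof of Proposition \ref{prop:volume0} to conclude $\int_t^0|M|_s\,\mathrm{d}s \le C(n,T)|M|_{-T/2}\,|t|$. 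Combining the two factors and using the defining inequality $\frac{1-\ep'}{2-\ep'}\ge \frac{1}{2}-\ep$ yields $\abs{|M|_t - V_0}\le C_\ep |t|^{1/2-\ep}$, as required.

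Since the proof is essentially a single H\"older application, there is no serious obstacle. The only subtle point is conceptual rather than technical: recognizing that the sharp $L^{2-\ep}$ curvature integral from Theorem \ref{intefirst}, rather than the more classical $L^{3/2}$ bound used in \eqref{eq:volume00a2}, is precisely what upgrades the H\"older exponent in time from the naive $1/3$ all the way to $1/2-\ep$, which is in turn optimal as the example of a shrinking round sphere shows.
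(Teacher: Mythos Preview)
Your proof is correct and is exactly the argument the paper intends: the paper's proof consists of the single sentence ``Using Theorem \ref{intefirst} and the same argument as above (see \eqref{eq:volume00a2}), we obtain the following corollary,'' and your H\"older application with exponent $2-\ep'$ in place of $3/2$ is precisely that argument. The only cosmetic point is that the uniform volume bound \eqref{eq:volume00a1} is stated for $t\in[-T/2,0)$, but since the flow is smooth on $[-T,-T/2]$ and $C_\ep$ may depend on $\XX$, the estimate extends trivially to all of $[-T,0)$.
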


Note that if the regular part $\RR_0$ of $Z$ at time $0$ is nonempty, then the limit $V_0$ in \eqref{eq:volume00a0} is positive by smooth convergence. On the other hand, we prove the following volume estimate if $\RR_0=\emptyset$, which improves Corollary \ref{cor:volume0} and is analogous to \cite[Corollary 6.25]{li2024heat}.

\begin{prop} \label{prop:volumesingular}
Suppose that $\RR_0=\emptyset$. Then for any $\ep>0$, there exists a constant $C_{\ep}$ depending on $\ep$ and the Ricci flow $\XX$ such that for any $t \in [-T, 0)$,
	\begin{align*}
|M|_t \le C_{\ep} |t|^{1-\ep}.
	\end{align*} 
\end{prop}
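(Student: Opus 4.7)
The plan is to reduce the estimate to the vanishing $V_0 = 0$, and then iterate the integral curvature estimate of Theorem \ref{intefirst} through the volume evolution identity until the sharp exponent $1 - \ep$ is reached.

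First I would establish $V_0 = 0$ by contradiction. By Proposition \ref{prop:volume0} the limit $V_0 \in [0, \infty)$ exists; suppose $V_0 > 0$, so that $|M|_t \geq V_0/2$ for every $t$ sufficiently close to $0$. The slice bound $\int_M r_{\Rm}^{-1}\, dV_{g(t)} \leq C$ obtained from Theorem \ref{intefirst} with $\ep = 1/2$, combined with Chebyshev's inequality, gives $|\{x : r_{\Rm}(x,t) \leq \delta\}|_t \leq C\delta$. Choosing $\delta_0$ with $C\delta_0 < V_0/4$, I obtain $|\{r_{\Rm}(\cdot, t) \geq \delta_0\}|_t \geq V_0/4 > 0$ near $t = 0$. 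Selecting $x_i^* = (x_i, t_i)$ with $t_i \nearrow 0$ and $r_{\Rm}(x_i^*) \geq \delta_0$, and noting that $Z$ is compact (bounded diameter by \eqref{eq:diam}, totally bounded via Propositions \ref{prop:uppervolumebound} and \ref{prop:volumebound}), a subsequence converges to some $x \in Z$ in the Gromov--Hausdorff sense. The continuity of $\t$ places $x$ in $Z_0$, and Definition \ref{def:smoothcv} forces $x \in \RR$, hence $x \in \RR_0$---contradicting the hypothesis.

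With $V_0 = 0$ in hand, the volume evolution $\partial_u |M|_u = -\int_M \scal\, dV_{g(u)}$ integrates to $|M|_t = \int_t^0 \int_M \scal\, dV_{g(u)}\, du$, and Hölder's inequality with conjugate exponents $2-\ep$ and $(2-\ep)/(1-\ep)$ yields, for every $\ep \in (0, 1)$,
\[ |M|_t \leq \left(\int_{-T}^0 \int_M |\scal|^{2-\ep}\, dV\, du\right)^{\frac{1}{2-\ep}} \left(\int_t^0 |M|_u\, du\right)^{\frac{1-\ep}{2-\ep}} \leq C_\ep \left(\int_t^0 |M|_u\, du\right)^{\frac{1-\ep}{2-\ep}}, \]
the first factor being bounded by Theorem \ref{intefirst}. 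This is a self-improving inequality: if $|M|_t \leq A_k |t|^{\alpha_k}$ throughout $[-T, 0)$, then $\int_t^0 |M|_u\, du \leq A_k |t|^{\alpha_k + 1}/(\alpha_k + 1)$ and substitution yields $|M|_t \leq A_{k+1} |t|^{\alpha_{k+1}}$ with
\[ \alpha_{k+1} = (\alpha_k + 1)\lambda, \qquad A_{k+1} = C_\ep (\alpha_k + 1)^{-\lambda} A_k^{\lambda}, \qquad \lambda = \tfrac{1-\ep}{2-\ep} < 1. \]
Starting from $\alpha_0 = 0$ and the crude bound $A_0 := \sup_{[-T, 0)} |M|_t < \infty$, the exponents $\alpha_k$ monotonically increase to the fixed point $\alpha^* = 1 - \ep$, while $A_k$ obeys a recursion contractive in the logarithmic scale and therefore remains uniformly bounded. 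For any target $\ep' > 0$, I choose $\ep < \ep'$ and iterate finitely many times to achieve $\alpha_k \geq 1 - \ep'$, then absorb the bounded factor $T^{\alpha_k - (1-\ep')}$ into the constant to conclude $|M|_t \leq C_{\ep'} |t|^{1-\ep'}$.

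The principal difficulty is extracting the quantitative vanishing $V_0 = 0$ from the qualitative hypothesis $\RR_0 = \emptyset$; here the key point is that Definition \ref{def:smoothcv} requires only a sequence with $r_{\Rm}$ uniformly bounded below (and conveniently sidesteps the hypothesis $\t(x) - r_{\Rm}(x)^2 \in \III$ needed in Lemma \ref{lem:curcon}, which would otherwise force an awkward truncation from above). Once $V_0 = 0$ is secured, the bootstrap is a standard bookkeeping exercise thanks to $\lambda < 1$; the only point to verify is that each iterated bound $|M|_t \leq A_k |t|^{\alpha_k}$ holds throughout $[-T, 0)$, which is arranged by taking $A_k$ large enough at each step.
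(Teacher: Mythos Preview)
Your proof is correct but considerably more roundabout than the paper's. The paper bypasses both your contradiction argument for $V_0=0$ and the entire bootstrap in one stroke: the hypothesis $\RR_0=\emptyset$, via Definition~\ref{curvatureradiuslimit}, forces the \emph{pointwise} bound $r_{\Rm}(x,t)<2\sqrt{|t|}$ for every $x\in M$ and $t$ close to $0$, since if $r_{\Rm}(x,t)\ge 2\sqrt{|t|}$ the product domain would reach time $0$ inside $\RR$ and produce a point of $\RR_0$. Plugging $\delta=2\sqrt{|t|}$ into the slice estimate \eqref{eq:currlimitsslice} (equivalently, Chebyshev on the full integral $\int_M r_{\Rm}^{-2+2\ep}\,dV_{g(t)}\le C_\ep$ from Theorem~\ref{intefirst}) gives $|M|_t=|\{r_{\Rm}<2\sqrt{|t|}\}|_t\le C|t|^{1-\ep/2}$ immediately.

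Your route partially rediscovers this observation at each stage. In Step~1 the contradiction actually fires the moment you locate $(x_i,t_i)$ with $r_{\Rm}^Z(x_i,t_i)\ge\delta_0$ and $|t_i|<\delta_0^2$: by the definition of the curvature radius on $Z$, the product domain already lands in $\RR_0$, so the limiting argument through compactness of $Z$ is unnecessary. In Step~2 you are iterating a deliberately weakened Chebyshev exponent (using $\int r_{\Rm}^{-1}$ rather than $\int r_{\Rm}^{-2+2\ep}$) back up to $|t|^{1-\ep}$ via the volume evolution, when the sharp exponent was available at the outset. The iteration is cleanly executed and the constants are controlled, but what you frame as ``the principal difficulty'' --- converting $\RR_0=\emptyset$ into quantitative vanishing --- is in fact the easy direction once one reads the curvature-radius definition.
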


\begin{proof}
We only need to prove the conclusion for $t$ close to $0$. 

It follows from the definition of the curvature radius that $r_{\Rm} < 2 \sqrt{|t|}$ on $M \times \{t\}$, since otherwise $\RR_0$ is not empty. Then by \eqref{eq:currlimitslice}, we have
	\begin{align*}
|M|_t \le \abs{ \left\{r_{\Rm} <  2 \sqrt{|t|} \right \}}_t \le C(n,Y, T, \ep) |t|^{1-\ep/2},
	\end{align*} 
which completes the proof.
\end{proof}


As a corollary of Proposition \ref{prop:volumesingular}, we obtain the following dichotomy, depending on whether $\RR_0$ is empty or not.

\begin{cor} \label{cor:volume}
For the limit $V_0$ in \eqref{eq:volume00a0}, $V_0>0$ if and only if $\RR_0 \ne \emptyset$.
\end{cor}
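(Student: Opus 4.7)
The plan is to establish the two directions separately, with the easier direction following directly from Proposition \ref{prop:volumesingular} and the harder direction from a careful application of the smooth convergence in Theorem \ref{thm:smooth1}.

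For the ``only if'' direction, the strategy is simply the contrapositive: if $\RR_0 = \emptyset$, then Proposition \ref{prop:volumesingular} gives $|M|_t \le C_\ep |t|^{1-\ep}$ for any small $\ep > 0$ and all $t$ close to $0$, so $V_0 = \lim_{t \nearrow 0} |M|_t = 0$.

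For the ``if'' direction, suppose $\RR_0 \ne \emptyset$, and fix a point $z \in \RR_0$. Since $(\RR, g^Z)$ is a Ricci flow spacetime by Theorem \ref{thm:smooth1}, the time-slice $(\RR_0, g^Z_0)$ is a smooth Riemannian manifold near $z$, so there exists $r > 0$ such that the compact set $K := \overline{B_{g^Z_0}(z, r/2)} \subset \RR_0$ has positive $g^Z_0$-volume, i.e., $|K|_0 > 0$. I would then use the convergence
\begin{align*}
(M \times [-0.98T, 0], d^*, p^*, \t_i) \xrightarrow[i \to \infty]{\quad \hat C^\infty \quad} (Z_I, d_Z, z_1, \t),
\end{align*}
where $g_i(t) = g(t_i + t)$ with $t_i \nearrow 0$. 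By Theorem \ref{thm:smooth1}, for sufficiently large $i$ the set $K$ lies in some $U_i$, and the time-preserving diffeomorphism $\phi_i: U_i \to V_i \subset M \times \III$ satisfies $\Vert \phi_i^* g_i - g^Z \Vert_{C^{[\ep_i^{-1}]}(U_i)} \le \ep_i$ with $\ep_i \to 0$. Since $\phi_i$ is time-preserving and $K \subset \RR_0$, the image $\phi_i(K)$ lies in the time-$0$ slice of the $i$-th rescaled flow, which corresponds to $M \times \{t_i\}$ in the original flow.

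Combining these observations, $|\phi_i(K)|_{g(t_i)} \to |K|_0$ as $i \to \infty$ by the smooth convergence of the metrics. Since $\phi_i(K) \subset M$, we have $|\phi_i(K)|_{g(t_i)} \le |M|_{t_i}$, and hence
\begin{align*}
V_0 = \lim_{i \to \infty} |M|_{t_i} \ge \lim_{i \to \infty} |\phi_i(K)|_{g(t_i)} = |K|_0 > 0.
\end{align*}
No step here poses a genuine obstacle: both directions reduce to results already established in the paper, with the main subtlety being the bookkeeping of the time shift $t_i \nearrow 0$ when transferring volume estimates from $\RR_0$ back to the original flow via the smooth convergence maps.
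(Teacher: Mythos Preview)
Your proof is correct and follows essentially the same approach as the paper: the paper notes just before Proposition \ref{prop:volumesingular} that $\RR_0 \ne \emptyset$ implies $V_0 > 0$ ``by smooth convergence,'' and then deduces the corollary directly from Proposition \ref{prop:volumesingular}. You have simply spelled out the smooth-convergence step via Theorem \ref{thm:smooth1}, which is exactly what the paper intends.
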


Thus, we have the following definition.

\begin{defn}
With the above assumptions, $\XX$ is called \textbf{noncollapsed}\index{noncollapsed} at the first singular time if $V_0>0$, and \textbf{collapsed}\index{collapsed} if $V_0=0$.
\end{defn}

The term ``collapsed'' is justified by the following lemma.

\begin{prop}
$\XX$ is collapsed at $0$ if and only if any tangent flow at $z \in Z_0$ is collapsed (see Definition \ref{def:ncf}).
\end{prop}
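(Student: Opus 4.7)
The plan is to reduce the statement, via Corollary \ref{cor:volume}, to the equivalence $\RR_0 = \emptyset \iff$ every tangent flow at every $z \in Z_0$ is collapsed. One direction is essentially immediate: if $\RR_0 \neq \emptyset$, I would pick any $y \in \RR_0$, and by Theorem \ref{thm:tworegular} some tangent flow at $y$ is isometric to the standard Euclidean spacetime $(\R^n \times \R_-, d^*_{E,\ep_0}, (\vec 0, 0), \t)$, which is manifestly noncollapsed since $|B_{g_E}(\vec 0, r)| = \omega_n r^n$ realizes a positive asymptotic volume ratio. Contrapositively, if every tangent flow at every point of $Z_0$ is collapsed, then $\RR_0 = \emptyset$ and so $V_0 = 0$ by Corollary \ref{cor:volume}.

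For the converse, I would argue by contrapositive: assume some tangent flow $(Z', d_{Z'}, z', \t')$ at some $z \in Z_0$ is noncollapsed, and deduce $\RR_0 \neq \emptyset$. The crucial structural observation is that such a tangent flow is a Ricci shrinker space with $\mathrm{image}(\t') = \R_- = (-\infty, 0]$, which in particular \emph{contains} the endpoint $0$; this falls under the case of Remark \ref{rem:general} with $T = +\infty$, $a = +\infty$, and $t_0 = 0$ (since $\t(z) = 0$ forces the limit times to accumulate up to $0$).

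The proof would then proceed in three steps. First, I would apply a Markov-type argument to find $x \in \RR'_{-1}$ with $r_{\Rm}^{Z'}(x) \ge 2$: combining the noncollapsing $|B_{g^{Z'}_{-1}}(p, r) \cap \RR'_{-1}|_{-1} \ge c r^n$ for $r$ large with the integral estimate $\int_{B(p,r)\cap \RR'_{-1}} r_{\Rm}^{-4+\ep}\, dV \le C r^{n-2+\ep}$ from Theorem \ref{thm:spacemink}(iii) produces points whose curvature radius grows like $r^{(2-\ep)/(4-\ep)}$, so for $r$ large enough such an $x$ exists. Second, I would flow $x$ forward to time $0$: since $r_{\Rm}^{Z'}(x) \ge 2 > 1$ and image$(\t') \ni 0$, the product neighborhood appearing in Definition \ref{curvatureradiuslimit} is
\[
B_{g^{Z'}_{-1}}(x, 2) \times \bigl([-5, 3]\cap(-\infty, 0]\bigr) \;=\; B_{g^{Z'}_{-1}}(x, 2) \times [-5, 0] \;\subset\; \RR',
\]
with $|\Rm| \le 1/4$, so the integral curve of $\partial_{\t'}$ starting at $x$ runs smoothly within $\RR'$ until it reaches a point $x^0 \in \RR'_0$. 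Third, I would lift $x^0$ back to $\RR_0 \subset Z_0$ using the $\hat C^\infty$-convergence of the rescaled spaces to $(Z', d_{Z'}, z', \t')$: Notation \ref{not:2} together with Definition \ref{defn:close} provides time-preserving smooth embeddings $\phi_j : U_j \to V_j \subset \RR$ with $U_j \subset \RR'$ exhausting $\RR'$. For $j$ large enough $x^0 \in U_j$, and since $\phi_j$ is time-preserving in the rescaled sense, $\t(\phi_j(x^0)) = r_j^2\, \t'(x^0) = 0$, so $\phi_j(x^0) \in \RR_0$, contradicting $\RR_0 = \emptyset$.

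The main technical point to verify carefully will be step two, where one must confirm that Definition \ref{curvatureradiuslimit} legitimately produces a product neighborhood extending all the way to $t = 0$ once $r_{\Rm}^{Z'}(x)^2 > |\t'(x)|$, so that the forward $\partial_{\t'}$-trajectory from $x$ really terminates at a regular point $x^0 \in \RR'_0$ rather than running into the singular set or exiting image$(\t')$. Once this boundary behavior is established, the lifting in step three is straightforward because the $\hat C^\infty$-convergence constructed in this paper, by virtue of Theorem \ref{thm:smooth1} and its proof, is designed to handle the regular part of the time-$0$ slice on equal footing with interior times.
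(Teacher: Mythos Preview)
Your overall strategy is sound, and Steps 1 and 2 are correct. The gap is in Step 3.

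The $\hat C^\infty$-convergence that the paper actually establishes for the tangent flow is \eqref{eq:rescaled}: it goes from the \emph{rescaled smooth Ricci flows} $(M_{i_j}\times\III_j, d'^*_j, z^*_{i_j},\t'_j)$ to $(Z',d_{Z'},z',\t')$, not from the rescalings $(Z, r_j^{-1}d_Z, z, r_j^{-2}\t)$ of the limit space to $Z'$. In the setting of Section~\ref{sec:first}, each smooth flow $\XX^{i_j}$ is a restriction of $\XX$ to a closed interval ending at original time $t_{i_j}<0$. Consequently the time-preserving embeddings $\phi_j:U_j\to V_j$ furnished by Theorem~\ref{thm:smooth1} land in $M\times[-T,t_{i_j}]$; the image $\phi_j(x^0)$ sits at original time at most $t_{i_j}<0$, never at time $0$. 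So your identity ``$\t(\phi_j(x^0))=r_j^2\,\t'(x^0)=0$'' is not justified: the time function on the approximating smooth flow is $\t'_j$, not $r_j^{-2}\t$, and these differ by a shift that does not vanish. If you instead try to invoke a $\hat C^\infty$-convergence directly from $(Z, r_j^{-1}d_Z, z, r_j^{-2}\t)$ to $Z'$, note that Notation~\ref{not:2} only \emph{defines} such convergence---it does not assert it holds here, and proving it would amount to producing a point of $\RR_0$, which is exactly what you are trying to show.

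The missing ingredient is forward-in-time curvature propagation: knowing $|\Rm|$ is bounded on a ball at time $t_{i_j}$ does not by itself control $|\Rm|$ on $(t_{i_j},0)$. This is precisely what the paper extracts from two-sided pseudolocality (Theorem~\ref{thm:twoside}). Moreover, the paper needs $r_{\Rm}(x_i)\to\infty$ rather than just $r_{\Rm}(x)\ge 2$: after pulling back to $y_j^*\in M\times\{-r_j^2\}$ and applying pseudolocality, one gets $r_{\Rm}^{\XX}(y_j^*)\gtrsim r_j\cdot r_{\Rm}^{Z'}(x_i)$, and the parabolic cylinder of $y_j^*$ reaches time $0$ only when $r_{\Rm}^{\XX}(y_j^*)^2\gg r_j^2$, i.e.\ when $r_{\Rm}^{Z'}(x_i)$ is large. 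Your Markov argument in Step~1 does produce points with arbitrarily large curvature radius, so you have the raw material; you just need to feed it into pseudolocality rather than into a lifting at the terminal time slice.
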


\begin{proof}
If every tangent flow at every point of $Z_0$ is collapsed, then in particular $\RR_0=\emptyset$. Hence, $\XX$ is collapsed at $0$.

Conversely, suppose for contradiction that $\XX$ is collapsed at $0$, but there exists a tangent flow $(Z', d_{Z'}, z', \t')$ at some point $z \in Z_0$ that is noncollapsed. We may assume that this tangent flow is obtained as the pointed Gromov--Hausdorff limit of  $(Z, r_j^{-1} d_Z, z, r_j^{-2}\t)$ for a sequence $r_j \searrow 0$. Then, by the same argument as in the proof of Theorem \ref{dichotomytang}, there exists a sequence $x_i \in \RR'_{-1}$ such that $r_{\Rm}(x_i) \to +\infty$. 

By smooth convergence on the regular part and Theorem \ref{thm:twoside}, we can find points $y_j^*=(y_j, -r_j^2) \in M \times [-T, 0)$ such that $r_j^{-1}r_{\Rm}(y_j^*) \to +\infty$. This implies that $\RR_0$ is nonempty, contradicting the assumption that $\XX$ is collapsed at time $0$.
\end{proof}

Next, we prove

\begin{prop} 
We have
\begin{align} \label{eq:volume00c1}
\abs{\RR_0}_{0}=V_0,
	\end{align} 	
	where $\abs{\RR_0}_{0}$ denotes the volume of $\RR_0$ with respect to $g^Z_0$.
\end{prop}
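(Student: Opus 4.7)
The plan is to sandwich $|\RR_0|_0$ and $V_0$ against each other by combining smooth convergence on the regular part (Theorem \ref{thm:smooth1}) with the slice integral curvature estimate from Theorem \ref{intefirst}. Applying Theorem \ref{thm:smooth1} to the sequence of time-shifted flows $g_i(t) = g(t_i + t)$ with $t_i \nearrow 0$, I will obtain open sets $U_i \nearrow \RR$ and time-preserving diffeomorphisms $\phi_i : U_i \to V_i \subset M \times [-0.98T, 0]$ satisfying $\phi_i^* g_i \to g^Z$ smoothly on $U_i$. Under the identification with the original flow, the $0$-slice of the $i$-th picture corresponds to $M \times \{t_i\}$, so for $z \in \RR_0 \cap U_i$ the image $\phi_i(z)$ lies in $M \times \{t_i\}$ with pulled-back metric $g(t_i)$.

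For the upper bound $|\RR_0|_0 \le V_0$, I would fix a compact $K \subset \RR_0$, which is contained in $U_i$ for large $i$ since $U_i \nearrow \RR$. Smooth convergence gives $|\phi_i(K)|_{g(t_i)} \to |K|_0$; since $|\phi_i(K)|_{g(t_i)} \le |M|_{t_i}$, passing to the limit yields $|K|_0 \le V_0$. By inner regularity of the Riemannian volume on $\RR_0$, taking the supremum over compact $K$ gives $|\RR_0|_0 \le V_0$.

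For the lower bound $V_0 \le |\RR_0|_0$, first note that if $\RR_0 = \emptyset$ then $V_0 = 0$ by Corollary \ref{cor:volume}. Otherwise, a Chebyshev-type argument applied to \eqref{eq:firstsliceint} gives, for any $\eta \in (0,1)$ and $\delta > 0$,
\begin{align*}
|B_\delta(t)|_{g(t)} \le C_\eta \delta^{2-\eta}, \qquad \text{where } B_\delta(t) := \{x \in M : r_{\Rm}(x, t) < \delta\},
\end{align*}
uniformly in $t \in [-T, 0)$. Set $\tilde A_{\delta'}(0) := \{z \in \RR_0 : r_{\Rm}(z) \ge \delta'\}$. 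The Lipschitz continuity of $r_{\Rm}$ in $d_Z$ (Proposition \ref{prop:LiprRmlimit}) together with Lemma \ref{lem:curcon} shows that $\tilde A_{\delta'}(0)$ is closed in the compact space $Z_0$ (compactness of $Z$ follows from the bounded diameter \eqref{eq:diam} and a ball-packing argument using Propositions \ref{prop:uppervolumebound} and \ref{prop:volumebound}), hence compact. The key step will be to show that for every $0 < \delta' < \delta$, $A_\delta(t_i) := M \setminus B_\delta(t_i) \subset \phi_i(\tilde A_{\delta'}(0))$ for all sufficiently large $i$. Granting this, smooth convergence on the compact set $\tilde A_{\delta'}(0)$ gives $|\phi_i(\tilde A_{\delta'}(0))|_{g(t_i)} \to |\tilde A_{\delta'}(0)|_0 \le |\RR_0|_0$, so
\begin{align*}
|M|_{t_i} = |A_\delta(t_i)|_{g(t_i)} + |B_\delta(t_i)|_{g(t_i)} \le |\phi_i(\tilde A_{\delta'}(0))|_{g(t_i)} + C_\eta \delta^{2-\eta};
\end{align*}
taking $\delta' = \delta/2$, sending $i \to \infty$ and then $\delta \to 0$ gives $V_0 \le |\RR_0|_0$.

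The hard part will be verifying the inclusion $A_\delta(t_i) \subset \phi_i(\tilde A_{\delta/2}(0))$ for $i$ large, which I plan to prove by contradiction. A sequence $x_{i_k} \in A_\delta(t_{i_k}) \setminus \phi_{i_k}(\tilde A_{\delta'}(0))$ admits, by compactness of $Z$, a subsequence converging in the Gromov--Hausdorff sense to some $x_\infty \in Z_0$. Since $r_{\Rm}(x_{i_k}) \ge \delta$, Lemma \ref{lem:curcon} forces $x_\infty \in \RR_0$ with $r_{\Rm}(x_\infty) \ge \delta$, hence $x_\infty \in \tilde A_{\delta'}(0) \subset U_{i_k}$ for $k$ large. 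Theorem \ref{thm:smooth1}(c) then gives $x_{i_k} \in V_{i_k}$ and $\phi_{i_k}^{-1}(x_{i_k}) \to x_\infty$, and Lemma \ref{lem:curcon} applied once more forces $r_{\Rm}(\phi_{i_k}^{-1}(x_{i_k})) \ge \delta'$ eventually, contradicting the choice of $x_{i_k}$. Once this inclusion is in hand, the rest is routine bookkeeping with the slice volume estimate and smooth convergence on compact sets.
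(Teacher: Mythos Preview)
Your proposal is correct and follows the same strategy as the paper: bound the volume of the low-curvature-radius set uniformly in $t$, and pass to the limit via smooth convergence on the complement. The paper's execution is more streamlined: since $\RR_t = M \times \{t\}$ for all $t < 0$ in this particular setting, the $\partial_\t$-flow on $\RR$ gives a direct identification between $\{r_{\Rm} \ge \delta\} \cap \RR_0$ and subsets of $M \times \{t\}$ for $t$ close to $0$, and the paper simply quotes the slice estimate \eqref{eq:currlimitsslice} at all $t \in [-T/100,0]$, including $t=0$ on $Z$. Your route through the $\phi_i$ and a contradiction argument is more general-purpose but also more laborious.

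One small correction: in your last step you invoke Lemma~\ref{lem:curcon} to get $r_{\Rm}(\phi_{i_k}^{-1}(x_{i_k})) \ge \delta'$ eventually, but that lemma compares $r_{\Rm}$ on the approximating flows with $r_{\Rm}$ on $Z$, not $r_{\Rm}$ at two points of $Z$. What you actually need is continuity of $r_{\Rm}$ on $Z$, i.e.\ Proposition~\ref{prop:LiprRmlimit}: since $\phi_{i_k}^{-1}(x_{i_k}) \to x_\infty$ in $d_Z$ and $r_{\Rm}(x_\infty) \ge \delta > \delta'$, the Lipschitz bound gives $r_{\Rm}(\phi_{i_k}^{-1}(x_{i_k})) > \delta'$ for large $k$, yielding the desired contradiction.
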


\begin{proof}
From the smooth convergence of $\RR_t$ to $\RR_0$ along $\partial_\t$, we obtain 
\begin{align*}
    |\RR_0|_0 \le V_0.
\end{align*}

On the other hand, by \eqref{eq:currlimitslice}, we have
	\begin{align} \label{eq:weakcurve}
\abs{\{r_{\Rm} <  \delta \}}_t \le C(n,Y, T) \delta
	\end{align} 
for all $t \in [-T/100, 0]$ and small $\delta>0$. By smooth convergence again, for any fixed $\delta$,
\begin{align*}
|\RR_0|_0 \ge \abs{\{r_{\Rm} \ge \delta/2\}}_0 \ge \lim_{t \nearrow 0} \abs{\{r_{\Rm} \ge \delta\}}_t \ge V_0-C(n, Y, T) \delta,
\end{align*}
where we used \eqref{eq:weakcurve} for the last inequality. Taking $\delta \to 0$ yields the other inequality $ |\RR_0|_0 \ge V_0$.
\end{proof}

We end this section by proving the following result.

\begin{thm}\label{thm:slicecodim2}
For every sufficiently small $\delta>0$ and $\ep>0$, we have
	\begin{align*}
\abs{\left\{y\in Z_0 \mid d_0^Z(y,\MS)<\delta \right\} }_0\leq C_\ep \delta^{2-\ep},
	\end{align*} 
	where $C_{\ep}$ depends on $\ep$ and the Ricci flow $\XX$.
\end{thm}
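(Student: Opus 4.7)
The plan is to reduce the statement to the slice volume estimate for $\{r_{\Rm}<C\delta\}\cap Z_0$ already established in the section, by showing that the $d_0^Z$-neighborhood of $\MS$ is contained in a set where the curvature radius is small. Specifically, I will prove that whenever $y\in\RR_0$ satisfies $d_0^Z(y,\MS)<\delta$, one must have $r_{\Rm}(y)\le C(n,Y)\ep_0^{-1}\delta$, and then appeal to the slice bound \eqref{eq:currlimitsslice} applied on a fixed finite cover of $Z_0$ by $d^*$-balls.

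First, note that only $y\in\RR_0$ contribute to $\abs{\cdot}_0$ by Definition \ref{def:vvvv}. Fix such a $y$ with $d_0^Z(y,w)<\delta$ for some $w\in\MS\cap Z_0$. Lemma \ref{dtlem1} gives $d_Z(y,w)\le\ep_0^{-1}\delta$. By Corollary \ref{cor:comple1}, we may pick a sequence $w_i\in\RR$ with $w_i\to w$ in $d_Z$; since $w\in\MS$, a tangent-flow argument (Theorem \ref{thm:tworegular}) together with Proposition \ref{prop:LiprRmlimit} forces $r_{\Rm}(w_i)\to 0$, for otherwise the Lipschitz estimate would yield a uniform lower bound of $r_{\Rm}$ near $w$ and place $w$ into $\RR$. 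Applying Proposition \ref{prop:LiprRmlimit} once more (its hypothesis $\t-r_{\Rm}^2\in\III$ is automatic since $\t(y)=0$ and $r_{\Rm}$ is small), we conclude
\begin{align*}
r_{\Rm}(y)=\lim_{i\to\infty}|r_{\Rm}(y)-r_{\Rm}(w_i)|\le C(n,Y)\,d_Z(y,w)\le C(n,Y)\ep_0^{-1}\delta.
\end{align*}
Setting $\delta':=C(n,Y)\ep_0^{-1}\delta$, we obtain the inclusion
\begin{align*}
\{y\in Z_0\mid d_0^Z(y,\MS)<\delta\}\cap\RR_0\subset\{r_{\Rm}<\delta'\}\cap\RR_0.
\end{align*}

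Since $Z$ has bounded diameter by \eqref{eq:diam} and lower volume bounds hold by Proposition \ref{prop:volumebound}, we may cover $Z_0$ by finitely many $d^*$-balls $\{B_Z^*(x_k,r_0)\}_{k=1}^N$ with $r_0:=\sqrt{T}/20$ (so that $\t(x_k)-2r_0^2\in\III^-$), where $N$ depends only on the Ricci flow $\XX$. The slice volume bound \eqref{eq:currlimitsslice} in the proof of Theorem \ref{integralRmd*limit} yields
\begin{align*}
\abs{\{r_{\Rm}<\delta'\}\cap B_Z^*(x_k,r_0)\cap Z_0}_0\le C(n,Y,\ep)(\delta'/r_0)^{2-\ep}r_0^n.
\end{align*}
Summing over $k=1,\ldots,N$ produces the desired bound $C_\ep\delta^{2-\ep}$; alternatively one can invoke the estimate \eqref{eq:weakcurve} established in the proof of \eqref{eq:volume00c1} directly at $t=0$.

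The main technical point is the fact that the curvature radius extends Lipschitz-continuously across $\MS$ with boundary value zero; this is where the transition from the (possibly infinite) extended distance $d_0^Z$ to the spacetime distance $d_Z$ is needed, and it is handled cleanly by combining Lemma \ref{dtlem1} with Proposition \ref{prop:LiprRmlimit}. The remainder is a routine covering computation using tools already in place.
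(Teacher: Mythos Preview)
Your argument is correct and follows the same approach as the paper: convert $d_0^Z$-closeness to $d_Z$-closeness via Lemma \ref{dtlem1}, then use the Lipschitz bound on $r_{\Rm}$ (Proposition \ref{prop:LiprRmlimit}) together with $r_{\Rm}=0$ on $\MS$ to land in a sublevel set of $r_{\Rm}$, and finish with the slice estimate \eqref{eq:weakcurve}. Your approximation of $w\in\MS$ by regular points $w_i$ is a slightly more careful way to invoke Proposition \ref{prop:LiprRmlimit}, but the paper simply sets $r_{\Rm}=0$ on $\MS$ directly and applies the Lipschitz bound in the same way.
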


\begin{proof}
It follows from Lemma \ref{dtlem1} that
	\begin{align*}
\left\{y\in Z_0 \mid d_0^Z(y,\MS)<\delta \right\} \subset S':=\left\{y\in Z_0 \mid d_Z(y,\MS)< \delta \right\}.
	\end{align*} 
Since $r_{\Rm}=0$ on $\MS$, we obtain from Proposition \ref{prop:LiprRmlimit} that any $y \in S'$ satisfies
	\begin{align*}
r_{\Rm}(y) < C(n, Y) \delta,
	\end{align*} 
which implies
	\begin{align*}
S' \subset \left\{y\in Z_0 \mid r_{\Rm}(y) < C(n, Y)\delta\right\}.
	\end{align*} 

The conclusion follows from \eqref{eq:weakcurve}.
\end{proof}

\section{Almost splitting maps} \label{sec:almost}

In this section, we consider a closed Ricci flow $\XX=\{M^n,(g(t))_{t\in I}\}$ with a fixed spacetime point $x_0^*=(x_0,t_0)\in \XX$. Moreover, we set
\begin{align*}
\mathrm{d}\nu_t=\mathrm{d}\nu_{x^*_0;t}=(4\pi\tau)^{-n/2}e^{-f} \mathrm{d}V_{g(t)},
\end{align*}
where $\tau=t_0-t$.

We use the following definition of almost splitting maps, which is similar to \cite[Definition 5.7]{bamler2020structure}.

\begin{defn}[$(k,\ep, r)$-splitting map]\label{defnsplittingmap}\index{$(k, \ep, r)$-splitting map}
	A map $\vec u=(u_1,\ldots,u_k)$ is called a \textbf{$(k,\ep, r)$-splitting map at $x_0^*$} if $t_0-10r^2 \in I$, and for all $i,j\in \{1, \ldots, k\}$, the following properties hold:
	\begin{enumerate}[label=\textnormal{(\roman{*})}]
		\item $u_i(x^*_0)=0$.
		\item $\square u_i=0$ on $M\times [t_0-10r^2,t_0]$.
		\item $\displaystyle \int_{t_0-10 r^2}^{t_0-r^2/10} \int_{M} |\na^2 u_i|^2 \, \mathrm{d}\nu_t \mathrm{d}t \le \ep$.		
		\item $\displaystyle \int_{t_0-10 r^2}^{t_0-r^2/10} \int_{M} \la \na u_i,\na u_j \ra-\delta_{ij} \, \mathrm{d}\nu_t \mathrm{d}t=0$.
	\end{enumerate}

\end{defn}

In the following, we assume throughout that $\ep$ is sufficiently small, say $\ep \le 10^{-3}$.

\begin{prop}\label{timesliceL2estimateofgradient}
	Let $\vec u=(u_1,\ldots,u_k)$ be a $(k,\ep, r)$-splitting map at $x_0^*$. Then, for any $i,j\in \{1, \ldots, k\}$
	and for all $t\in [t_0-10r^2,t_0-r^2/10]$, we have
	\begin{align*}
	\abs{\int_{M} \la \na u_i ,\na u_j \ra-\delta_{ij} \, \mathrm{d}\nu_t }\leq 2\ep \quad \text{and} \quad	\int_{M} \big|\la \na u_i ,\na u_j \ra-\delta_{ij}\big|\, \mathrm{d}\nu_t\leq 50 \epsilon^{\frac{1}{2}}.
	\end{align*}
	Moreover, for all $t\in [t_0-\frac{10}{p-1}r^2, t_0)$ and $p \ge 2$,
	\begin{align}\label{esti4}
		\lc\int_{M} |\nabla u_i|^p \, \mathrm{d}\nu_{t}\rc^{1/p}\leq 1+\epsilon^{\frac{1}{2}}.
	\end{align}
\end{prop}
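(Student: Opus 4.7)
The plan is to prove the three conclusions in sequence, exploiting the evolution of $F_{ij}(t):=\int_M\langle\nabla u_i,\nabla u_j\rangle\,d\nu_t$, Kato's inequality applied to $|\nabla u_i|$, and Bamler's hypercontractivity (Theorem \ref{hypercontractivity}). First I would derive the identity $F_{ij}'(t)=-2\int_M\langle\nabla^2 u_i,\nabla^2 u_j\rangle\,d\nu_t$ by combining the general rule $\tfrac{d}{dt}\int f\,d\nu_t=\int\square f\,d\nu_t$ with the computation $\square\langle\nabla u_i,\nabla u_j\rangle=-2\langle\nabla^2 u_i,\nabla^2 u_j\rangle$, which follows from Bochner's formula together with $\partial_t g^{ij}=2R^{ij}$ and $\square u_i=0$. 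Cauchy--Schwarz combined with hypothesis (iii) of Definition \ref{defnsplittingmap} then gives an oscillation bound $|F_{ij}(t_1)-F_{ij}(t_2)|\le 2\ep$ on $[t_0-10r^2,t_0-r^2/10]$. Since hypothesis (iv) says the mean of $F_{ij}-\delta_{ij}$ over this interval vanishes, there exists some time at which $F_{ij}=\delta_{ij}$, and the oscillation bound yields (i); in particular $F_{ii}(t)\in[1-2\ep,1+2\ep]$ throughout.

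Next I would establish (iii). The key ingredient is Kato's inequality: setting $\psi=|\nabla u_i|$, the identity $\square\sqrt{f}=\tfrac{\square f}{2\sqrt{f}}+\tfrac{|\nabla f|^2}{4f^{3/2}}$ applied to $f=|\nabla u_i|^2$ (for which $\square f=-2|\nabla^2 u_i|^2$) combined with $|\nabla f|\le 2|\nabla u_i|\,|\nabla^2 u_i|$ produces the distributional inequality $\square\psi\le 0$. Applying Theorem \ref{hypercontractivity} to $\psi$ with $q=2$ and $\tau_2=(p-1)\tau_1$ yields $\|\nabla u_i\|_{L^p(\nu_{t_0-\tau_1})}\le F_{ii}(t_0-\tau_2)^{1/2}$. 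For $\tau_1\in[r^2/(10(p-1)),\tfrac{10}{p-1}r^2]$ the time $t_0-\tau_2$ lies in $[t_0-10r^2,t_0-r^2/10]$, so (i) bounds the right-hand side by $(1+2\ep)^{1/2}\le 1+\ep^{1/2}$. For the remaining sub-range $\tau_1<r^2/(10(p-1))$, I would use the monotonicity $\tfrac{d}{dt}\int|\nabla u_i|^p\,d\nu_t\le 0$, itself coming from $\square\psi^p=p\psi^{p-1}\square\psi-p(p-1)\psi^{p-2}|\nabla\psi|^2\le 0$ for $p\ge 2$, to trade this small $\tau_1$ for $\tau_1=r^2/(10(p-1))$ and reduce to the previous case.

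For (ii) I would combine Cauchy--Schwarz with (iii) on the interior portion of the interval and the $L^1$-Poincar\'e inequality (Theorem \ref{poincareinequ}) on the complementary portion. Expanding $\int(h_{ij}-\delta_{ij})^2\,d\nu_t=\int h_{ij}^2\,d\nu_t-2\delta_{ij}F_{ij}(t)+\delta_{ij}^2$ and bounding $h_{ij}^2\le|\nabla u_i|^2|\nabla u_j|^2$, Cauchy--Schwarz together with (iii) for $p=4$ controls $\int h_{ij}^2\,d\nu_t$, which together with (i) yields a square-integral bound on $h_{ij}-\delta_{ij}$; Cauchy--Schwarz in $L^2(\nu_t)$ then gives the $L^1$ bound on the range where $p=4$ is admissible, namely $\tau\in(0,10r^2/3]$. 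On the complementary range $\tau\in[10r^2/3,10r^2]$ I would apply $L^1$-Poincar\'e to the mean-free function $h_{ij}-F_{ij}(t)$, estimate $|\nabla h_{ij}|\le|\nabla^2 u_i|\,|\nabla u_j|+|\nabla u_i|\,|\nabla^2 u_j|$, and use Cauchy--Schwarz together with the control of $F_{ii},F_{jj}$ from step one.

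The main obstacle is securing the sharp $\ep^{1/2}$ rate in (ii) pointwise in $t$ up to the endpoint $t=t_0-10r^2$, where (iii) with $p=4$ is no longer available and only the time-averaged bound $\int\Lambda_i(t)\,dt\le\ep$ on $\Lambda_i(t):=\int|\nabla^2 u_i|^2\,d\nu_t$ is given. A naive Cauchy--Schwarz bootstrap degrades the exponent from $\ep^{1/2}$ to $\ep^{1/4}$, and the Poincar\'e route would require $\Lambda_i(t)\lesssim\ep/r^2$ pointwise, which does not follow from the integrated hypothesis. The hardest step in the proof will be to reconcile these two approaches: one must exploit the monotonicity of $F_{ii}$ (hence of the cumulative integral of $\Lambda_i$) together with the monotonicity of higher $L^p$-norms of $|\nabla u_i|$ to trade the unfavorable endpoint for a nearby favorable time, effectively upgrading the average $L^1$ control to a uniform one with only an absolute-constant loss.
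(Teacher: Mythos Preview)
Your argument for (i) matches the paper exactly. For (iii) the paper avoids your two-subrange split by taking $\tau_2=10r^2$ fixed rather than $\tau_2=(p-1)\tau_1$: for any $\tau_1\le\tfrac{10}{p-1}r^2$ one has $\tau_2/\tau_1\ge p-1$, and hypercontractivity against the left endpoint gives $\|\nabla u_i\|_{L^p(\nu_{t_0-\tau_1})}\le\|\nabla u_i\|_{L^2(\nu_{t_0-10r^2})}\le(1+2\ep)^{1/2}\le1+\ep^{1/2}$ directly from part~(i). Your route also works, just with more bookkeeping.

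For (ii) you have correctly isolated the obstacle, but you have not found the mechanism the paper uses to resolve it. The paper neither splits the interval nor invokes the $L^4$-bound from (iii). It applies the $L^1$-Poincar\'e inequality at each fixed $t$ and then integrates \emph{in time before} applying Cauchy--Schwarz, so that only the spacetime quantities $\int\!\!\int|\nabla^2 u_i|^2\le\ep$ and $\int\!\!\int|\nabla u_j|^2=9.9r^2$ enter; this yields the time-averaged bound $\int_{t_0-10r^2}^{t_0-r^2/10}G(t)\,dt\le 40\ep^{1/2}$ for $G(t):=\int_M|h_{ij}-\delta_{ij}|\,d\nu_t$, with no pointwise $\Lambda_i(t)$ ever needed. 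The idea you are missing is that $G$ itself has small oscillation: since $|h_{ij}-\delta_{ij}|$ is a subsolution with $\square|h_{ij}-\delta_{ij}|\le|\square h_{ij}|=2|\langle\nabla^2 u_i,\nabla^2 u_j\rangle|$, one gets
\[
\frac{d}{dt}G(t)\le 2\int_M|\nabla^2 u_i|\,|\nabla^2 u_j|\,d\nu_t,
\]
whose time-integral over $[t_0-10r^2,t_0-r^2/10]$ is $\le2\ep$ by Cauchy--Schwarz and hypothesis~(iii). A small time-average together with a small oscillation of $G$ forces $G(t)\le 50\ep^{1/2}$ pointwise. So the ``trading'' you anticipate is done directly on the $L^1$-norm $G(t)$ via this Kato-type differential inequality, not through higher $L^p$-norms of $|\nabla u_i|$.
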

\begin{proof}
Without loss of generality, we assume $t_0=0$ and $r=1$. We set
\begin{align*}
I_{ij}(t)=\int_{M} \la \nabla u_i,\nabla u_j\ra \, \mathrm{d}\nu_t-\delta_{ij}.
	\end{align*}

Since
\begin{align*}
\diff{}{t}I_{ij}(t)=-2\int_{M} \la \nabla ^2u_i,\nabla^2 u_j\ra \, \mathrm{d}\nu_t,
	\end{align*}
we obtain for any $-10\leq s,t \leq -1/10$,
\begin{align}
|I_{ij}(t)-I_{ij}(s)| \le & 2 \int_{-10}^{-1/10} \int_M |\na^2 u_i||\na^2 u_j| \, \mathrm{d}\nu_t \mathrm{d}t \notag\\
\le &  2 \lc \int_{-10}^{-1/10}\int_M|\na^2 u_i|^2 \, \mathrm{d}\nu_t \mathrm{d}t\rc^{\frac 1 2}\lc \int_{-10}^{-1/10}\int_M|\na^2 u_j|^2 \, \mathrm{d}\nu_t \mathrm{d}t \rc^{\frac 1 2} \le 2\ep. \label{esti2}
	\end{align}

Then, it follows from (iv) in Definition \ref{defnsplittingmap} that for all $t\in [-10,-1/10]$,
	\begin{align}\label{esti1}
|I_{ij}(t)| \le 2\ep.
	\end{align}
	Applying Theorem \ref{poincareinequ} to $ \la \nabla u_i,\nabla u_j\ra-\delta_{ij}$, we have for all $t\in [-10,-1/10]$,
	\begin{align*}
		\int_{M} \left| \la \nabla u_i,\nabla u_j\ra-\delta_{ij}-I_{ij}(t) \right|\mathrm{d}\nu_t&\leq \sqrt{\pi |t|} \int_{M} \left|\nabla ( \la \nabla u_i,\nabla u_j\ra-\delta_{ij})\right|\, \mathrm{d}\nu_t\\
		&\leq \sqrt{10 \pi}\int_{M} |\nabla ^2u_i||\nabla u_j|+|\nabla ^2u_j||\nabla u_i| \, \mathrm{d}\nu_t.
	\end{align*}
	Integrating in time, we get
	\begin{align*}
		& \int_{-10}^{-1/10}\int_{M} \left| \la \nabla u_i,\nabla u_j\ra-\delta_{ij}-I_{ij}(t)\right|\, \mathrm{d}\nu_t \mathrm{d}t\\
		\leq & \sqrt{10 \pi} \lc\int_{-10}^{-1/10}\int_{M} |\nabla ^2u_i|^2+|\nabla^2 u_j|^2 \, \mathrm{d}\nu_t \mathrm{d}t\rc^{1/2}\lc\int_{-10}^{-1/10}\int_{M} |\nabla u_i|^2+|\nabla u_j|^2 \, \mathrm{d}\nu_t \mathrm{d}t \rc^{1/2}	\leq 20 \sqrt{\pi} \epsilon^{1/2},
	\end{align*}
	where we used Definition \ref{defnsplittingmap} (iv) to obtain
		\begin{align*}
\int_{-10}^{-1/10}\int_{M} |\nabla u_i|^2+|\nabla u_j|^2 \, \mathrm{d}\nu_t \mathrm{d}t \le 20.
	\end{align*}
		
	Combining with \eqref{esti1}, we have
	\begin{align}\label{esti3}
\int_{-10}^{-1/10}\int_{M} \big|\la \nabla u_i,\nabla u_j\ra-\delta_{ij}\big|\mathrm{d}\nu_t \mathrm{d}t\leq 20\ep+20 \sqrt{\pi} \epsilon^{1/2} \le 40 \epsilon^{1/2}.
	\end{align}	
	Since 
	\begin{align*}
	\diff{}{t}\int_{M} \big| \la \nabla u_i,\nabla u_j\ra-\delta_{ij}\big| \, \mathrm{d}\nu_t&\leq \int_{M} \left|\square ( \la \nabla u_i,\nabla u_j\ra-\delta_{ij})\right|\mathrm{d}\nu_t\\
		&\leq 2\int_{M} |\nabla^2 u_i||\nabla ^2 u_j|\mathrm{d}\nu_t,
	\end{align*}
	we obtain, as in \eqref{esti2}, that for any $s,t \in [-10, -1/10]$,
		\begin{align*}
\abs{\int_{M} \big| \la \nabla u_i,\nabla u_j\ra-\delta_{ij}\big| \, \mathrm{d}\nu_t-\int_{M} \big| \la \nabla u_i,\nabla u_j\ra-\delta_{ij}\big| \, \mathrm{d}\nu_s} \le 2\ep.
	\end{align*}
	Combining this with \eqref{esti3}, we conclude that for all $t\in [-10,-1/10]$, 
	\begin{align*}
		\int_{M} \big|\la \nabla u_i,\nabla u_j\ra-\delta_{ij}\big| \, \mathrm{d}\nu_t\leq 40 \epsilon^{1/2}+2\ep \le 50 \ep^{\frac 1 2}.
	\end{align*}

For the last statement \eqref{esti4}, we apply Theorem \ref{hypercontractivity} to $|\nabla u_i|$ with $0<\tau_1\leq \frac{10}{p-1}$ and $\tau_2=10$ so that
	\begin{align*}
		\lc\int_{M} |\nabla u_i|^p\,\mathrm{d}\nu_{-\tau_1}\rc^{1/p}\leq \lc\int_{M} |\nabla u_i|^2\,\mathrm{d}\nu_{-10}\rc^{1/2}\leq 1+\epsilon^{1/2},
	\end{align*}
	for any $p \ge 2$, where we used the estimate \eqref{esti1} for the last inequality.
\end{proof}

\begin{prop}[Gradient estimate]\label{pointwisegradientestimatesplittingmap}
	Let $\vec{u}=(u_1,\ldots,u_k)$ be a $(k,\ep, r)$-splitting map at $x_0^*$ and $i\in \{1,\ldots,k\}$. Then there exists a constant $\hat C=\hat C(n)>0$ such that on $M\times [t_0-r^2,t_0)$,
		\begin{align}\label{gradientesti2}
			|\na u_i|^2(x, t) \leq 1+\hat C \ep^{1/8} \exp \lc \frac{\hat C d^2_t(x, z)}{r^2}\rc,
		\end{align}
		where $(z, t)$ is an $H_n$-center of $x_0^*$.
\end{prop}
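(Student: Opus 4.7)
The plan is to combine the monotonicity of conjugate heat kernel averages---which reflects $\square|\na u_i|^2\leq 0$---with a Cauchy--Schwarz chain that shifts the base point of the conjugate heat kernel measure from $x^*=(x,t)$ back to $x_0^*$, where Proposition \ref{timesliceL2estimateofgradient} provides $L^p$-control of $|\na u_i|$. Setting $v:=|\na u_i|^2$ and $w:=v-1$, the Bochner formula for a heat equation solution on a Ricci flow gives $\square v=-2|\na^2 u_i|^2\leq 0$, so the duality identity $\frac{d}{ds}\int v\,\mathrm{d}\nu_{x^*;s}=\int \square v\,\mathrm{d}\nu_{x^*;s}\leq 0$, together with $\nu_{x^*;s}\to\delta_x$ as $s\nearrow t$, yields the mean value estimate $w(x,t)\leq \int w\,\mathrm{d}\nu_{x^*;s}$ for any admissible $s<t$.

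The natural choice is $s:=t-r^2/4$: for $t\in[t_0-r^2,t_0)$ this places $s\in[t_0-\frac{5}{4}r^2,t_0-\frac{r^2}{4})\subset [t_0-\frac{10}{7}r^2,t_0)$, so \eqref{esti4} of Proposition \ref{timesliceL2estimateofgradient} is available at $s$ for every $p\leq 8$. Let $\psi(y):=K(x,t;y,s)/K(x_0,t_0;y,s)$ be the Radon--Nikodym density of $\nu_{x^*;s}$ with respect to $\nu_{x_0^*;s}$. Two successive applications of Cauchy--Schwarz give
\[
\int w\,\mathrm{d}\nu_{x^*;s}\leq \lc\int w^2\,\mathrm{d}\nu_{x^*;s}\rc^{1/2}=\lc\int w^2\psi\,\mathrm{d}\nu_{x_0^*;s}\rc^{1/2}\leq \Vert w\Vert_{L^4(\nu_{x_0^*;s})}\,\Vert\psi\Vert^{1/2}_{L^2(\nu_{x_0^*;s})}.
\]
The $L^4$-norm of $w$ is controlled via the elementary inequality $(a-1)^4\leq (a^2-1)^2$ for $a\geq 0$: with $a=|\na u_i|^2$,
\[
\int w^4\,\mathrm{d}\nu_{x_0^*;s}\leq \int|\na u_i|^8\,\mathrm{d}\nu_{x_0^*;s}-2\int|\na u_i|^4\,\mathrm{d}\nu_{x_0^*;s}+1\leq C\ep^{1/2},
\]
using \eqref{esti4} with $p=8$ for the first term and the Cauchy--Schwarz lower bound $\int|\na u_i|^4\,\mathrm{d}\nu_{x_0^*;s}\geq\lc\int|\na u_i|^2\,\mathrm{d}\nu_{x_0^*;s}\rc^2\geq(1-2\ep)^2$ (together with Proposition \ref{timesliceL2estimateofgradient}) for the second. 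Thus $\Vert w\Vert_{L^4(\nu_{x_0^*;s})}\leq C\ep^{1/8}$.

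The remaining task---and the main obstacle---is the heat-kernel comparison bound
\[
\Vert\psi\Vert_{L^2(\nu_{x_0^*;s})}^2=\int\frac{K(x,t;y,s)^2}{K(x_0,t_0;y,s)}\,\mathrm{d}V_{g(s)}(y)\leq C\exp\lc \hat C\,d_t^2(x,z)/r^2\rc.
\]
For this we combine three ingredients: (i) the sharp Gaussian upper bound for $K(x,t;\cdot,s)$ from Theorem \ref{heatkernelupperbdgeneral}(ii), centered at an $H_n$-center $(z_x,s)$ of $x^*$; (ii) a matching Gaussian lower bound for $K(x_0,t_0;\cdot,s)$, obtained by propagating the on-diagonal lower bound $K(x_0,t_0;z,s)\gtrsim \tau^{-n/2}e^{\NN_{x_0,t_0}(\tau)}$---valid at the $H_n$-center $(z,s)$ of $x_0^*$ by Proposition \ref{prop:volume}(ii) and Theorem \ref{heatkernelupperbdgeneral}(ii)---via the logarithmic gradient estimate of Theorem \ref{gradientheatkernel}; and (iii) the Nash-entropy comparison $\NN_{x,t}(t-s)-\NN_{x_0,t_0}(t_0-s)\leq C(n)(1+d_t(x,z)/r)$ derived from Proposition \ref{propNashentropy1} with the bound $d_{W_1}^t(\delta_x,\nu_{x_0^*;t})\leq d_t(x,z)+\sqrt{H_n(t_0-t)}$. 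The $W_1$-monotonicity of Proposition \ref{monotonicityW1} additionally yields $d_s(z,z_x)\lesssim r+d_t(x,z)$, so that in the exponent $d_s(z_x,y)$ can be replaced by $d_s(z,y)$ at the cost of a factor $\exp(Cd_t^2(x,z)/r^2)$; carrying out the resulting Gaussian integral against $\mathrm{d}V_{g(s)}$ completes the bound on $\Vert\psi\Vert_{L^2}^2$. Combining this with the $L^4$-estimate for $w$ produces $w(x,t)\leq \hat C\ep^{1/8}\exp(\hat C d_t^2(x,z)/r^2)$, which is \eqref{gradientesti2}.
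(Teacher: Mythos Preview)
Your opening (the subsolution inequality $\square|\na u_i|^2\le0$ and the mean value estimate $w(x,t)\le\int w\,\mathrm{d}\nu_{x^*;s}$) matches the paper, and your $L^4$-bound on $w$ is a legitimate variant. The gap is in the heat-kernel comparison step, specifically item (ii). The gradient bound in Theorem~\ref{gradientheatkernel} is $|\na_x K|(x,t;y,s)/K$, i.e.\ a bound in the \emph{forward} variable~$x$; it does not let you propagate a lower bound for $K(x_0,t_0;y,s)$ as the \emph{backward} variable~$y$ moves away from the $H_n$-center. Likewise, Proposition~\ref{prop:volume}(ii) together with Theorem~\ref{heatkernelupperbdgeneral}(ii) gives a volume lower bound and a kernel upper bound, not an on-diagonal kernel lower bound at~$z$; so neither the base value nor its propagation is justified by the cited tools. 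Without a Gaussian \emph{lower} bound on $K(x_0,t_0;\cdot,s)$, the integral $\int K(x,t;y,s)^2/K(x_0,t_0;y,s)\,\mathrm{d}V$ is uncontrolled.

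The paper sidesteps this entirely. It evaluates at the fixed time $s=t_0-2r^2$ (rather than a moving $s=t-r^2/4$) and invokes the pointwise change-of-base estimate of Proposition~\ref{changeofbasepoint1} with $\alpha_1=0$, which gives
\[
\mathrm{d}\nu_{x^*;-2}\le C(n,\alpha)\,e^{C(n,\alpha)d_t^2(x,z)}\,e^{\alpha f}\,\mathrm{d}\nu_{x_0^*;-2}
\]
directly, using only heat-kernel \emph{upper} bounds and the forward-variable gradient estimate (this is precisely how Appendix~\ref{app:A} proves that proposition). Then one Cauchy--Schwarz splits $\int|w|e^{\alpha f}\,\mathrm{d}\nu_{-2}$ into $\|w\|_{L^2(\nu_{-2})}\cdot\|e^{\alpha f}\|_{L^2(\nu_{-2})}$; the second factor is bounded by Proposition~\ref{integralbound}, and the first by interpolating the $L^1$-smallness $\int|w|\,\mathrm{d}\nu_{-2}\le 50\ep^{1/2}$ (Proposition~\ref{timesliceL2estimateofgradient}) against the $L^3$-bound from \eqref{esti4} with $p=6$, yielding $\|w\|_{L^2}\le C\ep^{1/4}$ and hence the factor~$\ep^{1/8}$. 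Your $\|\psi\|_{L^2}^2$ bound would in fact follow immediately from Proposition~\ref{changeofbasepoint1} plus Proposition~\ref{integralbound} --- so your architecture is salvageable --- but the route you sketched for it is not.
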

\begin{proof}
Without loss of generality, we assume $t_0=0$ and $r=1$.

We consider $x^*=(x,t)$ with $t \in [-1,0)$. By the reproduction formula and $\square(|\nabla u_i|^2-1)=-2|\nabla^2u_i|^2$, we have
	\begin{align}\label{grad001}
		(|\nabla u_i|^2-1)(x^*)&=\int_{M} (|\nabla u_i|^2-1)\, \mathrm{d}\nu_{x^*;-2}+\int_{-2}^t\int_{M} \square (|\nabla u_i|^2-1) \, \mathrm{d}\nu_{x^*;s}\mathrm{d}s\nonumber\\
		&\leq \int_{M} \big||\nabla u_i|^2-1\big| \, \mathrm{d}\nu_{x^*;-2}.
	\end{align}
Take an $H_n$-center $(z,t)$ of $x_0^*$. Then $d^{t}_{W_1}(\nu_t,\delta_z)\leq \sqrt{H_n|t|} \le C(n)$. By Proposition \ref{changeofbasepoint1}, for any small constant $\alpha \in (0, 1)$,
	\begin{align}\label{grad002}
		\mathrm{d}\nu_{x^*;-2}\leq  e^{C(n, \alpha)\big( d_{W_1}^{t}(\nu_t,\delta_x)\big)^2}e^{\alpha f}\, \mathrm{d}\nu_{-2}\leq C(n, \alpha) e^{C(n, \alpha) d_t^2(x,z)}e^{\alpha f} \, \mathrm{d}\nu_{-2}.
	\end{align}
Combining \eqref{grad002} with \eqref{grad001}, we have
	\begin{align}\label{grad003x}
\abs{(|\nabla u_i|^2-1)(x^*)} \le & C(n, \alpha) e^{C(n, \alpha) d_t^2(x,z)} \int_M \big||\nabla u_i|^2-1\big| e^{\alpha f} \, \mathrm{d}\nu_{-2}.
	\end{align}
On the other hand, we have
	\begin{align*}
		\int_{M} \big||\na u_i|^2-1\big|^2 \, \mathrm{d}\nu_{-2} 		\leq  \lc\int_{M} \big||\na u_i|^2-1\big|\, \mathrm{d}\nu_{-2}\rc^{1/2}\lc\int_{M} \big||\na u_i|^2-1\big|^{3}\, \mathrm{d}\nu_{-2}\rc^{1/2} \le C \ep^{1/4},
	\end{align*}
for a universal constant $C>0$, where we used \eqref{esti4} with $p=6$. Combining this with \eqref{grad003x} and using Proposition \ref{integralbound} (with $\alpha=\alpha(n)$), we obtain
	\begin{align*}
\abs{(|\nabla u_i|^2-1)(x^*)} \le  C(n) e^{C(n) d_t^2(x,z)} \ep^{1/8}.
	\end{align*}	
This completes the proof of \eqref{gradientesti2}.
\end{proof}

We next prove that the almost splitting map is locally Lipschitz with respect to the spacetime distance.

\begin{prop}
Suppose $\XX \in \MM(n, T)$. Let $\vec{u}=(u_1,\ldots,u_k)$ be a $(k,\ep, r)$-splitting map at $x_0^*$ such that $\t(x_0^*)-10r^2 \in \III^-$. Then there exists a small constant $\hat c=\hat c(n)>0$ such that for any $z^*$ with $d^*(x_0^*, z^*) \le \hat c r$, we have
	\begin{align*}
\abs{\vec u(z^*)} \le C(n) d^*(x_0^*, z^*).
	\end{align*}
\end{prop}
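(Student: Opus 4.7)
The strategy is to use the reproduction formula combined with the Kantorovich-Rubinstein duality, after replacing $u_i$ by a globally Lipschitz approximation. Without loss of generality, assume $t_0 = \t(x_0^*) \ge \t(z^*)$ (the reverse case is handled by a similar argument, extending $u_i$ forward in time if necessary). Set $r_0 := d^*(x_0^*, z^*)$ and $s := t_0 - r_0^2$; choosing $\hat c = \hat c(n) > 0$ sufficiently small ensures that $s \in [t_0 - r^2/10, t_0)$ lies inside the interval where Proposition \ref{pointwisegradientestimatesplittingmap} applies. By Definition \ref{defnd*distance} and the monotonicity in Proposition \ref{monotonicityW1},
\begin{align*}
d_{W_1}^s(\nu_{x_0^*;s}, \nu_{z^*;s}) \le \ep_0 r_0.
\end{align*}
Since $\square u_i = 0$ and $u_i(x_0^*) = 0$, the reproduction formula yields
\begin{align*}
u_i(z^*) = \int_M u_i \, d(\nu_{z^*;s} - \nu_{x_0^*;s}).
\end{align*}

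Let $(z, s)$ be an $H_n$-center of $x_0^*$. By the gradient bound \eqref{gradientesti2}, for $\ep$ sufficiently small we have $|\nabla u_i| \le 2$ on $B_s(z, r)$. Let $\phi : M \to \R$ denote a $2$-Lipschitz extension of $u_i$ from $B_s(z, r/2)$, obtained via the McShane formula, so that $\phi \equiv u_i$ on $B_s(z, r/2)$. Decomposing
\begin{align*}
u_i(z^*) = \int_M \phi \, d(\nu_{z^*;s} - \nu_{x_0^*;s}) + \int_{M \setminus B_s(z, r/2)} (u_i - \phi) \, d(\nu_{z^*;s} - \nu_{x_0^*;s}),
\end{align*}
the Kantorovich-Rubinstein duality (Lemma \ref{lem:duality}) bounds the first integral by $2\ep_0 r_0$.

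The main obstacle is controlling the tail term. Both measures concentrate sharply near $z$: Theorem \ref{heatkernelupperbdgeneral}(i) gives $\nu_{x_0^*;s}(M \setminus B_s(z, L)) \le C(n) \exp(-L^2/(5 r_0^2))$, and because $d_{W_1}^s(\nu_{x_0^*;s}, \nu_{z^*;s}) \le \ep_0 r_0$, an $H_n$-center of $z^*$ at time $s$ lies within distance $O(r_0)$ of $z$, giving an analogous tail estimate for $\nu_{z^*;s}$. For the integrand, $\phi$ satisfies $|\phi(x)| \le |u_i(z,s)| + 2 d_s(x, z)$; the a priori bound $|u_i(z, s)| \le C(n) r_0$ will follow by combining the vanishing mean $\int u_i \, d\nu_{x_0^*;s} = 0$ with the local Lipschitz bound on $B_s(z, L r_0)$ and absorbing the tail self-reference by taking $L$ large. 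Integrating the exponential gradient bound \eqref{gradientesti2} along geodesics from $z$ then produces the pointwise estimate $|u_i(x)| \le C(n) r_0 + C(n) d_s(x, z) \exp(\hat C d_s^2(x, z)/r^2)$. The key balance is between this double-exponential growth and the sharp Gaussian decay of the measures; choosing $\hat c(n)$ small enough so that $1/(5 r_0^2) \ge 2\hat C/r^2$, the combined integrand decays at a Gaussian rate with scale comparable to $r_0$, and the tail term is bounded by $C(n) r_0$. Putting everything together gives $|u_i(z^*)| \le C(n) r_0 = C(n) d^*(x_0^*, z^*)$, which yields the stated bound for $\vec u$ after summing over $1 \le i \le k$.
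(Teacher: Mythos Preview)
Your proof is correct and uses the same core ingredients as the paper: the pointwise gradient estimate \eqref{gradientesti2}, the Gaussian tail bound from Theorem \ref{heatkernelupperbdgeneral}(i), and the $W_1$-closeness of the conjugate heat kernel measures coming from the definition of $d^*$. The organizational decomposition, however, differs. The paper sets $s:=100\,d^*(x_0^*,z^*)$, works at time $-s^2$, and runs a three-step triangle inequality through $H_n$-centers $z_0,z_1$: it bounds $|\vec u(x_0^*)-\vec u(z_0,-s^2)|$ and $|\vec u(z^*)-\vec u(z_1,-s^2)|$ by directly integrating $|\vec u(y)-\vec u(z_j)|$ against the respective conjugate heat kernel measures over dyadic annuli (this is the computation in \eqref{eq:lip0}), and then controls $|\vec u(z_0)-\vec u(z_1)|$ using $d_{-s^2}(z_0,z_1)\le C(n)s$ together with the gradient bound. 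Your route isolates the main term via Kantorovich--Rubinstein duality applied to a McShane extension, but the price is that your tail term on $M\setminus B_s(z,r/2)$ requires exactly the same annulus summation that the paper carries out as its main step; in particular, your bound $|u_i(z,s)|\le C(n)r_0$ is precisely the paper's \eqref{eq:lip1}. Both approaches impose the same smallness condition on $\hat c$, balancing the growth $\exp(\hat C d^2/r^2)$ against the Gaussian decay $\exp(-d^2/(5r_0^2))$.

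Two minor corrections: the standing assumption is only $\ep\le 10^{-3}$, so on $B_s(z,r)$ the gradient bound gives $|\nabla u_i|\le C(n)$ rather than $\le 2$; adjust the Lipschitz constant of the McShane extension accordingly. Also, the ``tail self-reference'' detour for bounding $|u_i(z,s)|$ is unnecessary --- since $\int u_i\,\mathrm{d}\nu_{x_0^*;s}=0$, one has $|u_i(z,s)|\le\int|u_i(z,s)-u_i(y)|\,\mathrm{d}\nu_{x_0^*;s}(y)$, and this integral is estimated directly by the annulus summation with no circularity.
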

\begin{proof}
Without loss of generality, we assume $t_0=0$ and $r=1$. We also assume $\t(z^*)\leq t_0$; the other case is analogous.

Define $s:=100 d^*(x_0^*, z^*)$, and let $(z_0, -s^2)$ and $(z_1, -s^2)$ be $H_n$-centers of $x_0^*$ and $z^*$, respectively. 

By Definition \ref{defn:dstar-distance}, we have
	\begin{align*}
s \ge d_{W_1}^{-s^2}(\nu_{x_0^*; -s^2}, \nu_{z^*; -s^2}) \ge d_{-s^2}(z_0, z_1)-2s \sqrt{H_n}
	\end{align*}
and hence
	\begin{align} \label{eq:lip1a}
d_{-s^2}(z_0, z_1) \le C(n) s.
	\end{align}

Choose $\hat c$ sufficiently small so that $\frac{1}{10 s^2} \ge \max\{1, \hat C(n)\}$, where $\hat C(n)$ is the same constant as in Proposition \ref{pointwisegradientestimatesplittingmap}. Since $\vec u$ solves the heat equation, we have $\vec u(x^*_0)=\int_{M} \vec u(\cdot,-s^2)\, \mathrm{d}\nu_{-s^2}$. Thus, by Proposition \ref{pointwisegradientestimatesplittingmap}, we obtain
	\begin{align}
		|\vec u(x^*_0)-\vec u(z_0,-s^2)|&\leq \int_{M} |\vec u(y,-s^2)-\vec u(z_0,-s^2)|\, \mathrm{d}\nu_{-s^2}(y) \notag \\
		&\leq \int_{M} \lc 1+C(n)\exp\lc \frac{d_{-s^2}^2(y,z_0)}{6s^2} \rc  \rc d_{-s^2}(y,z_0)\, \mathrm{d}\nu_{-s^2}(y)\notag \\
		&= \sum_{k=0}^\infty \int_{\{k s \le d_{-s^2}(y, z_0) \le (k+1)s \}} \lc 1+C(n)\exp \lc \frac{d_{-s^2}^2(y,z_0)}{6s^2} \rc \rc d_{-s^2}(y,z_0)\, \mathrm{d}\nu_{-s^2}(y) \notag \\
		&\leq \sum_{k=0}^\infty \lc 1+C(n) e^{(k+1)^2/6}\rc (k+1) s e^{-k^2/5} \le C(n) s, \label{eq:lip0}
	\end{align}
 where we used Theorem \ref{heatkernelupperbdgeneral} (i) for the third inequality. Since $\vec u(x^*_0)=0$, we conclude 
	\begin{align} \label{eq:lip1}
		|\vec u(z_0,-s^2)| \le C(n) s.
	\end{align}

By using \eqref{eq:lip1a} and the same argument as \eqref{eq:lip0}, we obtain
	\begin{align} \label{eq:lip2}
|\vec u(z^*)-\vec u(z_1,-s^2)| \le C(n) s.
	\end{align}

Combining \eqref{eq:lip1} and \eqref{eq:lip2}, we conclude from Proposition \ref{pointwisegradientestimatesplittingmap} that
	\begin{align*}
|\vec u(z^*)| \le |\vec u(z^*)-\vec u(z_1,-s^2)|+|\vec u(z_0, -s^2)-\vec u(z_1,-s^2)|+|\vec u(z_0,-s^2)| \le C(n) s.
	\end{align*}

This completes the proof.
\end{proof}

For the rest of the section, we consider a Ricci flow limit space $(Z, d_Z, p_\infty,\t) \in \MM(n, Y, T)$; see Notation \ref{not:1}.

Next, we introduce the following quantitative concept of splitting on $Z$. 

\begin{defn}\label{defnksplitting}\index{$(k, \ep, r)$-splitting map}
	A point $z\in Z_{\III^-}$ is called \textbf{$(k,\ep,r)$-splitting} if $\t(z)-10  r^2 \in \III^-$ and there exists a noncollapsed Ricci flow limit space $(Z',d_{Z'},z',\t')$ such that its regular part $\RR'_{[-10, 0]}$ splits off an $\R^k$ as a Ricci flow spacetime. Moreover, 
	  \begin{align*}
(Z, r^{-1} d_Z, z, r^{-2}(\t-\t(z))) \quad \text{is $\ep$-close to} \quad (Z',d_{Z'},z',\t') \quad \text{over} \quad [-10, 0].
  \end{align*} 
\end{defn}

Moreover, we generalize Definition \ref{defnsplittingmap} on $Z$.

\begin{defn}
A map $\vec u=(u_1,\ldots,u_k)$ is called a \textbf{$(k,\ep, r)$-splitting map at $z \in Z_{\III^-}$} if $\t(z)-10 r^2 \in \III^-$, and $\vec u$ is obtained as the limit of a sequence of $(k, \ep, r)$-splitting maps $\vec u^i=(u^i_1,\ldots,u^i_k)$ at $z_i^*$ with $z_i^* \to z$ in the Gromov--Hausdorff sense. Note that $\vec u$ is defined on $Z_{(\t(z)-10 r^2, 0]}$ by the reproduction formula and Theorem \ref{thm:convextra}.
\end{defn}

Passing to the limit, all of the above propositions and corollaries hold for almost splitting maps on $Z$.

We end this section by proving the following result.

\begin{prop}\label{equivalencesplitsymetric}
Let $(Z, d_Z, \t) \in \MM(n, Y, T)$. For any $\ep > 0$, if $\delta \leq \delta(n, Y, \ep)$ and $z$ is a $(k, \delta, r)$-splitting point, then there exists a map $\vec{u} = (u_1, \ldots, u_k)$ defined on $Z_{(\t(z) - 9 r^2, 0]}$ such that for any $x \in B^*_Z(z, \ep^{-1} r) \cap Z_{[\t(z)-\delta r^2, \t(z)+\delta r^2]}$ and $s \in [\ep r,  r/2]$, there exists a matrix $T_{x, s}$ satisfying $\|T_{x, s} - \mathrm{Id}\| \le \ep$, for which the rescaled map $\vec{u}_{x, s} := T_{x, s}\big(\vec{u} - \vec{u}(x)\big)$ is a $(k, \ep, s)$-splitting map at $x$.
\end{prop}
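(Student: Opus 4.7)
The plan is to construct $\vec{u}$ by transplanting the splitting coordinates of the model onto the approximating smooth Ricci flows and then solving the heat equation to promote them into genuine almost-splitting maps, after which the scale-invariance property is established by a compactness and contradiction argument. After parabolic rescaling, we may assume $r=1$ and $\t(z)=0$.

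By the hypothesis, there is a noncollapsed Ricci flow limit space $(Z', d_{Z'}, z', \t')$ whose regular part decomposes isometrically as $\RR'_{[-10,0]} = \RR''_{[-10,0]} \times \R^k$, with $(Z, d_Z, z, \t)$ being $\delta$-close to it over $[-10,0]$ in the sense of Definition \ref{defn:close}. The $\R^k$ factor provides functions $y_1, \ldots, y_k$ on $\RR'_{[-10,0]}$ satisfying
\begin{align*}
	\langle \nabla y_i, \nabla y_j \rangle = \delta_{ij}, \qquad \nabla^2 y_i = 0, \qquad \partial_{\t'} y_i = 0,
\end{align*}
and we normalize them so that $y_i(z'') = 0$ for a fixed basepoint $z'' \in \RR'_{-1}$ which is a regular $H_n$-center of $z'$. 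Now pick a sequence of closed Ricci flows $\XX^{\alpha} \in \MM(n,Y,T_\alpha)$ together with basepoints $z_\alpha^* \to z$ realizing the Gromov--Hausdorff convergence of Theorem \ref{GHlimitd*}; via the embeddings of Definition \ref{defn:close} (composed with $\phi_\alpha$ from Theorem \ref{thm:smooth1}) and the cutoff functions of Proposition \ref{cutoffProp1}, pull back $y_1, \ldots, y_k$ to smooth functions $\tilde u^{\alpha}_1, \ldots, \tilde u^{\alpha}_k$ on $\XX^{\alpha}$ supported in a large $d^*$-ball around $z_\alpha^*$. Using the smooth convergence on the regular part and the integral control of the singular stratum from Theorem \ref{integralRmd*limit}, these satisfy, for all $j, l$,
\begin{align*}
	\int_{t_\alpha-10}^{t_\alpha-1/10} \int_{M_\alpha} \bigl(\,|\square \tilde u^\alpha_j| + |\nabla^2 \tilde u^\alpha_j|^2 + |\langle\nabla \tilde u^\alpha_j,\nabla \tilde u^\alpha_l\rangle - \delta_{jl}|\,\bigr)\,\mathrm{d}\nu_{z_\alpha^*;t}\,\mathrm{d}t \le \Psi(\delta \mid n, Y) + \Psi(\alpha^{-1} \mid n, Y),
\end{align*}
where $\Psi \to 0$ as its argument tends to $0$. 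Applying Bamler's correction procedure from \cite[Proposition 12.1]{bamler2020structure} (used in the proof of Theorem \ref{thm:splitting}), one replaces each $\tilde u^\alpha_j$ by a solution $u^\alpha_j$ of $\square u^\alpha_j = 0$ on $\XX^\alpha_{[t_\alpha-10, t_\alpha]}$ with $u^\alpha_j(z_\alpha^*) = 0$ and the same integral defects up to $\Psi(\delta) + \Psi(\alpha^{-1})$. A Gram--Schmidt procedure on the $k \times k$ matrix $(\int \langle \nabla u^\alpha_j, \nabla u^\alpha_l \rangle \, \mathrm{d}\nu_{z_\alpha^*;t} \, \mathrm{d}t)_{j,l}$ yields a linear modification producing a genuine $(k, \Psi(\delta), 1)$-splitting map $\vec u^\alpha$ at $z_\alpha^*$. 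Passing to a subsequential limit $\alpha \to \infty$ and using Theorem \ref{thm:convextra} produces the desired $\vec u$ on $Z_{(-9, 0]}$.

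For the scale-invariance claim, I argue by contradiction: if the statement fails for some $\ep > 0$, then there exist $\delta_i \searrow 0$, noncollapsed Ricci flow limit spaces $Z^{(i)} \in \overline{\MM(n,Y)}$ with $(k, \delta_i, 1)$-splitting points $z^{(i)}$, corresponding maps $\vec u^{(i)}$ constructed as above, points $x^{(i)} \in B^*_{Z^{(i)}}(z^{(i)}, \ep^{-1}) \cap Z^{(i)}_{[-\delta_i, \delta_i]}$, and scales $s_i \in [\ep, 1/2]$, such that no matrix $T$ with $\|T - \mathrm{Id}\| \le \ep$ makes $T(\vec u^{(i)} - \vec u^{(i)}(x^{(i)}))$ a $(k, \ep, s_i)$-splitting map at $x^{(i)}$. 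By $\hat C^\infty$-compactness and the convergence $Z^{(i)} \xrightarrow{\hat C^\infty} (Z', d_{Z'}, z', \t')$, pass to a subsequence along which $x^{(i)} \to x_\infty \in Z'_0$, $s_i \to s_\infty \in [\ep, 1/2]$, and $\vec u^{(i)} \to (y_1, \ldots, y_k)$ in the appropriate smooth-on-regular-part sense. On $Z'$ the shifted functions $(y_j - y_j(x_\infty))_{j=1}^k$ automatically satisfy $\nabla^2 y_j = 0$, $\partial_{\t'} y_j = 0$, $\langle \nabla y_j, \nabla y_l\rangle = \delta_{jl}$ on $\RR'$, so they form a $(k, 0, s_\infty)$-splitting map at $x_\infty$. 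Passing the integral defects to the limit and performing a Gram--Schmidt normalization $T_{x^{(i)}, s_i}$ (with $\|T_{x^{(i)}, s_i} - \mathrm{Id}\| \to 0$), the rescaled $\vec u^{(i)}_{x^{(i)}, s_i}$ is a $(k, \Psi_i, s_i)$-splitting map at $x^{(i)}$ with $\Psi_i \to 0$, contradicting the choice of the sequence for $i$ large.

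The main obstacle is that the integral conditions defining a $(k, \ep, s)$-splitting map at $x^{(i)}$ are weighted by the conjugate heat kernel measure $\mathrm{d}\nu_{x^{(i),*};t}$ based at the new point and measured over the parabolic window $[t(x^{(i)}) - 10 s_i^2, t(x^{(i)}) - s_i^2/10]$, whereas the defect bounds for $\vec u^{(i)}$ are known only with respect to $\mathrm{d}\nu_{z^{(i),*};t}$ over the window near $z^{(i)}$. Transferring the bounds requires a change-of-basepoint comparison between the two conjugate heat kernel measures; this is exactly the role played by Proposition \ref{changeofbasepoint1} in Appendix \ref{app:A}, which combined with Theorem \ref{heatkernelupperbdgeneral} and the $L^p$ gradient bounds of Proposition \ref{timesliceL2estimateofgradient} gives uniform integrability of all relevant quantities against $\mathrm{d}\nu_{x^{(i),*};t}$ as long as $d^*(z^{(i)}, x^{(i)}) \le \ep^{-1}$ and $s_i \ge \ep$. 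This is where the restriction $s \ge \ep r$ and the ball radius $\ep^{-1} r$ enter the statement.
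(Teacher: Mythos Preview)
Your overall contradiction-plus-smooth-convergence skeleton matches the paper's, but you are missing the key mechanism that actually yields the Hessian bound at the shifted basepoint and smaller scale. Concretely, the defining inequality you must verify is
\[
\int_{\t(x^{(i)})-10s_i^2}^{\t(x^{(i)})-s_i^2/10}\int_{M^{(i)}}|\nabla^2 u_j^{(i)}|^2\,\mathrm{d}\nu_{x^{(i),*};t}\,\mathrm{d}t\ \longrightarrow\ 0,
\]
and your justification for this does not close. The change-of-basepoint comparison (Proposition~\ref{changeofbasepoint1}) only yields $\mathrm{d}\nu_{x^{(i),*};t}\le C\,e^{\alpha f_{z^{(i)}}}\,\mathrm{d}\nu_{z^{(i),*};t}$, so you would need $\int|\nabla^2 u_j^{(i)}|^2 e^{\alpha f}\,\mathrm{d}\nu_{z^{(i),*};t}$ small; the $(k,\Psi,1)$-splitting map property gives only the unweighted $L^2$ Hessian bound. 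The $L^p$ gradient bounds you cite from Proposition~\ref{timesliceL2estimateofgradient} control $|\nabla u|$, not $|\nabla^2 u|$, so they do not provide the uniform integrability you claim. Smooth convergence on compacta together with $\nabla^2 y_j\equiv 0$ handles the interior, but you have no uniform tail control on $|\nabla^2 u_j^{(i)}|^2$ near the singular set or at spatial infinity relative to $\nu_{x^{(i),*}}$.

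The paper closes this gap by a different route: it does \emph{not} attempt to bound the Hessian directly. Instead, it shows only the first-order statements
\[
\int_{M_l}(u_i^l-a_i^l)^2\,\mathrm{d}\nu_{x_l^*;-9}\to 18,\qquad \int_{M_l}\langle\nabla u_i^l,\nabla u_j^l\rangle\,\mathrm{d}\nu_{x_l^*;-9}\to\delta_{ij},
\]
which together force the frequency $F_{u_i^l-a_i^l}(-9)\to\tfrac12$. Then the frequency monotonicity of Lemma~\ref{frequencyestimate} and the propagation argument of Proposition~\ref{propagationofeigenvalues} (see Corollary~\ref{cor:exispliting}) \emph{derive} the Hessian bound against $\nu_{x_l^*}$ at all scales in $[\epsilon,1/2]$ as a consequence of the evolution identity~\eqref{evolutionoffrequency}. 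This is the missing ingredient in your argument: the Hessian smallness at the new basepoint is produced by an ODE for the frequency, not by a measure comparison. Your first-step construction via Bamler's correction and Gram--Schmidt is also more elaborate than needed; the paper simply takes heat solutions converging to the coordinate functions and lets the frequency argument do the work.
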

\begin{proof}
Without loss of generality, we assume $r=1$ and $\t(z)=0$.

Assume that the conclusion is false. Then we can find Ricci flow limit spaces $(Z^l,d_{Z^l},\t_l) \in \MM(n, Y)$ such that there exists $z_l\in Z^l$ that is $(k, l^{-2},1)$-splitting. 

After passing to a subsequence, we assume
	\begin{align*}
		(Z^l,d_{Z^l}, \t_l, z_l)\xrightarrow[l\to\infty]{\quad \mathrm{\hat C^\infty}\quad} (Z,d_{Z},\t,z).
	\end{align*} 
After passing to a diagonal subsequence, we may assume that $(Z^l,d_{Z^l}, \t_l, z_l)=(M_l \times \III_l, d_l^*, \t_l, z_l^*) \in \MM(n, Y, T_l)$.

By Definition \ref{defnksplitting}, we conclude that the regular part $\RR_{[-10 ,0]}=\RR' \times \R^k$. Moreover, it follows from Lemma \ref{lem:uniheatequ} that for any $w=(w', \vec b) \in \RR' \times \R^k$ and any $s \in [-10, \t(w))$, we have
	\begin{align} \label{eq:productmeasure}
\nu_{w;s}=\nu'_{w';s} \otimes \nu^{\R^k}_{\vec b ;s},
	\end{align} 
where $\nu^{\R^k}_{\vec b ;s}$ is the standard Gaussian measure on $\R^k$ defined by
	\begin{align*}
\nu^{\R^k}_{\vec b ;s}=(4\pi(\t(w)-s))^{-\frac{k}{2}} \exp \lc -\frac{|\vec b-\vec x|^2}{4(\t(w)-s)} \rc \, \mathrm{d}\vec x.
	\end{align*} 
	
	Let $(y_1,\ldots,y_k)$ denote the corresponding coordinate functions satisfying $\int_{\RR_{-10}} y_i \, \mathrm{d}\nu_{z; -10}=0$. By solving the corresponding heat equation, we assume that $y_i$ satisfies $\square y_i =0$ on $\RR_{[-10, 0]}$ and is defined on $Z_{(-10, 0]}$. 
	
	By \eqref{eq:productmeasure}, we conclude that for any $w \in \RR_{(-10, 0]}$
	\begin{align} \label{eq:extralimitint1}
		\int_{\RR_{-9}} \lc y_i-y_i(w) \rc^2\,\mathrm{d}\nu_{w;-9}=2(\t(w)+9), \quad \int_{\RR_{-9}}\la \na y_i,\na y_j\ra \, \mathrm{d}\nu_{w;-9}=\delta_{ij}.
	\end{align}		
	Passing to the limit (see \eqref{eq:kernellimit}), we conclude that \eqref{eq:extralimitint1} also holds for $w \in Z_{(-10, 0]}$. 
	
Using the cutoff functions in Proposition \ref{cutoffProp1} and the smooth convergence, we can find smooth functions $\vec u^l=(u^l_1, \ldots, u^l_k)$ on $M_l \times [-9, 0]$ with $\square \vec u^l=0$ so that $\vec u^l$ converge smoothly to $\vec y$ on $\RR_{(-9, 0)}$.
	
According to our assumption, there exist $x_l^* \in B^*(z_l^*, \ep^{-1}) \cap M_l \times [-l^{-2}, l^{-2}]$ and $s_l \in [\ep, 1/2]$ for which $\vec u^l$ does not satisfy the conclusion. After passing to a subsequence, we assume that $s_l \to s_\infty \in [\ep, 1/2]$, and $x_l^*$ converge to $x_\infty \in \overline{B^*_Z(z, \ep^{-1})} \cap Z_0$.

Applying \eqref{eq:extralimitint1} to $x_\infty$, we obtain, by smooth convergence, that
	\begin{align*} 
	\abs{\int_{M_l} (u_i^l-a_i^l(x_l^*))^2 \, \mathrm{d}\nu_{x_l^*;-9}-18} \xrightarrow[l \to \infty]{} 0,
		\end{align*}	
	where
		\begin{align*}
a_i^l(x_l^*):=\int_{M_l} u_i^l \, \mathrm{d}\nu_{x_l^*;-9}.
		\end{align*}
Moreover, the following estimate holds:
		\begin{align*} 
	 \abs{ \int_{M_l} \la \na u^l_i, \na u^l_j\ra \, \mathrm{d}\nu_{x_l^*;-9}-\delta_{ij}} \xrightarrow[l \to \infty]{} 0
		\end{align*}
		for any $1 \le i, j \le k$. Thus, the corresponding frequency function $F_{u_i^l-a_i^l(x_l^*)}$ with respect to $x_l^*$ (see Definition \ref{def:fre}) satisfies
	\begin{align*}
\abs{F_{u_i^l-a_i^l(x_l^*)}(-9)-\frac{1}{2}} \xrightarrow[l \to \infty]{} 0.
	\end{align*}
Thus, by the same argument as in the proof of Proposition \ref{propagationofeigenvalues} (see also Corollary \ref{cor:exisplitting}), we conclude that for sufficiently large $l$, there exists a matrix $T_{x_l^*, s_l}$ satisfying $\|T_{x_l^*, s_l} - \mathrm{Id}\| \to 0$, such that the map $T_{x_l^*, s_l}\lc \vec{u}^l-\vec{u}^l(x_l^*)\rc$ is a $(k, \ep, s_l)$-splitting map at $x_l^*$.

This contradicts our assumption and completes the proof.
\end{proof}

\section{Further discussions} \label{sec:further}

In this section, we discuss how the results of this paper extend to Ricci flows with bounded curvature. We also examine certain special properties that arise in the K\"ahler setting.

\subsection*{Complete Ricci flows with bounded curvature}

For a fixed constant $T \in (0, +\infty]$ and a small parameter $\sigma \in (0, 1/100]$, we define as before
	\begin{align*}
\III^-=(-(1-2\sigma)T,0], \quad \III=[-(1-2\sigma)T,0], \quad \III^{+}=[-(1-\sigma)T, 0], \quad \III^{++}=[-T,0].
	\end{align*}

\begin{defn}
For a fixed constant $T\in(0,+\infty]$, the moduli space $\widetilde\MM(n,T)$\index{$\widetilde\MM(n,T)$} consists of all $n$-dimensional complete Ricci flows
\begin{align*}
\XX=\{M^n,(g(t))_{t\in\III^{++}}\}
\end{align*}
with bounded curvature on every compact time subinterval of $\III^{++}$. For $Y>0$, the subspace $\widetilde\MM(n,Y,T)\subset\widetilde\MM(n,T)$\index{$\widetilde\MM(n,Y,T)$} consists of the flows whose entropy is bounded below by $-Y$.
\end{defn}

It is clear that $\MM(n,T)\subset\widetilde\MM(n,T)$ and $\MM(n,Y,T)\subset\widetilde\MM(n,Y,T)$. As shown in \cite[Appendix A]{bamler2025fundamental}, the results of \cite{bamler2020entropy}, \cite{bamler2023compactness}, and \cite{bamler2020structure} remain valid for Ricci flows in $\widetilde \MM(n, Y, T)$. For related heat kernel estimates on complete noncompact Ricci flows, we refer to \cite{li2025RFnoncompact}, where full details are provided. For example, the upper bound of the heat kernel in Theorem \ref{heatkernelupperbdgeneral} appears as \cite[Theorem 11.4]{li2025RFnoncompact}; Theorem \ref{hypercontractivity} on hypercontractivity corresponds to \cite[Theorem 12.1]{li2025RFnoncompact}; and the integral estimates in Proposition \ref{integralbound} are established in \cite[Section 13]{li2025RFnoncompact}.

Consequently, the arguments in Sections \ref{secpreliminary}--\ref{sec:f} apply to $\widetilde\MM(n,T)$. In particular, the analogue of Theorem \ref{thm:GHlimit-dstar} is the following.

\begin{thm}\label{thm:GHlimit-dstar-rs}
Consider a sequence
\begin{align*}
\XX^i=\{M_i^n,(g_i(t))_{t\in\III^{++}}\}\in\widetilde\MM(n,T)
\end{align*}
with base points $p_i^*\in\XX^i_{\III}$. When $T=+\infty$, assume additionally that $\limsup_{i\to\infty}\t_i(p_i^*)>-\infty$. After passing to a subsequence, we obtain
\begin{align}\label{eq:conrs1}
(M_i\times\III,d_i^*,p_i^*,\t_i)
\xrightarrow[i\to\infty]{\quad\mathrm{pGH}\quad}
(Z,d_Z,p_\infty,\t),
\end{align}
where $(Z,d_Z,\t)$ is a complete, separable, locally compact parabolic metric space over $\III$.
\end{thm}

If, in addition, the sequence in Theorem \ref{thm:GHlimit-dstar-rs} lies in $\widetilde\MM(n,Y,T)$ for a fixed $Y$, then all results in Sections \ref{sec:smooth}--\ref{sec:singular} hold for the limit. We summarize the principal conclusions below.

\begin{thm}\label{Fconvergencers}
Suppose $(Z,d_Z,p_\infty,\t)$ is obtained by Theorem \ref{thm:GHlimit-dstar-rs} from a sequence in $\widetilde\MM(n,Y,T)$ for a fixed $Y$. Then the following statements hold.
	
	\begin{enumerate}[label=\textnormal{(\arabic{*})}]
		\item There exists a decomposition
		\begin{align*}
		Z_{\III^-}=\RR_{\III^-} \sqcup \MS
		\end{align*}
		such that $\RR$ is given by an $n$-dimensional Ricci flow spacetime $(\RR, \t, \partial_{\t}, g^{Z})$ and $\dim_{\MMM} \mathcal S \le n-2$, where $\dim_{\MMM}$ denotes the Minkowski dimension in Definition \ref{defn:Minkowski-dstar}. Moreover, $\RR$ is open, $\RR_{\III^-}$ is dense in $Z_{\III^-}$, and $\RR$ is connected when $T=+\infty$.
		
	\item For each $t \in \III^-$, there exists an extended distance $d^Z_t$ on $Z_t$ such that $d^Z_t$, when restricted to $\RR_t$, agrees with $d_{g^Z_t}$ locally. In addition, each point $z \in Z$ is assigned a conjugate heat kernel measure $\nu_{z;s}$ such that $(Z, \t, (d^Z_t)_{t \in \III^-}, (\nu_{z;s})_{z\in Z, \, -(1-2\sigma)T \le s \le \t(z)})$ is an extended metric flow in the sense of Definition \ref{defn:emf}. Moreover, for any $x,y\in Z$ with $\t(x)\geq\t(y)$, set $r=d_Z(x,y)$ and $q=\t(x)-r^2$. If $q\in\III^-$, then
\begin{align*}
\lim_{t\nearrow q}d_{W_1}^{Z_t}(\nu_{x;t},\nu_{y;t})\leq r.
\end{align*}
If in addition $q<\t(y)$, then
\begin{align*}
r\leq\lim_{t\searrow q}d_{W_1}^{Z_t}(\nu_{x;t},\nu_{y;t}).
\end{align*}
		
		\item Every tangent flow $(Z',d_{Z'}, \t',z')$ at a point $z \in Z$, when restricted to the regular part $\RR'_{(-\infty, 0)}$, satisfies the equation
			\begin{align*}
\Ric(g^{Z'})+\na^2 f_{z'}=\frac{g^{Z'}}{2 \tau}.
	\end{align*}
Moreover, each $\RR'_{t}$ is connected for any $t \in (-\infty, 0)$.
		
		\item The convergence \eqref{eq:conrs1} is smooth on $\RR$ in the following sense. There exist an increasing sequence $U_1 \subset U_2 \subset \ldots \subset \RR$ of open subsets with $\bigcup_{i=1}^\infty U_i = \RR$, open subsets $V_i \subset M_i \times \III$, time-preserving diffeomorphisms $\phi_i : U_i \to V_i$ and a sequence $\ep_i \to 0$ such that the following holds:
		\begin{enumerate}[label=\textnormal{(\alph{*})}]
			\item We have
			\begin{align*}
				\Vert \phi_i^* g^i - g^Z \Vert_{C^{[\ep_i^{-1}]}(U_i)} & \leq \ep_i, \\
				\Vert \phi_i^* \partial_{\t_i} - \partial_{\t} \Vert_{C^{[\ep_i^{-1}]}(U_i)} &\leq \ep_i,
			\end{align*}
			where $g^i$ is the spacetime metric induced by $g_i(t)$, and $\partial_{\t_i}$ is the standard time vector field.
			
			\item For $U_i^{(2)}=\{(x,y) \in U_i \times U_i \mid \t(x)> \t(y)+\ep_i\}$, $V_i^{(2)}=\{(x^*,y^*) \in V_i \times V_i \mid \t_i(x^*)> \t_i(y^*)+\ep_i\}$ and $\phi_i^{(2)}:=(\phi_i, \phi_i): U_i^{(2)} \to V_i^{(2)}$, we have
					\begin{align*}
\Vert  (\phi_i^{(2)})^* K^i-K_Z \Vert_{C^{[\ep_i^{-1}]}(U_i^{(2)})} \le \ep_i,
			\end{align*}	
where $K^i$ and $K_Z$ denote the heat kernels on $(M_i \times \III, g_i(t))$ and $(\RR, g^Z)$, respectively.
			
		\item Let $y \in \RR$ and $y_i^* \in M_i \times \III$. Then $y_i^* \to y$ in the Gromov--Hausdorff sense if and only if $y_i^* \in V_i$ for large $i$ and $\phi_i^{-1}(y_i^*) \to y$ in $\RR$.
		
		\item If $y_i^* \in M_i \times \III$ converge to $y \in Z$ in the Gromov--Hausdorff sense, then
\begin{align*}
K^i(y_i^*;\phi_i(\cdot)) \xrightarrow[i \to \infty]{C^{\infty}_\mathrm{loc}} K_Z(y;\cdot) \quad \text{on} \quad \RR_{(-\infty, \t(y))}.
\end{align*}	
		\item For each $t \in \III^-$, the time slice $\RR_t$ has at most countably many connected components.	
		\item For all but countably many times $t \in \III^-$, we have
	\begin{align*}
d^Z_t=d_{g^Z_t}
	\end{align*}
on each connected component of $\RR_t$.	
				\end{enumerate}
		\item Given $x_0 \in Z$ and $r>0$ with $\t(x_0)-2r^2 \in \III^-$, for any $\ep>0$, we can find a constant $C=C(n,Y, \sigma,\ep)>0$ such that
	\begin{align*}
		\int_{B^*_Z(x_0,r)\cap \RR}r_{\Rm}^{-4+2\ep} \, \mathrm{d}V_{g^Z_t}\mathrm{d}t\leq C r^{n-2+2\ep}.
	\end{align*} 		
	Moreover, for any $t \in \R$,
	\begin{align*}
\int_{B^*_Z(x_0,r)\cap \RR_t}r_{\Rm}^{-2+2\ep} \, \mathrm{d}V_{g^Z_t} \leq C r^{n-2+2\ep}.		
	\end{align*} 
	\end{enumerate}
\end{thm}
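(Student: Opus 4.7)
The plan is to show that every analytic input used in Sections \ref{secpreliminary}--\ref{sec:singular} for the closed moduli space $\MM(n,Y,T)$ continues to hold for $\widetilde{\MM}(n,Y,T)$, and then to re-run the arguments verbatim. Because all of the structure theorems in Sections \ref{secRicciflowlimitspace}--\ref{sec:singular} are ultimately local in spacetime and depend only on (a) heat kernel and Nash entropy estimates at a single spacetime point, (b) monotonicity of the $W_1$-distance and variance between conjugate heat flows, (c) hypercontractivity and weighted integral bounds against Gaussian-type measures, and (d) $\ep$-regularity and pseudolocality, the whole chain goes through once these ingredients are reinstated.

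First I would verify the analytic inputs. For $\XX=\{M^n,(g(t))_{t\in\III^{++}}\}\in\widetilde\MM(n,Y,T)$, boundedness of the curvature on every compact subinterval guarantees existence and uniqueness of the heat kernel $K(x,t;y,s)$ on $M\times\III^{++}$, and conjugate heat kernel measures $\nu_{x^*;s}$ remain probability measures. The sharp upper bound of Theorem \ref{heatkernelupperbdgeneral}, the gradient bound of Theorem \ref{gradientheatkernel}, the monotonicity of Proposition \ref{monotonicityW1}, the Nash entropy estimates of Propositions \ref{propNashentropy}--\ref{propNashentropy1}, the volume estimates of Proposition \ref{prop:volume}, the distance distortion of Proposition \ref{HncenterScal}, the integral bounds of Proposition \ref{integralbound}, Poincar\'e/hypercontractivity (Theorems \ref{poincareinequ}, \ref{hypercontractivity}), the $\ep$-regularity of Theorem \ref{epregularityNash}, and the two-sided pseudolocality of Theorem \ref{thm:twoside} are all established in the complete bounded-curvature setting in \cite[Appendix A]{bamler2021fundamental} and \cite{li2025RFnoncompact}. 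I would simply cite these and use them as drop-in replacements.

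Second, with these inputs, I would reproduce Section \ref{secRicciflowlimitspace} unchanged: Definition \ref{defnd*distance} makes sense for $\widetilde\MM(n,Y,T)$, $d^*$ is a genuine distance inducing the standard topology, the $d^*$-balls are comparable to $P^*$-balls (Proposition \ref{propdistance2}) and satisfy the two-sided volume estimates of Propositions \ref{propdistance3}, \ref{uppervolumebd}. The ball-packing argument of Theorem \ref{GHlimitd*} then yields the pointed Gromov--Hausdorff convergence \eqref{eq:conrs1}, producing a complete separable locally compact parabolic metric space $(Z,d_Z,p_\infty,\t)$. For Section \ref{sec:f}, Bamler's $\IF$-compactness (Theorem \ref{thm:601}) and the structural Theorem \ref{Fconvergence} for noncollapsed $\IF$-limits extend to $\widetilde{\MM}$ by the same references, so for every $z\in Z$ one builds an associated metric flow $\XX^z$, the isometric embedding $\iota_z$ of Theorem \ref{thm:iden} exists, and the identification between $\CF$-limits and Gromov--Hausdorff limits carries over.

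Third, assertions (1), (2), (3), (4) are consequences of this identification: Section \ref{sec:smooth} constructs the Ricci flow spacetime $(\RR,\t,\partial_\t,g^Z)$ via local product neighborhoods and center-of-mass gluing (Lemma \ref{lem:smooth1} and Theorem \ref{thm:smooth1}), and the kernel convergence of Theorem \ref{thm:convextra} is local and does not see compactness of $M_i$. Connectedness of $\RR$ follows from Corollary \ref{cor:connected} with $T=+\infty$ suitably adapted (or more directly from the fact that the tangent Ricci shrinker at any $z$ has connected time-slices by Proposition \ref{prop:004}). Section \ref{sec:extend} defines $d^Z_t$ and establishes the extended metric flow structure; the proofs use only the $H_n$-concentration and the monotonicity, both of which are preserved. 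Section \ref{sec:tangent} yields the Ricci shrinker equation (assertion (3)) via the constancy of Nash entropy at a tangent flow. For assertion (5), Section \ref{sec:singular} uses only the quantitative singular strata estimates of Theorem \ref{quantsizeS}, whose proofs in \cite{bamler2020structure} extend to $\widetilde\MM(n,Y,T)$ by \cite[Appendix A]{bamler2021fundamental}; applying Corollary \ref{cor:006}, Theorem \ref{integralRmd*limit}, and their slicewise analogues gives the claimed integral bounds, and Corollary \ref{cor:minkow} yields $\dim_\MMM\MS\le n-2$.

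The main obstacle is the careful verification of the sharp heat kernel upper bound and the integral estimates of Proposition \ref{integralbound} in the noncompact setting, because their proofs involve integration by parts against $e^{\alpha f_{x^*}}\,\mathrm{d}\nu_{x^*;t}$ and a priori require justification of vanishing boundary terms at spatial infinity. The standard device is to insert a spatial cutoff $\chi_R$ whose gradient and Laplacian are controlled by the curvature bound on compact subintervals, carry out the computation on the compactly supported data, and then let $R\to\infty$ using the Gaussian concentration of $\nu_{x^*;t}$ to absorb the error terms. This is done in detail in \cite{li2025RFnoncompact}. Once these estimates are granted, every subsequent argument in Sections \ref{secRicciflowlimitspace}--\ref{sec:singular} is local and transfers without modification, completing the proof of assertions (1)--(5).
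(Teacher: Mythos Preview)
Your proposal is correct and mirrors the paper's own treatment: the paper does not supply a standalone proof of this theorem but simply notes (in the paragraphs preceding it) that the analytic inputs of Section~\ref{secpreliminary} extend to $\widetilde\MM(n,Y,T)$ via \cite[Appendix~A]{bamler2021fundamental} and \cite{li2025RFnoncompact}, after which ``all results in Sections~\ref{secpreliminary}--\ref{sec:singular} also hold,'' and then states Theorem~\ref{Fconvergencers} as a summary. Your write-up is in fact more detailed than the paper's, since you itemize exactly which estimates need to be re-verified and trace how each assertion (1)--(5) descends from the corresponding section; the one place where you (appropriately) hesitate---the connectedness of $\RR$ in item~(1)---is a claim the paper also states without argument, and Corollary~\ref{cor:connected} only yields it when $T=+\infty$.
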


All results in Section \ref{sec:almost} also hold for $\widetilde \MM(n, Y, T)$, except that we need to modify the definition of the $(k, \ep, r)$-splitting map slightly as follows.

\begin{defn}[$(k,\ep, r)$-splitting map] \label{def:splitnon}
Given $\XX=\{M^n,(g(t))_{t \in \III^{++}}\} \in \widetilde \MM(n, Y, T)$ with $x_0^*=(x_0, t_0) \in \XX$, a map $\vec u=(u_1,\ldots,u_k)$ is called a \textbf{$(k,\ep, r)$-splitting map at $x_0^*$} if $t_0-10 r^2 \in \III^{-}$, and for all $i,j \in \{1,\ldots,k\}$, the following properties hold:
	\begin{enumerate}[label=\textnormal{(\roman{*})}]
		\item $u_i(x^*_0)=0$.
		\item $\square u_i=0$ on $M\times [t_0-10r^2,t_0]$.
		\item $\displaystyle \int_{t_0-10 r^2}^{t_0-r^2/10} \int_{M} |\na^2 u_i|^2 \, \mathrm{d}\nu_t \mathrm{d}t \le \ep$.		
		\item $\displaystyle \int_{t_0-10 r^2}^{t_0-r^2/10} \int_{M} \la \na u_i,\na u_j \ra-\delta_{ij} \, \mathrm{d}\nu_t \mathrm{d}t=0$.
        \item For any compact interval $J \subset [t_0-10r^2,t_0)$, there exists a constant $m>0$ and $z \in M$ such that on $M\times J$,
    \begin{align*}
        |\na u_i(x,t)|\leq m \lc d^m_t(z, x)+1 \rc.
    \end{align*}
	\end{enumerate}
\end{defn}
Note that (v) above obviously holds for any closed Ricci flow. Thus, Definition \ref{def:splitnon} agrees with Definition \ref{defnsplittingmap}.

\begin{exmp}[Tangent flow at infinity]
Let $\XX=\{M^n, (g(t))_{t \in (-\infty, 0]}\}$ be a complete Ricci flow with bounded curvature on any compact time interval of $(-\infty, 0]$. Moreover, we assume that $\XX$ has entropy bounded below by $-Y$ at a spacetime point $p^*$. Note that it follows from \cite{CMZ23} that this assumption implies that $\XX$ has entropy bounded below by $-Y$ at any spacetime point.

We consider the $d^*$-distance on $\XX$, defined as in Definition \ref{defn:dstar-distance}. For a sequence $r_i\to+\infty$, set
	\begin{align*}
g_i(t)=r_i^{-2} g(r_i^2 t),\quad \t_i=r_i^{-2} \t \quad d^{*}_i=r_i^{-1} d^*.
	\end{align*}
Then, after passing to a subsequence, we have the pointed Gromov--Hausdorff convergence
	\begin{align*}
(M \times (-\infty,0], d^{*}_i, p^*,\t_i) \xrightarrow[i \to \infty]{\quad \mathrm{pGH} \quad} (Z,d_{Z},z,\t).
	\end{align*}
where $(Z,d_{Z},z,\t)$ is a noncollapsed Ricci flow limit space over $(-\infty,0]$, which is called a \textbf{tangent flow at infinity}\index{tangent flow at infinity}. Note that $(Z,d_{Z},z,\t)$ depends on the sequence $\{r_i\}$, but is independent of $p^*$. 

It follows from Lemma \ref{lem:nashconv1} that $\NN_z(\tau)$ is constant. Thus, by Proposition \ref{prop:004} and Corollary \ref{cor:agree2}, we conclude that on the regular part $\RR_{(-\infty,0)}$,
	\begin{align*}
\Ric(g^{Z})+\na^2 f_{z}=\frac{g^{Z}}{2 \tau},
	\end{align*}
where $\tau(\cdot)=-\t(\cdot)$. Moreover, $\RR_t$ is connected for any $t \in (-\infty,0)$, and the metric $d^Z_t$ on $\RR_t$ agrees with the Riemannian distance $d_{g^Z_t}$. In addition, there exists a flow $\boldsymbol{\psi}^s$ on $Z_{(-\infty, 0)}$ so that the statements of Proposition \ref{selfsimilarall} hold.
\end{exmp}

\subsection*{K\"ahler Ricci flows}

We now consider the subspace $\widetilde{\mathcal{KM}}(n, Y, T)$ of $\widetilde \MM(n, Y, T)$ which consists of all K\"ahler Ricci flows. In particular, $n=2m$ is even.

We use the following definition, analogous to Definition \ref{def:rss}.

\begin{defn}[K\"ahler Ricci shrinker space] 
A pointed parabolic metric space $(Z',d_{Z'},z',\t')$ with $\t'(z')=0$ is called an $m$-dimensional \textbf{K\"ahler Ricci shrinker space} with entropy bounded below by $-Y$ if it satisfies $\R_- \subset \mathrm{image}(\t')$ and arises as the pointed Gromov--Hausdorff limit of a sequence of K\"ahler Ricci flows in $\widetilde{\mathcal{KM}}(2m, Y, T_i)$ with $T_i\to +\infty$. Moreover, $\NN_{z'}(\tau)$ remains constant for all $\tau>0$.
	\end{defn}

It is clear from the smooth convergence that any K\"ahler Ricci shrinker space $(Z',d_{Z'},z',\t')$ has a complex structure $J$ on the regular part $\RR'$.

\begin{lem} \label{lem:krs}
Any $m$-dimensional K\"ahler Ricci shrinker space $(Z',d_{Z'},z',\t')$ is $2k$-symmetric for some $k \in \{0, \ldots, m+1\}$.
\end{lem}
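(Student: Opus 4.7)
The plan is to show that the maximal splitting rank of $\RR'_{-1}$ must be even due to the parallel complex structure inherited from the approximating K\"ahler Ricci flows, and then to conclude using the dichotomy in Definition \ref{defnsymmetricsoliton}.

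First, I would exploit smooth convergence: each K\"ahler Ricci flow $\XX^i \in \widetilde{\mathcal{KM}}(2m, Y, T_i)$ carries a parallel complex structure $J_i$ with $\nabla^{g_i(t)} J_i = 0$ that is preserved under the flow. By the smooth convergence asserted in Theorem \ref{thm:smooth1}, the $J_i$ converge on the regular part to a parallel complex structure $J$ with $\nabla^{g^{Z'}} J = 0$ so that $(g^{Z'}, J)$ is K\"ahler on each slice of $\RR'$.

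Next, let $k \in \{0, \ldots, 2m\}$ be the maximal integer such that $(Z',d_{Z'},z',\t')$ is $k$-splitting in the sense of Definition \ref{def:kspliting}. By Proposition \ref{splittingforpositivetime} this yields an isometric decomposition of Ricci flow spacetimes
\begin{align*}
\RR'_{(-\infty,0)} = \RR''\times\R^k,
\end{align*}
with the $\R^k$-factor spanned by parallel vector fields $\partial_1,\ldots,\partial_k$ given by gradients of the splitting functions on $\RR'_{-1}$. The core claim is that $k$ is even. Since $J$ is parallel, each $J\partial_i$ is again parallel. Decomposing $J\partial_i = A_i + B_i$ along the splitting, with $A_i$ tangent to $\R^k$ and $B_i$ tangent to $\RR''$, both components are individually parallel. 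I would argue $B_i \equiv 0$ by the maximality of $k$: any nonzero parallel vector field on $\RR''_{-1}$ yields an additional isometric splitting of $\RR''$ via Remark \ref{rem:split} together with Theorem \ref{thm:splitting}, contradicting our choice of $k$. Hence $J$ preserves the $\R^k$-factor, forcing $k = 2l$ to be even.

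Finally, by Definition \ref{defnsymmetricsoliton}: if $(Z',d_{Z'},z',\t')$ is not a static cone, then being $2l$-splitting makes it $2l$-symmetric with $l \in \{0,\ldots,m\}$; if it is a static cone, then being $2l$-splitting makes it $(2l+2)$-symmetric with $l+1 \in \{1,\ldots,m+1\}$. Either way, the space is $2k$-symmetric for some $k\in\{0,\ldots,m+1\}$.

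The main obstacle will be making the step $B_i \equiv 0$ rigorous. The naive route is to solve $\nabla \tilde y = B_i$ on $\RR''_{-1}$ so that Theorem \ref{thm:splitting} or Remark \ref{rem:split} applies; the one-form $B_i^\flat$ is closed by parallelism, but its exactness requires topological control of $H^1_{\mathrm{dR}}(\RR''_{-1})$, which in the singular Ricci shrinker setting is not immediate. One alternative is to combine the growth estimates on the potential function from Lemma \ref{lem:metrices1} with the codimension four bound on the singular set to show that $\RR''_{-1}$ has no nontrivial first de Rham cohomology. Another route is to argue directly on the approximating K\"ahler Ricci flows $\XX^i$: each splitting function $y_j$ on $\RR'$ arises as the limit of almost-splitting maps $u_j^i$ from Section \ref{sec:almost}, and applying $J_i$ to $\nabla u_j^i$ produces a parallel candidate that, after an almost-splitting adjustment as in Proposition \ref{equivalencesplitsymetric}, yields the desired additional splitting function $\tilde y_j$ in the limit whenever $B_i \ne 0$.
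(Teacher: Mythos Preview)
Your overall strategy---prove that splitting directions come in $J$-pairs, then invoke Definition \ref{defnsymmetricsoliton}---matches the paper's, and your identification of the complex structure $J$ on $\RR'$ via smooth convergence is exactly right. The substantive difference is in how to produce a splitting \emph{function} for $J\nabla y$. You correctly flag exactness of $(J\nabla y)^\flat$ as the obstacle and propose either a cohomological argument on $\RR''_{-1}$ or a detour through almost-splitting maps on the approximating flows; both are plausible but heavy, and the first would require nontrivial work on the topology of the singular regular part.

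The paper bypasses this entirely with a one-line construction using the shrinker potential: set $y':=2g^{Z'}(\nabla f_{z'},J\nabla y)$ on $\RR'_{-1}$. Using $\nabla^2 f_{z'}=\tfrac{1}{2}g^{Z'}-\Ric(g^{Z'})$, $\nabla^2 y=0$, $\nabla J=0$, and the fact that $\Ric(J\nabla y,\cdot)=0$ (since $\Ric(\nabla y,\cdot)=0$ and $\Ric$ is $J$-invariant in the K\"ahler case), one computes directly that $\nabla y'=J\nabla y$. This gives the desired primitive without any topological input, and then Remark \ref{rem:split} finishes the job. Your maximal-$k$ packaging is fine, but once you have this formula the argument becomes a direct pairing of splitting functions rather than a contradiction via the $B_i$ component.
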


\begin{proof}
By Proposition \ref{splittingforpositivetime} (see also Remark \ref{rem:split}), we only need to prove that on $\RR'_{-1}$, if $\na y$ for some smooth function $y$ induces a splitting direction, then so does $J \na y$.

Indeed, define $y'=2g^{Z'}(\na f_{z'}, J \na y)$. A direct computation yields
	\begin{align*}
\na y'=2 (\na^2 f_{z'})^{\#}( J \na y)+2(J \na^2 y)^{\#}(\na f_{z'})=J \na y-2\Ric(g^{Z'})^{\#}( J \na y)=J \na y,
	\end{align*}
where we used $\Ric(g^{Z'})^{\#}(J \na y)=0$. Since $J\na y$ is a nonvanishing parallel vector field, it is clear that $\na y'$ induces another splitting direction.
\end{proof}

By using Lemma \ref{lem:krs}, we have the following stratification of singular points (see also \cite{JM23}).

Let $(Z, d_Z, \t)$ be a noncollapsed \textbf{K\"ahler Ricci flow limit space}, which is the pointed Gromov--Hausdorff limit of a sequence of K\"ahler Ricci flows in $\widetilde{\mathcal{KM}}(2m, Y, T)$. Then the singular set satisfies the following refined stratification:
 	\begin{equation*}
		\mathcal S^0 \subset \mathcal S^2 \subset \cdots \subset \mathcal S^{2(m-1)}=\mathcal S.
	\end{equation*}

\newpage

\appendixpage
\addappheadtotoc
\appendix
\section{Change of basis for conjugate heat kernel measures} 
\label{app:A} 

In this appendix, we derive two versions of estimates for the conjugate heat kernel measures. The proofs follow from slight modifications of \cite[Proposition 8.1]{bamler2020structure}.

\begin{prop}\label{changeofbasepoint}
	Let $\{M^n,(g(t))_{t\in I}\}$ be a closed Ricci flow. Suppose that $s,t_0,t_1\in I$ satisfy the following conditions for some constants $D$ and $Y$:
	\begin{itemize}
		\item $s<t_1\leq t_0$, and $\scal(\cdot,s)\geq -D(t_1-s)^{-1}$;
		\item $t_0-s \le D(t_1-s)$;
		\item $d_{W_1}^{t_1}(\nu_{x_0,t_0;t_1},\delta_{x_1})\leq D\sqrt{t_1-s}$;
		\item $\mathcal N_{x_0,t_0}(t_0-s)\geq -Y$ or $\mathcal{N}_{x_1,t_1}(t_1-s)\geq -Y$.
	\end{itemize}
	Define $\mathrm{d}\nu_{x_i,t_i;t}=(4\pi\tau_i)^{-n/2}e^{-f_i}dV_{g(t)}$ for $i=0,1$, where $\tau_i=t_i-t$. Then,
	\begin{align*} 
		\nu_{x_1,t_1;s}\leq C(n,Y,D)e^{C(n,Y,D)\sqrt{f_0(\cdot,s)+C(n,Y,D)}} \nu_{x_0,t_0;s}.
	\end{align*}
\end{prop}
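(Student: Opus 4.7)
The plan is to use the reproduction formula at time $t_1$,
\[
K(x_0, t_0; y, s) = \int_M K(z, t_1; y, s) \, \mathrm{d}\nu_{x_0, t_0; t_1}(z),
\]
and compare the integrand pointwise with $K(x_1, t_1; y, s)$ for $z$ close to $x_1$ in $g(t_1)$. First I would reduce to the single hypothesis $\mathcal N_{x_1, t_1}(t_1 - s) \ge -Y$: if only $\mathcal N_{x_0, t_0}(t_0 - s) \ge -Y$ is given, Proposition \ref{propNashentropy1} together with the $W_1$ bound (iii), the time comparability (ii) and the scalar lower bound (i) yields $\mathcal N_{x_1, t_1}(t_1 - s) \ge -Y'$ for some $Y' = Y'(n, Y, D)$. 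Applying the same proposition once more, any $z \in M$ with $d_{t_1}(z, x_1) \le L\sqrt{t_1 - s}$ satisfies
\[
\mathcal N_{z, t_1}(t_1 - s) \ge -Y' - C(n, D)\, L.
\]

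Fix $y \in M$ and set $G(z) := -\log K(z, t_1; y, s) + C_1$, with $C_1 = C_1(n, Y, D)$ chosen via the uniform Nash lower bound above and the heat kernel upper bound of Theorem \ref{heatkernelupperbdgeneral} so that $G > 0$ on a large enough $g(t_1)$-ball around $x_1$. Theorem \ref{gradientheatkernel}, applied in the base variable, then gives
\[
|\nabla_z G|(z) \le \frac{C(n, Y, D)}{\sqrt{t_1 - s}} \sqrt{G(z)}
\]
on that ball, so integrating $|\nabla \sqrt{G}|$ along a minimizing $g(t_1)$-geodesic from $x_1$ to $z$ and squaring yields, after estimating $G(x_1) \le f_1(y, s) + C_2$ by the definition of $K$,
\[
K(z, t_1; y, s) \ge K(x_1, t_1; y, s) \exp\!\left( - \frac{C\, d_{t_1}^2(z, x_1)}{t_1 - s} - \frac{C\, d_{t_1}(z, x_1)}{\sqrt{t_1 - s}} \sqrt{f_1(y, s) + C_2}\right).
\]

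By Markov applied to (iii), the set $\Omega := \{z : d_{t_1}(z, x_1) \le 2D\sqrt{t_1 - s}\}$ has $\nu_{x_0, t_0; t_1}$-mass at least $1/2$, and this radius is exactly where the quadratic term above is $O(1)$ and where the uniform Nash estimate from the first paragraph holds. Plugging the pointwise lower bound into the reproduction formula, integrating over $\Omega$, and using $\tau_0 \le D\tau_1$ to swap normalisation factors, I obtain
\[
K(x_0, t_0; y, s) \ge c(n, Y, D)\, K(x_1, t_1; y, s) \exp\!\left( - C(n, Y, D)\, \bigl( 1 + \sqrt{f_1(y, s) + C_2}\bigr)\right).
\]
To convert the right-hand side from $f_1$ to $f_0$ I would do a case analysis on $y$: if $f_1(y, s) \le f_0(y, s)$, monotonicity of $\sqrt{\cdot + C_2}$ gives the claim directly; if $f_1(y, s) > f_0(y, s)$, then $e^{-f_1} \le e^{-f_0}$ and comparability of $\tau_0, \tau_1$ give $K(x_1, t_1; y, s) \le C(n, D)\, K(x_0, t_0; y, s)$ without needing the exponential factor at all.

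The main obstacle is the linear-in-$\sqrt{f_1}$ term in the integrated gradient estimate, which a priori threatens to degrade the pointwise lower bound like $\exp\bigl(C \sqrt{f_1}\cdot d_{t_1}(z, x_1)/\sqrt{t_1 - s}\bigr)$. What rescues the argument is precisely that (iii) forces $\nu_{x_0, t_0; t_1}$ to place a definite portion of its mass inside a $g(t_1)$-ball of radius $O(\sqrt{t_1 - s})$, so this factor becomes the bounded exponent $\exp(-C\sqrt{f_1 + C_2})$ rather than an unbounded one; the case split then trades $f_1$ for $f_0$. A secondary care point is that Theorem \ref{gradientheatkernel} must be applied with a single Nash entropy bound valid uniformly across $\Omega$, which is why the propagation step at the start of the proof is essential.
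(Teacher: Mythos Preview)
Your argument is correct and uses the same core machinery as the paper: the reproduction formula at time $t_1$, the gradient estimate of Theorem~\ref{gradientheatkernel} integrated to control $\sqrt{-\log K}$ along geodesics, and the concentration of $\nu_{x_0,t_0;t_1}$ near $x_1$ coming from the $W_1$ bound.  The paper packages the gradient estimate slightly differently, absorbing the Nash entropy into the auxiliary function $v=C_3^{-1}u\exp(\NN_0^*(\cdot,1))/2$ rather than propagating the entropy bound uniformly on the ball first, and it centers the ball at an $H_n$-center of $(x_0,t_0)$ instead of at $x_1$; but these are cosmetic differences since the $W_1$ hypothesis forces the two centers to be close.

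The one substantive divergence is the endgame.  The paper introduces a parameter $L$ via $w(z_1)=L\sqrt{f_0(y_0,0)+L}$ and splits on whether $L$ is bounded or large, handling the large case by directly comparing $v(x_1)\le\exp(-(L\sqrt{f_0+L}-C)^2)$ against the exact identity $\int u\,\mathrm{d}\nu_{x_0,t_0;1}=(4\pi t_0)^{-n/2}e^{-f_0}$.  Your dichotomy on $f_1(y,s)\lessgtr f_0(y,s)$ is more transparent: either monotonicity of $\sqrt{\,\cdot+C_2}$ transfers the bound, or the pointwise kernel comparison is trivial.  Both routes work; yours avoids the somewhat opaque $L$-parameter at the cost of passing through $f_1$ as an intermediate quantity.
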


\begin{proof}
Throughout the proof, all constants $C_i$ are positive and depend on $n$, $Y$, and $D$. Without loss of generality, we assume $s=0$ and $t_1=1\leq t_0$. 

By Proposition \ref{propNashentropy1}, we have
	\begin{align}\label{app:comp2}
		|\NN_0^{*}(x_0,t_0)-\mathcal{N}_0^{*}(x_1,1)|&\leq C(n) d^{1}_{W_1}(\nu_{x_0,t_0;1},\delta_{x_1})+\frac{n}{2}\log (t_0)\leq C_1.
	\end{align}

Moreover, we set $(z_1,1)$ to be an $H_n$-center of $(x_0,t_0)$. Then,
	\begin{align}\label{distesti}
		d_1(x_1,z_1)\leq d_{W_1}^{1}(\nu_{x_0,t_0;1},\delta_{x_1})+d_{W_1}^{1}(\nu_{x_0,t_0;1},\delta_{z_1})\leq C_2.
	\end{align}
	Fix $y_0 \in M$ and let $u:=K(\cdot,1;y_0,0)$. By the reproduction formula, it suffices to prove
	\begin{equation}\label{8inequality}
		u(x_1)\leq C(n,Y,D)e^{C(n,Y,D)\sqrt{f_0(y_0,0)+C(n,Y,D)}}\int_M u \, \mathrm{d}\nu_{x_0,t_0;1}.
	\end{equation}
	By Theorem \ref{heatkernelupperbdgeneral} (i) and Proposition \ref{gradientheatkernel}, we have
		\begin{align*}
u\leq C_3\exp\lc-\NN_0^*(\cdot,1)\rc \quad \text{and} \quad \frac{|\na u|}{u}\leq C_3\sqrt{\log\lc\frac{C_3\exp(-\NN_0^*(\cdot,1))}{u}\rc}.
	\end{align*}
	
We set $v:=C_3^{-1}u\exp(\NN_0^*(\cdot,1))/2$ and $w:=\sqrt{-\log v}$. Then we obtain
	\begin{align}\label{changebase2}
		|\na w|\leq C_4.
	\end{align} 
	Thus, to prove \eqref{8inequality}, we only need to show 
	\begin{align}\label{changebase3}
		v(x_1)\exp\lc-\NN_0^*(x_1,1)\rc\leq C(n,Y,D)e^{C(n,Y,D)\sqrt{f_{0}(y_0,0)+C(n,Y,D)}}\int_M v\exp\lc-\NN_0^*(\cdot,1)\rc \, \mathrm{d}\nu_{x_0,t_0;1}.
	\end{align}
	
By \eqref{app:comp2} and our assumption, we have
	\begin{align}\label{changebase5}
		-\NN_0^*(x_1,1)\leq C_5.
	\end{align}
	Moreover, by \eqref{distesti} and \eqref{changebase2}, we obtain 
	\begin{align*}
		w(x_1)\geq \lc w(z_1)-C_4d_1(x_1,z_1)\rc_+ \geq \lc w(z_1)-C_6\rc_+,
	\end{align*}
	which implies
	\begin{align}\label{changebase6}
		v(x_1)=\exp(-w(x_1)^2)\leq \exp \lc-(w(z_1)-C_6)_+^2\rc.
	\end{align}

Similarly, for any $y \in B:= B_1(z_1,\sqrt{2H_n(t_0-1)})$, we have
		\begin{align}\label{changebase6X}
		v(x_1) \leq \exp \lc-(w(y)-C_4d_1(x_1,y))_+^2\rc \leq C_7\exp\lc C_7w(z_1)\rc\exp\lc-w(y)^2\rc.
	\end{align}
	
We define $L \ge 0$ such that
	\begin{align*}
w(z_1)= L\sqrt{f_0(y_0,0)+L}.
	\end{align*}
If $L \le C(n, Y, D)$, then it follows from \eqref{changebase5} and \eqref{changebase6X} that \eqref{changebase3} holds. Indeed, since $\nu_{x_0,t_0;1}(B)\geq 1/2$, we have
	\begin{align*}
\int_M v\exp\lc-\NN_0^*(\cdot,1)\rc \, \mathrm{d}\nu_{x_0,t_0;1} \ge \int_M v\,\mathrm{d}\nu_{x_0,t_0;1} \ge \frac{1}{2} \min_{y \in B} \exp\lc-w(y)^2\rc,
	\end{align*}
which implies \eqref{changebase3}.

Otherwise, it follows from \eqref{changebase6} that for some constant $C_8$,
		\begin{align}\label{changebase6X1}
v(x_1)\exp\lc-\NN_0^*(x_1,1)\rc \le C_8 \exp \lc-(L\sqrt{f_0(y_0,0)+L}-C_6)^2 \rc.
	\end{align}
On the other hand, we have
		\begin{align}\label{changebase6X2}
\int_M v\exp\lc-\NN_0^*(\cdot,1)\rc \, \mathrm{d}\nu_{x_0,t_0;1}=C_3^{-1} \int_M u \, \mathrm{d}\nu_{x_0,t_0;1}=C_3^{-1} (4\pi t_0)^{-\frac n 2}e^{-f_0(y_0, 0)} \ge C_9 e^{-f_0(y_0, 0)}
	\end{align}
for some constant $C_9$. Combining \eqref{changebase6X1} and \eqref{changebase6X2}, we conclude that \eqref{changebase3} also holds if $L \gg C(n, Y, D)$.

This completes the proof.
\end{proof}

\begin{prop}\label{changeofbasepoint1}
	Let $\{M^n,(g(t))_{t\in I}\}$ be a closed Ricci flow. Suppose $s,t^*,t_0,t_1\in I$ satisfy:
	\begin{itemize}
		\item $s<t^*\leq \min\{t_0,t_1\}$, and $\scal(\cdot,s)\geq -A(t^*-s)^{-1}$;
		\item for any constants $-\infty <\alpha_1<\alpha_0<1$ and some $\theta=\theta(n, A)>0$, we have
		\begin{align*}
			t_0-t^*\leq A(t^*-s), \ t_1-t^*\leq \theta \frac{\alpha_0-\alpha_1}{1-\alpha_0}(t^*-s).
		\end{align*}
	\end{itemize}
	Let $\mathrm{d}\nu_{x_i,t_i;t}=(4\pi\tau_i)^{-n/2}e^{-f_i} \, \mathrm{d}V_{g(t)}$ for $i=0,1$, where $\tau_i=t_i-t$. Assume that
		\begin{align*}
		d_{W_1}^{t^*}(\nu_{x_0,t_0;t^*},\nu_{x_1,t_1;t^*})=D\sqrt{t^*-s}.
		\end{align*}
		Then
		\begin{align*}
		e^{\alpha_1 f_1}\mathrm{d}\nu_{x_1,t_1;s}\leq e^{(\alpha_0-\alpha_1)\mathcal{N}_{x_0,t_0}(t_0-s)}C(n,A,\alpha_0,\alpha_1)e^{C(n,A,\alpha_0,\alpha_1)D^2} e^{\alpha_0f_0}\mathrm{d}\nu_{x_0,t_0;s}.
	\end{align*}
\end{prop}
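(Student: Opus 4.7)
The plan is to adapt the scheme of Proposition \ref{changeofbasepoint}, but to exploit hypercontractivity more aggressively in order to produce a genuine reverse--H\"older inequality between the two heat kernels, thereby avoiding the $\exp(C\sqrt{f_0})$ loss. After parabolic rescaling, I would normalize to $s=0$, $t^*=1$, so that $t_0\in[1,1+A]$ and $t_1\in[1,1+\theta(\alpha_0-\alpha_1)/(1-\alpha_0)]$. Fix $y_0\in M$ and set $u(z,t):=K(z,t;y_0,0)$, which satisfies $\square u=0$ on $M\times(0,t_0]$. The first observation is that, using $\mathrm{d}\nu_{x_i,t_i;0}=(4\pi\tau_i)^{-n/2}e^{-f_i}\,\mathrm{d}V_{g(0)}$ and $e^{\alpha_i f_i}K(x_i,t_i;\cdot,0)=(4\pi\tau_i)^{-n\alpha_i/2}\,K(x_i,t_i;\cdot,0)^{1-\alpha_i}$, the desired pointwise inequality is equivalent to the reverse--H\"older-type bound
\[
K(x_1,t_1;y_0,0)^{1-\alpha_1}\le C(n,A,\alpha_0,\alpha_1)\,e^{CD^2}\,e^{(\alpha_0-\alpha_1)\NN_{x_0,t_0}(t_0)}\,K(x_0,t_0;y_0,0)^{1-\alpha_0},
\]
uniformly in $y_0$, up to harmless factors coming from the $\tau_i$'s.

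The next step is to use the reproduction formula $K(x_i,t_i;y_0,0)=\int_M u(z,1)\,\mathrm{d}\nu_{x_i,t_i;1}(z)$ together with H\"older's inequality. Writing $\rho:=\mathrm{d}\nu_{x_1,t_1;1}/\mathrm{d}\nu_{x_0,t_0;1}$ and choosing conjugate exponents $p,p'$,
\[
K(x_1,t_1;y_0,0)=\int_M u(\cdot,1)\,\rho\,\mathrm{d}\nu_{x_0,t_0;1}\le\|u(\cdot,1)\|_{L^p(\nu_{x_0,t_0;1})}\,\|\rho\|_{L^{p'}(\nu_{x_0,t_0;1})}.
\]
For the first factor, I would invoke Theorem \ref{hypercontractivity} to flow $u$ backward from time $1$ to a small time $t_\epsilon>0$, provided $(t_0-t_\epsilon)/(t_0-1)\ge (p-1)/(q-1)$. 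The parameters $p$ and $q$ are chosen so that in the limit $t_\epsilon\to 0$, $p\to 1/(1-\alpha_1)$ and $q\to 1/(1-\alpha_0)$; the smallness assumption $t_1-1\le\theta(\alpha_0-\alpha_1)/(1-\alpha_0)$ with $\theta=\theta(n,A)$ is precisely what makes the hypercontractivity constraint compatible with these exponents. For the second factor, I would bound $\|\rho\|_{L^{p'}(\nu_{x_0,t_0;1})}$ by $Ce^{CD^2}$ via Proposition \ref{changeofbasepoint}, which controls $\rho$ pointwise by $Ce^{C\sqrt{f_0+C}}$, combined with the $L^{p'}$-integrability of $e^{\alpha f_0}$ from Proposition \ref{integralbound} (applied with the exponent $p'\alpha<\bar\alpha$, which is ensured by choosing $\theta$ sufficiently small).

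The final step is to extract from the hypercontractivity bound an explicit constant in terms of $K(x_0,t_0;y_0,0)$ and $\NN_{x_0,t_0}(t_0)$. Concretely, as $t_\epsilon\to 0$ the function $u(\cdot,t_\epsilon)$ concentrates like a Gaussian around $y_0$, so the $L^q$-norm with respect to $\nu_{x_0,t_0;t_\epsilon}$ can be estimated by combining the sharp heat-kernel upper bound from Theorem \ref{heatkernelupperbdgeneral} with the dominated-convergence-type asymptotic identifying the leading term as $K(x_0,t_0;y_0,0)^{1-\alpha_0}$ (to the appropriate power) times an explicit Gaussian constant. The Nash-entropy factor $e^{(\alpha_0-\alpha_1)\NN_{x_0,t_0}(t_0)}$ then arises from the prefactor in Theorem \ref{heatkernelupperbdgeneral}, since the exponents involved differ by exactly $\alpha_0-\alpha_1$.

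The main obstacle will be the delicate balancing act in the hypercontractivity step: as $t_\epsilon\to 0$ with $q>1$, $\|u(\cdot,t_\epsilon)\|_{L^q(\nu_{x_0,t_0;t_\epsilon})}$ individually blows up, and the sought-after reverse--H\"older bound only emerges after this blow-up is exactly cancelled by the $1/q$ exponent and the Gaussian mass concentration of $\mathrm{d}\nu_{x_0,t_0;t_\epsilon}$ near $y_0$. Making this cancellation precise, while tracking that the final constant depends only on $(n,A,\alpha_0,\alpha_1)$ in the prefactor, only on $D^2$ in the Wasserstein exponential, and only on $(\alpha_0-\alpha_1)\NN_{x_0,t_0}(t_0)$ in the entropy correction, is the technical heart of the argument and dictates the precise definition of $\theta(n,A)$.
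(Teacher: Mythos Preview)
Your reformulation to the reverse--H\"older target is correct and coincides with the paper's reduction (its inequality (A.4) with $\lambda=(1-\alpha_0)/(1-\alpha_1)$). But the route you propose after that has a genuine gap.

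The core problem is the $t_\epsilon\to 0$ limit. After H\"older and hypercontractivity you have
\[
\int_M u\,\mathrm{d}\nu_{x_1,t_1;1}\le \|u(\cdot,t_\epsilon)\|_{L^q(\nu_{x_0,t_0;t_\epsilon})}\,\|\rho\|_{L^{p'}(\nu_{x_0,t_0;1})},
\]
and as $t_\epsilon\to 0$ the first factor scales like $K(x_0,t_0;y_0,0)^{1/q}\,t_\epsilon^{-n(q-1)/(2q)}$. Nothing else in your chain depends on $t_\epsilon$, so this divergence cannot be cancelled; you only obtain an upper bound that tends to $+\infty$. The sentence about ``Gaussian mass concentration of $\mathrm{d}\nu_{x_0,t_0;t_\epsilon}$ near $y_0$'' is not right: it is $u(\cdot,t_\epsilon)$ that concentrates near $y_0$, while $\nu_{x_0,t_0;t_\epsilon}$ just converges to the fixed probability measure $\nu_{x_0,t_0;0}$. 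There is no interplay to exploit.

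Two further symptoms confirm that the mechanism is misidentified. First, your hypercontractivity constraint $(t_0-t_\epsilon)/(t_0-1)\ge (p-1)/(q-1)$ involves only $t_0$, so the smallness condition on $t_1-t^*$ plays no role in it; yet you claim $\theta$ enters there. Second, bounding $\|\rho\|_{L^{p'}}$ via Proposition~\ref{changeofbasepoint} at time $1$ needs the hypothesis $t_0-1\le D'(t_1-1)$ for some $D'$, and the constant depends on $D'$; making $t_1-1$ small (as the $\theta$--condition forces) sends this constant to infinity. So Steps~4 and~5 are in tension rather than complementary.

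In the paper, the exponent $\lambda$ is produced by a completely different mechanism: after writing $w=\sqrt{-\log v}$ with $|\nabla w|\le C$, one decomposes $\int_M v\,e^{-\NN_0^*}\,\mathrm{d}\nu_{x_1,t_1;1}$ into dyadic annuli around the $H_n$--center $z_1$. On the $j$th annulus, $w$ deviates from $w(y_1)$ by at most $C(j+D)$ (Lipschitz plus Nash--entropy comparison), while the sharp concentration estimate gives $\nu_{x_1,t_1;1}(M\setminus B_1(z_1,j))\le C\exp\bigl(-j^2\lambda/(5\theta(1-\lambda))\bigr)$ --- this is precisely where the smallness of $t_1-1$ enters. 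Summing, the power $\lambda$ in $\exp(-\lambda w(y_1)^2)$ emerges from the elementary inequality $x^2+\tfrac{\lambda}{1-\lambda}y^2\ge \lambda(x+y)^2$ applied with $x=(w(y_1)-Cj-CD)_+$ and $y=Cj+CD$. Hypercontractivity is not used.
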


\begin{proof}
In the proof, all constants $C_i$ are positive and depend on $n$ and $A$; the constants $L_i$ are positive constants depending on $n$, $A$, $\alpha_0$ and $\alpha_1$.

Without loss of generality, we assume $s=0,t^{*}=1$ and hence
	\begin{align*}
		d^{1}_{W_1}(\nu_{x_0,t_0;1},\nu_{x_1,t_1;1})= D,\quad \scal\geq -A,\quad t_0\leq A+1.
	\end{align*}
By Proposition \ref{propNashentropy1}, we have
	\begin{align*}
		|\mathcal{N}_0^{*}(x_0,t_0)-\mathcal{N}_0^{*}(x_1,t_1)|\leq C(n,A) D+\frac{n}{2}\log \max\{t_0,t_1\}\leq L_1(D+1).
	\end{align*}
	
	Let $(z_i,1)$ be an $H_n$-center of $(x_i,t_i)$ for $i=0,1$. Then by Proposition \ref{propNashentropy1} again, we know that
	\begin{equation}\label{app:exx202}
		\mathcal{N}_0^{*}(z_1,1)\geq \mathcal{N}_0^{*}(x_1,t_1)-C(n, A) \sqrt{H_n(t_1-1)}\geq\mathcal{N}_0^{*}(x_0,t_0)-L_2(D+1).
	\end{equation}
Moreover, we have
	\begin{align*}
d_1(z_1,z_0)\leq D+2\sqrt{H_n(t_1-1)} \le D+L_2.
	\end{align*}
	
We set $\lambda:= \frac{1-\alpha_0}{1-\alpha_1}<1$ and $u(\cdot):=K(\cdot,1;y_0,0)$ for a fixed $y_0\in M$. By the reproduction formula, we only need to prove
	\begin{equation}\label{9inequality}
		\int_M u\,\mathrm{d}\nu_{x_1,t_1;1}\leq e^{(1-\lambda)\mathcal{N}_{x_0,t_0}(t_0-s)}C(n,A,\alpha_0,\alpha_1)e^{C(n,A,\alpha_0,\alpha_1)D^2}\lc\int_M u\,\mathrm{d}\nu_{x_0,t_0;1}\rc^{\lambda}.
	\end{equation}
As in the proof of Proposition \ref{changeofbasepoint}, we can find $C_1$ such that $v:= C_1^{-1}u\exp(\mathcal{N}_0^*(\cdot,1))\leq 1/2$, and $w:=\sqrt{-\log v}$ satisfies:
	\begin{align}\label{app:exx203}
		|\nabla w|\leq C_1.
	\end{align}
	Thus, \eqref{9inequality} becomes
	\begin{align}
		&\int_M v \exp(-\mathcal{N}_0^*(\cdot,1))\, \mathrm{d}\nu_{x_1,t_1;1} \notag \\
		\leq  &e^{(1-\lambda)\mathcal{N}_{x_0,t_0}(t_0-s)}C(n,A,\alpha_0,\alpha_1)e^{C(n,A,\alpha_0,\alpha_1)D^2}\lc\int_M v \exp(-\mathcal{N}_0^*(\cdot,1))\, \mathrm{d}\nu_{x_0,t_0;1}\rc^\lambda. \label{app:exx203a1}
	\end{align}
	
	We set $B:= B_1\lc z_0,\sqrt{2H_n(t_0-1)}\rc$ and choose $y_1 \in \bar B$ so that $u(y_1)=\inf_{y \in B} u(y)$. In particular, since $\nu_{x_0,t_0;1}(B)\geq 1/2$, we have
		\begin{align}\label{app:exx203a}
\lc\int_M v \exp(-\mathcal{N}_0^*(\cdot,1))\, \mathrm{d}\nu_{x_0,t_0;1}\rc^\lambda \ge &  2^{-\lambda} \exp\lc-\lambda w(y_1)^2-\lambda \mathcal{N}_0^*(y_1,1) \rc \notag \\
\ge & 2^{-\lambda} \exp\lc-\lambda w(y_1)^2-\lambda \mathcal{N}_0^*(x_0,t_0)-C_2 \rc,
	\end{align}
	where we used Proposition \ref{propNashentropy1} for the last inequality.
	
Next, it follows from \eqref{app:exx202} and Proposition \ref{propNashentropy1} that
	\begin{align}\label{app:exx204}
		0 \ge \mathcal{N}^*_0(\cdot,1) \ge \mathcal{N}_0^{*}(x_0,t_0)-L_2(D+1)-C_3 d_1(z_1, \cdot).
	\end{align}
	
Moreover, for any $q\in M$, we have by \eqref{app:exx203}
	\begin{align}
		w(q)\geq & w(y_1)-C_1 d_1(q, y_1) \notag\\
		\geq & w(y_1)-C_1\lc d_1(q,z_1)+d_1(z_1,z_0)+d_1(z_0,y_1)\rc \notag\\
		 \ge &  w(y_1)-C_1d_1(q,z_1)-L_3(D+1). \label{app:exx204a}
	\end{align}	
	
By Theorem \ref{heatkernelupperbdgeneral} (i), we see that for any $l>0$,
	\begin{equation}\label{8inequality2}
		\nu_{x_1,t_1;1}\lc M\setminus B_1(z_1,l)\rc\leq C(n) \exp \lc -\frac{l^2}{5(t_1-1)} \rc \leq C(n) \exp \lc -\frac{l^2 \lambda}{5 \theta(1-\lambda)} \rc.
	\end{equation}

Consequently, we have
	\begin{align}
		&\int_M v \exp\lc-\mathcal{N}_0^*(\cdot,1)\rc \, \mathrm{d}\nu_{x_1,t_1;1} \notag\\
		=&\sum_{j=0}^{\infty}\int_{B_1(z_1,j+1)\setminus B_1(z_1,j)}v \exp\lc-\mathcal{N}_0^*(\cdot,1)\rc \, \mathrm{d}\nu_{x_1,t_1;1}\notag\\
		\leq& e^{ -\mathcal{N}_0^{*}(x_0,t_0)+ L_2(D+1)} \sum_{j=0}^{\infty}\int_{B_1(z_1,j+1)\setminus B_1(z_1,j)}v e^{ C_3d_1(\cdot,z_1)} \, \mathrm{d}\nu_{x_1,t_1;1}\notag\\
		\leq& C(n)e^{ -\mathcal{N}_0^{*}(x_0,t_0)+ L_2(D+1)}\sum_{j=0}^{\infty} \exp \lc -\lc w(y_1)-C_1(j+1)-L_3(D+1) \rc_+^2+C_3(j+1)-\frac{j^2 \lambda}{5 \theta(1-\lambda)} \rc, \label{app:exx205}
	\end{align}
	where we used \eqref{app:exx204}, \eqref{app:exx204a} and \eqref{8inequality2}. If $\theta \le C_1^{-2}/10$, then we have
		\begin{align}
&  \exp \lc -\lc w(y_1)-C_1(j+1)-L_3(D+1) \rc_+^2+C_3(j+1)-\frac{j^2 \lambda}{5 \theta(1-\lambda)} \rc\notag\\
\le & e^{L_4(D^2+1)}\exp \lc -\lc w(y_1)-C_1(j+1)-L_3(D+1) \rc_+^2-\frac{\lambda}{1-\lambda} \lc C_1(j+1)+L_3(D+1) \rc^2-j \rc \notag\\
\le & e^{L_4(D^2+1)}\exp \lc -\lambda \lc \lc w(y_1)-C_1(j+1)-L_3(D+1) \rc_++ C_1(j+1)+L_3(D+1) \rc^2-j \rc \notag\\
 \le & \exp \lc L_4(D^2+1)-j-\lambda w(y_1)^2 \rc, \label{app:exx206}
	\end{align}
	where we used $x^2+\frac{\lambda}{1-\lambda} y^2 \ge \lambda(x+y)^2$ for the second inequality. Combining \eqref{app:exx203a}, \eqref{app:exx205} and \eqref{app:exx206}, we obtain \eqref{app:exx203a1}.
\end{proof}

\section{Comparison of spacetime distances}\label{app:B}

Let $\XX=\{M^n,(g(t))_{t\in\III^{++}}\}\in\MM(n,T)$. We now give an alternative definition of $d^*$.

\begin{defn}\label{defn:dstar-alpha}
Fix $\alpha >0$. For $x^*,y^*\in\XX_{\III^+}$, set $t_+=\max\{\t(x^*),\t(y^*)\}$ and $t_-=\min\{\t(x^*),\t(y^*)\}$. Define
\begin{align*}
d^{*,\alpha}(x^*,y^*):=\inf_{-(1-\sigma)T\leq\tau\leq t_-}
\max\left\{\sqrt{t_+-\tau},\ \alpha d_{W_1}^{\tau}(\nu_{x^*;\tau},\nu_{y^*;\tau})\right\}.
\end{align*}
For an ancient flow, the infimum is taken over $\tau\in(-\infty,t_-]$.
\end{defn}

The following lemma shows that all these distances are equivalent to $d^*=d^{*,1}$.

\begin{lem}
	For any $x^*,y^*\in \XX_{\III^{+}}$ and $\alpha>0$, 
	\begin{align*}
		\min\{\alpha,1\}d^*(x^*,y^*)\leq d^{*,\alpha}(x^*,y^*)\leq \max\{\alpha,1\} d^*(x^*,y^*).
	\end{align*}
\end{lem}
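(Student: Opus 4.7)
The plan is to exploit the explicit equalities \eqref{equalityd*1}--\eqref{equalityd*2} (which hold for $d^{*,\ep}$ verbatim, with $\ep_0$ replaced by $\ep$, thanks to the continuity established in Lemma \ref{continuitydW1}) together with the monotonicity of $t\mapsto d_{W_1}^t(\nu_{x^*;t},\nu_{y^*;t})$ from Proposition \ref{monotonicityW1}. In particular, both inequalities reduce to direct comparison statements about the scalar function $\rho(t):=d_{W_1}^t(\nu_{x^*;t},\nu_{y^*;t})$, using only its monotonicity and continuity.

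For the first inequality, set $r=d^{*,\ep}(x^*,y^*)$ and write $t=\max\{\t(x^*),\t(y^*)\}$. If $t-r^2>-(1-\sigma)T$, then by continuity, the analogue of \eqref{equalityd*1} gives $\rho(t-r^2)=\ep r\leq \ep_0 r$, so $r$ is an admissible value in the infimum defining $d^*$, yielding $d^*(x^*,y^*)\leq r$. If $t-r^2\leq -(1-\sigma)T$, the analogue of \eqref{equalityd*2} gives $\rho(-(1-\sigma)T)=\ep r\leq \ep_0 r$, and the definition of $d^*$ again yields $d^*(x^*,y^*)\leq r$.

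For the second inequality, set $r=d^*(x^*,y^*)$ and $r':=(\ep_0/\ep)\, r\geq r\geq \sqrt{t-\min\{\t(x^*),\t(y^*)\}}$. We split into two cases according to whether $t-r^2>-(1-\sigma)T$ or not, using \eqref{equalityd*1} or \eqref{equalityd*2}, respectively, to record that the relevant value of $\rho$ equals $\ep_0 r=\ep r'$. In both cases, Proposition \ref{monotonicityW1} propagates this equality to an inequality $\rho\leq \ep r'$ at the time slice relevant for $d^{*,\ep}$ at scale $r'$ (either $t-(r')^2$ if this stays $>-(1-\sigma)T$, or $-(1-\sigma)T$ otherwise, where the pushforward bound goes through $d_{W_1}^{-(1-\sigma)T}\leq \ep_0 r$). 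By the definition of $d^{*,\ep}$, this shows $d^{*,\ep}(x^*,y^*)\leq r'=(\ep_0/\ep)\,d^*(x^*,y^*)$.

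There is no real obstacle here; the only small care is the case split on whether $t-r^2$ or $t-(r')^2$ falls below the threshold $-(1-\sigma)T$, so that the right formula from Definition \ref{defnd*distance} (or Definition \ref{eq:denb1}) is applied. All three sub-cases, however, are handled uniformly by monotonicity.
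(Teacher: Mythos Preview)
Your proposal is correct and follows essentially the same approach as the paper's proof: both arguments set $r=d^{*,\ep}(x^*,y^*)$ (resp.\ $r=d^*(x^*,y^*)$) and use the equalities \eqref{equalityd*1}--\eqref{equalityd*2} together with the monotonicity from Proposition \ref{monotonicityW1} to verify that $r$ (resp.\ $r'=(\ep_0/\ep)r$) is admissible in the other definition, with the same case split on whether $t-r^2$ lies above or below the threshold $-(1-\sigma)T$. The paper's write-up is slightly terser in the boundary case but the logic is identical.
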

\begin{proof}
	Without loss of generality, we assume $t=\t(x^*)\geq s=\t(y^*)$. We set $r=d^*(x^*,y^*)$ and $\tau=\max\{-(1-\sigma)T, t-r^2\}$. By \eqref{eq:dstar-equality1}, 
	\begin{align*}
		d_{W_1}^{\tau}(\nu_{x^*;\tau},\nu_{y^*;\tau}) \le r,
	\end{align*}
	which, by Definition \ref{defn:dstar-alpha}, implies $d^{*,\alpha}(x^*,y^*)\leq \max\{\alpha,1\} r$. 
	
	On the other hand, set $r'=d^{*,\alpha}(x^*,y^*)$ and $\tau'=\max\{-(1-\sigma)T, t-(r')^2\}$. By Definition \ref{defn:dstar-alpha}, 
	\begin{align*}
		\alpha d_{W_1}^{\tau'}(\nu_{x^*;\tau'},\nu_{y^*;\tau'})\le r',
	\end{align*}
	which, by \eqref{eq:defn-dstar}, implies $d^*(x^*,y^*)\leq \max\{\alpha^{-1},1\}r'$. This completes the proof.
\end{proof}

Definition \ref{defn:dstar-distance} uses the $W_1$-Wasserstein distance. We compare it with the analogous construction based on $W_2$.

\begin{defn}\label{def:dstarW2}
For $x^*,y^*\in\XX_{\III^+}$, set $t_+=\max\{\t(x^*),\t(y^*)\}$ and $t_-=\min\{\t(x^*),\t(y^*)\}$. Define
\begin{align*}
d_2^*(x^*,y^*):=\inf_{-(1-\sigma)T\leq\tau\leq t_-}
\max\left\{\sqrt{t_+-\tau},\ d_{W_2}^{\tau}(\nu_{x^*;\tau},\nu_{y^*;\tau})\right\}.
\end{align*}
For an ancient flow, the infimum is taken over $\tau\in(-\infty,t_-]$.
\end{defn}

The proof of Lemma \ref{lem:000a}, using the monotonicity of $W_2$, shows that $d_2^*$ is a distance on $\XX_{\III^+}$ and that the time-function is $2$-H\"older with respect to $d_2^*$.

\begin{prop}\label{prop:W1W2comparison}
For all $x^*,y^*\in\XX_{\III^+}$,
\begin{align}\label{eq:w1w2}
d^*(x^*,y^*)\leq d_2^*(x^*,y^*)
\leq(1+2\sqrt{H_n})d^*(x^*,y^*).
\end{align}
\end{prop}
\begin{proof}
The first inequality follows from $d_{W_1}\leq d_{W_2}$.

Set $r=d^*(x^*,y^*)$ and assume $t=\t(x^*)\geq s=\t(y^*)$. Put
\begin{align*}
\tau:=\max\{t-r^2,-(1-\sigma)T\}.
\end{align*}
Equation \eqref{eq:dstar-equality1} gives
\begin{align*}
d_{W_1}^{\tau}(\nu_{x^*;\tau},\nu_{y^*;\tau})\leq r.
\end{align*}
Using Lemma \ref{lem:var} and the $H_n$-concentration of the conjugate heat kernels,
\begin{align*}
d_{W_2}^{\tau}(\nu_{x^*;\tau},\nu_{y^*;\tau})
&\leq d_{W_1}^{\tau}(\nu_{x^*;\tau},\nu_{y^*;\tau})
+\sqrt{\Var_\tau(\nu_{x^*;\tau})}+\sqrt{\Var_\tau(\nu_{y^*;\tau})}\\
&\leq r+\sqrt{H_n(t-\tau)}+\sqrt{H_n(s-\tau)}\leq(1+2\sqrt{H_n})r.
\end{align*}
Since also $\sqrt{t-\tau}\leq r$, Definition \ref{def:dstarW2} proves the second inequality.
\end{proof}

\begin{rem}
The same definitions and comparison hold on a metric-flow limit after replacing the Wasserstein profiles by their essential left limits, as in Definition \ref{defn:dstar-limit}.
\end{rem}

\section{Eigenvalues and almost splitting} 
\label{app:C} 

In this section, we consider a closed Ricci flow $\XX=\{M^n,(g(t))_{t\in I}\}$. All time subintervals considered below are assumed to be contained in $I$.

We fix a spacetime point $x_0^*=(x_0,t_0)\in \XX$ and set
\begin{align*}
\mathrm{d}\nu_t=\mathrm{d}\nu_{x^*_0;t}=(4\pi\tau)^{-n/2}e^{-f} \mathrm{d}V_{g(t)},
\end{align*}
where $\tau=t_0-t$. We consider the weighted Laplacian $\Delta_f=\Delta-\la \na \cdot, \na f \ra$. It is clear that $\Delta_f$ is self-adjoint with respect to $\mathrm{d}\nu_t$.

\begin{defn}\label{def:fre}
Given a subinterval $J \subset I$ containing $t_0$ and a smooth function $u$ on $M \times J$, we define
	\begin{align*}
	I_u(t):=\int_M u^2\,\mathrm{d}\nu_t,\quad \ D_u(t):=\int_M |\nabla u|^2\,\mathrm{d}\nu_t, \quad F_u(t):=\frac{\tau\int_M |\nabla u|^2\,\mathrm{d}\nu_t}{\int_M u^2\,\mathrm{d}\nu_t}=\frac{\tau D_u(t)}{I_u(t)}
	\end{align*}
for $t \in J$ and $t<t_0$. $F_u$ is called the \textbf{frequency}\index{frequency} of $u$, which is well defined for $I_u(t) \ne 0$.
\end{defn}

The following frequency estimate essentially follows from \cite{colding2024eigenvalue}.

\begin{lem}[Frequency estimate]\label{frequencyestimate}
Suppose $\square u=0$ on $M \times J$. Then the following evolution of the frequency holds for $t \in J$:
	\begin{align}\label{evolutionoffrequency}
		\diff{}{t} F_u(t)=-\frac{F_u}{\tau}+\frac{2}{\tau}F_u^2-\frac{2\tau\int_M |\nabla ^2 u|^2\,\mathrm{d}\nu_t}{I_u(t)}.
	\end{align}
	In particular, we have
	\begin{equation}\label{evolutioninequalityoffrequency}
		\diff{}{t}F_u(t)\leq \frac{F_u(t)}{\tau}(2F_u(t)-1).
	\end{equation}
	If $F_u(t_0-r^2)\leq \dfrac{1}{2}+\ep$ for some $r>0$, then for any $t \in [t_0-r^2, t_0-4\ep r^2]$, we have
	\begin{align*}
		F_u(t)\leq \frac 1 2+\frac{2\ep r^2}{\tau}.
	\end{align*}
\end{lem}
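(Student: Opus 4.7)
The strategy is to compute the time derivatives of the numerator and denominator of $F_u = \tau D_u/I_u$ separately, and then analyze the resulting ODE comparison.

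First, I would evolve $I_u(t) = \int_M u^2\,d\nu_t$. Using $\square u = 0$ together with the conjugate heat equation $\square^* K = 0$ satisfied by $K = (4\pi\tau)^{-n/2}e^{-f}$, one obtains
\[
\partial_t(d\nu_t) \;=\; -\Delta K\, dV_{g(t)},
\]
since the scalar-curvature contributions from $\partial_t\, dV_{g(t)} = -\scal\, dV_{g(t)}$ and from $\partial_t K = -\Delta K + \scal\, K$ cancel. Two integrations by parts on the closed manifold then give $\frac{d}{dt} I_u = -2D_u$. For $D_u$, I would combine $\partial_t |\nabla u|^2 = 2\Ric(\nabla u,\nabla u) + 2\langle \nabla u,\nabla \Delta u\rangle$ with the Bochner formula $\tfrac{1}{2}\Delta|\nabla u|^2 = |\nabla^2 u|^2 + \langle \nabla u,\nabla\Delta u\rangle + \Ric(\nabla u,\nabla u)$; the Ricci terms cancel, leaving $\partial_t |\nabla u|^2 = \Delta|\nabla u|^2 - 2|\nabla^2 u|^2$. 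Integrating and combining with the evolution of $d\nu_t$ yields $\frac{d}{dt}D_u = -2\int_M |\nabla^2 u|^2\,d\nu_t$.

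Applying the quotient rule to $F_u = \tau D_u/I_u$ with $\dot\tau = -1$ then produces \eqref{evolutionoffrequency} directly, and \eqref{evolutioninequalityoffrequency} follows by discarding the manifestly nonpositive Hessian term.

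For the final assertion, I would compare $F_u$ with the maximal solution of the scalar ODE $y' = y(2y-1)/\tau$ with $y(t_0-r^2)=\tfrac{1}{2}+\epsilon$. Assuming $F_u > \tfrac{1}{2}$ throughout the interval (otherwise the bound is automatic), the substitution $h := 1/(2F_u-1)$ linearizes the comparison ODE to $\dot h = -(h+1)/\tau$; expressed in the variable $\tau = t_0-t$ this becomes the separable equation $\tilde h'(\tau) = (\tilde h+1)/\tau$, which integrates explicitly to $\tilde h(\tau)+1 = A\tau$ with $A$ fixed by the initial data. Back-substituting gives
\[
F_u(t) - \tfrac{1}{2} \;\le\; \frac{\epsilon\, r^2}{(1+2\epsilon)\tau - 2\epsilon\, r^2}.
\]
For $\tau \ge 4\epsilon\, r^2$ the denominator is at least $\tau/2$, yielding the stated bound (with the natural dimensional factor $r^2$ made explicit). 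No step presents a serious obstacle; the only care points are the sign in $\partial_t(d\nu_t)$ and the cancellation of the Ricci terms in the evolution of $|\nabla u|^2$.
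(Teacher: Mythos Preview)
Your proposal is correct and follows essentially the same route as the paper: compute $\tfrac{d}{dt}I_u=-2D_u$ and $\tfrac{d}{dt}D_u=-2\int|\nabla^2u|^2\,d\nu_t$, apply the quotient rule, and then integrate the scalar ODE $y'=y(2y-1)/\tau$ (the paper separates variables directly rather than using your substitution $h=1/(2y-1)$, but the computations are equivalent). Your observation about the $r^2$ factor is also right---the paper normalizes to $r=1$ in the proof, and the stated bound $\tfrac12+2\tau^{-1}\epsilon$ should really read $\tfrac12+2\tau^{-1}\epsilon r^2$ for general $r$.
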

\begin{proof}	
	Since 
	\begin{align*}
\diff{}{t}I_u(t)=\int_M \square u^2\,\mathrm{d}\nu_t=\int_M 2u\square u-2|\nabla u|^2 \, \mathrm{d}\nu_t=-2D_u(t)
	\end{align*}
	and 
	\begin{align*}
\diff{}{t} D_u(t)=-2\int_M |\nabla^2u|^2\,\mathrm{d}\nu_t,
	\end{align*}
	we have
	\begin{align*}
		\diff{}{t} F_u(t)=\diff{}{t}\lc\frac{\tau D_u(t)}{I_u(t)}\rc&=-\frac{D_u(t)}{I_u(t)}-\frac{2\tau\int_M |\nabla ^2 u|^2\,\mathrm{d}\nu_t}{I_u(t)}+\frac{2\tau \lc\int_M |\nabla u|^2\,\mathrm{d}\nu_t\rc^2}{\lc\int_M u^2\,\mathrm{d}\nu_t\rc^2}\nonumber\\
		&=-\frac{F_u}{\tau}+\frac{2}{\tau}F_u^2-\frac{2\tau\int_M |\nabla ^2 u|^2 \mathrm{d}\nu_t}{I_u(t)},
	\end{align*}
	which gives \eqref{evolutionoffrequency}. Moreover, \eqref{evolutioninequalityoffrequency} follows immediately.
	
Without loss of generality, we assume $t_0=0$ and $r=1$. If $F_u(-1)\leq\frac 1 2+\ep$, by integrating \eqref{evolutioninequalityoffrequency}, we obtain
	\begin{align*}
F_u(t)\leq\frac{1}{2-\tau^{-1}\lc\dfrac{2F_u(-1)-1}{F_u(-1)}\rc}\leq \frac{1}{2-\dfrac{4\epsilon}{\tau}}.
	\end{align*}
Thus, if $4\ep \leq \tau \leq 1$, then
	\begin{align*}
	F_u(t)\leq \frac{1}{2}+2\tau^{-1}\epsilon.
	\end{align*}
\end{proof}

We now discuss the relationship between eigenvalues of $\Delta_f$ and splitting maps. Denote by $0<\lambda_1(t)\leq\lambda_2(t)\leq\ldots$ the eigenvalues of $-\Delta_f$ at time $t$, counted with multiplicities. Recall that by Theorem \ref{poincareinequ}, $\tau\lambda_1(t)\geq 1/2$ for any $t<t_0$.

The next proposition describes the propagation of the eigenvalues.

\begin{prop}[Propagation of eigenvalues]\label{propagationofeigenvalues}
	If $(r^2\lambda_k)(t_0-r^2) \leq \frac{1}{2}+\epsilon$, then for any $t \in [t_0-r^2, t_0-4\ep r^2]$,
	\begin{align}\label{eq:eigenes1}
		(\tau\lambda_k)(t)\leq \frac{1}{2}+\frac{2\ep r^2}{\tau}.
	\end{align}
	Moreover, we can find $\vec u=(u_1,\ldots,u_k):M\times [t_0-r^2,t_0]\to\R^k$ such that the following holds.
	
	For any $\delta \in [4 \ep, 1]$ and $i, j \in \{1, \ldots, k\}$,
	\begin{enumerate}[label=\textnormal{(\roman{*})}]
		\item On $M\times [t_0-r^2,t_0]$, $\square u_i=0$ and $ u_i(x_0^*)=0$.
		\item $\displaystyle \int_{t_0-r^2}^{t_0-\delta r^2}\int_M |\nabla^2 u_i|^2\, \mathrm{d}\nu_t \mathrm{d}t\leq 3 \delta^{-1} \ep$.
		\item For any $t \in [t_0-r^2, t_0-\delta r^2]$, we have $\displaystyle \abs{\int_M \la \nabla u_i,\nabla u_j\ra \, \mathrm{d}\nu_t-\delta_{ij}\lambda_i(t_0-r^2)}\leq 6 \delta^{-1} \ep$.
	\end{enumerate}
\end{prop}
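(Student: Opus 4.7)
The plan is the natural one: let $\phi_1,\dots,\phi_k$ be an $L^2(d\nu_{t_0-r^2})$-orthonormal family of eigenfunctions of $\Delta_f$ at time $t_0-r^2$ corresponding to the first $k$ eigenvalues $\lambda_1\le\dots\le\lambda_k$, and let $u_i$ solve the forward heat equation $\square u_i=0$ on $M\times[t_0-r^2,t_0]$ with $u_i|_{t_0-r^2}=\phi_i$. Since $\lambda_i\ge\lambda_1(t_0-r^2)\ge 1/(2r^2)>0$, each $\phi_i$ is orthogonal to the constants, so by the reproduction formula $u_i(x_0^*)=\int\phi_i\,d\nu_{t_0-r^2}=0$, which gives (i). This also ensures that every linear combination $v=\sum c_i u_i$ has zero mean against $\nu_t$ for all $t\le t_0$, and hence $F_v(t)\ge 1/2$ by the Poincar\'e inequality.

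For the eigenvalue bound \eqref{eq:eigenes1}, I would feed the entire span through Lemma \ref{frequencyestimate}. For any $c\in\mathbb{R}^k\setminus\{0\}$, $L^2$-orthonormality at $t_0-r^2$ gives $F_v(t_0-r^2)=r^2(c^T\mathrm{diag}(\lambda_i)c)/|c|^2\le r^2\lambda_k\le 1/2+\epsilon$, so Lemma \ref{frequencyestimate} propagates this uniformly to $F_v(t)\le 1/2+2\tau^{-1}\epsilon$ on $[t_0-r^2,t_0-4\epsilon r^2]$. Backward uniqueness of the heat equation on the closed manifold $M$ forces $c\mapsto v(\cdot,t)$ to be injective, so $V(t):=\mathrm{span}(u_1(\cdot,t),\dots,u_k(\cdot,t))$ remains $k$-dimensional, and Courant--Fischer's min-max yields $\lambda_k(t)\le\sup_{c\ne 0}F_v(t)/\tau\le 1/(2\tau)+2\tau^{-2}\epsilon$, which is \eqref{eq:eigenes1}.

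The estimates (ii) and (iii) rest on the Ricci-flow cancellation identities $\square|\nabla u|^2=-2|\nabla^2 u|^2$ and $\square\langle\nabla u_i,\nabla u_j\rangle=-2\langle\nabla^2 u_i,\nabla^2 u_j\rangle$ (the Ricci terms from $\partial_t g^{-1}$ cancel the Ricci terms from Bochner precisely because $\partial_t g=-2\mathrm{Ric}$), which integrate against $d\nu_t$ to give $\dot D_{u_i}=-2\int|\nabla^2 u_i|^2\,d\nu_t$ and a polarized analogue for $B_{ij}(t):=\int\langle\nabla u_i,\nabla u_j\rangle\,d\nu_t$. So (ii) reduces to bounding $D_{u_i}(t_0-r^2)-D_{u_i}(t_0-\delta r^2)$; the upper endpoint is exactly $\lambda_i\le 1/(2r^2)+\epsilon/r^2$, and for the lower endpoint I would combine $D_{u_i}=F_{u_i}I_{u_i}/\tau$, the lower bound $F_{u_i}\ge 1/2$, and a lower bound on $I_{u_i}$ obtained by integrating $(\ln I_{u_i})'=-2F_{u_i}/\tau$ against the upper frequency envelope from the previous step, giving $I_{u_i}(t_0-\delta r^2)\ge\delta e^{-4\epsilon/\delta+4\epsilon}\ge\delta-4\epsilon$ for $\delta\ge 4\epsilon$ and hence $D_{u_i}(t_0-\delta r^2)\ge 1/(2r^2)-2\epsilon/(\delta r^2)$. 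Item (iii) then follows by writing $B_{ij}(t)-\delta_{ij}\lambda_i$ as the time integral of $-2\int\langle\nabla^2 u_i,\nabla^2 u_j\rangle\,d\nu_s$ and applying Cauchy--Schwarz with (ii).

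The main obstacle I expect is closing the bound in (ii) when $\delta$ is near the threshold $4\epsilon$: at that scale $I_{u_i}$ has already decayed to order $\delta$ itself, so the product $F_{u_i}I_{u_i}/\tau$ at the endpoint only marginally exceeds $1/(2r^2)$ and the allowed margin $3\delta^{-1}\epsilon$ degenerates toward a universal constant. The envelope $1/2\le F_{u_i}\le 1/2+2\tau^{-1}\epsilon$ from Lemma \ref{frequencyestimate} is tight enough for the argument to go through, but one must track the exponential factor $e^{-4\epsilon/\delta}$ against the linear factor $\delta$ carefully; this balancing is the quantitative heart of the whole proposition.
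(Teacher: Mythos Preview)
Your proposal is correct and follows the paper's strategy: propagate the initial-time eigenfunctions forward under the heat equation, control their frequencies via Lemma~\ref{frequencyestimate}, push the resulting bound through a min--max to get~\eqref{eq:eigenes1}, and deduce (iii) from (ii) by Cauchy--Schwarz on the evolution of $\int\langle\nabla u_i,\nabla u_j\rangle\,\mathrm{d}\nu_t$. The only genuine difference is in the derivation of (ii). You bound the Hessian integral as $\tfrac12\bigl(D_{u_i}(-1)-D_{u_i}(-\delta)\bigr)$ and then seek a \emph{lower} bound on $I_{u_i}(-\delta)$ via the \emph{upper} frequency envelope, which is exactly the delicate balancing near $\delta=4\epsilon$ you flag as the main obstacle. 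The paper instead integrates the full frequency identity~\eqref{evolutionoffrequency} to obtain
\[
\int_{-1}^{-\delta}\frac{2\tau\int_M|\nabla^2u_i|^2\,\mathrm{d}\nu_t}{I_i(t)}\,\mathrm{d}t
= \bigl(F_i(-\delta)-F_i(-1)\bigr)+\int_{-1}^{-\delta}\frac{F_i}{\tau}(2F_i-1)\,\mathrm{d}t
\le 6\delta^{-1}\epsilon,
\]
and then removes the $I_i$ in the denominator using the \emph{upper} bound $I_i(t)\le\tau$, which comes from integrating $(\ln I_i)'\le -\tau^{-1}$ (i.e.\ from the \emph{lower} frequency bound $F_i\ge 1/2$). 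This cancels the weight $2\tau/I_i\ge 2$ and gives (ii) directly, so the obstacle you anticipated simply never appears.
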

\begin{proof}
Without loss of generality, we assume $r=1$ and $t_0=0$. Choose $\phi_i(-1)$, $i=1,\ldots,k$, to be $L^2$-orthonormal eigenfunctions (with respect to $\mathrm{d}\nu_{-1}$) corresponding to $\lambda_i(-1)$.

Next, we solve 
	\begin{align*}
		\square u_i=0, \quad u_i=\phi_i(-1)\quad \mathrm{at}\quad t=-1.
	\end{align*}
	Denote $I_i(t):= I_{u_i}(t), D_i(t):=D_{u_i}(t)$ and $F_i(t)=F_{u_i}(t)$. By Lemma \ref{frequencyestimate},
	\begin{align}\label{evofre1}
		\diff{}{t} F_i(t)=-\frac{F_i}{\tau}+\frac{2}{\tau}F_i^2-\frac{2\tau\int |\nabla ^2 u_i|^2\,\mathrm{d}\nu_t}{I_i(t)},
	\end{align}
	and
	\begin{equation} \label{evofre0}
		\diff{}{t} F_i(t)\leq \frac{F_i(t)}{\tau}(2F_i(t)-1).
	\end{equation}
	
By the Gram-Schmidt process, it follows from \eqref{evofre0} by using the same argument as in \cite[Equation (3.34)]{colding2024eigenvalue} that
	\begin{align*}
		\lc \diff{}{t}(\tau\lambda_k) \rc(-1) \leq \frac{(\tau\lambda_k)(-1)}{\tau}\big(2(\tau\lambda_k)(-1)-1\big).
	\end{align*}
	Since the argument works at any time, we can conclude that for any $t\in [-1,0)$,
	\begin{align*}
		\diff{}{t} (\tau\lambda_k)(t)\leq \frac{(\tau\lambda_k)(t)}{\tau}\big(2(\tau\lambda_k)(t)-1\big).
	\end{align*}
	By integration, we obtain \eqref{eq:eigenes1}.
	
	By Lemma \ref{frequencyestimate}, $F_i(t)\in [1/2,1/2+2\tau^{-1}\epsilon]$ for any $\tau \in [4\ep,1]$. Thus, we can integrate \eqref{evofre1} to get
	\begin{align}\label{eigenevolve1}
		\int_{-1}^{-\delta}\frac{2\tau\int_M |\nabla ^2 u_i|^2\, \mathrm{d}\nu_t}{I_i(t)}\mathrm{d}t&=F_i(-\delta)-F_i(-1)+\int_{-1}^{-\delta}\lc\frac{F_i(t)}{\tau}(2F_i(t)-1)\rc \mathrm{d}t\nonumber\\
		&\leq 2 \delta^{-1} \ep+4\ep\int_{-1}^{-\delta}\tau^{-2}\,\mathrm{d}t\leq 6 \delta^{-1} \ep.
	\end{align}
	Since $\diff{}{t}\log I_i(t)=-2\tau^{-1}F_i(t)\leq -\tau^{-1}$, we obtain for $t\in [-1,-\delta]$
	\begin{align*}
		I_i(t) \le  I_i(-1)e^{-\int_{-1}^{t} |s|^{-1}\,\mathrm{d}s} =\tau
	\end{align*}
	Combining this with \eqref{eigenevolve1}, we have
	\begin{align}\label{hesianesti1}
		\int_{-1}^{-\delta}\int_M |\nabla^2 u_i|^2\,\mathrm{d}\nu_t \mathrm{d}t\leq 3 \delta^{-1} \ep.
	\end{align}
	Since $\diff{}{t}\int_M u_i\,\mathrm{d}\nu_t=\int_M \square u_i\,\mathrm{d}\nu_t=0$ and $\int \phi_i(-1)\, \mathrm{d}\nu_{-1}=0$, we see that for all $t\in [-1,0]$, 
	\begin{align*}
		\int_M u_i\,\mathrm{d}\nu_t=0.
	\end{align*}
Moreover, for any $1\leq i,j \leq k$, since $\int_M \la\nabla u_i,\nabla u_j\ra \, \mathrm{d}\nu_{-1}=\delta_{ij}\lambda_i(-1)$ and 
	\begin{align*}
\diff{}{t}\int_M \la\nabla u_i, \nabla u_j\ra \, \mathrm{d}\nu_t=-2\int_M \la\nabla^2 u_i, \nabla^2 u_j\ra \, \mathrm{d}\nu_t,
	\end{align*}
we have for any $t\in [-1,-\delta]$,
	\begin{align*}
		&\left|\int_M \la \nabla u_i,\nabla u_j\ra \, \mathrm{d}\nu_t-\delta_{ij}\lambda_i(-1)\right| \notag \\
		\le & 2 \int_{-1}^t \int_M |\na^2 u_i| |\na^2 u_j| \, \mathrm{d}\nu_s \mathrm{d}s \notag \\
		\le & 2\lc  \int_{-1}^t \int_M |\na^2 u_i|^2 \, \mathrm{d}\nu_s \mathrm{d}s\rc^{\frac 1 2} \lc \int_{-1}^t \int_M |\na^2 u_j|^2 \, \mathrm{d}\nu_s \mathrm{d}s\rc^{\frac 1 2}
		\leq 6 \delta^{-1} \ep,
	\end{align*}
	where we used \eqref{hesianesti1} for the last inequality.
	
	This completes the proof.
\end{proof}

For the map $\vec u$, we can modify it by a positive definite matrix so that the following holds (see Definition \ref{defnsplittingmap}).

\begin{cor} \label{cor:exisplitting}
If $(r^2\lambda_k)(t_0-r^2) \leq \frac{1}{2}+\epsilon$, then there exists a $(k,C \ep,r/\sqrt{10})$-splitting map at $x_0^*$, where $C$ is a universal constant.
\end{cor}

The following proposition shows that, under the assumption of almost self-similarity, the existence of a $(k,\epsilon,r)$-splitting map is equivalent to the smallness of $r^2\lambda_k(-r^2)-\frac 1 2$.

\begin{prop}
Suppose that $\vec{u}=(u_1,\ldots,u_k)$ is a $(k,\epsilon, r)$-splitting map at $x_0^*$ with $\ep \le \ep(n)$, and 
	\begin{align*}
\WW_{x_0^*}(r^2/10)-\WW_{x_0^*}(10 r^2) \le \delta.
	\end{align*}
Then there exists a constant $C=C(n)>0$ such that
	\begin{align*}
		(\tau\lambda_k)(t_0-r^2) \leq \frac{1}{2}+C(\epsilon+\delta^{\frac{1}{2}}).
	\end{align*}
\end{prop}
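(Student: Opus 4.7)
The plan is to use the Courant--Fischer min-max characterization $\lambda_k(t^*) = \min_V \max_{u \in V \setminus \{0\}} R_{\nu_{t^*}}(u)$, where $V$ ranges over $k$-dimensional subspaces of functions $\nu_{t^*}$-orthogonal to constants and $R_\nu(u) := \int |\nabla u|^2\,\mathrm{d}\nu / \int u^2\,\mathrm{d}\nu$. First I would take $V := \mathrm{span}\{u_1(\cdot,t^*),\ldots,u_k(\cdot,t^*)\}$ at a time $t^* \in [t_0 - 2r^2, t_0 - r^2]$ selected by pigeonhole, show that the Rayleigh quotient of every nonzero element of $V$ is at most $(1 + C(\ep+\delta^{1/2}))/(2\tau^*)$ with $\tau^* := t_0 - t^*$, and then propagate the resulting bound $\tau^*\lambda_k(t^*) \le 1/2 + C(\ep+\delta^{1/2})$ forward in time from $t^*$ to $t_0 - r^2$ using the frequency ODE from Lemma \ref{frequencyestimate}.

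The core analytic ingredient is a quantitative weighted Poincar\'e inequality derived from the soliton identity. For any smooth $u$ on $M$ and any $t < t_0$, the weighted Bochner identity combined with the self-adjointness of $\Delta_f$ on $L^2(\mathrm{d}\nu_t)$ yields $\int_M (\Delta_f u)^2\,\mathrm{d}\nu_t = \int_M |\nabla^2 u|^2\,\mathrm{d}\nu_t + \int_M (\Ric + \nabla^2 f)(\nabla u, \nabla u)\,\mathrm{d}\nu_t$. Expanding $0 \le \int_M (\Delta_f u + u/(2\tau))^2\,\mathrm{d}\nu_t$ and using $\int_M u\,\Delta_f u\,\mathrm{d}\nu_t = -\int_M |\nabla u|^2\,\mathrm{d}\nu_t$, this rearranges to
\begin{align*}
\int_M u^2\,\mathrm{d}\nu_t \ge 2\tau \int_M |\nabla u|^2\,\mathrm{d}\nu_t - 4\tau^2 \lc \int_M |\nabla^2 u|^2\,\mathrm{d}\nu_t + \int_M \lc\Ric + \nabla^2 f - \tfrac{g}{2\tau}\rc(\nabla u, \nabla u)\,\mathrm{d}\nu_t \rc,
\end{align*}
which becomes an equality when $u$ is a linear function on a Gaussian shrinker. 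Definition \ref{defnsplittingmap} (iii) controls the Hessian term integrated over $[t_0 - 10r^2, t_0 - r^2/10]$, while Proposition \ref{propNashentropy} (iii) rewrites the $\WW$-entropy hypothesis as $\int_{r^2/10}^{10r^2} 2\tau \int_M |\Ric + \nabla^2 f - g/(2\tau)|^2\,\mathrm{d}\nu_{t_0-\tau}\,\mathrm{d}\tau \le \delta$, controlling the second error term in integrated form. Two applications of Markov's inequality on the subinterval $\tau \in [r^2, 2r^2]$ followed by intersection of the resulting good sets then produce $\tau^* \in [r^2, 2r^2]$ at which simultaneously $\sum_i \int_M |\nabla^2 u_i|^2\,\mathrm{d}\nu_{t^*} \le C(n)\ep/r^2$ and $\int_M |\Ric + \nabla^2 f - g/(2\tau^*)|^2\,\mathrm{d}\nu_{t^*} \le C(n)\delta/r^4$.

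At $t^* = t_0 - \tau^*$, Proposition \ref{timesliceL2estimateofgradient} gives the Gram-matrix bound $|A_{ij}(t^*) - \delta_{ij}| \le C\ep$ with $A_{ij}(t) := \int_M \langle \nabla u_i,\nabla u_j\rangle\,\mathrm{d}\nu_t$ (since $\tau^* \in [r^2/10, 10r^2]$) and, via \eqref{esti4}, the $L^4$ gradient bound $\int_M |\nabla u_i|^4\,\mathrm{d}\nu_{t^*} \le C$ (since $\tau^* \le 10r^2/3$). For $u = \sum c_i u_i$, Cauchy--Schwarz applied to the two error terms in the weighted Poincar\'e inequality --- combining the $L^4$ bound on $|\nabla u|$ with the $L^2$ curvature-defect bound --- yields
\begin{align*}
\int_M u^2\,\mathrm{d}\nu_{t^*} \ge 2\tau^* |c|^2 \bigl(1 - C(n)(\ep + \delta^{1/2})\bigr), \qquad \int_M |\nabla u|^2\,\mathrm{d}\nu_{t^*} \le |c|^2 (1 + C\ep),
\end{align*}
where the bound $\tau^* \le 2r^2$ is absorbed into the constant. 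Since each $u_i(\cdot,t^*)$ has zero $\nu_{t^*}$-mean ($u_i(x_0^*) = 0$ and $\square u_i = 0$) and the Gram matrix $A(t^*)$ is nondegenerate, $V$ is an admissible $k$-dimensional test subspace; the min-max principle then gives $\tau^* \lambda_k(t^*) \le 1/2 + C(n)(\ep + \delta^{1/2})$. Integrating the Riccati inequality $(\tau\lambda_k)' \le (\tau\lambda_k/\tau)(2\tau\lambda_k - 1)$ from Lemma \ref{frequencyestimate} forward in time from $t^*$ to $t_0 - r^2$ via its explicit solution --- valid on the whole interval provided $\ep + \delta^{1/2}$ is smaller than a dimensional constant, a condition absorbed into the hypothesis $\ep \le \ep(n)$ and the conclusion --- yields the desired $(r^2\lambda_k)(t_0 - r^2) \le 1/2 + C(n)(\ep + \delta^{1/2})$.

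The main obstacle will be \emph{coupling the two integrated smallness hypotheses at a single time}. Neither the splitting condition nor the $\WW$-entropy hypothesis gives a pointwise-in-time bound on the relevant quantities, so $t^*$ must be selected via pigeonhole on overlapping full-density subsets of $[r^2, 2r^2]$, while also respecting the narrower interval $\tau \le 10 r^2/3$ on which the $L^4$ gradient estimate of Proposition \ref{timesliceL2estimateofgradient} is available. A secondary subtlety is the appearance of $\delta^{1/2}$ rather than $\delta$: since the $\WW$-entropy hypothesis is an $L^2$-in-space bound on the shrinker-defect tensor, applying Cauchy--Schwarz against $|\nabla u|^2$ unavoidably introduces the square root.
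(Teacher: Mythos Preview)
Your argument is correct and takes a genuinely different route from the paper.

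The paper expands each $u_i$ in the eigenbasis $\{\phi_l\}$ of $\Delta_f$ at a good time $s_1\in[t_0-3r^2,t_0-2r^2]$, uses the Bochner identity to convert the shrinker defect into the spectral quantity $\sum_l \lambda_l(\lambda_l-\tfrac{1}{2\tau})(a_l^i)^2$, and then runs a short linear-algebra argument: if all $k$ functions $u_i$ had most of their Dirichlet mass concentrated on the first $k-1$ eigenfunctions, their Gram matrix could not be close to the identity, so at least one $u_{i_0}$ has $\sum_{l\ge k}\lambda_l(a_l^{i_0})^2\ge c_0(n)$, whence $\lambda_k-\tfrac{1}{2\tau^*}$ is small. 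Propagation to $\tau=r^2$ then proceeds via Proposition~\ref{propagationofeigenvalues}.

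Your approach bypasses the eigenfunction expansion and the dimension-counting contradiction entirely: the quantitative Poincar\'e inequality obtained from expanding $\int(\Delta_f u+\tfrac{u}{2\tau})^2\,\mathrm{d}\nu_t\ge 0$ gives a direct lower bound on $\int u^2$ for every $u\in\mathrm{span}\{u_i\}$, so min--max applies without further work. This is more elementary and slightly sharper in spirit, since it treats the whole test subspace at once. The paper's decomposition, on the other hand, yields finer spectral information (the modal distribution of the $u_i$), which is not needed here but is closer to the eigenvalue-pinching framework of Colding cited in Proposition~\ref{propagationofeigenvalues}. One minor correction: the Riccati inequality you invoke for $\tau\lambda_k$ is established in Proposition~\ref{propagationofeigenvalues} (via Gram--Schmidt and Lemma~\ref{frequencyestimate}), not in Lemma~\ref{frequencyestimate} itself, which only treats a single frequency $F_u$.
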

\begin{proof}
Without loss of generality, we assume $r=1$ and $t_0=0$. In the proof, the constant $C$ denotes a universal constant, which can be different line by line.

By our assumption, we know that
	\begin{align*}
		2\int_{-10}^{-1/10}\int_M \tau\abs{\Ric+\nabla ^2f-\frac{g}{2\tau} }^2\,\mathrm{d}\nu_t \mathrm{d}t\leq \delta.
	\end{align*}
	Let $\{\phi_i(t)\}$ be a sequence $L^2$-orthonormal eigenfunctions corresponding to eigenvalues $\lambda_i(t)$. For a smooth function $u$ with decomposition $u=\sum_{l=1}^{\infty}a_l\phi_l$ at $t$, we have
	\begin{align}\label{eigenidentity1}
		\int_M \frac{1}{2\tau}|\nabla u|^2-(\Delta_f u)^2\,\mathrm{d}\nu_t=\sum_{l=1}^{\infty}\lambda _l(\frac{1}{2\tau}-\lambda_l)a_l^2\leq 0.
	\end{align}
By Bochner's formula, we have
	\begin{align*}
		\int_M \lc \frac{g}{2\tau}-\nabla^2f-\Ric\rc (\nabla u,\nabla u)\, \mathrm{d}\nu_t=\int_M |\nabla ^2 u|^2+\frac{1}{2\tau}|\nabla u|^2-(\Delta_f u)^2 \, \mathrm{d}\nu_t.
	\end{align*}
	Applying this to $u_i(t)=\sum_{l=1}^{\infty} a_l^i(t)\phi_l(t)$, we get
	\begin{align*}
		\int_M \lc \frac{g}{2\tau}-\nabla^2f-\Ric\rc (\nabla u_i,\nabla u_i)\, \mathrm{d}\nu_t=\int_M |\nabla ^2 u_i|^2+\frac{1}{2\tau}|\nabla u_i|^2-(\Delta_f u_i)^2\,\mathrm{d}\nu_t.
	\end{align*}
	By Proposition \ref{timesliceL2estimateofgradient}, we have
	\begin{align*}
		&\left|\int_{-3}^{-1/10}\int_M \lc \frac{g}{2\tau}-\nabla^2f-\Ric\rc(\nabla u_i,\nabla u_i)\, \mathrm{d}\nu_t \mathrm{d}t \right| \\
		\leq & \lc\int_{-3}^{-1/10}\int \abs{\frac{g}{2\tau}-\nabla^2f-\Ric}^2\,\mathrm{d}\nu_t \mathrm{d}t\rc^{1/2}\lc\int_{-3}^{-1/10}|\nabla u_i|^4\,\mathrm{d}\nu_t \mathrm{d}t\rc^{1/2}\leq C\delta^{1/2}.
	\end{align*}
	Combining this with \eqref{eigenidentity1} and Definition \ref{defnsplittingmap} (iii), we obtain
	that for all $1\leq i\leq k$,
	\begin{align*}
		\int_{-3}^{-1/10}\sum_{l=1}^{\infty}\lambda_l(\lambda_l-\frac{1}{2\tau})(a_l^i)^2(t)\,\mathrm{d}t\leq C(\epsilon+\delta^{1/2}).
	\end{align*}
	
	In particular, we have
	\begin{align*}
		\int_{-3}^{-1/10}(\lambda_k-\frac{1}{2\tau})\sum_{l=k}^{\infty}\lambda_l (a_l^i)^2(t)\,\mathrm{d}t\leq \int_{-3}^{-1/10}\sum_{l=k}^{\infty} \lambda_l(\lambda_l-\frac{1}{2\tau})(a_l^i)^2(t)\, \mathrm{d}t\leq C(\epsilon+\delta^{1/2}).
	\end{align*}
Thus, we can find $s_1\in [-3,-2]$ such that for all $1\leq i\leq k$,
	\begin{equation}\label{eigenestimate1}
		\big(\lambda_k(s_1)-\frac{1}{2\tau(s_1)}\big)\sum_{l=k}^{\infty}\lambda_l(s_1) (a_l^i)^2(s_1)\leq C(\epsilon+\delta^{1/2}).
	\end{equation}	
	Since $\big|\int_M \la \nabla u_i,\nabla u_j\ra \mathrm{d}\nu_{s_1}-\delta_{ij}\big|\leq 2\epsilon$ by Proposition \ref{timesliceL2estimateofgradient}, we have at time $s_1$,
	\begin{align}\label{eigenidentity5}
		\abs{\delta_{ij}-\sum_{l=1}^{\infty}\lambda_l a_l^ia_l^j}\leq 2\epsilon.
	\end{align}
	If there exists $1\leq i_0\leq k$ such that the following holds: for some small dimensional constant $c_0>0$ to be determined later, 
	\begin{align*}
		\sum_{l=k}^{\infty}\lambda_l(s_1)(a_l^{i_0})^2(s_1)\geq c_0, 
	\end{align*}
	then by \eqref{eigenestimate1}, we obtain $\lambda_k(s_1)-\dfrac{1}{2\tau(s_1)}\leq Cc_0^{-1}(\ep+\delta^{1/2})$. Therefore, the conclusion follows from Proposition \ref{propagationofeigenvalues}. Now we assume that for all $1\leq i\leq k$, 
	\begin{align}\label{eigenidentity8}
\sum_{l=k}^{\infty}\lambda_l(s_1)(a_l^i)^2(s_1)\leq c_0 \quad \text{and hence} \quad \sum_{l=1}^{k-1}\lambda_l(s_1)(a_l^i)^2(s_1)\geq 1-c_0-2\ep.
	\end{align}
At time $s_1$, by \eqref{eigenidentity5}, we have for $i\neq j$, 
	\begin{align}\label{eigenidentity9}
\abs{\sum_{l=1}^{k-1}\lambda_l a_l^ia_l^j} \leq 2\ep+\big|\sum_{l=k}^{\infty}\lambda_la_l^ia_l^j\big|\leq 2\ep+\lc\sum_{l=k}^{\infty}\lambda_l(a_l^i)^2\rc^{1/2} \lc\sum_{l=k}^{\infty}\lambda_l(a_l^j)^2\rc^{1/2}\leq 2\ep+c_0.
	\end{align}
We define an inner product for $(k-1)$-tuples as follows: for $\vec a=(a_1,\ldots,a_{k-1}),\vec b=(b_1,\ldots,b_{k-1})$, set
	\begin{align*}
		\la \vec a, \vec b \ra=\sum_{l=1}^{k-1}\lambda_l(s_1) a_lb_l.
	\end{align*}
	Thus, for $a^i=(a_1^i,\ldots, a_{k-1}^i)$, we see that for all $1\leq i\neq j\leq k$,
	\begin{align*}
		1-c_0-2\ep \leq 	\la a^i, a^i \ra \leq 1+2\ep, \quad \abs{\la a^i, a^j \ra}\leq 2 \ep+c_0.
	\end{align*}
	Thus, if $c_0$ and $\ep$ are smaller than some constant depending on $n$, the cardinality of such a family $\{a^i\}$ is at most $k-1$. This contradicts \eqref{eigenidentity8} and \eqref{eigenidentity9}.
	
	This completes the proof.
\end{proof}

\section{Spines of Ricci shrinker spaces} 
\label{app:D} 

Let $(Z',d_{Z'},z',\t')$ be an $n$-dimensional Ricci shrinker space with entropy bounded below by $-Y$ (see Definition \ref{def:rss}). We denote by $\RR'$ the regular set, which is realized as a Ricci flow spacetime $(\RR', \t', \partial_{\t'}, g^{Z'}_t)$. For simplicity, we set $f=f_{z'}$ and $\nu_t=\nu_{z'; t}$.

First, we prove the following lemma. Here, $\XX^{z'}$ is the associated metric flow at $z'$.

\begin{lem}\label{lem:RCD}
$\lc \iota_{z'}(\XX^{z'}_{-1}), d^{Z'}_{-1}, \nu_{-1} \rc$ is an $\rcd(1/2, \infty)$-space.
\end{lem}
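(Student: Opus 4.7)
The plan is to identify $\lc \iota_{z'}(\XX^{z'}_{-1}), d^{Z'}_{-1}, \nu_{-1} \rc$ as the metric-measure completion of the smooth weighted Riemannian manifold $\lc \RR'_{-1}, g^{Z'}_{-1}, (4\pi)^{-n/2} e^{-f}\,dV_{g^{Z'}_{-1}} \rc$, and then verify the Bakry-\'Emery characterization of $\rcd(1/2, \infty)$ via a pointwise gradient estimate for the weighted heat semigroup.

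First, I would use Proposition \ref{prop:004}, Corollary \ref{cor:agree2}, Proposition \ref{prop:embed2} and Theorem \ref{Fconvergence}(4) to identify $\iota_{z'}(\RR^{z'}_{-1})$ with $\RR'_{-1}$ as a smooth connected Riemannian manifold, so that the restriction of $d^{Z'}_{-1}$ to $\RR'_{-1}$ coincides with the Riemannian distance $d_{g^{Z'}_{-1}}$. By Corollary \ref{cor:pre1} the measure $\nu_{-1}$ has full mass on $\RR'_{-1}$ and equals $(4\pi)^{-n/2} e^{-f}\,dV_{g^{Z'}_{-1}}$ there. On this smooth part, the Ricci shrinker equation from Proposition \ref{prop:004} reads $\Ric(g^{Z'}_{-1}) + \na^2 f = \tfrac{1}{2} g^{Z'}_{-1}$, which is exactly the Bakry-\'Emery lower bound $\mathrm{Ric}_\infty(\Delta_f) \equiv \tfrac{1}{2}$ for the weighted Laplacian $\Delta_f = \Delta - \la\na f, \na\cdot\ra$.

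Next, I would show that the weighted heat semigroup $P_t$, built from the heat kernel $K_{Z'}$ via Definition \ref{def:chks} and extended to the completion by Lemma \ref{lem:uniheatequ} and the reproduction formula, satisfies the Bakry-\'Emery gradient estimate
\[ |\na P_t u|^2 \le e^{-t}\, P_t(|\na u|^2) \]
for any bounded Lipschitz $u$. The derivation proceeds via the Bochner identity
\[ \tfrac{1}{2}\Delta_f |\na v|^2 - \la \na v, \na \Delta_f v\ra = |\na^2 v|^2 + (\Ric+\na^2 f)(\na v, \na v) \ge \tfrac{1}{2}|\na v|^2, \]
applied to $v = P_{t-s}u$ and integrated against $P_s$ after multiplication by the cutoff functions $\eta_{r, A}$ from Proposition \ref{cutoffProp1shrinker}. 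The codimension-four estimates in Theorem \ref{thm:spacemink} together with Proposition \ref{cutoffProp1shrinker}(4) control the boundary terms as $r\to 0$ and $A\to\infty$. Combining the resulting pointwise gradient estimate with the fact that $\RR'_{-1}$ carries full $\nu_{-1}$-measure (hence the space is infinitesimally Hilbertian), the Ambrosio-Gigli-Savar\'e characterization yields $\rcd(1/2, \infty)$.

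The principal difficulty lies in the cutoff argument inside the Bochner calculation. The cross terms containing $\na \eta_{r, A}$ and $\partial_\t \eta_{r, A}$ produced by integration by parts are supported where $r_{\Rm}$ is small or near the boundary of $B_{Z'_{-1}}(p, A)$; controlling them requires the volume estimates of Proposition \ref{cutoffProp1shrinker}(4), the Gaussian asymptotics of $f$ and $|\na f|$ furnished by Lemma \ref{lem:metrices1}, and the integral $|\Rm|$-bounds from Theorem \ref{thm:spacemink}(iii). A secondary technical point will be to confirm that the heat flow produced by $K_{Z'}$ agrees with the metric-measure Cheeger heat flow of the completion, which can be addressed using the extended metric flow structure of Theorem \ref{thm:em} together with the uniqueness clause in Remark \ref{rem:defheatequS}.
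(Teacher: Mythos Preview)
Your overall strategy---use the Ricci shrinker equation on $\RR'_{-1}$, the codimension-four estimate for the singular set, and a cutoff argument to verify a Bakry--\'Emery type condition---matches the paper's approach. However, there is a genuine conceptual mismatch in your choice of heat semigroup. You propose to build $P_t$ from the spacetime heat kernel $K_{Z'}$ via Definition \ref{def:chks} and Lemma \ref{lem:uniheatequ}, but $K_{Z'}$ is the kernel for the \emph{Ricci flow heat operator} $\square=\partial_\t-\Delta_{g^{Z'}_t}$, not for the weighted Laplacian $\Delta_f=\Delta_{g^{Z'}_{-1}}-\la\na f,\na\cdot\ra$ on the fixed slice $(\RR'_{-1},g^{Z'}_{-1},\nu_{-1})$. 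The AGS gradient characterization of $\rcd(1/2,\infty)$ requires the estimate $|\na P_t u|^2\le e^{-t}P_t(|\na u|^2)$ for the \emph{Cheeger heat flow} of the metric measure space, which on the regular part is the $\Delta_f$-semigroup. The references you give (Theorem \ref{thm:em}, Remark \ref{rem:defheatequS}) address uniqueness of conjugate heat kernels on the spacetime, not the identification of the Cheeger flow with a slice-level weighted flow; a translation via the self-similar reparametrization $\boldsymbol{\psi}^s$ would be needed, and you do not set this up.

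The paper avoids this issue by working directly in the Dirichlet form framework, following \cite[Proposition A.16]{li2024rigidity}: it defines $W^{1,2}(X,\mu)$ intrinsically, shows $C^\infty_c(\RR'_{-1})$ is dense (using codimension $>2$ of the singular set as in \cite[Corollary 2.12]{chen2017space}), so that the Cheeger energy equals $\mathcal D(u,u)=\int_{\RR'_{-1}}|\na u|^2\,\mathrm{d}\mu$ and its generator is exactly $\Delta_f$; it then verifies the abstract Bakry--\'Emery inequality for this generator and invokes the criterion of \cite[Definition 2.1]{gigli2018lecture}. This circumvents any heat-kernel identification problem and makes the role of the codimension estimate transparent (it guarantees regularity of the Dirichlet form). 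Your Bochner-plus-cutoff computation would still be needed inside this framework to justify the Bakry--\'Emery inequality, so the analytic core is the same; the difference is that the paper never needs to compare two a priori different heat flows.
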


\begin{proof}
By Proposition \ref{prop:004}, the following Ricci shrinker equation holds on $\RR'_{(-\infty, 0)}$:
	\begin{align} \label{eq:soliton1}
\Ric(g^{Z'})+\na^2 f=\frac{g^{Z'}}{2\tau},
	\end{align}
where $\tau:=-\t'$. In addition, thanks to Corollary \ref{cor:agree2}, $\RR'_t$ is connected for any $t<0$, and the distance $d^{Z'}_t$ on $\RR'_t$ agrees with the distance induced by $g^{Z'}_t$. 

Furthermore, the Minkowski dimension of the singular set $\iota_{z'}(\XX^{z'}_{-1}) \setminus \RR'_{-1}$ is at most $n-4$ (see Theorem \ref{thm:spacemink} (i)). By combining the Ricci shrinker equation \eqref{eq:soliton1} with the high codimension of the singular set, one can then derive the desired conclusion using the same argument as in \cite[Proposition A.16]{li2024rigidity}.

For the reader's convenience, we sketch the proof below. For simplicity, we set $(X, d, \mu)=\lc \iota_{z'}(\XX^{z'}_{-1}), d^{Z'}_{-1}, \nu_{-1} \rc$. We define the Sobolev space $W^{1, 2}(X, \mu)$ to be the subspace of $L^2(X, \mu)$ consisting of functions $u$ for which
	\begin{align*}
\|u\|^2_{W^{1, 2}} =\|u\|^2_{L^2}+\inf_{u_i} \liminf_{i \to \infty} \|h_i\|^2_{L^2}<\infty,
	\end{align*}
where the infimum is taken over all upper gradients $h_i$ of the function $u_i$ with $\|u_i-u\|_{L^2} \to 0$. The same argument as in \cite[Corollary 2.12]{chen2017space} shows that $C^{\infty}_c(\RR'_{-1})$ is dense in $W^{1, 2}$, because the singular set has codimension greater than $2$. We then consider the standard nonnegative symmetric bilinear form:
	\begin{align*}
\mathcal D(u, v):=\int_{\RR'_{-1}} \la \na u, \na v\ra \, \mathrm{d}\mu
	\end{align*}
for $u, v \in W^{1,2}$. It can be proved (see \cite[Corollary 2.15]{chen2017space}) that $\mathcal D$ is an irreducible, strongly local, and regular Dirichlet form. Moreover, if we denote by $\Delta_f$ the unique generator associated with $\mathcal D$, then the following Bakry-\'Emery condition holds:
	\begin{align*}
\frac{1}{2} \int |\na u|^2 \Delta_f v \, \mathrm{d}\mu \ge \frac{1}{2} \int v|\na u|^2 \, \mathrm{d}\mu+\int v\la \na u, \na \Delta_f u \ra  \, \mathrm{d}\mu
	\end{align*}
for any $u \in D(\Delta_f)$ with $\Delta_f u \in W^{1,2}$ and  $v \in L^{\infty} \cap D(\Delta_f)$ with $v \ge 0$ and $\Delta_f v \in L^\infty$. Here, $D(\Delta_f)$ denotes the domain of $\Delta_f$.

With these facts established, we see that conditions (i) and (iv) in \cite[Definition 3.1]{gigli2018lecture} are satisfied. Moreover, condition (ii) in \cite[Definition 3.1]{gigli2018lecture} is trivially satisfied since $\mu$ is a probability measure. Finally, condition (iii) in \cite[Definition 3.1]{gigli2018lecture} is immediate from Corollary \ref{cor:agree2}.
\end{proof}

Next, we prove

\begin{lem}\label{lem:appD2}
Suppose $u$ is a smooth function on $\RR'_{t}$ for $t<0$ such that
	\begin{align*}
\int_{\RR'_{t}} u^2+|\na u|^2 \, \mathrm{d}\nu_{t}<\infty
	\end{align*}
and $\Delta_f u+\frac{u}{2|t|}=0$, where $\Delta_f=\Delta_{g^{Z'}}-\la \na f, \na  \ra$ at $t$. Then $\na u$ induces a splitting factor $\R$ on $\RR'_t$.
\end{lem}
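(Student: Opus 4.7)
The plan is to establish $\nabla^2 u \equiv 0$ on $\RR'_t$ via a weighted Bochner identity combined with cutoff integration, and then invoke Remark \ref{rem:split} to produce the desired splitting. Using the self-similarity flow $\boldsymbol{\psi}^s$ from Proposition \ref{selfsimilarall}, which rescales the metric and time while preserving the shrinker structure, we may reduce to the case $t=-1$; the equation then reads $\Delta_f u = -\tfrac{1}{2} u$ on $\RR'_{-1}$ with $u, |\nabla u| \in L^2(d\nu_{-1})$, where $\Delta_f$ is the weighted Laplacian adjoint to $d\nu_{-1}$.

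The key computation is the weighted Bochner formula applied to $u$: invoking both the shrinker equation $\Ric(g^{Z'}) + \nabla^2 f = \tfrac{1}{2} g^{Z'}$ from Proposition \ref{prop:004} and the eigenvalue equation, one obtains on $\RR'_{-1}$
\begin{align*}
\tfrac{1}{2}\Delta_f |\nabla u|^2 = |\nabla^2 u|^2 + \langle \nabla \Delta_f u, \nabla u\rangle + (\Ric+\nabla^2 f)(\nabla u,\nabla u) = |\nabla^2 u|^2,
\end{align*}
since $\langle \nabla \Delta_f u, \nabla u\rangle = -\tfrac{1}{2}|\nabla u|^2$ cancels against $(\Ric+\nabla^2 f)(\nabla u, \nabla u) = \tfrac{1}{2}|\nabla u|^2$. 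Multiplying by $\eta_{r,A}^2$ for the cutoff $\eta_{r,A}$ from Proposition \ref{cutoffProp1shrinker}, integrating against $d\nu_{-1}$, and applying integration by parts together with Cauchy-Schwarz yields
\begin{align*}
\int_{\RR'_{-1}} \eta_{r,A}^2 \,|\nabla^2 u|^2 \, d\nu_{-1} \leq 4 \int_{\RR'_{-1}} |\nabla \eta_{r,A}|^2 \,|\nabla u|^2 \, d\nu_{-1}.
\end{align*}

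The central technical obstacle is to show that the right-hand side tends to $0$ as $r\to 0$ and $A\to\infty$. One must first upgrade the integrability of $|\nabla u|$ from $L^2$ to $L^p$ for some $p>4$: standard interior elliptic regularity on compact subsets of $\RR'_{-1}$, combined with Moser iteration on the smooth part and the quadratic growth of $f$ from Lemma \ref{lem:metrices1} (which controls the Gaussian weight at infinity), should give $|\nabla u| \in L^p(d\nu_{-1})$ for every $p < \infty$. Using $|\nabla \eta_{r,A}| \leq C(n)/r$ on its support $\{0<\eta_{r,A}<1\}$, whose $g^{Z'}_{-1}$-volume is bounded by $C(A,\epsilon)\, r^{4-\epsilon}$ from Proposition \ref{cutoffProp1shrinker}(4), and noting that $e^{-f}$ is bounded on the ball $B_{Z'_{-1}}(p,2A)$, H\"older's inequality gives
\begin{align*}
\int |\nabla \eta_{r,A}|^2 |\nabla u|^2 \, d\nu_{-1} \leq C(A,p,\epsilon)\, r^{(4-\epsilon)(1-2/p)-2},
\end{align*}
which tends to $0$ as $r\to 0$ when $p>4$ and $\epsilon$ is sufficiently small. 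Then letting $A\to\infty$ and applying monotone convergence yields $\nabla^2 u \equiv 0$ on $\RR'_{-1}$.

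Finally, since $\RR'_{-1}$ is connected by Proposition \ref{prop:004} and $u\not\equiv 0$ (otherwise the statement is vacuous), $|\nabla u|$ is a positive constant $c$. The rescaled function $y:= u/c$ then satisfies $|\nabla y|^2 \equiv 1$ and $\nabla^2 y \equiv 0$, so Remark \ref{rem:split} produces an isometric splitting of $\RR'_{-1}$ off an $\R$-factor in the direction of $\nabla u$; pulling back via $\boldsymbol{\psi}^s$ transfers this splitting to $\RR'_t$. The principal difficulty lies in the $L^p$ improvement beyond $L^2$ and in justifying integration by parts across the singular boundary, both of which rely essentially on the codimension-$4$ estimates for the singular set underlying Proposition \ref{cutoffProp1shrinker}.
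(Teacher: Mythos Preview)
Your route differs from the paper's. The paper dispatches the lemma in two lines: by Lemma~\ref{lem:RCD} the slice $(\iota_{z'}(\XX^{z'}_{-1}), d^{Z'}_{-1}, \nu_{-1})$ is an $\mathrm{RCD}(1/2,\infty)$ space, and \cite[Proposition~3.2]{GKKO} then yields the splitting directly from the bottom-eigenvalue eigenfunction. All the delicate analysis across the singular set is absorbed into the RCD machinery.

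Your Bochner-plus-cutoff argument is correct through the identity $\tfrac{1}{2}\Delta_f|\nabla u|^2=|\nabla^2 u|^2$ and the cutoff inequality, but there is a genuine gap at the step you yourself flag: the upgrade of $|\nabla u|$ from $L^2(d\nu_{-1})$ to $L^p$ with $p>4$. Interior elliptic regularity and Moser iteration give local bounds only on compacta of $\RR'_{-1}$, i.e.\ \emph{away} from the singular set; neither controls $|\nabla u|$ as one approaches $\MS\cap Z'_{-1}$, which is precisely where the support of $\nabla\eta_{r,A}$ concentrates as $r\to 0$. The quadratic growth of $f$ from Lemma~\ref{lem:metrices1} handles the tail at spatial infinity but says nothing about the singular set. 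Without an a~priori bound on $\int_{\{r\le r_{\Rm}\le 2r\}}|\nabla u|^p$ that beats the factor $r^{-2}$ after H\"older, the error term $\int|\nabla\eta_{r,A}|^2|\nabla u|^2\,d\nu_{-1}$ need not vanish, and the argument does not close. (In the only application, Lemma~\ref{lem:realstatic}, the function $u$ is a combination of potentials $f,f'$ whose gradients have explicit quadratic growth, so the $L^p$ bound is immediate there; but you are proving the lemma in its stated generality.) Note also that your final appeal to Remark~\ref{rem:split} is circular if read via its first justification, which is again RCD plus \cite{GKKO}; to keep the argument independent you would have to go through Theorem~\ref{thm:splitting} instead.
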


\begin{proof}
Without loss of generality, we assume $t=-1$.

Using the notation from the proof of Lemma \ref{lem:RCD}, our assumptions imply that $u \in W^{1,2}$. The conclusion then follows directly from \cite[Proposition 3.2]{GKKO}, since $\lc \iota_{z'}(\XX^{z'}_{-1}), d^{Z'}_{-1}, \nu_{z';-1} \rc$ is an $\rcd(1/2, \infty)$-space.
\end{proof}

We call 
	\begin{align*}
\boldsymbol{\mu}:=\NN_{z'}(1)
	\end{align*}
the \textbf{entropy} of the Ricci shrinker space.

Next, we show

\begin{lem}
For any $x \in Z'$,
	\begin{align}\label{eq:appd001}
\lim_{\tau \to +\infty}\NN_{x}(\tau)= \boldsymbol{\mu}.
	\end{align}
In particular, for any $\tau>0$,
	\begin{align} \label{eq:appd002}
\NN_{x}(\tau) \ge \boldsymbol{\mu}.
	\end{align}
\end{lem}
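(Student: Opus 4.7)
The plan is to combine the Lipschitz behavior of the Nash entropy with the defining property of a Ricci shrinker space, namely $\NN_{z'}(\tau) \equiv \boldsymbol{\mu}$. The Ricci shrinker space arises as a pointed Gromov--Hausdorff limit of closed Ricci flows $\XX^i \in \MM(n, Y, T_i)$ with $T_i \to +\infty$, and by Lemma \ref{lem:nashconv1} the Nash entropy $\NN_x(\tau)$ is the limit of $\NN_{x_i^*}(\tau)$ along any approximating sequence $x_i^* \to x$. The monotonicity from Proposition \ref{propNashentropy}(i), applied at the level of the $\XX^i$, therefore passes to the limit, showing $\tau \mapsto \NN_x(\tau)$ is non-increasing for every $x \in Z'$.

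Next, I would transfer the Lipschitz estimate of Proposition \ref{nashlip1} to the Ricci shrinker space. Picking approximating points $x_i^*, z_i^* \in \XX^i$ for $x, z' \in Z'$, for every $\tau \ge 2 d_{Z'}(x, z')^2 + 2|\t'(x)|$ the side conditions of Proposition \ref{nashlip1} are satisfied for all sufficiently large $i$: the bound $t_2 - \tau > -(1-\sigma) T_i$ is eventually trivial since $T_i \to \infty$, and $t_1 - r^2 \ge (t_2 - \tau)/2$ is enforced by the lower bound on $\tau$. Proposition \ref{nashlip1} yields
\begin{align*}
\abs{\NN_{x_i^*}(\tau) - \NN_{z_i^*}(\tau)} \le C(n, \tau, \sigma T_i)\, d_i^*(x_i^*, z_i^*),
\end{align*}
and the explicit asymptotic $C(n, \tau, \sigma T_i) \to C(n)/\sqrt{\tau}$ built into Proposition \ref{nashlip} ensures that the Lipschitz constant survives the limit. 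Passing $i \to \infty$ via Lemma \ref{lem:nashconv1} then gives
\begin{align*}
\abs{\NN_x(\tau) - \NN_{z'}(\tau)} \le \frac{C(n)}{\sqrt{\tau}}\, d_{Z'}(x, z').
\end{align*}
Since $\NN_{z'}(\tau) = \boldsymbol{\mu}$ by the definition of a Ricci shrinker space, letting $\tau \to \infty$ yields $\NN_x(\tau) \to \boldsymbol{\mu}$, which is \eqref{eq:appd001}. The estimate \eqref{eq:appd002} is then immediate from the monotonicity established in the first step: $\NN_x(\tau) \ge \lim_{\tau' \to \infty} \NN_x(\tau') = \boldsymbol{\mu}$.

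The main obstacle is verifying that the Lipschitz constant from Proposition \ref{nashlip1} remains bounded as $T_i \to \infty$, which is precisely the content of the explicit asymptotic in Proposition \ref{nashlip}. A secondary subtlety is that Lemma \ref{lem:nashconv1} is stated for points in $Z_{\III^-}$; in the Ricci shrinker setting, with $T_i \to \infty$, the effective interval $\III^-$ expands to all of $\R_-$ (and to $\R$ in the noncollapsed case by Theorem \ref{dichotomy}), so the lemma applies without restriction. This handles both the collapsed and noncollapsed cases uniformly, including points $x$ with $\t'(x) > 0$, since the lower bound $\tau \ge 2 d_{Z'}(x, z')^2 + 2|\t'(x)|$ is eventually vacuous as $\tau \to \infty$.
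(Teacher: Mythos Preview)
Your proof is correct and takes essentially the same approach as the paper: transfer the Lipschitz estimate of Proposition \ref{nashlip1} to the limit space (with constant $C(n)/\sqrt{\tau}$ since $T_i\to\infty$), combine with $\NN_{z'}(\tau)\equiv\boldsymbol{\mu}$, let $\tau\to\infty$, and invoke monotonicity for \eqref{eq:appd002}. The paper's proof is terser, simply stating ``after taking the limit for Proposition \ref{nashlip1}'' without spelling out the verification of hypotheses or the behavior of the Lipschitz constant that you make explicit.
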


\begin{proof}
We only prove \eqref{eq:appd001}, from which \eqref{eq:appd002} follows by monotonicity.

After taking the limit for Proposition \ref{nashlip1}, we obtain for any $x \in Z'$,
	\begin{align*}
\abs{\NN_{x}(\tau)-\NN_{z'}(\tau)} \le \frac{C(n)}{\sqrt{\tau}} d_{Z'}(x, z').
	\end{align*}
Letting $\tau \to +\infty$, we obtain \eqref{eq:appd001} and hence complete the proof.
\end{proof}

We have the following definition.

\begin{defn}
The \textbf{spine}\index{spine} of a Ricci shrinker space $(Z',d_{Z'},z',\t')$ is defined by
	\begin{align*}
\mathrm{spine}(Z'):=\{x \in Z' \mid \NN_{x}(\tau)=\boldsymbol{\mu},\, \forall \tau>0 \}.
	\end{align*}
	Moreover, we define the \textbf{arrival time}:
		\begin{align*}
t_a:=\sup \{\t'(x) \mid x \in \mathrm{spine}(Z')\} \in [0, \infty].
	\end{align*}
The \textbf{dimension} of $\mathrm{spine}(Z')$ is defined to be the unique integer $k \in [0, n+2]$ such that $(Z',d_{Z'},z',\t')$ is $k$-symmetric, but not $(k+1)$-symmetric (see Definition \ref{defnsymmetricsoliton}).
\end{defn}

We next prove the static principle.

\begin{lem} \label{lem:realstatic}
Suppose $y \in \mathrm{spine}(Z')$ with $\t'(y) \ne 0$. Then $(Z',d_{Z'},z',\t')$ is a static or quasi-static cone (see Definition \ref{def:stacone}). In this case, the Ricci curvature vanishes on $\RR'_{(-\infty, t_a]}$.
\end{lem}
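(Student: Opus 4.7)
The plan is to show that $\Ric(g^{Z'}) \equiv 0$ on $\RR'_{(-\infty, t_a]}$; the static vs.\ quasi-static dichotomy then follows directly from Definition \ref{def:stacone} depending on whether $t_a = +\infty$. Let $y \in \mathrm{spine}(Z')$ with $t_y := \t'(y) \ne 0$. Since $\NN_y(\tau) \equiv \boldsymbol{\mu}$ is constant in $\tau$, running the argument of Proposition \ref{limitnash} on the tangent metric soliton $\XX^y$ from Definition \ref{def:tms} shows that $\XX^y$ is a metric soliton satisfying a second shrinker equation
\begin{align*}
\Ric + \nabla^2 f_y = \frac{g^{Z'}}{2(t_y - \t')}
\end{align*}
on $\iota_y(\RR^y_{(-\infty, t_y)})$. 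Because $\RR'_t$ is connected (Proposition \ref{prop:004}) while $\iota_y(\RR^y_t)$ is a connected component of $\RR'_t$ (by the analog of Lemma \ref{lem:pre4} for Ricci shrinker spaces), the two sets coincide for $t < \min(0, t_y)$, and hence both shrinker equations (the one at $z'$ and the one at $y$) hold simultaneously on $\RR'_t$ in this range.

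The core of the argument is to construct a nontrivial isometric time-translation of the Ricci flow. Let $\boldsymbol{\psi}^s_{z'}$ be the self-similar flow from Proposition \ref{selfsimilarall}, generated on the regular part by $(-\t')(\partial_{\t'} - \nabla f_{z'})$; the analogous construction applied to $\XX^y$ produces a flow $\boldsymbol{\psi}^s_y$ on $\iota_y(\XX^y_{(-\infty, t_y)})$, generated by $(t_y - \t')(\partial_{\t'} - \nabla f_y)$. Both flows scale distances on each time-slice by $e^{-s/2}$, and their time-components send $\t' \mapsto e^{-s}\t'$ and $\t' \mapsto t_y + e^{-s}(\t' - t_y)$ respectively. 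The composition
\begin{align*}
\Phi^s := \boldsymbol{\psi}^{-s}_{z'} \circ \boldsymbol{\psi}^s_y
\end{align*}
therefore maps $\RR'_t$ to $\RR'_{t + t_y(e^s-1)}$, and since the distance scalings $e^{\pm s/2}$ cancel in the composition, one checks directly that $(\Phi^s)^* g^{Z'}_{t + t_y(e^s-1)} = g^{Z'}_t$. Thus $\{\Phi^s\}_s$ is a one-parameter family of isometries shifting time by the nontrivial amount $t_y(e^s - 1)$. Subtracting the two shrinker equations also yields the Hessian identity
\begin{align*}
\nabla^2 F = \lambda(t)\, g^{Z'}_t, \qquad F := f_{z'} - f_y, \qquad \lambda(t) := \frac{t_y}{2(-\t')(t_y - \t')} \ne 0,
\end{align*}
from which the standard curvature computation gives $R(\cdot, \cdot) \nabla F = 0$, hence $\Ric(\nabla F, \cdot) = 0$; combined with the contracted Bianchi identity $\nabla \scal = 2\Ric(\nabla f_{z'}, \cdot)$ for shrinkers, this further gives $\nabla F(\scal) = 0$.

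To conclude $\Ric \equiv 0$, I combine these constraints with the Hamilton-type identity $\scal + |\nabla f|^2 = (f - \boldsymbol{\mu})/\tau$, which holds for both $f_{z'}$ and $f_y$ because both carry entropy $\boldsymbol{\mu}$. Together with the local cone structure implied by $\nabla^2 F = \lambda(t) g^{Z'}_t$, these identities should force $\scal \equiv 0$ on $\RR'_t$ for $t < \min(0, t_y)$; the classical rigidity result that a gradient Ricci shrinker with vanishing scalar curvature is Ricci flat (of Pigola--Rimoldi--Setti type) then gives $\Ric \equiv 0$. Self-similarity (Proposition \ref{selfsimilarall}) propagates the vanishing from one slice to all of $\RR'_{(-\infty, 0)}$, and integrating $\partial_{\t'} g^{Z'} = -2\Ric$ along $\partial_{\t'}$ extends it forward to $\RR'_{[0, t_a]}$. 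The main obstacle is this final rigidity step: the standard Tashiro/Obata-type argument for $\nabla^2 F = \lambda g$ normally requires completeness, which only holds after passing to the singular extension $Z'_t$ where the singular set has codimension at least four (Theorem \ref{thm:spacemink}). Executing the rigidity rigorously in this mildly singular setting, without losing the Ricci shrinker structure, is the delicate technical point of the proof.
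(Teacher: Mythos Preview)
Your setup is right, and the subtraction $F = f_{z'} - f_y$ giving $\nabla^2 F = \lambda(t)\, g^{Z'}_t$ is correct, but this is precisely where your approach diverges from the paper's, and where your gap lies. The step ``these identities should force $\scal \equiv 0$'' is not substantiated: having $\Ric(\nabla F,\cdot)=0$ and $\nabla F(\scal)=0$ only says $\scal$ is constant along the integral curves of $\nabla F$, which is far from $\scal\equiv 0$. The Tashiro/Obata-type rigidity you invoke afterwards is exactly the kind of global completeness argument that the singular set obstructs, as you yourself note; you have not explained how the codimension-four bound from Theorem \ref{thm:spacemink} would let you run it.

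The paper avoids this entirely by choosing a smarter linear combination. At $t=-1$, instead of $F=f_{z'}-f_y$ it sets
\[
u := \frac{(\t'(y)+1)\, f_y - f_{z'}}{\t'(y)},
\]
and a direct substitution of the two shrinker equations shows that the $g^{Z'}$-terms cancel, yielding $\Ric + \nabla^2 u = 0$ on $\RR'_{-1}$. Applying the weighted divergence $\delta_f$ and using $\delta_f(\Ric+\nabla^2 f - \tfrac{g}{2})=0$ gives $\nabla(\Delta_f u + \tfrac{1}{2}u)=0$, so after subtracting a constant, $u'$ is an eigenfunction of $\Delta_f$ with eigenvalue $1/2$. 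Now the paper has already prepared the key tool: Lemma \ref{lem:RCD} shows $(\iota_{z'}(\XX^{z'}_{-1}), d^{Z'}_{-1}, \nu_{-1})$ is an $\mathrm{RCD}(1/2,\infty)$ space, and Lemma \ref{lem:appD2} (via the Gigli--Ketterer--Kuwada--Ohta rigidity) then forces $\nabla^2 u' \equiv 0$. Combined with $\Ric+\nabla^2 u=0$ this gives $\Ric\equiv 0$ on $\RR'_{-1}$ directly. The $\mathrm{RCD}$ eigenvalue rigidity is exactly the robust replacement for the Tashiro/Obata argument that works in the present singular setting; this is the missing idea in your proposal.

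Your $\Phi^s$ construction is an interesting observation but is left dangling; you never actually use the time-translation to conclude $\Ric=0$.
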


\begin{proof}
Without loss of generality, we assume $\t'(y)>-1$.

We set $f'=f_{y}$. Then it follows from Proposition \ref{prop:004} that on $\RR'_{(-\infty, \t'(y))}$,
	\begin{align*}
\Ric(g^{Z'})+\na^2 f'=\frac{g^{Z'}}{2(\t'(y)-\t')},
	\end{align*}
	which, when combined with $\Ric(g^{Z'})+\na^2 f=\frac{g^{Z'}}{2|\t'|}$, implies
	\begin{align} \label{eq:appd003}
\Ric+\na^2 u=0
	\end{align}
on $\RR'_{-1}$, where
	\begin{align*}
u:=\frac{(\t'(y)+1)f'-f}{\t'(y)}.
	\end{align*}
Applying $\delf$ to \eqref{eq:appd003}, we obtain on $\RR'_{-1}$,
	\begin{align*}
\delf(\na^2 u)=\na \lc \Delta_f u+\frac{1}{2} u\rc=0.
	\end{align*}
Thus, it follows that
	\begin{align*}
\Delta_f u+\frac{1}{2} u \equiv c
	\end{align*}
 for a constant $c$, since $\RR'_{-1}$ is connected. Define $u':=u-2c$. Then we have
 	\begin{align} \label{eq:appd005}
\Delta_f u'+\frac{1}{2} u'=0.
	\end{align}
 
  On the other hand, since all $f$, $f'$, $|\na f|^2$ and $|\na f'|^2$ increase at most quadratically (see \eqref{eq:tang2} and Lemma \ref{lem:metrices1}), we conclude that $|u'|+|\na u'|$ belongs to $L^2(Z'_{-1}, \nu_{-1})$. Consequently, it follows from \eqref{eq:appd005} and Lemma \ref{lem:appD2} that $\na^2 u' \equiv 0$ on $\RR'_{-1}$. Combined with \eqref{eq:appd003}, it follows that $\Ric \equiv 0$ on $\RR'_{-1}$. Thus, $(Z',d_{Z'},z',\t')$ is a static or quasi-static cone.
  
  By the same argument, one concludes that $\Ric \equiv 0$ on $\RR'_{(-\infty, t_a)}$. By taking the limit, we also obtain $\Ric \equiv 0$ on $\RR'_{(-\infty, t_a]}$.  
 \end{proof}

Suppose that $\mathrm{spine}(Z')$ has dimension $k$ and $(Z',d_{Z'},z',\t')$ is a static cone. Then it follows from Proposition \ref{prop:staticcone1} and Proposition \ref{prop:splitk} that there exist maps $\boldsymbol{\varphi}^t$ for $t \in \R$ and $\boldsymbol{\phi}^s$ for $s \in \R^{k-2}$. Next, we prove

\begin{prop}\label{prop:appD1}
With the above assumptions, we have
	\begin{align*}
\mathrm{spine}(Z')=\{ \boldsymbol{\varphi}^t \circ \boldsymbol{\phi}^s (z') \mid t\in \R,\,s \in \R^{k-2}\}.
	\end{align*}
\end{prop}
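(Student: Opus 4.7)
For the easy inclusion $\supset$, I fix $t \in \R$ and $s \in \R^{k-2}$ and let $x = \boldsymbol{\varphi}^t \circ \boldsymbol{\phi}^s(z')$. Proposition \ref{prop:splitk}(iii) gives $\NN_{\boldsymbol{\phi}^s(z')}(\tau) = \NN_{z'}(\tau) = \boldsymbol{\mu}$ for every $\tau > 0$; since the cone is static we have $t_a = +\infty$, so Proposition \ref{prop:staticcone1}(iii) is applicable for every $t$ and yields $\NN_x(\tau) = \NN_{\boldsymbol{\phi}^s(z')}(\tau) = \boldsymbol{\mu}$. Hence $x \in \mathrm{spine}(Z')$.

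For the reverse inclusion, take $y \in \mathrm{spine}(Z')$. Because the cone is static, $\boldsymbol{\varphi}^r$ is defined for all $r \in \R$ and preserves the spine (again by Proposition \ref{prop:staticcone1}(iii)); replacing $y$ by $\boldsymbol{\varphi}^{1-\t'(y)}(y)$, I may assume $\t'(y) = 1$, since once the reduced point is shown to equal $\boldsymbol{\varphi}^1 \circ \boldsymbol{\phi}^{s_0}(z')$ for some $s_0$, applying $\boldsymbol{\varphi}^{\t'(y)-1}$ back recovers the desired form of $y$. I then reprise the calculation of Lemma \ref{lem:realstatic} applied to $u := 2f_y - f_{z'}$ on $\RR'_{-1}$: combining the Ricci-shrinker equations for $f_y$ and $f_{z'}$ with the vanishing of the Ricci curvature yields $\nabla^2 u = 0$, and the same steps as in Lemma \ref{lem:realstatic} give $\Delta_f u + u/2 \equiv c$ for a constant $c$. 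Setting $u' = u - 2c$, the quadratic bounds of Lemma \ref{lem:metrices1} place $u'$ in $W^{1,2}(\RR'_{-1}, \nu_{-1})$, and Lemma \ref{lem:appD2} then produces a splitting direction $\nabla u'$. Because the spine has dimension $k$, by Definition \ref{defnsymmetricsoliton}(2) $(Z', d_{Z'}, z', \t')$ is $(k-2)$-splitting but \emph{not} $(k-1)$-splitting, so $\nabla u'$ must already lie in the span of the existing splitting factors $\nabla y_1, \ldots, \nabla y_{k-2}$.

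Writing $\RR'_{-1} = \RR''_{-1} \times \R^{k-2}$ with the splitting coordinate $\vec a$, it follows that $u'(w, \vec a) = \langle \vec b, \vec a \rangle$ for some $\vec b \in \R^{k-2}$, so $u(w, \vec a) = \langle \vec b, \vec a \rangle + 2c$. The shrinker equation together with $\Ric = 0$ and the splitting forces $f_{z'}(w, \vec a, -1) = \bar h(w)/2 + |\vec a|^2/4$ for some $\bar h$ on $\RR''_{-1}$; substituting into $2f_y - f_{z'} = \langle \vec b, \vec a \rangle + 2c$ and completing the square gives
\begin{align*}
f_y(w, \vec a, -1) = \bar h(w)/4 + |\vec a - s_0|^2/8 + \alpha
\end{align*}
with $s_0 := -2\vec b$. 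A direct computation using the invariances $K_{Z'}(\boldsymbol{\varphi}^t(x); \boldsymbol{\varphi}^t(w)) = K_{Z'}(x; w)$ (Proposition \ref{prop:staticcone1}) and $K_{Z'}(\boldsymbol{\phi}^s(x); \boldsymbol{\phi}^s(w)) = K_{Z'}(x; w)$ (Proposition \ref{prop:splitk}) shows that the candidate $y_0 := \boldsymbol{\varphi}^1 \circ \boldsymbol{\phi}^{s_0}(z')$ yields the identical formula with $\alpha = 0$; the probability-measure normalization of $\nu_{y;-1}$ then forces $\alpha = 0$ for $y$ as well, so $\nu_{y;-1} = \nu_{y_0;-1}$.

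The main obstacle I anticipate is the final step: promoting $\nu_{y;-1} = \nu_{y_0;-1}$ to the equality $y = y_0$. My plan is to combine three ingredients. First, the backward reproduction formula immediately gives $\nu_{y;t} = \nu_{y_0;t}$ for all $t \leq -1$. Second, on the smooth spacetime region $\RR'_{(-1, 1)}$ the difference $K_{Z'}(y;\cdot) - K_{Z'}(y_0;\cdot)$ satisfies the conjugate heat equation, vanishes on $\RR'_{-1}$, and is controlled by the Gaussian bound of Theorem \ref{heatkernelupperbdgeneral}; a backward-uniqueness argument of Carleman type (using the high codimension of the singular set together with the cutoff functions from Proposition \ref{cutoffProp1shrinker}) should extend the equality of measures to all $t \in (-1, 1)$. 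Third, because $y, y_0 \in \mathrm{spine}(Z')$, Proposition \ref{limitnash} shows that the associated metric flows $\XX^y, \XX^{y_0}$ are metric solitons and hence past-continuous at $t = 1$; passing to the limit $t \nearrow 1$ in $d_{W_1}^{Z'_t}(\nu_{y;t}, \nu_{y_0;t}) = 0$ via Proposition \ref{prop:pastcont} produces $d^{Z'}_1(y, y_0) = 0$, which by Lemma \ref{lem:extend1} forces $y = y_0$.
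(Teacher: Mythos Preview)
Your skeleton matches the paper's: the easy inclusion via Propositions~\ref{prop:staticcone1}(iii) and~\ref{prop:splitk}(iii), the reduction by $\boldsymbol{\varphi}$ to a fixed time, the vanishing Hessian $\nabla^2 u=0$, and the dimension argument forcing $\nabla u$ into the existing $\R^{k-2}$ factor. Two points deserve comment.

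First, your normalization $\t'(y)=1$ is unnecessarily awkward. The paper shifts to $x':=\boldsymbol{\varphi}^{-\t'(x)}(x)\in Z'_0$, so that $f_{x'}$ and $f_{z'}$ satisfy the \emph{same} shrinker equation $\Ric+\nabla^2 f=g/(2|\t'|)$; then $u=f_{z'}-f_{x'}$ has $\nabla^2 u=0$ directly, and one can invoke Proposition~\ref{prop:splitkpoten} to read off $f_{x'}=f_{\boldsymbol{\phi}^{s_0}(z')}$ immediately, rather than assembling the formula for $f_y$ by hand.

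Second, and more importantly, your proposed resolution of the final step is substantially harder than needed. You want to upgrade $\nu_{y;-1}=\nu_{y_0;-1}$ to $y=y_0$ via a Carleman-type backward-uniqueness argument for the conjugate heat equation on the singular space $\RR'$; this is technically formidable and not carried out anywhere in the paper. The intended shortcut uses that both $y$ and $y_0$ lie in the spine, so both potentials satisfy not only the conjugate heat equation but also the soliton identities of \eqref{eq:tang2}: $\partial_{\t'} f=|\nabla f|^2$ and $f-\tau_\bullet(|\nabla f|^2+\scal)=\boldsymbol{\mu}$ with $\tau_\bullet=\t'(y)-\t'$. Subtracting gives, for $v:=f_y-f_{y_0}$, the linear ODE $\partial_{\t'} v=v/\tau_\bullet$ along the $\partial_{\t'}$-trajectories; since the static cone has the product structure $\RR'\cong\RR'_{-1}\times\R$ (Theorem~\ref{staticcone}), the equality $v|_{t=-1}=0$ propagates to $v\equiv 0$ on all of $\RR'_{(-\infty,\t'(y))}$. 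Hence $\nu_{y;s}=\nu_{y_0;s}$ for every $s<\t'(y)$, and the argument of Lemma~\ref{lem:extend1} (regular $H_n$-centers at times $s\nearrow\t'(y)$ converge to both $y$ and $y_0$) yields $y=y_0$. No backward uniqueness is needed.
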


\begin{proof}
We set $S:=\{ \boldsymbol{\varphi}^t \circ \boldsymbol{\phi}^s (z') \mid t\in \R,\,s \in \R^{k-2}\}$. For any $y \in S$, it follows from Proposition \ref{prop:staticcone1} (iii) and Proposition \ref{prop:splitk} (iii) that $y \in \mathrm{spine}(Z')$.

Conversely, suppose $x \in \mathrm{spine}(Z')$. We define $x':=\boldsymbol{\varphi}^{-\t'(x)}(x) \in Z'_0$. It is clear that $x' \in \mathrm{spine}(Z')$ and we only need to prove that $x' \in S$.

Set $f':=f_{x'}$. Then, by the equation $\Ric(g^{Z'})+\na^2 f'=g^{Z'}/2$ on $\RR'_{-1}$, we have
	\begin{align*}
\na^2 u\equiv 0
	\end{align*}
on $\RR'_{-1}$, where $u=f-f'$. If $u$ is a constant $c$, then we have
	\begin{align}\label{eq:appd006}
(4\pi)^{\frac n 2}=\int_{\RR'_{-1}} e^{-f}  \, \mathrm{d}V_{g^{Z'}_{-1}}=\int_{\RR'_{-1}} e^{-f'-c}  \, \mathrm{d}V_{g^{Z'}_{-1}}=e^{-c} (4\pi)^{\frac n 2},
	\end{align}
which implies that $c=0$. Then we have $f=f'$, meaning that $x'=z' \in S$.

If $u$ is not a constant, then $\na u$ induces a splitting factor $\R$ in $\RR'_{-1}$. On the other hand, by our assumption, we have a decomposition $\RR'_{-1}=\RR''_{-1} \times \R^{k-2}$. For any $w \in \RR'_{-1}$, we denote its components in the above decomposition by $(w_1, w_2)$. By the Ricci shrinker equation, we have
	\begin{align*}
f(w)=h_1(w_1)+\frac{|w_2-v_1|^2}{4}
	\end{align*}
	and
	\begin{align*}
f'(w)=h_2(w_1)+\frac{|w_2-v_2|^2}{4}
	\end{align*}	
for any $w \in \RR'_{-1}$, where $v_1, v_2 \in \R^{k-2}$ are constant vectors. Since $\na u$ must be parallel to $\R^{k-2}$ (otherwise the dimension of the spine exceeds $k$), we obtain
	\begin{align*}
u(w)=\la w_2, v \ra+c'
	\end{align*}	
 for some $v \in \R^{k-2}$ and $c' \in \R$. Thus, by Proposition \ref{prop:splitkpoten}, there exists $s_0 \in \R^{k-2}$ such that 
 	\begin{align*}
f'=f_{\boldsymbol{\phi}^{s_0}(z')}+c''
	\end{align*}	
for some constant $c''$. By the same argument as in \eqref{eq:appd006}, we conclude that $c''=0$ and hence $x'=\boldsymbol{\phi}^{s_0}(z')$. Thus, $x' \in S$ and the proof is complete.
 \end{proof}

The proof of Proposition \ref{prop:appD1} also gives the following results.

\begin{prop}
Suppose that $\mathrm{spine}(Z')$ has dimension $k$ and $(Z',d_{Z'},z',\t')$ is a quasi-static cone. Then
	\begin{align*}
\mathrm{spine}(Z')=\{ \boldsymbol{\varphi}^t \circ \boldsymbol{\phi}^s (z') \mid t\in (-\infty, t_a],\,s \in \R^{k}\}.
	\end{align*}
\end{prop}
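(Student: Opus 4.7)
The plan is to mirror the argument for Proposition \ref{prop:appD1} with the modifications dictated by the quasi-static setting: the time-translation flow $\boldsymbol{\varphi}^t$ is only defined for $t \in (-\infty, t_a]$ (Proposition \ref{prop:staticcone1}), while the splitting flow $\boldsymbol{\phi}^s$ now ranges over $s \in \R^k$ rather than $\R^{k-2}$. The latter is because a quasi-static cone is not a static cone, so its $k$-symmetry falls under case (1) of Definition \ref{defnsymmetricsoliton}, i.e., $\RR'_{-1}$ splits off $\R^k$. Let $S$ denote the right-hand side of the claimed equality. The inclusion $S \subset \mathrm{spine}(Z')$ is immediate from the two Nash-entropy-invariance statements, Proposition \ref{prop:staticcone1}(iii) for $\boldsymbol{\varphi}^t$ and Proposition \ref{prop:splitk}(iii) for $\boldsymbol{\phi}^s$, since these two flows commute on the regular part (cf.\ the remark following Definition \ref{defnsymmetricsoliton}) and extend by continuity.

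For the reverse inclusion, take $x \in \mathrm{spine}(Z')$. By the definition of $t_a$, we have $\t'(x) \le t_a$, so $x' := \boldsymbol{\varphi}^{-\t'(x)}(x) \in Z'_0$ is well-defined via Proposition \ref{prop:staticcone1}, and again lies in $\mathrm{spine}(Z')$. It therefore suffices to show $x' = \boldsymbol{\phi}^{s_0}(z')$ for some $s_0 \in \R^k$. Set $f' := f_{x'}$ and $u := f - f'$. Lemma \ref{lem:realstatic} yields $\Ric \equiv 0$ on $\RR'_{-1} \subset \RR'_{(-\infty, t_a]}$, so the Ricci shrinker equations applied to both $f$ and $f'$ give $\na^2 u \equiv 0$ on $\RR'_{-1}$. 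If $u$ is constant, the mass normalization $\int_{\RR'_{-1}} e^{-f} \, dV = \int_{\RR'_{-1}} e^{-f'} \, dV = (4\pi)^{n/2}$ forces $u \equiv 0$, hence $f = f'$ and $x' = z' \in S$.

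If $u$ is non-constant, $\na u$ is a non-trivial parallel vector field on $\RR'_{-1}$. The main obstacle, and the heart of the argument, is to show that $\na u$ must lie entirely in the existing $\R^k$-factor of $\RR'_{-1} = \RR''_{-1} \times \R^k$. The reason is that any parallel vector field on a product splits into parallel vector fields on each factor; a non-zero component on $\RR''_{-1}$ would produce an additional $\R$-splitting direction. By the self-similarity of $\RR'_{(-\infty, 0)}$ (Proposition \ref{selfsimilarall}), together with the $\rcd$-splitting mechanism of Lemma \ref{lem:RCD} and Remark \ref{rem:split}, this additional direction propagates to a $(k+1)$-splitting of the entire regular spacetime, making $(Z', d_{Z'}, z', \t')$ a $(k+1)$-symmetric Ricci shrinker space and contradicting the hypothesis that the spine has dimension exactly $k$. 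Hence $\na u$ is tangent to the $\R^k$-factor, so $u(w_1, w_2) = \la w_2, v \ra + c'$ for some $v \in \R^k$ and constant $c'$. Proposition \ref{prop:splitkpoten} then produces $s_0 \in \R^k$ with $f' = f_{\boldsymbol{\phi}^{s_0}(z')} + c''$ for a constant $c''$, and integrating $e^{-f'} = e^{-f_{\boldsymbol{\phi}^{s_0}(z')} - c''}$ forces $c'' = 0$. Since the potential function (equivalently, the conjugate heat kernel of Definition \ref{def:chks}) determines its base point uniquely, $x' = \boldsymbol{\phi}^{s_0}(z')$, and therefore $x = \boldsymbol{\varphi}^{\t'(x)}(\boldsymbol{\phi}^{s_0}(z')) \in S$, completing the proof.
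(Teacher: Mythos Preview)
Your proposal is correct and follows exactly the approach the paper intends; the paper simply states that this proposition is obtained ``by the same proof'' as Proposition~\ref{prop:appD1}, and you have carried out that adaptation faithfully, correctly identifying that in the quasi-static case the $k$-symmetry falls under case~(1) of Definition~\ref{defnsymmetricsoliton} so that the splitting factor is $\R^k$ rather than $\R^{k-2}$. Your justification for why $\nabla u$ must lie in the existing $\R^k$-factor (via the maximality of the spine dimension) is in fact more explicit than what appears in the paper's proof of Proposition~\ref{prop:appD1}, where the analogous step ``$\nabla u$ must be parallel to $\R^{k-2}$'' is asserted without elaboration.
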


\begin{prop}
Suppose $\mathrm{spine}(Z')$ has dimension $k$ and $(Z',d_{Z'},z',\t')$ is neither a static cone nor a quasi-static cone. Then
	\begin{align*}
\mathrm{spine}(Z')=\{\boldsymbol{\phi}^s (z') \mid s \in \R^{k}\}.
	\end{align*}
	In particular, $\mathrm{spine}(Z') \subset Z'_0$.
\end{prop}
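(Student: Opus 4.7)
The proof will closely follow the template of Proposition \ref{prop:appD1}, adapted to the setting in which the only source of symmetry is spatial splitting. I would first invoke Lemma \ref{lem:realstatic}: any $y \in \mathrm{spine}(Z')$ with $\t'(y) \neq 0$ would force $(Z', d_{Z'}, z', \t')$ to be a static or quasi-static cone, contradicting the hypothesis. Hence $\mathrm{spine}(Z') \subset Z'_0$, and in particular the last sentence of the proposition is already established. The inclusion $\{\boldsymbol{\phi}^s(z') \mid s \in \R^k\} \subset \mathrm{spine}(Z')$ is immediate from Proposition \ref{prop:splitk}(iii), so only the reverse inclusion inside $Z'_0$ remains.

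For that inclusion, fix $x \in \mathrm{spine}(Z') \cap Z'_0$ and set $f' := f_x$. Applying the Ricci shrinker equation from Proposition \ref{prop:004} to both $f$ and $f'$ on $\RR'_{(-\infty,0)}$ and subtracting yields $\nabla^2 u \equiv 0$ on $\RR'_{-1}$, where $u := f - f'$. If $u$ is constant, the normalization argument of \eqref{eq:appd006} forces $u \equiv 0$, whence $\nu_{x;-1} = \nu_{z';-1}$, and an injectivity argument at time $-1$ (sketched below) identifies $x$ with $z' = \boldsymbol{\phi}^0(z')$.

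If $u$ is non-constant, Lemma \ref{lem:appD2} guarantees that $\nabla u$ is a nontrivial parallel vector field on $\RR'_{-1}$ generating a splitting factor $\R$. Here the dimension hypothesis enters decisively: by Definition \ref{defnsymmetricsoliton}, since $(Z', d_{Z'}, z', \t')$ is neither a static nor a quasi-static cone, being $(k+1)$-symmetric collapses to $\RR'_{-1}$ being $(k+1)$-splitting, and by hypothesis this is excluded. Writing $\RR'_{-1} = \RR''_{-1} \times \R^k$ and decomposing the parallel field $\nabla u$ in this product structure, the $\RR''_{-1}$-component must vanish, for otherwise the combined splitting would upgrade $\RR'_{-1}$ to a $(k+1)$-splitting. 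Consequently $u(w_1, w_2) = \langle w_2, v\rangle + c'$ for some $v \in \R^k$ and $c' \in \R$.

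To conclude, I will apply Proposition \ref{prop:splitkpoten} at time $t = -1$ (so $\tau = 1$) with $s_0 := 2v$, which yields $f(w) - f_{\boldsymbol{\phi}^{s_0}(z')}(w) = \langle w_2, v\rangle + c_{s_0}$ for a constant $c_{s_0}$. Combining this with the formula for $u$ gives $f'(w) = f_{\boldsymbol{\phi}^{s_0}(z')}(w) + (c_{s_0} - c')$ on $\RR'_{-1}$, and the normalization $\int e^{-f'} \, \mathrm{d}V_{g^{Z'}_{-1}} = \int e^{-f_{\boldsymbol{\phi}^{s_0}(z')}} \, \mathrm{d}V_{g^{Z'}_{-1}} = (4\pi)^{n/2}$ forces $c_{s_0} - c' = 0$. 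Thus $\nu_{x;-1} = \nu_{\boldsymbol{\phi}^{s_0}(z');-1}$, and the same injectivity argument as in the constant case identifies $x = \boldsymbol{\phi}^{s_0}(z')$. The principal obstacle I anticipate is justifying this final injectivity step, namely that two points of $Z'_0$ inducing the same conjugate heat kernel measure at $t = -1$ must coincide; the monotonicity of $d_{W_1}^{Z'_t}$ alone only propagates the equality to $t \leq -1$, so I would close the argument by combining the reproduction formula with Proposition \ref{prop:chara} and the self-similar structure given by Proposition \ref{selfsimilarall}, perhaps via $H_n$-centers of $x$ and $\boldsymbol{\phi}^{s_0}(z')$ at times $t_i \nearrow 0$ as in the proof of Proposition \ref{prop:com}, to force $d_{Z'}(x,\boldsymbol{\phi}^{s_0}(z'))=0$.
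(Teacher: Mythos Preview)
Your approach is correct and is precisely what the paper intends: it states this proposition with no separate proof, simply remarking that it follows ``by the same proof'' as Proposition~\ref{prop:appD1}. Your reduction to $Z'_0$ via Lemma~\ref{lem:realstatic}, the inclusion $\{\boldsymbol{\phi}^s(z')\}\subset\mathrm{spine}(Z')$ from Proposition~\ref{prop:splitk}(iii), and the main argument comparing $f$ and $f'=f_x$ on $\RR'_{-1}$ are all exactly as the paper would have it. One minor remark: since $\t'(x)=\t'(z')=0$, subtracting the two shrinker equations gives $\nabla^2 u=0$ directly, so $\nabla u$ is parallel without passing through Lemma~\ref{lem:appD2}.

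Your concern about the injectivity step is legitimate---the paper glosses over it in Proposition~\ref{prop:appD1} as well, writing only ``hence $x'=\boldsymbol{\phi}^{s_0}(z')$.'' The cleanest resolution is not via $H_n$-centers at $t_i\nearrow 0$ alone, since a priori you only know $\nu_{x;-1}=\nu_{\boldsymbol{\phi}^{s_0}(z');-1}$, and reproduction only propagates this to $t\le -1$. Instead, set $h:=f_x-f_{\boldsymbol{\phi}^{s_0}(z')}$ on all of $\RR'_{(-\infty,0)}$: since both points lie in the spine, both potentials satisfy the shrinker equation at every $\tau$, so $\nabla^2 h=0$ on each slice and $\nabla h$ is tangent to the $\R^k$ factor (else one gets a $(k{+}1)$-splitting). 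Writing $h(w_1,w_2,\tau)=\langle w_2,A(\tau)\rangle+B(\tau)$ and using $\partial_\t f=|\nabla f|^2$ for both potentials (see \eqref{eq:tang2}), one computes $\nabla(\partial_\t h)=\nabla h/\tau$, i.e.\ $A'(\tau)=-A(\tau)/\tau$; since $A(1)=0$ this forces $A\equiv 0$, and then the normalization $\int e^{-f_x}=\int e^{-f_{\boldsymbol{\phi}^{s_0}(z')}}=(4\pi\tau)^{n/2}$ gives $B\equiv 0$. Thus $\nu_{x;t}=\nu_{\boldsymbol{\phi}^{s_0}(z');t}$ for \emph{all} $t<0$, and now your $H_n$-center argument (or Lemma~\ref{lem:extend1}) closes it.
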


\printindex

\bibliographystyle{alpha}
\bibliography{lojaref}

\vspace{10pt}

Hanbing Fang, Mathematics Department, Stony Brook University, Stony Brook, NY 11794, United States; Email: hanbing.fang@stonybrook.edu;\\

Yu Li, Institute of Geometry and Physics, University of Science and Technology of China, No. 96 Jinzhai Road, Hefei, Anhui Province, 230026, China; Hefei National Laboratory, No. 5099 West Wangjiang Road, Hefei, Anhui Province, 230088, China; Email: yuli21@ustc.edu.cn. \\

\end{CJK}
\end{document}